%
%

\documentclass{memo-l}


\usepackage{amssymb,amsmath,array,amscd,amsthm,hhline,stmaryrd,latexsym}
\usepackage[mathscr]{euscript}
\DeclareMathAlphabet{\sfsl}{OT1}{cmss}{m}{sl}

\newtheorem{theorem}{Theorem}[section]
\newtheorem{proposition}[theorem]{Proposition}
\newtheorem{lemma}[theorem]{Lemma}
\newtheorem{corollary}[theorem]{Corollary}
\newtheorem{definition}[theorem]{Definition}

\newtheorem{remark}[theorem]{Remark}

\renewcommand{\theenumi}{\roman{enumi}}
\renewcommand{\labelenumi}{(\theenumi)}

\makeatletter \@addtoreset{equation}{chapter}
\renewcommand{\theequation}{\arabic{chapter}.\arabic{equation}}
\makeatother

\numberwithin{section}{chapter}



\def\smod#1{#1{\tt{-smod}}}

\def\ll{\left\llbracket}
\def\rr{\right\rrbracket}
\def\<{\langle}
\def\>{\rangle}
\renewcommand{\[}{\left[}

\def\bi{\text{\boldmath$i$}}

\def\cond{\;\begin{picture}(0,0)\put(3.5,3.6){\circle{9.6}}\put(0,0){{\rm\small if}}\end{picture}\;\;\;\,}
\def\cscript{\;\begin{picture}(0,0)\put(2.5,2.5){\circle{7.5}}\put(0,0){$\scriptstyle\mathrm{if}$}\end{picture}\,\;\;\,}

\def\cf{\mathop{\rm cf}\nolimits}

\def\I{\mathcal{I}}

\def\lm{\lambda}
\def\Z{\mathbb Z}

\def\R{\mathbf P}
\def\0{\mathfrak 0}
\def\1{\mathfrak 1}
\def\T{\mathcal T}
\def\U{\mathcal U}

\renewcommand{\and}{\mathbin\&}
\renewcommand\epsilon{\varepsilon}
\newcommand\de{\delta}
\newcommand\si{\sigma}
\newcommand\ga{\gamma}
\renewcommand\phi{\varphi}

\renewcommand{\O}{\mathcal O}

\def\ind{{\operatorname{ind}}}

\def\al{{\alpha}}
\def\be{{\beta}}


\def\suchthat{\mathbin{\rm |}}

\renewcommand{\emptyset}{\varnothing}
\def\id{\mathop{\rm id}\nolimits}
\def\odd#1{\overline{#1}}
\def\res{\mathop{\rm Res}\nolimits}
\def\Hom{\mathop{\rm Hom}\nolimits}

\def\ev{\mathop{\rm ev}\nolimits}
\def\Dist{\mathop{\rm Dist}\nolimits}
\def\Rem{\mathop{\rm Rem}\nolimits}
\def\Add{\mathop{\rm Add}\nolimits}
\def\height{\mathop{\rm ht}}

\def\Mtype{\mathtt{M}}
\def\Qtype{\mathtt{Q}}
\def\Se{\mathcal{Y}}
\def\Cl{\mathcal{C}}
\def\la{\lambda}
\def\Res{{\operatorname{cont}}}
\def\cont{{\operatorname{cont}}}
\def\eps{{\varepsilon}}
\def\phi{{\varphi}}

\def\u{\mathfrak{u}}

\def\cp{\mathop{\rm cp}\nolimits}
\def\pr{\mathop{\rm pr}\nolimits}


\def\tbinom#1#2{{\left(\textstyle\genfrac{}{}{0pt}{}{\!#1\!}{#2}\right)}}


\def\le{\leqslant}
\def\ge{\geqslant}
\renewcommand{\(}{\left(}
\renewcommand{\)}{\right)}
\def\={\equiv}




\newdimen\hoogte    \hoogte=12pt
\newdimen\breedte   \breedte=14pt
\newdimen\dikte     \dikte=0.5pt

\newenvironment{Young}{\begingroup
       \def\vr{\vrule height0.89\hoogte width\dikte depth 0.2\hoogte}
       \def\fbox##1{\vbox{\offinterlineskip
                    \hrule height\dikte
                    \hbox to \breedte{\vr\hfill##1\hfill\vr}
                    \hrule height\dikte}}
       \vbox\bgroup \offinterlineskip \tabskip=-\dikte \lineskip=-\dikte
            \halign\bgroup &\fbox{##\unskip}\unskip  \crcr }
       {\egroup\egroup\endgroup}
\def\diagram#1{\relax\ifmmode\vcenter{\,\begin{Young}#1\end{Young}\,}\else%
              $\vcenter{\,\begin{Young}#1\end{Young}\,}$\fi}

\makeindex

\begin{document}

\frontmatter

\title{Modular branching rules for projective representations of symmetric groups and lowering operators for the supergroup Q(n)}


\author{Alexander Kleshchev}
\address{Department of Mathematics, University of Oregon, Eugene, OR 97403, USA}
\email{klesh@uoregon.edu}
\thanks{This collaboration began when the second author visited University of Oregon in 2007. The second author is grateful to the Department of Mathematics, University of Oregon, for hospitality. The second author would like to acknowledge the financial support from the Russian Federation President Grant MK-2304.2007.1 which made the visit possible. The first author supported in part by NSF grant no. DMS-0654147. Both authors supported by the Isaac Newton Institute in Cambridge, U.K. Both authors are grateful to Jon Brundan for many useful comments and suggestions.}

\author{Vladimir Shchigolev}
\address{Department of Algebra, \  Faculty of Mathematics,\ \  Lomonosov Moscow State University, Leninskiye Gory, Moscow 119899, Russia}
\email{shchigolev\_vladimir@yahoo.com}
\thanks{}

\date{}

\subjclass[2000]{Primary }

\keywords{}

\begin{abstract}
There are two approaches to projective representation theory of symmetric and alternating groups, which are powerful enough to work for modular representations. One is based on Sergeev duality, which connects projective representation theory of the symmetric group and representation theory of the algebraic supergroup $Q(n)$  via appropriate Schur (super)algebras and Schur functors. 
The second approach follows the work of Grojnowski for classical affine and cyclotomic Hecke algebras and connects   projective representation theory of symmetric groups in characteristic $p$ to the crystal graph of the basic module of the twisted affine Kac-Moody algebra of type $A_{p-1}^{(2)}$. 

The goal of this work is to connect the two approaches mentioned above and to obtain new branching results for projective representations of symmetric groups. This is achieved by developing the theory of lowering operators for the supergroup $Q(n)$ which is parallel to (although much more intricate than) the similar theory for $GL(n)$ developed by the first author. 
The theory of lowering operators for $GL(n)$ is a non-trivial generalization of Carter's work in characteristic zero, 
and it has received a lot of attention. 
So this part of our work might be of independent interest.

One of the applications of lowering operators is to tensor products of irreducible $Q(n)$-modules with natural and dual natural modules, which leads to important special translation functors. We describe the socles and primitive vectors in such tensor products.

\end{abstract}

\maketitle


\setcounter{page}{4}

\tableofcontents

\chapter*{Introduction}

\section*{Set up}
There are two approaches to {\em projective}\, representation theory of symmetric and alternating groups, which are powerful enough to work for modular representations. One is based on Sergeev duality, which connects projective representation theory of the symmetric group and representation theory of the algebraic supergroup $Q(n)$ via appropriate Schur (super)algebras and Schur functors. This approach has been developed in \cite{Brundan_Kleshchev_Projective_representations}.

The second approach follows the work of Grojnowski for the classical affine and cyclotomic Hecke algebras and connects   projective representation theory of symmetric groups in characteristic $p$ to the crystal graph of the basic module of the twisted affine Kac-Moody algebra of type $A_{p-1}^{(2)}$. This approach has been developed in \cite{Brundan_Kleshchev_Hecke-Clifford_superalgebras} and~\cite{Kbook}.

The goal of this work is to connect the two approaches described above and to obtain new branching results for projective representations of symmetric groups. This is achieved by developing the theory of lowering operators for the supergroup $Q(n)$ which is parallel to (although much more intricate than) the similar theory for $GL(n)$ developed in \cite{KBrII}.

The theory of lowering operators for $GL(n)$ is a non-trivial  generalization of the Carter's work \cite{Carter} in characteristic zero, and it has received quite a lot of attention recently, see for example  \cite{KDec,KBrIV,BrBr,Brundan_operators,BKtr,Kujawa,ShchItLow,ShchGenLow,Shchigolev_Rectangular_low_level_case,ShchWeyl,RT}.
So this part of our work might be of independent interest, since it should be a useful tool for studying representation theory of $Q(n)$ and for obtaining further results on projective representations of symmetric groups.

One of the applications of lowering operators is to tensor products of irreducible $Q(n)$-modules with natural and dual natural modules, which leads to important special translation functors. In this paper we describe the socles and primitive vectors in such tensor products.

\section*{Projective representations and Sergeev algebra}
We now describe the contents of this work more carefully.
Let $\mathbb F$ be an algebraically closed field of characteristic $p\neq 2$. Let $S_n$ be the symmetric group on $n$ letters, and $A_n$ be the alternating group on $n$ letters.

Studying {\em projective}\, representations of $S_n$ over $\mathbb F$ is equivalent to studying {\em linear}\,  representations of the twisted group algebra $\T_n$ of $S_n$,  see for example \cite[Section 13.1]{Kbook}. Explicitly, $\T_n$ is the $\mathbb F$-algebra generated by the elements $t_1,\dots,t_{n-1}$ subject only to the relations
\begin{eqnarray*}
&t_i^2  =  1\quad &(1\leq i<n),\label{twrel1}
\\
&t_it_{i+1}t_i=t_{i+1}t_it_{i+1}&(1\leq i\leq n-2),\label{twrel2}
\\
&t_it_j=-t_jt_i &(1\leq i,j<n,\ |i-j|>1).\label{twrel3}
\end{eqnarray*}
Inside the algebra $\T_n$ we have the subalgebra
$$
\U_n:=\operatorname{span}\{t_g\mid g\in A_n\}.
$$
This is a twisted group algebra of the alternating group $A_n$, and its representation theory is equivalent to the projective representation theory of $A_n$ over $\mathbb F$.

We consider $\T_n$ as a superalgebra with respect to the following grading:
$$
(\T_n)_{\0}=\U_n,\quad (\T_n)_{\1}=\operatorname{span}\{t_g\mid g\in S_n\setminus A_n\}.
$$
(when $\Z/2\Z$ is used for grading, its zero and identity elements are be denoted $\0$ and $\1$).
To understand the usual irreducible modules over $\T_n$ and $\U_n$,
it suffices to understand the irreducible supermodules over $\T_n$. This is explained precisely in \cite[Proposition 12.2.11]{Kbook}. So from now on, we are mainly interested in supermodules over the superalgebra $\T_n$.
Note that understanding irreducible supermodules over $\T_n$, among other things, now entails understanding their {\em type}, which can be $\Mtype$ or $\Qtype$, see \cite[Section 12.2]{Kbook}.

Let $\Cl_n$ be the Clifford (super)algebra given by odd generators $c_1,\dots,c_n$ subject only to the relations
\begin{align*}
c_i^2=1\qquad&(1\leq i\leq n),
\\
c_ic_j=-c_jc_i\qquad&(1\leq i\neq j\leq n).
\end{align*}
The superalgebra $\T_n$ is `Morita superequivalent' to the {\em Sergeev superalgebra}
$$
\Se_n:=\T_n\otimes \Cl_n,
$$
where the tensor product is the tensor product of superalgebras. This is explained in detail, for example, in \cite[Section 13.2]{Kbook}. In particular, the information about  irreducible supermodules, including their type, is easily transferable between $\T_n$ and $\Se_n$. It turns out that $\Se_n$ is a little easier to work with than $\T_n$, so from now on let us concentrate on $\Se_n$.

\section*{Crystal graph approach}
This approach has been realized in \cite{Brundan_Kleshchev_Hecke-Clifford_superalgebras}, see also \cite{Kbook}, following the work of Grojnowski \cite{Gr} for the usual symmetric groups (and cyclotomic Hecke algebras). The original idea here is due to Leclerc and Thibon \cite{LT}.

Set
$$
\ell:=\left\{
\begin{array}{ll}
\infty&\hbox{if $p=0$,}\\
(p-1)/2&\hbox{if $p>0$;}
\end{array}\right.
$$
and
$$
I:=\left\{
\begin{array}{ll}
\Z_{\geq 0}&\hbox{if $p=0$,}\\
\{0,1,\dots,\ell\}&\hbox{if $p>0$.}
\end{array}\right.
$$
The block components of the restriction $\operatorname{res}^{\Se_n}_{\Se_{n-1}}$ 
are naturally labeled by the elements of $I$. For any irreducible $\Se_n$-supermodule $L$, this gives us a natural decomposition into block components \cite[Section 19.1]{Kbook}:
$$
\operatorname{res}^{\Se_n}_{\Se_{n-1}} L=\bigoplus_{i\in I}\operatorname{res}_i L.
$$

If $\operatorname{res}_i L\neq 0$, then up to a (not necessarily even) isomorphism, there is only one irreducible $\Se_{n-1}$-supermodule, denoted $\tilde e_i L$, such that
$$\Hom_{\Se_{n-1}}(\tilde e_i L,\operatorname{res}_i L)\neq 0.$$
If $\operatorname{res}_i L=0$, we set $\tilde e_i L=0$.
Now, let $B_n$ be the set of the isomorphism classes of irreducible $\Se_n$-supermodules, and $B:=\bigsqcup_{n\geq 0}B_n$. We make  $B$ into an $I$-colored graph as follows: $[L_1]\stackrel{i}{\rightarrow}[L_2]$ if and only if $L_1\cong \tilde e_i L_2$.

One of the main result of \cite{Brundan_Kleshchev_Hecke-Clifford_superalgebras} is that the colored graph $B$ is the crystal graph $B(\Lambda_0)$ of the basic representation of the twisted affine Kac-Moody Lie algebra of type $A_{p-1}^{(2)}$ (interpreted as type $B_{\infty}$ if $p=0$).

Kang \cite{Kang} has given a convenient combinatorial description
of the crystal graph $B(\Lambda_0)$ in terms of Young diagrams,
which we now explain. The following notions of $p$-strict and $p$-restricted partitions were first suggested in \cite{LT}. 
These notions arise naturally in \cite{Brundan_Kleshchev_Projective_representations} and \cite{Kleshchev_Brundan_Modular_Representations_of_the_supergroup_Q(n)_I} from completely different Lie theoretic considerations.

For any $n \geq 0$, a partition $\la=(\la_1,\la_2,\dots)$ of $n$
is {\em $p$-strict} if $\la_r=\la_{r+1}$ for some $r$ implies $p\mid\la_r$.
A $p$-strict partition $\la$
is {\em $p$-restricted} if in addition
$$
\left\{
\begin{array}{ll}
\la_r-\la_{r+1}< p &\hbox{if $p|\la_r$},\\
\la_r-\la_{r+1}\leq p &\hbox{if $p \nmid \la_r$}
\end{array}
\right.
$$
for each $r \geq 1$. If $p = 0$, we interpret both $p$-strict and $p$-restricted partitions as {\em strict partitions}, i.e. partitions all of whose non-zero parts are distinct.
Let ${\mathcal P}_p(n)$ denote the set of all $p$-strict partitions of $n$,
and ${\mathcal{RP}}_p(n)\subseteq {\mathcal P}_p(n)$
denote the set of all $p$-restricted partitions
of $n$. 

Let $\la$ be a $p$-strict partition.
As usual, we identify $\la$ with its {\em Young diagram}, which consists of certain nodes (or boxes). A node $(r,s)$ is the node in row $r$ and column $s$. We label the nodes
with ($p$-){\em contents} which are elements of the set
$I$.
The labelling depends only on the column and follows the repeating pattern
$$
0,1,\dots,\ell-1,\ell,\ell-1,\dots,1,0,
$$
starting fom the first column and going to the right.
For example, let $p=5$, so $\ell=2$.
The partition $\la=(16, 11,10,10,9,5,1)$ belongs to $\mathcal{RP}_5$,
and the contents of its nodes are as follows:
\vspace{1 mm}
$$
\newdimen\hoogte    \hoogte=12pt
\newdimen\breedte   \breedte=14pt
\newdimen\dikte     \dikte=0.5pt
\diagram{
$0$ & $1$ & $2$ & $1$ & $0$ & $0$& $1$ & $2$ & $1$ & $0$ & $0$ & $1$ & $2$ & $1$ & $0$ & $0$\cr
$0$ & $1$ & $2$ & $1$ & $0$ & $0$ & $1$ & $2$ & $1$ & $0$ & $0$ \cr
$0$ & $1$ & $2$ & $1$ & $0$ & $0$ & $1$ & $2$ & $1$ & $0$  \cr
$0$ & $1$ & $2$ & $1$ & $0$ & $0$ & $1$ & $2$ & $1$ & $0$  \cr
$0$ & $1$ & $2$ & $1$ & $0$ & $0$ & $1$ & $2$ & $1$ \cr
$0$ & $1$ & $2$ & $1$ & $0$\cr
$0$ \cr
}
\vspace{1 mm}
$$
\vspace{1 mm}
The content of the node $A$ is denoted $\Res_p A$.

Let $\la$ be a $p$-strict partition and $i \in I$ be some fixed content.
A node $A = (r,s)\in \la$ is {\em $i$-removable} (for $\la$) if one of the following
holds:
\begin{enumerate}
\item[(R1)] $\Res_p A = i$ and
$\la_A:=\la-\{A\}$ is again a $p$-strict partition;
\item[(R2)] the node $B = (r,s+1)$ immediately to the right of $A$
belongs to $\la$,
$\Res_p A = \Res_p B = i$,
and both $\la_B = \la - \{B\}$ and
$\la_{A,B} := \la - \{A,B\}$ are $p$-strict partitions.
\end{enumerate}
Similarly, a node $B = (r,s)\notin\la$ is
{\em $i$-addable} (for $\la$) if one of the following holds:
\begin{enumerate}
\item[(A1)] $\Res_p B = i$ and
$\la^B:=\la\cup\{B\}$ is again an $p$-strict partition;
\item[(A2)]
the node $A = (r,s-1)$
immediately to the left of $B$ does not belong to $\la$,
$\Res_p A = \Res_p B = i$, and both
$\la^A = \la \cup \{A\}$ and
$\la^{A, B} := \la \cup\{A,B\}$ are $p$-strict partitions.
\end{enumerate}
Of course, (R2) and (A2) above are only possible if $i = 0$.

Now label all $i$-addable
nodes of the diagram $\la$ by $+$ and all $i$-removable nodes by $-$.
Then the {\em $i$-signature} of
$\la$ is the sequence of pluses and minuses obtained by going along the rim of the Young diagram from top right to bottom left and reading off
all the signs.
The {\em reduced $i$-signature} of $\la$ is obtained
from the $i$-signature
by successively erasing all neighbouring
pairs of the form $-+$.

Note the reduced $i$-signature always looks like a sequence
of $+$'s followed by $-$'s.
Nodes corresponding to  $-$'s in the reduced $i$-signature are
called {\em $i$-normal} or {\em normal nodes for $\la$ of content $i$}. 
The top $i$-normal node 
(corresponding to the leftmost $-$ in the reduced $i$-signature) is called {\em $i$-good}.
Nodes corresponding to  $+$'s in the reduced $i$-signature are
called {\em $i$-conormal} or {\em conormal nodes for $\la$ of content $i$}. 
The bottom $i$-conormal node 
(corresponding to the rightmost $+$ in the reduced $i$-signature) is called {\em $i$-cogood}.
Continuing with the example above, the $0$-addable and $0$-removable nodes are as labelled in the diagram:
\vspace{2mm}
$$
\begin{picture}(320,95)
\put(63,40)
{$\newdimen\hoogte    \hoogte=12pt
\newdimen\breedte   \breedte=14pt
\newdimen\dikte     \dikte=0.5pt
\diagram{
 &  &  &  &  & &  &  &  &  &  &  &  &  & $-$ & $-$ \cr
 &  &  &  &  &  &  &  &  &  & $-$ \cr
 &  &  &  &  &  &  &  &  &   \cr
 &  &  &  &  &  &  &  &  &  \cr
 &  &  &  &  &  &  &  & \cr
 &  &  &  & $-$\cr
$-$ \cr
}
$}
\put(123.4, 14.4){\circle{9}}
\put(69.4, .8){\circle{9}}
\put(271.9, 82.3){\circle{9}}
\put(137,13.5){\makebox(0,0)[b]{$+$}}
\put(191,26){\makebox(0,0)[b]{$+$}}
\end{picture}
\vspace{5 mm}
$$
The $0$-signature of $\la$ is
$-,-,-,+,+,-,-$,
and the reduced $0$-signature is
$-,-,-$.
The nodes corresponding to the $-$'s in the reduced $0$-signature
have been circled in the diagram. The top of them, which is the node $(1,16)$, is $0$-good. There are no conormal or cogood nodes of content $0$ for $\la$.

Set
$$
\tilde e_i \la =
\left\{
\begin{array}{ll}
\la_A&\hbox{if $A$ is the $i$-good node,}\\
0&\hbox{if $\la$ has no $i$-good nodes},
\end{array}
\right.
$$
The definitions imply that $\tilde e_i(\la)$ is $p$-restricted (or zero)
if $\la$ itself is $p$-restricted. We make  $\mathcal{RP}_p:=\bigsqcup_{n\geq 0}\mathcal{RP}_p(n)$ into an $I$-colored graph as follows: $\la\stackrel{i}{\rightarrow}\mu$ if and only if $\la= \tilde e_i \mu$.
Kang \cite[7.1]{Kang} proves that this graph is isomorphic to the crystal graph $B(\Lambda_0)$.

Now we can {\em canonically} label the irreducible $\Se_n$-supermodules by the restricted  $p$-strict partitions of $n$. This is done as follows. Let $\la\in \mathcal{RP}_p(n)$. The supermodule $G(\la)$ is defined inductively through its branching properties. Indeed, $G(\emptyset)$ will have to be the trivial supermodule since $\Se_0$ is by convention the trivial algebra $\mathbb  F$. Now assume that $n>0$ and we have already defined $G(\mu)$ for all $\mu \in \mathcal{RP}_p(n-1)$. Let $\la\in \mathcal{RP}_p(n)$. Then for some $i$, the partition $\la$  has a good $i$-node. Let $\mu=\tilde e_i\la$. Then $G(\la)$ is defined as the only irreducible $\Se_n$-supermodule such that $\tilde e_i G(\la)=G(\mu)$. We refer the reader to \cite[Section 22.2]{Kbook} for further details.

It is proved in \cite{Brundan_Kleshchev_Hecke-Clifford_superalgebras}, see also \cite[Theorem 22.2.1]{Kbook}, that
\begin{equation}\label{EGs}
\{G(\la)\mid\la\in  \mathcal{RP}_p(n)\}
\end{equation}
is a complete and irredundant set of irreducible $\Se_n$-supermodules up to isomorphism. Finally, for $\la\in {\mathcal{RP}}_p(n)$ we have that $G(\la)$ is of type
$\Mtype$ if $h_{p'}(\la)$ is even and of type $\Qtype$ if $h_{p'}(\la)$ is odd, where the {\em $p'$-height} $h_{p'}(\la)$ of $\la$ is defined by:
$$
h_{p'}(\la):=\big|\{i\mid1\leq i\leq n\ \text{and}\ p\nmid\la_i\}\big| \qquad(\la\in {\mathcal{RP}}_p(n)).
$$

\section*{Schur functor approach}
We now review in more detail the work \cite{Brundan_Kleshchev_Projective_representations} and \cite{Kleshchev_Brundan_Modular_Representations_of_the_supergroup_Q(n)_I}. Let us denote by $G$ the algebraic supergroup $Q(n)$ (it will be discussed in more detail in Section~\ref{supergroupQn}).
The supergroup $G$ has a ``maximal torus'' $H$ and a ``Borel subgroup'' $B$ with the  natural supergroup epimorphism $B\to H$, so each $H$-supermodule inflates to a $B$-supermodule.

The irreducible $H$-supermodules are parametrized by the set of {\em weights} $X(n):=\Z^n$. If $\la=(\la_1,\dots,\la_n)\in X(n)$, we denote the corresponding irreducible $H$-supermodule by $\mathfrak u(\la)$, see \cite[Lemma 6.4]{Kleshchev_Brundan_Modular_Representations_of_the_supergroup_Q(n)_I}. We point out that $H$ is not commutative, and so $\mathfrak u(\la)$ does not need to be $1$-dimensional. In fact,
$$
\dim \mathfrak u(\la)=2^{\lfloor\frac{ h_{p'}(\la)+1}{2}\rfloor},
$$
where
$$
h_{p'}(\la):=\big|\{i\mid1\leq i\leq n\ \text{and}\ p\nmid\la_i\}\big| \qquad(\la\in X(n)).
$$
The supergroup $G$ has a ``Chevalley anti-involution'' $\tau$ which defines the duality $V\mapsto V^\tau$ on finite dimensional $G$-supermodules.
For $\la\in X(n)$,
define the {\em Weyl supermodule}
$$
V(\la):=\big(\ind_B^G\, \mathfrak u(\la)\big)^\tau.
$$

A weight $\la=(\la_1,\dots,\la_n)\in X(n)$ is {\em dominant} if $\la_1\geq\dots\geq \la_n$.
A dominant weight $\la=(\la_1,\dots,\la_n)\in X(n)$ is {\em $p$-strict} if $\la_r=\la_{r+1}$ for some $1\leq r<n$ implies $p\mid \la_r$. The set of all $p$-strict dominant weights in $X(n)$ is denoted by $X^+_p(n)$.
It is proved in \cite{Kleshchev_Brundan_Modular_Representations_of_the_supergroup_Q(n)_I} that $V(\la)$ is non-zero if and only if $\la\in X^+_p(n)$, in which case $V(\la)$ is finite dimensional and has a unique highest weight $\la$. Moreover, $V(\la)$ is the ``universal highest weight supermodule'' of weight $\la$, see \cite[Lemma 6.13]{Kleshchev_Brundan_Modular_Representations_of_the_supergroup_Q(n)_I}; $V(\la)$  has a unique irreducible quotient $L(\la)$; and
$$
\{L(\la)\mid \la\in X^+_p(n)\}
$$
is a complete irredundant set of irreducible $G$-supermodules up to isomorphism. Finally, $L(\la)$ is of type $\Mtype$ if $h_{p'}(\la)$ is even and of type $\Qtype$ if $h_{p'}(\la)$ is odd. Observe the remarkable fact that, unlike in the ``even case'' of $GL(n)$ and other ``purely even'' reductive algebraic groups, the parameterization of the irreducible supermodules for $Q(n)$ depends on $p$.

{\em Sergeev duality} allows us to push some of the results for the algebraic supergroup $Q(n)$ to results about $\Se_n$, just like the classical Schur-Weyl duality connects the general linear group $GL(n)$ and the symmetric group $S_n$. In order to apply Sergeev duality we need to restrict our attention to {\em polynomial representations} of $Q(n)$ of degree $n$. These are defined and studied in \cite[Section 10]{Kleshchev_Brundan_Modular_Representations_of_the_supergroup_Q(n)_I} and \cite{Brundan_Kleshchev_Projective_representations}. The irreducible supermodule $L(\la)$ and the Weyl supermodule $V(\la)$ are polynomial of degree $n$ if and only if $\la$ is a ($p$-strict) partition of $n$.

The category of finite dimensional polynomial representations of $Q(n)$ of degree $n$ is equivalent to the category of finite dimensional supermodules over certain {\em Schur superalgebra}\, $S(n,n)$ of type $Q(n)$.  There is an idempotent $e\in S(n,n)$ such that
$$eS(n,n)e\cong\Se_n,$$
see \cite[Theorem 6.2]{Brundan_Kleshchev_Projective_representations}. This defines the {\em Schur functor} from $S(n,n)$-supermodules to $\Se_n$-supermodules:
$$
\mathcal F: M\mapsto eM.
$$

Any partition in ${\mathcal{P}}_p(n)$ can be considered as a weight in $X_p^+(n)$. Then
the main results of \cite{Brundan_Kleshchev_Projective_representations} can be stated as follows. Define the {\em Specht supermodules} $S(\la)$ and the supermodules $D(\la)$ over $\Se_n$ as:
$$
S(\la):=\mathcal F(V(\la)),\quad D(\la):=\mathcal F(L(\la))\quad\qquad(\la\in {\mathcal{P}}_p(n)).
$$
Then $D(\la)\neq 0$ if and only if $\la\in {\mathcal{RP}}_p(n)$, in which case $D(\la)$ is the simple head of $S(\la)$. Moreover,
\begin{equation}\label{EDs}
\{D(\la)\mid\la\in {\mathcal{RP}}_p(n)\}
\end{equation}
is a complete and irredundant set of irreducible $\Se_n$-supermodules up to isomorphism. Finally, for $\la\in {\mathcal{RP}}_p(n)$ we have that $D(\la)$ is of type
$\Mtype$ if $h_{p'}(\la)$ is even and of type $\Qtype$ if $h_{p'}(\la)$ is odd.



\section*{Modular branching rules}

Our first main result is the following {\em modular branching rule} for irreducible supermodules over Sergeev supealgebras:

\vspace{2.5mm}
\noindent
{\bf Theorem A.}
{\em
Let $\la\in\mathcal{RP}_p(n)$ and $\mu\in \mathcal{RP}_p(n-1)$. Then
\begin{enumerate}
\item[{\rm (i)}] $\mu$ is obtained from $\la$ by removing a good node if and only if
$$
\Hom_{\Se_{n-1}}(D(\mu),\operatorname{res}^{\Se_n}_{\Se_{n-1}}D(\la))\neq 0.
$$
\item[{\rm (ii)}] $\mu$ is obtained from $\la$ by removing a normal node if and only if
$$
\Hom_{\Se_{n-1}}(S(\mu),\operatorname{res}^{\Se_n}_{\Se_{n-1}}D(\la))\neq 0.
$$
In particular, if $\mu$ is obtained from $\la$ by removing a normal node then $D(\mu)$ is a composition factor of the restriction $\operatorname{res}^{\Se_n}_{\Se_{n-1}}D(\la)$.
\item[{\rm (iii)}] $\la$ is obtained from $\mu$ by adding a cogood node if and only if
$$
\Hom_{\Se_{n}}(D(\la),\operatorname{ind}^{\Se_n}_{\Se_{n-1}}D(\mu))\neq 0.
$$
\item[{\rm (iv)}] $\la$ is obtained from $\mu$ by adding a conormal node if and only if
$$
\Hom_{\Se_{n}}(S(\la),\operatorname{ind}^{\Se_n}_{\Se_{n-1}}D(\mu))\neq 0.
$$
In particular, if $\la$ is obtained from $\mu$ by adding a conormal node then $D(\la)$ is a composition factor of the induced module $\operatorname{ind}^{\Se_n}_{\Se_{n-1}}D(\mu)$.
\end{enumerate}
}

We believe that Theorem A(ii) provides an especially important information, as many  applications of branching rules for {\em linear}\, representations involve normal nodes, see for example \cite{KDec,BarK,BKZ,BKtr,BKIrrRes}.

The reader is referred to Section~\ref{SFinal} for translation from  $\Se_n$-supermodules to $\T_n$-supermodules.

\section*{Connecting the two approaches}

Looking at (\ref{EGs}) and (\ref{EDs}), we would of course like to know that $D(\la)\cong G(\la)$ for all $\la$. In other words, do the Schur functor approach and the crystal graph approach, described above, lead to the same classification of irreducible $\Se_n$-supermodules, and hence to the same classification of projective representations of the symmetric groups in arbitrary characteristic? Unfortunately, this is far from clear.

This situation is quite annoying, because different sets of results are available for the modules $D(\la)$ and for the modules $G(\la)$. For example, the works \cite{Brundan_Kleshchev_Projective_representations, BKReg} obtain results on $D(\la)$'s, while the works \cite{Brundan_Kleshchev_Hecke-Clifford_superalgebras, Kbook, KT,BKDurham,BKCartan,Ph,ArS} obtain results on $G(\la)$'s. (Do not be mislead by the fact that the irreducible modules are usually denoted $D(\la)$ in all cases!)
The second main result of this work removes the ``annoyance'' described above:

\vspace{2.5mm}
\noindent
{\bf Theorem B.}
{\em
We have $D(\la)\cong G(\la)$ for all $\la\in\mathcal{RP}_p$.
}

\vspace{2.5mm}
We point out that the similar issue for the {\em linear}\, representations of symmetric groups has been resolved. We know of three different arguments, all of which rely on heavy machinery. 

The {\em first}\, argument, outlined in
\cite[Remark 11.2.2]{Kbook}, appeals to the work \cite{KBrI,KBrII,KBrIII}, which shows that the modules $D(\la)$, defined using Specht modules or Schur functors from $GL(n)$, satisfy the socle branching rules described by the combinatorics of good nodes, just like $G(\la)$'s do by definition. This is sufficient to identify $D(\la)$ with $G(\la)$.

The {\em second}\, argument is due to Ariki \cite{ABr}. It relies on the powerful Ariki's categorification theorem \cite{ACat} and subtle reduction modulo $p$ arguments.

The {\em third} approach \cite{BKllt} also relies on the Ariki's categorification theorem as well as the idea of an extremal weight from \cite{BKDurham}.

As the projective analogue of Ariki's categorification theorem is not available, the only feasible approach to the proof of Theorem B is the first one---through the branching rules for the modules $D(\la)$, that is through Theorem A. This is what we implement in this paper.


\section*[Tensor products over $Q(n)$]{Some tensor products over $Q(n)$}

The category of finite dimensional $Q(n)$-supermodules can be considered as a category of integrable finite dimensional supermodules over the corresponding distribution algebra, which is naturally isomorphic to the hyperalgebra $U(n)$, see Section~\ref{supergroupQn}. It is convenient to work in a larger {\em integral category $\O_p$} of $U(n)$-supermodules. This is described in detail in Section~\ref{SHWT}. Here we just mention that the irreducible supermodules in the category $\O_p$ are labelled by all integral weights:
$$
\{L(\la)\mid \la\in X(n)\},
$$
and there are Verma modules
$$
\{M(\la)\mid \la\in X(n)\}.
$$
The notions of normal, good, conormal, and cogood nodes for partitions are generalized to those of normal, good, conormal, and cogood indices for arbitrary weights $\la\in X(n)$, see Chapter~\ref{ChComb}. Finally, let $\eps_i$ denote the weight
$$
\eps_i:=(0,\dots,0,1,0,\dots,0)\in X(n)
$$
with $1$ in the $i$th position. Now our main result on tensor products is as follows:

\vspace{2.5mm}
\noindent
{\bf Theorem C.}
{\em
Let $\la,\mu\in X(n)$. Then:
\begin{enumerate}
\item[{\rm (i)}] $\Hom_{U(n)}(M(\mu),L(\la)\otimes V^*)\neq 0$ if and only if $\mu=\la-\eps_i$ for some $\la$-normal index $i$.
\item[{\rm (ii)}] $\Hom_{U(n)}(L(\mu),L(\la)\otimes V^*)\neq 0$ if and only if $\mu=\la-\eps_i$ for some $\la$-good index $i$.
\item[{\rm (iii)}] $\Hom_{U(n)}(M(\mu),L(\la)\otimes V)\neq 0$ if and only if $\mu=\la+\eps_i$ for some $\la$-conormal index $i$.
\item[{\rm (iv)}] $\Hom_{U(n)}(L(\mu),L(\la)\otimes V)\neq 0$ if and only if $\mu=\la+\eps_i$ for some $\la$-cogood index $i$.
\end{enumerate}
}

\vspace{2.5mm}

\mainmatter

\chapter{Preliminaries}\label{ChPrel}

\section{General Notation}\label{notation}
Throughout the paper $\mathbb F$ is an algebraically closed field of characteristic $p\ne2$.
We identify $\Z/p\,\Z$ with the simple subfield of $\mathbb F$. Residues modulo $p$ will be denoted by bold, for example, $\mathbf0=0+p\,\Z$, $\mathbf2=2+p\,\Z$. For any $j\in\Z$, we set
\begin{equation}\label{EDefRes}
\res_p j:=j(j-1)+p\Z\in\Z/p\Z\subset\mathbb F.
\end{equation}

For any condition $\pi$, let $\cond_\pi$ denote $1$ if $\pi$ is satisfied and $0$ otherwise.

When $\Z/2\Z$ is used for grading, its zero and identity elements are be denoted $\0$ and $\1$, respectively. We adopt the convention $(-1)^\0=1$ and $(-1)^\1=1$.

For $s,t\in\Z\cup\{\pm\infty\}$, we use the following notation for ``segments'' in $\Z$:
$$
\begin{array}{ll}\label{int}
[s..t]=\{x\in\Z\cup\{\pm\infty\}\suchthat s\le x\le t\},& [s..t)=\{x\in\Z\cup\{\pm\infty\}\suchthat s\le x <  t\},\\
(s..t]=\{x\in\Z\cup\{\pm\infty\}\suchthat s <  x\le t\},& (s..t)=\{x\in\Z\cup\{\pm\infty\}\suchthat s <  x <  t\}.
\end{array}
$$

We often denote by $x^n$ the sequence $(x,x,\dots,x)$ ($x$ repeated $n$ times).

For a sequence $\lm$ of length $n$ and an index $i=1,\ldots,n$, we denote by $\lm_i$ the $i$th entry
of $\lm$. In other words, $\lm=(\lm_1,\ldots,\lm_n)$. We also set $\sum\lm:=\lm_1+\cdots+\lm_n$ (when this makes sense).
Considering $\lm$ as a function on $[1..n]$, for any $I\subset [1..m]$ we denote by $\lm|_I$ the restriction considered as a sequence.
For example, $\lm|_{[1..n)}=(\lm_1,\ldots,\lm_{n-1})$.
If $f:T\to M$ is a function, its value at a point $t\in T$ will be denoted $f(t)$ or $f_t$.
Moreover, denote $\sum f:=\sum_{t\in T}f(t)$ and ${\sum^2} f:=\sum_{s,t\in T,s<t}f_sf_t$ (when makes sense).

Let $S\subset\Z$. A subset $R\subset S$ is called a {\it beginning} (resp. {\it end}) of $S$ if for any $r\in R$, all elements $x\in S$ such that
$x\le r$ (resp. $x\ge r$) belong to $R$. In other words, a beginning of $S$ is a set of the form $S\cap (-\infty..i)$
and an end of $S$ is a set of the form $S\cap (i..+\infty)$ for some $i\in\Z\cup\{\pm\infty\}$.

If $M$ is a set and $x\in M$, we denote by $M_{x\mapsto y}$ the set $M\setminus\{x\}\cup\{y\}$.
Similar notation makes sense for multisets.

For any integer $n\in\Z$, we consider its {\em odd}\, counterpart $\bar n$
and let $\bar\Z:=\{\bar n\suchthat n\in\Z\}$. We assume $\Z\cap\bar\Z=\emptyset$.
A {\em signed set} is a subset $S\subset\Z\cup\bar\Z$ such that both $n$ and $\bar n$ belong to $S$ for no $n\in\Z$.
Elements of $\Z$ are called {\em even} and elements of $\bar\Z$ are called {\em odd}.
We use the following order on $\Z\cup \bar\Z$:
$
\bar n<m,\quad n<\bar m,\quad\bar n<\bar m
$
for arbitrary $n,m\in\Z$ satisfying $n<m$ in the usual order on $\Z$.
Moreover, for $n\in\Z$, it is convenient to use the following notation:
$
n\sim n,\quad \bar n\sim n,\quad n\sim \bar n,\quad\bar n\sim \bar n
$.
For any $x\in\Z\cup\bar\Z$, the element $y\in\Z$ such that $x\sim y$ is called the
{\it absolute value} of $x$. For example, $7$ is the absolute value of $7$ and $\bar7$.

For a finite signed set $S=\{n_1,\ldots,n_k\}\cup\{\bar m_1,\ldots,\bar m_l\}$
the {\em height}\, of $S$ is
$$\height S=n_1+\cdots+n_k+m_1+\cdots+m_l$$
and the {\em parity}\, of $S$ is
$$\|S\|=l\cdot\1\in\Z/2\Z.$$
By convention, $\height\emptyset=-\infty$ and $\|\emptyset\|=\0$.
Let $I\subset \Z$. A signed set $S$ is a signed $I$-set if the absolute value of each element of $S$ belongs to $I$.
For example, let $S=\bigl\{1,\bar 3,5,\bar6,\bar 7\bigr\}$. Then, $S$ is a signed $[1..7]$-set but not a signed $[2..7]$-set. Moreover, $\height S=22$, $\|S\|=\1$, $\min S=1$ and
$\max S=\bar7$.

We can apply usual operations of set theory to signed $\Z$-sets, simply considering them as subsets of $\Z\cup\bar\Z$.
For example, $\bigl\{2,5,\bar8\bigr\}\cup\bigl\{\bar3,9\bigr\}=\bigl\{2,\bar3,5,\bar8,9\bigr\}$,
$\bigl\{3,\bar4,7,8\bigr\}\setminus\bigl\{\bar4,8\bigr\}=\bigl\{3,7\bigr\}$ and $\bigl\{1,\bar4,5\bigr\}_{\bar4\mapsto\bar3}=\bigl\{1,\bar3,5\bigr\}$.
Let $M$ be a signed set and $S\subset\Z$. Then we denote by $M_S$ the signed set consisting of all
elements $x\in M$ whose absolute values belong to $S$. For example, $\{1,\bar 2,3,\bar5,6\}_{(2..5]}=\{3,\bar5\}$.

If $A=A_\0\oplus A_\1$ is a superalgebra, and $a\in A$ is a homogeneous element of degree $\delta\in\{\0,\1\}$, then we write $\|a\|:=\delta$. If $a$ and $b$ are two homogeneous elements of $A$, we denote the {\em supercommutator}
\begin{equation}\label{ESuperComm}
[a,b]:=ab-(-1)^{\|a\|\|b\|}ba.
\end{equation}
If $V$ is an $A$-supermodule and $B$ is a subsuperalgebra of $B$ then the restriction of $V$ from $A$ to $B$ is denoted $\operatorname{res}^A_B V$ or simply $V_B$.

In this paper, we will deal with root systems of type $A_{n-1}$. Denote $\eps_i:=(0,\dots,0,1,0,\dots,0)\in \mathbb R^n$ with $1$ in the $i$th position. For $i=1,\ldots,n-1$, set
$\alpha_i:=\eps_i-\eps_{i+1}$ and for $1\le i<j\le n$, set
$\alpha(i,j):=\eps_i-\eps_{j}$. 
Thus $\alpha_1,\ldots,\alpha_{n-1}$ are all simple roots and
$\{\alpha(i,j)\suchthat 1\le i<j\le n\}$ are all positive roots.
We denote $X(n):=\bigoplus_{i=1}^n \Z\cdot \eps_i\cong\Z^n$ and sometimes refer to it as the {\em weight lattice}. Elements of $X(n)$ are called {\em weights}.
We denote the non-negative part of the root lattice by
$$
Q_+(n):=\bigoplus_{i=1}^{n-1}\Z_{\geq 0}\cdot\al_i.
$$
We use the usual {\em dominance order} on $X(n)$:
$\lm\ge\mu$ if and only if $\lm-\mu\in Q_+$.
A weight $\la=(\la_1,\dots,\la_n)\in X(n)$ is {\em dominant} if $\la_1\geq\dots\geq \la_n$.
A dominant weight $\la=(\la_1,\dots,\la_n)\in X(n)$ is {\em $p$-strict} if $\la_i=\la_{i+1}$ for some $1\leq i<n$ implies $p\mid \la_i$. The set of all $p$-strict dominant weights in $X(n)$ is denoted by $X^+_p(n)$.
Denote
$$
h_{p'}(\la):=\big|\{i\mid1\leq i\leq n\ \text{and}\ p\nmid\la_i\}\big| \quad\qquad(\la\in X(n)).
$$

\section[The supergroup $Q(n)$]{The supergroup $Q(n)$ and its hyperalgebra}\label{supergroupQn}
We review some basic facts on the supergroup $Q(n)$ and its hyperagebra referring the reader to \cite{Kleshchev_Brundan_Modular_Representations_of_the_supergroup_Q(n)_I} for details.

The complex Lie superalgebra $\mathfrak q(n,\mathbb C)$, as a vector superspace, consists of all complex $2n\times2n$ matrices of the form
$$
\left(
{\arraycolsep=1pt
\begin{array}{r|l}
S\,&\,S'\\
\hline
\vphantom{A^{A^A}}
S'\,&\,S
\end{array}}
\right),
$$
where $S$ and $S'$ are $n\times n$ matrices with complex entries. Such a matrix is even, i.e. belongs to  $\mathfrak q(n,\mathbb C)_\0$, if $S'=0$. Such a matrix  is odd, i.e. belongs to  $\mathfrak q(n,\mathbb C)_\1$, if $S=0$.
The {\it supercommutator} of two homogeneous elements $x,y\in \mathfrak q(n,\mathbb C)$ is defined by
$[x,y]=xy-(-1)^{\|x\|\cdot\|y\|}yx$, where $\|x\|,\|y\|\in\Z_2$ denote the parity of elements $x$, $y$ respectively.

Let $e_{s,t}$ denote the $2n\times2n$ matrix with $1$ in the $(s,t)$-position and zeros elsewhere. For $1\leq i,j\leq n$ set
$$X_{i,j}:=e_{i,j}+e_{n+i,n+j},\quad\bar X_{i,j}:=e_{i,n+j}+e_{n+i,j}\qquad(1\leq i,j\leq n).$$
Then $\{X_{i,j}, \bar X_{i,j}\mid 1\leq i,j\leq n\}$ is a basis of $\mathfrak q(n,\mathbb C)$.
Let $U_\Z(n)$ be the $\Z$-subalgebra of the universal enveloping superalgebra $U_\mathbb C(n)$ generated by
{\begin{itemize}
\itemsep=1pt
\item $X_{i,j}^{(m)}:=X_{i,j}^m/m!$\quad ($1\leq i\neq j\leq n$, $m\in \Z_{\ge0}$);
\item $\bar X_{i,j}$\quad ($1\leq i\neq j\leq n$);
\item $\tbinom{X_{i,i}}m:=X_{i,i}(X_{i,i}-1)\cdots(X_{i,i}-m+1)/m!$\quad ($1\leq i\leq n$, $m\in \Z_{\ge0}$);
\item $\bar X_{i,i}$\quad ($1\leq i\leq n$).
\end{itemize}}

\begin{proposition}[\mbox{\cite[Lemma 4.3]{Kleshchev_Brundan_Modular_Representations_of_the_supergroup_Q(n)_I}}]\label{proposition:intro:1}
The $\Z$-superalgebra $U_\Z(n)$ is free  over $\Z$ with basis given by all products of $X_{i,j}^{(a_{i,j})}$,
$\bar X_{i,j}^{b_{i,j}}$ $\tbinom{X_{i,i}}{c_i}$, $\bar X_{i,i}^{d_i}$ in a (fixed) arbitrary order,
where $a_{i,j},c_i$ are nonnegative integers and $b_{i,j},d_i\in\{0,1\}$.
\end{proposition}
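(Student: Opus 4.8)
The plan is to prove separately that the proposed ordered monomials $\Z$-span $U_\Z(n)$ and that they are $\Z$-linearly independent, the latter being essentially immediate. For independence I would work inside $U_\mathbb C(n)$ and compare with the classical PBW basis of the enveloping superalgebra of $\mathfrak q(n,\mathbb C)$, namely the ordered monomials $\prod X_{i,j}^{m_{i,j}}\,\prod\bar X_{i,j}^{\eps_{i,j}}$ with $m_{i,j}\in\Z_{\ge0}$ and $\eps_{i,j}\in\{0,1\}$ (the odd exponents are bounded by $1$ because $\mathfrak q(n,\mathbb C)_{\1}$ is a vector space; the identity $\bar X_{i,i}^2=X_{i,i}\ne 0$ in $U_\mathbb C(n)$ is irrelevant to this basis statement). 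Each proposed monomial is a $\Q$-linear combination of PBW monomials, since $X_{i,j}^{(a)}=X_{i,j}^a/a!$, $\tbinom{X_{i,i}}{c}$ is a polynomial in $X_{i,i}$ of degree $c$ with leading coefficient $1/c!$, and $\bar X_{i,j}^{b}$, $\bar X_{i,i}^{d}$ are already PBW monomials. Ordering the monomials by the multi-exponent $\bigl((a_{i,j}),(c_i),(b_{i,j}),(d_i)\bigr)$ makes this change of basis unitriangular with nonzero diagonal, so the proposed set is $\Q$-linearly independent in $U_\mathbb Q(n)\subseteq U_\mathbb C(n)$, hence a fortiori $\Z$-linearly independent; once spanning is also known, freeness over $\Z$ follows.

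For spanning I would show that the $\Z$-span $M$ of the proposed monomials is a subring of $U_\mathbb C(n)$. Since $M$ contains $1$ and every ring generator of $U_\Z(n)$ listed before the proposition is itself one of the proposed monomials, this gives $U_\Z(n)\subseteq M$, and the reverse inclusion is clear, so $M=U_\Z(n)$. To prove $M$ is a subring it suffices to check $g\cdot u\in M$ for each listed generator $g$ and each proposed monomial $u$; this is a straightening computation driven by commutation relations that all have integer coefficients: (i) $X_{i,j}^{(a)}X_{i,j}^{(b)}=\binom{a+b}{a}X_{i,j}^{(a+b)}$, and the fact that products of the $\tbinom{X_{i,i}}{c}$ stay inside the ring of integer-valued polynomials in $X_{i,i}$; (ii) moving $\tbinom{X_{i,i}}{c}$ past a generator $X_{j,k}^{(a)}$ or $\bar X_{j,k}$, using $[X_{i,i},X_{j,k}]=(\delta_{i,j}-\delta_{i,k})X_{j,k}$ and $[X_{i,i},\bar X_{j,k}]=(\delta_{i,j}-\delta_{i,k})\bar X_{j,k}$ together with the Vandermonde expansion $\tbinom{X_{i,i}+m}{c}=\sum_t\binom{m}{c-t}\tbinom{X_{i,i}}{t}$; (iii) the classical integral Kostant relations among the $X_{i,j}^{(a)}$ for the even subalgebra $\mathfrak{gl}(n)$; (iv) the super-relations $\bar X_{i,j}^2=0$ for $i\ne j$, $\bar X_{i,i}^2=X_{i,i}=\tbinom{X_{i,i}}{1}$, the odd--odd supercommutators $[\bar X_{i,j},\bar X_{k,l}]=\delta_{j,k}X_{i,l}+\delta_{i,l}X_{k,j}$, the even--odd commutators $[X_{i,j},\bar X_{k,l}]=\delta_{j,k}\bar X_{i,l}-\delta_{i,l}\bar X_{k,j}$, and the divided-power consequences $\bar X_{k,l}\,X_{i,j}^{(a)}=\sum_{t\ge0}X_{i,j}^{(a-t)}z_t$ with each $z_t$ an integral combination of monomials of strictly smaller PBW-filtration degree (these follow from (iv) by the usual $\mathrm{ad}$-nilpotency manipulation, $\mathrm{ad}(X_{i,j})$ with $i\ne j$ annihilating the odd generators after finitely many steps). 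Feeding all this into a straightening algorithm --- push $g$ rightward to its designated slot; a supercommutator term drops the filtration degree, a merge of two symbols of the same kind drops the number of factors, and any other move removes an inversion --- one verifies by induction on the triple $(\text{filtration degree},\ \text{number of factors},\ \text{number of inversions})$ that $g\cdot u$ lands in $M$.

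I expect the only genuine difficulty to lie in step (iv): one must record precisely the odd--odd and mixed commutation relations for $\mathfrak q(n)$ and check that combining them with the divided powers $X_{i,j}^{(a)}$ introduces no denominators --- this is exactly where integrality could fail, and the reason for using divided powers rather than ordinary powers --- and one must arrange the induction so that the filtration-degree-decreasing relation $\bar X_{i,i}^2=X_{i,i}$ is handled on the same footing as the commutator terms. A cosmetically cleaner alternative is to invoke Kostant's $\Z$-form theorem for the even subalgebra $U_\Z(n)_{\0}=U_\Z(\mathfrak{gl}(n))$ outright and then prove that $U_\Z(n)$ is a free left $U_\Z(n)_{\0}$-module on the $2^{n^2}$ ordered products of the $\bar X_{i,j}$; but the content of that freeness statement is again relations (iv), so this only isolates the obstacle rather than removing it. Either way the argument parallels the vector-space decomposition $U_\mathbb C(n)\cong U_\mathbb C(\mathfrak{gl}(n))\otimes\Lambda\bigl(\mathfrak q(n,\mathbb C)_{\1}\bigr)$ supplied by PBW.
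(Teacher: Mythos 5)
Your plan is a valid reconstruction, but note that the present paper does not prove this statement: as the bracketed citation indicates, Proposition~\ref{proposition:intro:1} is quoted from \cite[Lemma~4.3]{Kleshchev_Brundan_Modular_Representations_of_the_supergroup_Q(n)_I}, so there is no in-paper proof to compare against. What you propose is the standard Kostant $\Z$-form argument adapted to $\mathfrak q(n)$, and it is the natural route: deduce $\Z$-linear independence from the super-PBW theorem over $\mathbb C$ via a unitriangular change of basis, and obtain $\Z$-spanning by showing the $\Z$-span of the proposed monomials is closed under left multiplication by generators, by a straightening induction on filtration degree, number of factors, and inversions. You are right to flag step~(iv) as the locus of genuine work: commuting $\bar X_{k,l}$ past a divided power $X_{i,j}^{(a)}$ produces divided adjoint powers $\operatorname{ad}(X_{i,j})^{(t)}$, and one must verify from the $\mathfrak q(n)$ structure constants that these stay integral; this is precisely where the super case departs from Kostant's original $\mathfrak{gl}_n$ argument. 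One small slip in your aside: it is not accurate to call $\bar X_{i,i}^2 = X_{i,i}$ ``irrelevant'' to the PBW statement --- the reason the super-PBW basis restricts odd exponents to $\{0,1\}$ is exactly that $y^2 = \tfrac12[y,y]$ lies in the even part for odd $y$, of which that identity is the $\mathfrak q(n)$ instance --- though the conclusion you draw, that ordered monomials with odd exponents at most $1$ form a $\mathbb C$-basis of $U_{\mathbb C}(n)$, is of course correct. Your closing alternative (invoke Kostant's $\Z$-form for the even subalgebra, then prove $U_\Z(n)$ is a free left module over it on the $2^{n^2}$ ordered products of the $\bar X_{i,j}$) packages the same content and is probably the tidiest way to write this up.
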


Set $U(n):=U_\Z(n)\otimes_\Z\mathbb F$. The elements $X_{i,j}^{(m)}\otimes 1$, $\bar X_{i,j}\otimes 1$, $\tbinom{X_{i,i}}m\otimes 1$, $\bar X_{i,i}\otimes 1$ in $U(n)$
are denoted again by $X_{i,j}^{(m)}$, $\bar{X}_{i,j}^{b_{i,j}}$, $\tbinom{X_{i,i}}m$, $\,\bar{\!X}_{i,i}$, respectively.
We call $U_\Z(n)$ and $U(n)$ the {\it hyperalgebras} of $Q(n)$ over $\Z$ and $\mathbb F$ respectively. We often re-denote:
\begin{eqnarray*}
E_{i,j}^{(m)}\!:= X_{i,j}^{(m)},\ \bar{\!E}_{i,j}\!:=\bar{\! X}_{i,j},\  F_{i,j}^{(m)}\!:= X_{j,i}^{(m)},\ \bar{\! F}_{i,j}\!:=\bar{\! X}_{j,i}\label{EF}\  (1\le i<j\le n,\ m\in\Z_{\geq 0});
\\
\tbinom{ H_i}m:=\tbinom{ X_{i,i}}m,\ \,\bar{\! H}_i:=\,\bar{\! X}_{i,i},\label{H}\
(1\le i\le n,\ m\in\Z_{\geq 0}).
\end{eqnarray*}

The hyperalgebra $U(n)$ is naturally a superalgebra: the elements $ X_{i,j}^{(a_{i,j})}$ and $\tbinom{ X_{i,i}}{c_i}$
are even and the elements $\,\bar{\! X}_{i,j}$ and $\,\bar{\! X}_{i,i}$ are odd.
Moreover, $U(n)$ has a grading by the root lattice of the root system $A_{n-1}$.
We use the table bellow to describe the {\it weights} of the generators of $U(n)$ in this grading:

\vspace{ 1mm}

\begin{center}
\begin{tabular}{|c||c|c|c|c|c|c|}
\hline
Element&$E_{i,j}^{(m)}$&$\,\bar{\!E}_{i,j}$&$ F_{i,j}^{(m)}$&$\,\bar{\! F}_{i,j}$&$\tbinom{ H_i}m$\vphantom{$\binom{A^{A^A}}m$}&$\,\bar{\! H}_i$\\
\hline
Weight&$m\alpha(i,j)$&$\alpha(i,j)$&$-m\alpha(i,j)$&$-\alpha(i,j)$&0&0\vphantom{$A^{A^A}$}\\
\hline
\end{tabular}
\end{center}
\vspace{1 mm}
Finally, $U(n)$ inherits the structure of a {\em Hopf}\, superalgebra from $U_{\mathbb C}(n)$, but this will not be important here.
The hyperalgebra has the {\it triangular decomposition} $U(n)=U^-(n) U^0(n) U^+(n)$, where
\begin{itemize}
\item $U^-(n)$ is the subalgebra generated by $ F_{i,j}^{(m)}$ and $\,\bar{\! F}_{i,j}$;
\item $U^0(n)$ is the subalgebra generated by $\tbinom{ H_i}m$ and $\,\bar{\! H}_i$;
\item $U^+(n)$ is the subalgebra generated by $E_{i,j}^{(m)}$ and $\,\bar{\!E}_{i,j}$.
\end{itemize}
We will also need the sub(super)algebras $U^{\leq 0}(n):=U^-(n) U^0(n)$ and $U^{\geq 0}(n):=U^0(n) U^+(n)$. Similarly we have a triangular decomposition over $\Z$:
$$U_\Z(n)=U_\Z^-(n) U_\Z^0(n) U_\Z^+(n).$$

Recall that $U(n)$ is a Hopf superalgebra
whose comultiplication $\Delta$ and antipode $\sigma$ are given by
$$
\begin{array}{ll}
\Delta\bigl(X_{i,j}^{(m)}\bigr)=\sum_{t=0}^mX_{i,j}^{(t)}\otimes X_{i,j}^{(m-t)},&\quad \Delta\bigl(\bar X_{i,j}\bigr)=\bar X_{i,j}\otimes1+1\otimes\bar X_{i,j}\\[12pt]
\Delta\(\tbinom{H_i}m\)=\sum_{t=0}^m\tbinom{H_i}t\otimes\tbinom{H_i}{m-t},&\quad\Delta(\bar H_i)=\bar H_i\otimes 1+1\otimes\bar H_i\\[12pt]
\eta\bigl(X_{i,j}^{(m)}\bigr)=(-1)^mX_{i,j}^{(m)},&\quad\eta\bigl(\bar X_{i,j}\bigr)=-\bar X_{i,j}\\[8pt]
\eta\(\tbinom{H_i}m\)=\tbinom{-H_i}m,&\quad\eta(\bar H_i)=-\bar H_i.
\end{array}
$$
As usual, the comultiplication is used to define the structure of a $U(n)$-supermodule on the tensor product of two $U(n)$-supermodules, and the antipode is used to define the structure of a $U(n)$-supermodule on the dual of a $U(n)$-supermodule. The dual supermodule to the supermodule $M$ will be denoted $M^*$.  

The hyperalgebra 
plays an important role because it is isomorphic to the algebra of distributions of the supergroup $Q(n)$. Recall that a {\em supergroup} (over $\mathbb F$) is a functor from the category $\mathfrak{salg}_\mathbb F$ of commutative $\mathbb F$-superalgebras and even homomorphisms to the category of groups.
In particular, the supergroup $Q(n)$\label{Qn} by definition is a functor $G$ which to a commutative $\mathbb F$-superalgebra $A=A_\0\oplus A_\1$ assigns the group $G(A)$ of {\em invertible}\,
$2n\times 2n$ matrices of the form
\begin{equation}\label{EQnMat}
\left(
{\arraycolsep=1pt
\begin{array}{r|l}
S\,&\,S'\\
\hline
\vphantom{A^{A^A}}
{-}S'\,&\,S
\end{array}}
\right),
\end{equation}
where $S$ is an $n\times n$ matrix with entries in $A_\0$ and $S'$ is an $n\times n$ matrix with entries in $A_\1$. To a morphism $f:A\to B$ the functor $G$ assigns the morphism $G(f):Q(n)(A)\to Q(n)(B)$
acting as $f$ on each entry of the matrix $G(A)$.

Actually, $G$ is an {\em algebraic}\, supergroup, i.e. an affine superscheme with the coordinate ring $\mathbb F[G]$ is described as follows. Let $M$ be the functor from $\mathfrak{salg}_\mathbb F$ to the category of {\em sets}\, which assigns to a commutative superalgebra $A$ the set of {\em all} $2n\times 2n$ matrices of the form (\ref{EQnMat}). Then $M$ is isomorphic to the affine supescheme $\mathbb A^{n^2|n^2}$ with coordinate ring $\mathbb F[M]$ being the free commutative superalgebra on even generators $s_{i,j}$ and odd generators $s_{i,j}'$ for $1\leq i,j\leq n$. It is known \cite[Section 3]{Kleshchev_Brundan_Modular_Representations_of_the_supergroup_Q(n)_I} that $G$ is the principal open subset of $M$ defined by the function
$\det:\left(
{\arraycolsep=1pt
\begin{array}{r|l}
S\,&\,S'\\
\hline
\vphantom{A^{A^A}}
{-}S'\,&\,S
\end{array}}
\right)
\mapsto \det S$. In particular, $\mathbb F[G]$ is the localization of $\mathbb F[M]$ at the function $\det:=\det\|s_{ij}\|_{1\leq i,j\leq n}$. The Hopf superalgebra structure of $\mathbb F[G]$ is described explicitly in \cite[Section 3]{Kleshchev_Brundan_Modular_Representations_of_the_supergroup_Q(n)_I}.

Let $e$ be the identity matrix in $G(\mathbb F)$, and $I_e$ be the kernel of the evaluation map $\mathbb F[G]\to\mathbb F, f\mapsto f(e)$.
The {\em algebra of distributions}\, $\Dist G$ is defined 
as
$\Dist G:=\bigcup_{m\ge0}\Dist_mG$,
where
$
\Dist_mG=\bigl\{\mu\in\mathbb F[G]^*\suchthat\mu(I_e^{m+1})=0\bigr\}.
$
The Hopf superalgebra structure on $\mathbb F[G]$ yields a cocommutative Hopf superalgebra structure on $\Dist G$.

\begin{theorem}[\mbox{\cite[Theorem 4.4]{Kleshchev_Brundan_Modular_Representations_of_the_supergroup_Q(n)_I}}]\label{proposition:intro:2}
The Hopf superalgebras $\Dist G$ and $U(n)$ are isomorphic.
\end{theorem}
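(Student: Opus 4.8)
The plan is to identify $\Dist G$ with $U(n)$ by exhibiting both as the ``same'' object built out of the Lie superalgebra $\mathfrak{q}(n,\mathbb F) := \mathfrak{q}(n,\mathbb C)_\Z\otimes_\Z\mathbb F$, and then checking that the standard PBW-type bases match up. First I would recall that, for any affine algebraic supergroup, $\Dist_1 G / \Dist_0 G$ is naturally the Lie superalgebra $\operatorname{Lie} G$, and that $\operatorname{Lie} Q(n)\cong\mathfrak{q}(n,\mathbb F)$ — this follows directly from the explicit description of $\mathbb F[G]$ as the localization of the free commutative superalgebra on the $s_{i,j}$ and $s_{i,j}'$ at $\det$: the tangent space at $e$ is spanned by the even dual functionals to $s_{i,j}-\delta_{i,j}$ and the odd dual functionals to $s_{i,j}'$, which one matches with $X_{i,j}$ and $\bar X_{i,j}$ respectively, and a short computation with the comultiplication of $\mathbb F[G]$ (given explicitly in \cite[Section 3]{Kleshchev_Brundan_Modular_Representations_of_the_supergroup_Q(n)_I}) shows the bracket agrees with the matrix supercommutator.

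Next, one uses the Kostant-type $\Z$-form argument. Over $\mathbb C$, both $\Dist Q(n,\mathbb C)$ and $U_{\mathbb C}(n)$ are canonically the universal enveloping superalgebra of $\mathfrak q(n,\mathbb C)$ (for $\Dist$, because $Q(n,\mathbb C)$ is an ordinary algebraic supergroup in characteristic zero). So the content is the integral statement: I would show that the divided-power $\Z$-form $U_\Z(n)$ of Proposition \ref{proposition:intro:1} coincides, inside $U_{\mathbb C}(n)$, with the $\Z$-span of $\Dist_{\mathbb Z} Q(n)$ — i.e. with those distributions of $Q(n,\mathbb C)$ that take integer values on the $\Z$-form $\Z[G]$ of the coordinate ring. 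This is the supergroup analogue of Kostant's theorem. The key point is that the generators of $U_\Z(n)$ are exactly dual to the obvious monomial $\Z$-basis of $\Z[G]$ (resp.\ of $I_e$-adic associated graded), so they are integral distributions; conversely, the PBW basis of $U_\Z(n)$ from Proposition \ref{proposition:intro:1} restricts, modulo $I_e^{m+1}$, to a $\Z$-basis of $\Dist_m$, which forces equality. Since the odd generators $\bar X_{i,j}$ square to scalars, no divided powers are needed in the odd directions — consistent with $b_{i,j},d_i\in\{0,1\}$ in Proposition \ref{proposition:intro:1}.

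Finally, tensoring with $\mathbb F$: base change gives $\Dist G = \Dist_\Z Q(n)\otimes_\Z\mathbb F$ (distributions commute with this flat base change because each $\Dist_m$ is $\Z$-free of finite rank), and by definition $U(n) = U_\Z(n)\otimes_\Z\mathbb F$, so the $\mathbb C$-level identification descends to the isomorphism claimed, and it is visibly one of Hopf superalgebras since comultiplication, counit and antipode are all defined by the same formulas on the matching generators (compare the comultiplication/antipode formulas for $U(n)$ displayed above with the dual of the Hopf structure on $\mathbb F[G]$). The main obstacle is the integrality/Kostant step: one must check carefully that the divided powers $X_{i,j}^{(m)}$ and $\binom{H_i}{m}$ are genuinely integral distributions and that together with the odd generators they span all of $\Dist_\Z Q(n)$ — this requires a filtered/graded argument matching $\operatorname{gr}\Dist_\Z Q(n)$ with the symmetric algebra on the $\Z$-lattice $\mathfrak q(n)_\Z$, for which one leans on the explicit monomial basis of $\mathbb F[G]$. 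Everything else is bookkeeping with the triangular decomposition and the Hopf structure maps, already recorded in \cite{Kleshchev_Brundan_Modular_Representations_of_the_supergroup_Q(n)_I}.
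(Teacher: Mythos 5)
This result is not proved in the paper you are reading: it is stated as a citation of Theorem~4.4 of \cite{Kleshchev_Brundan_Modular_Representations_of_the_supergroup_Q(n)_I}, so there is no in-paper proof to compare against. That said, your reconstruction follows essentially the same route as the proof in the cited reference: identify $\operatorname{Lie} G$ with $\mathfrak q(n)$, observe that over $\mathbb C$ both $\Dist G$ and $U_{\mathbb C}(n)$ are the enveloping superalgebra, carry out the Kostant-type integrality argument matching the divided-power PBW basis of $U_\Z(n)$ (with no divided powers needed in the odd directions, since $\bar X_{i,j}^2$ is a scalar) against the monomial basis of $\Z[G]$ in the $I_e$-adic filtration, and then flat base change to $\mathbb F$ using the $\Z$-freeness of each $\Dist_m$. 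The argument is sound, and the points you flag as needing care — the integrality/duality step and the base change — are exactly where the technical weight lies.
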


From now on we identify $U(n)$ with $\Dist G$ and interchange them freely. Let $M$ be a  $U^0(n)$-supermodule and $\la=\sum_{i=1}^n\la_i\eps_i\in X(n)$ be a weight. Define the {\em $\la$-weight space}\, of $M$:
$$
M^\la:=\{v\in M\mid \tbinom{ H_i}m v=\tbinom{\la_i}m\ \text{for all $i=1,\dots,n$, $m\geq 1$}\}.
$$
A $U(n)$-supermodule is a $U^0(n)$-supermodule on restriction, so its weight spaces are also defined.
A $U(n)$-supermodule $M$ is {\em integrable} if $M$ is locally finite and $M=\bigoplus_{\la\in X(n)}M^\la$. A structure of $G$-supermodule on a vector superspace $M$ canonically gives rise to a structure of an integrable $U(n)$-supermodule on $M$. It is proved in \cite{Kleshchev_Brundan_Modular_Representations_of_the_supergroup_Q(n)_I} that the converse is also true, i.e. a structure of an integrable $U(n)$-supermodule on a vector superspace $M$ canonically lifts to a structure of a $G$-supermodule on $M$:

\begin{theorem} 
{\rm \cite[Corollary 5.7]{Kleshchev_Brundan_Modular_Representations_of_the_supergroup_Q(n)_I}} 
The category of $G$-supermodules is isomorphic to the category of integrable $U(n)$-supermodules.
\end{theorem}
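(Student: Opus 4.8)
The statement to prove is that the category of $G$-supermodules is isomorphic to the category of integrable $U(n)$-supermodules, where $G = Q(n)$. Since the excerpt cites this as \cite[Corollary 5.7]{Kleshchev_Brundan_Modular_Representations_of_the_supergroup_Q(n)_I}, the plan is really to reconstruct the standard argument that identifies representations of an algebraic (super)group with integrable modules over its algebra of distributions; the "super" decorations add bookkeeping but do not change the strategy.

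\medskip

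\emph{Step 1: Functors in both directions.} First I would make precise the functor from $G$-supermodules to $U(n)$-supermodules. Given a $G$-supermodule, i.e. a comodule $\rho\colon M\to M\otimes\mathbb F[G]$ over the Hopf superalgebra $\mathbb F[G]$, one dualizes: each $\mu\in\Dist G\subseteq\mathbb F[G]^*$ acts on $M$ by $m\mapsto (\id\otimes\mu)\circ\rho(m)$. That this is a (super)module action follows from coassociativity of $\rho$ together with the (co)algebra structure on $\Dist G$; that it lands in $U(n)$ rather than all of $\mathbb F[G]^*$ uses Theorem \ref{proposition:intro:2}. The comodule structure map, being an $\mathbb F$-linear map into $M\otimes\mathbb F[G]$, forces $M$ to be the union of finite-dimensional subcomodules (standard coalgebra fact), hence locally finite; and the $\mathbb F[G]$-comodule structure restricts along the torus $H$ to give the weight-space decomposition $M=\bigoplus_{\la}M^\la$ because $\mathbb F[H]$ is (after the super-localization is inverted) a group algebra of $X(n)$ on its even part — so the image lands in integrable modules. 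Conversely, given an integrable $U(n)$-supermodule $M$, I would reconstruct the comodule structure: local finiteness lets one reduce to finite-dimensional $M$, and then one builds $\rho\colon M\to M\otimes\mathbb F[G]$ by a matrix-coefficient construction, or more slickly by invoking that $\mathbb F[G]$ is the restricted/full dual and that $\Dist G$-actions on finite-dimensional spaces extend to $\mathbb F[G]$-coactions precisely when the integrability (algebraicity) condition holds. The point where the super-structure matters is only that all maps must respect the $\Z/2\Z$-grading, which is automatic since $\rho$, $\Delta$, $\sigma$ are all even.

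\medskip

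\emph{Step 2: These functors are mutually inverse and exact.} Here I would check that starting from a $G$-supermodule, dualizing to $U(n)$ and then re-coacting recovers the original comodule, and vice versa. The essential input is that the pairing between $\mathbb F[G]$ and $\Dist G$ is "nondegenerate enough": because $G$ is an affine superscheme whose coordinate ring $\mathbb F[G]$ is a localization of a polynomial superalgebra (reduced in its even part in the appropriate sense), a function $f\in\mathbb F[G]$ is determined by its value and all its "higher derivatives" at the identity, i.e. by $\mu(f)$ for $\mu$ ranging over $\Dist G$ — at least when $f$ appears as a matrix coefficient of an integrable module. One packages this as: the category of integrable $U(n)$-supermodules is equivalent to the category of $\mathbb F[G]$-comodules, via the finite-dimensional case plus passage to colimits. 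Functoriality (compatibility with morphisms, tensor products via $\Delta$, duals via $\sigma$) is then a routine diagram chase using the Hopf structure already recorded in the excerpt.

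\medskip

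\emph{Main obstacle.} The genuinely non-formal point is the surjectivity/reconstruction half: showing that \emph{every} integrable $U(n)$-supermodule arises from a $G$-supermodule, i.e. that the $\Dist G$-action can be integrated to an $\mathbb F[G]$-coaction. In the purely even case this is a classical theorem (for reductive $G$ over a field, algebraic=rational=locally finite+semisimple-over-torus), and the proof typically reduces to: (a) the natural $G$-module (here the $2n$-dimensional defining comodule of $Q(n)$) together with its dual and tensor powers generate enough comodules, and (b) an arbitrary finite-dimensional integrable $U(n)$-supermodule embeds into a sum of such, because the matrix coefficients of any integrable action are polynomial (this is where the specific generators $E_{i,j}^{(m)}, \bar E_{i,j}, \tbinom{H_i}{m}, \bar H_i$ and the explicit form of $\mathbb F[G]$ as a localization of $\mathbb F[M]$ at $\det$ get used). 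I would therefore expect to lean on the structure of $\mathbb F[G]$ from \cite[Section 3]{Kleshchev_Brundan_Modular_Representations_of_the_supergroup_Q(n)_I} and on Theorem \ref{proposition:intro:2}, and the bulk of the real work — which the cited corollary presumably does in detail — is verifying that the algebraicity condition "$M=\bigoplus M^\la$ and locally finite" is exactly what is needed for the comodule reconstruction to go through, with the odd generators $\bar X_{i,j}$ (which square into the even part) handled by the same local-finiteness argument since they act nilpotently in a controlled way.
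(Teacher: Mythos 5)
The paper does not prove this statement; it simply cites it from \cite[Corollary~5.7]{Kleshchev_Brundan_Modular_Representations_of_the_supergroup_Q(n)_I}, so there is no "paper's own proof" here to compare against. That said, your outline is the standard route taken in the cited reference: build the functor $G\text{-supermodules}\to U(n)\text{-supermodules}$ by differentiating a comodule structure map via $\mu\mapsto(\id\otimes\mu)\circ\rho$, check it lands in locally finite modules with a weight-space decomposition over $U^0(n)$, and then prove essential surjectivity by reconstructing a comodule from an integrable action, using that $\Dist G$ separates points of $\mathbb F[G]$ (equivalently $\bigcap_m I_e^m=0$, which holds since $G$ is connected with smooth even part). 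Your identification of the reconstruction half as the genuinely non-formal point is right, and that is indeed where the explicit description of $\mathbb F[G]$ as a localization of the free supercommutative algebra gets used.

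One concrete inaccuracy: you claim the odd generators $\bar X_{i,j}$ "square into the even part" and hence "act nilpotently in a controlled way." This is false for the diagonal ones: $\bar H_i = \bar X_{i,i}$ satisfies $\bar H_i^2 = H_i$, which is a nonzero semisimple operator, so $\bar H_i$ is not nilpotent on an integrable module. The local-finiteness argument does not and cannot rest on nilpotence of all odd generators; rather, $\bar H_i$ preserves each finite-dimensional weight space (since it commutes with each $\tbinom{H_j}{m}$), and that is what keeps the $U^0(n)$-action locally finite. Only the off-diagonal odd generators $\bar X_{i,j}$ with $i\ne j$ are nilpotent (they are root vectors). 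This does not wreck your plan — the integrability condition "$M=\bigoplus_\lambda M^\lambda$ with finite-dimensional weight spaces and local finiteness" already packages exactly what is needed — but the stated justification at that point is wrong and should be replaced by the weight-space argument.
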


In view of the theorem, we will switch freely between $G$-supermodules and integrable $U(n)$-supermodules.

\section{Highest weight theory}\label{SHWT}

The classification of irreducible $G$-supermodules was obtained in~\cite{Kleshchev_Brundan_Modular_Representations_of_the_supergroup_Q(n)_I}.

\begin{proposition}[\mbox{\cite[Theorem 6.11]{Kleshchev_Brundan_Modular_Representations_of_the_supergroup_Q(n)_I}}]\label{proposition:intro:3}
For any $\lm\in X^+_p(n)$ there exists a $G$-supermodule $L(\lm)$ with highest weight $\lm$, i.e. $L(\lm)^\la\neq 0$ and $L(\lm)^\mu\neq 0$ only if $\mu\leq \la$.
Moreover, $\bigl\{L(\lm)\suchthat\lm\in X^+_p(n)\bigr\}$ is a complete and irredundant set of irreducible $G$-supermodules up to isomorphism.
\end{proposition}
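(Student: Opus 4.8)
The plan is to construct $L(\lambda)$ as the unique irreducible quotient of a universal highest weight supermodule and then argue completeness and irredundancy by the standard highest weight machinery, adapted to the superalgebra setting of $U(n)$. First I would introduce, for each $\lambda \in X^+_p(n)$, the irreducible $U^0(n)$-supermodule $\mathfrak u(\lambda)$ (whose existence and $2^{\lfloor (h_{p'}(\lambda)+1)/2\rfloor}$-dimensionality come from the analysis of the ``Cartan'' part $U^0(n)$ — a Clifford-algebra-like superalgebra — referenced as \cite[Lemma 6.4]{Kleshchev_Brundan_Modular_Representations_of_the_supergroup_Q(n)_I}), inflate it to a $U^{\geq 0}(n)$-supermodule via $U^{\geq 0}(n) \twoheadrightarrow U^0(n)$, and form the Verma-type induced supermodule $M(\lambda) := U(n) \otimes_{U^{\geq 0}(n)} \mathfrak u(\lambda)$. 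By the triangular decomposition $U(n) = U^-(n)U^0(n)U^+(n)$ and the PBW-type basis of Proposition~\ref{proposition:intro:1}, $M(\lambda)$ is free over $U^-(n)$, so $M(\lambda) = \bigoplus_{\mu} M(\lambda)^\mu$ with $M(\lambda)^\lambda \cong \mathfrak u(\lambda)$ finite-dimensional and $M(\lambda)^\mu \neq 0$ only for $\mu \leq \lambda$; moreover each weight space is finite-dimensional.

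The next step is to show $M(\lambda)$ has a unique maximal proper $U(n)$-subsupermodule. The key point is that any proper subsupermodule $N$ is graded by weights and must satisfy $N^\lambda = 0$: otherwise $N \cap M(\lambda)^\lambda$ is a nonzero $U^0(n)$-subsupermodule of the irreducible $\mathfrak u(\lambda)$, hence all of it, and then $N = M(\lambda)$ since $M(\lambda)^\lambda$ generates. Therefore the sum of all proper subsupermodules is still proper (it has zero $\lambda$-weight space), giving a unique maximal one; the quotient is the desired irreducible $L(\lambda)$, which has $L(\lambda)^\lambda = \mathfrak u(\lambda) \neq 0$ and $L(\lambda)^\mu \neq 0 \Rightarrow \mu \leq \lambda$. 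This $L(\lambda)$ is integrable: it is a quotient of $M(\lambda)$, all of whose weight spaces are finite-dimensional, and the raising operators $E_{i,j}^{(m)}$ and $\bar E_{i,j}$ act locally nilpotently by the weight grading, while the divided-power structure forces integrability of the $\mathfrak{sl}_2$-copies — so $L(\lambda)$ lifts to a genuine $G$-supermodule by \cite[Corollary 5.7]{Kleshchev_Brundan_Modular_Representations_of_the_supergroup_Q(n)_I}. For irredundancy, if $L(\lambda) \cong L(\lambda')$ then comparing highest weights forces $\lambda = \lambda'$.

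For completeness I would take an arbitrary irreducible integrable $G$-supermodule $V$. Integrability gives $V = \bigoplus_{\mu} V^\mu$ with finitely many weights (local finiteness plus the action of the divided powers bounds the weights), so among the weights of $V$ there is a maximal one $\lambda$ with respect to the dominance order; maximality forces $E_{i,j}^{(m)} V^\lambda = \bar E_{i,j} V^\lambda = 0$, i.e. $V^\lambda$ is a $U^{\geq 0}(n)$-stable ``highest weight space,'' and a standard $\mathfrak{sl}_2$-integrability argument along each simple root shows $\lambda \in X^+_p(n)$ — here the $p$-strictness condition $\lambda_r = \lambda_{r+1} \Rightarrow p \mid \lambda_r$ is exactly what integrability of the odd generator $\bar E_{r,r+1}$ (whose square lands in the even part) imposes on the weight, which is the genuinely ``super'' ingredient absent in the $GL(n)$ story. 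Then $V^\lambda$ contains an irreducible $U^0(n)$-subsupermodule isomorphic to $\mathfrak u(\lambda)$, yielding by Frobenius reciprocity a nonzero map $M(\lambda) \to V$; since $V$ is irreducible this is surjective, and since $L(\lambda)$ is the unique irreducible quotient of $M(\lambda)$ we get $V \cong L(\lambda)$.

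The main obstacle I anticipate is not the formal highest-weight scaffolding but the two super-specific points: (1) establishing that $\mathfrak u(\lambda)$ — the irreducible $U^0(n)$-module — behaves well (its structure as a module over a Clifford-type algebra, its dimension formula, and the fact that a highest weight space of an integrable module contains it), and (2) proving that local finiteness/integrability over $G$ genuinely forces $\lambda \in X^+_p(n)$ rather than just $\lambda$ dominant, i.e. pinning down where the characteristic-$p$ $p$-strictness enters via the odd root vectors. Both of these are handled in \cite{Kleshchev_Brundan_Modular_Representations_of_the_supergroup_Q(n)_I}, so in the present paper I would cite them; the remaining work — triangular decomposition, uniqueness of the maximal submodule, Frobenius reciprocity — is routine once those inputs are in place.
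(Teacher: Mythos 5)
The paper itself gives no proof for this proposition; it is stated as a citation to \cite[Theorem 6.11]{Kleshchev_Brundan_Modular_Representations_of_the_supergroup_Q(n)_I}, so there is no internal argument to compare against. Your Verma-module reconstruction is a reasonable and largely correct outline of the standard highest-weight machinery, and in fact the paper does later set up exactly this Verma-type apparatus (Lemma~\ref{Vermamod}, Theorem~\ref{TUnivProperty}) for the category $\O$.

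However, there is a genuine gap in your integrability argument. You assert that $L(\lambda)$ is integrable because its weight spaces are finite-dimensional, the raising operators act locally nilpotently, and ``the divided-power structure forces integrability of the $\mathfrak{sl}_2$-copies.'' This reasoning cannot be right as stated: the Verma module $M(\lambda)$ already has finite-dimensional weight spaces and already has locally nilpotent raising operators (by the weight grading) and already carries the full divided-power structure, yet $M(\lambda)$ is infinite-dimensional and is certainly not a $G$-supermodule. What is missing, and what is genuinely the hard part, is showing that the \emph{lowering} operators $F_{i,j}^{(m)}$, $\bar F_{i,j}$ act locally nilpotently on the irreducible quotient $L(\lambda)$, so that $L(\lambda)$ is finite-dimensional. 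This is precisely where the hypothesis $\lambda\in X^+_p(n)$ — both dominance and $p$-strictness — has to be used nontrivially, and it does not follow from the formal scaffolding. The cited reference sidesteps this by constructing $L(\lambda)$ as the socle of the coinduced module $H^0(\lambda)=\operatorname{ind}_B^G\mathfrak{u}(\lambda)$, which is a rational $G$-supermodule from the outset, and then showing $H^0(\lambda)\neq 0$ iff $\lambda\in X^+_p(n)$; the Verma-module route you take requires a separate finiteness argument to land in the same place. You do acknowledge a related point in your final paragraph (that integrability forces $\lambda\in X^+_p(n)$ in the completeness direction), but the converse — that $\lambda\in X^+_p(n)$ forces $L(\lambda)$ to be integrable — is the one your sketch treats as automatic when it is not.
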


Consider the algebra antiautomorphism $\tau=\tau_n$ of $U(n)$, i.e. a linear map such that $\tau(xy)=\tau(y)\tau_n(x)$ for all $x,y\in U(n)$,
defined by
$$
\begin{array}{lll}
\tau(E_{i,j}^{(m)})=F_{i,j}^{(m)},&\tau(F_{i,j}^{(m)})=E_{i,j}^{(m)},&\tau\left(\tbinom{H_i}m\right)=\tbinom{H_i}m,\\[6pt]
\tau(\bar E_{i,j})=\bar F_{i,j},&\tau(\bar F_{i,j})=\bar E_{i,j},&\tau(\bar H_i)=\bar H_i.
\end{array}
$$

For any $U(n)$-supermodule $M$ which has a weight space decomposition $M=\bigoplus_{\la\in X(n)}M^\la$, we denote by $M^{\tau}$ the superspace $\bigoplus_{\la\in X(n)}(M^\la)^*$ of linear functions $f:M\to\mathbb F$
with the grading $M^{\tau}_\0=\{f\in M^{\tau}\suchthat f(M_\1)=0\}$, $M^{\tau}_\1=\{f\in M^{\tau}\suchthat f(M_\0)=0\}$.
This superspace is made into a $U(n)$-supermodule via $(xf)(m)=f({\tau}(x)m)$
for $x\in U(n), f\in M^\tau, m\in M$. The supermodule $M^\tau$ is referred to as the {\em contravariant dual} of $M$ and should be distinguished from the usual dual $M^*$.
For any $U(n)$-homomorphism $\zeta:M\to N$,
we denote by $\zeta^{\tau}$ the map from $N^{\tau}\to M^{\tau}$ defined by
$\zeta^{\tau}(f)=f\circ\zeta$ for $f\in N^{\tau}$.
Then $\zeta^{\tau}$ is a homomorphism of $U(n)$-supermodules.

A vector $v$ in a $U(n)$-supermodule $M$ is called {\it primitive} if it belongs to some weight space $M^\mu$ and
$E^{(m)}_{i,j}v=\,\bar{\!E}_{i,j}\,v=0$ for all $1\le i<j\le n$ and $m>0$.
Likewise a subset of $M$ is called {\it primitive} if each its vector is primitive.
It follows from the results of \cite[Section 6]{Kleshchev_Brundan_Modular_Representations_of_the_supergroup_Q(n)_I} that the set of primitive vectors of $L(\la)$ is precisely $L(\la)^\la$ and that $L(\la)^\la$ is the irreducible $U^0(n)$ module $\mathfrak u(\la)$ described by the following proposition:

\begin{proposition} \label{PUMu}
\label{proposition:intro:4}
For each $\mu\in X(n)$, there exists a unique (up to isomorphism)
irreducible $U^0(n)$-supermodule $\mathfrak u(\mu)$ such that $\mathfrak u(\mu)^\mu=\mathfrak u(\mu)$. Moreover:
\begin{enumerate}
\item[{\rm (i)}] $\dim \mathfrak u(\mu)=2^{\lfloor(h_{p'}(\mu)+1)/2\rfloor}$.
\item[{\rm (ii)}] if $\mu_i\=0\pmod p$ for some $1\leq i\leq n$, then $\bar{\! H}_i\,\mathfrak u(\mu)=0$
\item[{\rm (iii)}] if $\mu\in X_p^+(n)$, then $L(\mu)^\mu$ is isomorphic to $\mathfrak u(\mu)$ as a $U^0(n)$-module.
\item[{\rm (iv)}] $\mathfrak u(\mu)$ is of type $\Mtype$ if and only if $h_{p'}(\mu)$ is even.
\end{enumerate}
\end{proposition}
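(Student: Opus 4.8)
The plan is to analyze the structure of $U^0(n)$ directly. The algebra $U^0(n)$ is generated by the even elements $\tbinom{H_i}{m}$ and the odd elements $\,\bar{\!H}_i$ for $1 \le i \le n$. The even generators $\tbinom{H_i}{m}$ commute with everything in $U^0(n)$ and act as scalars $\tbinom{\mu_i}{m}$ on any supermodule on which they act semisimply with weight $\mu$; so the first step is to observe that on a supermodule $M$ with $M = M^\mu$, the even part of $U^0(n)$ acts by scalars, and the whole module structure is controlled by the odd generators $\,\bar{\!H}_1, \dots, \,\bar{\!H}_n$. Next I would compute the supercommutators among the $\,\bar{\!H}_i$. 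Since $[\,\bar{\!H}_i, \,\bar{\!H}_j] = \,\bar{\!H}_i\,\bar{\!H}_j + \,\bar{\!H}_j\,\bar{\!H}_i$ must land in weight $0$ and be expressible via the PBW basis of Proposition~\ref{proposition:intro:1}, the relevant relation coming from $\mathfrak{q}(n)$ is $\,\bar{\!H}_i^2 = H_i$ (i.e.\ $\bar X_{i,i}^2 = X_{i,i}$) and $[\,\bar{\!H}_i, \,\bar{\!H}_j] = 0$ for $i \ne j$. Hence, modulo the ideal generated by the even elements acting as scalars, the $\,\bar{\!H}_i$ generate a Clifford algebra: on $M^\mu$ we have $\,\bar{\!H}_i\,\bar{\!H}_j = -\,\bar{\!H}_j\,\bar{\!H}_i$ for $i \ne j$ and $\,\bar{\!H}_i^2 = \mu_i \cdot 1$ (the scalar by which $H_i = X_{i,i}$ acts, which is $\mu_i \bmod p$ when one remembers $X_{i,i}$ acts by $\mu_i$).

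From here the proof reduces to representation theory of Clifford superalgebras over $\mathbb F$. If $p \mid \mu_i$ then $\mu_i = 0$ in $\mathbb F$, so $\,\bar{\!H}_i^2 = 0$; I claim $\,\bar{\!H}_i$ must then act as zero on the irreducible module, which gives part~(ii): indeed in a Clifford algebra with a degenerate form the radical generators act trivially on any irreducible representation (the two-sided ideal they generate is nilpotent, hence annihilates every simple module). So only the indices $i$ with $p \nmid \mu_i$ contribute, and after rescaling each such $\,\bar{\!H}_i$ (using that $\mathbb F$ is algebraically closed, so $\mu_i$ has a square root) we get the standard Clifford superalgebra $\Cl_k$ on $k = h_{p'}(\mu)$ odd generators. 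The classical structure theory then gives a unique irreducible supermodule of dimension $2^{\lfloor (k+1)/2 \rfloor}$, of type $\Mtype$ when $k$ is even and type $\Qtype$ when $k$ is odd (this is exactly the dichotomy between $\Cl_{2r} \cong M_{2^r}(\mathbb F)$ as a superalgebra and $\Cl_{2r+1}$ being of type $Q$); this yields~(i), the existence and uniqueness statement, and~(iv). For~(iii), I would invoke the already-cited fact from \cite[Section 6]{Kleshchev_Brundan_Modular_Representations_of_the_supergroup_Q(n)_I} that the primitive vectors of $L(\mu)$ form $L(\mu)^\mu$, which is a nonzero $U^0(n)$-module on which $U^0(n)$ acts with weight $\mu$; by the uniqueness in (i) this forces $L(\mu)^\mu \cong \mathfrak u(\mu)$, since any such module is a direct sum of copies of $\mathfrak u(\mu)$ and irreducibility of $L(\mu)$ as a $U(n)$-module forces $L(\mu)^\mu$ to be a single copy (a proper $U^0$-submodule of $L(\mu)^\mu$ would generate a proper $U(n)$-submodule of $L(\mu)$ by highest-weight theory).

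The main obstacle I anticipate is getting the supercommutation relations among the $\,\bar{\!H}_i$ and the identity $\,\bar{\!H}_i^2 = H_i$ pinned down precisely inside $U_\Z(n)$ (as opposed to the Lie superalgebra $\mathfrak q(n,\mathbb C)$), and tracking the effect of reduction modulo $p$ on the quadratic form—in particular being careful that the relevant scalar is genuinely $\mu_i \bmod p$ and not, say, $\res_p \mu_i$ or some divided-power variant. A secondary point requiring care is the type dichotomy: one must verify that the $\mathbb Z/2$-grading on the Clifford algebra (all generators odd) matches the grading on $\mathfrak u(\mu)$ coming from the $U(n)$-supermodule structure, so that ``type $\Mtype$ vs.\ $\Qtype$'' in the Clifford-algebraic sense coincides with the superalgebra notion used throughout; this is where I would be most careful to cite or re-derive the precise statement. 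Everything else—the dimension count, existence, uniqueness—is then routine Clifford-algebra bookkeeping once the relations are established.
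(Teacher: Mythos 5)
Your proposal is correct, and it is essentially the standard argument: since the even part of $U^0(n)$ is central and acts by scalars on any weight-$\mu$ module, the problem factors through the quotient of $U^0(n)$ in which only the $\bar H_i$ survive, and the relations $\bar H_i^2 = H_i$ (which acts as $\mu_i\cdot 1$, not $\res_p\mu_i$ — you correctly flagged and resolved this) and $[\bar H_i,\bar H_j]=0$ for $i\ne j$ make this quotient a $2^n$-dimensional Clifford superalgebra with quadratic form $\operatorname{diag}(\mu_1,\dots,\mu_n)$ over $\mathbb F$. The radical, generated by the $\bar H_i$ with $p\mid\mu_i$, squares to zero (your anticommutation-then-collapse argument is sound), so every simple kills it, giving (ii), and what remains after rescaling is the nondegenerate $\Cl_k$ on $k=h_{p'}(\mu)$ generators; parts (i), (iv), existence, and uniqueness are then the well-known structure theory of $\Cl_k$, which the paper itself invokes in Section 7.2. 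The paper does not prove this proposition but recalls it from \cite[Section 6]{Kleshchev_Brundan_Modular_Representations_of_the_supergroup_Q(n)_I}, where the argument is the same in substance.

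One sentence in your treatment of (iii) is imprecise and, as stated, false: a $U^0(n)$-module concentrated in weight $\mu$ need not be a direct sum of copies of $\mathfrak u(\mu)$ when the form is degenerate, since the quotient Clifford algebra is not semisimple in that case. Fortunately the parenthetical you give is the actual (and correct) argument and makes the semisimplicity claim unnecessary: a proper nonzero $U^0(n)$-submodule $W\subsetneq L(\mu)^\mu$ satisfies $U(n)W=U^-(n)W$ (since $U^+(n)^{>0}$ kills the highest weight space and $U^0(n)$ preserves $W$), so $(U(n)W)^\mu=W\subsetneq L(\mu)^\mu$ and $U(n)W$ is a proper nonzero $U(n)$-submodule, contradicting irreducibility of $L(\mu)$. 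Hence $L(\mu)^\mu$ is $U^0(n)$-irreducible and concentrated in weight $\mu$, so uniqueness identifies it with $\mathfrak u(\mu)$. I would simply delete the ``direct sum of copies'' clause and let the highest-weight argument carry (iii) by itself.
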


For each $\la\in X(n)$, one can form the induced module $H^0(\la):=\operatorname{ind}_B^G\mathfrak u(\la)$, as in \cite[(6.5)]{Kleshchev_Brundan_Modular_Representations_of_the_supergroup_Q(n)_I}, and its contravariant dual $V(\la):=H^0(\la)^\tau$ \cite[(10.14)]{Kleshchev_Brundan_Modular_Representations_of_the_supergroup_Q(n)_I}. Then

\begin{theorem} \label{TUnivProperty}{\rm \cite[Section 6]{Kleshchev_Brundan_Modular_Representations_of_the_supergroup_Q(n)_I}} 
We have $V(\la)\neq 0$  if and only if $\la\in X^+_p(\la)$, in which case $V(\la)$ is finite dimensional,
$V(\la)^\la\cong\u(\la)$, $V(\la)$ has simple head $L(\la)$, and $V(\la)$
is universal among all finite dimensional $U(n)$-supermodules generated by
a primitive $U^0(n)$-subsupermodule isomorphic to $\mathfrak u(\lm)$.
\end{theorem}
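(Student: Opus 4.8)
\emph{Overall approach.} I would deduce Theorem~\ref{TUnivProperty} from the general formalism of induction from the Borel together with contravariant duality, with all the $Q(n)$-specific content concentrated in the representation theory of the odd torus $H$ (Proposition~\ref{PUMu}) and a rank-one computation inside a root copy of $Q(2)$. The first step is to record the standard properties of $\operatorname{ind}_B^G$: it is left exact; it obeys Frobenius reciprocity $\Hom_{U(n)}(M,\operatorname{ind}_B^G N)\cong\Hom_B(\operatorname{res}_BM,N)$; it sends finite-dimensional $B$-supermodules to finite-dimensional $G$-supermodules, because $G/B$ is a projective superscheme and the cohomology of coherent sheaves on a complete superscheme is finite-dimensional; and $H^0(\la)=\operatorname{ind}_B^G\mathfrak u(\la)$ has all weights $\le\la$, with $\la$-weight space $\mathfrak u(\la)$ when it is nonzero (the usual costandard behaviour). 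Since $(-)^\tau$ preserves finite-dimensionality and weight spaces, $V(\la)=H^0(\la)^\tau$ is always finite-dimensional, has all weights $\le\la$, and satisfies $V(\la)^\la\cong\mathfrak u(\la)$ whenever $V(\la)\ne0$; so the one remaining point for the first assertion is to decide when $H^0(\la)\ne0$.

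\emph{The $p$-strictness criterion.} For ``$\Rightarrow$'', suppose $\la_r=\la_{r+1}=a$ with $p\nmid a$ and, for contradiction, $V(\la)\ne0$; then $V(\la)^\la\ne0$, so fix $0\ne v\in V(\la)^\la$. Since all weights of $V(\la)$ are $\le\la$ and $\la+\al_r\not\le\la$, the vector $v$ is primitive, so $\bar E_{r,r+1}v=0$; the supercommutation relation $\bar E_{r,r+1}\bar F_{r,r+1}+\bar F_{r,r+1}\bar E_{r,r+1}=H_r+H_{r+1}$ in $U(n)$ then gives $\bar E_{r,r+1}\bar F_{r,r+1}v=(\la_r+\la_{r+1})v=2a\,v\ne0$ (here $p\ne2$), so $\bar F_{r,r+1}v\ne0$. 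But $\bar F_{r,r+1}v$ lies in $V(\la)^{\la-\al_r}$, and restricting the finite-dimensional $V(\la)$ to the $SL_2$ generated by $E_{r,r+1}^{(m)},F_{r,r+1}^{(m)}$, weight-multiplicity symmetry (together with $\langle\la-\al_r,\al_r^\vee\rangle=-2$, so $s_{\al_r}(\la-\al_r)=\la+\al_r$) forces $V(\la)^{\la+\al_r}\ne0$, contradicting that all weights of $V(\la)$ are $\le\la$. For ``$\Leftarrow$'', given $\la\in X^+_p(n)$ one must exhibit a nonzero element of $H^0(\la)$; I would either construct an explicit highest weight vector in the function model of $\operatorname{ind}_B^G\mathfrak u(\la)$ --- the only delicate input being that $\mathfrak u(\la)$, whose odd Cartan elements $\bar H_i$ vanish exactly when $p\mid\la_i$ (Proposition~\ref{PUMu}(ii), reflecting $\bar H_i^2=H_i$) extends compatibly to a $B$-supermodule --- or else induct on $\sum_r(\la_r-\la_{r+1})$ and reduce nonvanishing to a direct check in $Q(2)$. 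This gives $V(\la)\ne0\iff\la\in X^+_p(n)$.

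\emph{Simple head, top weight space, universality.} Fix $\la\in X^+_p(n)$. Since $\mathfrak u(\la)$ is inflated from $H$ along $B\to H$, Frobenius reciprocity together with contravariant duality (and finite-dimensionality of $M$ and $H^0(\la)$) yield, for any finite-dimensional $U(n)$-supermodule $M$, a natural isomorphism
$$\Hom_{U(n)}(V(\la),M)\;\cong\;\Hom_{U(n)}(M^\tau,H^0(\la))\;\cong\;\Hom_B(\operatorname{res}_BM^\tau,\mathfrak u(\la))\;\cong\;\Hom_{U^0(n)}(\mathfrak u(\la),P_\la(M)),$$
where $P_\la(M)$ is the space of primitive vectors of $M$ of weight $\la$ and the composite identifies a $U(n)$-map $V(\la)\to M$ with a $U^0(n)$-map out of $V(\la)^\la\cong\mathfrak u(\la)$. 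Taking $M=L(\mu)$ and using that the primitive vectors of $L(\mu)$ are precisely $L(\mu)^\mu\cong\mathfrak u(\mu)$ (recalled before the theorem) and that $\mathfrak u(\la)$ is the unique irreducible $U^0(n)$-supermodule of weight $\la$ (Proposition~\ref{PUMu}), we get $\Hom_{U(n)}(L(\mu),H^0(\la))\ne0\iff\mu=\la$, so $\operatorname{soc}H^0(\la)\cong L(\la)$; applying $(-)^\tau$, which swaps socle and head, fixes weight spaces, and fixes irreducibles ($L(\la)^\tau\cong L(\la)$), gives that $V(\la)$ has simple head $L(\la)$. Furthermore $U(n)\cdot V(\la)^\la=V(\la)$: the surjection $V(\la)\twoheadrightarrow L(\la)$ onto the head restricts to a surjection $V(\la)^\la\twoheadrightarrow L(\la)^\la$, and $L(\la)^\la$ generates $L(\la)$, so $U(n)\cdot V(\la)^\la$ is not contained in the unique maximal submodule of $V(\la)$ and hence equals $V(\la)$; in particular $V(\la)$ is generated by the primitive submodule $V(\la)^\la\cong\mathfrak u(\la)$. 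Finally, for any finite-dimensional $M$ generated by a primitive $U^0(n)$-subsupermodule $W\subseteq M^\la$ with $W\cong\mathfrak u(\la)$, the map $\phi\colon V(\la)\to M$ corresponding under the displayed isomorphism to $\mathfrak u(\la)\xrightarrow{\ \sim\ }W\hookrightarrow P_\la(M)$ satisfies $\phi(V(\la)^\la)=W$, whence $\phi(V(\la))=\phi\bigl(U(n)\,V(\la)^\la\bigr)=U(n)\,W=M$; so $\phi$ is onto, which is the asserted universal property.

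\emph{Main obstacle.} I expect the genuinely hard part to be the ``$\Leftarrow$'' half of the $p$-strictness criterion (equivalently, the identification of $H^0(\la)^\la$ with $\mathfrak u(\la)$ when $H^0(\la)\ne0$): both require controlling how the irreducible module $\mathfrak u(\la)$ of the \emph{non-commutative} odd torus $H$ behaves under induction through the root $Q(2)$-subgroups, and it is exactly there that $p$-strictness must be fed in, via the relation $\bar H_i^2=H_i$ in $U(n)$ and Proposition~\ref{PUMu}(ii). Everything else is the standard highest-weight-category machinery (left exactness, Frobenius reciprocity, the socle/head dictionary under $(-)^\tau$, $SL_2$ weight symmetry), carried over to the super setting with the customary care about parity shifts and the type ($\Mtype$ or $\Qtype$) of $\mathfrak u(\la)$.
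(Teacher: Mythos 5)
The paper does not prove Theorem~\ref{TUnivProperty}: it is stated with a citation to \cite[Section 6]{Kleshchev_Brundan_Modular_Representations_of_the_supergroup_Q(n)_I}, and no proof appears in the text. So there is no in-paper argument to compare against; what you have written is a reconstruction of the proof from the cited reference, and it is a sound one, built on exactly the expected ingredients: $G/B$ proper giving finite-dimensionality of $H^0(\la)$, contravariant duality plus Frobenius reciprocity producing the natural isomorphism $\Hom_{U(n)}(V(\la),M)\cong\Hom_{U^0(n)}(\mathfrak u(\la),P_\la(M))$, the description of primitive vectors of $L(\mu)$ together with Proposition~\ref{PUMu} pinning down the socle of $H^0(\la)$ and hence the head of $V(\la)$, and the consequent universal property via generation of $V(\la)$ by $V(\la)^\la$. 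Your rank-one calculation for the necessity of $p$-strictness is correct: the relation $\bar E_r\bar F_r+\bar F_r\bar E_r=H_r+H_{r+1}$ gives $\bar E_r\bar F_r v=(\la_r+\la_{r+1})v=2a\,v\ne0$ when $p\nmid a$ and $p\ne2$, and the $\mathfrak{sl}_2$ weight-multiplicity symmetry (with $\langle\la-\al_r,\al_r^\vee\rangle=-2$) then produces the forbidden weight $\la+\al_r$.

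Two remarks. First, your argument addresses only the $p$-strict half of the condition $\la\in X^+_p(n)$; the dominance half (that $\la_r<\la_{r+1}$ forces $H^0(\la)=0$) must also be handled, though this is the easy, purely even part of the criterion and presumably what you meant by ``standard highest-weight-category machinery.'' Second, when you conclude $\operatorname{soc}H^0(\la)\cong L(\la)$ from $\Hom_{U(n)}(L(\mu),H^0(\la))\ne0\Leftrightarrow\mu=\la$, you should also note that $\Hom_{U(n)}(L(\la),H^0(\la))\cong\operatorname{End}_{U^0(n)}(\mathfrak u(\la))\cong\operatorname{End}_{U(n)}(L(\la))$, so the multiplicity of $L(\la)$ in the socle is exactly one and the socle (hence the head of $V(\la)$) is genuinely simple. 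Beyond that, you correctly identify the one genuinely hard step --- exhibiting a nonzero vector in $H^0(\la)$ for $\la\in X^+_p(n)$, equivalently the identification $H^0(\la)^\la\cong\mathfrak u(\la)$ --- and your proposal to reduce this to $Q(2)$, using $\bar H_i^2=H_i$ and the vanishing of $\bar H_i$ on $\mathfrak u(\la)$ when $p\mid\la_i$, is where the $p$-strictness hypothesis must actually enter; that reduction is the content that cannot be dispatched by formal nonsense.
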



It is often slightly more convenient to work in the larger category than the category of finite dimensional $U(n)$-supermodules. To define this larger category, given $\la\in X(n)$, set
$$
X(\la):=\{\mu\in X(n)\mid \mu\leq \la\}.
$$
Now, define the {\em integral category $\O=\O(n)$} as the full subcategory of $U(n)$-supermodules which consists of supermodules $M$ such that $M=\bigoplus_{\la\in X(n)} M^\la$, $\dim M^\la<\infty$ for all $\la\in X(n)$, and there are $\la^{(1)},\dots,\la^{(r)}\in X(n)$ such that for any $\mu\in X(n)$, we have that $M^\mu\neq 0$ implies $\mu\in\cup_{i=1}^r X(\la^{(i)})$. This is analogous to the category considered in  \cite[Section 2.3]{Kujawa} for $\mathfrak{gl}(m,n)$.
Note that any module in $\O$ is locally finite over $U^{\geq 0}(n)$.
For any $\la\in X(n)$, we have the {\em Verma supermodule}
$$
M(\la):=U(n)\otimes_{U^{\geq 0}(n)}\u(\la),
$$
where we have inflated $\u(\la)$ from $U^0(n)$ to $U^{\geq 0}(n)$.
A standard argument yields:

\begin{lemma}\label{Vermamod} 
Let $\la\in X(n)$. Then $M(\la)$ is an object in $\O$,
$M(\la)^\la\cong\u(\la)$, $M(\la)$ has simple head $L(\la)$,
and $M(\la)$ is universal among all supermodules in $\O$ generated by
a primitive $U^0(n)$-subsupermodule isomorphic to $\mathfrak u(\lm)$.
Finally, $\{L(\la)\mid\la\in X(n)\}$ is a complete and irredundant set of irreducible modules in $\O$ up to isomorphism.
\end{lemma}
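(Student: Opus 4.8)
The plan is to prove Lemma~\ref{Vermamod} in four stages, each mirroring a standard highest-weight-category argument but adapted to the superalgebra setting with the non-one-dimensional ``Cartan'' modules $\u(\la)$.

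\textbf{Step 1: $M(\la)\in\O$ and its weight spaces.} Using the triangular decomposition $U(n)=U^-(n)U^0(n)U^+(n)$ and the PBW-type basis of $U^-(n)$ extracted from Proposition~\ref{proposition:intro:1} (products of $F_{i,j}^{(a_{i,j})}$ and $\bar F_{i,j}^{\,b_{i,j}}$ in a fixed order), the multiplication map $U^-(n)\otimes_{\mathbb F}\u(\la)\to M(\la)$ is an isomorphism of $U^0(n)$-supermodules after inflation. Each basis monomial of $U^-(n)$ has a weight lying in $-Q_+(n)$ by the weight table, so $M(\la)=\bigoplus_{\mu\le\la}M(\la)^\mu$, each weight space is finite-dimensional (finitely many monomials of a given weight times $\dim\u(\la)<\infty$ by Proposition~\ref{PUMu}(i)), and $M(\la)^\mu\ne0\Rightarrow\mu\in X(\la)$. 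This places $M(\la)$ in $\O$ with a single dominating weight $\la$. Also $M(\la)^\la\cong\u(\la)$ since the only monomial of weight $0$ is $1$.

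\textbf{Step 2: universality.} The copy $1\otimes\u(\la)\subset M(\la)$ is annihilated by $U^+(n)$ (since $\u(\la)$ was inflated so that $E^{(m)}_{i,j}$ and $\bar E_{i,j}$ act as zero) and lies in $M(\la)^\la$, hence is a primitive $U^0(n)$-subsupermodule isomorphic to $\u(\la)$, and it generates $M(\la)$ as $U(n)$-module. Conversely, if $N\in\O$ is generated by a primitive $U^0(n)$-subsupermodule $W\cong\u(\la)$, then $W$ is a $U^{\ge0}(n)$-submodule on which $U^+(n)$ acts trivially, so by the universal property of induction along $U^{\ge0}(n)\hookrightarrow U(n)$ (equivalently, freeness of $U(n)$ as a right $U^{\ge0}(n)$-module, again from Proposition~\ref{proposition:intro:1}) the inclusion $W\hookrightarrow N$ extends uniquely to a $U(n)$-homomorphism $M(\la)\to N$, which is surjective because $W$ generates $N$.

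\textbf{Step 3: simple head.} The key point: $M(\la)$ has a unique maximal submodule. Let $M^+$ be the sum of all submodules $N\subsetneq M(\la)$; I claim each such proper $N$ satisfies $N^\la=0$. Indeed $M(\la)^\la\cong\u(\la)$ is irreducible over $U^0(n)$, so $N^\la$ is $0$ or all of $M(\la)^\la$; in the latter case $N$ contains the generating set $1\otimes\u(\la)$, forcing $N=M(\la)$. Hence $(M^+)^\la=0$, so $M^+\subsetneq M(\la)$ is the unique maximal submodule, and $L':=M(\la)/M^+$ is irreducible with $L'^\la\cong\u(\la)$. By the classification in Proposition~\ref{proposition:intro:3}/\ref{proposition:intro:4} (extended to $\O$), an irreducible in $\O$ is determined by its highest weight, and since $\la$ is the unique maximal weight of $M(\la)$, $L'\cong L(\la)$. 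This also shows every irreducible in $\O$ arises this way, giving the completeness statement; irredundancy is the classification of $L(\la)$'s by highest weight.

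\textbf{Main obstacle.} The delicate point is handling the non-abelian ``torus'' $U^0(n)$: the module $\u(\la)$ is genuinely higher-dimensional, so ``highest weight space is one-dimensional'' is false and must be replaced everywhere by ``$M(\la)^\la$ is irreducible over $U^0(n)$.'' I expect the real work is in Step~1, verifying that the analogue of the PBW theorem over $\mathbb F$ (Proposition~\ref{proposition:intro:1} tensored up) genuinely gives $U(n)\cong U^-(n)\otimes U^{\ge0}(n)$ as a left $U^-(n)$- and right $U^{\ge0}(n)$-bimodule with the right factor acting on $\u(\la)$ through its inflation — i.e.\ that induction from $U^{\ge0}(n)$ is exact and produces a module free over $U^-(n)$. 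Once that structural fact is in hand, Steps 2--3 are the usual formal highest-weight-category arguments, and ``a standard argument yields'' is justified; the proof should be quite short.
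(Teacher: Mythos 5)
Your proof is correct and is exactly the ``standard argument'' the paper alludes to: triangular decomposition/PBW gives $M(\la)\cong U^-(n)\otimes_{\mathbb F}\u(\la)$ hence membership in $\O$ and the weight-space identification, Frobenius reciprocity along $U^{\geq0}(n)\hookrightarrow U(n)$ gives universality, and the observation that $M(\la)^\la\cong\u(\la)$ is $U^0(n)$-irreducible shows proper subsupermodules have trivial $\la$-weight space, yielding the unique maximal submodule and simple head. The one small point you gloss over is reconciling the notation $L(\la)$ with Proposition~\ref{proposition:intro:3} when $\la\in X^+_p(n)$: for such $\la$ the already-defined $L(\la)$ is generated by a primitive copy of $\u(\la)$, so universality gives a surjection $M(\la)\twoheadrightarrow L(\la)$ which must factor through the (unique) simple head — hence the two agree — while for $\la\notin X^+_p(n)$ the lemma is implicitly \emph{defining} $L(\la)$ as this head, so your appeal to ``the classification extended to $\O$'' is harmless but slightly circular as phrased.
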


The category of integrable finite dimensional $U(n)$-supermodules is a subcategory of the category $\O$, and $L(\la)$ is finite dimensional if and only if $\la\in X^+_p(n)$.
The contravariant duality $\tau$ preserves the category $\O$, and $L(\la)^\tau\cong L(\la)$ for all $\la\in X(n)$. Moreover, let us
consider $\u(\la)$ as a $U^{\leq 0}(n)$-module via inflation along the natural surjection $U^{\leq 0}(n)\to U^0(n)$. Then it is easy to see that
\begin{equation}\label{EVermaDual}
M(\la)^\tau\cong \operatorname{coind}_{U^{\leq 0}(n)}^{U(n)}\u(\la),
\end{equation}
where $\operatorname{coind}_B^A V$ denotes the $A$-module $\Hom_B(A,V)$ obtained from a $B$-module $V$ by coinduction from a  subalgebra $B\subseteq A$.

\begin{lemma} \label{LHomVermaDualVerma}
For $\la,\mu\in X(n)$, we have $\Hom_{U(n)}(M(\la),M(\mu)^\tau)=0$ unless $\la=\mu$, and $\Hom_{U(n)}(M(\la),M(\la)^\tau)\cong\operatorname{End}_{U^0(n)}(\u(\la))$.
\end{lemma}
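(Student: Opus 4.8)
The plan is to use the universal property of the Verma module $M(\la)$ together with the description of $M(\mu)^\tau$ as a coinduced module in \eqref{EVermaDual}. First I would observe that a $U(n)$-homomorphism $\zeta: M(\la)\to M(\mu)^\tau$ is determined by the image of a generating highest weight subspace: since $M(\la)$ is generated by $M(\la)^\la\cong\u(\la)$, and $\u(\la)$ is a primitive $U^0(n)$-subsupermodule, the image $\zeta(M(\la)^\la)$ lies in $(M(\mu)^\tau)^\la$ and consists of primitive vectors (i.e. killed by $U^+(n)$). So the first step is to identify the primitive vectors of weight $\la$ in $M(\mu)^\tau$.

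Next I would compute $(M(\mu)^\tau)^\la$ and its primitive part. Using \eqref{EVermaDual}, $M(\mu)^\tau\cong\operatorname{coind}_{U^{\le 0}(n)}^{U(n)}\u(\mu) = \Hom_{U^{\le 0}(n)}(U(n),\u(\mu))$. By the triangular decomposition $U(n)=U^-(n)U^0(n)U^+(n)$ and the PBW-type basis (Proposition~\ref{proposition:intro:1}), $U(n)$ is free as a right $U^{\le 0}(n)$-module with basis the monomials in the $E$'s and $\bar E$'s, so $M(\mu)^\tau$ can be identified as a $U^0(n)$-module with $\bigoplus_{\text{monomials } u\in U^+(n)} \u(\mu)$, where the monomial $u$ of weight $\nu\in Q_+$ contributes a copy of $\u(\mu)$ in weight $\mu-\nu$ (after twisting by $\tau$; the contravariant dual flips weights back so that the weight is $\mu+\nu$ — one must track this sign carefully, but the upshot is that weights of $M(\mu)^\tau$ are $\le\mu$, matching those of $M(\mu)$). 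A vector $f$ in weight $\la$ is primitive iff $E^{(m)}_{i,j}f=\bar E_{i,j}f=0$ for all $i<j,m>0$; dualizing, this says $f$ vanishes on $F_{i,j}^{(m)}M(\mu)$ and $\bar F_{i,j}M(\mu)$, i.e. $f$ factors through the cosocle-type quotient $M(\mu)/(U^-_{>0}(n)M(\mu)) \cong \u(\mu)$ (as $U^0(n)$-modules), which is nonzero only in weight $\mu$. Hence primitive vectors of weight $\la$ in $M(\mu)^\tau$ exist only when $\la=\mu$, and then they form a space isomorphic to $\Hom_{U^0(n)}(\u(\la),\u(\la))=\operatorname{End}_{U^0(n)}(\u(\la))$.

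Finally I would assemble these: if $\la\ne\mu$ there are no nonzero primitive vectors of weight $\la$ in $M(\mu)^\tau$, hence $\zeta=0$, giving the first claim. If $\la=\mu$, the universal property in Lemma~\ref{Vermamod} says that any $U^0(n)$-homomorphism from the primitive subsupermodule $\u(\la)\subset M(\la)$ into the primitive vectors of weight $\la$ of a module in $\O$ (and $M(\la)^\tau$ is in $\O$ since $\tau$ preserves $\O$) extends uniquely to a $U(n)$-homomorphism $M(\la)\to M(\la)^\tau$. This sets up a bijection $\Hom_{U(n)}(M(\la),M(\la)^\tau)\cong\Hom_{U^0(n)}(\u(\la),(\text{primitive part of }(M(\la)^\tau)^\la))\cong\operatorname{End}_{U^0(n)}(\u(\la))$, which is readily checked to be linear (an isomorphism of vector spaces, or of algebras if one prefers).

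The main obstacle I anticipate is the careful bookkeeping around the contravariant dual: identifying the weight-$\la$ space of $M(\mu)^\tau$ and, within it, exactly which vectors are primitive, requires dualizing the statement "$M(\mu)$ is generated over $U^-(n)$ by its highest weight space modulo $U^-_{>0}(n)M(\mu)$" correctly, and making sure the $\tau$-twist on weights is handled so that weights of $M(\mu)^\tau$ still lie below $\mu$ rather than above. Once that identification is pinned down, the rest is a formal consequence of the universal property of Verma modules already recorded in Lemma~\ref{Vermamod}.
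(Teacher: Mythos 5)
Your argument is correct, and it is essentially the mirror image of the proof the paper gives. The paper applies Frobenius reciprocity for coinduction on the \emph{target}: by (\ref{EVermaDual}),
\[
\Hom_{U(n)}(M(\la),M(\mu)^\tau)\cong\Hom_{U^{\le 0}(n)}(M(\la),\u(\mu)),
\]
and since $\u(\mu)$ is concentrated in weight $\mu$ and annihilated by $U^-(n)$ of strictly negative weight, any such $U^{\le 0}(n)$-map kills $U^-_{>0}(n)M(\la)$ and hence factors through $M(\la)/U^-_{>0}(n)M(\la)\cong\u(\la)$, giving $0$ unless $\la=\mu$ and $\operatorname{End}_{U^0(n)}(\u(\la))$ when $\la=\mu$. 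You instead use the tensor–Hom adjunction built into $M(\la)=U(n)\otimes_{U^{\ge0}(n)}\u(\la)$ (i.e.\ the universal property on the \emph{source}), converting the Hom into $\Hom_{U^0(n)}\bigl(\u(\la),\ \text{(primitive vectors of weight }\la\text{ in }M(\mu)^\tau)\bigr)$, and then compute those primitive vectors directly from the definition of the contravariant dual ($E^{(m)}_{i,j}f=0$ iff $f$ kills $F^{(m)}_{i,j}M(\mu)$), concluding that they vanish unless $\la=\mu$, in which case they form $\u(\mu)^\tau\cong\u(\mu)$. Both routes rest on exactly the same weight/primitivity observation; the paper's is marginally shorter since coinduction adjunction hands you $\u(\mu)$ directly rather than making you rediscover it inside $M(\mu)^\tau$.

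One small slip worth flagging: in the middle you assert that when $\la=\mu$ the primitive vectors "form a space isomorphic to $\Hom_{U^0(n)}(\u(\la),\u(\la))=\operatorname{End}_{U^0(n)}(\u(\la))$." That is not the primitive-vector space itself — that space is $\u(\mu)^\tau\cong\u(\mu)$, of dimension $2^{\lfloor(h_{p'}(\mu)+1)/2\rfloor}$, not $1$ or $2$. It is the Hom-space you obtain in the next step. Your final displayed chain $\Hom_{U(n)}(M(\la),M(\la)^\tau)\cong\Hom_{U^0(n)}(\u(\la),\text{prim.\ part})\cong\operatorname{End}_{U^0(n)}(\u(\la))$ is the correct statement, so the slip does not damage the argument; it is only the wording in that intermediate sentence that conflates the space with its Hom.
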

\begin{proof}
Using (\ref{EVermaDual}), we get
$$\Hom_{U(n)}(M(\la),M(\mu)^\tau)\cong\Hom_{U^{\leq 0}(n)}(M(\la),\u(\mu)),$$
which easily implies the result.
\end{proof}

\begin{lemma} \label{LVermaFilt}
Let $\la\in X(n)$ and $W$ be an integrable finite dimensional $U(n)$-supermodule. Then the supermodule
$M(\la)\otimes W$ has a finite filtration with factors of the form $M(\la+\mu)$, where $\mu$ is a weight of $W$.
\end{lemma}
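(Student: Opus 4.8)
The plan is to mimic the classical argument for the tensor identity $M(\la)\otimes W$ having a Verma filtration, adapting it to the super setting. First I would use the standard isomorphism coming from the triangular decomposition: as a $U^-(n)$-supermodule, $M(\la)\cong U^-(n)\otimes_{\mathbb F}\u(\la)$ by Proposition~\ref{proposition:intro:1} (the PBW basis), so that $M(\la)\otimes W\cong U^-(n)\otimes_{\mathbb F}(\u(\la)\otimes W)$ as $U^-(n)$-supermodules, where $W$ is viewed as an $\mathbb F$-superspace. The point is that $M(\la)\otimes W$ is free as a $U^-(n)$-supermodule on the superspace $\u(\la)\otimes W$, whose dimension is $\dim\u(\la)\cdot\dim W$.

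Next I would produce the filtration. Since $W$ is integrable and finite dimensional, choose a basis $w_1,\dots,w_d$ of $W$ consisting of weight vectors, ordered so that $\wt(w_1)\geq\wt(w_2)\geq\dots$ (i.e. refining the dominance order; nothing forces the weights to be distinct). Let $W_{\leq k}:=\operatorname{span}\{w_1,\dots,w_k\}$; these are not $U(n)$-submodules but they are $U^{\geq 0}(n)$-submodules up to lower terms — more precisely, using the comultiplication formulas, for $x\in U^{\geq 0}(n)$ we have $x\cdot(v\otimes w_k)\in \sum_t (U(n)v)\otimes W_{\leq k}$ modulo terms raising the $W$-weight, and this is where the ordering is used. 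Set $N_k:=U(n)\cdot(\u(\la)\otimes W_{\leq k})\subseteq M(\la)\otimes W$. This gives a filtration $0=N_0\subseteq N_1\subseteq\dots\subseteq N_d=M(\la)\otimes W$. I would then argue that each $N_k/N_{k-1}$ is a homomorphic image of $M(\la+\wt(w_k))$: indeed the image of $\u(\la)\otimes w_k$ in $N_k/N_{k-1}$ is a primitive $U^0(n)$-subsupermodule (primitive because applying $E^{(m)}_{i,j}$ or $\,\bar{\!E}_{i,j}$ pushes into $N_{k-1}$, by the $W$-weight bookkeeping and Theorem~\ref{TUnivProperty}/Lemma~\ref{Vermamod}), and it is isomorphic to $\u(\la+\wt(w_k))$ as a $U^0(n)$-module — here one needs $\u(\la)\otimes(\text{1-diml weight space})$ to realize $\u(\la+\wt w_k)$, using that tensoring with the appropriate character twists $\u(\la)$ to $\u(\la+\wt w_k)$ on the torus; by the universal property (Lemma~\ref{Vermamod}) this yields a surjection $M(\la+\wt(w_k))\twoheadrightarrow N_k/N_{k-1}$.

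Finally I would upgrade these surjections to isomorphisms by a dimension/character count. Comparing formal characters: $\operatorname{ch}(M(\la)\otimes W)=\operatorname{ch} M(\la)\cdot\operatorname{ch} W=\sum_{k}\operatorname{ch} M(\la+\wt(w_k))$, using that $\operatorname{ch} M(\mu)=\operatorname{ch}\u(\mu)\cdot\prod_{\text{positive roots}}(1-e^{-\al})^{-1}$ and that $\dim\u(\mu)$ is constant along the coset-of-weights appearing (this needs a small check: for the filtration to have exactly these factors one wants $\operatorname{ch}\u(\la)\cdot\operatorname{ch}W=\sum_k\operatorname{ch}\u(\la+\wt w_k)$, which holds because $\u(\la)\otimes\mathbb F_{\wt w_k}\cong\u(\la+\wt w_k)$ as $U^0(n)$-modules when $\mathbb F_{\wt w_k}$ is the relevant one-dimensional weight contribution — and more care is needed when $\,\bar{\!H}_i$-actions interact, i.e. when parities of $h_{p'}$ change, so the factors may need to be grouped, but the total character still matches). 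Since the sum of the characters of the subquotients already equals $\operatorname{ch}(M(\la)\otimes W)$ and each subquotient is a quotient of the corresponding $M(\la+\wt w_k)$, each surjection must be an isomorphism.

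The main obstacle I anticipate is the $U^0(n)$-module bookkeeping: unlike the $GL(n)$ or $\mathfrak{gl}(m,n)$ cases, $\u(\mu)$ is not one-dimensional and its dimension $2^{\lfloor(h_{p'}(\mu)+1)/2\rfloor}$ jumps as $\mu_i$ moves on or off the residue-$0$ locus. So the naive claim ``factors are $M(\la+\mu)$, one for each weight $\mu$ of $W$ counted with multiplicity'' is only literally correct at the level of characters; one must be careful that a single weight vector $w_k$ of $W$ contributes a subquotient that is a quotient of $M(\la+\wt w_k)$ with the correct $\u(\la+\wt w_k)$ on top, and this forces a compatibility between the Clifford-module structure of $\u(\la)\otimes(\text{odd part of }W\text{'s weight space})$ and that of $\u(\la+\wt w_k)$. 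Verifying this compatibility — essentially that $\u(\la)\otimes\u(\eps_j)^{\otimes?}$ decomposes correctly — is the technical heart; everything else is the standard filtration-plus-character argument.
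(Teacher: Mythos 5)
Your outline closely parallels the paper's proof: both reduce to the tensor identity $M(\la)\otimes W\cong\ind^{U(n)}_{U^{\geq0}(n)}\big(\u(\la)\otimes W|_{U^{\geq0}(n)}\big)$ (your observation that $M(\la)\otimes W$ is free over $U^-(n)$ on $\u(\la)\otimes W$ is the same fact in different clothing) and then seek a filtration of $\u(\la)\otimes W$ as a $U^{\geq0}(n)$-module with factors $\u(\la+\mu)$. The concrete gap is in the choice of $W_{\leq k}$. Picking a weight basis $w_1,\dots,w_d$ and taking spans is too coarse: since $U^0(n)$ contains the odd generators $\bar H_i$, which act nontrivially within a single weight space, the span $W_{\leq k}$ of the first $k$ weight vectors is generally not a $U^0(n)$-submodule. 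Concretely, $\bar H_i(v\otimes w_k)=(\bar H_i v)\otimes w_k\pm v\otimes\bar H_i w_k$, and $\bar H_i w_k$ can involve $w_j$ with $j>k$ from the same weight space; the chain $N_k=U(n)\cdot(\u(\la)\otimes W_{\leq k})$ then collapses (consecutive $N_k$'s within one weight space coincide), and the resulting subquotients need not be quotients of single Verma modules — they can be strictly larger, with $\u(\la)\otimes W^\nu$ rather than $\u(\la+\nu)$ sitting on top. What you actually need is a filtration of $W|_{U^{\geq0}(n)}$ by $U^{\geq0}(n)$-\emph{submodules} with irreducible factors $\u(\mu)$: filter each weight space by $U^0(n)$-submodules (not merely choose a basis), and then order the weights as you did. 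This is exactly the unpacked form of the paper's one sentence that $W|_{U^{\geq0}(n)}$ filters by $\u(\mu)$'s.

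Once that correction is made, the remaining ingredient — which you rightly flag as the technical heart — is that $\u(\la)\otimes\u(\mu)$ admits a finite $U^0(n)$-filtration all of whose factors are $\u(\la+\mu)$. This is precisely what the paper asserts (without proof) in its last clause, and it is what settles the Clifford-compatibility you worried about. Granting it, the factors of $\ind^{U(n)}_{U^{\geq0}(n)}\big(\u(\la)\otimes W|_{U^{\geq0}(n)}\big)$ are $\ind^{U(n)}_{U^{\geq0}(n)}\u(\la+\mu)=M(\la+\mu)$ by exactness of induction, and your character count becomes redundant: never leaving the $U^{\geq0}(n)$-module category until the final induction step already forces the surjections from Verma modules to be isomorphisms.
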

\begin{proof}
It is easy to check that as usual we have
$$
M(\la)\otimes W=(\ind_{U^{\geq 0}(n)}^{U(n)}\u(\la))\otimes W\cong \ind_{U^{\geq 0}(n)}^{U(n)}(\u(\la)\otimes \operatorname{res}^{U(n)}_{U^{\geq 0}(n)}W).
$$
Now $\operatorname{res}^{U(n)}_{U^{\geq 0}(n)}W$ has a finite filtration whose composition factors are of the form $\u(\mu)$, where $\mu$ is a weight of $W$, and $\u(\la)\otimes \u(\mu)$ has a finite filtration all of whose factors are of the form $\u(\la+\mu)$.
\end{proof}

\begin{corollary} \label{CVermaTensHom}
Let $\la,\mu\in X(n)$ and $W$ be an integrable finite dimensional $U(n)$-supermodule. Then $\Hom_{U(n)}(M(\la),L(\mu)^\tau\otimes W)\neq 0$ implies $\la=\mu+\nu$ for some weight $\nu$ of $W$.
\end{corollary}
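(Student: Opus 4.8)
The plan is to combine Lemma \ref{LVermaFilt} with Lemma \ref{LHomVermaDualVerma} via the standard adjunction/tensor-hom trick that converts a Hom into $L(\mu)^\tau$ into a Hom into a Verma-dual. First I would use the fact that for an integrable finite dimensional $W$ the duality interacts well with tensor products, namely $L(\mu)^\tau\otimes W\cong (L(\mu)\otimes W^\tau)^\tau$ (here $W^\tau$ is again integrable finite dimensional), so that
$$
\Hom_{U(n)}(M(\la),L(\mu)^\tau\otimes W)\cong\Hom_{U(n)}(M(\la),(L(\mu)\otimes W^\tau)^\tau).
$$
Since $L(\mu)$ is a quotient of the Verma module $M(\mu)$, the module $L(\mu)\otimes W^\tau$ is a quotient of $M(\mu)\otimes W^\tau$, hence $(L(\mu)\otimes W^\tau)^\tau$ is a submodule of $(M(\mu)\otimes W^\tau)^\tau$. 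Applying $\Hom_{U(n)}(M(\la),-)$ (which is left exact) then gives an injection
$$
\Hom_{U(n)}(M(\la),(L(\mu)\otimes W^\tau)^\tau)\hookrightarrow\Hom_{U(n)}(M(\la),(M(\mu)\otimes W^\tau)^\tau).
$$

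Next I would analyse the right-hand side using Lemma \ref{LVermaFilt} applied to $M(\mu)\otimes W^\tau$: this module has a finite filtration with subquotients $M(\mu+\nu)$, where $\nu$ ranges over the weights of $W^\tau$ (equivalently, since the weights of $W^\tau$ are the negatives of the weights of $W$, and it suffices here to know they form a finite set closed in the way we need — in fact for the conclusion it will be cleanest to filter $M(\mu)\otimes W^\tau$ and note its factors are $M(\mu+\nu')$ for weights $\nu'$ of $W^\tau$). Dualizing via $\tau$ reverses the filtration and replaces each $M(\mu+\nu')$ by $M(\mu+\nu')^\tau$; since $\tau$ is exact on $\O$, $(M(\mu)\otimes W^\tau)^\tau$ has a finite filtration with factors $M(\mu+\nu')^\tau$. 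Now $\Hom_{U(n)}(M(\la),-)$ applied along this filtration shows that if $\Hom_{U(n)}(M(\la),(M(\mu)\otimes W^\tau)^\tau)\neq 0$, then $\Hom_{U(n)}(M(\la),M(\mu+\nu')^\tau)\neq 0$ for some factor, because a nonzero map into the total module has nonzero image in some subquotient (one induces on the length of the filtration: the map either factors through a proper sub or surjects onto a quotient appropriately). By Lemma \ref{LHomVermaDualVerma}, $\Hom_{U(n)}(M(\la),M(\mu+\nu')^\tau)=0$ unless $\la=\mu+\nu'$. Hence $\la=\mu+\nu'$ for some weight $\nu'$ of $W^\tau$; translating back, $\nu'=-\nu_0$ where $\nu_0$ is a weight of $W$, so $\la=\mu-\nu_0$. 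To match the statement as written ($\la=\mu+\nu$ for some weight $\nu$ of $W$) I would simply note that the set of weights of $W$ equals the set of weights of $W^\tau$ negated, and re-index — or, more simply, run the filtration argument on $M(\mu)\otimes W$ directly after first rewriting $L(\mu)^\tau\otimes W$; either way the weight that appears is a weight of $W$.

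The only genuinely delicate point is the passage "a nonzero Hom into a filtered module forces a nonzero Hom into one of the filtration factors." For a \emph{sub}module filtration this is not automatic for Hom out of a fixed module; the clean way is to use the \emph{dual} filtration: after applying $\tau$, one gets a filtration $0=N_0\subset N_1\subset\cdots\subset N_k=(M(\mu)\otimes W^\tau)^\tau$ with $N_j/N_{j-1}\cong M(\mu+\nu'_j)^\tau$, and a nonzero $\phi\in\Hom_{U(n)}(M(\la),N_k)$ has image in some $N_j$ but not in $N_{j-1}$ (taking $j$ minimal with image in $N_j$), whence the composition $M(\la)\xrightarrow{\phi}N_j\twoheadrightarrow N_j/N_{j-1}\cong M(\mu+\nu'_j)^\tau$ is nonzero. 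This is the main (modest) obstacle; everything else is a direct assembly of the lemmas already proved, together with the elementary fact that $\tau$ is an exact contravariant self-equivalence of $\O$ commuting suitably with $\otimes$ by integrable finite-dimensional modules.
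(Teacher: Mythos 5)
Your proposal is correct and takes essentially the same route as the paper: embed $L(\mu)^\tau\otimes W$ into $M(\mu)^\tau\otimes W$, dualize the Verma filtration of Lemma~\ref{LVermaFilt}, and conclude with Lemma~\ref{LHomVermaDualVerma} (the step you flag — pushing a nonzero homomorphism into a submodule filtration down to a nonzero homomorphism into some factor — works exactly as you say for a filtration by submodules). One small slip: since $\tau$ fixes $U^0(n)$, the contravariant dual \emph{preserves} weights rather than negating them, so the weights of $W^\tau$ already coincide with those of $W$ and no re-indexing or sign change is needed at the end.
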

\begin{proof}
We have
$$\Hom_{U(n)}(M(\la),L(\mu)^\tau\otimes W)\subseteq \Hom_{U(n)}(M(\la),M(\mu)^\tau\otimes W),
$$
and by dualizing in Lemma~\ref{LVermaFilt}, we conclude that $M(\mu)^\tau\otimes W$ has a finite filtration all of whose factors are of the form $M(\mu+\nu)^\tau$, where $\nu$ is a weight of $W$. Now the result follows from Lemma~\ref{LHomVermaDualVerma}.
\end{proof}

Denote
\begin{equation}\label{equation:intro:0}
\begin{split}
&E^\0_i:=E^{(1)}_{i,i+1},\ E^\1_i:=\bar E_{i,i+1},\ F_{i,j}^\0:=F^{(1)}_{i,j},
\\
&F_{i,j}^\1:=\bar F_{i,j},\ H_i^\0:=\tbinom{H_i}1,\ H_i^\1:=\bar H_i.
\end{split}
\end{equation}
Extending scalars from $\Z$ to $\mathbb F$, we get the corresponding elements of $E^\0_i$, $E^\1_i$, $\sfsl F_{i,j}^\0$, $\sfsl F_{i,j}^\1$, $ H_i^\0$, $ H_i^\1$ of $U(n)$.
Moreover, it will be convenient to use the  notation $X:=X^\0$ and $\bar X:=X^\1$ when the right hand sides make sense for the  symbol $X$.
Thus, for example, $H_i=H^\0_i$ and $\bar E_i=E^\1_i$.

Recalling the supercommutation notation (\ref{ESuperComm}), we can write the supercommutator of any two generators of $U_\Z(n)$ and $U(n)$, using the corresponding relation in the universal enveloping algebra $U_\mathbb C(n)$. The following relations will be used especially often:
\begin{equation}\label{equation:intro:1}
\begin{array}{l}
[H_i^\de,H_i^\eps]=(1 -(-1)^{\de\eps}) H_i^{\de+\eps};
\\[1pt]
\bar H_i^2= H_i;
\\[1pt]
[H_i^\de, H_j^\eps]=0\qquad(
i\neq j);
\\[1pt]
(\bar H_i\pm\bar H_j)^2=(H_i+H_j)\qquad(
i\neq j);
\\[1pt]
[E_i^\delta, F_{i,j}^\epsilon]=-(-1)^{\delta\epsilon}F_{i+1,j}^{\epsilon+\delta}\qquad
(j>i+1);
\\[1pt]
[E_{j-1}^\delta, F_{i,j}^\epsilon]=F_{i,j-1}^{\epsilon+\delta}\qquad
(j>i+1);
\\[1pt]
[E_k^\delta, F_{i,j}^\epsilon]=0\qquad (k\ne i, j-1);
\\[1pt]
[E_i^\delta, F_{i}^\epsilon]=H_i^{\epsilon+\delta}-(-1)^{\delta\epsilon}H_{i+1}^{\epsilon+\delta};
\\[1pt]
[H_i^\de,E_{i,j}^\eps]=E_{i,j}^{\de+\eps}; 
\\[1pt]
[H_j^\de,E_{i,j}^\eps]=-(-1)^{\de\eps}E_{i,j}^{\de+\eps}; 
\\[1pt]
[H_i^\de,F_{i,j}^\eps]=-(-1)^{\de\eps}F_{i,j}^{\de+\eps}; 
\\[1pt]
[H_j^\de,F_{i,j}^\eps]=F_{i,j}^{\de+\eps}; 
\\[1pt]
[H_l^\de,E_{i,j}^\eps]=[H_l^\de,F_{i,j}^\eps]=0; \qquad(k\neq i,j).
\\[1pt]
\end{array}
\end{equation}

Often we will only need to know commutation relations modulo ideals generated by
certain elements of $U^+_\Z(n)$. Specifically, for $S\subset [1,n)$, let $I^+_S$\label{I+S} denote the left ideal of $U_\Z(n)$ generated by $E_s$ and $\bar E_s$, where $s\in S$.
We will abbreviate $I^+_l:=I^+_{\{l\}}$. For example the relation $[E_i^\eps, H_i^\de]=-(-1)^{\eps\de}E_i^{\eps+\de}$ above implies $[E_i^\eps, H_i^\de]\equiv 0\pmod{I_i}$.

We will use the following elements of $U_\Z(n)$ or $U(n)$ for $1\leq i,j\leq n$:
$$
C(i,j):=H_i(H_i-1)-H_j(H_j-1),\quad B(i,j):=H_i(H_i-1)-(H_j+1)H_j.\label{CB}
$$
Note that for any vector $v$ of weight $\lm$ we have
$$
 C(i,j)v=(\res_p\lm_i-\res_p\lm_j)v,\quad  B(i,j)v=(\res_p\lm_i-\res_p(\lm_j+1))v.
$$

Let $k\leq n$. We have natural inclusions $U(k)\subset U(n)$ and $U^0(k)\subset U^0(n)$, under which generators go to the generators with exactly the same names.
Given a $U(n)$-supermodule $V$ and a weight $\mu\in X(n)$, we can speak of $U(k)$-primitive vectors of weight $\mu$ in $V$, i.e. vectors $v\in V^\mu$ such that $E_{i,j}^{(m)}v=\bar E_{i,j}v=0$ for all $1\leq i<j\leq k$ and $m>0$.  One of the main goals of this paper is to understand $U(n-1)$-primitive vectors of weight $\la-\al(i,n)$ in the irreducible $U(n)$-module $L(\la)$. These primitive vectors will be obtained by  applying appropriate lowering operators to highest weight vectors.

The proof of the following useful fact is standard:

\begin{proposition}\label{proposition:intro:5}
Let $\la\in X(n)$ and $\al\in Q_+(n)$. A vector $v\in L(\lm)^{\lm-\alpha}$ is nonzero if and only if there exists some $E\in U^+(n)^\al$
such that $E v\ne0$.
\end{proposition}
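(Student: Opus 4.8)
The plan is to prove the two implications separately. The ``if'' direction is immediate: if there exists $E\in U^+(n)^\al$ with $Ev\ne 0$, then certainly $v\ne 0$, since $E$ acts as zero on the zero vector. So the substance is in the ``only if'' direction: assuming $v\in L(\lm)^{\lm-\al}$ is nonzero, produce some $E\in U^+(n)^\al$ with $Ev\ne 0$.

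For the ``only if'' direction I would argue as follows. Since $v\ne 0$ and $L(\lm)$ is irreducible, the submodule $U(n)v$ is all of $L(\lm)$; in particular it contains the highest weight space $L(\lm)^\lm$, which is nonzero and consists of primitive vectors (as recalled in the excerpt, the set of primitive vectors of $L(\la)$ is precisely $L(\la)^\la$). Pick $0\ne w\in L(\lm)^\lm$. Using the triangular decomposition $U(n)=U^-(n)U^0(n)U^+(n)$ and decomposing $U(n)$ into its weight components under the $Q(n)$-root-lattice grading, write $w = \sum_k y_k h_k E_k v$ with $y_k\in U^-(n)$, $h_k\in U^0(n)$, $E_k\in U^+(n)$ homogeneous. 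Comparing weights, a term $y_k h_k E_k v$ has weight $\lm$ only if $E_k$ has weight $+\be_k$ and $y_k$ has weight $-\be_k$ for some $\be_k\in Q_+(n)$, and since $E_k v\in L(\lm)^{\lm-\al+\be_k}$ sits in a space of weight $\le\lm$ we get $\be_k\le\al$; moreover if $\be_k\ne\al$ the vector $y_kh_kE_kv$ lies in a weight space strictly below $\lm$, contradicting that the total sum $w$ has weight exactly $\lm$ (more precisely, projecting onto the $\lm$-weight space, only the terms with $\be_k=\al$, hence $y_k=1$ up to a scalar in $U^0(n)$, survive). Hence $w = \sum_k h_k E_k v$ with each $E_k\in U^+(n)^\al$ and $h_k\in U^0(n)$. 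Since $w\ne 0$, at least one term $h_k E_k v\ne 0$, so $E_k v\ne 0$ for that $k$, and $E_k\in U^+(n)^\al$ is the desired element.

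The one delicate point — and the step I expect to require the most care — is the weight bookkeeping: making precise that in the expansion of $w$ in PBW order $U^-U^0U^+$, after projecting to the $\lm$-weight space, only monomials whose $U^-$-part is trivial (i.e. degree $0$ in $Q_+(n)$) can contribute. This uses that $E_k v$ always lands in a weight $\le \lm$, that $U^0(n)$ preserves weight spaces, and that $U^-(n)$ strictly lowers weight unless it is degree zero; one must also handle the odd generators $\bar E$, $\bar F$, $\bar H$ correctly, but these carry the same root-lattice weights as their even counterparts (weight $\pm\al(i,j)$ or $0$), so the same grading argument applies verbatim. With that in hand, the conclusion $Ev\ne 0$ for some $E\in U^+(n)^\al$ is forced. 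An alternative, essentially equivalent, packaging of the same idea: the map $U^+(n)^\al \to L(\lm)^\lm$, $E\mapsto$ (the $\lm$-component of $Ev$), combined with the action of $U^0(n)$ and $U^-(n)$, generates $L(\lm)$ from $v$; since $L(\lm)^\lm\ne 0$ this map cannot be identically zero, which is exactly the claim.
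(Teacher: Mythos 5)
The paper omits the proof, calling it ``standard,'' so there is no author's proof to compare against; your argument is exactly the standard one and is correct in substance. There is, however, a slip in the weight bookkeeping that you should repair: you write that for the term $y_kh_kE_kv$ to have weight $\lm$ one needs ``$E_k$ has weight $+\be_k$ and $y_k$ has weight $-\be_k$,'' but with those weights the term lies in $L(\lm)^{\lm-\al}$, not $L(\lm)^\lm$. What the weight comparison actually gives is: $E_k$ has weight $\ga_k\in Q_+(n)$, $y_k$ has weight $-\de_k$ with $\de_k\in Q_+(n)$, and $\ga_k-\de_k=\al$. Then $E_kv\in L(\lm)^{\lm-\al+\ga_k}$ forces $\ga_k\le\al$ because all weights of $L(\lm)$ are $\le\lm$; combined with $\ga_k=\al+\de_k\ge\al$ this yields $\ga_k=\al$ and $\de_k=0$, so $y_k$ is a scalar. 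Your parenthetical remark and the conclusion $w=\sum_k h_kE_kv$ with $E_k\in U^+(n)^\al$ show you had the right picture in mind, so this is a notational confusion rather than a gap; the remaining steps (irreducibility gives $U(n)v=L(\lm)\supseteq L(\lm)^\lm\ne0$, project to the $\lm$-weight space, conclude some $E_kv\ne0$) and the observation that the odd generators carry the same root-lattice weights are all fine.
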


\begin{lemma}\label{lemma:socle:1.5}
Let $\lm\in X(n)$, $1\le h<i<n$, $F\in U(n)^{-\alpha(h,i)}$,
and $v$ be a $U(n-1)$-primitive vector of $L(\lm)^{\lm-\alpha(i,n)}$.
Suppose that $ E_l^\delta Fv=0$ for all $l\in[h..i-1)$ and $\delta\in\{\0,\1\}$, and that
$ E_h^{\delta_h}\cdots E_{i-1}^{\delta_{i-1}}Fv=0$ for all $\delta_h,\ldots,\delta_{i-1}\in\{\0,\1\}$.
Then $Fv$ is a $U(n-1)$-primitive vector.
\end{lemma}
\begin{proof}
It suffices to prove that $ E_{i-1}^\delta Fv=0$ for any $\delta$. By Proposition~\ref{proposition:intro:5},
we must prove that $P E_{i-1}^\delta Fv=0$ for any product $P$ of the elements $ E^{\delta_i}_i,\ldots, E^{\delta_{n-1}}_{n-1}$
and the elements $ E^{\delta_h}_h,\ldots, E^{\delta_{i-2}}_{i-2}$ in an arbitrary order.
The elements of the first group supercommute with the elements of the second group, so
we can write $P=\pm P'P''$, where $P'$ is a product of the elements $ E^{\delta_i}_i,\ldots, E^{\delta_{n-1}}_{n-1}$ and $P''$ is a product of elements $ E^{\delta_h}_h,\ldots, E^{\delta_{i-2}}_{i-2}$.
Now $P'' E_{i-1}^\delta F\,v=0$, except when $P''= E^{\delta_h}_h\cdots E^{\delta_{i-2}}_{i-2}$.
However, in this case $P'' E_{i-1}^\delta F\,v=0$  by assumption.
\end{proof}

Recall that our ground field $\mathbb F$ is algebraically closed and of characteristic  different from $2$.
Fix a square root $\sqrt{-1}\in\mathbb F$. Let $\bi\in\mathbb C$ be a primitive 4th root of $1$, and $\Z[\bi]$ be the ring of Gaussian integers. We may extend the natural $\Z$-action on $\mathbb F$ to a $\Z[\bi]$-action such that $\bi$ acts with multiplication by $\sqrt{-1}$.

Let $w_0$ be the longest element of the symmetric group $S_n$, i.e. $w_0 i=n+1-i$ for all $i=1,\dots,n$. It is now easy to check that there is an even automorphism $\si$ of Lie superalgebra $\mathfrak q(n,\mathbb C)$ such that
\vspace{1mm}
\begin{equation}\label{ESigmaForm}
\si:\mathfrak q(n,\mathbb C)\to\mathfrak q(n,\mathbb C),\ X^\eps_{i,j}\mapsto  -(\bi)^{\cond_{\eps=\1}}X^\eps_{w_0 j,w_0 i}.
\end{equation}
The automorphism $\si$ restricts to the subset $\Z[\bi]\cdot U_\Z(n)\subset U_{\mathbb C}(n)$, which is isomorphic as a $\Z[\bi]$-algebra to the $\Z[\bi]$-form $U_{\Z[\bi]}(n)=U_\Z(n)\otimes\Z[\bi]$ of $U_{\mathbb C}(n)$, and then extends to the automorphism of the hyperalgebra $U(n)=U_{\Z[\bi]}(n)\otimes \mathbb F$:
\begin{equation}\label{ESigma}
\si:U(n)\to U(n)
\end{equation}
Given a $U(n)$-supermodule $M$, we can twist it with the automorphism (\ref{ESigma}) to get the $U(n)$-supermodule which we denote $M^\si$.
If $\la=(\la_1,\dots,\la_n)\in X(n)$ we set:
$$
-w_0\la:=(-\la_n,-\la_{n-1},\dots,-\la_1).
$$
Using  (\ref{ESigmaForm}), we deduce:

\begin{lemma} \label{LSigmaAppl}
If $\la\in X(n)$, then $L(\la)^\si\cong L(-w_0\la)$ and $M(\la)^\si\cong M(-w_0\la)$. In particular $(V^*)^\si\cong V$.
\end{lemma}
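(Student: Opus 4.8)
The plan is to deduce Lemma~\ref{LSigmaAppl} directly from the defining formula \eqref{ESigmaForm} for the automorphism $\si$, together with the universal properties of $L(\la)$ and $M(\la)$ recorded in Proposition~\ref{proposition:intro:3} and Lemma~\ref{Vermamod}. The key observation is purely bookkeeping: $\si$ sends $E^\eps_{i,j}$ (weight $\alpha(i,j)$) to a scalar multiple of $X^\eps_{w_0 j, w_0 i} = F^\eps_{w_0 i, w_0 j}$ (weight $-\alpha(w_0 i, w_0 j)$, a negative root since $w_0 i < w_0 j$ when $i<j$), and conversely sends lowering generators to raising generators; it fixes $U^0(n)$ up to the scalars $(\bi)^{\cond_{\eps=\1}}$, which on the even generators $\binom{H_i}{m}$ are trivial. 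Consequently $\si$ carries $U^{\geq 0}(n)$ isomorphically onto $U^{\leq 0}(n)$, interchanging $U^+(n)$ with $U^-(n)$ and permuting the torus by $w_0$.

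First I would pin down how $\si$ acts on the zero part. Since $\si$ is an even automorphism fixing each $\tbinom{H_i}{m}$ up to composition with $i\mapsto w_0 i$ (because $X^\0_{i,i}\mapsto -X^\0_{w_0 i, w_0 i} = -H_{w_0 i}$, and hence $H_i(H_i-1)$ — which is what appears in $\tbinom{H_i}{2}$ etc. — maps to $H_{w_0i}(H_{w_0i}+1)$; one checks the divided powers $\tbinom{H_i}{m}$ map into $U^0$ with the effect of negating $H_i$ and applying $w_0$), a vector $v$ of weight $\mu$ in $M$ becomes a vector of weight $-w_0\mu$ in $M^\si$. Equivalently $(M^\si)^\la = M^{-w_0\la}$ as superspaces. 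Moreover $v$ is primitive in $M$ (killed by all $E^{(m)}_{i,j}$, $\bar E_{i,j}$) if and only if $v$, viewed in $M^\si$, is killed by all $\si^{-1}(E^{(m)}_{i,j}) = \pm F^{(m)}_{\cdot,\cdot}$, i.e. if and only if $v$ generates a ``lowest weight'' subspace in $M^\si$; but by the symmetry $w_0$ this is the same as being primitive for the raising operators after the $w_0$-twist. The cleanest formulation: if $v\in M^\mu$ is primitive and $\mathfrak u(\mu)$-generating, then in $M^\si$ the superspace $U^0(n)\cdot v$ has weight $-w_0\mu$ and is annihilated by $U^+(n)$, so it is a primitive $U^0(n)$-subsupermodule; and one checks it is isomorphic to $\mathfrak u(-w_0\mu)$ (the $U^0(n)$-module structure is transported by $\si$, and by uniqueness in Proposition~\ref{PUMu}, a torus module of highest weight $-w_0\mu$ which is irreducible of the right dimension — note $h_{p'}(-w_0\mu) = h_{p'}(\mu)$ — must be $\mathfrak u(-w_0\mu)$).

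With this in hand, the lemma follows formally. For $L(\la)$: $L(\la)^\si$ is irreducible (twisting by an automorphism preserves irreducibility) with highest weight $-w_0\la$ and its highest weight space isomorphic to $\mathfrak u(-w_0\la)$; since $\la\in X^+_p(n)\iff -w_0\la\in X^+_p(n)$, Proposition~\ref{proposition:intro:3} gives $L(\la)^\si\cong L(-w_0\la)$. For $M(\la)$: twisting preserves the category $\O$ (it permutes weight spaces by a bijection of $X(n)$ and $-w_0$ maps each $X(\la)$ onto $X(-w_0\la)$), and $M(\la)^\si$ is generated by a primitive $U^0(n)$-subsupermodule isomorphic to $\mathfrak u(-w_0\la)$, so by the universal property in Lemma~\ref{Vermamod} there is a surjection $M(-w_0\la)\twoheadrightarrow M(\la)^\si$; applying $\si^{-1}$-twisting again (or a dimension/character count in each weight space, using that $\si$ is a linear isomorphism) shows it is an isomorphism. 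Finally, the natural module $V$ has character $\eps_1+\cdots+\eps_n$ in the appropriate sense, and $V^* $ has highest weight $-\eps_n = -w_0\eps_1$; applying the established fact $L(\mu)^\si\cong L(-w_0\mu)$ to $\mu=\eps_1$ (so $L(\eps_1)\cong V$, $L(-\eps_n)\cong V^*$) yields $(V^*)^\si\cong V$.

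The main obstacle I expect is the careful verification of how $\si$ acts on $U^0(n)$, in particular on the divided-power generators $\tbinom{H_i}{m}$ and the odd generators $\bar H_i$: the formula \eqref{ESigmaForm} is given on the Lie superalgebra $\mathfrak q(n,\mathbb C)$, and one must check that the induced map on $U_{\mathbb C}(n)$ restricts to $U_{\Z[\bi]}(n)$ (so that it descends to $U(n)$ after $\otimes\,\mathbb F$) and compute its effect on $\tbinom{H_i}{m}$ and $\bar H_i$ precisely enough to identify weights and the $U^0(n)$-module $\mathfrak u$. The scalars $(\bi)^{\cond_{\eps=\1}} = \sqrt{-1}$ on the odd generators are exactly what is needed for $\bar H_i^2 = H_i$ and $(\bar H_i\pm\bar H_j)^2 = H_i+H_j$ to be respected under the $w_0$-twist, so this is where the choice of $\sqrt{-1}$ and the Gaussian-integer form genuinely enter; everything else is a routine transport-of-structure argument.
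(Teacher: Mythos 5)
The central computational claim in your proof is wrong: $\si$ does not interchange $U^+(n)$ and $U^-(n)$; it \emph{preserves} each piece of the triangular decomposition. Indeed, for $i<j$ one has $w_0 i = n+1-i > n+1-j = w_0 j$ (your parenthetical ``$w_0 i < w_0 j$ when $i<j$'' reverses the inequality), so
\[
\si(E^\eps_{i,j}) \;=\; \si(X^\eps_{i,j}) \;=\; -(\bi)^{\cond_{\eps=\1}}X^\eps_{w_0 j,\,w_0 i} \;=\; -(\bi)^{\cond_{\eps=\1}}E^\eps_{w_0 j,\,w_0 i},
\]
which is again a \emph{raising} generator, not a lowering one (and likewise $\si(F^\eps_{i,j})$ is again a lowering generator, and $\si(U^0(n))=U^0(n)$). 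Consequently the appeal to ``lowest weight'' vectors in $M^\si$, and the claim that lowest-for-$F$ becomes highest-for-$E$ ``by the symmetry $w_0$'', does not parse as written: nothing needs to be flipped, since a $U^+(n)$-primitive vector of $M$ is already $U^+(n)$-primitive in $M^\si$.

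Once this is corrected the argument closes more simply than you anticipated, and the rest of your outline is sound. Your computation $\si(H_i)=-H_{w_0 i}$, $\si\bigl(\tbinom{H_i}{m}\bigr)=\tbinom{-H_{w_0 i}}{m}$ shows that a weight-$\mu$ vector becomes a weight-$(-w_0\mu)$ vector; since $\si$ preserves $U^+(n)$, the highest weight vector of $L(\la)$ stays primitive; and since $-w_0$ permutes the simple roots it preserves the dominance order, so $-w_0\la$ is again the highest weight of $L(\la)^\si$. The highest weight space is then an irreducible $U^0(n)$-supermodule with the correct $p'$-height (using $h_{p'}(-w_0\la)=h_{p'}(\la)$), hence isomorphic to $\u(-w_0\la)$ by Proposition~\ref{PUMu}; twisting preserves irreducibility, and the classification (Lemma~\ref{Vermamod} for general $\la\in X(n)$, not just the dominant $p$-strict case of Proposition~\ref{proposition:intro:3}) gives $L(\la)^\si\cong L(-w_0\la)$. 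Your argument for $M(\la)^\si\cong M(-w_0\la)$ via the universal property and for $(V^*)^\si\cong V$ via $V^*\cong L(-\eps_n)$, $V\cong L(\eps_1)$ then goes through verbatim. The consistency checks you raise at the end (integrality of $\tbinom{-H_{w_0 i}}{m}$, the role of $\sqrt{-1}$ in preserving $\bar H_i^2=H_i$) are exactly the right things to verify and you have them right.
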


\chapter{Lowering operators}\label{Q(n):lovering operators}

\section{Definitions}\label{Q(n):lovering operators:def}
In this section, we define the {\em lowering operators} $S^{\,\epsilon}_{i,j}(M)\in U_\Z^{\leq 0}(n)$, where
\begin{enumerate}
\item[$\bullet$] $1\le i<j\le n$;
\item[$\bullet$] $\epsilon\in\{\0,\1\}$,
\item[$\bullet$] $M$ is a signed $(i..j]$-set containing either $\bar\jmath$ or $j$.
\end{enumerate}
These three assumptions are assumed to hold whenever we talk about lowering operators. We will also denote $S_{i,j}(M):=S^{\0}_{i,j}(M)$ and $\bar S_{i,j}(M):=S^{\1}_{i,j}(M)$.
The lowering operators are defined by induction on $\height M$ as follows. First, we set
{
\renewcommand{\theequation}{{\bf S-\arabic{equation}}}
\begin{equation}
\label{S1}
S^{\,\epsilon}_{i,j}(\{\bar\jmath\})=F^{\,\epsilon}_{i,j},
\end{equation}
\begin{equation}
\label{S2}
\begin{split}
S^{\,\epsilon}_{i,j}(\{j\})=\sum_{\gamma+\sigma=\epsilon}\Big((-1)^{\sigma} F_{i,j}^{\,\gamma}(H_i^\sigma+(-1)^{\gamma\sigma+\eps} H_j^\sigma)
+(-1)^{\gamma\epsilon}\sum_{i<k<j}\!F^\gamma_{i,k}F^\sigma_{k,j}\Big).
\end{split}
\end{equation}

Now suppose that $M$ is not equal to $\{\bar\jmath\}$ or $\{j\}$. Then of course $\min M<j$. There are four cases:

{\em Case 1:} $\min M=\odd{i{+}1}$. In this case we set
\begin{equation}
\label{S3}
\begin{split}
S_{i,j}^\epsilon(M)=\sum_{\gamma+\sigma=\epsilon}
\Big(({-}1)^{\gamma(\1+\epsilon+\|M_{(i+1..j]}\|)}
 S^{\,\gamma}_{i,i+1}\(\left\{\odd{i{+}1}\right\}\)\!S_{i+1,j}^{\,\sigma}\(M_{(i+1..j]}\)\\
+
(-1)^{\sigma(\1+\|M\|)}S_{i,j}^{\,\gamma}\(M\setminus\left\{\odd{i{+}1}\right\}\)H^\sigma_i\Big).
\end{split}
\end{equation}

{\em Case 2:} $\min M=i+1$. In this case we set
\begin{equation}
\label{S4}
\begin{split}
S_{i,j}^\epsilon(M)=\sum_{\gamma+\sigma=\epsilon}
(-1)^{\gamma(\1+\epsilon+\|M_{(i+1..j]}\|)}S^{\,\gamma}_{i,i+1}\(\left\{i{+}1\right\}\)S_{i+1,j}^{\,\sigma}\(M_{(i+1..j]}\)\\
+S_{i,j}^{\,\epsilon}(M\setminus\{i{+}1\})\,C(i,i{+}1).
\end{split}
\end{equation}

{\em Case 3:} $\min M=\bar m>i+1$. In this case we set
\begin{equation}
\label{S5}
\begin{split}
S_{i,j}^\epsilon(M)=\sum_{\gamma+\sigma=\epsilon}(-1)^{\gamma(\1+\epsilon+\|M_{(m..j]}\|)}\,S_{i,m}^{\,\gamma}\(\left\{\bar m\right\}\)S_{m,j}^{\,\sigma}\(M_{(m..j]}\)
\\
+S_{i,j}^{\,\epsilon}\(M_{\odd{\vphantom{1}m}\mapsto\odd{m{-}1}}\).
\end{split}
\end{equation}

{\em Case 4:} $\min M= m>i+1$. In this case we set
\begin{equation}
\label{S6}
\begin{split}
S_{i,j}^\epsilon(M)=\sum_{\gamma+\sigma=\epsilon}
(-1)^{\gamma(\1+\epsilon+\|M_{(m..j]}\|)}\,S^{\,\gamma}_{i,m}(\{m\})S_{m,j}^{\,\sigma}\(M_{(m..j]}\)\\
+S_{i,j}^{\,\epsilon}(M_{m\mapsto m{-}1})+S_{i,j}^{\,\epsilon}(M\setminus\{m\})\,C(m{-}1,m).
\end{split}
\end{equation}}


The following result can be easily proved by induction on $\height M$.

\begin{proposition}\label{proposition:ops:1}
$S_{i,j}^{\,\epsilon}(M)$ is a degree $\epsilon$ homogeneous  integral polynomial in terms of the form $F^\delta_{k,l}$ and $H^\gamma_t$, where $i\le k<l\le j$ and $i\le t\le j-\cond_{\bar\jmath\in M}$.
\end{proposition}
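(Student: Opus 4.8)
The plan is to prove Proposition~\ref{proposition:ops:1} by induction on $\height M$, following exactly the recursive structure of the definitions \eqref{S1}--\eqref{S6}. The statement has three parts that must all be carried through the induction simultaneously: (a) $S_{i,j}^{\,\epsilon}(M)$ lies in $U_\Z^{\le 0}(n)$ and is a polynomial with integer coefficients in the listed generators; (b) it is $\Z/2\Z$-homogeneous of degree $\epsilon$; and (c) the index ranges are respected, namely only $F^\delta_{k,l}$ with $i\le k<l\le j$ and only $H^\gamma_t$ with $i\le t\le j-\cond_{\bar\jmath\in M}$ occur.

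First I would check the base cases. For $S^{\,\epsilon}_{i,j}(\{\bar\jmath\})=F^{\,\epsilon}_{i,j}$ this is immediate: it is a single element $F^\delta_{k,l}$ with $k=i$, $l=j$, it has parity $\epsilon$ by the definitions in \eqref{equation:intro:0}, and since $\bar\jmath\in M$ the allowed range for $t$ is $[i..j-1]$, which is consistent as no $H$ occurs. For $S^{\,\epsilon}_{i,j}(\{j\})$ in \eqref{S2} one inspects the two summands: $F_{i,j}^{\,\gamma}(H_i^\sigma+\cdots H_j^\sigma)$ uses $F_{i,j}$ (range OK, since $j\notin$ the forbidden case so $t$ may range up to $j$) and $H_i$, $H_j$, all with total parity $\gamma+\sigma=\epsilon$; the term $F^\gamma_{i,k}F^\sigma_{k,j}$ with $i<k<j$ uses indices in the required range and has parity $\gamma+\sigma=\epsilon$. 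Integrality and membership in $U_\Z^{\le 0}(n)$ are clear since $F$'s and $H$'s are among the generators of $U^-_\Z$ and $U^0_\Z$.

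For the inductive step I would handle the four cases in turn. In each case the right-hand side is a $\Z$-linear combination of products, each product being of one of the forms: (i) $S^{\,\gamma}_{i,i+1}(\cdot)\,S^{\,\sigma}_{i+1,j}(\cdot)$ with $\gamma+\sigma=\epsilon$ (or $S^\gamma_{i,m}(\cdot)S^\sigma_{m,j}(\cdot)$), where both factors have strictly smaller height and the induction hypothesis applies; (ii) $S^{\,\gamma}_{i,j}(\cdot)H^\sigma_i$ or $S^{\,\epsilon}_{i,j}(\cdot)\,C(i,i{+}1)$ or $S^{\,\epsilon}_{i,j}(\cdot)\,C(m{-}1,m)$, where the $S$-factor has smaller height and $C(a,b)$ is a polynomial in $H_a,H_b\in U^0_\Z$; (iii) $S^{\,\epsilon}_{i,j}(M_{\bar m\mapsto\overline{m-1}})$ or $S^{\,\epsilon}_{i,j}(M_{m\mapsto m-1})$, which has strictly smaller height. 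The parity claim (b) follows because in every case the exponents of the sign factors $(-1)^{\gamma(\cdots)}$, $(-1)^{\sigma(\cdots)}$ do not affect parity, the two-factor products have parity $\gamma+\sigma=\epsilon$ by the inductive hypothesis applied to each factor, and the one-factor products have parity $\epsilon$ (noting $C(a,b)$ and $H^\sigma_i$ with $\sigma$ forced: in \eqref{S3} the parity of $S^\gamma_{i,j}(\cdots)H^\sigma_i$ is $\gamma+\sigma=\epsilon$; in \eqref{S4}, \eqref{S6} the extra factor $C$ is even and the $S$-factor has parity $\epsilon$). Membership in $U^{\le 0}_\Z(n)$ and integrality (a) are preserved since $U^{\le 0}_\Z(n)$ is closed under multiplication and contains all the $H$'s and $C$'s. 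The only genuinely fiddly point is the index-range bookkeeping (c): one must check in \eqref{S3}--\eqref{S6} that $S^{\,\gamma}_{i,i+1}$ contributes only $F_{i,i+1}$ and $H_i$ (within $[i..j]$), that $S^{\,\sigma}_{i+1,j}(M_{(i+1..j]})$ contributes only generators with indices in $[i+1..j]\subseteq[i..j]$, that $H^\sigma_i$ and $C(i,i+1)$, $C(m-1,m)$ only involve $H_t$ with $t\in\{i,i+1\}$ or $\{m-1,m\}\subseteq[i..j]$, and — the subtle part — that the $H_t$-range bound $t\le j-\cond_{\bar\jmath\in M}$ is respected: since $M$ always contains $j$ or $\bar\jmath$, and the recursion on $M$ never creates a $j$ where there was a $\bar\jmath$ (only $\overline{m}\mapsto\overline{m-1}$ or $m\mapsto m-1$, and the set-differences remove elements $<j$), the flag $\cond_{\bar\jmath\in M}$ is stable under the recursion in precisely the way needed, and the $H_j$ appearing in the base case \eqref{S2} is only allowed because there $j\in M$.

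The step I expect to be the main obstacle is this last range-bookkeeping in part (c), specifically verifying that no $H_j$ sneaks in when $\bar\jmath\in M$. One must trace how $\cond_{\bar\jmath\in M}$ behaves down the recursion: the recursive calls with the smaller set are $S^{\,\sigma}_{i+1,j}(M_{(i+1..j]})$, $S^{\,\sigma}_{m,j}(M_{(m..j]})$, $S^{\,\epsilon}_{i,j}(M\setminus\{\overline{i+1}\})$, $S^{\,\epsilon}_{i,j}(M\setminus\{m\})$, $S^{\,\epsilon}_{i,j}(M_{\bar m\mapsto\overline{m-1}})$, $S^{\,\epsilon}_{i,j}(M_{m\mapsto m-1})$. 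In every one of these, whether the new set contains $j$ or $\bar\jmath$ matches the original (since we only ever modify elements with absolute value $<j$, as $\min M<j$ in Cases 1--4), so by induction the allowed $H$-range is unchanged, and the only new $H$-factors introduced are $H^\sigma_i$, $C(i,i+1)$, $C(m-1,m)$, all with indices $\le j-1<j$, hence harmless. This needs to be stated carefully but is not deep; everything else is routine closure-under-the-operations bookkeeping. I would write the proof as a single induction treating all three claims at once, spelling out the base cases \eqref{S1}, \eqref{S2} and then the four inductive cases \eqref{S3}--\eqref{S6} uniformly.
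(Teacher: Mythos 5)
Your proof is correct and follows exactly the approach the paper intends (the paper simply remarks that Proposition~\ref{proposition:ops:1} is ``easily proved by induction on $\height M$'' without giving details). Your careful tracking of the flag $\cond_{\bar\jmath\in M}$ through the recursion is precisely the one non-routine point, and your observation that the recursive calls never alter whether $j$ or $\bar\jmath$ lies in the set, while all newly introduced $H$-factors carry indices strictly less than $j$, settles it.
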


\begin{corollary}\label{corollary:ops:1}
$H^\delta_l$ supercommutes with $S_{i,j}^{\epsilon}(M)$ if $l<i$ or $l>j$.
\end{corollary}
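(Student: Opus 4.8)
The plan is to deduce this immediately from Proposition~\ref{proposition:ops:1} together with the basic supercommutation relations collected in~\eqref{equation:intro:1}. By Proposition~\ref{proposition:ops:1}, the element $S_{i,j}^{\epsilon}(M)$ is an integral polynomial in the generators $F^\delta_{k,l}$ with $i\le k<l\le j$ and $H^\gamma_t$ with $i\le t\le j-\cond_{\bar\jmath\in M}$; in particular every generator appearing has all its indices lying in the interval $[i..j]$. So it suffices to check that $H^\delta_l$ supercommutes with each such generator whenever $l<i$ or $l>j$, and then note that supercommuting with each factor of a product (up to the appropriate sign bookkeeping, which is automatic since $H^\delta_l$ is homogeneous) gives supercommuting with the product, hence with the whole polynomial.

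First I would dispose of the $H$-factors: the relations $[H_l^\delta,H_t^\gamma]=0$ for $l\ne t$ (the third line of~\eqref{equation:intro:1}) cover every $H^\gamma_t$ occurring, since $t\in[i..j]$ forces $t\ne l$ when $l<i$ or $l>j$. Then I would handle the $F$-factors $F^\delta_{k,l'}$ with $i\le k<l'\le j$: here I need $[H^\delta_l, F^{\epsilon}_{k,l'}]=0$, which holds by the last line of~\eqref{equation:intro:1}, namely $[H_l^\delta,F_{k,l'}^\epsilon]=0$ whenever $l\ne k$ and $l\ne l'$. Since $k,l'\in[i..j]$ and $l\notin[i..j]$, indeed $l\ne k$ and $l\ne l'$, so this applies. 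This exhausts all the generators that can occur, and no other type of generator is present by Proposition~\ref{proposition:ops:1}.

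Finally, to conclude for the product: write $S_{i,j}^{\epsilon}(M)$ as a $\Z$-linear combination of monomials $g_1\cdots g_r$ in these generators. Since $H^\delta_l$ supercommutes with each $g_m$, an easy induction on $r$ using the super-Leibniz identity $[a,bc]=[a,b]c+(-1)^{\|a\|\|b\|}b[a,c]$ shows $[H^\delta_l, g_1\cdots g_r]=0$, and hence $[H^\delta_l, S_{i,j}^\epsilon(M)]=0$. There is no real obstacle here; the only point requiring a word of care is the sign bookkeeping in ``supercommutes with a product'', which is handled uniformly by the super-Leibniz rule and the homogeneity of all elements involved.
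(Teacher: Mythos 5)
Your proof is correct and matches the paper's (implicit) argument: the paper states Corollary~\ref{corollary:ops:1} as an immediate consequence of Proposition~\ref{proposition:ops:1} and the relations in~\eqref{equation:intro:1}, which is exactly the route you take. You have merely spelled out the two ingredients the paper leaves tacit, namely that every generator appearing in $S_{i,j}^\epsilon(M)$ has all indices inside $[i..j]$ so that the relevant $[H_l^\delta,H_t^\gamma]=0$ and $[H_l^\delta,F_{k,l'}^\epsilon]=0$ relations apply, and that supercommutation with each homogeneous factor propagates to the whole polynomial by the super-Leibniz rule.
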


\begin{corollary}\label{corollary:ops:2}
$E_l^\delta$ supercommutes with  $S_{i,j}^{\,\epsilon}(M)$
if $l<i$, or $l>j$, or $l=j$ and $M$ contains $\bar\jmath$.
\end{corollary}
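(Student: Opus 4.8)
The plan is to argue by induction on $\height M$, running in parallel with the inductive definition \eqref{S1}--\eqref{S6} of the lowering operators, and to use Corollary~\ref{corollary:ops:1} together with the commutation relations \eqref{equation:intro:1} at each inductive step. Throughout, $l$ is fixed with $l<i$, or $l>j$, or ($l=j$ and $\bar\jmath\in M$); I want to show $[E_l^\delta, S^{\,\epsilon}_{i,j}(M)]=0$ for both $\delta\in\{\0,\1\}$. Since $S^{\,\epsilon}_{i,j}(M)$ is, by Proposition~\ref{proposition:ops:1}, an integral polynomial in the $F^{\,\de'}_{k,l'}$ with $i\le k<l'\le j$ and the $H^{\,\ga}_t$ with $i\le t\le j-\cond_{\bar\jmath\in M}$, the cases $l<i$ and $l>j$ are in fact immediate from \eqref{equation:intro:1}: when $l<i$ or $l>j$ we have $l\ne k,l'-1$ for every $F^{\,\de'}_{k,l'}$ occurring (so $[E_l^\de,F^{\,\de'}_{k,l'}]=0$) and $l\ne t$ for every $H^{\,\ga}_t$ occurring, so $E_l^\de$ supercommutes with every monomial, hence with the whole polynomial by the super Leibniz rule. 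So the only content is the case $l=j$ with $\bar\jmath\in M$: here the generators $H^\ga_j$ do \emph{not} occur (the range of $t$ stops at $j-1$ because $\cond_{\bar\jmath\in M}=1$), but the generators $F^{\,\de'}_{k,j}$ do occur, and $[E_j^\de,F^{\,\de'}_{k,j}]$ need not vanish — it equals a term involving $F^{\,\de'+\de}_{k,j+1}$ which is harmless only if $j+1>n$, i.e. only in the top-level sense. So this case requires the genuine induction.

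For the case $l=j$, $\bar\jmath\in M$, I would induct on $\height M$. The base case is $M=\{\bar\jmath\}$: then $S^{\,\epsilon}_{i,j}(\{\bar\jmath\})=F^{\,\epsilon}_{i,j}$ by \eqref{S1}, and one checks directly from \eqref{equation:intro:1} that $[E_j^\de,F^{\,\epsilon}_{i,j}]$... wait — actually $[E_j^\de, F_{i,j}^\epsilon]$ is not listed as zero; but note $E_j=E_{j,j+1}$ only makes sense when $j<n$, and the relation $[E_{j-1}^\delta,F_{i,j}^\epsilon]=F_{i,j-1}^{\epsilon+\delta}$ shows it is $E_{j-1}$, not $E_j$, that fails to commute with $F_{i,j}$. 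The relation $[E_k^\delta,F_{i,j}^\epsilon]=0$ for $k\ne i,j-1$ covers $k=j$ precisely when $j\ne i$ and $j\ne j-1$, both automatic. So in fact $E_j^\de$ \emph{does} supercommute with $F_{i,j}^\epsilon$, and more generally with every $F^{\,\de'}_{k,j}$ since there $k\le j-1$... no: $[E_j^\de, F_{k,j}^{\de'}]$ with $j$ playing the role of the ``$j$'' in the relation list and $k$ the role of ``$i$'' — since $j\ne k$ and $j\ne j-1$, this is $0$. Good; similarly $[E_j^\de, F^{\de'}_{k,l'}]=0$ for all $i\le k<l'\le j$ because $j\notin\{k,l'-1\}$ when $l'\le j$ forces $l'-1\le j-1<j$. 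Hence, once the $H_j$-generators are excluded (which is exactly what $\bar\jmath\in M$ guarantees), $E_j^\de$ supercommutes with every monomial in the polynomial expression for $S^{\,\epsilon}_{i,j}(M)$, and the result follows from the super Leibniz rule with \emph{no induction needed at all}.

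So the honest structure of the argument is: invoke Proposition~\ref{proposition:ops:1} to get the explicit shape of $S^{\,\epsilon}_{i,j}(M)$ as an integral polynomial in $F^{\,\de'}_{k,l'}$ ($i\le k<l'\le j$) and $H^{\,\ga}_t$ ($i\le t\le j-\cond_{\bar\jmath\in M}$); then split into the three cases for $l$; in each case observe that $E_l^\de$ supercommutes with every generator appearing (using \eqref{equation:intro:1}: for $F^{\,\de'}_{k,l'}$ one needs $l\notin\{k,l'-1\}$, for $H^{\,\ga}_t$ one needs $l\ne t$, both of which hold under the stated hypotheses — the case $l=j$ working precisely because the $\bar\jmath\in M$ condition removes $H_j$ from the generator list); and finally conclude by the super Leibniz rule (supercommutators act as superderivations in each slot). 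The one subtlety to state carefully is why $l=j$ with $\bar\jmath\in M$ forces the bound on $t$ to be $j-1$ rather than $j$, namely $\cond_{\bar\jmath\in M}=1$; this is the only place the hypothesis ``$M$ contains $\bar\jmath$'' is used, and it is the crux of the corollary. No step is a real obstacle; the main point to get right is the bookkeeping of which generators can occur, which is entirely handled by Proposition~\ref{proposition:ops:1}.
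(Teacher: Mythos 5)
Your approach is the right one for this corollary---deduce everything from Proposition~\ref{proposition:ops:1} together with the commutation relations \eqref{equation:intro:1} by the super Leibniz rule---and your handling of the cases $l>j$ and $l=j$ with $\bar\jmath\in M$ is correct. There is, however, a genuine gap in the case $l<i$. You assert that for $[E_l^\delta,H_t^\gamma]=0$ one needs only $l\ne t$, but from the relations $[H_i^\de,E_{i,j}^\eps]=E_{i,j}^{\de+\eps}$ and $[H_j^\de,E_{i,j}^\eps]=-(-1)^{\de\eps}E_{i,j}^{\de+\eps}$, applied to $E_l=E_{l,l+1}$, the correct condition is $t\notin\{l,l+1\}$. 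For $l>j$ this is automatic because $t\le j<l$; for $l=j$ with $\bar\jmath\in M$ it is automatic because $t\le j-1<j$; but for $l<i$ only $t\ne l$ is automatic. The value $t=l+1$ can occur precisely when $l=i-1$ and $t=i$, and $H_i^\gamma$ certainly occurs in $S_{i,j}^\epsilon(M)$ by Proposition~\ref{proposition:ops:1}, so the argument breaks down there.

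In fact the statement, read as an exact supercommutation in $U_\Z(n)$, fails for $l=i-1$: take $i=2$, $j=3$, $M=\{3\}$, $\epsilon=\0$. Then by \eqref{S2} one has $S_{2,3}^{\,\0}(\{3\})=F_{2,3}(H_2+H_3)-\bar F_{2,3}(\bar H_2-\bar H_3)$, and a direct computation using $[E_1,H_2]=E_1$, $[E_1,\bar H_2]=\bar E_1$, $[E_1,H_3]=[E_1,\bar H_3]=[E_1,F_{2,3}]=[E_1,\bar F_{2,3}]=0$ gives $[E_1,S_{2,3}^{\,\0}(\{3\})]=F_{2,3}E_1-\bar F_{2,3}\bar E_1\ne 0$. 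This element does lie in the ideal $I_1^+$, and wherever the corollary is invoked in the sequel the clause ``$l<i$'' is only ever needed with $l\le i-2$ or in a computation modulo $I_l^+$, so nothing downstream appears to be affected; but as a literal claim about supercommutation the case $l=i-1$ is false. Your writeup should record the missing condition $t\ne l+1$, restrict the clean conclusion to $l\le i-2$, and treat $l=i-1$ separately (either by giving the weaker conclusion $[E_{i-1}^\delta,S_{i,j}^{\,\epsilon}(M)]\in I_{i-1}^+$, or by avoiding the case).
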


\section{Properties of $S^{\,\epsilon}_{i,j}(\{\bar\jmath\})$ and $S^{\,\epsilon}_{i,j}(\{j\})$}\label{Q(n):lovering operators:propertiesofSijbarjandSij}
These operators are defined explicitly by~(\ref{S1}) and~(\ref{S2}). We study their properties first, before using induction to investigate general lowering operators. The main issue is to establish commutation formulas with various $E$'s and $H$'s. We will repeatedly use the commutation formulas (\ref{equation:intro:1}).

\begin{lemma}\label{lemma:ops:1}
We have
$
[E^\delta_j,S^{\,\epsilon}_{i,j}\(\{j\}\)]=
\sum_{\gamma+\sigma=\epsilon+\delta}(-1)^{\1+(\delta+\gamma)\epsilon}F_{i,j}^{\,\gamma}E_j^{\,\sigma}.
$
\end{lemma}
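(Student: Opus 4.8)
The plan is to compute the supercommutator directly from the explicit formula~(\ref{S2}) for $S^{\,\epsilon}_{i,j}(\{j\})$, using bilinearity of the supercommutator in the second slot together with the Leibniz rule $[a,bc]=[a,b]c+(-1)^{\|a\|\|b\|}b[a,c]$ and the basic relations~(\ref{equation:intro:1}). Writing
$$
S^{\,\epsilon}_{i,j}(\{j\})=\sum_{\gamma+\sigma=\epsilon}\Big((-1)^{\sigma} F_{i,j}^{\,\gamma}(H_i^\sigma+(-1)^{\gamma\sigma+\eps} H_j^\sigma)+(-1)^{\gamma\epsilon}\sum_{i<k<j}F^\gamma_{i,k}F^\sigma_{k,j}\Big),
$$
I would first note that $E^\delta_j$ supercommutes with every $F_{i,k}^{\,\gamma}$ for $i<k<j$ (since $j\ne k$ and $j\ne k-1$ is false only when $k=j-1$; but even then $[E^\delta_{j-1},F^\sigma_{k,j}]$ with $k<j-1$ — wait, need $k=j-1$: then $[E^\delta_{j-1},F^\sigma_{j-1,j}]=H^{\ldots}-\ldots$). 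So the double-sum term is \emph{not} inert and must be handled. Let me restructure: the terms involving $H^\sigma_i$ are killed by $[E^\delta_j,-]$ up to the $F$-factor contribution, the terms involving $H^\sigma_j$ produce $[E^\delta_j,H^\sigma_j]$ via Leibniz, and $[E^\delta_j,F^\gamma_{i,j}]$ itself contributes through $[E_j^\delta,F_j^\epsilon]$-type relations when one reaches $F_{i,j}$; the double sum contributes through the $k=j-1$ summand where $[E^\delta_{j-1},F^\sigma_{j-1,j}]$ appears. Actually $E_j$, not $E_{j-1}$, acts here, so $[E^\delta_j, F^\gamma_{i,k}]=0$ for all $i<k<j$ and $[E^\delta_j,F^\sigma_{k,j}]$ is governed by $[E_j^\delta,F_{k,j}^\sigma]$ with the relation $[E_i^\delta,F_{i,j}^\epsilon]$ reindexed — here $[E_j^\delta, F_{k,j}^\sigma]$ with $j$ being the \emph{second} index, which by the relations vanishes unless... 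Let me be careful: in~(\ref{equation:intro:1}) the relation $[E_i^\delta,F_{i,j}^\epsilon]=-(-1)^{\delta\epsilon}F_{i+1,j}^{\epsilon+\delta}$ has $E$ at position $i$ matching the \emph{first} subscript of $F$. So $[E_j^\delta,F_{k,j}^\sigma]$: here $E_j$ matches neither the first subscript $k$ of $F_{k,j}$ (unless $k=j$, impossible) nor equals $j-1$ generically; by the relation $[E_k^\delta,F_{i,j}^\epsilon]=0$ for $k\ne i,j-1$, this commutator vanishes unless $j = (\text{second index of }F)-1$, i.e. never for $F_{k,j}$, OR $j$ equals the first index, impossible. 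Hence the double sum is genuinely inert, and only the $F_{i,j}^\gamma H_j^\sigma$ and the direct $[E_j^\delta,F_{i,j}^\gamma]$ pieces survive.

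Concretely, then, I would split $[E^\delta_j, S^{\,\epsilon}_{i,j}(\{j\})]$ into contributions from (a) $[E_j^\delta, F_{i,j}^\gamma]\cdot(H_i^\sigma + (-1)^{\gamma\sigma+\epsilon}H_j^\sigma)$ and (b) $(-1)^{\|E\|\|F\|}F_{i,j}^\gamma\cdot[E_j^\delta, H_i^\sigma + (-1)^{\gamma\sigma+\epsilon}H_j^\sigma]$, while (c) the double sum over $k$ contributes zero by the remarks above (each $[E_j^\delta, F_{i,k}^\gamma F_{k,j}^\sigma]$ vanishes since $j\notin\{i,k-1\}$ and $j\notin\{k, j-1\}$... careful, $j$ \emph{is} the relevant index for $F_{k,j}$: apply $[E_l^\delta,F_{i,j}^\epsilon]=0$ for $l\ne i, j-1$ with $l=j$; since $j\ne j-1$ and $j\ne k$, indeed $[E_j^\delta,F_{k,j}^\sigma]=0$). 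For (a) I use $[E_j^\delta,F_{i,j}^\gamma]=-(-1)^{\delta\gamma}F_{i+1,j}^{\gamma+\delta}$ when $i+1<j$, and $[E_j^\delta,F_j^\gamma]=H_j^{\gamma+\delta}-(-1)^{\delta\gamma}H_{j+1}^{\gamma+\delta}$ when $i+1=j$ — hmm, but the claimed answer only has $F_{i,j}^\gamma E_j^\sigma$ terms, no $F_{i+1,j}$, so I must be misreading which $E$ acts. Re-examining: the relation $[E_i^\delta,F_{i,j}^\epsilon]=-(-1)^{\delta\epsilon}F_{i+1,j}^{\epsilon+\delta}$ — for our commutator $[E_j^\delta, F_{i,j}^\gamma]$ we'd need $E$'s index to equal $F$'s \emph{first} index $i$, which it doesn't; and $[E_{j-1}^\delta,F_{i,j}^\epsilon]=F_{i,j-1}^{\epsilon+\delta}$ needs index $j-1$, not $j$. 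So actually $[E_j^\delta, F_{i,j}^\gamma]=0$ too (since $j\ne i$ and $j\ne j-1$)! Then (a) vanishes as well, and the \emph{entire} commutator comes from (b): $[E_j^\delta, H_i^\sigma]=0$ for $i\ne j$, and $[E_j^\delta, H_j^\sigma]$ is computed from $[H_j^\delta, E_{i,j}^\epsilon]=-(-1)^{\delta\epsilon}E_{i,j}^{\delta+\epsilon}$, giving $[E_j^\delta, H_j^\sigma]=(-1)^{\delta\sigma}E_j^{\delta+\sigma}$ (note $E_j$ here means $E_{j,j+1}$, and $H_j$ acts on its first index $j$).

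So the computation reduces to: $[E^\delta_j, S^{\,\epsilon}_{i,j}(\{j\})] = \sum_{\gamma+\sigma=\epsilon}(-1)^{\sigma}(-1)^{\|E_j^\delta\|\,\|F_{i,j}^\gamma\|}F_{i,j}^\gamma\cdot(-1)^{\gamma\sigma+\epsilon}[E_j^\delta, H_j^\sigma]$, and plugging $[E_j^\delta,H_j^\sigma]=(-1)^{\delta\sigma}E_j^{\delta+\sigma}$ and tracking the Koszul sign $(-1)^{\delta\gamma}$, I get $\sum_{\gamma+\sigma=\epsilon}(-1)^{\sigma+\delta\gamma+\gamma\sigma+\epsilon+\delta\sigma}F_{i,j}^\gamma E_j^{\delta+\sigma}$; reindexing $\sigma':=\delta+\sigma$ so that $\gamma+\sigma'=\epsilon+\delta$, and simplifying the sign exponent (using $\sigma=\sigma'+\delta$, $\sigma\equiv\sigma'-\delta$, and collecting terms mod $2$) should yield exactly $\sum_{\gamma+\sigma=\epsilon+\delta}(-1)^{\1+(\delta+\gamma)\epsilon}F_{i,j}^{\,\gamma}E_j^{\,\sigma}$. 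The main obstacle will be bookkeeping the $\Z/2\Z$-signs correctly — in particular the Koszul sign from passing $E_j^\delta$ past $F_{i,j}^\gamma$, the extra $(-1)^\sigma$ and $(-1)^{\gamma\sigma+\epsilon}$ in the definition, and the sign in $[E_j^\delta,H_j^\sigma]$ — and verifying that after reindexing the summation variable these collapse to the single clean exponent $\1+(\delta+\gamma)\epsilon$. I would carry this out by substituting $\sigma=\epsilon-\gamma$ (so everything is a function of $\gamma,\delta,\epsilon$), expanding the exponent as a polynomial in $\gamma$ over $\Z/2\Z$, and matching against the target exponent with its summation variable likewise substituted; checking the four parity cases $(\delta,\epsilon)\in\{\0,\1\}^2$ by hand is a safe fallback.
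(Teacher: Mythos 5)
Your overall strategy is the same as the paper's: observe that $E_j^\delta$ supercommutes with every generator appearing in~(\ref{S2}) except $H_j^\sigma$ (you verify that $[E_j^\delta,F_{i,k}^\gamma]$, $[E_j^\delta,F_{k,j}^\sigma]$, $[E_j^\delta,F_{i,j}^\gamma]$ and $[E_j^\delta,H_i^\sigma]$ all vanish, which is exactly the paper's opening remark), expand via the super-Leibniz rule, and reindex $\sigma\mapsto\sigma+\delta$. After some back-and-forth you do land on that correct structure.

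There is, however, a concrete sign slip in the one commutator that actually matters. From the relation $[H_j^\sigma,E_{j,j+1}^\delta]=E_{j,j+1}^{\sigma+\delta}$ in~(\ref{equation:intro:1}) (the one where $H$'s index matches the \emph{first} subscript of $E$), one gets
\[
[E_j^\delta,H_j^\sigma]=-(-1)^{\delta\sigma}[H_j^\sigma,E_j^\delta]=-(-1)^{\delta\sigma}E_j^{\sigma+\delta},
\]
whereas you wrote $[E_j^\delta,H_j^\sigma]=(-1)^{\delta\sigma}E_j^{\delta+\sigma}$, dropping the overall $-1$ that comes from reversing the slots of the supercommutator. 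That missing sign is precisely the source of the $\1$ in the target exponent: carrying the reindexing through with your version gives $(-1)^{(\delta+\gamma)\epsilon}$ rather than $(-1)^{\1+(\delta+\gamma)\epsilon}$. You do flag the $\mathbb{Z}/2\mathbb{Z}$ bookkeeping as the main hazard and propose checking the four parity cases by hand, which would expose the discrepancy; but as written the computation is off by a global sign, and this is the only gap between your argument and the paper's.
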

\begin{proof} 
The only generator appearing in the right hand side of (\ref{S2}) that does not supercommute with $E_j^\delta$ is $H_j^\si$. So $\displaystyle [E^\delta_j,S^{\,\epsilon}_{i,j}\(\{j\}\)]$ equals
\begin{align*}
&\sum_{\gamma+\sigma=\epsilon}(-1)^{\sigma} F_{i,j}^{\,\gamma}(-1)^{\gamma\sigma+\eps}(-1)^{\de\ga} [E^\delta_j,H_j^\sigma]
\\
=&\sum_{\gamma+\sigma=\epsilon}(-1)^{\sigma} F_{i,j}^{\,\gamma}(-1)^{\gamma\sigma+\eps}(-1)^{\de\ga}(-(-1)^{\de\si})E_j^{\de+\si}
\end{align*}
It remains to apply the substitution $\sigma:=\delta+\si$ and simplify the sign. 
\end{proof}

\begin{lemma}\label{lemma:ops:2}
We have
\begin{enumerate}
\item\label{lemma:ops:2:i} $E^\delta_iS_{i,j}^{\,\epsilon}(\{j\})\=(-1)^{\1+\delta(\epsilon+\1)}S_{i+1,j}^{\,\epsilon+\delta}(\{j\})\pmod{I^+_i}$ if $i+1<j$;
\item\label{lemma:ops:2:iv} $E^\delta_iS^\epsilon_{i,i+1}(\{i{+}1\})\=\cond_{\delta=\epsilon}B(i,i{+}1)\pmod{I^+_i}$.
\item\label{lemma:ops:2:ii} $E^\delta_lS_{i,j}^{\,\epsilon}(\{j\})\=0\pmod{I^+_l}$ if $l\ne i,j-1$;
\item\label{lemma:ops:2:iii} $E^\delta_{j-1}S^\epsilon_{i,j}(\{j\})\=S^{\epsilon+\delta}_{i,j-1}(\{j{-}1\})\pmod{I^+_{j-1}}$ if $i+1<j$;
\end{enumerate}
\end{lemma}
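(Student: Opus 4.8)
The statement collects four congruences (modulo left ideals of $U_\Z(n)$) describing how various $E_l^\delta$ act on the explicit operators $S^{\,\epsilon}_{i,j}(\{j\})$ and $S^{\epsilon}_{i,i+1}(\{i{+}1\})$. The strategy throughout is the same: start from the closed formula (\ref{S2}), apply $E_l^\delta$ on the left, push it to the right through the factors $F^\gamma_{i,k}$, $F^\sigma_{k,j}$ and $H^\sigma_i$, $H^\sigma_j$ using the supercommutation relations (\ref{equation:intro:1}), and then discard every term that lands in the relevant ideal $I^+_l$. The key bookkeeping point is that any monomial which still has an $E_l$ or $\bar E_l$ as a left factor, or which can be rewritten that way after moving things past elements that supercommute with $E_l$, vanishes modulo $I^+_l$; so I only need to track the ``obstruction'' terms where $E_l^\delta$ genuinely collides with a factor of $S^{\,\epsilon}_{i,j}(\{j\})$.

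\emph{Part (iii), the case $l\neq i,j-1$.} This is the easiest: by Proposition~\ref{proposition:ops:1}, $S^{\,\epsilon}_{i,j}(\{j\})$ is a polynomial in $F^\delta_{k,l'}$ with $i\le k<l'\le j$ and in $H$'s with indices in $[i..j-1]$ (note $\bar\jmath\notin\{j\}$, so the $H$-indices only go up to $j-1$). For $l<i$ or $l>j$, Corollary~\ref{corollary:ops:2} gives outright supercommutation, hence $E_l^\delta S^{\,\epsilon}_{i,j}(\{j\}) = \pm S^{\,\epsilon}_{i,j}(\{j\})E_l^\delta \in I^+_l$. For $i<l<j-1$ I would check directly from (\ref{equation:intro:1}) that $[E_l^\delta, F^\epsilon_{i,j}]=0$ (since $l\neq i$ and $l\neq j-1$), that $[E_l^\delta, H_i^\sigma]=[E_l^\delta,H_j^\sigma]=0$ (as $l\neq i,j$, and here $l\neq i+1\neq l$ is not needed since $H_i,H_j$ only see $E_i,E_{i-1}$ and $E_j,E_{j-1}$), and that for the cross terms $F^\gamma_{i,k}F^\sigma_{k,j}$ the commutator $[E_l^\delta,F_{i,k}F_{k,j}]$ either vanishes or produces a term still beginning with a left factor that is absorbed: concretely $[E_l,F_{i,k}]\ne0$ only for $l=i$ or $l=k-1$, and $[E_l,F_{k,j}]\ne0$ only for $l=k$ or $l=j-1$; walking through the admissible $k$ in $(i..j)$, every surviving contribution either has index outside the range or recombines into a multiple of $S$ which commutes past $E_l$. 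So everything is $\equiv 0 \pmod{I^+_l}$.

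\emph{Parts (i) and (iv), the case $l=i$.} Here $E_i^\delta$ collides with the factors $H_i^\sigma$ (via $[E_i^\delta,H_i^\sigma]=-(-1)^{\delta\sigma}E_i^{\delta+\sigma}$, absorbed into $I_i^+$) and with the ``innermost'' cross term $F^\gamma_{i,i+1}F^\sigma_{i+1,j}$ or $F^\gamma_{i,k}$ with $k=i+1$ and, when $j=i+1$, with $F^\epsilon_{i,i+1}$ itself. Modulo $I^+_i$ the relevant identities are $[E_i^\delta,F_{i,j}^\epsilon]\equiv -(-1)^{\delta\epsilon}F_{i+1,j}^{\epsilon+\delta}\pmod{I_i^+}$ for $j>i+1$ and $[E_i^\delta,F_i^\epsilon]=H_i^{\epsilon+\delta}-(-1)^{\delta\epsilon}H_{i+1}^{\epsilon+\delta}$ for $j=i+1$. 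For part (i) I would substitute these into (\ref{S2}), observe that the $H_i$-contributions and the $F_{i,k}F_{k,j}$-contributions with $k\ge i+2$ all die, collect the surviving $F^{\gamma+\delta}_{i+1,j}(\cdots)$ and $F^{\sigma}_{i+1,j}$ terms, and match the result against the formula (\ref{S2}) written for the pair $(i+1,j)$ in place of $(i,j)$ — this is exactly where the sign $(-1)^{\1+\delta(\epsilon+\1)}$ comes from, and getting the sign right (keeping careful track of the Koszul signs from moving $E_i^\delta$, which is odd when $\delta=\1$, past $F$'s and $H$'s of various parities) will be the one genuinely error-prone step. For part (iv) with $j=i+1$, the sum (\ref{S2}) only has the $H$-terms and no cross terms; applying $E_i^\delta$ and using $[E_i^\delta,F_i^\gamma]=H_i^{\gamma+\delta}-(-1)^{\gamma\delta}H_{i+1}^{\gamma+\delta}$ together with $[E_i^\delta,H_i^\sigma],[E_i^\delta,H_{i+1}^\sigma]\in I_i^+$ plus $E_i^\delta\in I_i^+$, one is left with a sum over $\gamma+\sigma=\epsilon$ of products of two $H$'s which, after collecting, collapses to $\cond_{\delta=\epsilon}$ times $H_i(H_i-1)-(H_{i+1}+1)H_{i+1}=B(i,i+1)$ — the $\delta=\epsilon$ restriction appearing because an odd $E_i$ contributes an odd $H^{\1}$ which then must pair with another odd factor to reach even degree, forcing the parities to match.

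\emph{Part (ii), the case $l=j-1$.} This is dual in flavour to part (i): now $E_{j-1}^\delta$ collides with $F_{i,j}^\epsilon$ via $[E_{j-1}^\delta,F_{i,j}^\epsilon]\equiv F_{i,j-1}^{\epsilon+\delta}\pmod{I^+_{j-1}}$, with $H_j^\sigma$ via $[E_{j-1}^\delta,H_j^\sigma]$ (which is $\pm(-1)^{\cdots}E_{j-1}^{\cdots}$, absorbed), with $H_i^\sigma$ (supercommutes, since $i<j-1$ because $i+1<j$), and with the cross term $F^\gamma_{i,k}F^\sigma_{k,j}$ where only the $k=j-1$ piece $F^\gamma_{i,j-1}F^\sigma_{j-1,j}=F^\gamma_{i,j-1}F^\sigma_{j-1}$ can interact (via $[E_{j-1},F_{j-1}]$ giving $H$'s). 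Substituting and collecting, the surviving terms should reassemble, after relabelling $j\mapsto j-1$, into $S^{\epsilon+\delta}_{i,j-1}(\{j{-}1\})$ as given by (\ref{S2}); again the delicate part is the Koszul signs, but here the target has a trivial sign coefficient, which is a useful consistency check. Throughout, the main obstacle I anticipate is not the structure of the argument — which is a routine ``apply $E$, commute, discard ideal terms, recognize the answer'' — but the sign accounting: the definition (\ref{S2}) already carries three different sign factors $(-1)^\sigma$, $(-1)^{\gamma\sigma+\epsilon}$, $(-1)^{\gamma\epsilon}$, and each commutator past an odd element introduces further $(-1)^{\delta\,(\cdot)}$ factors, so I would organize the computation by first doing the purely even case $\delta=\epsilon=\0$ to pin down the combinatorial skeleton, then reinstating parities one generator at a time.
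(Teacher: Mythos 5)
Your overall approach — apply $E_l^\delta$ to the closed formula (\ref{S2}), push it through with the supercommutation relations (\ref{equation:intro:1}), discard terms in $I_l^+$, and match against the target — is exactly the paper's approach, and your handling of the $l<i$, $l>j$, $i<l<j-1$ and $l=i$ cases is sound in outline. The emphasis on sign bookkeeping as the delicate step is also right.

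However, your description of the $l=j-1$ case contains a genuine error that would derail the computation if executed as written. You claim that among the cross terms $F^\gamma_{i,k}F^\sigma_{k,j}$ with $i<k<j$, \emph{only} the $k=j-1$ piece interacts with $E_{j-1}^\delta$ (via $[E_{j-1},F_{j-1}]$ producing $H$'s). In fact $E_{j-1}^\delta$ interacts nontrivially with $F_{k,j}^\sigma$ for \emph{every} $k$ in $(i..j)$: for $k<j-1$ the relevant relation from (\ref{equation:intro:1}) is $[E_{j-1}^\delta,F_{k,j}^\sigma]=F_{k,j-1}^{\sigma+\delta}$, which produces the terms $F^\gamma_{i,k}F^{\sigma+\delta}_{k,j-1}$ for $i<k<j-1$ — and these are precisely the cross terms of the target $S^{\epsilon+\delta}_{i,j-1}(\{j-1\})$. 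If you only account for the $k=j-1$ contribution, those cross terms never appear, the sum cannot reassemble into (\ref{S2}) written for $(i,j-1)$, and the claimed identity would fail to emerge. A related but less damaging slip occurs in the $l=i$ case, where you say the $F_{i,k}F_{k,j}$ contributions with $k\ge i+2$ ``all die'': they do not die, they become $F_{i+1,k}^{\gamma+\delta}F_{k,j}^\sigma$ and survive as the cross terms of $S^{\epsilon+\delta}_{i+1,j}(\{j\})$. You would presumably notice both errors when you tried to match the result against (\ref{S2}), but as written the plan does not account for where those terms come from.
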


\begin{proof} We write ``$\equiv$'' for ``$\equiv\pmod{I}$'' for an appropriate ideal $I$, which is clear from the context.

(\ref{lemma:ops:2:i}) By~(\ref{S2}), we get
\begin{align*}
E^\delta_iS_{i,j}^{\,\epsilon}(\{j\})\=-\sum_{\gamma+\sigma=\epsilon}(-1)^{\sigma+\delta\gamma}F_{i+1,j}^{\,\gamma+\delta}(H_i^\sigma+(-1)^{\gamma\sigma+\eps} H_j^\sigma)
\\
-\sum_{\gamma+\sigma=\epsilon}(-1)^{\gamma\epsilon+\delta\gamma}\sum_{i+1<k<j}\!F^{\gamma+\delta}_{i+1,k}F^\sigma_{k,j}
+\sum_{\gamma+\sigma=\epsilon}(-1)^{\gamma\epsilon}(H_i^{\delta+\gamma}-(-1)^{\delta\gamma}H_{i+1}^{\delta+\gamma})F^\sigma_{i+1,j}
\\
\shoveleft{
=-\!\!\sum_{\gamma+\sigma=\epsilon+\delta}
\Big((-1)^{\sigma+\delta(\gamma+\delta)}F_{i+1,j}^{\,\gamma}(H_i^\sigma+(-1)^{(\gamma+\delta)\sigma+(\gamma+\delta)+\sigma} H_j^\sigma)}
\\
+
(-1)^{(\gamma+\delta)\epsilon+\delta(\gamma+\delta)}
\hspace{-2mm}
\sum_{i+1<k<j}F^{\gamma}_{i+1,k}F^\sigma_{k,j}
-(-1)^{(\gamma+\delta)\epsilon}(H_i^{\gamma}
-(-1)^{\delta(\gamma+\delta)}H_{i+1}^{\gamma})F^\sigma_{i+1,j}\Big).
\end{align*}
Here we have applied the substitution $\gamma:=\gamma+\delta$.
Now, the last summand equals
$$
-(-1)^{(\ga+\de)\eps+\gamma\sigma}F^\sigma_{i+1,j}(H_i^{\gamma}-(-1)^{\delta(\gamma+\delta)}H_{i+1}^{\gamma})\\
-(-1)^{(\gamma+\delta)\epsilon+\delta(\gamma+\delta)+\gamma\sigma}F^{\gamma+\sigma}_{i+1,j}.
$$
By considering the cases $\eps+\de=\0$ and $\eps+\de=1$ separately, we observe that the signs match in such a way that
$$
\sum_{\gamma+\sigma=\epsilon+\delta}
(-1)^{(\gamma+\delta)\epsilon+\delta(\gamma+\delta)+\gamma\sigma}
F^{\gamma+\sigma}_{i+1,j}=0.
$$
Next, swapping $\ga$ and $\si$ we obtain
\begin{align*}
\sum_{\gamma+\sigma=\epsilon+\delta}
(-1)^{(\ga+\de)\eps+\gamma\sigma}F^\sigma_{i+1,j}(H_i^{\gamma}-(-1)^{\delta(\gamma+\delta)}H_{i+1}^{\gamma})
\\
=\sum_{\gamma+\sigma=\epsilon+\delta}
(-1)^{(\si+\de)\eps+\sigma\gamma}F^\gamma_{i+1,j}(H_i^{\sigma}-(-1)^{\delta(\sigma+\delta)}H_{i+1}^{\sigma}).
\end{align*}
Thus, modulo $I_i^+$, the expression $E^\delta_iS_{i,j}^{\,\epsilon}(\{j\})$ is equal to
\begin{align*}
-\sum_{\gamma+\sigma=\epsilon+\delta}\Big(
(-1)^{\sigma+\delta(\gamma+\delta)}F_{i+1,j}^{\,\gamma}(H_i^\sigma+(-1)^{(\gamma+\delta)\sigma+(\gamma+\delta)+\sigma} H_j^\sigma)
\\
+(-1)^{(\gamma+\delta)\epsilon+\delta(\gamma+\delta)}
\hspace{-3mm}
\sum_{i+1<k<j}
\hspace{-3mm}
F^{\gamma}_{i+1,k}F^\sigma_{k,j}
-(-1)^{(\si+\de)\eps+\sigma\gamma}F^\gamma_{i+1,j}(H_i^{\sigma}-(-1)^{\delta(\sigma+\delta)}H_{i+1}^{\sigma})\Big).
\end{align*}
The first and the third terms inside the summation give
\begin{align*}
F_{i+1,j}^{\,\gamma}\bigl((-1)^{(\si+\de)\eps+\sigma\gamma+\delta(\sigma+\delta)}H_{i+1}^\sigma
+(-1)^{\sigma+\delta(\gamma+\delta)+(\gamma+\delta)\sigma+\gamma+\delta+\sigma}H_j^\sigma\bigr),
\end{align*}
and the result follows by comparing the signs with ~(\ref{S2}).

(\ref{lemma:ops:2:iv}) By definitions, we get using commutation relations
\begin{align*}
E^\delta_iS^\epsilon_{i,i+1}(\{i{+}1\})\equiv
\sum_{\gamma+\sigma=\epsilon}(-1)^{\sigma}
(H_i^{\delta+\gamma}-(-1)^{\delta\gamma}H_{i+1}^{\delta+\gamma})(H_i^\sigma+(-1)^{\gamma\sigma+\gamma+\sigma}H_{i+1}^\sigma)
\\
=\sum_{\gamma+\sigma=\epsilon}(-1)^{\sigma}
\Big(H_i^{\delta+\gamma}H_i^\sigma
+(-1)^{\gamma\sigma+\gamma+\sigma}H_i^{\delta+\gamma}H_{i+1}^\sigma
\\
-(-1)^{\delta\gamma+\sigma(\delta+\gamma)}H_i^\sigma H_{i+1}^{\delta+\gamma}-(-1)^{\delta\gamma+\gamma\sigma+\gamma+\sigma}H_{i+1}^{\delta+\gamma}H_{i+1}^\sigma\Big)\\
=H_i^{\delta+\epsilon}H_i+(-1)^{\epsilon}H_i^{\delta+\epsilon}H_{i+1}
-(-1)^{\delta\epsilon}H_iH_{i+1}^{\delta+\epsilon}-(-1)^{\delta\epsilon+\epsilon}H_{i+1}^{\delta+\epsilon}H_{i+1}
-H_i^{\delta+\epsilon+\1}\bar H_i
\\
+H_i^{\delta+\epsilon+\1}\bar H_{i+1}
-(-1)^{\delta\epsilon+\epsilon}\bar H_iH_{i+1}^{\delta+\epsilon+\1}-(-1)^{\delta\epsilon+\delta}H_{i+1}^{\delta+\epsilon+\1}\bar H_{i+1}).
\end{align*}
Considering separately the cases $\delta=\epsilon$ and  $\delta=\epsilon+\1$, we obtain the required result.

(\ref{lemma:ops:2:ii}) It suffices to consider the case $i<l<j-1$. Note that $E^\delta_l$ supercommutes with
$F_{i,j}^{\,\gamma}$ as well as with $F_{i,k}^\gamma$ for $k\ne l+1$ and with $F^\sigma_{k,j}$ for $k\ne l$.
Hence
\begin{align*}
E^\delta_lS_{i,j}^{\,\epsilon}(\{j\})
\=\sum_{\gamma+\sigma=\epsilon}(-1)^{\gamma\epsilon+\delta\gamma}F^\gamma_{i,l}E^\delta_lF^\sigma_{l,j}
+\sum_{\gamma+\sigma=\epsilon}(-1)^{\gamma\epsilon}E^\delta_lF^\gamma_{i,l+1}F^\sigma_{l+1,j}
\\
\=-\sum_{\gamma+\sigma=\epsilon}(-1)^{\gamma\epsilon+\delta\gamma+\sigma\delta}F^\gamma_{i,l}F^{\sigma+\delta}_{l+1,j}
+\sum_{\gamma+\sigma=\epsilon}(-1)^{\gamma\epsilon}F^{\gamma+\delta}_{i,l}F^\sigma_{l+1,j}
\\
=-\sum_{\gamma+\sigma'=\epsilon+\delta}(-1)^{\gamma\epsilon+\delta\gamma+(\sigma'+\delta)\delta}F^\gamma_{i,l}F^{\sigma'}_{l+1,j}
+\sum_{\gamma'+\sigma=\epsilon+\delta}(-1)^{(\gamma'+\delta)\epsilon}F^{\gamma'}_{i,l}F^\sigma_{l+1,j}=0.
\end{align*}
Here we applied the substitutions $\sigma':=\sigma+\delta$ and $\gamma':=\gamma+\delta$.

(\ref{lemma:ops:2:iii}) Using commutation relations, we can write $E^\delta_{j-1}S^\epsilon_{i,j}(\{j\})$ modulo $I_{j-1}$ as
\begin{align*}
\sum_{\gamma+\sigma=\epsilon}\Big((-1)^{\sigma}F_{i,j-1}^{\delta+\gamma}(H_i^\sigma+(-1)^{\gamma\sigma+\eps} H_j^\sigma)
+(-1)^{\gamma\epsilon+\delta\gamma}\sum_{i<k<j}F^\gamma_{i,k}E^{\,\delta}_{j-1}F^\sigma_{k,j}\Big)
\\
=\sum_{\gamma+\sigma=\epsilon}\Big((-1)^{\sigma}F_{i,j-1}^{\,\delta+\gamma}(H_i^\sigma+(-1)^{\gamma\sigma+\eps} H_j^\sigma)+
(-1)^{\gamma\epsilon+\delta\gamma}\sum_{i<k<j-1}\!F^\gamma_{i,k}F^{\,\delta+\sigma}_{k,j-1}
\\
+(-1)^{\gamma\epsilon+\delta\gamma}F^\gamma_{i,j-1}(H^{\delta+\sigma}_{j-1}-(-1)^{\delta\sigma}H^{\delta+\sigma}_j)\Big)
\end{align*}
\begin{align*}
=\sum_{\gamma+\sigma=\epsilon+\delta}\Big((-1)^{\sigma}F_{i,j-1}^{\gamma}(H_i^\sigma+(-1)^{(\gamma+\delta)\sigma+\gamma+\delta+\sigma} H_j^\sigma)
\\
+(-1)^{\gamma\epsilon+\delta\gamma}\sum_{i<k<j-1}F^\gamma_{i,k}F^{\,\sigma}_{k,j-1}
+(-1)^{\gamma\epsilon+\delta\gamma}F^\gamma_{i,j-1}(H^{\sigma}_{j-1}-(-1)^{\delta(\sigma+\delta)}H^{\sigma}_j)\Big),
\end{align*}
where we have applied the substitutions $\gamma:=\gamma+\delta$ in the first term and $\sigma:=\sigma+\delta$ for the second and the third terms.
It is easy to see that the last expression equals $S^{\epsilon+\delta}_{i,j-1}(\{j{-}1\})$.
\end{proof}

\begin{lemma}\label{lemma:ops:5}
We have
\begin{enumerate}
\item\label{lemma:ops:5:i}  $[H_i^\delta, S_{i,j}^\epsilon(\{j\})]=(-1)^{\1+\delta(\epsilon+\1)}S_{i,j}^{\epsilon+\delta}(\{j\})$.
\item\label{lemma:ops:5:ii} $[H_l^\delta, S_{i,j}^\epsilon(\{j\})]=0$ if $l\ne i,j$.
\item\label{lemma:ops:5:iii} $[H_j^\delta, S_{i,j}^\epsilon(\{j\})]=S_{i,j}^{\epsilon+\delta}(\{j\})$.
\end{enumerate}
\end{lemma}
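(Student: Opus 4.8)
The plan is to compute each supercommutator directly from the explicit formula~(\ref{S2}), using the basic commutation relations~(\ref{equation:intro:1}) between the $H$'s and between the $H$'s and the $F$'s. The key observation is that in~(\ref{S2}) the only generators occurring are $F_{i,j}^\gamma$, $F_{i,k}^\gamma$, $F_{k,j}^\sigma$ (for $i<k<j$), $H_i^\sigma$, and $H_j^\sigma$, so $[H_l^\delta,S_{i,j}^\epsilon(\{j\})]$ reduces to knowing how $H_l^\delta$ supercommutes with each of these. For $l\ne i,j$ this is immediate from Corollary~\ref{corollary:ops:1} (or directly: $H_l^\delta$ supercommutes with every $F_{k,m}^\gamma$ appearing since $l\notin\{i,j\}$ and the intermediate indices $k$ lie strictly between $i$ and $j$ — wait, one must be slightly careful here, but in fact $l\ne i,j$ still leaves the possibility $i<l<j$, so one uses $[H_l^\delta,F_{i,k}^\gamma]=0$ for $k\ne l$, $[H_l^\delta,F_{k,j}^\sigma]=0$ for $k\ne l$, and when $k=l$ the two contributions $[H_l^\delta,F_{i,l}^\gamma]$ and $[H_l^\delta,F_{l,j}^\sigma]$ cancel because $F_{i,l}^\gamma F_{l,j}^\sigma$ appears as a product and $H_l$ has weight $0$). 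This gives part~(\ref{lemma:ops:5:ii}).

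For part~(\ref{lemma:ops:5:iii}), only $H_j^\sigma$ and the factors $F_{i,j}^\gamma$, $F_{i,k}^\gamma F_{k,j}^\sigma$ fail to supercommute with $H_j^\delta$. I would use $[H_j^\delta, H_j^\sigma]=(1-(-1)^{\delta\sigma})H_j^{\delta+\sigma}$, $[H_j^\delta,F_{i,j}^\gamma]=F_{i,j}^{\delta+\gamma}$, and $[H_j^\delta,F_{k,j}^\sigma]=F_{k,j}^{\delta+\sigma}$ (these are the relations $[H_j^\de,F_{i,j}^\eps]=F_{i,j}^{\de+\eps}$ from~(\ref{equation:intro:1})), collect terms, and perform the substitution $\sigma\mapsto\sigma+\delta$ (or $\gamma\mapsto\gamma+\delta$, as appropriate to each summand) to recognize the result as $S_{i,j}^{\epsilon+\delta}(\{j\})$ via~(\ref{S2}). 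The sign bookkeeping is the only real work: one must check that the coefficient $(1-(-1)^{\delta\sigma})$ coming from the $H_j^\delta H_j^\sigma$ commutator combines correctly with the $\delta=\0$ versus $\delta=\1$ cases. Part~(\ref{lemma:ops:5:i}) is entirely analogous, now moving $H_i^\delta$ past $H_i^\sigma$, $F_{i,j}^\gamma$, and $F_{i,k}^\gamma$, using $[H_i^\delta,F_{i,j}^\gamma]=-(-1)^{\delta\gamma}F_{i,j}^{\delta+\gamma}$ and $[H_i^\delta,F_{i,k}^\gamma]=-(-1)^{\delta\gamma}F_{i,k}^{\delta+\gamma}$; the extra sign $(-1)^{\1+\delta(\epsilon+\1)}$ is exactly the sign discrepancy between acting on the left vs. the structure of~(\ref{S2}), and one verifies it by the same $\epsilon,\delta\in\{\0,\1\}$ case analysis.

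Alternatively — and perhaps more cleanly — one can observe that parts~(\ref{lemma:ops:5:i})--(\ref{lemma:ops:5:iii}) follow formally from Lemma~\ref{lemma:ops:1}: indeed $H_i^\delta=\tfrac12[E_i^\delta,F_i^{\0}]+\cdots$ relations tie the $H$'s to the $E,F$'s, but this route is actually more cumbersome here, so I would stick with the direct computation. The main obstacle is purely the sign management in assembling~(\ref{S2}) after the index substitutions; there is no conceptual difficulty, and the pattern is identical to the proof of Lemma~\ref{lemma:ops:2}. I would present~(\ref{lemma:ops:5:ii}) first (shortest), then~(\ref{lemma:ops:5:iii}), then~(\ref{lemma:ops:5:i}), in each case writing out the relevant commutators, substituting, and invoking a "considering the cases $\delta=\0$ and $\delta=\1$ separately, the signs match" step to conclude.
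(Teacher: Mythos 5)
Your proposal takes essentially the same route as the paper: expand $[H_l^\delta,S_{i,j}^\epsilon(\{j\})]$ by the Leibniz rule applied to~(\ref{S2}), substitute the commutators from~(\ref{equation:intro:1}), and reindex the summation (shifting $\gamma$ or $\sigma$ by $\delta$) to recover~(\ref{S2}) with $\epsilon$ replaced by $\epsilon+\delta$. The paper writes out only part~(i) and declares~(ii),~(iii) ``similar''; your ordering is cosmetic and harmless. One small imprecision: in~(ii), for $i<l<j$ and $\delta=\1$ the cancellation of the two contributions from $F_{i,l}^\gamma F_{l,j}^\sigma$ is not literally ``$H_l$ has weight $0$'' (that argument works only for the even $H_l^{\0}$); for $\bar H_l$ one must carry out the same $\gamma\mapsto\gamma+\1$, $\sigma\mapsto\sigma+\1$ reindexing as in the other parts, after which the two sums cancel term by term. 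Since you already invoke exactly that substitution mechanism elsewhere, this is a wording issue rather than a gap.
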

\begin{proof}
(\ref{lemma:ops:5:i}) As $[H_i^\delta,H_j^\si]=0$ and $[H_i^\delta,F^\sigma_{k,j}]=0$ for $k>i$, we have by Leibnitz rule that $[H_i^\delta, S_{i,j}^\epsilon(\{j\})]$ equals
\begin{align*}
\sum_{\gamma+\sigma=\epsilon}\Big((-1)^{\sigma} [H_i^\delta,F_{i,j}^{\gamma}](H_i^\sigma+(-1)^{\gamma\sigma+\eps} H_j^\sigma)
+(-1)^{\sigma+\ga\de} F_{i,j}^{\gamma}[H_i^\delta,H_i^\sigma]
\\
+(-1)^{\gamma\epsilon}\sum_{i<k<j}[H_i^\delta, F^\gamma_{i,k}]F^\sigma_{k,j}\Big).
\end{align*}
Now substitute $-(-1)^{\de\ga}F_{i,j}^{\ga+\de}$ for $[H_i^\delta,F_{i,j}^{\gamma}]$,
$-(-1)^{\de\ga}F_{i,k}^{\ga+\de}$ for $[H_i^\delta,F_{i,k}^{\gamma}]$,
and $(1-(-1)^{\de\si})H_i^{\de+\si}$ for $[H_i^\delta,H_i^\sigma]$, and simplify.

(\ref{lemma:ops:5:ii}) and (\ref{lemma:ops:5:iii}) are proved similarly. \end{proof}

The analogues of Lemmas~\ref{lemma:ops:2} and~\ref{lemma:ops:5} for $S_{i,j}^{\,\epsilon}(\{\bar\jmath\})$ are much easier to prove, so we just record those omitting the proofs.

\begin{lemma}\label{lemma:ops:2'}
We have
\begin{enumerate}
\item\label{lemma:ops:2':i} $E^\delta_iS_{i,j}^{\,\epsilon}(\{\bar\jmath\})\=(-1)^{\1+\delta\epsilon}S_{i+1,j}^{\,\epsilon+\delta}(\{\bar\jmath\})\pmod{I_i^+}$ if $i+1<j$;
\item\label{lemma:ops:2':iv} $E^\delta_iS^\epsilon_{i,i+1}(\{\odd{i{+}1}\})\=H^{\delta+\epsilon}_i-(-1)^{\delta\epsilon}H^{\delta+\epsilon}_{i+1}\pmod{I_i^+}$.
\item\label{lemma:ops:2':ii} $E^\delta_lS_{i,j}^{\,\epsilon}(\{\bar\jmath\})\=0\pmod{I_l^+}$ if $l\ne i,j-1$;
\item\label{lemma:ops:2':iii} $E^\delta_{j-1}S^\epsilon_{i,j}(\{\bar\jmath\})\=S^{\epsilon+\delta}_{i,j-1}(\{\odd{j{-}1}\})\pmod{I_{j-1}^+}$ if $i+1<j$;
\end{enumerate}
\end{lemma}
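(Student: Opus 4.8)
The plan is to exploit the base case (\ref{S1}) of the inductive definition, namely $S^{\,\epsilon}_{i,j}(\{\bar\jmath\})=F^{\,\epsilon}_{i,j}$: the singleton $\{\bar\jmath\}$ is treated directly by (\ref{S1}) and never enters Cases~1--4, so all four assertions reduce to statements about how the single generator $F^{\,\epsilon}_{i,j}$ of $U_\Z(n)$ moves past $E^\delta_l$ modulo the left ideal $I^+_l$. For each part I would start from the identity
$$
E^\delta_l F^{\,\epsilon}_{i,j}=(-1)^{\delta\epsilon}F^{\,\epsilon}_{i,j}E^\delta_l+[E^\delta_l,F^{\,\epsilon}_{i,j}],
$$
note that the term $(-1)^{\delta\epsilon}F^{\,\epsilon}_{i,j}E^\delta_l$ lies in $I^+_l$ because $I^+_l$ is a \emph{left} ideal containing $E^\delta_l=E^0_l,E^1_l$, and then substitute the relevant supercommutator from the list (\ref{equation:intro:1}).

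Carrying this out: for (\ref{lemma:ops:2':i}) one uses $[E^\delta_i,F^{\,\epsilon}_{i,j}]=-(-1)^{\delta\epsilon}F^{\,\epsilon+\delta}_{i+1,j}$ (valid since $i+1<j$), so $E^\delta_i F^{\,\epsilon}_{i,j}\equiv-(-1)^{\delta\epsilon}F^{\,\epsilon+\delta}_{i+1,j}=(-1)^{\1+\delta\epsilon}S^{\,\epsilon+\delta}_{i+1,j}(\{\bar\jmath\})\pmod{I^+_i}$; for (\ref{lemma:ops:2':iv}) one uses $[E^\delta_i,F^{\,\epsilon}_i]=H^{\delta+\epsilon}_i-(-1)^{\delta\epsilon}H^{\delta+\epsilon}_{i+1}$, which gives the claim immediately since $S^\epsilon_{i,i+1}(\{\odd{i{+}1}\})=F^{\,\epsilon}_i$; for (\ref{lemma:ops:2':ii}) one uses $[E^\delta_l,F^{\,\epsilon}_{i,j}]=0$, valid for $l\ne i,j-1$, so $E^\delta_l F^{\,\epsilon}_{i,j}=(-1)^{\delta\epsilon}F^{\,\epsilon}_{i,j}E^\delta_l\in I^+_l$; and for (\ref{lemma:ops:2':iii}) one uses $[E^\delta_{j-1},F^{\,\epsilon}_{i,j}]=F^{\,\epsilon+\delta}_{i,j-1}$ (valid since $i+1<j$), so that $E^\delta_{j-1}F^{\,\epsilon}_{i,j}\equiv F^{\,\epsilon+\delta}_{i,j-1}=S^{\,\epsilon+\delta}_{i,j-1}(\{\odd{j{-}1}\})\pmod{I^+_{j-1}}$, once more by (\ref{S1}).

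I do not expect any real obstacle; the only points requiring care are routine sign bookkeeping (e.g. checking $-(-1)^{\delta\epsilon}=(-1)^{\1+\delta\epsilon}$ under the convention $(-1)^{\1}=-1$) and the observation that each right-hand side is again of the form $F^{\,\epsilon'}_{i',j'}=S^{\,\epsilon'}_{i',j'}(\{\bar\jmath'\})$ (or a linear combination of $H$'s), so that the identities literally match the claimed formulas. This is exactly the content of the remark preceding the lemma: because $S^{\,\epsilon}_{i,j}(\{\bar\jmath\})$ is a single generator rather than the multi-term polynomial $S^{\,\epsilon}_{i,j}(\{j\})$, none of the term-by-term cancellations that made the proof of Lemma~\ref{lemma:ops:2} delicate are needed here.
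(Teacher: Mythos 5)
Your proposal is correct and is exactly the computation the paper has in mind: the authors explicitly omit this proof on the grounds that, because $S^{\,\epsilon}_{i,j}(\{\bar\jmath\})=F^{\,\epsilon}_{i,j}$ is a single generator by (\ref{S1}), the argument reduces to reading off the supercommutators $[E^\delta_l,F^{\,\epsilon}_{i,j}]$ from (\ref{equation:intro:1}) and discarding the term $(-1)^{\delta\epsilon}F^{\,\epsilon}_{i,j}E^\delta_l\in I^+_l$, with no cancellations needed. Your sign checks are right (in particular $-(-1)^{\delta\epsilon}=(-1)^{\1+\delta\epsilon}$ with the standard convention $(-1)^{\1}=-1$, which is the one actually used throughout the paper notwithstanding the evident typo in Section~\ref{notation}), and the identification of each right-hand side as $S^{\,\epsilon'}_{i',j'}(\{\bar\jmath'\})$ via (\ref{S1}) is exactly what closes each case.
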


\begin{lemma}\label{lemma:ops:5'}
We have
\begin{enumerate}
\item\label{lemma:ops:5':i}  $[H_i^\delta, S_{i,j}^{\,\epsilon}(\{\bar\jmath\})]=(-1)^{\1+\delta\epsilon}S_{i,j}^{\,\epsilon+\delta}(\{\bar\jmath\})$.
\item\label{lemma:ops:5':ii} $[H_l^\delta, S_{i,j}^{\,\epsilon}(\{\bar\jmath\})]=0$ if $l\ne i,j$.
\item\label{lemma:ops:5':iii} $[H_j^\delta, S_{i,j}^{\,\epsilon}(\{\bar\jmath\})]=S_{i,j}^{\,\epsilon+\delta}(\{\bar\jmath\})$.
\end{enumerate}
\end{lemma}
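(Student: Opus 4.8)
The plan is to observe that, by the base case~(\ref{S1}) of the inductive definition, the operator $S^{\,\epsilon}_{i,j}(\{\bar\jmath\})$ is nothing but the single generator $F^{\,\epsilon}_{i,j}$, so that Lemma~\ref{lemma:ops:5'} reduces immediately to three of the commutation relations listed in~(\ref{equation:intro:1}). In contrast with the proofs of Lemmas~\ref{lemma:ops:2} and~\ref{lemma:ops:5}, where $S^{\,\epsilon}_{i,j}(\{j\})$ is a genuine multi-term polynomial and one must combine the Leibniz rule with several $H$--$F$ and $E$--$H$ brackets and then reconcile the signs, here there is nothing to combine; no induction on $\height M$ is needed either, because $\{\bar\jmath\}$ is already a minimal signed set of the relevant form. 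This is exactly why the statement is recorded without a separate argument in the main text.

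Concretely, first I would substitute $S^{\,\epsilon}_{i,j}(\{\bar\jmath\})=F^{\,\epsilon}_{i,j}$ everywhere, reducing all three parts to identities for $[H^{\delta}_l,F^{\,\epsilon}_{i,j}]$. Part (ii) is then literally the relation $[H^{\delta}_l,F^{\,\epsilon}_{i,j}]=0$ for $l\ne i,j$ from~(\ref{equation:intro:1}); note this already covers the case $i<l<j$, and not merely $l<i$ or $l>j$ as in Corollary~\ref{corollary:ops:1}. Part (iii) is the relation $[H^{\delta}_j,F^{\,\epsilon}_{i,j}]=F^{\,\epsilon+\delta}_{i,j}$, and re-reading $F^{\,\epsilon+\delta}_{i,j}$ through~(\ref{S1}) as $S^{\,\epsilon+\delta}_{i,j}(\{\bar\jmath\})$ finishes it. For part (i), I would use $[H^{\delta}_i,F^{\,\epsilon}_{i,j}]=-(-1)^{\delta\epsilon}F^{\,\epsilon+\delta}_{i,j}$ from~(\ref{equation:intro:1}), rewrite the scalar $-(-1)^{\delta\epsilon}$ in the form $(-1)^{\1+\delta\epsilon}$, and once more identify $F^{\,\epsilon+\delta}_{i,j}$ with $S^{\,\epsilon+\delta}_{i,j}(\{\bar\jmath\})$ via~(\ref{S1}).

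The only thing requiring any care is the sign bookkeeping in part (i), i.e. tracking the factor $(-1)^{\1}$; apart from that there is no obstacle, and the whole lemma is a one-line consequence of~(\ref{S1}) and~(\ref{equation:intro:1}). If a uniform presentation were preferred, one could instead run the argument for part (i) exactly in parallel with that for Lemma~\ref{lemma:ops:5}(i) (Leibniz rule applied to the right-hand side of~(\ref{S2}), then substitution of the brackets and simplification), but given that here the right-hand side collapses to a single term $F^{\,\epsilon}_{i,j}$, this extra machinery is superfluous.
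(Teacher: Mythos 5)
Your proof is correct and is precisely the argument the paper intends: since $S^{\,\epsilon}_{i,j}(\{\bar\jmath\})=F^{\,\epsilon}_{i,j}$ by (\ref{S1}), all three parts are immediate restatements of the $H$--$F$ brackets in (\ref{equation:intro:1}), which is why the paper records the lemma "omitting the proofs" as "much easier." The sign bookkeeping $(-1)^{\1+\delta\epsilon}=-(-1)^{\delta\epsilon}$ in part (i) is handled correctly.
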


\section{Supercommutator $[H_k^\delta,S_{i,j}^{\,\epsilon}(M)]$}

\begin{lemma}\label{lemma:ops:5.5}
We have
\begin{enumerate}
\item\label{lemma:ops:5.5:i} $[H^\delta_i,S_{i,j}^{\,\epsilon}(M)]=(-1)^{\1+\delta(\epsilon+\1+\|M\|)}S_{i,j}^{\,\epsilon+\delta}(M)$;
\item\label{lemma:ops:5.5:ii}
$[H^\delta_l,S_{i,j}^{\,\epsilon}(M)]=0$ if $l\ne i,j$.
\item\label{lemma:ops:5.5:iii}
$[H^\delta_j,S_{i,j}^{\,\epsilon}(M)]=S_{i,j}^{\,\epsilon+\delta}(M)$
\end{enumerate}
\end{lemma}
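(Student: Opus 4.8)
The statement is the general-$M$ version of Lemmas~\ref{lemma:ops:5} and~\ref{lemma:ops:5'}, and the natural approach is induction on $\height M$. The base cases $M=\{\bar\jmath\}$ and $M=\{j\}$ are exactly Lemmas~\ref{lemma:ops:5'} and~\ref{lemma:ops:5}, so it remains to treat the four recursive cases (\ref{S3})--(\ref{S6}). In each case $S_{i,j}^{\,\epsilon}(M)$ is written as a sum of terms, each of which is a product of a ``short'' lowering operator $S_{i,m}^{\,\gamma}(\{m\})$ or $S_{i,m}^{\,\gamma}(\{\bar m\})$ (or $S^{\,\gamma}_{i,i+1}$ of a singleton) with a lowering operator $S_{m,j}^{\,\sigma}(\cdot)$ of strictly smaller height, possibly followed by a further lowering operator of smaller height times an element of $U^0_\Z(n)$ (namely $H_i^\sigma$, $C(i,i{+}1)$, $C(m{-}1,m)$). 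So I would apply the super-Leibniz rule for $[H^\delta_l,-]$ to each such product and invoke the inductive hypothesis on the factors of smaller height, the base-case lemmas on the short factors, Corollary~\ref{corollary:ops:1} to know $H^\delta_l$ supercommutes with $S_{a,b}^{\,\epsilon}(N)$ whenever $l<a$ or $l>b$, and the explicit relations (\ref{equation:intro:1}) for the commutators of $H^\delta_l$ with $H^\sigma_i$, $C(\cdot,\cdot)$, which are zero (all these lie in $U^0$ and $[H^\delta_l,H^\sigma_t]=0$ for $l\ne t$, while $[H^\delta_i,H^\sigma_i]$ and $[H^\delta_i,C(i,i+1)]$ contribute only in part~(i) and must be tracked).

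For part~(ii), with $l\ne i,j$: if moreover $l\notin(i..j]$ then Corollary~\ref{corollary:ops:1} already gives the result, so assume $i<l<j$. In each of (\ref{S3})--(\ref{S6}) every term splits at some $m$ with $i<m\le j$; if $l<m$ the right factor $S_{m,j}^{\,\sigma}(\cdot)$ supercommutes with $H^\delta_l$ by Corollary~\ref{corollary:ops:1}, and the left short factor lives on $[i..m]$, so we reduce to a smaller-height instance of part~(i) or to the base lemmas; if $l\ge m$ the left short factor supercommutes with $H^\delta_l$ and the right factor is handled by the inductive hypothesis (part~(i) if $l=m$ or $l=j$—but $l\ne j$—so part~(i) or (ii) as appropriate), and the trailing $U^0$-factors ($H_i^\sigma$, $C$'s) supercommute with $H^\delta_l$ since $l\ne i$ and $l\notin\{i,i+1\}$ or $\{m-1,m\}$ needs to be checked. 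The one genuine subtlety: when $l=m-1$ or $l=i+1$ etc., the trailing factors $C(i,i{+}1)$ or $C(m{-}1,m)$ do \emph{not} commute with $H^\delta_l$; but in those terms the companion lowering operator is $S_{i,j}^{\,\epsilon}(M\setminus\{m\})$ or $S_{i,j}^{\,\epsilon}(M_{m\mapsto m-1})$ of smaller height to which one applies part~(i)/(ii), and one must verify the resulting contributions cancel against the $[H^\delta_l,C]$-terms. Part~(iii) is entirely parallel to part~(i) with $l=j$, using $[H^\delta_j,H^\sigma_i]=0$, $[H^\delta_j, C(i,i{+}1)]=0$, $[H^\delta_j,C(m{-}1,m)]=0$ (as $j>i+1\ge i$ and, in Cases 3--4, $j\ge m>m-1>i$, so $j\notin\{m-1,m\}$ unless $m=j$, which is the split point and handled separately via Lemma~\ref{lemma:ops:5:iii} on the short factor $S_{m,j}^{\,\sigma}(\{j\})$ or $\{\bar\jmath\}$).

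Concretely the induction for part~(i), say in Case~4 (\ref{S6}): write $S_{i,j}^\epsilon(M)=\sum_{\gamma+\sigma=\epsilon}(-1)^{\gamma(\1+\epsilon+\|M_{(m..j]}\|)}S^{\,\gamma}_{i,m}(\{m\})S_{m,j}^{\,\sigma}(M_{(m..j]})+S_{i,j}^{\,\epsilon}(M_{m\mapsto m-1})+S_{i,j}^{\,\epsilon}(M\setminus\{m\})\,C(m{-}1,m)$; apply $[H^\delta_i,-]$; in the first sum $H^\delta_i$ supercommutes with $S_{m,j}^{\,\sigma}(M_{(m..j]})$ (as $m>i$, Cor.~\ref{corollary:ops:1}) and acts on $S^{\,\gamma}_{i,m}(\{m\})$ by Lemma~\ref{lemma:ops:5:i}, giving $(-1)^{\1+\delta(\gamma+\1)}S^{\,\gamma+\delta}_{i,m}(\{m\})$; in the last two summands, $H^\delta_i$ commutes with $C(m-1,m)$ since $i<m-1$ or $i=m-1$—here one uses that in Case~4 $m>i+1$ so $i<m-1$, hence $[H^\delta_i,C(m{-}1,m)]=0$—and acts on the height-smaller operators by the inductive hypothesis, noting $\|M_{m\mapsto m-1}\|=\|M\|$ and $\|M\setminus\{m\}\|=\|M\|$. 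Collecting signs and re-summing via $\gamma\mapsto\gamma+\delta$ should reproduce $(-1)^{\1+\delta(\epsilon+\1+\|M\|)}$ times the defining expression (\ref{S6}) for $S_{i,j}^{\,\epsilon+\delta}(M)$.

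\textbf{Main obstacle.} The arithmetic is routine term-by-term, so the real work is \emph{bookkeeping of signs}: making sure the parity exponents $\|M\|$, $\|M_{(i+1..j]}\|$, $\|M_{(m..j]}\|$ combine correctly when one splits $M$ at the minimum and that the sign $(-1)^{\gamma(\1+\epsilon+\dots)}$ in the definition is exactly what lets the $\gamma\mapsto\gamma+\delta$ reindexing go through. In particular in part~(ii), Cases~2 and~4, one must check that the ``bad'' contributions coming from $[H^\delta_l,C(i,i{+}1)]$ (when $l=i+1$) or $[H^\delta_l,C(m{-}1,m)]$ (when $l\in\{m-1,m\}$)—which are the only places part~(ii) could fail—are cancelled, using that these $C$-terms are always paired with a lowering operator to which the inductive hypothesis applies and whose $[H^\delta_l,-]$ produces a matching term; this cancellation is forced by the very recursion defining $S$, but verifying it cleanly is the delicate point.
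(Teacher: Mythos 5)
Your overall strategy — induction on $\height M$ with base cases from Lemmas~\ref{lemma:ops:5} and~\ref{lemma:ops:5'}, the super-Leibniz rule applied to the recursions~(\ref{S3})--(\ref{S6}), Corollary~\ref{corollary:ops:1} to kill out-of-range commutators, and tracking of $[H_i^\delta,H_i^\sigma]$ in part~(i) — is exactly the paper's, and your worked Case~4 of part~(i) is fine. But your analysis of part~(ii) misidentifies the one genuinely nontrivial spot, and the proposed cancellation mechanism would not work.

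First, the commutators $[H_l^\delta,C(i,i{+}1)]$ and $[H_l^\delta,C(m{-}1,m)]$ you worry about are \emph{always zero}: the $C$'s are polynomials in the even elements $H_t=H_t^{\0}$, and the relations~(\ref{equation:intro:1}) give $[H_l^\delta,H_t^{\0}]=0$ for every $l,t,\delta$ (only $[H_l^{\1},H_l^{\1}]=2H_l$ is nonzero). So there are no ``bad'' $C$-contributions to cancel against anything, and the cancellation you posit between $[H_l^\delta,C]$-terms and inductive-hypothesis terms cannot be the mechanism.

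Second, your claim that for $l\ge m$ ``the left short factor supercommutes with $H_l^\delta$'' fails precisely at $l=m$, and this is where the real work happens. When $l\sim\min M$ (i.e.\ $l=m$ in Cases~3 and~4, or $l=i+1$ in Cases~1 and~2, though then $l=i+1$ can equal $i$ only if $M$ is a singleton), the left factor $S^{\gamma}_{i,l}(\{\min M\})$ contains the node at $l$, and by parts~(iii) of Lemmas~\ref{lemma:ops:5},~\ref{lemma:ops:5'} we have $[H_l^\delta,S^{\gamma}_{i,l}(\{\min M\})]=S^{\gamma+\delta}_{i,l}(\{\min M\})\ne0$. Thus the Leibniz rule on the product $S^{\gamma}_{i,l}(\{\min M\})\,S^{\sigma}_{l,j}(M_{(l..j]})$ produces \emph{two} nonzero contributions: one from the left factor via the base-lemma part~(iii), one from the right factor via the inductive hypothesis part~(i). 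The paper's proof of part~(ii) shows, by the reindexings $\gamma'=\gamma+\delta$ and $\sigma'=\sigma+\delta$ and a sign check, that these two contributions cancel each other; the remaining terms of~(\ref{S5})--(\ref{S6}) ($S_{i,j}^{\epsilon}(M_{m\mapsto m-1})$, $S_{i,j}^{\epsilon}(M\setminus\{m\})\,C(m{-}1,m)$) vanish by the inductive hypothesis part~(ii) together with the fact that $H_l^\delta$ commutes with the $C$'s. If you instead assume the left factor supercommutes, you get only one surviving term and the commutator would incorrectly come out nonzero. You should repair the $l=m$ case to use base-lemma part~(iii) on the left factor and carry out the explicit cancellation.
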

\begin{proof}
We apply induction on $\height M$. The cases $M=\{j\}$ and $M=\{\bar\jmath\}$ come from  Lemmas~\ref{lemma:ops:5} and~\ref{lemma:ops:5'}.
We assume now that $M$ is distinct from $\{j\}$ and $\{\bar\jmath\}$.

(\ref{lemma:ops:5.5:i})
{\it Case~1: $\odd{i+1}\in M$.} By~(\ref{S3}), Lemma~\ref{lemma:ops:5'}(\ref{lemma:ops:5':i}),
the inductive hypothesis and Corollary~\ref{corollary:ops:1}, we get that $[H^\delta_i,S_{i,j}^{\,\epsilon}(M)]$ equals
\begin{align*}
\sum_{\gamma+\sigma=\epsilon}
\Big(({-}1)^{\gamma(\1+\epsilon+\|M_{(i+1..j]}\|)}[H^\delta_i,S^{\gamma}_{i,i+1}\(\left\{\odd{i{+}1}\right\}\)]S_{i+1,j}^{\,\sigma}\(M_{(i+1..j]}\)
\\
+
(-1)^{\sigma(\1+\|M\|)}[H^\delta_i,S_{i,j}^{\,\gamma}\(M\setminus\left\{\odd{i{+}1}\right\}\)]H^\sigma_i
\\
+
(-1)^{\sigma(\1+\|M\|)+\de\ga}S_{i,j}^{\gamma}\(M\setminus\left\{\odd{i{+}1}\right\}\)[H^\delta_i,H^\sigma_i]\Big)
\\
=
\sum_{\gamma+\sigma=\epsilon}
\Big(({-}1)^{\gamma(\1+\epsilon+\|M_{(i+1..j]}\|)+\1+\de\ga}S^{\de+\gamma}_{i,i+1}\(\left\{\odd{i{+}1}\right\}\) S_{i+1,j}^{\,\sigma}\(M_{(i+1..j]}\)
\\
+
(-1)^{\sigma(\1+\|M\|)+\1+\delta(\ga+\1+\|M\setminus\left\{\odd{i{+}1}\right\}\|)}S_{i,j}^{\de+\gamma}\(M\setminus\left\{\odd{i{+}1}\right\}\)H^\sigma_i
\\
+
(-1)^{\sigma(\1+\|M\|)+\de\ga}S_{i,j}^{\,\gamma}\(M\setminus\left\{\odd{i{+}1}\right\}\)(1-(-1)^{\delta\sigma})H^{\de+\sigma}_i\Big)
\\
=
\sum_{\gamma+\sigma=\epsilon+\de}
({-}1)^{(\gamma+\de)(\1+\epsilon+\|M_{(i+1..j]}\|)+\1+\de(\ga+\de)}
S^{\gamma}_{i,i+1}\(\left\{\odd{i{+}1}\right\}\) S_{i+1,j}^{\,\sigma}\(M_{(i+1..j]}\)
\\
+\sum_{\gamma+\sigma=\epsilon+\de}
(-1)^{\sigma(\1+\|M\|)+\1+\delta(\ga+\de+\|M\|)}S_{i,j}^{\gamma}\(M\setminus\left\{\odd{i{+}1}\right\}\)H^\sigma_i
\\
+\sum_{\gamma+\sigma=\epsilon+\de}
(-1)^{(\sigma+\de)(\1+\|M\|)+\de\ga}S_{i,j}^{\,\gamma}\(M\setminus\left\{\odd{i{+}1}\right\}\)
(1-(-1)^{\delta(\sigma+\delta)})H^{\si}_i,
\end{align*}
which is easily checked to equal to
$(-1)^{\1+\delta(\epsilon+\1+\|M\|)}S_{i,j}^{\,\epsilon+\delta}(M)$.

{\it Case~2: $i{+}1\in M$.}
By~(\ref{S4}), and Lemma~\ref{lemma:ops:5}(\ref{lemma:ops:5:i}), the inductive hypothesis and Corollary~\ref{corollary:ops:1},
we get that $[H^\delta_i,S_{i,j}^{\,\epsilon}(M)]$ equals
\begin{align*}
\sum_{\gamma+\sigma=\epsilon}
(-1)^{\gamma(\1+\epsilon+\|M_{(i+1..j]}\|)}
\left[H^\delta_i,S^{\,\gamma}_{i,i+1}\(\left\{i{+}1\right\}\)\right]
S_{i+1,j}^{\,\sigma}\(M_{(i+1..j]}\)
\\
+\left[H^\delta_i,S_{i,j}^{\,\epsilon}(M\setminus\{i{+}1\})\right]\,C(i,i{+}1)\\
=\sum_{\gamma+\sigma=\epsilon}
(-1)^{\gamma(\1+\epsilon+\|M_{(i+1..j]}\|)+\1+\delta(\gamma+\1)}
S^{\,\gamma+\delta}_{i,i+1}\(\left\{i{+}1\right\}\)
S_{i+1,j}^{\,\sigma}\(M_{(i+1..j]}\)
\\
+(-1)^{\1+\delta(\epsilon+\1+\|M_{(i+1..j]}\|)}S_{i,j}^{\,\epsilon+\delta}(M\setminus\{i{+}1\})\,C(i,i{+}1)
\\
=\sum_{\gamma+\sigma=\epsilon+\delta}
(-1)^{(\gamma+\delta)(\1+\epsilon+\|M_{(i+1..j]}\|)+\1+\delta(\gamma+\delta+\1)}
S^{\,\gamma}_{i,i+1}\(\left\{i{+}1\right\}\)
S_{i+1,j}^{\,\sigma}\(M_{(i+1..j]}\)
\\
+(-1)^{\1+\delta(\epsilon+\1+\|M\|)}S_{i,j}^{\,\epsilon+\delta}(M\setminus\{i{+}1\})\,C(i,i{+}1),
\end{align*}
which is easily checked to equal to $(-1)^{\1+\delta(\epsilon+\1+\|M\|)}S_{i,j}^{\,\epsilon+\delta}(M)$.

{\it Case~3: $i+1<\min M=\odd m<j$.} By~(\ref{S5}), Lemma~\ref{lemma:ops:5'}\ref{lemma:ops:5':i},
the inductive hypothesis and Corollary~\ref{corollary:ops:1}, $[H_i^\delta, S_{i,j}^\epsilon(M)]$ equals
\begin{align*}
\sum_{\gamma+\sigma=\epsilon}(-1)^{\gamma(\1+\epsilon+\|M_{(m..j]}\|)}[H_i^\delta,S_{i,m}^{\,\gamma}\(\left\{\bar m\right\}\)] S_{m,j}^{\,\sigma}\(M_{(m..j]}\)
+[H_i^\delta,S_{i,j}^{\,\epsilon}\(M_{\odd{\vphantom{1}m}\mapsto\odd{m{-}1}}\)]
\\
=\sum_{\gamma+\sigma=\epsilon}(-1)^{\gamma(\1+\epsilon+\|M_{(m..j]}\|)+\1+\delta\gamma}\,S_{i,m}^{\,\gamma+\delta}\(\left\{\bar m\right\}\)S_{m,j}^{\,\sigma}\(M_{(m..j]}\)
\\
+(-1)^{\1+\delta(\epsilon+\1+\|M_{\odd{\vphantom{1}m}\mapsto\odd{m{-}1}}\|)}S_{i,j}^{\,\epsilon+\delta}\(M_{\odd{\vphantom{1}m}\mapsto\odd{m{-}1}}\)
\\
=\sum_{\gamma+\sigma=\epsilon+\de}(-1)^{(\gamma+\de)(\1+\epsilon+\|M_{(m..j]}\|)+\1+\delta(\gamma+\de)}\,S_{i,m}^{\gamma}\(\left\{\bar m\right\}\)S_{m,j}^{\,\sigma}\(M_{(m..j]}\)
\end{align*}
\begin{align*}
+(-1)^{\1+\delta(\epsilon+\1+\|M\|)}S_{i,j}^{\,\epsilon+\delta}\(M_{\odd{\vphantom{1}m}\mapsto\odd{m{-}1}}\),
\end{align*}
which is easily checked to equal to $(-1)^{\1+\delta(\epsilon+\1+\|M\|)}S_{i,j}^{\,\epsilon+\delta}(M)$.

{\it Case~4: $i+1<\min M=m<j$.} This case is similar to Case 2.

(\ref{lemma:ops:5.5:ii}) If $l\not\sim\min M$ then the required formula follows immediately from~(\ref{S3})--(\ref{S6}),
parts~(ii) of Lemmas~\ref{lemma:ops:5} and~\ref{lemma:ops:5'} and the inductive hypothesis. So let $l\sim\min M$. By~(\ref{S3})--(\ref{S6}), the inductive hypothesis, and parts~(iii) of Lemmas~\ref{lemma:ops:5},~\ref{lemma:ops:5'}, the supercommutator $[H_l^\delta, S^{\,\epsilon}_{i,j}(M)]$ equals
\begin{align*}
\sum_{\gamma+\sigma=\epsilon}(-1)^{\gamma(\1+\epsilon+\|M_{(l..j]}\|)}
\Big(
[H_l^\delta,S^{\,\gamma}_{i,l}(\{\min M\})]S_{l,j}^{\,\sigma}\(M_{(l..j]}\)
\\
+
(-1)^{\de\gamma}S^{\gamma}_{i,l}(\{\min M\})[H_l^\delta, S_{l,j}^{\,\sigma}\(M_{(l..j]}\)]\Big)
\\
=\sum_{\gamma+\sigma=\epsilon}(-1)^{\gamma(\1+\epsilon+\|M_{(l..j]}\|)}
\Big(S^{\,\gamma+\delta}_{i,l}(\{\min M\}) S_{l,j}^{\,\sigma}\(M_{(l..j]}\)
\\
+
(-1)^{\delta\gamma+\1+\delta(\sigma+\1+\|M_{(l..j]}\|)}S^{\,\gamma}_{i,l}(\{\min M\})S_{l,j}^{\,\sigma+\delta}\(M_{(l..j]}\)\Big)
\\
=\sum_{\gamma'+\sigma=\epsilon+\delta}(-1)^{(\gamma'+\delta)(\1+\epsilon+\|M_{(l..j]}\|)} S^{\,\gamma'}_{i,l}(\{\min M\}) S_{l,j}^{\,\sigma}\(M_{(l..j]}\)
\\
+
\sum_{\gamma+\sigma'=\epsilon+\delta}(-1)^{\gamma(\1+\epsilon+\|M_{(l..j]}\|)+\delta\gamma+\1+\delta(\sigma'+\delta+\1+\|M_{(l..j]}\|)}
S^{\,\gamma}_{i,l}(\{\min M\})S_{l,j}^{\,\sigma'}\(M_{(l..j]}\).
\end{align*}
Here, as usual, we have applied the substitutions $\gamma'=\gamma+\delta$ and $\sigma'=\sigma+\delta$. The last expression is now easily checked to be zero.

(\ref{lemma:ops:5.5:iii})  is similar to (\ref{lemma:ops:5.5:i}) but much easier, so we skip the details.
\end{proof}

\section{Supercommutator $[E_j^\delta,S_{i,j}^{\,\epsilon}(M)]$}
By Corollary~\ref{corollary:ops:2}, $[E_j^\delta,S_{i,j}^{\,\epsilon}(M)]=0$ if $\bar\jmath\in M$.
So we just need to compute the supercommutator in the case where $j\in M$.

\begin{lemma}\label{lemma:socle:1}
Let $j\in M$.
Then
$$
[E^{\delta}_j,S^{\,\epsilon}_{i,j}\(M\)]=
\sum_{\gamma+\sigma=\epsilon+\delta}(-1)^{\1+(\delta+\gamma)\epsilon}S_{i,j}^{\,\gamma}(M_{j\mapsto \bar\jmath})E_j^{\,\sigma}.
$$
\end{lemma}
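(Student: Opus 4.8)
The plan is to prove the formula by induction on $\height M$, following the pattern of the proof of Lemma~\ref{lemma:ops:5.5}. The statement says, roughly, that pushing $E^{\delta}_j$ past $S^{\,\epsilon}_{i,j}(M)$ consumes the element $j$ of $M$, replacing it by $\bar\jmath$, and emits a factor $E^{\,\sigma}_j$ on the right. For the base case $M=\{j\}$ one has $S^{\,\gamma}_{i,j}(\{\bar\jmath\})=F^{\,\gamma}_{i,j}$ by (\ref{S1}) and $\{j\}_{j\mapsto\bar\jmath}=\{\bar\jmath\}$, so the desired identity is exactly Lemma~\ref{lemma:ops:1}. Thus I may assume $M\neq\{j\},\{\bar\jmath\}$; then $\min M<j$, and since $j\in M$ (so $\bar\jmath\notin M$) this forces $j>i+1$ and $i<m<j$, where $m$ denotes the absolute value of $\min M$. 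In particular $M$ falls under one of Cases~1--4 of (\ref{S3})--(\ref{S6}).

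In each of the four cases I would apply the Leibniz rule to $[E^{\delta}_j,S^{\,\epsilon}_{i,j}(M)]$ using the corresponding recursion. By Corollary~\ref{corollary:ops:2} (applied with $m<j$), $E^{\delta}_j$ supercommutes with each ``left factor'' $S^{\,\gamma}_{i,i+1}(\{\odd{i+1}\})$, $S^{\,\gamma}_{i,i+1}(\{i{+}1\})$, $S^{\,\gamma}_{i,m}(\{\bar m\})$, or $S^{\,\gamma}_{i,m}(\{m\})$ that occurs; and by Corollary~\ref{corollary:ops:1} together with (\ref{equation:intro:1}) it supercommutes with the trailing factors $H^{\sigma}_i$, $C(i,i{+}1)$ and $C(m{-}1,m)$ as well, since these involve only the $H_t$ with $t\in\{i,i{+}1\}$ or $t\in\{m{-}1,m\}$, none of which is $j$ or $j+1$. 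Hence the only surviving contributions come from $[E^{\delta}_j,S^{\,\sigma}_{m,j}(M_{(m..j]})]$ in the product term and from $[E^{\delta}_j,S^{\,\epsilon}_{i,j}(N)]$ in the recursive terms, where, depending on the case, $N$ runs over $M\setminus\{\odd{i+1}\}$, $M\setminus\{i{+}1\}$, $M_{\odd{\vphantom{1}m}\mapsto\odd{m{-}1}}$, $M_{m\mapsto m-1}$, and $M\setminus\{m\}$. Both $M_{(m..j]}$ and each such $N$ still contain $j$ and have strictly smaller height, so the inductive hypothesis applies to all of them.

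It then remains to reassemble the result into $\sum_{\gamma+\sigma=\epsilon+\delta}(-1)^{\1+(\delta+\gamma)\epsilon}S^{\,\gamma}_{i,j}(M_{j\mapsto\bar\jmath})E^{\,\sigma}_j$. The crucial combinatorial observation is that $\min M$, having absolute value $m<j$, is untouched by the substitution $j\mapsto\bar\jmath$, so $M_{j\mapsto\bar\jmath}$ lies in the same Case as $M$ and is expanded by the very same recursion; moreover $(M_{j\mapsto\bar\jmath})_{(m..j]}=(M_{(m..j]})_{j\mapsto\bar\jmath}$, $(M_{j\mapsto\bar\jmath})\setminus\{\odd{i+1}\}=(M\setminus\{\odd{i+1}\})_{j\mapsto\bar\jmath}$ (and similarly for the other modified sets), while $\|M_{j\mapsto\bar\jmath}\|=\|M\|+\1$ and $\|(M_{j\mapsto\bar\jmath})_{(m..j]}\|=\|M_{(m..j]}\|+\1$. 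Feeding these identities into the expansion of the right-hand side and performing the usual re-indexings of the summation variables (substitutions $\gamma\mapsto\gamma+\delta$, and $\sigma\mapsto\sigma+\delta$ inside the doubled sum produced by the inductive hypothesis), one sees that the two expressions coincide. The main obstacle, exactly as in Lemma~\ref{lemma:ops:5.5}, is purely the sign bookkeeping: one must check case by case --- treating $\epsilon+\delta=\0$ and $\epsilon+\delta=\1$, and the parity of $\delta$, separately --- that the accumulated exponents, built out of $\delta,\gamma,\sigma$, $\|M\|$, $\|M_{(m..j]}\|$, and the extra $\1$'s introduced by $j\mapsto\bar\jmath$, match those dictated for $S^{\,\gamma}_{i,j}(M_{j\mapsto\bar\jmath})$ by the recursion and by the prefactor $(-1)^{\1+(\delta+\gamma)\epsilon}$. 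No new idea beyond careful arithmetic should be needed.
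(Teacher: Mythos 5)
Your proposal is correct and follows essentially the same route as the paper's proof: induction on $\height M$ with base case from Lemma~\ref{lemma:ops:1}, the four-way case analysis on $\min M$ via (\ref{S3})--(\ref{S6}), the observation that $E^\delta_j$ supercommutes with all left factors and the $U^0$-terms because $j>m$, the parity identity $\|M_{j\mapsto\bar\jmath}\|=\|M\|+\1$, and recollecting the resulting terms into the recursion for $S^{\,\gamma}_{i,j}(M_{j\mapsto\bar\jmath})$ after re-indexing the summation variables. The paper carries out Cases~1 and~4 in full and notes Cases~2 and~3 are similar, matching the sign bookkeeping you describe as the remaining routine work.
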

\begin{proof}
We apply induction on $\height M$. The base case $M=\{j\}$ follows from Lemma~\ref{lemma:ops:1}. Denote $L:=M_{j\mapsto \bar\jmath}$.

{\it Case~1: $\min M=\odd{i+1}$}.
By~(\ref{S3}) and the inductive hypothesis, $[E^{\,\delta}_j,S^{\,\epsilon}_{i,j}(M)]$ equals
\begin{align*}
\sum_{\rho+\pi=\epsilon}\Big(
({-}1)^{\rho(\1+\epsilon+\|M_{(i+1..j]}\|)+\delta\rho}S^{\,\rho}_{i,i+1}\(\left\{\odd{i{+}1}\right\}\)[E^{\,\delta}_j,S_{i+1,j}^{\,\pi}\(M_{(i+1..j]}\)]
\\
+(-1)^{\pi(\1+\|M\|)}[E^{\,\delta}_j,S_{i,j}^{\,\rho}\(M\setminus\left\{\odd{i{+}1}\right\}\)]H^\pi_i\Big)
\\
=
\hspace{-2mm}
\sum_{\rho+\pi=\epsilon}
({-}1)^{\rho(\1+\epsilon+\|M_{(i+1..j]}\|)+\delta\rho}S^{\,\rho}_{i,i+1}(\left\{\odd{i{+}1}\right\})
\hspace{-3mm}
 \sum_{\xi+\sigma=\pi+\delta}
 \hspace{-2mm}
 (-1)^{\1+(\delta+\xi)\pi}S_{i+1,j}^{\,\xi}(L_{(i+1..j]})E_j^{\sigma}
\\
+\sum_{\rho+\pi=\epsilon}(-1)^{\pi(\1+\|M\|)}
\sum_{\zeta+\sigma=\rho+\delta}(-1)^{\1+(\delta+\zeta)\rho}
S_{i,j}^{\,\zeta}\(L\setminus\left\{\odd{i{+}1}\right\}\)E^{\,\sigma}_jH^\pi_i
\\
=
\hspace{-3mm}
\sum_{\rho+\xi+\sigma=\epsilon+\delta}
\hspace{-3mm}
({-}1)^{\rho(\1+\epsilon+\|M_{(i+1..j]}\|)+\delta\rho+\1+(\delta+\xi)(\rho+\epsilon)} S^{\,\rho}_{i,i+1}\(\left\{\odd{i{+}1}\right\}\)S_{i+1,j}^{\,\xi}\(L_{(i+1..j]}\)E_j^{\,\sigma}
\end{align*}
\begin{align*}
+\sum_{\zeta+\sigma+\pi=\epsilon+\delta}(-1)^{\pi(\1+\|M\|)+\1+(\delta+\zeta)(\pi+\epsilon))+\sigma\pi} S_{i,j}^{\,\zeta}\(L\setminus\left\{\odd{i{+}1}\right\}\)H^\pi_iE^{\,\sigma}_j.
\end{align*}
Substitute $\gamma=\rho+\xi$ in the first sum and $\gamma=\zeta+\pi$ in the second sum to get
\begin{align*}
\sum_{\gamma+\sigma=\epsilon+\delta}(-1)^{\1+(\delta+\gamma)\epsilon}
\Big[\sum_{\rho+\xi=\gamma}(-1)^{\rho(\1+\gamma+\|L_{(i+1..j]}\|)}S^{\,\rho}_{i,i+1}\(\left\{\odd{i{+}1}\right\}\)S_{i+1,j}^{\,\xi}\(L_{(i+1..j]}\)
\\
+\sum_{\zeta+\pi=\gamma}(-1)^{\pi(\1+\|L\|)}S_{i,j}^{\,\zeta}\(L\setminus\left\{\odd{i{+}1}\right\}\)H^\pi_i\Big]E^{\sigma}_j
=\sum_{\gamma+\sigma=\epsilon+\delta}(-1)^{\1+(\delta+\gamma)\epsilon}S_{i,j}^{\,\gamma}(L)E^{\sigma}_j.
\end{align*}

{\it Case~2: $\min N=i+1$}. This case is similar to Case 1.

{\it Case~3: $i+1<\min N=\bar m$}. This case is similar to Case 4 which we now do in detail.

{\it Case~4: $i+1<\min N=m$}. By~(\ref{S6}) and the inductive hypothesis, we have that $[E_j^{\delta},S_{i,j}^{\epsilon}(M)]$ equals
\begin{align*}
\sum_{\rho+\pi=\epsilon}
(-1)^{\rho(\1+\epsilon+\|M_{(m..j]}\|)+\delta\rho}\,S^{\,\rho}_{i,m}(\{m\})[E_j^{\delta},S_{m,j}^{\pi}\(M_{(m..j]}\)]
\\
+[E_j^{\delta},S_{i,j}^\eps(M_{m\mapsto m{-}1})]+[E_j^{\delta},S_{i,j}^\eps(M\setminus\{m\})]C(m-1,m)
\\
=\sum_{\rho+\pi=\epsilon}
(-1)^{\rho(\1+\epsilon+\|M_{(m..j]}\|)+\delta\rho}\,S^{\,\rho}_{i,m}(\{m\})
\sum_{\xi+\sigma=\pi+\delta}(-1)^{\1+(\delta+\xi)\pi}S_{m,j}^{\,\xi}\(L_{(m..j]}\)E_j^{\sigma}
\\
+\sum_{\gamma+\sigma=\epsilon+\delta}(-1)^{\1+(\delta+\gamma)\epsilon}S_{i,j}^{\,\gamma}\(L_{m\mapsto m{-}1}\)E_j^{\sigma}
\\
+\sum_{\gamma+\sigma=\epsilon+\delta}(-1)^{\1+(\delta+\gamma)\epsilon}S_{i,j}^{\,\gamma}\(L\setminus\{m\}\)E_j^{\sigma}C(m{-}1,m)
\\
=\sum_{\rho+\xi+\sigma=\epsilon+\delta}(-1)^{\rho(\1+\epsilon+\|M_{(m..j]}\|)+\delta\rho+\1+(\delta+\xi)(\rho+\epsilon)}S^{\,\rho}_{i,m}(\{m\})S_{m,j}^{\,\xi}\(L_{(m..j]}\)E_j^{\sigma}
\\
+\sum_{\gamma+\sigma=\epsilon+\delta}(-1)^{\1+(\delta+\gamma)\epsilon}S_{i,j}^{\,\gamma}\(L_{m\mapsto m{-}1}\)E_j^{\sigma}\\
+\sum_{\gamma+\sigma=\epsilon+\delta}(-1)^{\1+(\delta+\gamma)\epsilon}S_{i,j}^{\,\gamma}\(L\setminus\{m\}\)C(m{-}1,m)E_j^{\sigma}.
\end{align*}
Introducing the new parameter $\gamma=\rho+\xi$ in first sum, we get
\begin{align*}
\sum_{\gamma+\sigma=\epsilon+\delta}(-1)^{\1+(\delta+\gamma)\epsilon}
\Big[\sum_{\rho+\xi=\gamma}(-1)^{\rho(\1+\gamma+\|L_{(m..j]}\|)}\,S^{\,\rho}_{i,m}(\{m\})S_{m,j}^{\,\xi}\(L_{(m..j]}\)
\\
+S_{i,j}^{\,\gamma}\(L_{m\mapsto m{-}1}\)
+S_{i,j}^{\,\gamma}\(L\setminus\{m\}\)\,C(m{-}1,m)\Big]E_j^{\sigma},
\end{align*}
which is $\sum_{\gamma+\sigma=\epsilon+\delta}(-1)^{\1+(\delta+\gamma)\epsilon}S_{i,j}^{\gamma}(L)\,E_j^{\sigma}.
$
\end{proof}

\section{More on $E_l^{\delta}S_{i,j}^\epsilon(M)$}
First we consider the case $l=i$:

\begin{lemma}\label{lemma:ops:6}
Let $i<j-1$. Modulo $I^+_i$, we have
{
\renewcommand{\labelenumi}{{\rm \theenumi}}
\renewcommand{\theenumi}{{\rm(\roman{enumi})}}
\begin{enumerate}
\item\label{lemma:ops:6:i}  If $i+1\in M$ then $E^\delta_iS_{i,j}^{\,\epsilon}(M)\=0\,;$
\item\label{lemma:ops:6:iii} If $\odd{i{+}1}\in M$ then
\begin{align*}
E^\delta_iS_{i,j}^{\,\epsilon}(M)\=
\sum_{\gamma+\sigma=\epsilon+\delta}
(-1)^{(\epsilon+\gamma)(\1+\|M\|)+\1+\delta\epsilon} S^{\,\gamma}_{i+1,j}\(M{\setminus}\left\{\overline{i{+}1}\right\}\)H^{\,\sigma}_{i+1};
\end{align*}
\item\label{lemma:ops:6:ii}  If $i+1,\odd{i+1}\notin M$ then $E^\delta_iS_{i,j}^{\,\epsilon}(M)\=(-1)^{\1+\delta(\epsilon+\1+\|M\|)}S_{i+1,j}^{\,\epsilon+\delta}(M)$.
\end{enumerate}}
\end{lemma}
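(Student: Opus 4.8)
The plan is to prove all three parts simultaneously by induction on $\height M$, reducing each case of the recursion (\ref{S3})--(\ref{S6}) to the base cases handled in Lemmas~\ref{lemma:ops:2}, \ref{lemma:ops:2'} and the inductive hypothesis, always working modulo $I^+_i$ so that any term beginning with $E_i$ or $\bar E_i$ is discarded and $C(i,i{+}1)$, $H^\sigma_i$ may be moved past $S$-operators with only a sign (Corollary~\ref{corollary:ops:1}). I would first dispose of part~(\ref{lemma:ops:6:i}): when $i+1\in M$ the operator $S_{i,j}^{\,\epsilon}(M)$ is built via (\ref{S4}) from $S^{\,\gamma}_{i,i+1}(\{i{+}1\})$, $S_{i+1,j}^{\,\sigma}$, and $S_{i,j}^{\,\epsilon}(M\setminus\{i{+}1\})\,C(i,i{+}1)$; applying $E^\delta_i$, the first summand contributes $E^\delta_iS^{\,\gamma}_{i,i+1}(\{i{+}1\})\equiv \cond_{\delta=\gamma}B(i,i{+}1)$ by Lemma~\ref{lemma:ops:2}(\ref{lemma:ops:2:iv}), while $E^\delta_i$ acting on $S_{i,j}^{\,\epsilon}(M\setminus\{i{+}1\})C(i,i{+}1)$ gives, using the inductive case applied to $M\setminus\{i{+}1\}$ (which may itself start with $i{+}1$ again, or with $\odd{i{+}1}$, or with neither), a term that should exactly cancel the $B(i,i{+}1)$-contribution after one uses $B(i,i{+}1)=C(i,i{+}1)-\ (\text{correction})$ together with how $C(i,i{+}1)$ and $H_i$ interact with the $S$-operators. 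This sign-bookkeeping cancellation is the crux of~(\ref{lemma:ops:6:i}).

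For part~(\ref{lemma:ops:6:ii}), when neither $i+1$ nor $\odd{i+1}$ lies in $M$ we are necessarily in Case~3 or Case~4 of the recursion with $\min M>i+1$, so $S_{i,j}^{\,\epsilon}(M)$ is assembled from $S^{\,\gamma}_{i,m}(\{m\})$ or $S^{\,\gamma}_{i,m}(\{\bar m\})$ times $S_{m,j}^{\,\sigma}(M_{(m..j]})$, plus shift terms $S_{i,j}^{\,\epsilon}(M_{m\mapsto m-1})$, $S_{i,j}^{\,\epsilon}(M_{\bar m\mapsto\overline{m-1}})$ and $S_{i,j}^{\,\epsilon}(M\setminus\{m\})C(m{-}1,m)$. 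Since $m>i+1$, I would apply Lemma~\ref{lemma:ops:2}(\ref{lemma:ops:2:i}) (resp.\ Lemma~\ref{lemma:ops:2'}(\ref{lemma:ops:2':i})) to pull $E^\delta_i$ through $S^{\,\gamma}_{i,m}(\{m\})$ turning it into $S^{\,\gamma+\delta}_{i+1,m}(\{m\})$ up to sign, and use the inductive hypothesis on the strictly shorter shift-sets (each of which still has $\min>i+1$, so falls under case~(\ref{lemma:ops:6:ii}) of the induction) to replace $E^\delta_i S_{i,j}^{\,\epsilon}(\cdot)$ by $(-1)^{\cdots}S_{i+1,j}^{\,\epsilon+\delta}(\cdot)$; collecting terms and matching signs against the defining recursion (\ref{S5})/(\ref{S6}) read at level $i+1$ then yields $S_{i+1,j}^{\,\epsilon+\delta}(M)$. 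The parities $\|M_{(m..j]}\|$, $\|M\|$ are unchanged by these operations since only even elements (or their odd counterparts, symmetrically) get shifted, which keeps the sign exponents in~(\ref{S5})--(\ref{S6}) intact.

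Part~(\ref{lemma:ops:6:iii}) is the most delicate: here $\odd{i{+}1}\in M$, so (\ref{S3}) expresses $S_{i,j}^{\,\epsilon}(M)$ via $S^{\,\gamma}_{i,i+1}(\{\odd{i{+}1}\})S_{i+1,j}^{\,\sigma}(M_{(i+1..j]})$ and $S_{i,j}^{\,\gamma}(M\setminus\{\odd{i{+}1}\})H^\sigma_i$. Applying $E^\delta_i$: on the first summand use Lemma~\ref{lemma:ops:2'}(\ref{lemma:ops:2':iv}) to get $E^\delta_i S^{\,\gamma}_{i,i+1}(\{\odd{i{+}1}\})\equiv H^{\gamma+\delta}_i-(-1)^{\gamma\delta}H^{\gamma+\delta}_{i+1}$; the $H_i$-piece is killed modulo $I^+_i$ only after one commutes it leftward, but more carefully $H^{\gamma+\delta}_i$ times $S_{i+1,j}^{\,\sigma}$ is $\equiv 0 \pmod{I^+_i}$ is false---instead one keeps it and it must cancel against the $E^\delta_i$-image of the second summand $S_{i,j}^{\,\gamma}(M\setminus\{\odd{i{+}1}\})H^\sigma_i$, which by the already-proved case~(\ref{lemma:ops:6:ii}) (applicable since $M\setminus\{\odd{i{+}1}\}$ now has $\min>i+1$ and contains no $i{+}1,\odd{i{+}1}$) becomes $(-1)^{\cdots}S_{i+1,j}^{\,\gamma+\delta}(M\setminus\{\odd{i{+}1}\})H^\sigma_i$ plus a contribution from $E^\delta_i$ hitting $H^\sigma_i$ via $[E^\delta_i,H^\sigma_i]$. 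The surviving $-(-1)^{\gamma\delta}H^{\gamma+\delta}_{i+1}S_{i+1,j}^{\,\sigma}(M_{(i+1..j]})$ terms, reindexed and combined, should assemble into $\sum_{\gamma+\sigma=\epsilon+\delta}(-1)^{(\epsilon+\gamma)(\1+\|M\|)+\1+\delta\epsilon} S^{\,\gamma}_{i+1,j}(M\setminus\{\overline{i{+}1}\})H^{\,\sigma}_{i+1}$ after noting $M_{(i+1..j]}=M\setminus\{\overline{i{+}1}\}$ (as $\odd{i{+}1}$ is the min) and using Lemma~\ref{lemma:ops:5.5}(\ref{lemma:ops:5.5:iii}) to move $H_{i+1}$ through $S_{i+1,j}$. \textbf{The main obstacle} I anticipate is precisely this cancellation of the ``$H_i$'' contributions in part~(\ref{lemma:ops:6:iii}): tracking the signs $(-1)^{\delta\gamma}$, $(-1)^{\sigma(\1+\|M\|)}$, and the $[E^\delta_i,H^\sigma_i]=-(-1)^{\delta\sigma}E^{\delta+\sigma}_i\equiv 0$ reduction, together with the reindexings $\gamma\mapsto\gamma+\delta$, so that everything with a leading $H_i$ or leading $E_i$ vanishes modulo $I^+_i$ and only the $H_{i+1}$-terms remain with exactly the asserted sign---this is where an error is most likely to hide and will require the most care.
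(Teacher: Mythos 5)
Your overall plan---a simultaneous induction on $\height M$, reducing each of the four branches (\ref{S3})--(\ref{S6}) of the recursion modulo $I^+_i$ via Lemmas~\ref{lemma:ops:2}, \ref{lemma:ops:2'}, \ref{lemma:ops:5.5} and the inductive hypothesis---is exactly the paper's strategy, and your analysis of part~(\ref{lemma:ops:6:iii}) is sound: the recursion (\ref{S3}) does produce both $H_i$- and $H_{i+1}$-bearing terms, and the $H_i$ contributions from the $(H_i^{\gamma+\delta}-(-1)^{\delta\gamma}H_{i+1}^{\gamma+\delta})S_{i+1,j}^{\,\sigma}$ piece cancel against the $E^\delta_i$-image of the $S_{i,j}^{\,\gamma}(M\setminus\{\odd{i{+}1}\})H_i^\sigma$ piece, leaving only $H_{i+1}$. (One small slip: moving $H_{i+1}$ past $S_{i+1,j}$ is Lemma~\ref{lemma:ops:5.5}(\ref{lemma:ops:5.5:i}) for the starting index $i+1$, not \ref{lemma:ops:5.5:iii}.)

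Two points in your argument, however, would not go through as stated.

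For part~(\ref{lemma:ops:6:i}) the cancellation you anticipate does not come from any identity of the form $B(i,i{+}1)=C(i,i{+}1)-(\text{correction})$. Writing out (\ref{S4}) and applying $E^\delta_i$, the first summand gives $(-1)^{\cdots}\cond_{\delta=\gamma}B(i,i{+}1)S_{i+1,j}^{\,\sigma}(M_{(i+1..j]})$, and the shift term gives $(-1)^{\cdots}S_{i+1,j}^{\,\epsilon+\delta}(M\setminus\{i{+}1\})\,C(i,i{+}1)$ by case~(\ref{lemma:ops:6:ii}) of the induction (note $M\setminus\{i{+}1\}$ can contain neither $i{+}1$ nor $\odd{i{+}1}$, since $M$ is signed---so the induction always falls into~(\ref{lemma:ops:6:ii}) here). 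The reason these two terms cancel is the weight-shift identity: since $S_{i+1,j}^{\,\epsilon+\delta}(M_{(i+1..j]})$ has weight $-\alpha(i{+}1,j)$, one has $B(i,i{+}1)\,S_{i+1,j}^{\,\epsilon+\delta}(M_{(i+1..j]}) = S_{i+1,j}^{\,\epsilon+\delta}(M_{(i+1..j]})\,C(i,i{+}1)$, because shifting the weight at position $i{+}1$ down by one turns the ``$(H_{i+1}+1)H_{i+1}$'' in $B(i,i{+}1)$ into ``$H_{i+1}(H_{i+1}-1)$''. Your proposed route through $B=C-2H_{i+1}$ would leave a stray $H_{i+1}$ term that does not vanish.

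For part~(\ref{lemma:ops:6:ii}) your claim that ``each of which still has $\min>i{+}1$, so falls under case~(\ref{lemma:ops:6:ii}) of the induction'' is false precisely in the boundary cases $\min M=i{+}2$ and $\min M=\odd{i{+}2}$. In the first, (\ref{S6}) produces the shift set $M_{(i{+}2)\mapsto(i{+}1)}$, which contains $i{+}1$ and so falls under case~(\ref{lemma:ops:6:i}) of the induction (giving $0$); in the second, (\ref{S5}) produces $M_{\odd{i{+}2}\mapsto\odd{i{+}1}}$, which contains $\odd{i{+}1}$ and falls under case~(\ref{lemma:ops:6:iii}), so its $E^\delta_i$-image is an $H_{i+1}$-decorated sum, not of the simple form $(-1)^{\cdots}S_{i+1,j}^{\,\epsilon+\delta}(\cdot)$ you expect. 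In these boundary cases one must then recognize (via (\ref{S3}) or (\ref{S4}) read with base index $i{+}1$) that this $H_{i+1}$-decorated sum, combined with the pulled-through first summand, reassembles exactly into $S_{i+1,j}^{\,\epsilon+\delta}(M)$. The paper handles this by splitting part~(\ref{lemma:ops:6:ii}) into four cases ($\min M\in\{\odd{i{+}2},\bar m>i{+}2, i{+}2, m>i{+}2\}$); you will not be able to make the argument uniform as written.
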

\begin{proof} We apply induction on $\height M$. The base cases $M=\{j\}$ and $M=\{\bar\jmath\}$ follow from Lemmas~\ref{lemma:ops:2} and~\ref{lemma:ops:2'}. Let $M$ be distinct from
$\{\bar\jmath\}$ and $\{j\}$.

\smallskip

{\it Case~1: $\odd{i+1}\in M$.} By~(\ref{S3}), Lemma~\ref{lemma:ops:2'}(\ref{lemma:ops:2':iv}), Corollary~\ref{corollary:ops:2},
part~\ref{lemma:ops:6:ii} of the inductive hypothesis and Lemma~\ref{lemma:ops:5.5}(\ref{lemma:ops:5.5:i}),(\ref{lemma:ops:5.5:ii}),
we get
\begin{align*}
E^\delta_iS_{i,j}^{\,\epsilon}(M)\=
\sum_{\gamma+\sigma=\epsilon}
({-}1)^{\gamma(\1+\epsilon+\|M_{(i+1..j]}\|)}(H_i^{\gamma+\delta}-(-1)^{\delta\gamma}H_{i+1}^{\gamma+\delta})\,S_{i+1,j}^{\,\sigma}\(M_{(i+1..j]}\)\\
+\sum_{\gamma+\sigma=\epsilon}(-1)^{\sigma(\1+\|M\|)+\1+\delta(\gamma+\1+\|M\setminus\{\odd{i{+}1}\}\|)}S_{i,j}^{\,\gamma+\delta}\(M\setminus\left\{\odd{i{+}1}\right\}\)H^\sigma_i\\
=\sum_{\gamma+\sigma=\epsilon}
({-}1)^{\gamma(\1+\epsilon+\|M_{(i+1..j]}\|)+\sigma(\gamma+\delta)}S_{i+1,j}^{\,\sigma}\(M_{(i+1..j]}\)(H_i^{\gamma+\delta}-(-1)^{\delta\gamma}H_{i+1}^{\gamma+\delta})\\
+\sum_{\gamma+\sigma=\epsilon}({-}1)^{\gamma(\1+\epsilon+\|M_{(i+1..j]}\|)+\delta\gamma+(\gamma+\delta)(\sigma+\1+\|M_{(i+1..j]}\|)}
 S_{i+1,j}^{\,\sigma+\gamma+\delta}\(M_{(i+1..j]}\)
 \\
+\sum_{\gamma+\sigma=\epsilon}(-1)^{\sigma(\1+\|M\|)+\1+\delta(\gamma+\1+\|M\setminus\{\odd{i{+}1}\}\|)}S_{i,j}^{\,\gamma+\delta}\(M\setminus\left\{\odd{i{+}1}\right\}\)H^\sigma_i.
\end{align*}
Considering separately the cases $\gamma=\0,\sigma=\epsilon$ and $\gamma=\1,\sigma=\epsilon+\1$,
we see that the middle sum is zero. Noting that $M\setminus\{\odd{i{+}1}\}=M_{(i+1..j]}$ and $\|M_{(i+1..j]}\|=\|M\|+\1$ and using the new parameters $\gamma':=\sigma$, $\sigma':=\gamma+\delta$ in the first sum and
$\gamma':=\gamma+\delta$, $\sigma':=\sigma$ in the last sum, we get
\begin{align*}
\sum_{\gamma'+\sigma'=\epsilon+\delta}
({-}1)^{(\sigma'+\delta)(\epsilon+\|M\|)+\gamma'\sigma'}
S_{i+1,j}^{\,\gamma'}\(M_{(i+1..j]}\)(H_i^{\sigma'}-(-1)^{\delta(\sigma'+\delta)}H_{i+1}^{\sigma'})\\
+\sum_{\gamma'+\sigma'=\epsilon+\delta}(-1)^{\sigma'(\1+\|M\|)+\1+\delta(\gamma'+\delta+\|M\|)}S_{i,j}^{\,\gamma'}\(M\setminus\left\{\odd{i{+}1}\right\}\)H^{\sigma'}_i.
\end{align*}
which gives the required formula~\ref{lemma:ops:6:iii}.

\smallskip

{\it Case~2: $i+1\in M$.} By~(\ref{S4}), Lemma~\ref{lemma:ops:2}(\ref{lemma:ops:2:iv})
and part~\ref{lemma:ops:6:ii} of the inductive hypothesis, we get
\begin{align*}
E^\delta_iS_{i,j}^{\,\epsilon}(M)\=&\sum_{\gamma+\sigma=\epsilon}(-1)^{\gamma(\1+\epsilon+\|M_{(i+1..j]}\|)}\cond_{\delta=\gamma}B(i,i+1)S_{i+1,j}^{\,\sigma}\(M_{(i+1..j]}\)
\\
&+(-1)^{\1+\delta(\epsilon+\1+\|M\setminus\{i{+}1\}\|)}S_{i+1,j}^{\,\epsilon+\delta}(M\setminus\{i{+}1\})C(i,i{+}1)
\\
=&(-1)^{\delta(\1+\epsilon+\|M_{(i+1..j]}\|)}B(i,i+1)S_{i+1,j}^{\,\epsilon+\delta}\(M_{(i+1..j]}\)
\\
&+(-1)^{\1+\delta(\epsilon+\1+\|M\setminus\{i{+}1\}\|)}S_{i+1,j}^{\,\epsilon+\delta}(M\setminus\{i{+}1\})C(i,i{+}1).
\end{align*}
The last expression is zero, as $M_{(i+1..j]}=M\setminus\{i{+}1\}$ and  $S_{i+1,j}^{\,\epsilon+\delta}\(M_{(i+1..j]}\)$ has weight $-\alpha(i+1,j)$. This proves \ref{lemma:ops:6:i}.

\smallskip

{\it Case~3: $\odd{i{+}2}=\min M<j$.} By~(\ref{S5}),
parts~\ref{lemma:ops:6:ii} and~\ref{lemma:ops:6:iii} of the inductive hypothesis and~(\ref{S3}), we get
\begin{align*}
E^\delta_iS_{i,j}^{\,\epsilon}(M)\=\sum_{\gamma+\sigma=\epsilon}(-1)^{\gamma(\1+\epsilon+\|M_{(i+2..j]}\|)+\1+\delta\gamma}
 S_{i+1,i+2}^{\,\gamma+\delta}\(\left\{\odd{i{+}2}\right\}\)S_{i+2,j}^{\,\sigma}\(M_{(i{+}2..j]}\)
\\
+\sum_{\gamma+\sigma=\epsilon+\delta}(-1)^{(\epsilon+\gamma)(\1+\|M_{\odd{\vphantom{1}i{+}2}\mapsto\odd{i{+}1}}\|)+\1+\delta\epsilon}
S_{i+1,j}^{\,\gamma}\(M_{\odd{\vphantom{1}i{+}2}\mapsto\odd{i{+}1}}{\setminus}\left\{\overline{i{+}1}\right\}\)H^{\,\sigma}_{i+1}
\\
=\sum_{\gamma'+\sigma=\epsilon+\delta}(-1)^{(\gamma'+\delta)(\epsilon+\|M\|)+\1+\delta(\gamma'+\delta)}
 S_{i+1,i+2}^{\,\gamma'}\(\left\{\odd{i{+}2}\right\}\)S_{i+2,j}^{\,\sigma}\(M_{(i{+}2..j]}\)
\\
+\sum_{\gamma+\sigma=\epsilon+\delta}(-1)^{(\epsilon+\gamma)(\1+\|M\|)+\1+\de\epsilon}
S_{i+1,j}^{\,\gamma}\(M{\setminus}\left\{\overline{i{+}2}\right\}\)H^{\,\sigma}_{i+1}
\\
=(-1)^{\1+\delta(\epsilon+\1+\|M\|)}S_{i+1,j}^{\,\epsilon+\delta}(M),
\end{align*}
as required.

\smallskip

{\it Case~4: $i{+}2<\bar m=\min M<j$.}
By~(\ref{S5}) and part~\ref{lemma:ops:6:ii} of the inductive hypothesis, we have
\begin{align*}
E^\delta_iS_{i,j}^{\,\epsilon}(M)\=\sum_{\gamma+\sigma=\epsilon}(-1)^{\gamma(\1+\epsilon+\|M_{(m..j]}\|)+\1+\delta\gamma}
S_{i+1,m}^{\,\gamma+\delta}\(\left\{\bar m\right\}\)S_{m,j}^{\,\sigma}\(M_{(m..j]}\)
\\
+(-1)^{\1+\delta(\epsilon+\1+\|M_{\odd{\vphantom{1}m}\mapsto\odd{m{-}1}}\|)}S_{i+1,j}^{\,\epsilon+\delta}\(M_{\odd{\vphantom{1}m}\mapsto\odd{m{-}1}}\)
\\
=\sum_{\gamma'+\sigma=\epsilon+\delta}(-1)^{(\gamma'+\delta)(\epsilon+\|M\|)+\1+\delta(\gamma'+\delta)}\,S_{i+1,m}^{\,\gamma'}\(\left\{\bar m\right\}\)S_{m,j}^{\,\sigma}\(M_{(m..j]}\)
\\
+(-1)^{\1+\delta(\epsilon+\1+\|M\|)}S_{i+1,j}^{\,\epsilon+\delta}\(M_{\odd{\vphantom{1}m}\mapsto\odd{m{-}1}}\)
\ =\ (-1)^{\1+\delta(\epsilon+\1+\|M\|)}S_{i+1,j}^{\,\epsilon+\delta}(M),
\end{align*}
as required.
\smallskip

{\it Case~5: $i{+}2=\min M<j$.} This case is similar to Case 3.

\smallskip

{\it Case~6: $i+2<m=\min M<j$.} This  case is similar to Case 4.
\end{proof}

Now we consider $E^\delta_lS^\epsilon_{i,j}(M)$ for $i<l<j-1$:

\begin{lemma}\label{lemma:ops:7}
Let $i<l<j-1$. Modulo $I^+_l$, we have:
\begin{enumerate}
\item\label{lemma:ops:7:i} 
If $M$ does not contain $l,\ \bar l$ or if $M$ contains $l+1$, then $E^\delta_lS^\epsilon_{i,j}(M) \=0$;
\item\label{lemma:ops:7:ii} 
If $M$ contains either $\bar l$ or $l$, and $M$ does not contain $\odd{l{+}1}$, $l{+}1$, then
\begin{align*}
E^\delta_lS^\epsilon_{i,j}(M) &\=\displaystyle\sum_{\gamma+\sigma=\epsilon+\delta}(-1)^{\1+(\delta+\gamma)(\epsilon+\1+\|M_{(l+1..j]}\|)} S^\gamma_{i,l}\bigl(M_{(i..l]}\bigr)S^{\,\sigma}_{l+1,j}\bigl(M_{(l+1..j]}\bigr);
\end{align*}
\item\label{lemma:ops:7:iii}
If $M$ contains either $\bar l$ or $l$ and $M$ contains $\odd{l{+}1}$, then
\begin{align*}
E^\delta_lS^\epsilon_{i,j}(M) \=\hspace{-2 mm}\sum_{\gamma+\sigma+\tau=\epsilon+\delta}
\hspace{-5 mm} (-1)^{\1+(\delta+\gamma)\epsilon+(\epsilon+\sigma)\|M_{(l+1..j]}\|} S_{i,l}^{\,\gamma}(M_{(i..l]})S^{\,\sigma}_{l+1,j}(M_{(l+1..j]})H_{l+1}^\tau.
\end{align*}
\end{enumerate}
\end{lemma}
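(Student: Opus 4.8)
The plan is to proceed by induction on $\height M$, in the style of the proofs of Lemmas~\ref{lemma:ops:5.5},~\ref{lemma:socle:1} and~\ref{lemma:ops:6}. In the base cases $M=\{j\}$ and $M=\{\bar\jmath\}$ we have $l\ne i,j-1$, so Lemmas~\ref{lemma:ops:2}(\ref{lemma:ops:2:ii}) and~\ref{lemma:ops:2'}(\ref{lemma:ops:2':ii}) give $E^\delta_lS^\epsilon_{i,j}(M)\equiv0\pmod{I^+_l}$; since neither $\{j\}$ nor $\{\bar\jmath\}$ contains $l$ or $\bar l$, this is exactly alternative~(\ref{lemma:ops:7:i}). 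For the inductive step let $m$ be the absolute value of $\min M$, so $i+1\le m\le j-1$, expand $S^\epsilon_{i,j}(M)$ by whichever of (\ref{S3})--(\ref{S6}) applies, and divide into three cases according to the position of $l$ relative to $m$.

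\textbf{Case $m>l$.} Then $m\ge i+2$, so (\ref{S5}) or (\ref{S6}) is used, $M$ contains neither $l$ nor $\bar l$, and $M$ contains $l+1$ only when $m=l+1$; thus alternative~(\ref{lemma:ops:7:i}) applies in all subcases and we must prove $E^\delta_lS^\epsilon_{i,j}(M)\equiv0$. By Corollary~\ref{corollary:ops:2}, $E^\delta_l$ supercommutes with the leading factor $S^\sigma_{m,j}(M_{(m..j]})$; on $S^\gamma_{i,m}(\{\min M\})$ it acts by $0$ modulo $I^+_l$ (Lemmas~\ref{lemma:ops:2}(\ref{lemma:ops:2:ii}),~\ref{lemma:ops:2'}(\ref{lemma:ops:2':ii})) when $m>l+1$, and by $S^{\gamma+\delta}_{i,l}(\{l\})$, resp. $S^{\gamma+\delta}_{i,l}(\{\bar l\})$ (Lemmas~\ref{lemma:ops:2}(\ref{lemma:ops:2:iii}),~\ref{lemma:ops:2'}(\ref{lemma:ops:2':iii})), when $m=l+1$ and $\min M$ is even, resp. odd. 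The remaining ``lower'' terms are handled by the inductive hypothesis; when $m=l+1$ the term obtained by decrementing $\min M$ falls under alternative~(\ref{lemma:ops:7:ii}), and a sign comparison shows that the two surviving contributions cancel.

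\textbf{Case $m<l$.} Here $E^\delta_l$ supercommutes with the leading factor $S^\gamma_{i,m}(\{\min M\})$ and with the trailing $H^\sigma_i$ and $C(\cdot,\cdot)$ factors (all of which involve only $H_t$ with $t\notin\{l,l+1\}$), so it may be pushed onto $S^\sigma_{m,j}(M_{(m..j]})$ and onto the operators $S^{\,\cdot}_{i,j}(M')$ occurring in the ``lower'' terms, each of strictly smaller height and with the same intersection with $\{l,\bar l,l+1,\overline{l+1}\}$ as $M$. After invoking the inductive hypothesis one reassembles the products $S^\gamma_{i,m}(\{\min M\})\,S^{\,\cdot}_{m,l}(M_{(m..l]})$ together with the ``lower'' terms into $S^{\,\cdot}_{i,l}(M_{(i..l]})$ — recognising them, sign by sign, as the right-hand side of the recursion (\ref{S3})--(\ref{S6}) for $S^{\,\cdot}_{i,l}(M_{(i..l]})$, and using that $C(m-1,m)$ and $H^\sigma_i$ commute with $S^{\,\cdot}_{l+1,j}(M_{(l+1..j]})$ — which yields alternative~(\ref{lemma:ops:7:ii}) or~(\ref{lemma:ops:7:iii}).

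\textbf{Case $m=l$.} Now every ``lower'' term $S^{\,\cdot}_{i,j}(M')$, with $M'$ obtained from $M$ by decrementing or removing $\min M$, lies in alternative~(\ref{lemma:ops:7:i}) and hence vanishes modulo $I^+_l$ by the inductive hypothesis; the trailing $C(l-1,l)$ and $H^\sigma_{l-1}$ factors cause no difficulty, since $E_lC(l-1,l),\bar E_lC(l-1,l)\in U_\Z(n)E_l+U_\Z(n)\bar E_l$ and $E_l,\bar E_l$ commute with $H_{l-1}$ by (\ref{equation:intro:1}), while $I^+_l$ is a left ideal. Everything therefore comes from the leading term $S^\gamma_{i,l}(\{\min M\})\,S^\sigma_{l,j}(M_{(l..j]})$. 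If $\min M=\bar l$, then $E^\delta_l$ supercommutes with $S^\gamma_{i,l}(\{\bar l\})$ (Corollary~\ref{corollary:ops:2}) and one finishes by applying Lemma~\ref{lemma:ops:6} to $E^\delta_lS^\sigma_{l,j}(M_{(l..j]})$ (noting $M_{(l..j]}\setminus\{\overline{l+1}\}=M_{(l+1..j]}$ and $M_{(i..l]}=\{\bar l\}$). If $\min M=l$, one first uses Lemma~\ref{lemma:socle:1} to write $E^\delta_lS^\gamma_{i,l}(\{l\})=[E^\delta_l,S^\gamma_{i,l}(\{l\})]+(-1)^{\delta\gamma}S^\gamma_{i,l}(\{l\})E^\delta_l$, where $[E^\delta_l,S^\gamma_{i,l}(\{l\})]$ is a sum of terms $S^{\,\cdot}_{i,l}(\{\bar l\})E^{\,\cdot}_l$; applying Lemma~\ref{lemma:ops:6} to each $E^{\,\cdot}_lS^\sigma_{l,j}(M_{(l..j]})$, one checks that a two-term $\Z/2$-sum cancellation removes the spurious $S^{\,\cdot}_{i,l}(\{\bar l\})$ contributions, leaving exactly the first, second or third alternative according to which part of Lemma~\ref{lemma:ops:6} governs $M_{(l..j]}$.

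The conceptual skeleton above is short; the real work — and the main obstacle — is the parity/sign bookkeeping. Every step rewrites an expression as a $\Z/2$-indexed sum of products of lowering operators carrying signs of the shape $(-1)^{\1+\delta\epsilon+\cdots}$, and after the usual reparametrisations ($\gamma\mapsto\gamma+\delta$, $\sigma\mapsto\sigma+\delta$, and so on) one must verify both that the ``cancellation'' subsums genuinely vanish and that the surviving subsums reproduce the signs of alternatives~(\ref{lemma:ops:7:ii}) and~(\ref{lemma:ops:7:iii}) exactly, correctly accounting for how $\|M_{(l+1..j]}\|$, $\|M_{(m..j]}\|$ and $\|M_{(m..l]}\|$ combine. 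As with Lemma~\ref{lemma:ops:6}, I expect it will suffice to carry out one or two representative subcases in full (say $m<l$ with $\min M$ odd towards~(\ref{lemma:ops:7:ii}), and $m=l$ with $\min M$ even towards~(\ref{lemma:ops:7:iii})) and to dismiss the rest as entirely similar.
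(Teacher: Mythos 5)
Your plan — induction on $\height M$, expansion by (\ref{S3})--(\ref{S6}), handling $E_l^\delta$ on the leading $S_{i,m}(\{\min M\})\,S_{m,j}(M_{(m..j]})$ factor via Lemmas~\ref{lemma:ops:1},~\ref{lemma:ops:2},~\ref{lemma:ops:2'},~\ref{lemma:ops:6} and Corollary~\ref{corollary:ops:2}, then invoking the inductive hypothesis on the lower terms — is exactly the paper's argument; the only difference is that you organize the case split by the relative position of $l$ and $m=|{\min M}|$ while the paper first splits on the form of $\min M$ (cases I--IV) and then on the position of $l$ (cases a--d), which is a purely cosmetic reshuffling. The one thing you leave undone (the sign bookkeeping, which you explicitly flag) is precisely the bulk of the paper's proof, so the proposal is a correct skeleton rather than a complete alternative.
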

\begin{proof}
Induction on  $\height M$. The base cases $M=\{j\}$ and $M=\{\bar\jmath\}$ follow from Lemmas~\ref{lemma:ops:2} and~\ref{lemma:ops:2'}.
Let $M$ be distinct from $\{j\}$ and $\{\bar\jmath\}$.

We are going consider all possible combinations of the following cases:
$$
\begin{array}{rl}
\text{I.} & \odd{i{+}1}\in M.  \\
\text{II.} & i+1\in M.\\
\text{III.} & i+1<\bar m=\min M.\\
\text{IV.} & i+1<m=\min M.
\end{array}
\quad
\begin{array}{rl}
\text{a.} & l< \min M-1.  \\
\text{b.} & l\sim \min M-1.\\
\text{c.} & l\sim \min M.\\
\text{d.} & l>\min M.
\end{array}
$$
as well as the cases
\begin{enumerate}
\item[(i)] $M$ does not contain $l,\bar l$ or $M$ contains $l+1$;
\item[(ii)] $M$ contains either $\bar l$ or $l$, and $M$ does not contain $\odd{l{+}1}$ and $l{+}1$;
\item[(iii)] $M$ contains either $\bar l$ or $l$, and $M$ contains $\odd{l{+}1}$;
\end{enumerate}
which correspond to (i),(ii),(iii) in the assumptions of the lemma. Of course, not all possible combinations of these cases can happen. The following picture will help the reader to see which cases are possible and navigate the proof.

\vspace{1 cm}

\setlength{\unitlength}{1pt}

\def\smallspear{
\put(10,4.3){\line(0,1){16}}
\put(10,0){\circle{8.5}}
\put(7,22){\tiny (i)}
}

\def\smalltrident{
\put(10,0){\circle{8.5}}
\put(8,4.1){\line(-1,2){8}}
\put(10,4.3){\line(0,1){16}}
\put(12,4.1){\line(1,2){8}}
\put(-4,22){\tiny (i)}
\put(6,22){\tiny (ii)}
\put(16,22){\tiny (iii)}
}

\def\bident{
\put(0,0){\smalltrident}
\put(35,0){\smalltrident}
\put(8,-2.3){\small c}
\put(42.5,-2.5){\small d}
\put(27.5,-35){\circle{14}}
\put(23.4,-29){\line(-1,2){12.3}}
\put(31.6,-29){\line(1,2){12.3}}
}

\def\quadrudent{
\put(0,0)\smallspear
\put(12,0)\smallspear
\put(34,0)\smalltrident
\put(68,0)\smalltrident
\put(8,-1.6){\small a}
\put(20.2,-2.5){\small b}
\put(42,-2){\small c}
\put(75.5,-2.5){\small d}
\put(44,-35){\circle{14}}
\put(39.6,-29.3){\line(-3,5){15.2}}
\put(44,-28){\line(0,1){23.7}}
\put(36.9,-33.8){\line(-4,5){24.1}}
\put(50.6,-32){\line(5,6){24.1}}
}

\begin{picture}(0,0)
\put(0,0){\bident}
\put(74,0){\bident}
\put(138,0){\quadrudent}
\put(235,0){\quadrudent}
\put(26.1,-38.5){I}
\put(98,-38.5){II}
\put(176.4,-38.4){III}
\put(273.6,-38.4){IV}
\put(161.5,-102){\circle*{7}}
\put(161.5,-101.5){\line(-1,1){59.3}}
\put(161.5,-100){\line(1,3){19.3}}
\put(161.5,-103){\line(-2,1){128}}
\put(161.5,-104){\line(5,3){110.7}}
\end{picture}

\vspace{3.9 cm}






{\it  Case~{\rm Ic(i)}}.
Note that in this case we must have $l+1=i+2\in M$.
Then $E_l^\delta S_{i,j}^{\,\epsilon}(M)\=0$ by~(\ref{S3}),
Corollary~\ref{corollary:ops:2},
and part~(\ref{lemma:ops:7:i})
of the inductive hypothesis.

\smallskip

{\it Case}\, Ic(ii).
By~(\ref{S3}), part~(\ref{lemma:ops:7:i}) of the inductive hypothesis and Lemma~\ref{lemma:ops:6}\ref{lemma:ops:6:ii},
\begin{align*}
E_l^{\,\delta}S_{i,j}^\epsilon(M)\=\sum_{\gamma+\sigma=\epsilon}
({-}1)^{\gamma(\1+\epsilon+\|M_{(i+1..j]}\|)+\delta\gamma}
 S^{\,\gamma}_{i,i+1}\(\left\{\odd{i{+}1}\right\}\)E_{i+1}^{\,\delta}S_{i+1,j}^{\,\sigma}\(M_{(i+1..j]}\)
 \\
\=\sum_{\gamma+\sigma=\epsilon}
({-}1)^{\gamma(\epsilon+\|M\|)+\delta\gamma+\1+\delta(\sigma+\1+\|M_{(i+1..j]}\|)}
S^{\,\gamma}_{i,i+1}\(\left\{\odd{i{+}1}\right\}\)S_{i+2,j}^{\,\sigma+\delta}\(M_{(i+1..j]}\).
\end{align*}
Applying the substitution $\sigma':=\sigma+\delta$, we obtain the desired formula~(\ref{lemma:ops:7:ii}).

\smallskip

{\it Case}\, Ic(iii).
By~(\ref{S3}), part~(\ref{lemma:ops:7:i}) of the inductive hypothesis and Lemma~\ref{lemma:ops:6}\ref{lemma:ops:6:iii},
\begin{align*}
E_l^{\,\delta}S_{i,j}^\epsilon(M)=\sum_{\gamma+\rho=\epsilon}
({-}1)^{\gamma(\1+\epsilon+\|M_{(i+1..j]}\|)+\delta\gamma}
S^{\,\gamma}_{i,i+1}\(\left\{\odd{i{+}1}\right\}\)E_{i+1}^{\,\delta}S_{i+1,j}^{\,\rho}\(M_{(i+1..j]}\)\\
\=\sum_{\gamma+\rho=\epsilon}
({-}1)^{\gamma(\1+\epsilon+\|M_{(i+1..j]}\|)+\delta\gamma}S^{\,\gamma}_{i,i+1}\(\left\{\odd{i{+}1}\right\}\)
\\
\times\sum_{\sigma+\tau=\rho+\delta}(-1)^{(\rho+\sigma)(\1+\|M_{(i+1..j]}\|)+\1+\delta\rho} S^{\,\sigma}_{i+2,j}\(M_{(i+1..j]}{\setminus}\left\{\overline{i{+}2}\right\}\)H^{\,\tau}_{i+2}
\\
=\sum_{\gamma+\sigma+\tau=\epsilon+\delta}(-1)^{\gamma(\1+\epsilon+\|M_{(i+1..j]}\|)+\delta\gamma+(\tau+\delta)(\1+\|M_{(i+1..j]}\|)+\1+\delta(\sigma+\tau+\delta)}
\\
\times S^{\,\gamma}_{i,i+1}\(\left\{\odd{i{+}1}\right\}\)S^{\,\sigma}_{i+2,j}\(M_{(i+2..j]}\)H^{\,\tau}_{i+2},
\end{align*}
which the desired formula~(\ref{lemma:ops:7:iii}).

{\it Case} Id(i): Note that $M_{(i+1..j]}=M\setminus\left\{\odd{i{+}1}\right\}$ does not contain $l,\bar l$
if $l,\bar l\notin M$, and $M_{(i+1..j]}=M\setminus\left\{\odd{i{+}1}\right\}$ contains $l+1$ if $l+1\in M$. Now $E_l^\delta S_{i,j}^{\,\epsilon}(M)\=0$ by~(\ref{S3}) and part~(\ref{lemma:ops:7:i})
of the inductive hypothesis.

{\it Case} Id(ii).
By~(\ref{S3}) and part~(\ref{lemma:ops:7:ii}) of the inductive hypothesis, we obtain
\begin{align*}
E_l^{\,\delta}S_{i,j}^\epsilon(M)=
\sum_{\rho+\zeta=\epsilon}
({-}1)^{\rho(\1+\epsilon+\|M_{(i+1..j]}\|)+\delta\rho}
 S^{\,\rho}_{i,i+1}\(\left\{\odd{i{+}1}\right\}\)E_l^{\,\delta}S_{i+1,j}^{\,\zeta}\(M_{(i+1..j]}\)
\end{align*}
\begin{align*}
+\sum_{\pi+\xi=\epsilon}(-1)^{\xi(\1+\|M\|)}E_l^{\,\delta}S_{i,j}^{\,\pi}\(M\setminus\left\{\odd{i{+}1}\right\}\)H^\xi_i
\\
\=\sum_{\rho+\zeta=\epsilon}
({-}1)^{\rho(\1+\epsilon+\|M_{(i+1..j]}\|)+\delta\rho}S^{\,\rho}_{i,i+1}\(\left\{\odd{i{+}1}\right\}\)
\hspace{-2 mm}\sum_{\xi+\sigma=\zeta+\delta}(-1)^{\1+(\delta+\xi)(\zeta+\1+\|M_{(l+1..j]}\|)}
\\
\times S^\xi_{i+1,l}\bigl(M_{(i+1..l]}\bigr)S^{\,\sigma}_{l+1,j}\bigl(M_{(l+1..j]}\bigr)
\\
+\sum_{\pi+\xi=\epsilon}(-1)^{\xi(\1+\|M\|)}
\sum_{\rho+\sigma=\pi+\delta}(-1)^{\1+(\delta+\rho)(\pi+\1+\|(M{\setminus}\{\odd{i{+}1}\})_{(l+1..j]}\|)}
\\
\times S^\rho_{i,l}\bigl((M{\setminus}\{\odd{i{+}1}\})_{(i..l]}\bigr)
S^{\,\sigma}_{l+1,j}\bigl((M{\setminus}\{\odd{i{+}1}\}))_{(l+1..j]}\bigr)H^\xi_i
\\
=\sum_{\rho+\xi+\sigma=\epsilon+\delta}({-}1)^{\rho(\1+\epsilon+\|M_{(i+1..j]}\|)+\delta\rho+\1+(\delta+\xi)(\xi+\sigma+\delta+\1+\|M_{(l+1..j]}\|)}
\\
\times S^{\,\rho}_{i,i+1}\(\left\{\odd{i{+}1}\right\}\)
S^\xi_{i+1,l}\bigl(M_{(i+1..l]}\bigr)S^{\,\sigma}_{l+1,j}\bigl(M_{(l+1..j]})
\\
+
\sum_{\rho+\xi+\sigma=\epsilon+\delta}
(-1)^{\xi(\!\1+\|M\|)+\1+(\delta+\rho)(\!\rho+\sigma+\delta+\1+\|M_{(l+1..j]}\|)}
\\
\times S^\rho_{i,l}(M_{(i..l]}\!\!\setminus\!\{\odd{i{+}1}\})S^{\sigma}_{l+1,j}(M_{(l+1..j]})H^\xi_i
\\
=\sum_{\gamma+\sigma=\epsilon+\delta}\sum_{\rho+\xi=\gamma}\biggl(
({-}1)^{\rho(\1+\epsilon+\|M_{(i+1..j]}\|)+\delta\rho+\1+(\delta+\xi)(\xi+\sigma+\delta+\1+\|M_{(l+1..j]}\|)}
\\
\times S^{\,\rho}_{i,i+1}\(\left\{\odd{i{+}1}\right\}\)
S^\xi_{i+1,l}\bigl(M_{(i+1..l]}\bigr)S^{\,\sigma}_{l+1,j}\bigl(M_{(l+1..j]})
\\
+(-1)^{\xi(\1+\|M\|)+\1+(\delta+\rho)(\sigma+\|M_{(l+1..j]}\|)+\xi\sigma}
S^\rho_{i,l}\bigl(M_{(i..l]}\setminus\{\odd{i{+}1}\}\bigr)H^\xi_iS^{\,\sigma}_{l+1,j}\bigl(M_{(l+1..j]}\bigr)
\biggr)
\\
=\sum_{\gamma+\sigma=\epsilon+\delta}(-1)^{\1+(\delta+\gamma)(\epsilon+\1+\|M_{(l+1..j]}\|)}
\Biggl[\sum_{\rho+\xi=\gamma}(-1)^{\rho(\1+\gamma+\|M_{(i+1..l]}\|)}
\\
\times S^{\,\rho}_{i,i+1}\(\left\{\odd{i{+}1}\right\}\)S^\xi_{i+1,l}\bigl(M_{(i+1..l]}\bigr)
\\
+\sum_{\rho+\xi=\gamma}(-1)^{\xi(\1+\|M_{(i..l]}\|)}S^\rho_{i,l}\bigl(M_{(i..l]}\setminus\{\odd{i{+}1}\}\bigr)H^\xi_i\Biggr]S^{\,\sigma}_{l+1,j}\bigl(M_{(l+1..j]}\bigr).
\end{align*}
By~(\ref{S3}), the expression in the big square brackets equals  $S_{i,l}^{\,\gamma}(M_{(i..l]})$. This gives the desired formula~(\ref{lemma:ops:7:ii}).

{\it Case} Id(iii).
By~(\ref{S3}), we get for $E_l^{\,\delta}S_{i,j}^\epsilon(M)$:
\begin{align*}
\sum_{\rho+\zeta=\epsilon}
({-}1)^{\rho(\1+\epsilon+\|M_{(i+1..j]}\|)+\delta\rho}S^{\,\rho}_{i,i+1}\(\left\{\odd{i{+}1}\right\}\)E_l^{\,\delta}S_{i+1,j}^{\,\zeta}\(M_{(i+1..j]}\)\\
+\sum_{\pi+\xi=\epsilon}(-1)^{\xi(\1+\|M\|)}E_l^{\delta}S_{i,j}^{\,\pi}\(M\setminus\left\{\odd{i{+}1}\right\}\)H^\xi_i.
\end{align*}

Applying part~(\ref{lemma:ops:7:iii}) of the inductive hypothesis, we get
\begin{align*}
\sum_{\rho+\zeta=\epsilon}
({-}1)^{\rho(\1+\epsilon+\|M_{(i+1..j]}\|)+\delta\rho}S^{\,\rho}_{i,i+1}\(\left\{\odd{i{+}1}\right\}\)
\\
\times\sum_{\xi+\sigma+\tau=\zeta+\delta}(-1)^{\1+(\delta+\xi)\zeta+(\zeta+\sigma)\|M_{(l+1..j]}\|}S_{i+1,l}^{\,\xi}\(M_{(i+1..l]}\)S^{\,\sigma}_{l+1,j}\(M_{(l+1..j]}\)H_{l+1}^\tau
\end{align*}
\begin{align*}
+\sum_{\pi+\xi=\epsilon}(-1)^{\xi(\1+\|M\|)}
\sum_{\rho+\sigma+\tau=\pi+\delta}(-1)^{\1+(\delta+\rho)\pi+(\pi+\sigma)\|(M\setminus\{\overline{i+1}\})_{(l+1..j]}\|}
\\
\times S_{i,l}^{\,\rho}\((M{\setminus}{\{\odd{i{+}1}\}})_{(i..l]}\)S^{\,\sigma}_{l+1,j}\((M{\setminus}{\{\odd{i{+}1}\}})_{(l+1..j]}\)H_{l+1}^\tau H^\xi_i
\\
=\sum_{\rho+\xi+\sigma+\tau=\epsilon+\delta}
({-}1)^{\rho(\1+\epsilon+\|M_{(i+1..j]}\|)+\delta\rho+\1+(\delta+\xi)(\xi+\sigma+\tau+\delta)+(\xi+\tau+\delta)\|M_{(l+1..j]}\|}
\\
\times S^{\,\rho}_{i,i+1}\(\left\{\odd{i{+}1}\right\}\) S_{i+1,l}^{\,\xi}\(M_{(i+1..l]}\)S^{\,\sigma}_{l+1,j}\(M_{(l+1..j]}\)H_{l+1}^\tau
\\
+\sum_{\rho+\xi+\sigma+\tau=\epsilon+\delta}(-1)^{\xi(\1+\|M\|)+\1+(\delta+\rho)(\rho+\sigma+\tau+\delta)+(\rho+\tau+\delta)\|M_{(l+1..j]}\|}
\\
\times S_{i,l}^{\,\rho}\(M_{(i..l]}\setminus{\{\odd{i{+}1}\}}\)S^{\,\sigma}_{l+1,j}\(M_{(l+1..j]}\)H_{l+1}^\tau H^\xi_i.
\\
=\sum_{\gamma+\sigma+\tau=\epsilon+\delta}\;\sum_{\rho+\xi=\gamma}
\bigg(
({-}1)^{\rho(\1+\epsilon+\|M_{(i+1..j]}\|)+\delta\rho+\1+(\delta+\xi)(\xi+\sigma+\tau+\delta)+(\xi+\tau+\delta)\|M_{(l+1..j]}\|}
\\
\times S^{\,\rho}_{i,i+1}\(\left\{\odd{i{+}1}\right\}\) S_{i+1,l}^{\,\xi}\(M_{(i+1..l]}\)S^{\,\sigma}_{l+1,j}\(M_{(l+1..j]}\)H_{l+1}^\tau
\\
+(-1)^{\xi(\1+\|M\|)+\1+(\delta+\rho)(\rho+\sigma+\tau+\delta)+(\rho+\tau+\delta)\|M_{(l+1..j]}\|+\xi(\sigma+\tau)}
\\
\times S_{i,l}^{\,\rho}\(M_{(i..l]}\setminus{\{\odd{i{+}1}\}}+\xi(\sigma+\tau)\)H^\xi_iS^{\,\sigma}_{l+1,j}\(M_{(l+1..j]}\)H_{l+1}^\tau.
\bigg)
\\
=\sum_{\gamma+\sigma+\tau=\epsilon+\delta}(-1)^{\1+(\delta+\gamma)\epsilon+(\epsilon+\sigma)\|M_{(l+1..j]}\|}
\\
\times \Biggl[\sum_{\rho+\xi=\gamma}({-}1)^{\rho(\1+\gamma+\|M_{(i+1..l]}\|)}S^{\,\rho}_{i,i+1}\!\!\(\left\{\odd{i{+}1}\right\}\)\!S_{i+1,l}^{\,\xi}\!\!\(M_{(i+1..l]}\)
\\
+\sum_{\rho+\xi=\gamma}(-1)^{\xi(\1+\|M_{(i..l]}\|)}S_{i,l}^{\,\rho}\(M_{(i..l]}\setminus{\{\odd{i{+}1}\}}\)H^\xi_i\Biggr]
S^{\,\sigma}_{l+1,j}\(M_{(l+1..j]}\)H_{l+1}^\tau.
\end{align*}
By~(\ref{S3}), the expression in the big square brackets equals $S_{i,l}^{\,\gamma}(M_{(i..l]})$. This gives the desired formula~(\ref{lemma:ops:7:iii}).

{\it  Case~{\rm IIc(i)}} is similar to case~Ic(i).

{\it Case} IIc(ii). By~(\ref{S4}), part~(\ref{lemma:ops:7:i}) of the inductive hypothesis and Lemmas~\ref{lemma:ops:1}
and~\ref{lemma:ops:6}\ref{lemma:ops:6:ii}, we obtain
\begin{align*}
E^\delta_lS_{i,j}^\epsilon(M)=\sum_{\gamma+\sigma=\epsilon}
(-1)^{\gamma(\1+\epsilon+\|M_{(i+1..j]}\|)+\delta\gamma}S^{\,\gamma}_{i,i+1}\(\left\{i{+}1\right\}\)E^\delta_lS_{i+1,j}^{\,\sigma}\(M_{(i+1..j]}\)\\
+\sum_{\gamma+\sigma=\epsilon}(-1)^{\gamma(\1+\epsilon+\|M_{(i+1..j]}\|)}
\sum_{\rho+\tau=\gamma+\delta}(-1)^{\1+(\delta+\rho)\gamma}
F_{i,j}^{\rho}E^\tau_{i+1}S_{i+1,j}^{\,\sigma}\(M_{(i+1..j]}\)
\\
=
\hspace{-2mm}
\sum_{\gamma+\sigma=\epsilon}
\hspace{-2mm}
(-1)^{\gamma(\1+\epsilon+\|M_{(i+1..j]}\|)+\delta\gamma+\1+\delta(\sigma+\1+\|M_{(i+1..j]}\|)}
S^{\,\gamma}_{i,i+1}\(\left\{i{+}1\right\}\)
S_{i+2,j}^{\,\sigma+\delta}\(M_{(i+1..j]}\)
\\
+\sum_{\gamma+\sigma=\epsilon}(-1)^{\gamma(\1+\epsilon+\|M_{(i+1..j]}\|)}\\
\times\sum_{\rho+\tau=\gamma+\delta}(-1)^{\1+(\delta+\rho)\gamma+\1+\tau(\sigma+\1+\|M_{(i+1..j]}\|)}F_{i,j}^{\rho}E^\tau_{i+1}S_{i+1,j}^{\,\sigma}\(M_{(i+1..j]}\)
\\
=\sum_{\gamma+\sigma'=\epsilon+\delta}
(-1)^{\gamma(\1+\epsilon+\|M_{(i+1..j]}\|)+\delta\gamma+\1+\delta(\sigma'+\delta+\1+\|M_{(i+1..j]}\|)}
\end{align*}
\begin{align*}
\times
S^{\,\gamma}_{i,i+1}\(\left\{i{+}1\right\}\)S_{i+2,j}^{\,\sigma'}\(M_{(i+1..j]}\)
\\
+
\hspace{-3mm}\sum_{\rho+\tau+\sigma=\epsilon+\delta}
\hspace{-5.5mm}
(-1)^{(\rho+\tau+\delta)(\1+\epsilon+\|M\|)+\1+(\delta+\rho)(\rho+\tau+\delta)+\1+\tau(\sigma+\1+\|M\|)}
\!F_{i,j}^{\rho}S_{i+1,j}^{\sigma+\tau}(M_{(i+1..j]}).
\end{align*}
The last summand is easily checked to equal zero, while the first summand yields the required
formula~(\ref{lemma:ops:7:ii}).

{\it Case} IIc(iii). By~(\ref{S4}), part~(\ref{lemma:ops:7:i}) of the inductive hypothesis and Lemmas~\ref{lemma:ops:1} and~\ref{lemma:ops:2}(\ref{lemma:ops:2:i}),
we obtain
\begin{align*}
E_l^\delta S_{i,j}^\epsilon(M)=\sum_{\gamma+\rho=\epsilon}
(-1)^{\gamma(\1+\epsilon+\|M_{(i+1..j]}\|)+\delta\gamma} S^{\,\gamma}_{i,i+1}\(\left\{i{+}1\right\}\)E_{i+1}^\delta S_{i+1,j}^{\,\rho}\(M_{(i+1..j]}\)\\
+\sum_{\gamma+\rho=\epsilon}(-1)^{\gamma(\1+\epsilon+\|M_{(i+1..j]}\|)}
\sum_{\zeta+\tau=\gamma+\delta}(-1)^{\1+(\delta+\zeta)\gamma}F_{i,i+1}^\zeta E_{i+1}^\tau S_{i+1,j}^{\,\rho}\(M_{(i+1..j]}\)
\\
=\sum_{\gamma+\rho=\epsilon}
(-1)^{\gamma(\1+\epsilon+\|M_{(i+1..j]}\|)+\delta\gamma}S^{\,\gamma}_{i,i+1}\(\left\{i{+}1\right\}\)
\\
\times
\sum_{\sigma+\tau=\rho+\delta}(-1)^{(\rho+\sigma)(\1+\|M_{(i+1..j]}\|)+\1+\delta\rho}
S^\sigma_{i+2,j}(M_{(i+1..j]}\setminus\{\overline{i{+}2}\})H^\tau_{i+2}\\
+\sum_{\gamma+\rho=\epsilon}(-1)^{\gamma(\1+\epsilon+\|M_{(i+1..j]}\|)}\sum_{\zeta+\tau=\gamma+\delta}(-1)^{\1+(\delta+\zeta)\gamma}F_{i,i+1}^\zeta\\
\times\sum_{\xi+\pi=\rho+\tau}(-1)^{(\rho+\xi)(\1+\|M_{(i+1..j]}\|)+\1+\tau\rho}
S^\xi_{i+2,j}(M_{(i+1..j]}\setminus\{\overline{i{+}2}\})H^\pi_{i+2}\\
=\sum_{\gamma+\sigma+\tau=\epsilon+\delta}(-1)^{\gamma(\1+\epsilon+\|M_{(i+1..j]}\|)+\delta\gamma+(\sigma+\tau+\delta+\sigma)(\1+\|M_{(i+1..j]}\|)}\\
\times(-1)^{\1+\delta(\sigma+\tau+\delta)}
S^{\,\gamma}_{i,i+1}\(\left\{i{+}1\right\}\)S^\sigma_{i+2,j}(M_{(i+1..j]}\setminus\{\overline{i{+}2}\})H^\tau_{i+2}\\
+\sum_{\gamma+\rho=\epsilon}(-1)^{\gamma(\1+\epsilon+\|M_{(i+1..j]}\|)}\sum_{\zeta+\xi+\pi=\epsilon+\delta}(-1)^{\1+(\delta+\zeta)\gamma+(\rho+\xi)(\1+\|M_{(i+1..j]}\|)}\\
\times(-1)^{\1+(\xi+\pi+\rho)\rho}
F_{i,i+1}^\zeta S^\xi_{i+2,j}(M_{(i+1..j]}\setminus\{\overline{i{+}2}\})H^\pi_{i+2}.
\end{align*}
The last summand contains two independent summations. Swapping them, we easily check that this summand equals zero.
The first summand is easily checked to give the required formula (ii).



{\it Cases} IId are similar to the corresponding cases Id.


{\it Case} IIIa(i). By~(\ref{S5}), Lemma~\ref{lemma:ops:2'}(\ref{lemma:ops:2':ii}) and part~(\ref{lemma:ops:7:i}) of the inductive hypothesis,
we get $E_l^\delta S_{i,j}^{\,\epsilon}(M)\=0$.

{\it\quad Case} IIIb(i). By~(\ref{S5}), Lemma~\ref{lemma:ops:2'}(\ref{lemma:ops:2':iii}) and part~(\ref{lemma:ops:7:ii}) of the inductive hypothesis, we get
\begin{align*}
E_l^{\,\delta}S_{i,j}^\epsilon(M)\=
\sum_{\gamma+\sigma=\epsilon}(-1)^{\gamma(\1+\epsilon+\|M_{(m..j]}\|)} S_{i,m-1}^{\,\gamma+\delta}\(\left\{\odd{m{-}1}\right\}\)S_{m,j}^{\,\sigma}\(M_{(m..j]}\)\\
+\sum_{\gamma+\sigma=\epsilon+\delta}(-1)^{\1+(\delta+\gamma)(\epsilon+\1+\|(M_{\odd{\vphantom{1}m}\mapsto\odd{m{-}1}})_{(m..j]}\|)}
\\
\qquad\times S^\gamma_{i,m-1}\bigl((M_{\odd{\vphantom{1}m}\mapsto\odd{m{-}1}})_{(i..m-1]}\bigr)S^{\,\sigma}_{m,j}\bigl((M_{\odd{\vphantom{1}m}\mapsto\odd{m{-}1}})_{(m..j]}\bigr)
\end{align*}
Applying the substitution $\gamma:=\gamma+\delta$ in the first sum, and taking into account that $(M_{\odd{\vphantom{1}m}\mapsto\odd{m{-}1}})_{(m..j]}=M_{(m..j]}$ and
$(M_{\odd{\vphantom{1}m}\mapsto\odd{m{-}1}})_{(i..m-1]}=\left\{\odd{m{-}1}\right\}$, it is easy to see that the last expression is zero.

\smallskip

{\it Case}~IIIc(i). In this case we must have $m{+}1\in M$. Note that $l+1\in 
M_{\odd{\vphantom{1}m}\mapsto\odd{m{-}1}}$.
So $E_l^\delta S_{i,j}^{\,\epsilon}(M)\=0$ by~(\ref{S5}), Corollary~\ref{corollary:ops:2}, Lemma~\ref{lemma:ops:6}\ref{lemma:ops:6:i} and part~(\ref{lemma:ops:7:i}) of the inductive hypothesis.

\smallskip

{\it\quad Case}~IIIc(ii).  
By the formula (\ref{S5}), part~(\ref{lemma:ops:7:i}) of the inductive hypothesis, and
Lemma~\ref{lemma:ops:6}\ref{lemma:ops:6:ii}, we get that $E_l^{\,\delta}S_{i,j}^\epsilon(M)$ is $\=$ to
\begin{align*}
\sum_{\gamma+\sigma=\epsilon}
({-}1)^{\gamma(\1+\epsilon+\|M_{(m..j]}\|)+\delta\gamma+\1+\delta(\sigma+\1+\|M_{(m..j]}\|)}S^{\,\gamma}_{i,m}\(\{\bar m\}\)S_{m+1,j}^{\,\sigma+\delta}\(M_{(m..j]}\)
\\
=\sum_{\gamma+\sigma'=\epsilon+\delta}({-}1)^{\gamma(\1+\epsilon+\|M_{(m+1..j]}\|)+\delta\gamma+\1+\delta(\sigma'+\delta+\1+\|M_{(m+1..j]}\|)}
\\
\qquad\qquad\times
S^{\,\gamma}_{i,m}\(\{\bar m\}\)S_{m+1,j}^{\,\sigma'}\(M_{(m+1..j]}\),
\end{align*}
which yields the required formula~(\ref{lemma:ops:7:ii}).

\smallskip

{\it Case}~IIIc(iii).
We have by~(\ref{S5}), part~(\ref{lemma:ops:7:i}) of the inductive hypothesis,
and Lemma~\ref{lemma:ops:6}\ref{lemma:ops:6:iii} that
$E_l^{\delta}S_{i,j}^\epsilon(M)$ is $\=$ to
\begin{align*}
\sum_{\gamma+\rho=\epsilon}
({-}1)^{\gamma(\1+\epsilon+\|M_{(m..j]}\|)+\delta\gamma}
S^{\,\gamma}_{i,m}\(\left\{\bar m\right\}\)
\\
\times\sum_{\sigma+\tau=\rho+\delta}(-1)^{(\rho+\sigma)(\1+\|M_{(m..j]}\|)+\1+\delta\rho} S^{\,\sigma}_{m+1,j}\(M_{(m..j]}{\setminus}\left\{\overline{m{+}1}\right\}\)H^{\,\tau}_{m+1}
\\
=\sum_{\gamma+\sigma+\tau=\epsilon+\delta}(-1)^{\gamma(\1+\epsilon+\|M_{(m..j]}\|)+\delta\gamma+(\tau+\delta)(\1+\|M_{(m..j]}\|)+\1+\delta(\sigma+\tau+\delta)}
\\
\times
S^{\,\gamma}_{i,i+1}\(\left\{\bar m\right\}\)S^{\,\sigma}_{m+1,j}\(M_{(m+1..j]}\)H^{\,\tau}_{m+1}.
\end{align*}
which yields the required formula~(\ref{lemma:ops:7:iii}).

\smallskip

{\it\quad Case}~IIId(i). Note that $l$ and $\bar l$ do not belong to the sets
$M_{(m..j]}$ and $M_{\odd{\vphantom{1}m}\mapsto\odd{m{-}1}}$ if $l,\bar l\notin M$
and $l+1$ belongs to these sets if $l+1\in M$.
Then $E_l^\delta S_{i,j}^{\,\epsilon}(M)\=0$ by~(\ref{S5}) and part~(\ref{lemma:ops:7:i}) of the inductive hypothesis.

\smallskip

{\it Case}~IIId(ii).
By~(\ref{S5}) and part~(\ref{lemma:ops:7:ii}) of the inductive hypothesis, we get
\begin{align*}
E_l^{\,\delta}S_{i,j}^\epsilon(M)\=\sum_{\rho+\zeta=\epsilon}
({-}1)^{\rho(\1+\epsilon+\| M_{(m..j]} \|)+\delta\rho}
 S^{\,\rho}_{i,m}\(\left\{\bar m\right\}\)E_l^{\,\delta}S_{m,j}^{\,\zeta}\(M_{(m..j]}\)\\
+\sum_{\gamma+\sigma=\epsilon+\delta}(-1)^{\1+(\delta+\gamma)(\epsilon+\1+\|(M_{\odd{\vphantom{1}m}\mapsto\odd{m{-}1}})_{(l+1..j]}\|)}
\\
\times
S^\gamma_{i,l}\bigl((M_{\odd{\vphantom{1}m}\mapsto\odd{m{-}1}})_{(i..l]}\bigr)S^{\,\sigma}_{l+1,j}\bigl((M_{\odd{\vphantom{1}m}\mapsto\odd{m{-}1}})_{(l+1..j]}\bigr)
\\
\=\sum_{\rho+\zeta=\epsilon}
({-}1)^{\rho(\1+\epsilon+\|M_{(m..j]}\|)+\delta\rho}S^{\,\rho}_{i,m}\(\left\{\bar m\right\}\)
\\
\times\sum_{\xi+\sigma=\zeta+\delta}(-1)^{\1+(\delta+\xi)(\zeta+\1+\|M_{(l+1..j]}\|)}
S^\xi_{m,l}\bigl(M_{(m..l]}\bigr)S^{\,\sigma}_{l+1,j}\bigl(M_{(l+1..j]}\bigr)
\\
+\sum_{\gamma+\sigma=\epsilon+\delta}(-1)^{\1+(\delta+\gamma)(\epsilon+\1+\|M_{(l+1..j]}\|)}
 S^\gamma_{i,l}\bigl((M_{(i..l]})_{\odd{\vphantom{1}m}\mapsto\odd{m{-}1}}\bigr)S^{\,\sigma}_{l+1,j}\bigl(M_{(l+1..j]}\bigr).
\end{align*}
Introducing 
$\gamma:=\rho+\xi$ and rearranging the first summand as in case~I.d.(ii), we get
\begin{align*}
\sum_{\gamma+\sigma=\epsilon+\delta}
\hspace{-3.5mm}
(-1)^{\1+(\delta+\gamma)(\epsilon+\1+\|M_{(l+1..j]}\|)}\Biggl[\sum_{\rho+\xi=\gamma}
\hspace{-2mm}
(-1)^{\rho(\1+\gamma+\|M_{(m..l]}\|)}\!S^{\rho}_{i,m}(\{\bar m\})S^\xi_{m,l}\bigl(M_{(m..l]}\bigr)
\\
+S^\gamma_{i,l}\bigl((M_{(i..l]})_{\odd{\vphantom{1}m}\mapsto\odd{m{-}1}}\bigr)\Biggl]S^{\,\sigma}_{l+1,j}\bigl(M_{(l+1..j]}\bigr).
\end{align*}
By~(\ref{S5}), the expression in the big square brackets equals $S_{i,l}^{\,\gamma}(M_{(i..l]})$.

\smallskip

{\it Case}~IIId(iii)
By~(\ref{S5}) and part~(\ref{lemma:ops:7:iii}) of the inductive hypothesis, we get
\begin{align*}
E_l^{\,\delta}S_{i,j}^\epsilon(M)\=\sum_{\rho+\zeta=\epsilon}
({-}1)^{\rho(\1+\epsilon+\|M_{(m..j]}\|)+\delta\rho}S^{\,\rho}_{i,m}\(\left\{\bar m\right\}\)E_l^{\,\delta}S_{m,j}^{\,\zeta}\(M_{(m..j]}\)
\\
+\sum_{\gamma+\sigma+\tau=\epsilon+\delta}(-1)^{\1+(\delta+\gamma)\epsilon+(\epsilon+\sigma)\|(M_{\odd{\vphantom{1}m}\mapsto\odd{m{-}1}})_{(l+1..j]}\|}
\\
\times S_{i,l}^{\,\gamma}\((M_{\odd{\vphantom{1}m}\mapsto\odd{m{-}1}})_{(i..l]}\)S^{\,\sigma}_{l+1,j}\((M_{\odd{\vphantom{1}m}\mapsto\odd{m{-}1}})_{(l+1..j]}\)H_{l+1}^\tau\\
\=\sum_{\rho+\zeta=\epsilon}
({-}1)^{\rho(\1+\epsilon+\|M_{(m..j]}\|)+\delta\rho}S^{\,\rho}_{i,m}\(\left\{\bar m\right\}\)
\\
\times\sum_{\xi+\sigma+\tau=\zeta+\delta}(-1)^{\1+(\delta+\xi)\zeta+(\zeta+\sigma)\|M_{(l+1..j]}\|}
 S_{m,l}^{\,\xi}\(M_{(m..l]}\)S^{\,\sigma}_{l+1,j}\(M_{(l+1..j]}\)H_{l+1}^\tau
\\
+\hspace{-3.5mm}
\sum_{\gamma+\sigma+\tau=\epsilon+\delta}
\hspace{-3.5mm}
(-1)^{\1+(\delta+\gamma)\epsilon+(\epsilon+\sigma)\|M_{(l+1..j]}\|}
S_{i,l}^{\,\gamma}\((M_{(i..l]})_{\odd{\vphantom{1}m}\mapsto\odd{m{-}1}}\)S^{\,\sigma}_{l+1,j}\(M_{(l+1..j]}\)H_{l+1}^\tau.
\end{align*}
Introducing $\gamma:=\rho+\xi$ and rearranging the first summand as in~I.d.(iii), we get
\begin{align*}
\sum_{\gamma+\sigma+\tau=\epsilon+\delta}
\hspace{-5.5mm}
(-1)^{\1+(\delta+\gamma)\epsilon+(\epsilon+\sigma)\|M_{(l+1..j]}\|}
\!\Biggl[\sum_{\rho+\xi=\gamma}
\hspace{-2mm}
({-}1)^{\rho(\1+\gamma+\|M_{(m..l]}\|)}\!S^{\,\rho}_{i,m}(\!\{\bar m\}\!)S_{m,l}^{\,\xi}(M_{(m..l]})\\
+S_{i,l}^{\,\gamma}\((M_{(i..l]})_{\odd{\vphantom{1}m}\mapsto\odd{m{-}1}}\)
\Biggl]S^{\,\sigma}_{l+1,j}\(M_{(l+1..j]}\)H_{l+1}^\tau.
\end{align*}
By~(\ref{S5}), the expression in the big square brackets equals $S_{i,l}^{\,\gamma}(M_{(i..l]})$.



{\it\quad Case}~IV is similar to cases II and III.
\end{proof}

\section{Some coefficients}\label{some_coefficients}
By Proposition~\ref{proposition:intro:1}, any element $F\in U^{\leq 0}_\Z(n)^{-\alpha(i,j)}$ can be written as a sum of the following elements:
\begin{equation}\label{eq:rev:1}
 F^{\,\epsilon_0}_{i,a_1} F^{\,\epsilon_1}_{a_1,a_2}\cdots F^{\,\epsilon_m}_{a_m,j}\,H_{i,\,a_1,\ldots,a_m,j}^{\,\epsilon_0,\ldots,\epsilon_m},
\end{equation}
where $H_{i,\,a_1,\ldots,a_m,j}^{\,\epsilon_0,\ldots,\epsilon_m}\in U^0_\Z(n)$,
$\epsilon_0,\ldots,\epsilon_m\in\{\0,\1\}$, and $i<a_1<\cdots<a_m<j$. We set $\cf_{i,j}^{\,\delta}(F):=H_{i,j}^{\,\delta}$, the ``$U^0_\Z(n)$-coefficient'' of $F_{i,j}^\de$.


\begin{lemma}\label{lemma:rev:1}
Let $M$ be a signed $(i..j]$-set all elements of which are even except $\bar\jmath$,
and $\epsilon,\delta\in\{\0,\1\}$. Then
$
\cf_{i,j}^{\,\delta}\bigl( S_{i,j}^{\,\epsilon}(M)\bigr)=\cond_{\epsilon=\delta}\prod\nolimits_{t\in M\setminus\{\bar\jmath\}} C(i,t).
$
\end{lemma}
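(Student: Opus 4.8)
The plan is to argue by induction on $\height M$, mirroring the recursive definition of $S_{i,j}^{\,\epsilon}(M)$ in \eqref{S1}--\eqref{S6} and tracking only the $U^0_\Z(n)$-coefficient of the ``straight'' term $F_{i,j}^\de$. For the base cases, $M=\{\bar\jmath\}$ gives $S_{i,j}^{\,\epsilon}(\{\bar\jmath\})=F_{i,j}^{\,\epsilon}$ by \eqref{S1}, so $\cf_{i,j}^{\,\delta}=\cond_{\epsilon=\delta}$ and the empty product is $1$, as required; for $M=\{j\}$ we read off \eqref{S2} directly: the only term of the form $F_{i,j}^\de\cdot(\text{element of }U^0_\Z(n))$ comes from the $\gamma=\epsilon$, $\sigma=\0$ summand, which contributes $(-1)^0F_{i,j}^{\,\epsilon}(H_i^\0+(-1)^{0+\eps}H_j^\0)=F_{i,j}^{\,\epsilon}\bigl(H_i(H_i-1)-H_j(H_j-1)\bigr)=F_{i,j}^{\,\epsilon}C(i,j)$ if $\eps=\delta$ (note $H^\0_i=\binom{H_i}1$ and $C(i,j)=H_i(H_i-1)-H_j(H_j-1)$, using $(-1)^\eps H_j^\0=H_j(H_j-1)$ only after the sign is absorbed—I would double-check the $\eps=\1$ case here, but the $F^\gamma_{i,k}F^\sigma_{k,j}$ terms with $i<k<j$ never produce an $F_{i,j}$, so they are irrelevant to $\cf_{i,j}$), and zero otherwise; this matches $\cond_{\epsilon=\delta}C(i,j)=\cond_{\epsilon=\delta}\prod_{t\in\{j\}}C(i,t)$.

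For the inductive step, since all elements of $M$ except $\bar\jmath$ are even, only Cases~2 and~4 of the definition can occur (Cases~1 and~3 require $\odd{i{+}1}\in M$ or $\bar m=\min M$, which cannot happen as $M\setminus\{\bar\jmath\}$ is even and $\bar\jmath$ is not the minimum once $M\ne\{\bar\jmath\}$). Consider Case~4, $\min M=m>i+1$: by \eqref{S6},
\begin{equation*}
S_{i,j}^\epsilon(M)=\sum_{\gamma+\sigma=\epsilon}
(-1)^{\gamma(\1+\epsilon+\|M_{(m..j]}\|)}\,S^{\,\gamma}_{i,m}(\{m\})S_{m,j}^{\,\sigma}\(M_{(m..j]}\)
+S_{i,j}^{\,\epsilon}(M_{m\mapsto m{-}1})+S_{i,j}^{\,\epsilon}(M\setminus\{m\})\,C(m{-}1,m).
\end{equation*}
An $F_{i,j}^\de$-term in the first sum must pick an $F_{i,m}$ from $S^{\,\gamma}_{i,m}(\{m\})$ and an $F_{m,j}$ from $S_{m,j}^{\,\sigma}(M_{(m..j]})$—but then it is of the form $F^{\,\gamma}_{i,m}F^{\,\sigma}_{m,j}\cdot(\cdots)$, which is \emph{not} of the straight form $F_{i,j}^\de\cdot U^0_\Z(n)$ in the basis \eqref{eq:rev:1} (it has an intermediate index $m$), so the first sum contributes nothing to $\cf_{i,j}^{\,\delta}$. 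Hence $\cf_{i,j}^{\,\delta}(S_{i,j}^\epsilon(M))=\cf_{i,j}^{\,\delta}(S_{i,j}^{\,\epsilon}(M_{m\mapsto m{-}1}))+\cf_{i,j}^{\,\delta}(S_{i,j}^{\,\epsilon}(M\setminus\{m\}))\,C(m{-}1,m)$. Both $M_{m\mapsto m{-}1}$ and $M\setminus\{m\}$ have strictly smaller height and still satisfy the evenness hypothesis, so by induction this equals $\cond_{\epsilon=\delta}\bigl(\prod_{t\in M\setminus\{m\}}C(i,t)\bigr)C(i,m{-}1)+\cond_{\epsilon=\delta}\bigl(\prod_{t\in M\setminus\{m\}}C(i,t)\bigr)C(m{-}1,m)$, where I use $C(i,m{-}1)+C(m{-}1,m)=\bigl(H_i(H_i-1)-H_{m-1}(H_{m-1}-1)\bigr)+\bigl(H_{m-1}(H_{m-1}-1)-H_m(H_m-1)\bigr)=C(i,m)$. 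This collapses to $\cond_{\epsilon=\delta}\prod_{t\in M}C(i,t)$, as desired. Case~2 ($\min M=i+1$) is handled identically, with \eqref{S4} replacing \eqref{S6}: the $S^{\,\gamma}_{i,i+1}(\{i{+}1\})S_{i+1,j}^{\,\sigma}$ term again has intermediate index $i+1$ and drops out, leaving $\cf_{i,j}^{\,\delta}(S_{i,j}^{\,\epsilon}(M\setminus\{i{+}1\}))\,C(i,i{+}1)$, which by induction is $\cond_{\epsilon=\delta}\bigl(\prod_{t\in M\setminus\{i+1\}}C(i,t)\bigr)C(i,i{+}1)=\cond_{\epsilon=\delta}\prod_{t\in M}C(i,t)$.

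The main obstacle I anticipate is purely bookkeeping: being certain that no term of $S^{\,\gamma}_{i,m}(\{m\})S_{m,j}^{\,\sigma}(M_{(m..j]})$ can, after rewriting into the PBW-type basis \eqref{eq:rev:1}, acquire a straight $F_{i,j}$ summand. This follows from Proposition~\ref{proposition:ops:1} (every monomial of $S^{\,\gamma}_{i,m}(\{m\})$ lies in the span of products of $F^\delta_{k,l}$ and $H^\gamma_t$ with $i\le k<l\le m$, and similarly for the second factor with indices in $[m..j]$) together with the fact that the only way such a product of $F$'s spans $-\alpha(i,j)$ with no repeated or skipped root vector of the shape $F_{i,j}$ is to genuinely involve $m$—commuting $F$'s past $H$'s via \eqref{equation:intro:1} only changes $H$-coefficients, never merges $F_{i,m}F_{m,j}$ into $F_{i,j}$. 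I would spell this out as a short lemma or remark. The sign factor $(-1)^{\gamma(\1+\epsilon+\|M_{(m..j]}\|)}$ is irrelevant precisely because that whole summand contributes $0$ to $\cf_{i,j}^{\,\delta}$; likewise $C(m-1,m)$ and $C(i,i+1)$ are central-looking elements of $U^0_\Z(n)$, so no reordering issues arise in the $\cf$ computation.
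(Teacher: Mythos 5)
Your inductive argument is essentially the paper's own: in (S4)/(S6), the split product $S^\ga_{i,m}(\{m\})\,S^\si_{m,j}(M_{(m..j]})$ contributes nothing to $\cf_{i,j}^\de$ because, after writing each factor in PBW form, commuting $H$'s past $F$'s only adjusts parity superscripts and never collapses $F_{i,m}F_{m,j}$ into a straight $F_{i,j}$; the remaining summands then telescope via $C(i,m{-}1)+C(m{-}1,m)=C(i,m)$. The paper leaves the first observation implicit exactly as you do, and flagging it as deserving a small separate remark is a reasonable improvement.

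One correction is needed, though it does not damage the proof. Your second base case $M=\{j\}$ lies outside the lemma's scope: the hypothesis requires $\bar\jmath\in M$ to be the unique odd element, so $j\notin M$. (The case $j\in M$ is Lemma~\ref{lemma:rev:1.5}, where the answer is $\bigl((-1)^{\eps+\de}H_i^{\eps+\de}+(-1)^{\eps\de}H_j^{\eps+\de}\bigr)\prod_{t\in M\setminus\{j\}}C(i,t)$, not $\cond_{\eps=\de}C(i,j)$.) Your computation of that spurious case is also wrong: $H_i^\0=\tbinom{H_i}1=H_i$, not $H_i(H_i-1)$; you have conflated $H_i^\0$ with $\ll x_i\rr$, which is where the quadratic $H_i(H_i-1)$ actually lives. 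Since the recursion only ever passes to $M\setminus\{i{+}1\}$, $M_{m\mapsto m-1}$, or $M\setminus\{m\}$, all of which still contain $\bar\jmath$, you never reach $M=\{j\}$ and the argument is unharmed, but the confusion between $H_i^\0$ and $\ll x_i\rr$ is worth clearing up.
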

\begin{proof} We apply induction on $\height M$. If $M=\{\bar\jmath\}$
the result is clear by~(\ref{S1}).
Let now $M\ne\{\bar\jmath\}$. We set $m:=\min M$.
If $m=i+1$, then by~(\ref{S4}) and the inductive hypothesis, we get
\begin{align*}
\cf_{i,j}^{\,\delta}( S_{i,j}^{\,\epsilon}(M))
=&\cf_{i,j}^{\,\delta}\bigl( S_{i,j}^{\,\epsilon}(M\setminus\{i+1\}\bigr) C(i,i+1)
\\
=&\cond_{\epsilon=\delta} C(i,i+1)\prod\nolimits_{t\in M\setminus\{i+1,\bar\jmath\}} C(i,t)
\!=\!\cond_{\epsilon=\delta}\prod\nolimits_{t\in M\setminus\{\bar\jmath\}}\! C(i,t).
\end{align*}

If $m>i+1$, then by~(\ref{S6}) and the inductive hypothesis, $\cf_{i,j}^{\,\delta}\bigl( S_{i,j}^{\,\epsilon}(M)\bigr)$ equals
\begin{align*}
&\cf_{i,j}^{\,\delta}\bigl( S_{i,j}^{\,\epsilon}(M_{m\mapsto m-1})\bigr)+\cf_{i,j}^{\,\delta}\bigl( S_{i,j}^{\,\epsilon}(M\setminus\{m\})\bigr)\, C(m{-}1,m)
\\
=&\cond_{\epsilon=\delta}\prod\nolimits_{t\in M_{m\mapsto m-1}\setminus\{\bar\jmath\}} C(i,t)+ C(m{-}1,m)\,\cond_{\epsilon=\delta}\prod\nolimits_{t\in M\setminus\{m,\bar\jmath\}} C(i,t)
\\
= &C(i,m{-}1)\,\cond_{\epsilon=\delta}\prod\nolimits_{t\in M\setminus\{m,\bar\jmath\}} C(i,t)+ C(m{-}1,m)\,\cond_{\epsilon=\delta}\prod\nolimits_{t\in M\setminus\{m,\bar\jmath\}} C(i,t)
\\
=& C(i,m)\,\cond_{\epsilon=\delta}\prod\nolimits_{t\in M\setminus\{m,\bar\jmath\}} C(i,t)=\cond_{\epsilon=\delta}\prod\nolimits_{t\in M\setminus\{\bar\jmath\}} C(i,t),
\end{align*}
as desired.
\end{proof}

\begin{lemma}\label{lemma:rev:1.5}
Let $j\in M\subset(i..j]$ and $\epsilon,\delta\in\{\0,\1\}$. Then
$
\cf_{i,j}^{\,\delta}\bigl( S_{i,j}^{\,\epsilon}(M)\bigr)
=((-1)^{\epsilon+\delta}H_i^{\epsilon+\delta}+(-1)^{\epsilon\delta}H_j^{\epsilon+\delta})\prod\nolimits_{t\in M\setminus\{j\}}C(i,t).
$
\end{lemma}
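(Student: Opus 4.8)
The plan is to prove Lemma~\ref{lemma:rev:1.5} by induction on $\height M$, paralleling the structure of the definition~(\ref{S1})--(\ref{S6}) and of the previous lemma. The base case is $M=\{j\}$, where~(\ref{S2}) gives
$$
S^{\,\epsilon}_{i,j}(\{j\})=\sum_{\gamma+\sigma=\epsilon}\Big((-1)^{\sigma} F_{i,j}^{\,\gamma}(H_i^\sigma+(-1)^{\gamma\sigma+\eps} H_j^\sigma)+(-1)^{\gamma\epsilon}\sum_{i<k<j}F^\gamma_{i,k}F^\sigma_{k,j}\Big),
$$
so the $F_{i,j}^{\,\delta}$-component is picked out only by the $\gamma=\delta$, $\sigma=\epsilon+\delta$ term of the first sum (the double product $F^\gamma_{i,k}F^\sigma_{k,j}$ with $i<k<j$ contributes nothing of the form $F_{i,j}^{\,\delta}H$, since those are already in normal form~(\ref{eq:rev:1}) with $m\geq 1$), yielding $(-1)^{\epsilon+\delta}(H_i^{\epsilon+\delta}+(-1)^{\delta(\epsilon+\delta)+\epsilon}H_j^{\epsilon+\delta})$; a quick sign check separating $\epsilon+\delta=\0$ and $\epsilon+\delta=\1$ shows this equals $(-1)^{\epsilon+\delta}H_i^{\epsilon+\delta}+(-1)^{\epsilon\delta}H_j^{\epsilon+\delta}$, and the empty product over $M\setminus\{j\}$ is $1$.

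For the inductive step, since $j\in M$ we have $\min M = m < j$ with $\{m\text{ or }\bar m\}$ properly in $M$, so exactly one of Cases~1--4 of the recursion applies (the relevant recursion is~(\ref{S3}),~(\ref{S4}),~(\ref{S5}), or~(\ref{S6}) according to whether $\min M$ is $\odd{i+1}$, $i+1$, $\bar m>i+1$, or $m>i+1$). In each case the first summand of the recursion is a product $S^{\,\gamma}_{i,m'}(\cdots)S^{\,\sigma}_{m',j}(\cdots)$ in which the second factor carries weight $-\alpha(m',j)$ with $m'>i$; by Proposition~\ref{proposition:ops:1} such a product, when rewritten in the normal form~(\ref{eq:rev:1}), involves $F_{i,a_1}^{\cdot}$ with $a_1 = m' > i$ (or more steps), hence contributes \emph{nothing} to $\cf_{i,j}^{\,\delta}$. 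So only the ``tail'' terms of~(\ref{S3})--(\ref{S6}) survive under $\cf_{i,j}^{\,\delta}$: namely $S^{\gamma}_{i,j}(M\setminus\{\odd{i+1}\})H_i^\sigma$ in Case~1, $S^{\epsilon}_{i,j}(M\setminus\{i+1\})C(i,i+1)$ in Case~2, $S^{\epsilon}_{i,j}(M_{\odd{m}\mapsto\odd{m-1}})$ in Case~3, and $S^{\epsilon}_{i,j}(M_{m\mapsto m-1})+S^{\epsilon}_{i,j}(M\setminus\{m\})C(m-1,m)$ in Case~4. Applying the inductive hypothesis to each of these — noting that in Case~1 the set $M\setminus\{\odd{i+1}\}$ still contains $j$, and likewise in the other cases — one reduces, exactly as in the proof of Lemma~\ref{lemma:rev:1}, to the identities $C(i,m-1)+C(m-1,m)=C(i,m)$ (for Cases~2 and~4, telescoping the $C(i,t)$-product) and to the fact that $\prod_{t\in M_{\odd{m}\mapsto\odd{m-1}}\setminus\{j\}}C(i,t)$ equals $C(i,m-1)\prod_{t\in M\setminus\{m,j\}}C(i,t)$ in Case~3. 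The factor $((-1)^{\epsilon+\delta}H_i^{\epsilon+\delta}+(-1)^{\epsilon\delta}H_j^{\epsilon+\delta})$ is untouched by this process in Cases~3 and~4; in Cases~1 and~2 one must check it survives multiplication by $H_i^\sigma$ or $C(i,i+1)$ correctly, which is where a little care with the $U^0$-coefficient bookkeeping is needed.

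The main obstacle is Case~1: here the surviving tail $(-1)^{\sigma(\1+\|M\|)}S^{\gamma}_{i,j}(M\setminus\{\odd{i+1}\})H_i^\sigma$ is summed over $\gamma+\sigma=\epsilon$, so after substituting the inductive value of $\cf_{i,j}^{\,\gamma}$ one obtains $\sum_{\gamma+\sigma=\epsilon}(-1)^{\sigma(\1+\|M\|)}\bigl((-1)^{\gamma+\delta}H_i^{\gamma+\delta}+(-1)^{\gamma\delta}H_j^{\gamma+\delta}\bigr)\bigl(\prod_{t\in M\setminus\{\odd{i+1},j\}}C(i,t)\bigr)H_i^\sigma$, and one must verify, using $H_i^{\gamma+\delta}H_i^\sigma = H_i^{\gamma+\delta+\sigma} + (\text{correction from }[H_i^{\gamma+\delta},H_i^\sigma])$ via the relation $[H_i^\de,H_i^\eps]=(1-(-1)^{\de\eps})H_i^{\de+\eps}$ together with $\bar H_i^2 = H_i$ and $C(i,t)$ being central in $U^0$, that the whole thing collapses to $\bigl((-1)^{\epsilon+\delta}H_i^{\epsilon+\delta}+(-1)^{\epsilon\delta}H_j^{\epsilon+\delta}\bigr)\prod_{t\in M\setminus\{j\}}C(i,t)$ — recalling that $\odd{i+1}\notin M\setminus\{\odd{i+1}\}$ means $i+1 = \min(M\setminus\{\odd{i+1}\})$ is \emph{not} present either, so $C(i,i+1)$ does not appear and the product over $M\setminus\{\odd{i+1},j\}$ is already the product over $M\setminus\{j\}$. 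The sign identities $(-1)^{\sigma(\1+\|M\|)}$ versus the parities appearing in the base case are the fiddly part; as in all the other lemmas of this section, they are dispatched by treating $\epsilon+\delta=\0$ and $\epsilon+\delta=\1$ separately.
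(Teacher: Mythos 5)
Your overall plan — induct on $\height M$, observe that the product terms $S^{\gamma}_{i,m'}(\cdots)S^{\sigma}_{m',j}(\cdots)$ with $m'>i$ cannot contribute to $\cf_{i,j}^{\,\delta}$, and reduce the tail terms by the inductive hypothesis using the telescoping identity $C(i,m-1)+C(m-1,m)=C(i,m)$ — is exactly the paper's. But you have seriously overcomplicated matters by considering all four cases of the recursion and, in particular, devoting most of your effort to ``Case 1'' ($\min M = \odd{i+1}$).

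Look again at the hypothesis: $j\in M\subset(i..j]$. This means $M$ is a plain subset of the integer interval $(i..j]$, not a general signed set; every element of $M$ is even. Hence $\min M$ is always an honest integer, and only formulas~(\ref{S4}) (when $\min M=i+1$) and~(\ref{S6}) (when $\min M>i+1$) ever arise. Cases~1 and~3 of the recursion, involving $\odd{i+1}$ or $\bar m$, cannot occur. The ``main obstacle'' you describe in Case~1, with the $\sum_{\gamma+\sigma=\epsilon}(-1)^{\sigma(\1+\|M\|)}(\cdots)H_i^\sigma$ bookkeeping, is a non-issue: that whole branch is vacuous here. (It is handled elsewhere, in Lemma~\ref{lemma:rcoeff:2}, by an entirely different route via the polynomials $g^{(2)}_{i,k,q,j}$.) Similarly, your caution about Case~2 is misplaced: $C(i,i+1)$ lies in $U^0(n)$ and is central there (from $[H_i^\de,H_i^\eps]=(1-(-1)^{\de\eps})H_i^{\de+\eps}$ and $\bar H_i^2=H_i$, one sees $H_i$, $H_{i+1}$ are central), so the factor $((-1)^{\epsilon+\delta}H_i^{\epsilon+\delta}+(-1)^{\epsilon\delta}H_j^{\epsilon+\delta})$ passes through undisturbed and no further sign analysis is needed. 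Once the two spurious cases and the unnecessary worry about $C(i,i+1)$ are stripped away, what remains is precisely the paper's short two-case induction.
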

\begin{proof}
We apply induction on $\height M$. If $M=\{j\}$ the result is clear by~\mbox{(\ref{S2})}.
Otherwise set $m:=\min M$.
If $m=i+1$, then by~(\ref{S4}) and the inductive hypothesis, we get
\begin{align*}
\cf_{i,j}^{\,\delta}( S_{i,j}^{\,\epsilon}(M))
=\cf_{i,j}^{\,\delta}\bigl( S_{i,j}^{\,\epsilon}(M\setminus\{i+1\}\bigr) C(i,i+1)
\\
=((-1)^{(\epsilon+\delta)}H_i^{\epsilon+\delta}+(-1)^{\epsilon\delta}H_j^{\epsilon+\delta})C(i,i+1)\prod\nolimits_{t\in M\setminus\{i+1,j\}}C(i,t)
\\
=((-1)^{(\epsilon+\delta)}H_i^{\epsilon+\delta}+(-1)^{\epsilon\delta}H_j^{\epsilon+\delta})\prod\nolimits_{t\in M\setminus\{j\}}C(i,t).
\end{align*}
Denote $A:= (-1)^{(\epsilon+\delta)}H_i^{\epsilon+\delta}+(-1)^{\epsilon\delta}H_j^{\epsilon+\delta}$.
If $m>i+1$, then by~(\ref{S6}) and the inductive hypothesis,
$\cf_{i,j}^{\,\delta}\bigl( S_{i,j}^{\,\epsilon}(M)\bigr)$ equals
\begin{align*}
\cf_{i,j}^{\,\delta}\bigl( S_{i,j}^{\,\epsilon}(M_{m\mapsto m-1})\bigr)+\cf_{i,j}^{\,\delta}\bigl( S_{i,j}^{\,\epsilon}(M\setminus\{m\})\bigr)\, C(m{-}1,m)
\\
=A\prod\nolimits_{t\in M_{m\mapsto m-1}\setminus\{j\}}C(i,t)
+AC(m{-}1,m)\prod\nolimits_{t\in M\setminus\{m,j\}}C(i,t)\\
=A\Big(C(i,m-1)\prod\nolimits_{t\in M\setminus\{m,j\}}C(i,t)+C(m{-}1,m)\prod\nolimits_{t\in M\setminus\{m,j\}}C(i,t)\Big)
\\
=AC(i,m)\prod\nolimits_{t\in M\setminus\{m,j\}}C(i,t)
=A\prod\nolimits_{t\in M\setminus\{j\}}C(i,t)
\end{align*}
as desired.
\end{proof}

\begin{lemma}\label{lemma:main:2'}
Let $1\le i<j<l\le n$ and $F\in U^{\leq 0}_\Z(n)^{-\alpha(i,j)}$, and
$\delta,\sigma\in\{\0,\1\}$. Then $\cf_{i,l}^\delta( F^{\,\sigma}_{j,l}F)=\cf_{i,j}^{\delta+\sigma}(F)$.
\end{lemma}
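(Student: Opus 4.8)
The plan is to argue by induction on $\height F$ in the sense of the decomposition~\eqref{eq:rev:1}, reducing everything to the commutation relations between the $F$'s and to the definition of $\cf$. Recall that $F\in U^{\leq0}_\Z(n)^{-\alpha(i,j)}$ can be written as a $\Z$-linear combination of monomials $F^{\,\epsilon_0}_{i,a_1}F^{\,\epsilon_1}_{a_1,a_2}\cdots F^{\,\epsilon_m}_{a_m,j}\,H$ with $H\in U^0_\Z(n)$ and $i<a_1<\cdots<a_m<j$, and that by definition $\cf^{\,\delta}_{i,j}(F)$ picks out the $U^0_\Z(n)$-coefficient of the monomial $F^{\,\delta}_{i,j}$ (i.e. the $m=0$ term of the appropriate parity). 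By $\Z$-linearity of both sides in $F$, it suffices to prove the identity when $F$ is a single such monomial.

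First I would dispose of the base case: if $F=F^{\,\delta'}_{i,j}\,H$ is itself a ``straight'' monomial (no intermediate nodes), then $\cf^{\,\delta}_{i,j}(F)=\cond_{\delta=\delta'}H$, while $F^{\,\sigma}_{j,l}F=F^{\,\sigma}_{j,l}F^{\,\delta'}_{i,j}H$; since $E_{l-1}^{\,\cdots}$ is irrelevant here, the only thing to check is that $F^{\,\sigma}_{j,l}F^{\,\delta'}_{i,j}$ is, up to sign and up to lower monomials killed by $\cf_{i,l}$, proportional to $F^{\,\delta'+\sigma}_{i,l}$ — but this is exactly the kind of three-term relation $[F^{\,\sigma}_{j,l},F^{\,\delta'}_{i,j}]\sim F^{\,\delta'+\sigma}_{i,l}$ coming from~\eqref{equation:intro:1} (applying $\tau$ to the stated $[E,F]$-relations, or equivalently reading off the corresponding relation in $U_\mathbb C(n)$). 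The key point is that $F^{\,\sigma}_{j,l}F^{\,\delta'}_{i,j}$ equals $\pm F^{\,\delta'+\sigma}_{i,l}$ plus the term $\pm F^{\,\delta'}_{i,j}F^{\,\sigma}_{j,l}$, and the latter contributes an intermediate node $j$ between $i$ and $l$, hence does not affect $\cf_{i,l}^{\,\delta}$; so $\cf_{i,l}^{\,\delta}(F^{\,\sigma}_{j,l}F^{\,\delta'}_{i,j}H)=\pm\cond_{\delta+\sigma=\delta'}H$, and one checks the sign matches $\cf_{i,j}^{\,\delta+\sigma}(F^{\,\delta'}_{i,j}H)=\cond_{\delta+\sigma=\delta'}H$.

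For the inductive step, suppose $F=F^{\,\epsilon_0}_{i,a_1}F'$ with $a_1<j$ and $F'\in U^{\leq0}_\Z(n)^{-\alpha(a_1,j)}$ of strictly smaller height. Since $l>j>a_1$, the element $F^{\,\sigma}_{j,l}$ supercommutes with $F^{\,\epsilon_0}_{i,a_1}$ (the relevant $[F^{\,\sigma}_{j,l},F^{\,\epsilon_0}_{i,a_1}]$-relation from~\eqref{equation:intro:1} vanishes because the node indices do not interlace), so
\[
F^{\,\sigma}_{j,l}F=\pm F^{\,\epsilon_0}_{i,a_1}\,F^{\,\sigma}_{j,l}F'.
\]
Now I would apply the inductive hypothesis to $F'$ (with $i$ replaced by $a_1$): $\cf^{\,\delta'}_{a_1,l}(F^{\,\sigma}_{j,l}F')=\cf^{\,\delta'+\sigma}_{a_1,j}(F')$ for all $\delta'$. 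Finally one needs a transparency lemma saying that for any $G\in U^{\leq0}_\Z(n)^{-\alpha(a_1,j)}$ one has $\cf^{\,\delta}_{i,j}(F^{\,\epsilon_0}_{i,a_1}G)$ depends only on the $\cf^{\,\cdot}_{a_1,j}$-data of $G$ — more precisely $\cf^{\,\delta}_{i,j}(F^{\,\epsilon_0}_{i,a_1}G)=\pm\cf^{\,\delta+\epsilon_0}_{a_1,j}(G)$ (and likewise $\cf^{\,\delta}_{i,l}(F^{\,\epsilon_0}_{i,a_1}H)=\pm\cf^{\,\delta+\epsilon_0}_{a_1,l}(H)$), which holds because $\cf$ reads off a straight monomial and prepending $F^{\,\epsilon_0}_{i,a_1}$ shifts ``straightness'' from $(a_1,j)$ to $(i,j)$ without mixing in other terms. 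Chaining these three equalities gives $\cf^{\,\delta}_{i,l}(F^{\,\sigma}_{j,l}F)=\cf^{\,\delta+\sigma}_{i,j}(F)$, as required.

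The main obstacle is bookkeeping of signs and making the ``transparency lemma'' for $\cf^{\,\delta}_{i,j}(F^{\,\epsilon_0}_{i,a_1}G)$ precise: one must verify that when $G$ is expanded in the basis~\eqref{eq:rev:1}, no cancellation or mixing between different straight-monomial coefficients occurs after left-multiplying by $F^{\,\epsilon_0}_{i,a_1}$, and that the parity shift $\delta\mapsto\delta+\epsilon_0$ together with the Koszul sign is exactly right. Once that auxiliary statement is in hand (it is essentially a restatement of the uniqueness in Proposition~\ref{proposition:intro:1} together with the fact that $F^{\,\epsilon_0}_{i,a_1}$ already sits at the left end in the normal form), the induction closes cleanly; indeed this is precisely the pattern used in Lemmas~\ref{lemma:rev:1} and~\ref{lemma:rev:1.5}, which I would mimic essentially verbatim.
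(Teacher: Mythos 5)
There is a genuine gap: the ``transparency lemma'' you invoke in the inductive step is false. For $i<a_1<j$ and $G\in U^{\leq0}_\Z(n)^{-\alpha(a_1,j)}$, the quantity $\cf^{\,\delta}_{i,j}(F^{\,\epsilon_0}_{i,a_1}G)$ is \emph{identically zero}: every normal-form monomial in the expansion of $F^{\,\epsilon_0}_{i,a_1}G$ with respect to~(\ref{eq:rev:1}) begins with the link $F^{\,\epsilon_0}_{i,a_1}$ and therefore carries $a_1$ as an intermediate node, so the straight monomial $F^{\,\delta}_{i,j}$ never occurs. Meanwhile $\cf^{\,\delta+\epsilon_0}_{a_1,j}(G)$ is generally nonzero --- take $G=F^{\,\delta+\epsilon_0}_{a_1,j}$. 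Prepending $F^{\,\epsilon_0}_{i,a_1}$ does not transfer ``straightness'' from $(a_1,j)$ to $(i,j)$; it inserts an intermediate node at $a_1$ and thereby kills the $\cf_{i,j}$-coefficient outright. The same objection applies to the companion claim $\cf^{\,\delta}_{i,l}(F^{\,\epsilon_0}_{i,a_1}H)=\pm\cf^{\,\delta+\epsilon_0}_{a_1,l}(H)$.

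In fact no induction is needed, and the paper's proof does not use one. If the basis monomial $F=F^{\,\epsilon_0}_{i,a_1}\cdots F^{\,\epsilon_m}_{a_m,j}\,H$ has any intermediate node ($m\ge1$), then both sides of the lemma vanish trivially: $\cf^{\,\delta+\sigma}_{i,j}(F)=0$ because $a_1$ is an intermediate node of $F$; and $\cf^{\,\delta}_{i,l}(F^{\,\sigma}_{j,l}F)=0$ because $F^{\,\sigma}_{j,l}$ supercommutes with the initial links $F^{\,\epsilon_0}_{i,a_1},\ldots,F^{\,\epsilon_{m-1}}_{a_{m-1},a_m}$ and the one nontrivial supercommutator $[F^{\,\sigma}_{j,l},F^{\,\epsilon_m}_{a_m,j}]=F^{\,\epsilon_m+\sigma}_{a_m,l}$ yields only terms that still begin with $F^{\,\epsilon_0}_{i,a_1}$. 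The paper simply carries out this expansion once for the generic term of~(\ref{eq:rev:1}): of the two resulting normal-form monomials, one has $j$ as an intermediate node and never contributes to $\cf^{\,\delta}_{i,l}$; the other contributes exactly when $m=0$ and $\epsilon_0+\sigma=\delta$, with trivial Koszul sign, matching $\cf^{\,\delta+\sigma}_{i,j}(F)$. Your base case $F=F^{\,\delta'}_{i,j}H$ is precisely this $m=0$ computation and is correct; the recursion wrapped around it is unnecessary and, as stated, unsound.
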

\begin{proof} Multiplying~(\ref{eq:rev:1}) by $ F^{\,\sigma}_{j,l}$ on the left, we get
\begin{align*}
& F^{\,\sigma}_{j,l}\, F^{\,\epsilon_0}_{i,a_1}\cdots F^{\,\epsilon_m}_{a_m,j}\,H_{i,\,a_1,\ldots,a_m,j}^{\,\epsilon_0,\ldots,\epsilon_m}
 \\
=&(-1)^{\sigma(\epsilon_0+\cdots+\epsilon_{m-1})} F^{\,\epsilon_0}_{i,a_1}\cdots F^{\,\epsilon_{m-1}}_{a_{m-1},a_m}\, F^{\,\sigma}_{j,l} F^{\,\epsilon_m}_{a_m,j}\,H_{i,\,a_1,\ldots,a_m,j}^{\,\epsilon_0,\ldots,\epsilon_m}\\
=&(-1)^{\sigma(\epsilon_0+\cdots+\epsilon_m)} F^{\,\epsilon_0}_{i,a_1}\cdots F^{\,\epsilon_{m-1}}_{a_{m-1},a_m}\, F^{\,\epsilon_m}_{a_m,j}\, F^{\,\sigma}_{j,l}\,H_{i,\,a_1,\ldots,a_m,j}^{\,\epsilon_0,\ldots,\epsilon_m}
\\
&+(-1)^{\sigma(\epsilon_0+\cdots+\epsilon_{m-1})} F^{\,\epsilon_0}_{i,a_1}\cdots F^{\,\epsilon_{m-1}}_{a_{m-1},a_m} F^{\,\epsilon_m+\sigma}_{a_m,l}\,H_{i,\,a_1,\ldots,a_m,j}^{\,\epsilon_0,\ldots,\epsilon_m}.
\end{align*}
The result follows.
\end{proof}

\chapter{Some polynomials}

In this chapter, we work with the polynomial ring $\R$ over $\Z$ in the variables
$\{x_i,y_i\suchthat i\in\Z\}$. It will be convenient to consider $\R$
embedded into its fraction field to be able to divide arbitrary polynomials of $\R$ by nonzero polynomials of $\R$.

\section{Operators $\sigma^k_{i,j}$}
\label{operators sigmaMij and relations between them}
For integers $a<b$ and $k$, we denote by $\sigma_{a,b}^k$ the $\Z$-algebra homomorphism of $\R$ such that:
$$
\sigma_{a,b}^k(x_t)=\left\{
{\arraycolsep=2pt
\begin{array}{ll}
x_t+x_a-x_b\, &\mbox{ if }t\ge k;\\[6pt]
x_t&\mbox{ otherwise},
\end{array}}
\right.\quad
\sigma_{a,b}^k(y_t)=\left\{
{\arraycolsep=2pt
\begin{array}{ll}
y_t+x_a-x_b\, &\mbox{ if }t\ge k;\\[6pt]
y_t&\mbox{ otherwise}.
\end{array}}
\right.\label{sigmaabk}
$$

The definition immediately implies:

\begin{proposition}\label{proposition:compol:1}
For any $f\in\R$, we have
$
\dfrac{(\id-\sigma_{a,b}^k)f}{x_a-x_b}\in \R.
$
\end{proposition}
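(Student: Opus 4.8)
The plan is to reduce the statement to a simple observation about monomials. Since $\sigma_{a,b}^k$ is a $\Z$-algebra homomorphism and hence $\Z$-linear, and since the assignment $f\mapsto (\id-\sigma_{a,b}^k)f/(x_a-x_b)$ is $\Z$-linear (division by the fixed nonzero polynomial $x_a-x_b$ in the fraction field), it suffices to verify the claim for $f$ a monomial in the variables $\{x_i,y_i\}$. So first I would write an arbitrary monomial as $f=\prod_t x_t^{p_t}\prod_t y_t^{q_t}$ (finite product).

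Next I would compute $\sigma_{a,b}^k(f)$ directly from the definition: each $x_t$ with $t\ge k$ is sent to $x_t+(x_a-x_b)$ and each $y_t$ with $t\ge k$ to $y_t+(x_a-x_b)$, while the variables indexed below $k$ are fixed. Hence $\sigma_{a,b}^k(f)$ is a polynomial in the $x_a-x_b$, the remaining $x_t,y_t$; more precisely, setting $z:=x_a-x_b$ we have $\sigma_{a,b}^k(f)=g(z)$ for a polynomial $g\in\R[z]$ (with coefficients in the subring generated by all $x_t,y_t$), and $g(0)=f$. Therefore $f-\sigma_{a,b}^k(f)=g(0)-g(z)$, and since $g(0)-g(z)$ is divisible by $z$ in $\R[z]$ (the factor theorem: $z\mid g(z)-g(0)$), we conclude that $(x_a-x_b)\mid (\id-\sigma_{a,b}^k)f$ in $\R$. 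Summing over the monomials appearing in a general $f$ with their integer coefficients gives the result.

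The only subtlety — and the thing worth stating carefully rather than a genuine obstacle — is to be sure that the division really lands back in $\R$ and not merely in the fraction field: this is exactly what the factor-theorem argument in $\R[z]$ guarantees, because $z=x_a-x_b$ is a polynomial variable situation (more concretely, $x_a-x_b$ is a prime element of the UFD $\R$, or one simply substitutes and divides formally in the one-variable polynomial ring $R_0[z]$ where $R_0$ is generated by the other variables together with, say, $x_b$, writing $x_a=x_b+z$). I expect this step to be routine; there is no real obstacle, which is presumably why the excerpt labels it a Proposition with the remark that it follows immediately from the definition.
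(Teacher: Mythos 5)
Your proof is correct, and since the paper offers no proof at all (it writes only ``The definition immediately implies:'' before the statement), your argument supplies the missing routine justification. The core observation — that $\sigma_{a,b}^k$ alters each generator only by a multiple of $x_a-x_b$, so $(\id-\sigma_{a,b}^k)f$ lies in the principal ideal $(x_a-x_b)$, and divisibility then holds because $\R$ is a polynomial ring — is exactly right, and your reduction to monomials by $\Z$-linearity is a perfectly good way to see it.

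One small imprecision worth noting: the claim ``$g(0)=f$'' is literally true only if the monomial $f$ does not involve $x_a$. Once you perform the substitution $x_a=x_b+z$ to work in $R_0[z]$, what you actually get is $g(0)=f|_{z=0}$ rather than $g(0)=f$, since $f$ itself becomes a polynomial in $z$ under this identification. This costs nothing, because $f-f|_{z=0}$ is also a multiple of $z$, so $(\id-\sigma)f=(f-f|_{z=0})+(g(0)-g(z))$ is still divisible by $z$. You are evidently aware of the subtlety (you flag it in the last paragraph), but the wording earlier slightly overstates the clean case. A packaging that avoids the issue entirely: the composition of $\sigma_{a,b}^k$ with the projection $\R\to\R/(x_a-x_b)$ equals the projection itself, since every generator is moved by a multiple of $x_a-x_b$; hence $(\id-\sigma_{a,b}^k)f$ lies in the kernel $(x_a-x_b)$, and since $x_a-x_b$ is a prime (degree-one, primitive) element of the UFD $\R$, the quotient $(\id-\sigma_{a,b}^k)f/(x_a-x_b)$ is again in $\R$. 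This is the same idea as yours, just phrased to sidestep the $x_a$-versus-$z$ bookkeeping.
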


\begin{lemma}\label{lemma:compol:1}
Let $a<b<c$ and $e,h\in\{0,1\}$. Then
$$
\sigma_{a,b}^{b+e}\sigma_{a,c}^{c+h}=\sigma_{b,c}^{c+h}\sigma_{a,b}^{b+e}.
$$
\end{lemma}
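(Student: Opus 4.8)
The plan is to prove Lemma~\ref{lemma:compol:1} by a direct computation on generators, since both sides are $\Z$-algebra homomorphisms of $\R$ and $\R$ is freely generated over $\Z$ by $\{x_t, y_t \suchthat t \in \Z\}$. Thus it suffices to check that $\sigma_{a,b}^{b+e}\sigma_{a,c}^{c+h}(x_t) = \sigma_{b,c}^{c+h}\sigma_{a,b}^{b+e}(x_t)$ for every $t$, and likewise for each $y_t$; the two verifications are formally identical (the $y$-generators transform by exactly the same rule as the $x$-generators under every $\sigma$ in sight), so I will carry out only the $x_t$ case and remark that the $y_t$ case is the same.

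First I would record the effect of each single operator on $x_t$: $\sigma_{a,b}^{b+e}(x_t)$ adds $x_a - x_b$ when $t \geq b+e$ and does nothing otherwise, while $\sigma_{a,c}^{c+h}(x_t)$ adds $x_a - x_c$ when $t \geq c+h$ and nothing otherwise; similarly $\sigma_{b,c}^{c+h}(x_t)$ adds $x_b - x_c$ when $t \geq c+h$. The key auxiliary observation is that since $a < b < c$, one has $a < b + e \leq b+1 \le c < c+h$ (using $e,h \in \{0,1\}$ and $b < c$, hence $b+1 \le c$), so in particular $b+e \le c < c+h$; this means every threshold index $b+e$ lies strictly below every threshold index $c+h$, and also the variables $x_a, x_b, x_c$ themselves have indices $a < b < c$, all of which are $< c+h$ but whose position relative to $b+e$ must be tracked. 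Then I would split into the three cases $t < b+e$, $b+e \le t < c+h$, and $t \ge c+h$, and in each case compute the composite on both sides, taking care that when one operator introduces a new occurrence of $x_a$, $x_b$, or $x_c$, the next operator may or may not act on it depending on whether its index meets the relevant threshold — concretely, $\sigma_{a,c}^{c+h}$ fixes $x_a$ and $x_b$ (their indices $a, b$ are $< c+h$) and fixes $x_c$ (index $c < c+h$), while $\sigma_{a,b}^{b+e}$ fixes $x_a$ (index $a < b+e$) but sends $x_b \mapsto x_a$ when $b \ge b+e$, i.e. when $e = 0$, and fixes $x_b$ when $e=1$, and fixes $x_c$ since $c \ge b+e$ so actually $\sigma_{a,b}^{b+e}(x_c) = x_c + x_a - x_b$. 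These bookkeeping facts determine each composite; in the cases $t < b + e$ both sides give $x_t$, in the case $t \ge c+h$ both sides give $x_t + x_a - x_c$ after the $x_b$ terms cancel, and in the middle range both sides give $x_t + x_a - x_b$.

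The only real subtlety, and the place where one must be careful rather than mechanical, is the middle case $b+e \le t < c+h$: here on the left-hand side $\sigma_{a,c}^{c+h}$ acts first and fixes $x_t$ (since $t < c+h$), then $\sigma_{a,b}^{b+e}$ sends $x_t \mapsto x_t + x_a - x_b$; on the right-hand side $\sigma_{a,b}^{b+e}$ acts first sending $x_t \mapsto x_t + x_a - x_b$, and then $\sigma_{b,c}^{c+h}$ must fix this (since $t < c+h$ and also the newly introduced $x_a$, $x_b$ have indices $a, b < c+h$), giving again $x_t + x_a - x_b$ — so they agree. I expect this middle case, together with the top case where one has to see that the spurious $x_b$ produced by the inner operator is exactly cancelled by the outer operator's action, to be the main (though still routine) obstacle; everything reduces to the inequalities $a < b+e \le c < c+h$ noted above. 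I would conclude that both homomorphisms agree on all algebra generators, hence are equal.
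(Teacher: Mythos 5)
Your proof is correct and takes essentially the same approach as the paper's own: verify the identity on the generators $x_t,y_t$ (the two behave identically) by splitting into the three cases $t<b+e$, $b+e\le t<c+h$, and $t\ge c+h$ and computing both composites directly. One small slip worth flagging: the chain of inequalities should read $a<b+e\le c\le c+h$ — when $h=0$ you have $c=c+h$ rather than $c<c+h$, and the middle range may be empty — but since your case computations never actually use strictness at that step, the argument is unaffected; also note the momentary ``fixes $x_c$'' in your parenthetical is contradicted by the correct formula $\sigma_{a,b}^{b+e}(x_c)=x_c+x_a-x_b$ that you then write, which is clearly just a typo.
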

\begin{proof} Let $z_t$ be either $x_t$ or $y_t$.
Note that $a<b+e\le c\le c+h$.


{\it Case 1: $t<b+e$}. We have
$
\sigma_{a,b}^{b+e}\sigma_{a,c}^{c+h}z_t=z_t=\sigma_{b,c}^{c+h}\sigma_{a,b}^{b+e}z_t.
$


{\it Case 2: $b+e\le t<c+h$}. We have
$$
\begin{array}{l}
\sigma_{a,b}^{b+e}\sigma_{a,c}^{c+h}z_t=\sigma_{a,b}^{b+e}z_t=z_t+x_a-x_b,\\[6pt]
\sigma_{b,c}^{c+h}\sigma_{a,b}^{b+e}z_t=\sigma_{b,c}^{c+h}(z_t+x_a-x_b)=z_t+x_a-x_b.
\end{array}
$$


{\it Case 3: $c+h\le t$}. We have
$$
\begin{array}{l}
\sigma_{a,b}^{b+e}\sigma_{a,c}^{c+h}z_t=\sigma_{a,b}^{b+e}(z_t+x_a-x_c)=z_t+(x_a-x_b)+x_a-(x_c+(x_a-x_b)), 
\end{array}
$$
which is the same as
$
\sigma_{b,c}^{c+h}\sigma_{a,b}^{b+e}z_t=\sigma_{b,c}^{c+h}(z_t+x_a-x_b)=z_t+(x_b-x_c)+x_a-x_b$. 
\end{proof}

\begin{lemma}\label{lemma:compol:2}
Let $a<b\le c<d$ and $e,h\in\{0,1\}$ be such that $b+e\le c$.
Then  $\sigma_{a,b}^{b+e}$ and $\sigma_{c,d}^{d+h}$ commute.
\end{lemma}
\begin{proof} Let $z_t$ be $x_t$ or $y_t$. 
Note that $b+e\le c<d\le d+h$.


{\it Case 1: $t<b+e$}. We have
$
\sigma_{a,b}^{b+e}\sigma_{c,d}^{d+h}z_t=z_t=\sigma_{c,d}^{d+h}\sigma_{a,b}^{b+e}z_t.
$


{\it Case 2: $b+e\le t<d+h$}. We have
$$
\begin{array}{l}
\sigma_{a,b}^{b+e}\sigma_{c,d}^{d+h}z_t=\sigma_{a,b}^{b+e}z_t=z_t+x_a-x_b=\sigma_{c,d}^{d+h}(z_t+x_a-x_b)=\sigma_{c,d}^{d+h}\sigma_{a,b}^{b+e}z_t.
\end{array}
$$


{\it Case 3: $d+h\le t$}. We have
\begin{align*}
\sigma_{a,b}^{b+e}\sigma_{c,d}^{d+h}z_t=\sigma_{a,b}^{b+e}(z_t+x_c-x_d)=z_t+(x_a-x_b)+x_c
+(x_a-x_b)
\\
-(x_d+(x_a-x_b))=z_t+x_c-x_d+x_a-x_b,
\end{align*}
which is the same as
$\sigma_{c,d}^{d+h}\sigma_{a,b}^{b+e}z_t=\sigma_{c,d}^{d+h}(z_t+x_a-x_b)=z_t+x_c-x_d+x_a-x_b.
$
\end{proof}

\section{Polynomials $f_{i,j}^{D,l}(S)$ }\label{Polynomials_f}

Let $D\subset\Z$. For $t\in\Z$, we denote $\max_t D:=\max (D\cap (-\infty..t))$.
For integers $i\le j$  we set
$$
D_t^i:={\max}_t (D\cup\{i\}),
$$
and
$$
u_{i,j}^D=\prod_{t=i+1}^j\(x_{D_t^i}-y_t\).
$$
For example, $u_{i,i}^D=1$,
$$
{\arraycolsep=1pt
\begin{array}{rcl}
u_{1,5}^\emptyset&=&(x_1-y_2)\,(x_1-y_3)\,(x_1-y_4)\,(x_1-y_5),\\
u_{1,5}^{\{3\}}&=&(x_1-y_2)\,(x_1-y_3)\,(x_3-y_4)\,(x_3-y_5).
\end{array}}
$$

Let in addition $l$ be a function on $(i..j]$ taking values $0$ or $1$.
We define the polynomials $f_{i,j}^{D,l}(S)$ for any subset $S\subset(i..j]$ by the following inductive rules:
\medskip
{
\renewcommand{\labelenumi}{{\rm \theenumi}}
\renewcommand{\theenumi}{{$\rm(\alph{enumi}$)}}
\begin{enumerate}\label{fijDl}
\item\label{f:a} $f_{i,j}^{D,l}(\emptyset)=u_{i,j}^D$;\\
\item\label{f:b} $\displaystyle f_{i,j}^{D,l}(S)
=\frac{(\id-\sigma^{s+l(s)}_{D_s^i,s})f_{i,j}^{D,l}(S\setminus\{s\})}
{x_{D_s^i}-x_s}$\,,
      for $S\ne\emptyset$ and $s=\min S$.
\end{enumerate}}
\smallskip
By Proposition~\ref{proposition:compol:1}, the fractions $f_{i,j}^{D,l}(S)$ defined by the above rules are polynomials of $\R$,
which can depend only on the variables $x_i,\ldots,x_j,y_{i+1},\ldots,y_j$.

Keep the notation as above fixed and recall the notion of an {\em end} from Section~\ref{notation}.

\begin{lemma}\label{lemma:compol:4}
Let 
$R$ be an end of $S$ and $\phi:R\to(i..j]$ be an injection such that $\phi(t)\ge t+l(t)$
for all $t\in R$.
Suppose additionally that $l|_{R\cap D}$ is identically $0$. We denote by $\I$ the ideal of $\R$ generated by the polynomials
\renewcommand{\labelenumi}{{\rm \theenumi}}
\renewcommand{\theenumi}{{\rm(\arabic{enumi})}}
\begin{enumerate}
\itemsep=2pt
\item\label{gen:type:1} $\{x_{D_{\phi(t)}^i}-y_{\phi(t)}\mid t\in R,\ \bigl[t{+}l(t)..\phi(t)\bigr)\cap D\ne\emptyset\}$ and
\item\label{gen:type:2} $\{x_t-y_{\phi(t)}\mid t\in R\setminus\{j\},\ \bigl[t{+}l(t)..\phi(t)\bigr)\cap D=\emptyset\}$.
\end{enumerate}
\noindent
Then, modulo $\I$, we have
$$
f_{i,j}^{D,l}(S)\=\left\{
{\arraycolsep=1pt
\begin{array}{ll}
0&\text{ if }\bigcup_{t\in R}\bigl[t{+}l(t)..\phi(t)\bigr)\cap D\ne\emptyset\,;\\
{\displaystyle\prod_{t\in(i..j]\setminus\phi(S)}}(x_{D_t^i}-y_t)
&\text{ if }R=S\ \text{and}\ \bigcup_{t\in S}\bigl[t{+}l(t)..\phi(t)\bigr)\cap D=\emptyset\,.
\end{array}}
\right.
$$
\end{lemma}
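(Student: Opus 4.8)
The proof will be by induction on $\height S$ (equivalently, on $|S|$), using the defining rules (\ref{f:a}) and (\ref{f:b}) for $f_{i,j}^{D,l}(S)$, together with the commutation relations for the operators $\sigma^k_{a,b}$ from Lemmas~\ref{lemma:compol:1} and~\ref{lemma:compol:2} and the divisibility statement of Proposition~\ref{proposition:compol:1}. The base case is $S=\emptyset$: then $R=\emptyset$ as well, $f_{i,j}^{D,l}(\emptyset)=u_{i,j}^D=\prod_{t\in(i..j]}(x_{D_t^i}-y_t)$, and the empty union of intervals meets $D$ in the empty set, so the second alternative holds trivially and the first is vacuous.

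For the inductive step, set $s=\min S$ and apply rule (\ref{f:b}), so that
$$
f_{i,j}^{D,l}(S)=\frac{(\id-\sigma^{s+l(s)}_{D_s^i,s})\,f_{i,j}^{D,l}(S\setminus\{s\})}{x_{D_s^i}-x_s}.
$$
The plan is to split into two cases according to whether $s\in R$ or not. If $s\notin R$, then $R$ is still an end of $S\setminus\{s\}$ and $\phi$ restricts appropriately; the inductive hypothesis gives the value of $f_{i,j}^{D,l}(S\setminus\{s\})$ modulo $\I$, and one must check that applying $(\id-\sigma^{s+l(s)}_{D_s^i,s})$ and dividing by $x_{D_s^i}-x_s$ respects the congruence modulo $\I$ and produces the claimed answer (in the ``$R=S$'' branch this case cannot arise unless $S=\emptyset$, so effectively $s\notin R$ only occurs in the vanishing branch, where one shows the operator $\id-\sigma$ kills the relevant product modulo $\I$ or that the numerator already lies in $\I$). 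If $s\in R$, then since $R$ is an end of $S$ and $s=\min S$, we must have $R=S$; here one uses the inductive hypothesis for $S\setminus\{s\}$ with the end $R\setminus\{s\}$ and the restricted injection, then analyzes how $\sigma^{s+l(s)}_{D_s^i,s}$ acts on the surviving product $\prod_{t\in(i..j]\setminus\phi(S\setminus\{s\})}(x_{D_t^i}-y_t)$. The key point is that $\phi(s)$ is one of the indices $t$ still present, so the factor $(x_{D_{\phi(s)}^i}-y_{\phi(s)})$ appears; applying $\id-\sigma^{s+l(s)}_{D_s^i,s}$ to this factor (using $\phi(s)\ge s+l(s)$, so the variable $y_{\phi(s)}$ is shifted) produces a factor divisible by $x_{D_s^i}-x_s$, and the quotient is either $0$ or the expected product, depending on whether $[s+l(s)..\phi(s))\cap D=\emptyset$ — which is exactly the dichotomy encoded in the two types of generators (\ref{gen:type:1}) and (\ref{gen:type:2}) of $\I$.

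The main obstacle I anticipate is bookkeeping: one must carefully track how $D_t^i=\max_t(D\cup\{i\})$ changes (or does not change) under the substitution $x_{D_s^i}\mapsto x_{D_s^i}$, $x_t\mapsto x_t+x_{D_s^i}-x_s$ for $t\ge s+l(s)$ defining $\sigma^{s+l(s)}_{D_s^i,s}$, and verify that the hypotheses ``$l|_{R\cap D}\equiv 0$'' and ``$\phi(t)\ge t+l(t)$'' propagate correctly to the sub-configuration $(S\setminus\{s\},R\setminus\{s\},\phi|_{R\setminus\{s\}})$ — in particular that the generators of the ideal $\I$ for the smaller problem are among those for $\I$, so that congruences modulo the smaller ideal imply congruences modulo $\I$. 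A second subtle point is the interaction between the division by $x_{D_s^i}-x_s$ and membership in $\I$: one needs that if a polynomial $g\in\I$ is divisible in $\R$ by $x_{D_s^i}-x_s$ then the quotient still lies in $\I$, which holds because none of the generators of $\I$ involves the variable $x_{D_s^i}$ paired with $x_s$ (they are all of the form $x_?-y_?$), so $x_{D_s^i}-x_s$ is coprime to each generator; this lets the induction go through cleanly.

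I expect that once the case split $s\in R$ versus $s\notin R$ is set up, each case is a short computation with $\sigma^{s+l(s)}_{D_s^i,s}$ acting on an explicit product, and the two alternatives in the conclusion correspond precisely to whether the newly-introduced interval $[s+l(s)..\phi(s))$ meets $D$. The commutation lemmas~\ref{lemma:compol:1} and~\ref{lemma:compol:2} are presumably needed to rewrite $f_{i,j}^{D,l}(S\setminus\{s\})$ using rule (\ref{f:b}) applied at $\min(S\setminus\{s\})$ and to commute the resulting $\sigma$'s past $\sigma^{s+l(s)}_{D_s^i,s}$ when one wants to peel off $s$ last rather than first; if instead one peels off $\min S$ directly as the defining rule dictates, those lemmas may only be needed implicitly. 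I will organize the write-up so that the vanishing branch is disposed of first (it only requires producing one generator of $\I$ that divides the numerator), and then the ``$R=S$, union disjoint from $D$'' branch is handled by the explicit $\sigma$-computation described above.
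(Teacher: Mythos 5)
Your overall approach — induction on $|S|$, peeling off $s=\min S$ via rule~\ref{f:b}, splitting on whether $s\in R$, noting that $s\in R$ forces $R=S$, and computing the action of $\id-\sigma^{s+l(s)}_{D_s^i,s}$ on the distinguished factor $x_{D_{\phi(s)}^i}-y_{\phi(s)}$ — matches the paper's proof strategy. However, the two ``subtle points'' you flag at the end are exactly where your write-up has genuine gaps, and the heuristic you offer for one of them is not a valid argument.

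\textbf{Cancellation by $x_{D_s^i}-x_s$.} You claim that if $g(x_{D_s^i}-x_s)\in\I$ then $g\in\I$, and justify this by saying that $x_{D_s^i}-x_s$ is ``coprime to each generator'' because the generators are all of the form $x_?-y_?$. Pairwise coprimality to the generators does not yield a cancellation property of this kind for a multi-generated ideal; there is no general principle that lets you cancel an element coprime to each generator. The paper proves this via a structural fact: the auxiliary ideal $\I'$ (generated by the same polynomials but with $t$ ranging over $R\setminus\{s\}$) is \emph{prime} and $x_{s_0}-x_s\notin\I'$, by \cite[Proposition~6.18]{Shchigolev_Rectangular_low_level_case} (which uses that $\phi$ is injective, hence the $y$-variables appearing in the generators are pairwise distinct, so the quotient ring is again a polynomial ring). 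Primality plus non-membership is what gives the cancellation. If you want to argue this yourself without the citation, you must at least say this much; ``coprime to each generator'' is not a proof.

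\textbf{$\sigma$-invariance and the role of the smaller ideal $\I'$.} To pass the congruence $f_{i,j}^{D,l}(S\setminus\{s\})\equiv\cdots$ through $\id-\sigma^{s+l(s)}_{D_s^i,s}$, you need the relevant ideal to be closed under $\sigma^{s+l(s)}_{D_s^i,s}$; this is where the hypotheses $\phi(t)\ge t+l(t)$ and $l|_{R\cap D}\equiv 0$ actually get used, and it is not automatic. Crucially, the \emph{full} ideal $\I$ you work with is not $\sigma^{s+l(s)}_{D_s^i,s}$-invariant when $s\in R$, $l(s)=1$, and the generator at $t=s$ is of type~\ref{gen:type:2}: then $\sigma^{s+1}_{s_0,s}(x_s)=x_s$ but $\sigma^{s+1}_{s_0,s}(y_{\phi(s)})=y_{\phi(s)}+x_{s_0}-x_s$, so $\sigma^{s+1}_{s_0,s}(x_s-y_{\phi(s)})=2x_s-x_{s_0}-y_{\phi(s)}$, which is not congruent to $x_s-y_{\phi(s)}$ modulo $\I$. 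This is precisely why the paper introduces $\I'$ (excluding the $t=s$ generator): one checks that \emph{every} generator of $\I'$ is fixed by $\sigma^{s+l(s)}_{s_0,s}$, using $t>s$ together with $\phi(t)\ge t+l(t)$, and then carries out the whole computation modulo $\I'$, only invoking $\I'\subset\I$ and the $t=s$ generator of $\I$ at the very end. Your sketch never identifies $\I'$ as the object one must actually work with, and as stated the congruence manipulation would break down in the case just described. You should rewrite the $s\in R$ case around $\I'$: establish that $\I'$ is $\sigma$-invariant, that $x_{s_0}-x_s$ is a non-zero-divisor modulo $\I'$, run the division argument modulo $\I'$, and only then throw in the $t=s$ generator.

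Finally, when $R=S$ you implicitly merge the paper's Cases~2 and~3, but the case where $\bigcup_{t\in R\setminus\{s\}}[t{+}l(t)..\phi(t))\cap D\ne\emptyset$ (the paper's Case~1) deserves its own sentence: there the inductive hypothesis already puts $f_{i,j}^{D,l}(S\setminus\{s\})$ in $\I'$, and one concludes using $\sigma$-invariance and cancellation as above, with no factorisation of the product needed.
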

\begin{proof} Induction on $|S|$. If $S=\emptyset$, the result follows from the definition \ref{f:a} of the polynomial
$f_{i,j}^{D,l}(\emptyset)$.
Now let $S\ne\emptyset$ and $s:=\min S$.
Let $s_0:=D_s^i$. Then
$$
f_{i,j}^{D,l}(S)
=\frac{\bigl(\id-\sigma^{s+l(s)}_{s_0,s}\bigr)f_{i,j}^{D,l}(S\setminus\{s\})}{x_{s_0}-x_s}\,,
$$
by definition of $f_{i,j}^{D,l}(S)$.

Consider the ideal $\I^{\,\prime}$ generated by
\renewcommand{\labelenumi}{{\rm \theenumi}}
\renewcommand{\theenumi}{{$\rm(\arabic{enumi}'$)}}
\begin{enumerate}
\item\label{gen:type:1'} $\{x_{D_{\phi(t)}^i}-y_{\phi(t)}\mid t\in R\setminus\{s\},\ \bigl[t{+}l(t)..\phi(t)\bigr)\cap D\ne\emptyset\}$ and
\item\label{gen:type:2'} $\{x_t-y_{\phi(t)}\mid t\in R\setminus\{s,j\},\ \bigl[t{+}l(t)..\phi(t)\bigr)\cap D=\emptyset\}$.
\end{enumerate}
In other words,  $\I^{\,\prime}$ is defined similarly to $\I$ but using $R\setminus\{s\}$ instead of $R$.
Clearly, $\I^{\,\prime}\subset\I$. Note also that $R\setminus\{s\}$ is an end of $S\setminus\{s\}$.

Since $\phi$ is an injection,
\cite[Proposition~6.18]{Shchigolev_Rectangular_low_level_case}
applies to $\I^{\,\prime}$ if we choose an order on the generators $\{x_a,y_a\suchthat a\in\Z\}$ of $\R$ so that $y_a$'s precede $x_b$'s. We conclude that $\I'$ is a prime ideal and that an element $X=c_0+\sum_a c_ay_a+\sum_bd_bx_b$, where $c_a,d_b\in\Z$, belongs to $\I'$ if and only if $X$ is a $\Z$-linear combination of the generators (1$'$) and (2$'$). In particular, $x_{s_0}-x_s\notin\I^{\prime}$. Hence
\begin{equation}\label{Proposition6.18}
g\cdot(x_{s_0}-x_s)\in\I'\quad\text{implies}\quad g\in\I'\qquad(g\in \R).
\end{equation}


We claim that $\I^{\,\prime}$ is closed under the action of $\sigma^{s+l(s)}_{s_0,s}$.
Indeed, consider first a generator
$x_{D_{\phi(t)}^i}-y_{\phi(t)}$ of type~\ref{gen:type:1'}.
We have $t\in R\setminus\{s\}$ and $\bigl[t+l(t)..\phi(t)\bigr)\cap D\ne\emptyset$.
Take any $d\in \bigl[t+l(t)..\phi(t)\bigr)\cap D$.
Noting that $s<t$, we get
$$
s+l(s)\le t+l(t)\le d\le D_{\phi(t)}^i<\phi(t).
$$
Hence
$$
\sigma^{s+l(s)}_{s_0,s}(x_{D_{\phi(t)}^i}-y_{\phi(t)})=x_{D_{\phi(t)}^i}+x_{s_0}-x_s-(y_{\phi(t)}+x_{s_0}-x_s)=x_{D_{\phi(t)}^i}-y_{\phi(t)}.
$$
Now consider a generator $x_t-y_{\phi(t)}$ of type~\ref{gen:type:2'}. As $s<t$, we get
$
s+l(s)\le t\le\phi(t),
$
and so again we obtain that $\sigma^{s+l(s)}_{s_0,s}$
acts on $x_t-y_{\phi(t)}$ identically.

To complete the proof we now consider three cases.

\smallskip

{\it Case 1: $\bigcup_{t\in R\setminus\{s\}}\bigl[t{+}l(t)..\phi(t)\bigr)\cap D\ne\emptyset$.}
By the inductive hypothesis, applied to the set $S\setminus\{s\}$ and its end $R\setminus\{s\}$,
we get $f_{i,j}^{D,l}(S\setminus\{s\})\in\I^{\,\prime}$. Therefore
$$
f_{i,j}^{D,l}(S)\cdot(x_{s_0}-x_s)=
\bigl(\id-\sigma^{s+l(s)}_{s_0,s}\bigr)f_{i,j}^{D,l}(S\setminus\{s\})\in\I^{\,\prime}.
$$
By (\ref{Proposition6.18}), $f_{i,j}^{D,l}(S)\in\I^{\prime}\subset\I$.

\smallskip

{\it Case 2:} $\bigcup_{t\in R\setminus\{s\}}\bigl[t{+}l(t)..\phi(t)\bigr)\cap D=\emptyset$ but
$\bigcup_{t\in R}\bigl[t{+}l(t)..\phi(t)\bigr)\cap D\ne\emptyset$.
In this case $s\in R$ and so $S=R$.
By the inductive hypothesis, applied to the set $S\setminus\{s\}$ and its end $R\setminus\{s\}$, 
we have
$
f_{i,j}^{D,l}(S\setminus\{s\})=h+P
$
for some $h\in\I^{\,\prime}$, where
$$
P:=\prod_{t\in(i..j]\setminus\phi(S)}(x_{D_t^i}-y_t).
$$
Since $\phi$ is an injection, we have
\begin{equation}\label{eq:compol:1}
\begin{split}
f_{i,j}^{D,l}(S)\cdot(x_{s_0}-x_s)&=
(\id-\sigma^{s+l(s)}_{s_0,s})f_{i,j}^{D,l}(S\setminus\{s\})
\\
&=h'+(\id-\sigma^{s+l(s)}_{s_0,s})
\displaystyle\prod_{t\in(i..j]
\setminus\phi(S\setminus\{s\})}(x_{D_t^i}-y_t)
\\
\displaystyle
&=h'+(\id-\sigma^{s+l(s)}_{s_0,s})
\bigl[
\bigl(x_{D_{\phi(s)}^i}-y_{\phi(s)}\bigr)\cdot P\bigr],
\end{split}
\end{equation}

\noindent
where $h'=(\id-\sigma^{s+l(s)}_{s_0,s})h$. As $\I^{\prime}$ is invariant under
$\sigma^{s+l(s)}_{s_0,s}$, we have $h'\in\I^{\,\prime}$.

Note that $[s{+}l(s)..\phi(s))\cap D\ne\emptyset$. Let $d\in [s{+}l(s)..\phi(s))\cap D\ne\emptyset$.
Then
$\phi(s)>\max_{\phi(s)}D\cup\{i\}\ge d\ge s+l(s)$.
Therefore, $\sigma^{s+l(s)}_{s_0,s}(x_{D_{\phi(s)}^i}-y_{\phi(s)})=x_{D_{\phi(s)}^i}-y_{\phi(s)}$
and we can rewrite~(\ref{eq:compol:1}) as
$
h'+(x_{D_{\phi(s)}^i}-y_{\phi(s)}\bigr)\cdot
(\id-\sigma^{s+l(s)}_{s_0,s})P.
$
Hence
$$
\bigg(
f_{i,j}^{D,l}(S)-\bigl(x_{D_{\phi(s)}D^i}-y_{\phi(s)}\bigr)
\frac{\bigl(\id-\sigma^{s+l(s)}_{s_0,s}\bigr)P}{x_{s_0}-x_s} \bigg)\cdot(x_{s_0}-x_s)=h'\in\I^{\,\prime}.
$$
By Proposition~\ref{proposition:compol:1} the fraction in the big brackets belongs to $\R$.
Now, by (\ref{Proposition6.18}),
$$
f_{i,j}^{D,l}(S)-\bigl(x_{D_{\phi(s)}^i}-y_{\phi(s)}\bigr)
\frac{\bigl(\id-\sigma^{s+l(s)}_{s_0,s}\bigr)P}{x_{s_0}-x_s}\in\I^{\,\prime}.
$$
Thus $f_{i,j}^{D,l}(S)\=0\pmod\I$, since $x_{D_{\phi(s)}^i}-y_{\phi(s)}\in\I$
(as a generator of type~\ref{gen:type:1}) and $\I^{\,\prime}\subset\I$.

\smallskip

{\it Case 3:} $R=S$ and $\bigcup_{t\in S}\bigl[t{+}l(t)..\phi(t)\bigr)\cap D=\emptyset$.
We claim that
\begin{equation}\label{eq:compol:2}
s_0=D_s^i =D_{\phi(s)}^i.
\end{equation}
Indeed, we have
$D_{\phi(s)}^i=D_{s+l(s)}^i$,
since $[s{+}l(s)..\phi(s)\bigr)\cap(D\cup\{i\})=\emptyset$.
If $l(s)=0$ then $D_{s+l(s)}^i=s_0$ and (\ref{eq:compol:2}) follows.
If  $l(s)=1$, then $s\notin D$ (recall that $l|_{S\cap D}=0$) and, therefore,
$s\notin D\cup\{i\}$. Hence we again obtain $D_{s+l(s)}^i=s_0$.

Since $s_0<s+l(s)\le\phi(s)$, we have
$$
\sigma^{s+l(s)}_{s_0,s}\bigl(x_{s_0}-y_{\phi(s)}\bigr)
=x_{s_0}-(y_{\phi(s)}+x_{s_0}-x_s)=
x_s-y_{\phi(s)}.
$$

Note that in the present case the formula~(\ref{eq:compol:1}) is again true. Applying~(\ref{eq:compol:2}),
we can rewrite this formula in the following form:
\begin{align*}
f_{i,j}^{D,l}(S)\cdot(x_{s_0}-x_s)
=&h'+\bigl(\id-\sigma^{s+l(s)}_{s_0,s}\bigr)
\big[
\bigl(x_{s_0}-y_{\phi(s)}\bigr)\cdot P
\big]\\
=&h'+\bigl(x_{s_0}-y_{\phi(s)}\bigr)\cdot P
-\bigl(x_s-y_{\phi(s)}\bigr)\cdot\sigma^{s+l(s)}_{s_0,s}P\\
=&h'+\bigl(x_s-y_{\phi(s)}\bigr)\cdot\big[P-\sigma^{s+l(s)}_{s_0,s}P\big]
+(x_{s_0}-x_s)\cdot P.
\end{align*}
Hence
$$
\bigg(
f_{i,j}^{D,l}(S)-P-\bigl(x_s-y_{\phi(s)}\bigr)\frac{\bigl(\id-\sigma^{s+l(s)}_{s_0,s}\bigr)P}{x_{s_0}-x_s}
\bigg)\cdot(x_{s_0}-x_s)=h'\in\I^{\,\prime}.
$$
By Proposition~\ref{proposition:compol:1}, the fraction in the big brackets belongs to $\R$.
Now, by (\ref{Proposition6.18}),
\begin{equation}\label{eq:compol:1.25}
f_{i,j}^{D,l}(S)-P-\bigl(x_s-y_{\phi(s)}\bigr)\frac{\bigl(\id-\sigma^{s+l(s)}_{s_0,s}\bigr)P}{x_{s_0}-x_s}\in\I^{\,\prime}.
\end{equation}
If $s<j$ then $x_s-y_{\phi(s)}\in\I$ as a generator of type~\ref{gen:type:2}, and we are done.

Consider now the case $s=j$. We have $R=S=\{j\}$, $\phi(j)=j$, $l(j)=0$ and
\mbox{$P=\prod_{t\in(i..j)}(x_{D_t^i}{-}y_t)$.}
Hence $\sigma^{s+l(s)}_{s_0,s}=\sigma^{j}_{s_0,j}$ acts identically on $P$,
whence
$$
\frac{\bigl(\id-\sigma^{s+l(s)}_{s_0,s}\bigr)P}{x_{s_0}-x_s}=0,
$$
and we are done.
\end{proof}

\begin{lemma}\label{lemma:compol:4.5} If $a<b\le c<d$, $e=0,1$ and $b+e\le c$,
then $\sigma_{a,b}^{b+e}(f_{c,d}^{D,l}(S))=f_{c,d}^{D,l}(S)$.
\end{lemma}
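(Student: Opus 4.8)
The plan is to argue by induction on $|S|$, following the recursive definition~\ref{f:a}--\ref{f:b} of $f_{c,d}^{D,l}(S)$. In the base case $S=\emptyset$ we have $f_{c,d}^{D,l}(\emptyset)=u_{c,d}^D=\prod_{t=c+1}^d(x_{D_t^c}-y_t)$. Each variable appearing here is $y_t$ with $t\ge c+1>c\ge b+e$, or $x_{D_t^c}$ with $D_t^c\in D\cup\{c\}$, so in particular $D_t^c\ge c\ge b+e$ always. Thus $\sigma_{a,b}^{b+e}$ shifts $x_{D_t^c}$ by $x_a-x_b$ and $y_t$ by $x_a-x_b$, so each factor $x_{D_t^c}-y_t$ is fixed, and hence so is the product. (The hypothesis $b+e\le c$ is exactly what guarantees all indices are $\ge b+e$; the condition $b\le c<d$ and $a<b$ are just there to make $\sigma_{a,b}^{b+e}$ and the block $(c,d)$ ``disjoint'' in the required sense.)

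For the inductive step, let $S\ne\emptyset$ and $s:=\min S$. By~\ref{f:b},
$$
f_{c,d}^{D,l}(S)=\frac{(\id-\sigma^{s+l(s)}_{D_s^c,s})\,f_{c,d}^{D,l}(S\setminus\{s\})}{x_{D_s^c}-x_s}.
$$
I want to apply $\sigma_{a,b}^{b+e}$ to this expression and commute it past $\sigma^{s+l(s)}_{D_s^c,s}$. Since $s\in S\subset(c..d]$ we have $c<s\le d$ and $D_s^c\in\{c\}\cup(D\cap(c..s))$, so $c\le D_s^c<s$; together with $b+e\le c$ this gives $b+e\le c\le D_s^c<s\le d$. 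Hence the pair $(a,b)$ and the pair $(D_s^c,s)$ satisfy the hypothesis of Lemma~\ref{lemma:compol:2} (with $b+e\le c\le D_s^c$), so $\sigma_{a,b}^{b+e}$ and $\sigma^{s+l(s)}_{D_s^c,s}$ commute. Moreover $\sigma_{a,b}^{b+e}$ fixes $x_{D_s^c}$ and $x_s$ individually (both indices are $\ge b+e$, so each is shifted by $x_a-x_b$; actually more simply: the definition of $\sigma_{a,b}^{b+e}$ sends $x_t\mapsto x_t+x_a-x_b$ for $t\ge b+e$, and we need to be careful — $x_{D_s^c}-x_s$ is a difference of two such, hence fixed). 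Therefore $\sigma_{a,b}^{b+e}$ fixes the denominator and commutes with the numerator's shift operator, giving
$$
\sigma_{a,b}^{b+e}\bigl(f_{c,d}^{D,l}(S)\bigr)
=\frac{(\id-\sigma^{s+l(s)}_{D_s^c,s})\,\sigma_{a,b}^{b+e}\bigl(f_{c,d}^{D,l}(S\setminus\{s\})\bigr)}{x_{D_s^c}-x_s}
=\frac{(\id-\sigma^{s+l(s)}_{D_s^c,s})\,f_{c,d}^{D,l}(S\setminus\{s\})}{x_{D_s^c}-x_s}
=f_{c,d}^{D,l}(S),
$$
where the middle equality is the inductive hypothesis applied to $S\setminus\{s\}$ (which is again a subset of $(c..d]$, and $|S\setminus\{s\}|<|S|$).

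The only subtlety—and the one step I would write out with care—is the claim that $\sigma_{a,b}^{b+e}$, being a ring homomorphism and not merely a linear map, genuinely commutes with the fraction construction: one must check that $\sigma_{a,b}^{b+e}$ extends to the fraction field and fixes the denominator $x_{D_s^c}-x_s$, so that $\sigma_{a,b}^{b+e}$ of the quotient is the quotient of the images. Since $D_s^c\ge b+e$ and $s\ge b+e$ we get $\sigma_{a,b}^{b+e}(x_{D_s^c}-x_s)=(x_{D_s^c}+x_a-x_b)-(x_s+x_a-x_b)=x_{D_s^c}-x_s$, so the denominator is fixed and nonzero, and the argument goes through. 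I do not anticipate any real obstacle here; the lemma is essentially a bookkeeping consequence of Lemma~\ref{lemma:compol:2} plus the recursive definition, and the ``hard part'' is merely being careful about the index inequalities $b+e\le c\le D_s^c<s\le d$ at each stage.
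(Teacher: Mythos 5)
Your proof is correct and follows essentially the same route as the paper's: induction on $|S|$, with the base case reduced to checking each factor of $u_{c,d}^D$ is fixed, and the inductive step resting on Lemma~\ref{lemma:compol:2} to commute $\sigma_{a,b}^{b+e}$ past $\sigma_{D_s^c,s}^{s+l(s)}$. The only cosmetic difference is that the paper multiplies through by $x_{D_s^c}-x_s$ and cancels at the end instead of manipulating the fraction directly, a point your last paragraph already addresses.
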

\begin{proof}
We apply induction on $|S|$. By definition, we have
$$
f_{c,d}^{D,l}(\emptyset)=\prod\nolimits_{t=c+1}^d\(x_{D_t^c}-y_t\).
$$
The operator $\sigma_{a,b}^{b+e}$ acts identically on $f_{c,d}^{D,l}(\emptyset)$,
since it does so on each factor of this product.
Now let $S\ne\emptyset$ and set $s:=\min S$ and $s_0:=D_s^c$. Applying the inductive hypothesis and
Lemma~\ref{lemma:compol:2}, we get
\begin{align*}
(x_{s_0}-x_s)\sigma_{a,b}^{b+e}f_{c,d}^{D,l}(S)=
\sigma_{a,b}^{b+e}\big[(x_{s_0}-x_s)f_{c,d}^{D,l}(S)\big]
\!=\sigma_{a,b}^{b+e}(\id-\sigma^{s+l(s)}_{s_0,s})f_{c,d}^{D,l}(S\setminus\{s\})\\
=\!(\id-\sigma^{s+l(s)}_{s_0,s})\sigma_{a,b}^{b+e}\,f_{c,d}^{D,l}(S\setminus\{s\})
\!=\!(\id-\sigma^{s+l(s)}_{s_0,s})f_{b,c}^{D,l}(S\setminus\{s\})
\!=\!(x_{s_0}-x_s)\,f_{c,d}^{D,l}(S).
\end{align*}
Cancelling out $x_{s_0}-x_s$, we get $\sigma_{a,b}^{b+e}f_{b,c}^{D,l}(S)=f_{b,c}^{D,l}(S)$.
\end{proof}

\section{Polynomials $g^{(1)}_{i,j}(S)$}\label{polinomialsg1}
Here and below, we sometimes denote by $0$ and $1$ the constant functions on some set (clear from the context) taking the values $0$ or $1$, respectively.
Let $i<j$ and $S\subset(i..j]$. Define
$$g^{(1)}_{i,j}(S)=f_{i,j}^{\emptyset,1}(S).$$
%
%
The defining relations~\ref{f:a} and~\ref{f:b} of Section~\ref{Polynomials_f}
in our special case become:

\renewcommand{\labelenumi}{{\rm \theenumi}}
\renewcommand{\theenumi}{{$\rm(\alph{enumi}$\,-1)}}
\begin{enumerate}
\item\label{g-1:a} $g_{i,j}^{(1)}(\emptyset)=u_{i,j}^\emptyset=(x_i-y_{i+1})(x_i-y_{i+2})\cdots(x_i-y_j)$;\\
\item\label{g-1:b} $\displaystyle g_{i,j}^{(1)}(S)
=\frac{(\id-\sigma^{s+1}_{i,s})g_{i,j}^{(1)}(S\setminus\{s\})}
{x_i-x_s}$,
      for  $S\ne\emptyset$ and $s=\min S$.
\end{enumerate}

\begin{lemma}\label{lemma:compol:6}
$g^{(1)}_{i,j}\bigl((i..j)\bigr)=x_i-y_{i+1}$.
\end{lemma}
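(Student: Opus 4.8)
The claim is that $g^{(1)}_{i,j}\bigl((i..j)\bigr)=x_i-y_{i+1}$, where $(i..j)=\{i+1,i+2,\dots,j-1\}$. This is a statement about a specific value of the polynomials $f_{i,j}^{\emptyset,1}(S)$, so the natural approach is to invoke Lemma~\ref{lemma:compol:4} with $D=\emptyset$, $l\equiv 1$, $S=(i..j)$, and a carefully chosen injection $\phi$ on an end $R$ of $S$. Since $D=\emptyset$, the condition ``$l|_{R\cap D}$ is identically $0$'' is vacuous, and for $D=\emptyset$ we have $D_t^i=i$ for every $t\in(i..j]$, so $[t+l(t)..\phi(t))\cap D=\emptyset$ automatically for all $t$; thus every element of $R$ falls into the ``type~(2)'' case and the first alternative of the conclusion (the one giving $0$) never triggers.

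**Key steps.** First I would take $R=S=(i..j)$ and define $\phi:(i..j)\to(i..j]$ by $\phi(t)=t+1$. This is an injection, and $\phi(t)=t+1=t+l(t)\ge t+l(t)$, so the hypothesis $\phi(t)\ge t+l(t)$ holds with equality. With $D=\emptyset$, the ideal $\I$ from Lemma~\ref{lemma:compol:4} is generated only by the type-(2) generators $\{x_t-y_{\phi(t)}\mid t\in R\setminus\{j\}\}$; but $j\notin R$ here since $R=(i..j)$, so in fact $R\setminus\{j\}=R=(i..j)$ and $\I$ is generated by $\{x_t-y_{t+1}\mid t\in(i..j)\}=\{x_{i+1}-y_{i+2},\,x_{i+2}-y_{i+3},\dots,x_{j-1}-y_j\}$. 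Second, I would compute the image $(i..j]\setminus\phi(S)$: since $\phi((i..j))=\{i+2,i+3,\dots,j\}$, its complement in $(i..j]=\{i+1,\dots,j\}$ is exactly $\{i+1\}$. So the second alternative of Lemma~\ref{lemma:compol:4} gives
$$
g^{(1)}_{i,j}\bigl((i..j)\bigr)\equiv \prod_{t\in\{i+1\}}(x_{D_t^i}-y_t)=x_i-y_{i+1}\pmod{\I}.
$$
Third — and this is the only real content — I must upgrade this congruence modulo $\I$ to an honest equality. For this I would observe that $g^{(1)}_{i,j}\bigl((i..j)\bigr)$ is a polynomial only in $x_i,\dots,x_{j-1}$ (note: no $x_j$, since with $D=\emptyset$ all $D_t^i=i$ and the operators $\sigma^{s+1}_{i,s}$ only ever introduce $x_i-x_s$ for $s\in S\subseteq(i..j)$, so $x_j$ never appears) and in $y_{i+1},\dots,y_j$. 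A cleaner route: track the degrees. Each defining step \ref{g-1:b} replaces a polynomial by $(\id-\sigma^{s+1}_{i,s})(\cdot)/(x_i-x_s)$, which lowers the total degree by $1$; starting from $g^{(1)}_{i,j}(\emptyset)$ of degree $j-i$ and removing $|S|=j-i-1$ elements, $g^{(1)}_{i,j}\bigl((i..j)\bigr)$ has total degree $1$. Hence it is an affine-linear polynomial, say $c_0+\sum_a c_a y_a+\sum_b d_b x_b$. By \cite[Proposition~6.18]{Shchigolev_Rectangular_low_level_case} (applied exactly as in the proof of Lemma~\ref{lemma:compol:4}), an affine-linear element of $\I$ is a $\Z$-linear combination of the generators $x_{i+1}-y_{i+2},\dots,x_{j-1}-y_j$. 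So the difference $g^{(1)}_{i,j}\bigl((i..j)\bigr)-(x_i-y_{i+1})$, being affine-linear and lying in $\I$, is such a combination; comparing which variables can occur then forces it to be $0$ — indeed $x_i$ and $y_{i+1}$ appear in $g^{(1)}_{i,j}\bigl((i..j)\bigr)-(x_i-y_{i+1})$ but in none of the generators of $\I$, so their coefficients there are zero, and then a descending induction on the generators (the generator $x_{j-1}-y_j$ is the only one involving $y_j$, etc.) kills all the remaining coefficients.

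**Expected obstacle.** The genuinely delicate point is the passage from ``$\equiv$ modulo $\I$'' to ``$=$'': Lemma~\ref{lemma:compol:4} only gives a congruence, and one needs the primality/linearity statement for $\I$ (via \cite[Proposition~6.18]{Shchigolev_Rectangular_low_level_case}) together with the degree-one bound to conclude. An alternative, possibly shorter, would be a direct induction on $j-i$ using \ref{g-1:a}, \ref{g-1:b} — the base case $j=i+1$ gives $g^{(1)}_{i,i+1}(\emptyset)=x_i-y_{i+1}$ immediately, and one would peel off $\min S=i+1$ using Lemma~\ref{lemma:compol:1} to commute the relevant $\sigma$'s — but bookkeeping the nested $\sigma^{s+1}_{i,s}$ operators there looks at least as involved, so I would go with the Lemma~\ref{lemma:compol:4} route above.
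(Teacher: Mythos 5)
Your overall strategy — apply Lemma~\ref{lemma:compol:4} with $D=\emptyset$, $l\equiv1$, $R=S=(i..j)$, $\phi(t)=t+1$, then upgrade the resulting congruence mod $\I$ to an honest equality using a degree bound and the linear structure of $\I$ — is a genuine alternative to the paper's argument. The paper instead proves, by a two-line downward induction on $k$, the stronger statement $g^{(1)}_{i,j}([k..j))=(x_i-y_{i+1})\cdots(x_i-y_k)$ for all $k\in[i+1..j]$, and the lemma is the case $k=i+1$. That induction tracks the defining recursion \ref{g-1:b} exactly and avoids any ideal-theoretic input.

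However, your final step has a genuine gap. You correctly get, via Lemma~\ref{lemma:compol:4}, Proposition~\ref{proposition:compol:1} (degree $\le 1$), and \cite[Proposition~6.18]{Shchigolev_Rectangular_low_level_case}, that
\begin{equation*}
g^{(1)}_{i,j}\bigl((i..j)\bigr)-(x_i-y_{i+1})=\sum_{t=i+1}^{j-1}e_t(x_t-y_{t+1})
\end{equation*}
for some integers $e_t$. But the observation that $x_i$ and $y_{i+1}$ occur in none of the generators $x_{i+1}-y_{i+2},\dots,x_{j-1}-y_j$ of $\I$ only pins down the coefficients of $x_i$ and $y_{i+1}$ in $g^{(1)}_{i,j}\bigl((i..j)\bigr)$ (namely $+1$ and $-1$); it gives no control over the coefficients of $x_{i+1},\dots,x_{j-1}$ or $y_{i+2},\dots,y_j$, i.e.\ over the $e_t$. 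The ``descending induction'' (``$x_{j-1}-y_j$ is the only generator involving $y_j$, so its coefficient must vanish, etc.'') would determine $e_{j-1}=-(\text{coefficient of $y_j$ in }g^{(1)}_{i,j}((i..j)))$, but you have not shown this coefficient is zero — and nothing in the argument you give does so. Indeed, that the ``interior'' coefficients all vanish is precisely the content you would need to prove, and the degree bound plus the structure of $\I$ alone do not deliver it: an affine-linear polynomial congruent to $x_i-y_{i+1}$ mod $\I$ could a priori be, say, $x_i-y_{i+1}+(x_{i+1}-y_{i+2})$. To close the gap you would need an independent argument that $g^{(1)}_{i,j}\bigl((i..j)\bigr)$ involves no variable $y_t$ with $t>i+1$ (or no $x_t$ with $i<t<j$); at that point you might as well run the direct downward induction of the paper, which yields this vanishing (and the whole formula) in one stroke.
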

\begin{proof} We apply downward induction on $k=j\ldots,i+1$ to prove:
\begin{equation}\label{eq:compol:3}
g^{(1)}_{i,j}\bigl([k..j)\bigr)=(x_i-y_{i+1})\cdots(x_i-y_k).
\end{equation}
The induction base is clear from~\ref{g-1:a}.
Let~(\ref{eq:compol:3}) holds and $k>i+1$. By~\ref{g-1:b},
\begin{align*}
g^{(1)}_{i,j}\bigl([k-1..j)\bigr)=
&\frac{\bigl(\id-\sigma^k_{i,k-1}\bigr)g^{(1)}_{i,j}\bigl([k..j)\bigr)}
     {x_i-x_{k-1}}\\
=&\frac{(x_i-y_{i+1})\cdots(x_i-y_k)-(x_i-y_{i+1})\cdots(x_i-y_{k-1})(x_{k-1}-y_k)}
{x_i-x_{k-1}}
\\
=&(x_i-y_{i+1})\cdots(x_i-y_{k-1}).
\end{align*}
The required formula follows from~(\ref{eq:compol:3}) for $k=i+1$.
\end{proof}

\begin{lemma}
\label{lemma:compol:7}
Let $i+1<j$ and $S\subset(i{+}1..j)$. Then
$$
(x_i-y_{i+1})\,g^{(1)}_{i+1,j}(S)+(x_i-x_{i+1})\,g_{i,j}^{(1)}(\{i{+}1\}\cup S)=g_{i,j}^{(1)}(S).
$$
\end{lemma}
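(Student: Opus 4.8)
The plan is to argue by downward induction on $\min S$ (or equivalently on $|S|$), using the recursive rule~\ref{g-1:b} to peel off the smallest element of $S$, and then to reduce the inductive step to an identity between the operators $\sigma^{s+1}_{i,s}$ and $\sigma^{s+1}_{i+1,s}$ together with Lemma~\ref{lemma:compol:6}. First I would dispose of the base case $S=\emptyset$: there the claim reads $(x_i-y_{i+1})\,g^{(1)}_{i+1,j}(\emptyset)+(x_i-x_{i+1})\,g^{(1)}_{i,j}(\{i{+}1\})=g^{(1)}_{i,j}(\emptyset)$. Using~\ref{g-1:a} we have $g^{(1)}_{i+1,j}(\emptyset)=(x_{i+1}-y_{i+2})\cdots(x_{i+1}-y_j)$ and $g^{(1)}_{i,j}(\emptyset)=(x_i-y_{i+1})\cdots(x_i-y_j)$, while by~\ref{g-1:b}
$$
g^{(1)}_{i,j}(\{i{+}1\})=\frac{(\id-\sigma^{i+2}_{i,i+1})\,g^{(1)}_{i,j}(\emptyset)}{x_i-x_{i+1}}
=\frac{(x_i-y_{i+1})\bigl[(x_i-y_{i+2})\cdots(x_i-y_j)-(x_{i+1}-y_{i+2})\cdots(x_{i+1}-y_j)\bigr]}{x_i-x_{i+1}},
$$
and substituting this in makes the base case a one-line cancellation.

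For the inductive step, let $s:=\min S$, so $i+1<s<j$, and write $S'=S\setminus\{s\}$, which is contained in $(s..j)\subset(i{+}1..j)$. Apply~\ref{g-1:b} in both $g^{(1)}_{i+1,j}(S)=\dfrac{(\id-\sigma^{s+1}_{i+1,s})g^{(1)}_{i+1,j}(S')}{x_{i+1}-x_s}$ and $g^{(1)}_{i,j}(\{i{+}1\}\cup S)=\dfrac{(\id-\sigma^{s+1}_{i,s})g^{(1)}_{i,j}(\{i{+}1\}\cup S')}{x_i-x_s}$ and $g^{(1)}_{i,j}(S)=\dfrac{(\id-\sigma^{s+1}_{i,s})g^{(1)}_{i,j}(S')}{x_i-x_s}$. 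The idea is now to substitute the inductive hypothesis for $S'$, namely $(x_i-y_{i+1})\,g^{(1)}_{i+1,j}(S')+(x_i-x_{i+1})\,g^{(1)}_{i,j}(\{i{+}1\}\cup S')=g^{(1)}_{i,j}(S')$, into the right-hand side expression $\dfrac{(\id-\sigma^{s+1}_{i,s})g^{(1)}_{i,j}(S')}{x_i-x_s}$, and then to track how $\sigma^{s+1}_{i,s}$ acts on each of the three terms. Here the key computational facts are: $\sigma^{s+1}_{i,s}$ fixes $x_i-y_{i+1}$ and $x_i-x_{i+1}$ (since $i,i+1<s+1$), and on a polynomial in the variables indexed by $i+1,\dots,j$ the operators $\sigma^{s+1}_{i,s}$ and $\sigma^{s+1}_{i+1,s}$ differ in a controlled way — precisely, $\sigma^{s+1}_{i,s}$ and $\sigma^{s+1}_{i+1,s}$ both shift the variables with index $\ge s+1$, the former by $x_i-x_s$ and the latter by $x_{i+1}-x_s$, so $\sigma^{s+1}_{i,s}=\tau\circ\sigma^{s+1}_{i+1,s}$ where $\tau$ is the shift of $x_t,y_t$ ($t\ge s+1$) by $x_i-x_{i+1}$. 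One clean way to organize this is to observe $x_{i+1}-x_s=(x_{i+1}-x_i)+(x_i-x_s)$ and expand accordingly, keeping the denominators $x_i-x_s$ and $x_{i+1}-x_s$ straight.

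The main obstacle I anticipate is the bookkeeping in the inductive step: one must correctly relate $(\id-\sigma^{s+1}_{i+1,s})$ applied to $g^{(1)}_{i+1,j}(S')$ to $(\id-\sigma^{s+1}_{i,s})$ applied to the same polynomial, handling the extra shift by $x_i-x_{i+1}$, and verify that after clearing the two denominators $x_i-x_s$ and $x_{i+1}-x_s$ everything matches. A slicker alternative — which I would try first to avoid the $\sigma$-manipulation — is to verify the identity by evaluating both sides at the two hyperplanes $x_{i+1}=x_i$ and $x_s=x_{i+1}$ (or $x_s=x_i$), using that both sides are polynomials of bounded degree in $x_{i+1}$ and $x_s$; on $x_{i+1}=x_i$ the claim reduces to $g^{(1)}_{i,j}(S)=(x_i-y_{i+1})g^{(1)}_{i,j}(S)$-type collapse after noting $\sigma^{s+1}_{i,s}$ behavior, and on $x_s=x_i$ it reduces to an instance with one fewer variable. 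If that degree/evaluation argument closes cleanly it is the shortest route; otherwise the explicit induction via~\ref{g-1:b} and the $\tau$-shift relation above will certainly work, at the cost of a page of elementary algebra.
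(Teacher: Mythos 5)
Your inductive step contains a misapplication of rule~\ref{g-1:b}. That rule peels off the \emph{minimal} element of the index set. For the middle term $g^{(1)}_{i,j}(\{i{+}1\}\cup S)$ the minimal element is $i+1$, not $s=\min S$, so the only legitimate expansion is
$$
g^{(1)}_{i,j}(\{i{+}1\}\cup S)=\frac{(\id-\sigma^{i+2}_{i,i+1})\,g^{(1)}_{i,j}(S)}{x_i-x_{i+1}}\,,
$$
whereas you wrote $g^{(1)}_{i,j}(\{i{+}1\}\cup S)=\dfrac{(\id-\sigma^{s+1}_{i,s})\,g^{(1)}_{i,j}(\{i{+}1\}\cup S')}{x_i-x_s}$. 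That formula would need a separate ``peel off a non-minimal element'' lemma, which is not available here and which you do not prove; as written the step is not justified.

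The omission also means you miss the natural simplification that drives the proof. Expanding the middle term at $s=i+1$ cancels the prefactor $x_i-x_{i+1}$ outright, turning the left-hand side into
$$
(x_i-y_{i+1})\,g^{(1)}_{i+1,j}(S)+g^{(1)}_{i,j}(S)-\sigma^{i+2}_{i,i+1}g^{(1)}_{i,j}(S),
$$
so the lemma reduces to the single clean identity $\sigma^{i+2}_{i,i+1}g^{(1)}_{i,j}(S)=(x_i-y_{i+1})\,g^{(1)}_{i+1,j}(S)$. That identity is then proved by induction on $|S|$ using the braid-type commutation $\sigma^{i+2}_{i,i+1}\sigma^{s+1}_{i,s}=\sigma^{s+1}_{i+1,s}\sigma^{i+2}_{i,i+1}$ of Lemma~\ref{lemma:compol:1} (which is the genuine key computational input, not the ``shift by $x_i-x_{i+1}$'' heuristic you sketch). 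Your alternative evaluation-on-hyperplanes idea is also not convincing: neither side is of bounded small degree in $x_{i+1}$ or $x_s$, so two evaluations do not determine the polynomials. I would recommend reorganizing around the expansion of $\{i{+}1\}\cup S$ at its actual minimum and the reduction to the $\sigma^{i+2}_{i,i+1}$ identity.
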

\begin{proof} By~\ref{g-1:b}, the left hand side of the above formula equals
$$
\begin{array}{l}
(x_i-y_{i+1})\,g^{(1)}_{i+1,j}(S)+g_{i,j}^{(1)}(S)-\sigma^{i+2}_{i,i+1}g_{i,j}^{(1)}(S).
\end{array}
$$
Thus it suffices to prove that
\begin{equation}\label{eq:compol:4}
\sigma^{i+2}_{i,i+1}g_{i,j}^{(1)}(S)=(x_i-y_{i+1})\,g^{(1)}_{i+1,j}(S)
\end{equation}
for any $S\subset(i{+}1..j)$.

If $S=\emptyset$, then by~\ref{g-1:a}, we get
\begin{align*}
\sigma^{i+2}_{i,i+1}g_{i,j}^{(1)}(\emptyset)=&\sigma^{i+2}_{i,i+1}\bigl[(x_i-y_{i+1})\cdots (x_i-y_j)\bigr]\\
=&(x_i-y_{i+1})(x_{i+1}-y_{i+2})\cdots (x_{i+1}-y_j)=(x_i-y_{i+1})\,g_{i+1,j}^{(1)}(\emptyset).
\end{align*}

Now let $S\ne\emptyset$. Set $s=\min S$. Clearly $s\ge i+2$.
Applying~\ref{g-1:b}, the inductive hypothesis and Lemma~\ref{lemma:compol:1}, we get
\begin{align*}
(x_{i+1}-x_s)\sigma^{i+2}_{i,i+1}g_{i,j}^{(1)}(S)
=\sigma^{i+2}_{i,i+1}\big[(x_i-x_s)\,g_{i,j}^{(1)}(S)\big]
=\sigma^{i+2}_{i,i+1}(\id-\sigma^{s+1}_{i,s})g_{i,j}^{(1)}(S\setminus\{s\})
\\
=(\id-\sigma^{s+1}_{i+1,s})\sigma^{i+2}_{i,i+1}\,g_{i,j}^{(1)}(S\setminus\{s\})
=(\id-\sigma^{s+1}_{i+1,s})\big[(x_i-y_{i+1})\,g^{(1)}_{i+1,j}(S\setminus\{s\})\big]\\
=(x_i-y_{i+1})\cdot\bigl(\id-\sigma^{s+1}_{i+1,s}\bigr)\,g^{(1)}_{i+1,j}(S\setminus\{s\})
=(x_i-y_{i+1})\,(x_{i+1}-x_s)\,g^{(1)}_{i+1,j}(S).
\end{align*}
Cancelling out $(x_{i+1}-x_s)$, we obtain the required relation~(\ref{eq:compol:4}).
\end{proof}

\begin{lemma}\label{lemma:compol:8}
Let $i+1<m<j$ and $S\subset(m..j)$. Then
\begin{align*}
&(x_i-y_{i+1})\,g_{m,j}^{(1)}(S)+g_{i,j}^{(1)}\bigl((i..m{-}1)\cup\{m\}\cup S\bigr)
\\
+
&(x_{m-1}-x_m)\,g^{(1)}_{i,j}\bigl((i..m]\cup S\bigr)
=g^{(1)}_{i,j}\bigl((i..m)\cup S\bigr).
\end{align*}
\end{lemma}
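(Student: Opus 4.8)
The asserted identity is an equality in the fraction field of $\R$; by Proposition~\ref{proposition:compol:1} each of the four $g^{(1)}$-terms is an honest polynomial of $\R$, so it is harmless to clear a denominator when convenient. The plan is to argue by induction in the spirit of Lemmas~\ref{lemma:compol:6} and~\ref{lemma:compol:7}: peel off minimal elements of the relevant subsets using the recursive rule~\ref{g-1:b}, and convert the resulting $\sigma^{i+2}_{i,i+1}$-twists of $g^{(1)}_{i,j}$ into $(x_i-y_{i+1})$ times $g^{(1)}_{i+1,j}$ by means of the identity~(\ref{eq:compol:4}) proved inside Lemma~\ref{lemma:compol:7}, the commutations among the various $\sigma$-operators being supplied by Lemma~\ref{lemma:compol:1}. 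Concretely I would induct on the pair $(|S|,\ m-i)$ ordered lexicographically (equivalently on the total size $|(i..m)\cup S|=(m-i-1)+|S|$); nailing down the exact induction measure so that every term produced below is covered by one of the inductive hypotheses is one of the points that needs care.

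\textbf{Base case $S=\emptyset$.}
Here the claim is $(x_i-y_{i+1})\,g^{(1)}_{m,j}(\emptyset)+g^{(1)}_{i,j}\bigl((i..m{-}1)\cup\{m\}\bigr)+(x_{m-1}-x_m)\,g^{(1)}_{i,j}\bigl((i..m]\bigr)=g^{(1)}_{i,j}\bigl((i..m)\bigr)$, which I would establish by a secondary downward induction on $m$. Using the base rule~\ref{g-1:a} and the value $g^{(1)}_{m,j}(\emptyset)=\prod_{t=m+1}^{j}(x_m-y_t)$ together with repeated application of~\ref{g-1:b}, the $g^{(1)}_{i,j}$ of each ``initial–block'' set collapses by the same telescoping cancellation as in the proof of Lemma~\ref{lemma:compol:6} (cf. the computation yielding~(\ref{eq:compol:3})), and the identity reduces to one that is checked termwise. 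The starting step $m=i+2$ of this secondary induction is handled directly: the three sets become $\{i{+}1\}$, $\{i{+}2\}$, $\{i{+}1,i{+}2\}$, and the identity follows from two applications of Lemma~\ref{lemma:compol:7} (once with first index $i$, once with first index $i+1$) after computing $g^{(1)}_{i,j}(\{i{+}1\})$ via~\ref{g-1:b}.

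\textbf{Inductive step.}
Assume $S\neq\emptyset$ and set $s:=\min S$. When $m>i+2$, the three sets $(i..m)\cup S$, $(i..m{-}1)\cup\{m\}\cup S$ and $(i..m]\cup S$ all have minimal element $i+1$; applying~\ref{g-1:b} strips $i+1$ and produces $\sigma^{i+2}_{i,i+1}$, and by~(\ref{eq:compol:4}) each stripped term becomes a combination of $g^{(1)}_{i,j}$ of a smaller set with $(x_i-y_{i+1})\,g^{(1)}_{i+1,j}$ of a smaller set. In parallel, applying~\ref{g-1:b} to $g^{(1)}_{m,j}(S)$ peels $s$ via $\sigma^{s+1}_{m,s}$, and Lemma~\ref{lemma:compol:1} is used to push this operator past the others. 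After these substitutions the target identity becomes an alternating sum of products of $g^{(1)}_{i+1,j}$'s and $g^{(1)}_{i,j}$'s with coefficients of the form $x_a-x_b$; the $g^{(1)}_{i+1,j}$-contributions are controlled by the inductive hypothesis with $i$ replaced by $i+1$ (smaller gap $m-i$) and the $g^{(1)}_{i,j}$-contributions by the inductive hypothesis for $S\setminus\{s\}$, and one verifies that the whole sum cancels. The case $m=i+2$ is run simultaneously, the only difference being that the middle set $\{i{+}2\}\cup S$ has minimal element $i+2$ instead of $i+1$, for which one invokes Lemma~\ref{lemma:compol:7} directly rather than the generic peeling of $i+1$.

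\textbf{Expected main obstacle.}
The difficulty is purely combinatorial bookkeeping. The three subsets fed to $g^{(1)}_{i,j}$ differ exactly in how they meet $\{i{+}1,\,m{-}1,\,m,\,s\}$, so after peeling one obtains a fairly large collection of terms that must be organized and cancelled in pairs; making the signs match and making the superscripts on the nested $\sigma$-operators line up (so that Lemma~\ref{lemma:compol:1} applies and so that both flavours of inductive hypothesis become available) is the delicate part. In addition, the degenerate configurations $S=\emptyset$, $m=i+2$, $s=m+1$, and $s=j$ each make the generic recursion collapse and have to be treated separately. I expect no conceptual surprise beyond this, since all the needed identities among the $g^{(1)}$'s and $\sigma$'s are already in place from Lemmas~\ref{lemma:compol:1},~\ref{lemma:compol:6} and~\ref{lemma:compol:7}.
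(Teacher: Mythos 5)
There is a genuine gap, and it is precisely at the point you flagged as ``needs care.'' Your plan is to induct directly on the original statement, peeling $i+1$ from the three $g^{(1)}_{i,j}$-terms via Lemma~\ref{lemma:compol:7}. Carrying this out produces for each of them a $g^{(1)}_{i,j}$-piece and an $(x_i-y_{i+1})\,g^{(1)}_{i+1,j}$-piece. The $g^{(1)}_{i+1,j}$-pieces do assemble into an instance of the lemma at level $i+1$, and after cancelling them against the corresponding IH you are left with a residual identity in $g^{(1)}_{i,j}$ of the form
\[
(x_i-y_{i+1})(x_i-y_{i+2})\,g^{(1)}_{m,j}(S)+g^{(1)}_{i,j}\bigl([i{+}2..m{-}1)\cup\{m\}\cup S\bigr)
+(x_{m-1}-x_m)\,g^{(1)}_{i,j}\bigl([i{+}2..m]\cup S\bigr)=g^{(1)}_{i,j}\bigl([i{+}2..m)\cup S\bigr),
\]
which is \emph{not} an instance of Lemma~\ref{lemma:compol:8} for any choice of parameters: the initial block starts at $i+2$ rather than $i+1$, and the coefficient of $g^{(1)}_{m,j}(S)$ is a two-fold product $(x_i-y_{i+1})(x_i-y_{i+2})$. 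No induction measure on $(|S|, m-i)$ or on $|(i..m)\cup S|$ alone makes this a valid inductive call, because the statement itself has changed. This is precisely why the paper strengthens the claim to a family of identities parametrized by $k\in\{i+1,\dots,m-1\}$, with coefficient $(x_i-y_{i+1})\cdots(x_i-y_k)$ and sets $[k..m)\cup S$, $[k..m{-}1)\cup\{m\}\cup S$, $[k..m]\cup S$, and proves it by downward induction on $k$. Your peeling step essentially rediscovers the $k=i+2$ member of this family, but without formalizing the generalization the recursion cannot close.

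Two secondary discrepancies: your proposed ``secondary downward induction on $m$'' for $S=\emptyset$ is off-target — the paper's nested induction at the base of its outer $k$-induction is on $|S|$ (applied to an auxiliary quantity $\Psi^S$ involving operators $\sigma_{i,m-1}^m$ and $\sigma_{i,m}^{m+1}$), not on $m$; lowering $m$ changes the target identity in a way not controlled by the ingredients you cite. Also, the argument relies on Lemma~\ref{lemma:compol:4.5} (commutation of $\sigma_{a,b}^{b+e}$ past $g^{(1)}_{m,j}(S)$) and Lemma~\ref{lemma:compol:2}, neither of which appears in your toolbox, while Lemma~\ref{lemma:compol:6} and equation~(\ref{eq:compol:3}), which you lean on, concern $g^{(1)}_{i,j}$ of full or terminal blocks and do not apply to the truncated blocks $(i..m)$ etc.\ occurring here.
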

\begin{proof} We apply downward induction on $k=m-1\ldots,i+1$ to prove
\begin{align*}
&(x_i-y_{i+1})\cdots(x_i-y_k)\,g_{m,j}^{(1)}(S)+g_{i,j}^{(1)}\bigl([k..m{-}1)\cup\{m\}\cup S\bigr)
\\
+&(x_{m-1}-x_m)\,g^{(1)}_{i,j}\bigl([k..m]\cup S\bigr)=g^{(1)}_{i,j}\bigl([k..m)\cup S\bigr).
\end{align*}
Denote by $\Phi^S_k$ the difference of the left-hand side and the right-hand side of this formula.
The claim of the current lemma will follow from $\Phi^S_{i+1}=0$.

The induction base is the case $k=m-1$. By~\ref{g-1:b}, $(x_i-x_{m-1})\Phi^S_{m-1}$ equals
\begin{align*}
&(x_i-x_{m-1})(x_i-y_{i+1})\cdots(x_i-y_{m-1})\,g_{m,j}^{(1)}(S)
+(x_i-x_{m-1})\,g_{i,j}^{(1)}\bigl(\{m\}\cup S\bigr)\\
&+(x_i-x_{m-1})(x_{m-1}-x_m)\,g^{(1)}_{i,j}\bigl(\{m{-}1,m\}\cup S\bigr)
-(x_i-x_{m-1})\,g^{(1)}_{i,j}\bigl(\{m{-}1\}\cup S\bigr)\\
=&(x_i-x_{m-1})(x_i-y_{i+1})\cdots(x_i-y_{m-1})\,g_{m,j}^{(1)}(S)
+(x_i-x_{m-1})\,g_{i,j}^{(1)}\bigl(\{m\}\cup S\bigr)\\
&+(x_{m-1}-x_m)\,\bigl(\id-\sigma_{i,m-1}^m\bigr)\,g^{(1)}_{i,j}\bigl(\{m\}\cup S\bigr)
-\(\id-\sigma_{i,m-1}^m\)\,g_{i,j}^{(1)}(S)\\
=&(x_i-x_{m-1})(x_i-y_{i+1})\cdots(x_i-y_{m-1})\,g_{m,j}^{(1)}(S)+(x_i-x_m)\,g_{i,j}^{(1)}\bigl(\{m\}\cup S\bigr)\\
&-(x_{m-1}-x_m)\cdot\sigma_{i,m-1}^m\,g^{(1)}_{i,j}\bigl(\{m\}\cup S\bigr)
-\bigl(\id-\sigma_{i,m-1}^m\bigr)\,g_{i,j}^{(1)}(S).
\end{align*}
Note that $\sigma_{i,m-1}^m(x_i-x_m)=x_i-(x_m+x_i-x_{m-1})=x_{m-1}-x_m$.
Applying this equality and~\ref{g-1:b}, we get
\begin{align*}
&(x_i-x_{m-1})\cdot(x_i-y_{i+1})\cdots(x_i-y_{m-1})\,g_{m,j}^{(1)}(S)
+\bigl(\id-\sigma_{i,m}^{m+1}\bigr)\,g_{i,j}^{(1)}(S)
\\
&-\sigma_{i,m-1}^m\Bigl[(x_i-x_m)\,g^{(1)}_{i,j}\bigl(\{m\}\cup S\bigr)\Bigr]
-\(\id-\sigma_{i,m-1}^m\)g_{i,j}^{(1)}(S)
\\
=&(x_i-x_{m-1})(x_i-y_{i+1})\cdots(x_i-y_{m-1})\,g_{m,j}^{(1)}(S)
+\bigl(\id-\sigma_{i,m}^{m+1}\bigr)\,g_{i,j}^{(1)}(S)
\\
&-\sigma_{i,m-1}^m\bigl(\id-\sigma_{i,m}^{m+1}\bigr)g^{(1)}_{i,j}(S)
-\(\id-\sigma_{i,m-1}^m\)g_{i,j}^{(1)}(S)
\\
=&(x_i-x_{m-1})(x_i-y_{i+1})\cdots(x_i-y_{m-1})\,g_{m,j}^{(1)}(S)-\sigma_{i,m}^{m+1}\,g_{i,j}^{(1)}(S)
\\
&+\sigma_{i,m-1}^m\sigma_{i,m}^{m+1}\,g_{i,j}^{(1)}(S).
\end{align*}
Denote the last expression by $\Psi^S$ and prove by induction on $|S|$ that $\Psi^S=0$
for any $S\subset(m..j)$. By~\ref{g-1:a}, we get
\begin{align*}
\Psi^\emptyset&=(x_i-x_{m-1})\cdot(x_i-y_{i+1})\cdots(x_i-y_{m-1})\cdot(x_m-y_{m+1})\cdots(x_m-y_j)\\
&-\sigma_{i,m}^{m+1}\Bigl[(x_i-y_{i+1})\cdots(x_i-y_j)\Bigr]
+\sigma_{i,m-1}^m\sigma_{i,m}^{m+1}\Bigl[(x_i-y_{i+1})\cdots(x_i-y_j)\Bigr]
\\
=&(x_i-x_{m-1})\cdot(x_i-y_{i+1})\cdots(x_i-y_{m-1})\cdot(x_m-y_{m+1})\cdots(x_m-y_j)
\\
&-(x_i-y_{i+1})\cdots(x_i-y_m)\cdot(x_m-y_{m+1})\cdots(x_m-y_j)\\
&+\sigma_{i,m-1}^m\Bigl[(x_i-y_{i+1})\cdots(x_i-y_m)\cdot(x_m-y_{m+1})\cdots(x_m-y_j)\Bigr]
\\
=&(x_i-x_{m-1})\cdot(x_i-y_{i+1})\cdots(x_i-y_{m-1})\cdot(x_m-y_{m+1})\cdots(x_m-y_j)
\\
&-(x_i-y_{i+1})\cdots(x_i-y_m)\cdot(x_m-y_{m+1})\cdots(x_m-y_j)
\\
&+(x_i-y_{i+1})\cdots(x_i-y_{m-1})\cdot(x_{m-1}-y_m)\cdot(x_m-y_{m+1})\cdots(x_m-y_j)
=0
\end{align*}
Now let $S\ne\emptyset$ and $s:=\min S$. Recall that $m<s<j$. Applying~\ref{g-1:b},
Lemmas~\ref{lemma:compol:1} and~\ref{lemma:compol:2} and the inductive hypothesis, we get that $(x_m-x_s)\Psi^S$ equals
\begin{align*}
&(x_i-x_{m-1})\cdot(x_i-y_{i+1})\cdots(x_i-y_{m-1})\cdot(x_m-x_s)\,g_{m,j}^{(1)}(S)\\
&-\sigma_{i,m}^{m+1}\big[(x_i-x_s)\,g_{i,j}^{(1)}(S)\big]
+\sigma_{i,m-1}^m\sigma_{i,m}^{m+1}\big[(x_i-x_s)\,g_{i,j}^{(1)}(S)\big]
\\
=&(x_i-x_{m-1})\cdot(x_i-y_{i+1})\cdots(x_i-y_{m-1})\cdot(\id-\sigma_{m,s}^{s+1})\,g_{m,j}^{(1)}(S\setminus\{s\})
\\
&-\sigma_{i,m}^{m+1}(\id-\sigma_{i,s}^{s+1})\,g_{i,j}^{(1)}(S\setminus\{s\})
+\sigma_{i,m-1}^m\sigma_{i,m}^{m+1}(\id-\sigma_{i,s}^{s+1})\,g_{i,j}^{(1)}(S\setminus\{s\})
\\
=&(\id-\sigma_{m,s}^{s+1})\bigl[(x_i-x_{m-1})\cdot(x_i-y_{i+1})\cdots(x_i-y_{m-1})\,g_{m,j}^{(1)}(S\setminus\{s\})\bigr]
\\
&-(\id-\sigma_{m,s}^{s+1})\sigma_{i,m}^{m+1}g_{i,j}^{(1)}(S\setminus\{s\})
+(\id-\sigma_{m,s}^{s+1})\sigma_{i,m-1}^m\sigma_{i,m}^{m+1}\,g_{i,j}^{(1)}(S\setminus\{s\})
\\
=&(\id-\sigma_{m,s}^{s+1})\Psi^{S\setminus\{s\}}=0.
\end{align*}
Hence $\Psi^S=0$. We have now proved that $\Phi^S_{m-1}=0$.

Now let $i<k<m-1$ and by the undictive assumption we have $\Phi^S_{k+1}=0$.
By definition, $\sigma^{k+1}_{i,k}$ acts identically on $x_i-y_t$ for $t=i+1,\ldots,k$. By Lemma~\ref{lemma:compol:4.5}, it also acts identically on $g_{m,j}^{(1)}(S)$.
Moreover, $\sigma^{k+1}_{i,k}(x_i-y_{k+1})=x_k-y_{k+1}$. So
\begin{align*}
&\bigl(\id-\sigma^{k+1}_{i,k}\bigr)\bigl[(x_i-y_{i+1})\cdots(x_i-y_k)\cdot(x_i-y_{k+1})\,g_{m,j}^{(1)}(S)\bigr]
\\
=&(x_i-y_{i+1})\cdots(x_i-y_k)\cdot(x_i-y_{k+1})\,g_{m,j}^{(1)}(S)
\\
&-(x_i-y_{i+1})\cdots(x_i-y_k)\cdot(x_k-y_{k+1})\,g_{m,j}^{(1)}(S)
\\
=&(x_i-x_k)\cdot(x_i-y_{i+1})\cdots(x_i-y_k)\,g_{m,j}^{(1)}(S).
\end{align*}
Hence usingy~\ref{g-1:b}, we get
\begin{align*}
(x_i-x_k)\,\Phi^S_k=&(x_i-x_k)\cdot(x_i-y_{i+1})\cdots(x_i-y_k)\,g_{m,j}^{(1)}(S)
\\
&+\bigl(\id-\sigma_{i,k}^{k+1}\bigr)\,g_{i,j}^{(1)}\bigl([k{+}1..m{-}1)\cup\{m\}\cup S\bigr)
\\
&+(x_{m-1}-x_m)\cdot\bigl(\id-\sigma_{i,k}^{k+1}\bigr)\,g^{(1)}_{i,j}\bigl([k+1..m]\cup S\bigr)
\\
&-\bigl(\id-\sigma_{i,k}^{k+1}\bigr)g^{(1)}_{i,j}\bigl([k{+}1..m)\cup S\bigr)
=\bigl(\id-\sigma_{i,k}^{k+1}\bigr)\Phi^{S\setminus\{s\}}_k=0.
\end{align*}
So $\Phi^S_k=0$, and the inductive step is complete. 
\end{proof}

\section{Polynomials $g^{(2)}_{i,k,q,j}(S)$}\label{polinomialsg2}
Let $i\le k\le q\le j$, $i<q$, and $S\subset (i..j]$. Set
$$
g^{(2)}_{i,k,q,j}(S)=f_{i,j}^{\{k\},l^{(2)}_{i,k,q,j}}(S).$$
where $l^{(2)}_{i,k,q,j}:(i..j]\to\{0,1\}$ is the following function:
      $$\label{l2}
      l^{(2)}_{i,k,q,j}(t)=\left\{
                             {\arraycolsep=0pt
                             \begin{array}{l}
                             1\mbox{ if }i<t<k\mbox{ or }q<t<j;\\[3pt]
                             0\mbox{ if }k\le t\le q\mbox{ or }t=j.
                             \end{array}}
        \right.
      $$
The defining relations~\ref{f:a} and~\ref{f:b} of Section~\ref{Polynomials_f} in our special case become:
\renewcommand{\labelenumi}{{\rm \theenumi}}
\renewcommand{\theenumi}{{$\rm(\alph{enumi}$\,-2)}}
\begin{enumerate}
\item\label{g-2:a} $g^{(2)}_{i,k,q,j}(\emptyset)=u_{i,j}^{\{k\}}=(x_i-y_{i+1})\cdots(x_i-y_k)\cdot(x_k-y_{k+1})\cdots(x_k-y_j)$;\\
\item\label{g-2:b} $\displaystyle g^{(2)}_{i,k,q,j}(S)=\frac{\bigl(\id-\sigma^{s+l^{(2)}_{i,k,q,j}}_{\{k\}_s^i,s}\bigr)g^{(2)}_{i,k,q,j}(S\setminus\{s\})}{x_{\{k\}_s^i}-x_s}$,
      for $S\ne\emptyset$ and $s=\min S$.
\end{enumerate}

\begin{lemma}\label{lemma:compol:9}
$g^{(2)}_{i,i,j,j}\bigl((i..j]\bigr)=1$ and $g^{(2)}_{i,i+1,j,j}\bigl((i..j]\bigr)=1$.
\end{lemma}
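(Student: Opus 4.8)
The plan is to prove both identities by the same kind of downward induction used for Lemma~\ref{lemma:compol:6}, specializing the general apparatus of Section~\ref{Polynomials_f} to the two relevant choices of $D$ and $l$. For the first identity we take $D=\{i\}$ with $l=l^{(2)}_{i,i,j,j}$; note that since $k=i$ and $q=j$, the function $l^{(2)}_{i,i,j,j}$ is identically $0$ on $(i..j]$ (the range $i<t<k$ is empty, the range $q<t<j$ is empty, and $t=j$ and $k\le t\le q$ together cover everything). Since $i\in D$, we have $\{i\}_t^i=i$ for all $t$, so the defining relation \ref{g-2:b} reads
$$
g^{(2)}_{i,i,j,j}(S)=\frac{(\id-\sigma^{s}_{i,s})\,g^{(2)}_{i,i,j,j}(S\setminus\{s\})}{x_i-x_s},\qquad s=\min S,
$$
with base case $g^{(2)}_{i,i,j,j}(\emptyset)=(x_i-y_{i+1})\cdots(x_i-y_j)$ from \ref{g-2:a}.

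For this case I would prove by downward induction on $k=j,\dots,i+1$ the statement
$$
g^{(2)}_{i,i,j,j}\bigl([k..j]\bigr)=(x_i-y_{i+1})\cdots(x_i-y_{k-1}),
$$
interpreting the right-hand side as $1$ when $k=i+1$. The base case $k=j$ is
$$
g^{(2)}_{i,i,j,j}(\{j\})=\frac{(\id-\sigma^{j}_{i,j})\,g^{(2)}_{i,i,j,j}(\emptyset)}{x_i-x_j}
=\frac{(x_i-y_{i+1})\cdots(x_i-y_j)-(x_i-y_{i+1})\cdots(x_i-y_{j-1})(x_j-y_j)}{x_i-x_j}
$$
$$
=(x_i-y_{i+1})\cdots(x_i-y_{j-1}),
$$
using that $\sigma^{j}_{i,j}$ fixes $x_i-y_t$ for $t<j$ and sends $x_i-y_j$ to $x_j-y_j$. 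The inductive step from $k$ to $k-1$ is identical in form: applying \ref{g-2:b} with $s=k-1$ and using that $\sigma^{k}_{i,k-1}$ (here $l(k-1)=0$, so the superscript is $k-1+0=k-1$; one must be slightly careful, but $\sigma^{k-1}_{i,k-1}$ fixes all $x_i-y_t$ with $t\le k-1$ and sends $x_i-y_k$ to... wait — actually the superscript is $s+l(s)=k-1$, and $\sigma^{k-1}_{i,k-1}$ acts on $x_t,y_t$ for $t\ge k-1$; it fixes $x_i-y_t$ for $t\le k-2$ and sends $x_i-y_{k-1}$ to $x_{k-1}-y_{k-1}$), so the computation matches the one in Lemma~\ref{lemma:compol:6} with the indices shifted. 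Taking $k=i+1$ gives $g^{(2)}_{i,i,j,j}\bigl((i..j]\bigr)=1$.

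For the second identity, $g^{(2)}_{i,i+1,j,j}\bigl((i..j]\bigr)=1$: now $k=i+1$, $q=j$, so $l^{(2)}_{i,i+1,j,j}(t)=1$ for $q<t<j$ (empty) and for $i<t<k$ (empty too, since $k=i+1$), while $l=0$ on $k\le t\le q$ and at $t=j$ — so again $l\equiv 0$, and $D=\{i+1\}$. Here $\{i+1\}_t^i = i$ for $t\le i+1$ and $=i+1$ for $t\ge i+2$, and the base case from \ref{g-2:a} is $g^{(2)}_{i,i+1,j,j}(\emptyset)=(x_i-y_{i+1})(x_{i+1}-y_{i+2})\cdots(x_{i+1}-y_j)$. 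I would prove, by downward induction on $k=j,\dots,i+2$, that $g^{(2)}_{i,i+1,j,j}\bigl([k..j]\bigr)=(x_{i+1}-y_{i+2})\cdots(x_{i+1}-y_{k-1})$ (empty product $=1$ when $k=i+2$) — the recursion for $s\ge i+2$ uses $\sigma^{s}_{i+1,s}$ and telescopes exactly as above — and then handle the final step $S=\{i+1\}\cup[i+2..j]$ separately: here $s=i+1$, $\{i+1\}_{i+1}^i=i$, $l(i+1)=0$, so
$$
g^{(2)}_{i,i+1,j,j}\bigl((i..j]\bigr)=\frac{(\id-\sigma^{i+1}_{i,i+1})\,g^{(2)}_{i,i+1,j,j}\bigl([i+2..j]\bigr)}{x_i-x_{i+1}}
=\frac{(x_i-y_{i+1})-(x_{i+1}-y_{i+1})}{x_i-x_{i+1}}=1,
$$
since by the previous induction $g^{(2)}_{i,i+1,j,j}\bigl([i+2..j]\bigr)=x_i-y_{i+1}$ (the $k=i+2$ case, where the claimed product over $[i+2..j]$ specializes: actually I should double-check — with $D=\{i+1\}$ the $\emptyset$-value already contains the factor $x_i-y_{i+1}$, and the telescoping over $[i+2..j]$ kills the factors $(x_{i+1}-y_{i+2})\cdots(x_{i+1}-y_j)$ leaving precisely $x_i-y_{i+1}$), and $\sigma^{i+1}_{i,i+1}$ fixes nothing above index $i+1$ but here acts on $x_i-y_{i+1}$: it fixes $x_i$ (index $i<i+1$) and sends $y_{i+1}\mapsto y_{i+1}+x_i-x_{i+1}$, giving $x_{i+1}-y_{i+1}$. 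The main obstacle is simply bookkeeping: getting the superscripts $s+l(s)$ and the values $D_s^i$ right at each step so that $\sigma$ acts as claimed, and correctly tracking which factor of the product each $\sigma$ fixes versus shifts; there is no conceptual difficulty beyond the telescoping pattern already established in Lemma~\ref{lemma:compol:6}. Alternatively, one could derive both identities directly from Lemma~\ref{lemma:compol:4} by choosing $R=S=(i..j]$ and $\phi=\id$ (so every interval $[t+l(t)..\phi(t))$ is empty, hence disjoint from $D$), which gives $g^{(2)}(S)\equiv\prod_{t\in(i..j]\setminus\phi(S)}(x_{D_t^i}-y_t)$ modulo the ideal $\I$; but since the ideal-free statement is what is wanted, the explicit downward induction is cleaner and self-contained, so that is the route I would take.
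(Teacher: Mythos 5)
Your approach matches the paper's, which proves the unified claim $g^{(2)}_{i,k,j,j}\bigl((s..j]\bigr)=\prod_{t=i+1}^s\bigl(x_{\{k\}_t^i}-y_t\bigr)$ for general $k\in[i..j]$ by downward induction on $s$ down to $\max(i,k-1)$ and then specializes to $s=i$; your two separate telescoping runs are the same computation split by the value of $k$. The only slip is that your stated inductive formula for $g^{(2)}_{i,i+1,j,j}\bigl([k..j]\bigr)$ omits the constant factor $(x_i-y_{i+1})$ coming from $u_{i,j}^{\{i+1\}}$ (a factor $\sigma^s_{i+1,s}$ leaves fixed for $s\ge i+2$, so it simply rides along through the induction), but you catch this in your parenthetical before the final division, and the closing computation $(\id-\sigma^{i+1}_{i,i+1})(x_i-y_{i+1})/(x_i-x_{i+1})=1$ is correct.
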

\begin{proof}
Let $k\in[i..j]$. Apply downward induction on $s=j, \dots, \max(i,k-1)$ to prove:
$$
g^{(2)}_{i,k,j,j}\bigl((s..j]\bigr)=\prod_{t=i+1}^s(x_{\{k\}_t^i}-y_t).
$$
By~\ref{g-2:a}, this formula is true for $s=j$.
Let $\max(i,k-1)\le s<j$. Note that $k \le s+1\le j$. So $l^{(2)}_{i,k,j,j}(s+1)=0$.
Denote $u:=\{k\}_{s+1}^i$.
By~\ref{g-2:b} and the inductive hypothesis, $g^{(2)}_{i,k,j,j}\bigl((s..j]\bigr)$ equals
\begin{align*}
\frac{(\id-\sigma_{u,s+1}^{s+1})g^{(2)}_{i,k,j,j}((s{+}1..j])}{x_u-x_{s+1}}
=\frac{(\id-\sigma_{u,s+1}^{s+1})\prod\nolimits_{t=i+1}^{s+1}(x_{\{k\}_t^i}-y_t)}{x_u-x_{s+1}}\\
=\frac{\prod\nolimits_{t=i+1}^s(x_{\{k\}_t^i}-y_t)\,(\id-\sigma_{u,s+1}^{s+1})(x_u-x_{s+1})}{x_u-x_{s+1}}
=\prod_{t=i+1}^s(x_{\{k\}_t^i}-y_t),
\end{align*}
using  $\sigma_{u,s+1}^{s+1}(x_u-x_{s+1})=0$.
The required formulas now follow from the special cases $s=i$, $k=i$ or $s=i$, $k=i+1$.
\end{proof}

\begin{lemma}\label{lemma:compol:10}
Let $i+1<j$ and $S\subset(i{+}1..j)$. Then
{\renewcommand{\labelenumi}{{\rm \theenumi}}
\renewcommand{\theenumi}{{\rm(\alph{enumi})}}
\begin{enumerate}
\item\label{lemma:compol:10:part:a} $g^{(1)}_{i+1,j}(S)+g^{(1)}_{i,j}\bigl(\{i{+}1\}\cup S\bigr)=g^{(2)}_{i,i,i+1,j}\bigl(\{i{+}1\}\cup S\bigr)${\rm;}\\
\item\label{lemma:compol:10:part:b} $g^{(1)}_{i+1,j}(S)=g^{(2)}_{i,i+1,i+1,j}\bigl(\{i{+}1\}\cup S\bigr)$.
\end{enumerate}}
\end{lemma}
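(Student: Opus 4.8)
The plan is to prove both identities simultaneously by unwinding the definition of $g^{(2)}$ in terms of $f^{D,l}$ (with $D=\{k\}$, $k\in\{i,i+1\}$) and comparing it termwise with the defining recursion for $g^{(1)}=f^{\emptyset,1}$. The key observation is that, for the two functions $l^{(2)}_{i,i,i+1,j}$ and $l^{(2)}_{i,i+1,i+1,j}$ appearing here, the value of $l^{(2)}$ on $(i{+}1..j)$ coincides with the constant function $1$ that defines $g^{(1)}$, so the only discrepancies between the recursions for $g^{(2)}_{i,k,i+1,j}$ and $g^{(1)}_{i,j}$ occur at the distinguished node $i{+}1$ (where $l^{(2)}$ is $0$) and in the anchor index $\{k\}^i_t$ versus $i$ itself. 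First I would record, directly from (g-2:a), that $g^{(2)}_{i,i,i+1,j}(\emptyset)=u^{\{i\}}_{i,j}=u^{\emptyset}_{i,j}=g^{(1)}_{i,j}(\emptyset)$ and $g^{(2)}_{i,i+1,i+1,j}(\emptyset)=u^{\{i+1\}}_{i,j}=(x_i-y_{i+1})(x_{i+1}-y_{i+2})\cdots(x_{i+1}-y_j)=(x_i-y_{i+1})\,g^{(1)}_{i+1,j}(\emptyset)$, using (g-1:a); the second equality is exactly the content of (\ref{eq:compol:4}) from Lemma~\ref{lemma:compol:7}, which I would reuse.

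Next I would peel off the element $i{+}1$ from the set $\{i{+}1\}\cup S$. For part (b): since $k=i+1\in D$, we have $\{k\}^i_{i+1}=i+1$, so $\{k\}^i_t=i+1$ for all $t\ge i+1$, and the recursion (g-2:b) for $g^{(2)}_{i,i+1,i+1,j}$ restricted to subsets of $[i{+}1..j)$ is \emph{literally} the recursion (g-1:b) for $g^{(1)}_{i+1,j}$ shifted by one index, except for the first step at $s=i+1$ where $l^{(2)}(i+1)=0$ forces the operator $\sigma^{i+1}_{i+1,i+1}$, which is the identity — so that step just removes $i+1$ harmlessly after accounting for the $(x_i-y_{i+1})$ prefactor coming from $u^{\{i+1\}}_{i,j}$. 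Tracking this prefactor through the recursion (it is a polynomial in $x_i,y_{i+1}$ only, and every subsequent $\sigma$ has base index $\ge i+1$, hence fixes it by Lemma~\ref{lemma:compol:4.5}) gives $g^{(2)}_{i,i+1,i+1,j}(\{i{+}1\}\cup S)=(x_i-y_{i+1})\,g^{(1)}_{i+1,j}(S)$... wait — but (b) claims equality with $g^{(1)}_{i+1,j}(S)$, not with $(x_i-y_{i+1})$ times it. So the $(x_i-y_{i+1})$ factor must in fact be cancelled by the $s=i+1$ step: there $\{k\}^i_{i+1}=i+1$ while the ``anchor'' $D^i_{i+1}$ used in dividing is also $i+1$, giving denominator $x_{i+1}-x_{i+1}=0$ — so instead the correct reading is that the $s=i+1$ step uses $\{k\}^i_{i+1}=i+1$ but since $i+1\in D$, actually $D^i_{i+1}=\max_{i+1}(D\cup\{i\})=\max(D\cap(-\infty..i+1)\cup\{i\})=i$; hence the operator is $\sigma^{i+1}_{i,i+1}$ and the denominator is $x_i-x_{i+1}$, and $(\id-\sigma^{i+1}_{i,i+1})\big[(x_i-y_{i+1})\,g^{(1)}_{i+1,j}(S)\big]=(x_i-x_{i+1})\,g^{(1)}_{i+1,j}(S)$ since $\sigma^{i+1}_{i,i+1}$ sends $x_i-y_{i+1}\mapsto x_{i+1}-y_{i+1}$ and fixes $g^{(1)}_{i+1,j}(S)$ by Lemma~\ref{lemma:compol:4.5}. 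This is the crux, and it makes (b) fall out.

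For part (a), with $k=i$: here $D=\{i\}$, so $D^i_t=i$ for all $t$, and the polynomials $g^{(2)}_{i,i,i+1,j}$ satisfy exactly the recursion of $g^{(1)}_{i,j}$ except that $l^{(2)}_{i,i,i+1,j}(t)$ differs from the constant $1$ precisely at $t=i+1$ (where it is $0$). So I would run the two recursions in parallel on $\{i+1\}\cup S$: peeling off $s=i+1$ from $g^{(1)}_{i,j}(\{i+1\}\cup S)$ uses $\sigma^{i+2}_{i,i+1}$, whereas peeling it off from $g^{(2)}_{i,i,i+1,j}(\{i+1\}\cup S)$ uses $\sigma^{i+1}_{i,i+1}$; the two sides then differ by a controlled term, and I expect (a) to be equivalent, after clearing the common denominator $x_i-x_{i+1}$, to the identity $\sigma^{i+1}_{i,i+1}g^{(1)}_{i,j}(S)-\sigma^{i+2}_{i,i+1}g^{(1)}_{i,j}(S)=(x_i-y_{i+1})\,g^{(1)}_{i+1,j}(S)-(x_i-y_{i+1})\,g^{(1)}_{i+1,j}(S)$ — i.e.\ again reducible to (\ref{eq:compol:4}) and Lemma~\ref{lemma:compol:7}. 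The main obstacle will be bookkeeping: making sure that at the single ``bad'' index $i+1$ the anchor index $D^i_{i+1}$ and the shift $s+l^{(2)}(s)$ are computed correctly (they are $i$ and $i+1$ respectively in both cases, which is what makes everything line up), and that all later operators $\sigma^{\bullet}_{\bullet,\bullet}$ have base indices $\ge i+1$ so that Lemmas~\ref{lemma:compol:1},~\ref{lemma:compol:2} and~\ref{lemma:compol:4.5} let me commute $\sigma^{i+1}_{i,i+1}$ or $\sigma^{i+2}_{i,i+1}$ past the recursion exactly as in the proof of Lemma~\ref{lemma:compol:7}. I would organize the write-up as an induction on $|S|$, mirroring that proof, with Lemma~\ref{lemma:compol:7} itself (and its key equation (\ref{eq:compol:4})) doing the heavy lifting in the inductive step.
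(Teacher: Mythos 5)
Your overall strategy is the same as the paper's: clear the denominator $x_i-x_{i+1}$ using the recursions \ref{g-1:b} and \ref{g-2:b}, exploit the coincidence $g^{(2)}_{i,i,i+1,j}(S)=g^{(1)}_{i,j}(S)$ for $S\subset(i+1..j)$ (the paper makes the same observation at the start of its proof of (a)), and then run an induction on $|S|$, with Lemmas~\ref{lemma:compol:1},~\ref{lemma:compol:2} and~\ref{lemma:compol:4.5} doing the commutation of the $\sigma$'s past the recursion. Where you differ is in the packaging: the paper defines $\Phi^S$, $\Psi^S$ as the differences cleared of denominators and proves $\Phi^S=\Psi^S=0$ by a single monolithic induction; you instead want to first establish two clean auxiliary identities — for (b), the factorization $g^{(2)}_{i,i+1,i+1,j}(S)=(x_i-y_{i+1})\,g^{(1)}_{i+1,j}(S)$ for $S\subset(i+1..j)$, and for (a), the two $\sigma$-identities — and then plug them in. That is a legitimate and arguably more modular organization of the same argument.

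There are, however, two concrete problems that need fixing before this is a proof.

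First, in part (a), the target identity you wrote down,
$\sigma^{i+1}_{i,i+1}g^{(1)}_{i,j}(S)-\sigma^{i+2}_{i,i+1}g^{(1)}_{i,j}(S)=(x_i-y_{i+1})\,g^{(1)}_{i+1,j}(S)-(x_i-y_{i+1})\,g^{(1)}_{i+1,j}(S)$,
has right-hand side identically zero, whereas the left-hand side is not zero. After clearing the denominator and using $g^{(2)}_{i,i,i+1,j}(S)=g^{(1)}_{i,j}(S)$, what (a) actually reduces to is
\begin{equation*}
\sigma^{i+2}_{i,i+1}g^{(1)}_{i,j}(S)-\sigma^{i+1}_{i,i+1}g^{(1)}_{i,j}(S)=(x_i-x_{i+1})\,g^{(1)}_{i+1,j}(S),
\end{equation*}
equivalently (after substituting the two $\sigma$-identities)
$\sigma^{i+1}_{i,i+1}g^{(1)}_{i,j}(S)-\sigma^{i+2}_{i,i+1}g^{(1)}_{i,j}(S)=(x_{i+1}-y_{i+1})\,g^{(1)}_{i+1,j}(S)-(x_i-y_{i+1})\,g^{(1)}_{i+1,j}(S)$.
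You almost certainly meant to write $(x_{i+1}-y_{i+1})$ in the first term on the right, but as written the reduction is false and would produce a proof of a wrong statement.

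Second, and more substantively: your plan for (a) (and the corrected identity above) relies on the companion to (\ref{eq:compol:4}), namely
$\sigma^{i+1}_{i,i+1}g^{(1)}_{i,j}(S)=(x_{i+1}-y_{i+1})\,g^{(1)}_{i+1,j}(S)$,
but (\ref{eq:compol:4}) only gives the $\sigma^{i+2}_{i,i+1}$ version. The $\sigma^{i+1}_{i,i+1}$ version is \emph{not} in the paper; it is true, and it is proved by the very same induction on $|S|$ that establishes (\ref{eq:compol:4}) inside the proof of Lemma~\ref{lemma:compol:7} (the base case is an easy direct check, the inductive step commutes $\sigma^{i+1}_{i,i+1}$ past $\sigma^{s+1}_{i,s}$ via Lemma~\ref{lemma:compol:1}), but you cannot simply cite Lemma~\ref{lemma:compol:7} for it — you must state and prove it. Similarly, for (b), the factorization $g^{(2)}_{i,i+1,i+1,j}(S)=(x_i-y_{i+1})\,g^{(1)}_{i+1,j}(S)$ for $S\subset(i+1..j)$ is correct (the two recursions coincide on such $S$, and $(x_i-y_{i+1})$ is fixed by every $\sigma^{s+1}_{i+1,s}$ with $s>i+1$), but it also needs a short induction of its own; as written it is asserted, not proved. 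Once those two auxiliary statements are written out, your argument closes, and the residual base-index computation you worried over (that $\{i+1\}^i_{i+1}=i$, not $i+1$) is indeed the one you eventually landed on correctly.
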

\begin{proof}\ref{lemma:compol:10:part:a}
Note that
$g^{(2)}_{i,i,i+1,j}(S)=g^{(1)}_{i,j}(S)$ for any $S\subset(i+1..j)$.
This fact follows directly from the inductive definitions~\ref{g-1:a},~\ref{g-1:b},~\ref{g-2:a},~\ref{g-2:b} and the equalities $u_{i,j}^{\{i\}}=u_{i,j}^\emptyset$,
$\{i\}_s^i=i$ and $l^{(2)}_{i,i,i+1,j}(s)=1$ for any $s\in(i{+}1..j)$.

Now, by~\ref{g-1:b} and~\ref{g-2:b}, the difference of the left-hand side and the right-hand side
of~\ref{lemma:compol:10:part:a} multiplied by $(x_i-x_{i+1})$ equals
\begin{align*}
&(x_i-x_{i+1})\,g^{(1)}_{i+1,j}(S)+\bigl(\id-\sigma_{i,i+1}^{i+2}\bigr)g^{(1)}_{i,j}(S)
-\bigl(\id-\sigma_{i,i+1}^{i+1}\bigr)g^{(2)}_{i,i,i+1,j}(S)\\
=&(x_i-x_{i+1})\,g^{(1)}_{i+1,j}(S)-\sigma_{i,i+1}^{i+2}\,g^{(1)}_{i,j}(S)
+\sigma_{i,i+1}^{i+1}\,g^{(1)}_{i,j}(S).
\end{align*}
Denote the last expression by $\Phi^S$ and prove by induction on $|S|$ that $\Phi^S=0$
for all $S\subset(i{+}1..j)$. By~\ref{g-1:a}, $\Psi^\emptyset$ equals
\begin{align*}
&(x_i-x_{i+1})\cdot(x_{i+1}-y_{i+2})\cdots(x_{i+1}-y_j)
\\
&-\sigma_{i,i+1}^{i+2}\bigl[(x_i-y_{i+1})\cdots(x_i-y_j)\bigr]
+\sigma_{i,i+1}^{i+1}\bigl[(x_i-y_{i+1})\cdots(x_i-y_j)\bigr]
\\
=&(x_i-x_{i+1})\cdot(x_{i+1}-y_{i+2})\cdots(x_{i+1}-y_j)
\\
&-(x_i-y_{i+1})\cdot(x_{i+1}-y_{i+2})\cdots(x_{i+1}-y_j)
+(x_{i+1}-y_{i+1})\cdots(x_{i+1}-y_j)=0.
\end{align*}

Now let $S\ne\emptyset$ and set $s:=\min S$.
By~\ref{g-1:b}, Lemma~\ref{lemma:compol:1} and the inductive hypothesis, $(x_{i+1}-x_s)\,\Phi^S$ equals
\begin{align*}
&(x_i-x_{i+1})\,(\id-\sigma_{i+1,s}^{s+1})\,g^{(1)}_{i+1,j}(S\setminus\{s\})
\\
&-\sigma_{i,i+1}^{i+2}\bigl[(x_i-x_s)\,g^{(1)}_{i,j}(S)\bigr]
+\sigma_{i,i+1}^{i+1}\bigl[(x_i-x_s)g^{(1)}_{i,j}(S)\bigr]
\\
=&(x_i-x_{i+1})\,(\id-\sigma_{i+1,s}^{s+1})\,g^{(1)}_{i+1,j}(S\setminus\{s\})
-\sigma_{i,i+1}^{i+2}(\id-\sigma_{i,s}^{s+1})g^{(1)}_{i,j}(S\setminus\{s\})
\\
&+\sigma_{i,i+1}^{i+1}(\id-\sigma_{i,s}^{s+1})g^{(1)}_{i,j}(S\setminus\{s\})=(\id-\sigma_{i+1,s}^{s+1})\Phi^{S\setminus\{s\}}=0.
\end{align*}
Therefore $\Phi^S=0$.

\smallskip

\ref{lemma:compol:10:part:b} By~\ref{g-2:b}, the difference of the left-hand side and the right-hand side
of~\ref{lemma:compol:10:part:b} multiplied by $(x_i-x_{i+1})$ equals
$$
(x_i-x_{i+1})\,g^{(1)}_{i+1,j}(S)-\bigl(\id-\sigma_{i,i+1}^{i+1}\bigr)g^{(2)}_{i,i+1,i+1,j}(S).
$$
Denote the last expression by $\Psi^S$ and prove by induction on $|S|$ that $\Psi^S=0$ for all  $S\subset(i+1..j)$. By~\ref{g-1:a} and~\ref{g-2:a}, we get
\begin{align*}
\Psi^\emptyset=&(x_i-x_{i+1})\cdot(x_{i+1}-y_{i+2})\cdots(x_{i+1}-y_j)
\\
&-\bigl(\id-\sigma_{i,i+1}^{i+1}\bigr)\bigl[(x_i-y_{i+1})\cdot(x_{i+1}-y_{i+2})\cdots(x_{i+1}-y_j)\bigr]
\\
=&(x_i-x_{i+1})\cdot(x_{i+1}-y_{i+2})\cdots(x_{i+1}-y_j)
\\
&-(x_i-y_{i+1})\cdot(x_{i+1}-y_{i+2})\cdots(x_{i+1}-y_j)
\\
&+(x_{i+1}-y_{i+1})\cdot(x_{i+1}-y_{i+2})\cdots(x_{i+1}-y_j)
=0.
\end{align*}

Now let $S\ne\emptyset$ and $s:=\min S$. We have $i+1<s<j$. So, by Lemma~\ref{lemma:compol:2},
formulas~\ref{g-1:b} and~\ref{g-2:b} and the inductive hypothesis, $(x_{i+1}-x_s)\,\Psi^S$ equals
\begin{align*}
&(x_i{-}x_{i+1}){\cdot}(\id-\sigma_{i+1,s}^{s+1})g^{(1)}_{i+1,j}(S\setminus\{s\})
{-}(\id-\sigma_{i,i+1}^{i+1})(\id-\sigma_{i+1,s}^{s+1})g^{(2)}_{i,i+1,i+1,j}(S\setminus\{s\})
\\
=&(\id-\sigma_{i+1,s}^{s+1})\bigl[(x_i-x_{i+1})\,g^{(1)}_{i+1,j}(S\setminus\{s\})
-(\id-\sigma_{i,i+1}^{i+1})g^{(2)}_{i,i+1,i+1,j}(S\setminus\{s\})\bigr]
\\
=&(\id-\sigma_{i+1,s}^{s+1})\Psi^{S\setminus\{s\}}=0.
\end{align*}
Therefore $\Psi^S=0$.
\end{proof}

\begin{lemma}\label{lemma:compol:11}
Let $i+1<q\le j$, $k\ge i+2$, and $S\subset(i{+}1..j]$. Then:
{\renewcommand{\labelenumi}{{\rm \theenumi}}
\renewcommand{\theenumi}{{\rm(\alph{enumi})}}
\begin{enumerate}
\item\label{lemma:compol:11:part:a} $(x_{i+1}-y_{i+1})\,g^{(2)}_{i+1,i+1,q,j}(S)+(x_i-x_{i+1})\,g^{(2)}_{i,i,q,j}\bigl(\{i{+}1\}\cup S\bigr)=g^{(2)}_{i,i,q,j}(S)${\rm;}\\[-3pt]
\item\label{lemma:compol:11:part:b} $g^{(2)}_{i+1,i+1,q,j}(S)=g^{(2)}_{i,i+1,q,j}(\{i{+}1\}\cup S)${\rm;}\\[-3pt]
\item\label{lemma:compol:11:part:c} $(x_i-y_{i+1})\,g^{(2)}_{i+1,i+2,q,j}(S)=g^{(2)}_{i,i+2,q,j}(S)$ if \,$i+2\in S${\rm;}\\[-3pt]
\item\label{lemma:compol:11:part:d} $(x_i-y_{i+1})\,g^{(2)}_{i+1,k,q,j}(S)+(x_i-x_{i+1})\,g^{(2)}_{i,k,q,j}\bigl(\{i{+}1\}\cup S\bigr)=g^{(2)}_{i,k,q,j}(S)$.
\end{enumerate}}
\end{lemma}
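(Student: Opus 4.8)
All four identities follow the pattern already used for Lemmas~\ref{lemma:compol:7}, \ref{lemma:compol:8}, and~\ref{lemma:compol:10}: one expands, via the recursion~\ref{g-2:b}, the term carrying the extra node $i+1$, thereby reducing the claimed equality to an identity of the shape ``$\sigma(\,\cdot\,)=(\text{linear factor})\cdot(\,\cdot\,)$'' between polynomials $g^{(2)}$, which is then checked by induction on $|S|$ using the one-line commutation facts of Lemmas~\ref{lemma:compol:1}, \ref{lemma:compol:2}, and~\ref{lemma:compol:4.5}. I would organize the argument in three blocks. The first is a \emph{factoring lemma}: for $i\le k\le q\le j$ with $i<q$ and any $S\subset(k..j]$,
$$
g^{(2)}_{i,k,q,j}(S)=\Big(\prod\nolimits_{t=i+1}^{k}(x_i-y_t)\Big)\,g^{(2)}_{k,k,q,j}(S).
$$
The base case $S=\emptyset$ is $u^{\{k\}}_{i,j}=\big(\prod_{t=i+1}^{k}(x_i-y_t)\big)u^{\{k\}}_{k,j}$ from~\ref{g-2:a}; the inductive step is immediate from~\ref{g-2:b}, since for $s=\min S\ge k+1$ one has $\{k\}_s^i=\{k\}_s^k=k$, the operator $\sigma^{s+l(s)}_{k,s}$ fixes $\prod_{t=i+1}^{k}(x_i-y_t)$ (all of whose indices are $\le k<s+l(s)$), and $l^{(2)}_{i,k,q,j}$ agrees with $l^{(2)}_{k,k,q,j}$ on $(k..j]$.

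Parts~(b) and~(c) follow quickly from this. For~(b), \ref{g-2:b} (with $l^{(2)}_{i,i+1,q,j}(i+1)=0$, $\{i+1\}_{i+1}^i=i$) gives $(x_i-x_{i+1})\,g^{(2)}_{i,i+1,q,j}(\{i+1\}\cup S)=(\id-\sigma^{i+1}_{i,i+1})g^{(2)}_{i,i+1,q,j}(S)$; substituting $g^{(2)}_{i,i+1,q,j}(S)=(x_i-y_{i+1})g^{(2)}_{i+1,i+1,q,j}(S)$ from the factoring lemma, using that $\sigma^{i+1}_{i,i+1}$ fixes $g^{(2)}_{i+1,i+1,q,j}(S)$ by Lemma~\ref{lemma:compol:4.5} and sends $x_i-y_{i+1}$ to $x_{i+1}-y_{i+1}$, the right side collapses to $(x_i-x_{i+1})g^{(2)}_{i+1,i+1,q,j}(S)$, giving~(b). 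For~(c) one has $\min S=i+2$ and $S':=S\setminus\{i+2\}\subset(i+2..j]$; expanding both $g^{(2)}_{i,i+2,q,j}(S)$ and $g^{(2)}_{i+1,i+2,q,j}(S)$ by~\ref{g-2:b}, inserting the factorizations $g^{(2)}_{i,i+2,q,j}(S')=(x_i-y_{i+1})(x_i-y_{i+2})g^{(2)}_{i+2,i+2,q,j}(S')$ and $g^{(2)}_{i+1,i+2,q,j}(S')=(x_{i+1}-y_{i+2})g^{(2)}_{i+2,i+2,q,j}(S')$, and simplifying with Lemma~\ref{lemma:compol:4.5} together with the explicit action of $\sigma^{i+2}_{i,i+2}$ and $\sigma^{i+2}_{i+1,i+2}$ on the linear factors, one obtains $g^{(2)}_{i,i+2,q,j}(S)=(x_i-y_{i+1})g^{(2)}_{i+2,i+2,q,j}(S')$ and $g^{(2)}_{i+1,i+2,q,j}(S)=g^{(2)}_{i+2,i+2,q,j}(S')$, whence~(c). (The degenerate case $j=i+2$, where $S'=\emptyset$ and $g^{(2)}_{i+2,i+2,q,j}(\emptyset)=1$, is trivial.)

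For~(a) and~(d) I would expand the $g^{(2)}_{i,\cdot,q,j}(\{i+1\}\cup S)$ term by~\ref{g-2:b}, noting $l^{(2)}_{i,i,q,j}(i+1)=0$ in case~(a) and $l^{(2)}_{i,k,q,j}(i+1)=1$ in case~(d) (as $i<i+1<k$). This reduces~(a) to the identity $\sigma^{i+1}_{i,i+1}g^{(2)}_{i,i,q,j}(S)=(x_{i+1}-y_{i+1})g^{(2)}_{i+1,i+1,q,j}(S)$ and~(d) to $\sigma^{i+2}_{i,i+1}g^{(2)}_{i,k,q,j}(S)=(x_i-y_{i+1})g^{(2)}_{i+1,k,q,j}(S)$, for $S\subset(i+1..j]$. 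Each is proved by induction on $|S|$: the base cases follow from~\ref{g-2:a} and the action of $\sigma^{i+1}_{i,i+1}$ (resp.\ $\sigma^{i+2}_{i,i+1}$) on $u^{\emptyset}_{i,j}$ (resp.\ $u^{\{k\}}_{i,j}$); in the inductive step one applies~\ref{g-2:b} at $s=\min S$ and pushes $\sigma^{i+1}_{i,i+1}$ (resp.\ $\sigma^{i+2}_{i,i+1}$) through $\sigma^{s+l(s)}_{\{k\}_s^i,s}$ by Lemma~\ref{lemma:compol:1} when the pivot $\{k\}_s^i$ equals $i$, and by Lemma~\ref{lemma:compol:2} when it equals $k$ (which for~(d) happens exactly when $\min S>k$); in both situations $\sigma^{s+l(s)}_{\ast,s}$ fixes the trailing linear factor, and cancelling $x_\ast-x_s$ completes the step.

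The only real difficulty is the bookkeeping: one must check in each subcase that the relevant index inequalities for Lemmas~\ref{lemma:compol:1}, \ref{lemma:compol:2}, and~\ref{lemma:compol:4.5} hold, and keep careful track of which $\sigma$-operator fixes or moves which linear factor. The delicate points are the subcase of~(d) with $\min S\le k$, where the recursion pivot is $i$ rather than $k$, so that $\sigma^{i+2}_{i,i+1}$ converts the denominator $x_i-x_s$ into $x_{i+1}-x_s$, and the simplification in~(c), where one must observe separately that $\sigma^{i+2}_{i,i+2}$ fixes $x_i-y_{i+1}$ but sends $x_i-y_{i+2}$ to $x_{i+2}-y_{i+2}$. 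There is no conceptual obstacle beyond this; the computations run exactly parallel to those in the proofs of Lemmas~\ref{lemma:compol:7}, \ref{lemma:compol:8}, and~\ref{lemma:compol:10}.
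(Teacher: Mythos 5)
Your proposal is correct, and the two halves of it relate to the paper's proof in different ways. For parts (a) and (d) your reduction is literally the paper's: after expanding the $\{i{+}1\}\cup S$ term by~\ref{g-2:b} (using that the relevant $l^{(2)}$-value at $i+1$ is $0$ in case (a), $1$ in case (d), and that the recursion pivot is $i$), the claim collapses to proving $\sigma^{i+1}_{i,i+1}g^{(2)}_{i,i,q,j}(S) = (x_{i+1}-y_{i+1})g^{(2)}_{i+1,i+1,q,j}(S)$ resp.\ $\sigma^{i+2}_{i,i+1}g^{(2)}_{i,k,q,j}(S) = (x_i-y_{i+1})g^{(2)}_{i+1,k,q,j}(S)$, and the induction on $|S|$ you describe — pushing $\sigma^{b+e}_{i,i+1}$ past $\sigma^{s+l(s)}_{\ast,s}$ via Lemma~\ref{lemma:compol:1} when the pivot is $i$ and Lemma~\ref{lemma:compol:2} when it is $k$, and noting that $\sigma^{s+l(s)}_{\ast,s}$ fixes $x_i-y_{i+1}$ — is exactly what the paper does. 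For parts (b) and (c), however, your argument is genuinely shorter than the paper's. The paper proves each by a fresh ``$\Phi^S=0$'' induction on $|S|$; you instead extract a \emph{factoring lemma} $g^{(2)}_{i,k,q,j}(S)=\bigl(\prod_{t=i+1}^{k}(x_i-y_t)\bigr)g^{(2)}_{k,k,q,j}(S)$ for $S\subset(k..j]$, itself an easy induction because the operator $\sigma^{s+l(s)}_{k,s}$ fixes all the linear factors of index $\le k$, and then (b) and (c) become one-step computations (substitute the factorization, apply Lemma~\ref{lemma:compol:4.5} to kill the $\sigma$-action on the $g^{(2)}_{k,k,q,j}$-piece, and track the action on the remaining linear factors). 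The only thing to flag: when you write $g^{(2)}_{k,k,q,j}$ for $k=q$ (e.g.\ $k=i+2=q$ in part (c)) you are outside the stated parameter range $i<q$ of Section~\ref{polinomialsg2}, so you should either note explicitly that $f_{q,j}^{\{q\},\,l}$ with $l=l^{(2)}_{i,q,q,j}|_{(q..j]}$ is still a well-defined polynomial (that $l$ is independent of $i$), or treat this boundary subcase separately; as it stands you only flag the case $j=i+2$. This is a cosmetic point — the $f$-polynomial underneath is perfectly well defined — but worth one line.
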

\begin{proof}\ref{lemma:compol:11:part:a}
Denote by $\Phi^S$ the difference of the right-hand side and left-hand side of~\ref{lemma:compol:11:part:a}.
Applying~\ref{g-2:b}, we get
\begin{align*}
\Phi^S&=g^{(2)}_{i,i,q,j}(S)-(x_{i+1}-y_{i+1})\,g^{(2)}_{i+1,i+1,q,j}(S)-\bigl(\id-\sigma_{i,i+1}^{i+1}\bigr)\,g^{(2)}_{i,i,q,j}(S)\\
&=\sigma_{i,i+1}^{i+1}g^{(2)}_{i,i,q,j}(S)-(x_{i+1}-y_{i+1})\,g^{(2)}_{i+1,i+1,q,j}(S).
\end{align*}
We prove $\Phi^S=0$ by induction on $|S|$.
By~\ref{g-2:a}, we get
$$
\Phi^\emptyset=\sigma_{i,i+1}^{i+1}\bigl[(x_i-y_{i+1})\cdots(x_i-y_j)\bigr]-(x_{i+1}-y_{i+1})\cdot(x_{i+1}-y_{i+2})\cdots(x_{i+1}-y_j)=0.
$$
Let $S\ne\emptyset$ and $s:=\min S$. Note that
$l^{(2)}_{i,i,q,j}(s)=l^{(2)}_{i+1,i+1,q,j}(s)$. Denote this number by $h$.
By~\ref{g-2:b}, Lemma~\ref{lemma:compol:1} and the inductive hypothesis,
$(x_{i+1}-x_s)\,\Phi^S$ equals
\begin{align*}
&\sigma_{i,i+1}^{i+1}\bigl[(x_i-x_s) g^{(2)}_{i,i,q,j}(S)\bigr]
-(x_{i+1}-y_{i+1}) (x_{i+1}-x_s)g^{(2)}_{i+1,i+1,q,j}(S)
\\
=&\sigma_{i,i+1}^{i+1}(\id-\sigma_{i,s}^{s+h})g^{(2)}_{i,i,q,j}(S\setminus\{s\})
-(x_{i+1}-y_{i+1})(\id-\sigma_{i+1,s}^{s+h})g^{(2)}_{i+1,i+1,q,j}(S\setminus\{s\})
\\
=&(\id-\sigma_{i+1,s}^{s+h})\sigma_{i,i+1}^{i+1}g^{(2)}_{i,i,q,j}(S\setminus\{s\})
-(\id-\sigma_{i+1,s}^{s+h})\bigl[(x_{i+1}-y_{i+1})g^{(2)}_{i+1,i+1,q,j}(S\setminus\{s\})\bigr]
\\
=&\bigl(\id-\sigma_{i+1,s}^{s+h}\bigr)\Phi^{S\setminus\{s\}}=0.
\end{align*}
Therefore $\Phi^S=0$.

\smallskip

\ref{lemma:compol:11:part:b} We denote by $\Phi^S$ the difference of the right-hand side and the left-hand side
of~\ref{lemma:compol:11:part:b} multiplied by $(x_i-x_{i+1})$.
By~\ref{g-2:b}, we have
$$
\Phi^S=\bigl(\id-\sigma_{i,i+1}^{i+1}\bigr)g^{(2)}_{i,i+1,q,j}(S)-(x_i-x_{i+1})\,g^{(2)}_{i+1,i+1,q,j}(S).
$$
We prove $\Phi^S=0$ by induction on $|S|$.
By~\ref{g-2:a}, $\Phi^\emptyset$ equals
\begin{align*}
&(\id-\sigma_{i,i+1}^{i+1})\bigl[(x_i-y_{i+1})\cdot(x_{i+1}-y_{i+2})\cdots(x_{i+1}-y_j)\bigr]
\\
&-(x_i{-}x_{i+1})\cdot(x_{i+1}-y_{i+2})\cdots(x_{i+1}-y_j)\\
=&(x_i-y_{i+1})\cdot(x_{i+1}-y_{i+2})\cdots(x_{i+1}-y_j)
\\
&-(x_{i+1}-y_{i+1})\cdot(x_{i+1}-y_{i+2})\cdots(x_{i+1}-y_j)
\\
&-(x_i-x_{i+1})\cdot(x_{i+1}-y_{i+2})\cdots(x_{i+1}-y_j)=0.
\end{align*}
Let $S\ne\emptyset$ and $s:=\min S$.
Note that $l^{(2)}_{i,i+1,q,j}(s)=l^{(2)}_{i+1,i+1,q,j}(s):=h$.
By~\ref{g-2:b}, Lemma~\ref{lemma:compol:2} and the inductive hypothesis, $(x_{i+1}{-}x_s)\,\Phi^S$ equals
\begin{align*}
&(\id-\sigma_{i,i+1}^{i+1})\bigl[(x_{i+1}{-}x_s)\,g^{(2)}_{i,i+1,q,j}(S)\bigr]
-(x_i{-}x_{i+1})\,(x_{i+1}{-}x_s)\,g^{(2)}_{i+1,i+1,q,j}(S)
\\
=&(\id-\sigma_{i,i+1}^{i+1})(\id-\sigma_{i+1,s}^{s+h})g^{(2)}_{i,i+1,q,j}(S\setminus\{s\})
\\
&-(x_i-x_{i+1})(\id-\sigma_{i+1,s}^{s+h})g^{(2)}_{i+1,i+1,q,j}(S\setminus\{s\})
\\
=&(\id-\sigma_{i+1,s}^{s+h})(\id-\sigma_{i,i+1}^{i+1})g^{(2)}_{i,i+1,q,j}(S\setminus\{s\})
\\
&-(\id-\sigma_{i+1,s}^{s+h})\bigl[(x_i-x_{i+1}) g^{(2)}_{i+1,i+1,q,j}(S\setminus\{s\})\bigr]
=(\id-\sigma_{i+1,s}^{s+h})\Phi^{S\setminus\{s\}}=0.
\end{align*}

\smallskip

\ref{lemma:compol:11:part:c} Set $T:=S\setminus\{i+2\}$.
Denote by $\Phi^T$ the difference of the left-hand side and the right-hand side of~\ref{lemma:compol:11:part:c} multiplied by $(x_i-x_{i+2})\,(x_{i+1}-x_{i+2})$.
By~\ref{g-2:b},
\begin{align*}
\Phi^T=&(x_i-y_{i+1})\,(x_i-x_{i+2})\,(\id-\sigma_{i+1,i+2}^{i+2})\,g^{(2)}_{i+1,i+2,q,j}(T)
\\
&-(x_{i+1}-x_{i+2})\,(\id-\sigma_{i,i+2}^{i+2})\,g^{(2)}_{i,i+2,q,j}(T).
\end{align*}
We prove $\Phi^T=0$ by induction on $|T|$.
By~\ref{g-2:a}, we get
\begin{align*}
&(x_i-y_{i+1})(x_i-x_{i+2})(\id-\sigma_{i+1,i+2}^{i+2})\bigl[(x_{i+1}-y_{i+2})\cdot(x_{i+2}-y_{i+3})\cdots(x_{i+2}-y_j)\bigr]
\\
&-(x_{i+1}-x_{i+2})(\id-\sigma_{i,i+2}^{i+2})\bigl[(x_i-y_{i+1})(x_i-y_{i+2})\cdot(x_{i+2}-y_{i+3})\cdots(x_{i+2}-y_j)\bigr]
\\
=&(x_i-y_{i+1})(x_i-x_{i+2})(x_{i+1}-y_{i+2})\cdot(x_{i+2}-y_{i+3})\cdots(x_{i+2}-y_j)
\\
&-(x_i-y_{i+1})(x_i-x_{i+2})\,(x_{i+2}-y_{i+2})\cdot(x_{i+2}-y_{i+3})\cdots(x_{i+2}-y_j)
\\
&-(x_{i+1}-x_{i+2})(x_i-y_{i+1})\,(x_i-y_{i+2})\cdot(x_{i+2}-y_{i+3})\cdots(x_{i+2}-y_j)
\\
&+(x_{i+1}-x_{i+2})\,(x_i-y_{i+1})\,(x_{i+2}-y_{i+2})\cdot(x_{i+2}-y_{i+3})\cdots(x_{i+2}-y_j).
\end{align*}
So
\begin{align*}
\Phi^\emptyset=&(x_i-y_{i+1})\,(x_i-x_{i+2})\,(x_{i+1}-x_{i+2})\cdot(x_{i+2}-y_{i+3})\cdots(x_{i+2}-y_j)
\\
&-(x_{i+1}-x_{i+2})\,(x_i-y_{i+1})\,(x_i-x_{i+2})\cdot(x_{i+2}-y_{i+3})\cdots(x_{i+2}-y_j)=0.
\end{align*}
Let $T{\ne}\emptyset$ and $t:=\min T$. Note that $l^{(2)}_{i+1,i+2,q,j}(t)=l^{(2)}_{i,i+2,q,j}(t)=:h$. By~\ref{g-2:b}, Lemma~\ref{lemma:compol:2} and the inductive hypothesis, $(x_{i+2}-x_t)\Phi^T$ equals
\begin{align*}
&(x_i-y_{i+1})(x_i-x_{i+2})(\id-\sigma_{i+1,i+2}^{i+2})\bigl[(x_{i+2}-x_t) g^{(2)}_{i+1,i+2,q,j}(T)\bigr]
\\
&-(x_{i+1}-x_{i+2})(\id-\sigma_{i,i+2}^{i+2})\bigl[(x_{i+2}-x_t)\,g^{(2)}_{i,i+2,q,j}(T)\bigr]
\\
=&(x_i-y_{i+1})\,(x_i-x_{i+2})\,\bigl(\id-\sigma_{i+1,i+2}^{i+2}\bigr)\bigl(\id-\sigma_{i+2,t}^{t+h}\bigr)g^{(2)}_{i+1,i+2,q,j}\bigl(T\setminus\{t\}\bigr)
\\
&-(x_{i+1}-x_{i+2})\bigl(\id-\sigma_{i,i+2}^{i+2}\bigr)\bigl(\id-\sigma_{i+2,t}^{t+h}\bigr)g^{(2)}_{i,i+2,q,j}\bigl(T\setminus\{t\}\bigr)
\\
=&(\id-\sigma_{i+2,t}^{t+h})\bigl[(x_i-y_{i+1})\,(x_i-x_{i+2})\,(\id-\sigma_{i+1,i+2}^{i+2})g^{(2)}_{i+1,i+2,q,j}(T\setminus\{t\})
\\
&-(x_{i+1}-x_{i+2})\,(\id-\sigma_{i,i+2}^{i+2})g^{(2)}_{i,i+2,q,j}(T\setminus\{t\})\bigr]=(\id-\sigma_{i+2,t}^{t+h})\Phi^{T\setminus\{t\}}=0.
\end{align*}

\smallskip

\ref{lemma:compol:11:part:d} Denote by $\Phi^S$ the difference of the right-hand side and the left-hand side
of~\ref{lemma:compol:11:part:d}. By~\ref{g-2:b},
\begin{align*}
\Phi^S=&g^{(2)}_{i,k,q,j}(S)-(x_i-y_{i+1})\,g^{(2)}_{i+1,k,q,j}(S)-(\id-\sigma_{i,i+1}^{i+2})g^{(2)}_{i,k,q,j}(S)
\\
=&\sigma_{i,i+1}^{i+2}g^{(2)}_{i,k,q,j}(S)-(x_i-y_{i+1})\,g^{(2)}_{i+1,k,q,j}(S).
\end{align*}
We prove $\Phi^S=0$ by induction on $|S|$. By~\ref{g-2:a},
\begin{align*}
\Phi^\emptyset=&\sigma_{i,i+1}^{i+2}\bigl[(x_i-y_{i+1})\cdots(x_i-y_k)\cdot(x_k-y_{k+1})\cdots(x_k-y_j)\bigr]
\\
&-(x_i-y_{i+1})\cdot(x_{i+1}-y_{i+2})\cdots(x_{i+1}-y_k)\cdot(x_k-y_{k+1})\cdots(x_k-y_j)=0.
\end{align*}
Let $S\ne\emptyset$ and $s:=\min S$. Note that $l^{(2)}_{i,k,q,j}(s)=l^{(2)}_{i+1,k,q,j}(s)=:h$. Moreover, we set $s_0:=\{k\}_s^i$ and $s'_0:=\{k\}_s^{i+1}$.
If $s>k$ then $s_0=s'_0=k$. By Lemma~\ref{lemma:compol:2},
$$
\begin{array}{l}
\sigma_{i,i+1}^{i+2}(x_{s_0}-x_s)=\sigma_{i,i+1}^{i+2}(x_k-x_s)=x_k-x_s=x_{s'_0}-x_s,\\[6pt]
\sigma_{i,i+1}^{i+2}\sigma_{s_0,s}^{s+h}=\sigma_{i,i+1}^{i+2}\sigma_{k,s}^{s+h}=\sigma_{k,s}^{s+h}\sigma_{i,i+1}^{i+2}=\sigma_{{s'_0},s}^{s+h}\sigma_{i,i+1}^{i+2}.
\end{array}
$$
If $s\le k$ then $s_0=i$ and $s'_0=i+1$. By Lemma~\ref{lemma:compol:1}, we get
$$
\begin{array}{l}
\sigma_{i,i+1}^{i+2}(x_{s_0}-x_s)=\sigma_{i,i+1}^{i+2}(x_i-x_s)=x_{i+1}-x_s=x_{s'_0}-x_s,\\[6pt]
\sigma_{i,i+1}^{i+2}\sigma_{s_0,s}^{s+h}=\sigma_{i,i+1}^{i+2}\sigma_{i,s}^{s+h}=\sigma_{i+1,s}^{s+h}\sigma_{i,i+1}^{i+2}=\sigma_{s'_0,s}^{s+h}\sigma_{i,i+1}^{i+2}.
\end{array}
$$
Thus in either case, we have
$$
\sigma_{i,i+1}^{i+2}(x_{s_0}-x_s)=x_{s'_0}-x_s, \quad
\sigma_{i,i+1}^{i+2}\sigma_{s_0,s}^{s+h}=\sigma_{s'_0,s}^{s+h}\sigma_{i,i+1}^{i+2}.
$$
By these formulas,~\ref{g-2:b} and the inductive hypothesis, $(x_{s'_0}-x_s)\,\Phi^S$ equals
\begin{align*}
&\sigma_{i,i+1}^{i+2}\bigl[(x_{s_0}-x_s)\,g^{(2)}_{i,k,q,j}(S)\bigr]-(x_i-y_{i+1})\,(x_{s'_0}-x_s)\,g^{(2)}_{i+1,k,q,j}(S)\\
=&\sigma_{i,i+1}^{i+2}(\id-\sigma_{s_0,s}^{s+h}) g^{(2)}_{i,k,q,j}(S\setminus\{s\})-(x_i-y_{i+1})(\id-\sigma_{s'_0,s}^{s+h}) g^{(2)}_{i+1,k,q,j}(S\setminus\{s\})
\\
=&\bigl(\id-\sigma_{s'_0,s}^{s+h}\bigr)\bigl[\sigma_{i,i+1}^{i+2}\,g^{(2)}_{i,k,q,j}\bigl(S\setminus\{s\}\bigr)-(x_i-y_{i+1})\,g^{(2)}_{i+1,k,q,j}\bigl(S\setminus\{s\}\bigr)\bigr]
\\
=&\bigl(\id-\sigma_{s'_0,s}^{s+h}\bigr)\Phi^{S\setminus\{s\}}=0.
\end{align*}
Therefore $\Phi^S=0$.
\end{proof}

\begin{lemma}\label{lemma:compol:12}
Let $i+1<q<j$ and $S\subset(q..j)$. Then:
{\renewcommand{\labelenumi}{{\rm \theenumi}}
\renewcommand{\theenumi}{{\rm(\alph{enumi})}}
\begin{enumerate}
\item\label{lemma:compol:12:part:a} $g_{q,j}^{(1)}(S)+g^{(2)}_{i,i,q-1,j}((i..q]\cup S)=g^{(2)}_{i,i,q,j}((i..q]\cup S)${\rm ;}\\
\item\label{lemma:compol:12:part:b} $g_{q,j}^{(1)}(S)+g^{(2)}_{i,i+1,q-1,j}((i..q]\cup S)=g^{(2)}_{i,i+1,q,j}((i..q]\cup S)$.
\end{enumerate}}
\end{lemma}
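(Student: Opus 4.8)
The plan is to mimic exactly the structure of the proof of Lemma~\ref{lemma:compol:11}, since Lemma~\ref{lemma:compol:12} has the same flavour: an identity among the polynomials $g^{(1)}$ and $g^{(2)}$ that is verified by reducing (via the defining recursion~\ref{g-2:b}) to a statement about a fixed finite product, and then verifying that statement by a second nested induction. Concretely, for part~\ref{lemma:compol:12:part:a} I would denote by $\Phi^S$ the difference of the right-hand and left-hand sides of the asserted identity and prove $\Phi^S=0$ by induction on $|S|$. Since every $t\in(i..q]\cup S$ with $t\le q$ has $\{k\}_t^i=i$ when $k=i$ (here $k=i$ in $g^{(2)}_{i,i,q-1,j}$ and $g^{(2)}_{i,i,q,j}$), the recursion~\ref{g-2:b} peels off the elements $i+1,i+2,\dots,q$ using the operators $\sigma^{t+l}_{i,t}$; the two $g^{(2)}$'s differ only in the function $l^{(2)}$ at the single point $q$ (value $1$ for $q-1$ as third index, value $0$ for $q$ as third index, by the definition of $l^{(2)}_{i,k,q,j}$), which is precisely why the telescoping produces the extra $g^{(1)}_{q,j}(S)$ term.

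First I would handle the base case $S=\emptyset$: here $(i..q]\cup S=(i..q]$, so by Lemma~\ref{lemma:compol:9} (or rather its proof, peeling all of $(i..q]$) both $g^{(2)}$-terms collapse to explicit products in the $x$'s and $y$'s, $g^{(1)}_{q,j}(\emptyset)=u_{q,j}^{\emptyset}$ is explicit by~\ref{g-1:a}, and the identity becomes an elementary polynomial identity of the form ``$\prod(x_q-y_t)$ over $(q..j]$ appears as a difference of two products differing by one factor,'' exactly as in the base case computations of Lemmas~\ref{lemma:compol:8} and~\ref{lemma:compol:11}. Then for $S\ne\emptyset$ with $s:=\min S$ (note $s>q$, so $\{k\}_s^i$ equals $i$ for both $g^{(2)}$'s and $q$ for $g^{(1)}_{q,j}$; wait, rather $\{k\}_s^i=i$ since $k=i$ is the only element of $D=\{i\}$, hence I must be careful and use instead that the relevant denominator is $x_i-x_s$ in both $g^{(2)}$'s while $g^{(1)}_{q,j}$ has denominator $x_q-x_s$): I would multiply $\Phi^S$ by the appropriate product of these denominators, apply~\ref{g-1:b} and~\ref{g-2:b} together with the commutation relations Lemmas~\ref{lemma:compol:1} and~\ref{lemma:compol:2}, and recognise the result as $(\text{a product of }(\id-\sigma))\cdot\Phi^{S\setminus\{s\}}$, which vanishes by the inductive hypothesis; cancelling the nonzero denominator gives $\Phi^S=0$. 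Part~\ref{lemma:compol:12:part:b} is identical except that $k=i+1$, so when peeling $i+1$ from $(i..q]\cup S$ one uses $\{i+1\}_{i+1}^i=i+1$ and $l^{(2)}_{i,i+1,q,j}(i+1)=0$; the bookkeeping is the same and I would simply say ``the proof is entirely analogous, replacing Lemma~\ref{lemma:compol:11}\ref{lemma:compol:11:part:a} considerations by~\ref{lemma:compol:11:part:b}.''

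I expect the main obstacle to be purely clerical: getting the sequence of denominators right when mixing $g^{(1)}_{q,j}$ (recursion with divisor $x_q-x_s$) and $g^{(2)}_{i,\bullet,\bullet,j}$ (recursion with divisor $x_{\{k\}_s^i}-x_s$, which for $s>q$ is $x_k-x_s$ but for $s\le q$ is $x_i-x_s$ or $x_{i+1}-x_s$), so that one can clear a common multiple, apply the recursions, and still have the $\sigma$-operators commute past each other via Lemmas~\ref{lemma:compol:1} and~\ref{lemma:compol:2}. The one genuinely new point compared with Lemma~\ref{lemma:compol:11} is the appearance of the $g^{(1)}_{q,j}$ summand: I would verify in the base case that it is exactly the discrepancy $u_{i,j}^{\{i\}}$ produces between the ``$l(q)=1$'' and ``$l(q)=0$'' peelings, i.e. that $\sigma^{q+1}_{i,q}$ versus $\sigma^{q}_{i,q}$ applied to the tail $(x_i-y_{q+1})\cdots$ differ precisely by $(x_i-y_{q+1})\cdots(x_q-y_{q+1})\cdots$, and then that this discrepancy telescopes to $g^{(1)}_{q,j}(S)$ by Lemma~\ref{lemma:compol:4.5} (which says $\sigma^{t+1}_{i,t}$ fixes $g^{(1)}_{q,j}(S)$ for $t<q$, so the factor $g^{(1)}_{q,j}(S)$ rides unchanged through all the remaining peeling steps). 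Once that identification is in place, the nested induction on $|S|$ goes through verbatim as in Lemmas~\ref{lemma:compol:8} and~\ref{lemma:compol:11}.
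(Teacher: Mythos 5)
Your mental picture of the mechanism is right --- the single discrepancy between $g^{(2)}_{i,i',q-1,j}$ and $g^{(2)}_{i,i',q,j}$ sits at the point $t=q$ of $l^{(2)}$, it produces precisely the $g^{(1)}_{q,j}(S)$ term, and Lemma~\ref{lemma:compol:4.5} is what lets that term ride unchanged through the remaining peels. But the inductive scaffolding you describe does not work as stated, and this is a genuine gap rather than a clerical one. You propose to set $\Phi^S$ equal to the difference of the two sides \emph{of the lemma itself} and induct on $|S|$. At the inductive step, with $s:=\min S>q$, you cannot peel $s$: the recursion~\ref{g-2:b} applied to $g^{(2)}_{i,i',\bullet,j}\bigl((i..q]\cup S\bigr)$ always peels the \emph{minimum} of the argument set, which is $i+1$, and the result involves $g^{(2)}_{i,i',\bullet,j}\bigl((i+1..q]\cup S\bigr)$ --- not an instance of $\Phi^{S'}$ with $|S'|<|S|$. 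So the step "recognise the result as $(\prod(\id-\sigma))\cdot\Phi^{S\setminus\{s\}}$" does not go through. Similarly, your base case $S=\emptyset$ cannot simply appeal to Lemma~\ref{lemma:compol:9}: that lemma (and its proof) concerns $g^{(2)}_{i,k,j,j}$, where $l^{(2)}_{i,k,j,j}$ vanishes on all of $[k..j]$; for $q<j$ the function $l^{(2)}_{i,k,q,j}$ is $1$ on $(q..j)$, so the peel of $(i..q]$ does not give the same explicit product and the base case already needs its own downward induction.

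The fix is to reorganise the induction exactly as in Lemmas~\ref{lemma:compol:8} and~\ref{lemma:compol:13}: the \emph{outer} induction must be a downward induction on $k=q,q-1,\dots,i+1$, and it must prove a \emph{strengthened} statement carrying the accumulated prefactor, namely
$u_{i,k-1}^{\{i'\}}\,g_{q,j}^{(1)}(S)+g^{(2)}_{i,i',q-1,j}\bigl([k..q]\cup S\bigr)=g^{(2)}_{i,i',q,j}\bigl([k..q]\cup S\bigr)$
(with $i'=i$ for part~\ref{lemma:compol:12:part:a} and $i'=i+1$ for part~\ref{lemma:compol:12:part:b}); the lemma is the case $k=i+1$. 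The induction on $|S|$ occurs only at the base $k=q$, where after clearing $x_{i'}-x_q$ one is left with an auxiliary quantity $\Psi^S$ whose vanishing is proved by induction on $|S|$ using Lemmas~\ref{lemma:compol:1},~\ref{lemma:compol:2} and the recursion; the step $k+1\to k$ then just peels one more element of $[k..q]$, using that $\sigma^{k}_{k_0,k}$ fixes $u_{i,k-1}^{\{i'\}}$ and (by Lemma~\ref{lemma:compol:4.5}) fixes $g_{q,j}^{(1)}(S)$, so $\Phi^S_k=0$ follows from $\Phi^S_{k+1}=0$ after dividing by $x_{k_0}-x_k$. You already cite the right pair of model lemmas, so the structure is within reach, but as written your outer induction variable is the wrong one and the strengthened statement with the $u^{\{i'\}}_{i,k-1}$ prefactor is never formulated.
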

\begin{proof} Let $i':=i$ in case~\ref{lemma:compol:12:part:a} and $i':=i+1$ in case~\ref{lemma:compol:12:part:b}.
In both cases, we have $i'<q$. Note that
$g^{(2)}_{i,i',q-1,j}(S)=g^{(2)}_{i,i',q,j}(S)$ for any $S\subset(q..j)$.
This equality follows from~\ref{g-2:b} and the equality $l^{(2)}_{i,i',q-1,j}(s)=l^{(2)}_{i,i',q,j}(s)=1$
for any $s\in(q..j)$.

We apply downward induction on $k=q\ldots,i+1$ to prove the equality
$$
u_{i,k-1}^{\{i'\}}
\,g_{q,j}^{(1)}(S)+g^{(2)}_{i,i',q-1,j}\bigl([k..q]\cup S\bigr)=g^{(2)}_{i,i',q,j}\bigl([k..q]\cup S\bigr)
$$
Let $\Phi^S_k$ denote the difference of the left-hand side and the right-hand side in this formula.
The required formulas will follow from $\Phi_{i+1}^S=0$.

At first, we consider the case $k=q$. By~\ref{g-2:b}, $(x_{i'}-x_q)\Phi^S_q$ equals
\begin{align*}
&(x_{i'}-x_q)u_{i,q-1}^{\{i'\}}g_{q,j}^{(1)}(S)+(\id-\sigma_{i',q}^{q+1})g^{(2)}_{i,i',q-1,j}(S)
-(\id-\sigma_{i',q}^q)g^{(2)}_{i,i',q,j}(S)
\\
=&(x_{i'}-x_q)\,u_{i,q-1}^{\{i'\}}\,g_{q,j}^{(1)}(S)-\sigma_{i',q}^{q+1}g^{(2)}_{i,i',q,j}(S)+\sigma_{i',q}^q\,g^{(2)}_{i,i',q,j}(S)=:\Psi^S.
\end{align*}
We prove that $\Psi^S=0$ 
by induction on $|S|$.
By~\ref{g-1:a} and~\ref{g-2:a}, $\Psi^\emptyset$ equals
\begin{align*}
&(x_{i'}-x_q)u_{i,q-1}^{\{i'\}}u_{q,j}^\emptyset-\sigma_{i',q}^{q+1}u_{i,j}^{\{i'\}}+\sigma_{i',q}^qu_{i,j}^{\{i'\}}
\\
=&(x_{i'}-x_q) u_{i,q-1}^{\{i'\}}u_{q,j}^\emptyset-\sigma_{i',q}^{q+1}\bigl[u_{i,q-1}^{\{i'\}}\cdot(x_{i'}-y_q)\cdots(x_{i'}-y_j)\bigr]
\\
&+\sigma_{i',q}^q\bigl[u_{i,q-1}^{\{i'\}}\cdot(x_{i'}-y_q)\cdots(x_{i'}-y_j)\bigr]
\\
=&(x_{i'}-x_q)\,u_{i,q-1}^{\{i'\}}u_{q,j}^\emptyset-u_{i,q-1}^{\{i'\}}(x_{i'}-y_q)u_{q,j}^\emptyset
+u_{i,q-1}^{\{i'\}}(x_q-y_q)u_{q,j}^\emptyset
\\
=&u_{i,q-1}^{\{i'\}}u_{q,j}^\emptyset\,\bigl[(x_{i'}-x_q)-(x_{i'}-y_q)+(x_q-y_q)\bigr]=0.
\end{align*}
Let $S\ne\emptyset$ and $s:=\min S$. Note that $q+1\le s<j$.
By~\ref{g-1:b} and~\ref{g-2:b}, Lemma~\ref{lemma:compol:1}
and the inductive hypothesis, $(x_q-x_s)\Psi^S$ equals
\begin{align*}
&(x_{i'}-x_q)u_{i,q-1}^{\{i'\}}(\id-\sigma_{q,s}^{s+1})g_{q,j}^{(1)}(S)-\sigma_{i',q}^{q+1}\bigl[(x_{i'}-x_s)g^{(2)}_{i,i',q,j}(S)\bigr]
\\
&+\sigma_{i',q}^q\bigl[(x_{i'}-x_s)g^{(2)}_{i,i',q,j}(S)\bigr]
\\
=&\bigl(\id-\sigma_{q,s}^{s+1}\bigr)\bigl[(x_{i'}-x_q)\,u_{i,q-1}^{\{i'\}}\,g_{q,j}^{(1)}\bigl(S\setminus\{s\}\bigr)\bigr]
\\
&-\sigma_{i',q}^{q+1}\bigl(\id-\sigma_{i',s}^{s+1}\bigr)\,g^{(2)}_{i,i',q,j}\bigl(S\setminus\{s\}\bigr)
+\sigma_{i',q}^q\bigl(\id-\sigma_{i',s}^{s+1}\bigr)\,g^{(2)}_{i,i',q,j}\bigl(S\setminus\{s\}\bigr)
\\
=&\bigl(\id-\sigma_{q,s}^{s+1}\bigr)\bigl[(x_{i'}-x_q)\,u_{i,q-1}^{\{i'\}}\,g_{q,j}^{(1)}\bigl(S\setminus\{s\}\bigr)
\\
&-\sigma_{i',q}^{q+1}\,g^{(2)}_{i,i',q,j}\bigl(S\setminus\{s\}\bigr)+\sigma_{i',q}^q\,g^{(2)}_{i,i',q,j}\bigl(S\setminus\{s\}\bigr)\bigr]
=\bigl(\id-\sigma_{q,s}^{s+1}\bigr)\Psi^{S\setminus\{s\}}=0.
\end{align*}
Therefore $\Psi^S=0$. Thus we have proved $\Phi^S_q=0$.

Now let $i<k<q$. By the inductive assumption, $\Phi^S_{k+1}=0$.
We set $k_0:=\max_k\{i,i'\}$.
Note that $\sigma^k_{k_0,k}$ acts identically on $u_{i,k-1}^{\{i'\}}$ by definition and on $g_{q,j}^{(1)}(S)$ by Lemma~\ref{lemma:compol:4.5}.
On the other hand, $\sigma^k_{k_0,k}(x_{k_0}-y_k)=x_k-y_k$. Therefore, we get
\begin{align*}
\bigl(\id-\sigma_{k_0,k}^k\bigr)\bigl[u_{i,k}^{\{i'\}}g_{q,j}^{(1)}(S)\bigr]
=\bigl(\id-\sigma_{k_0,k}^k\bigr)\bigl[u_{i,k-1}^{\{i'\}}\,(x_{k_0}-y_k)\,g_{q,j}^{(1)}(S)\bigr]
\\
=u_{i,k-1}^{\{i'\}}(x_{k_0}-y_k)\,g_{q,j}^{(1)}(S)-u_{i,k-1}^{\{i'\}}(x_k-y_k)\,g_{q,j}^{(1)}(S)
=(x_{k_0}-x_k)u_{i,k-1}^{\{i'\}}\,g_{q,j}^{(1)}(S).
\end{align*}
By this formula and~\ref{g-2:b}, $(x_{k_0}-x_k)\,\Phi_k^S$ equals
\begin{align*}
&(x_{k_0}-x_k)u_{i,k-1}^{\{i'\}}\,g_{q,j}^{(1)}(S)
+\bigl(\id-\sigma_{k_0,k}^k\bigr)\,g^{(2)}_{i,i',q-1,j}\bigl([k{+}1..q]\cup S\bigr)
\\
&-\bigl(\id-\sigma_{k_0,k}^k\bigr)\,g^{(2)}_{i,i',q,j}\bigl([k{+}1..q]\cup S\bigr)
\\
=&\bigl(\id-\sigma_{k_0,k}^k\bigr)\bigl[u_{i,k}^{\{i'\}}g_{q,j}^{(1)}(S)
+g^{(2)}_{i,i',q-1,j}\bigl([k{+}1..q]\cup S\bigr)
-g^{(2)}_{i,i',q,j}\bigl([k{+}1..q]\cup S\bigr)\bigr]
\\
=&\bigl(\id-\sigma_{i,k}^k\bigr)\Phi_{k+1}^S=0.
\end{align*}
Therefore $\Phi_k^S=0$. This completes the inductive step.
\end{proof}

\begin{lemma}\label{lemma:compol:13}
Let $i+1<m<q\le j$ and $S\subset(m..j]$. Then
{
\renewcommand{\labelenumi}{{\rm \theenumi}}
\renewcommand{\theenumi}{{\rm(\alph{enumi})}}
\begin{enumerate}
\item\label{lemma:compol:13:part:a}$(x_m-y_m)g^{(2)}_{m,m,q,j}(S)+g^{(2)}_{i,i,q,j}\bigl((i..m{-}1)\cup\{m\}\cup S\bigr)$\\
                                  ${}$\hspace{0pt}$+(x_{m-1}-x_m)\,g^{(2)}_{i,i,q,j}\bigl((i..m]\cup S\bigr)=g^{(2)}_{i,i,q,j}\bigl((i..m)\cup S\bigr)${\rm;}\\[-2pt]
\item\label{lemma:compol:13:part:b}$(x_m-y_m)\,g^{(2)}_{m,m,q,j}(S)+\cond_{i+1<m-1}\,g^{(2)}_{i,i+1,q,j}\bigl((i..m{-}1)\cup\{m\}\cup S\bigr)$\\[4pt]
                                  ${}$\hspace{0pt}$+(x_{m-1}-x_m)\,g^{(2)}_{i,i+1,q,j}\bigl((i..m]\cup S\bigr)=g^{(2)}_{i,i+1,q,j}\bigl((i..m)\cup S\bigr)${\rm;}\\
\item\label{lemma:compol:13:part:c}$(x_i-y_{i+1})\,g^{(2)}_{m,m,q,j}(S)=g^{(2)}_{i,m,q,j}\bigl((i..m{-}1)\cup\{m\}\cup S\bigr)${\rm;}\\
\item\label{lemma:compol:13:part:d}$(x_i-y_{i+1})\,g^{(2)}_{m,m+1,q,j}(S)=g^{(2)}_{i,m+1,q,j}\bigl((i..m)\cup S\bigr)$ if $m{+}1\in S${\rm;}\\
\item\label{lemma:compol:13:part:e}$(x_i-y_{i+1})\,g^{(2)}_{m,k,q,j}(S)+g^{(2)}_{i,k,q,j}\bigl((i..m-1)\cup\{m\}\cup S\bigr)$\\
                                  ${}$\hspace{0pt}$+(x_{m-1}-x_m)\,g^{(2)}_{i,k,q,j}\bigl((i..m]\cup S\bigr)=g^{(2)}_{i,k,q,j}\bigl((i..m)\cup S\bigr)$\\[6pt]
                                  for all $k=m+2,\ldots,q$.
\end{enumerate}}
\end{lemma}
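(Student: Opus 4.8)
The plan is to prove all five identities by the two-layer induction already employed for the $g^{(1)}$-analogue in Lemma~\ref{lemma:compol:8} and for the $g^{(2)}$-identities in Lemmas~\ref{lemma:compol:11} and~\ref{lemma:compol:12}. For each part I would first strengthen the statement by replacing the interval $(i..m)$ occurring inside the $g^{(2)}$-arguments (and, correspondingly, the intervals $(i..m{-}1)$ and $(i..m]$) by truncated intervals $[r..m)$, $[r..m{-}1)$, $[r..m]$ for an auxiliary parameter $r$ with $i<r\le m-1$, and the leading linear factor (which is $x_m-y_m$ in parts~\ref{lemma:compol:13:part:a},~\ref{lemma:compol:13:part:b} and $x_i-y_{i+1}$ in parts~\ref{lemma:compol:13:part:c}--\ref{lemma:compol:13:part:e}) by the appropriate product of factors of the form $x_a-y_b$ that reduces to the original one when $r=i+1$. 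Writing $\Phi^S_r$ for the difference of the two sides of the strengthened identity, the lemma is the assertion $\Phi^S_{i+1}=0$, which I would establish by downward induction on $r=m-1,\ldots,i+1$.

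The step of this downward induction, passing from $\Phi^S_{r+1}=0$ to $\Phi^S_r=0$, proceeds as in Lemmas~\ref{lemma:compol:8} and~\ref{lemma:compol:12}: writing $r_0:=\{k'\}_r^i$ for the relevant predecessor index (where $k'$ is the ``middle'' parameter of the $g^{(2)}$'s involved) and $l\in\{0,1\}$ for the value of the defining function $l^{(2)}$ at $r$ appropriate to the part, one checks that $\sigma^{\,r+l}_{r_0,r}$ acts identically on every surviving factor $x_a-y_b$ and, by Lemma~\ref{lemma:compol:4.5}, on the detached polynomial $g^{(2)}_{m,\dots,q,j}(S)$, while it sends $x_{r_0}-y_{r+l}$ to $x_r-y_{r+l}$; multiplying $\Phi^S_r$ by $x_{r_0}-x_r$, rewriting each $g^{(2)}$-term with rule~\ref{g-2:b}, and invoking $\Phi^S_{r+1}=0$ gives $\Phi^S_r\cdot(x_{r_0}-x_r)=0$, whence $\Phi^S_r=0$ since $\R$ is a domain. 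For the base case $r=m-1$, after clearing the remaining denominators with rule~\ref{g-2:b} one is left with an auxiliary polynomial $\Psi^S$ which I would show vanishes by a secondary induction on $|S|$: the case $S=\emptyset$ is a direct telescoping computation from rule~\ref{g-2:a} using the explicit product $u_{i,j}^{\{k'\}}=(x_i-y_{i+1})\cdots(x_i-y_{k'})(x_{k'}-y_{k'+1})\cdots(x_{k'}-y_j)$, and the step $S\ne\emptyset$, $s:=\min S$, uses rule~\ref{g-2:b}, the commutation identities of Lemmas~\ref{lemma:compol:1} and~\ref{lemma:compol:2} to carry $\sigma^{\,s+h}_{\{k'\}_s^i,s}$ (with $h$ the appropriate value of $l^{(2)}$ at $s$) past the operators occurring in $\Psi^S$, and the inductive hypothesis $\Psi^{S\setminus\{s\}}=0$.

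Parts~\ref{lemma:compol:13:part:c} and~\ref{lemma:compol:13:part:d} are slightly simpler, since the right-hand side contains, besides the $g^{(2)}$ built from $S$, only a single further $g^{(2)}$-term rather than two; in part~\ref{lemma:compol:13:part:d} the hypothesis $m{+}1\in S$ is exactly what prevents the surviving linear factor on the left from degenerating when rule~\ref{g-2:b} is applied at $s=m+1$, just as $i{+}2\in S$ is used in Lemma~\ref{lemma:compol:11}\ref{lemma:compol:11:part:c}. In part~\ref{lemma:compol:13:part:b} the indicator $\cond_{i+1<m-1}$ only records that the interval $(i..m{-}1)$ is empty in the boundary case $m=i+2$, so that the corresponding $g^{(2)}$-term is absent and the downward induction on $r$ collapses.

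The step I expect to be the main obstacle is not any single one of these inductions but the bookkeeping surrounding them: in each of the five parts, and within each part across the subcases $s\le k'$ versus $s>k'$ (where $\{k'\}_s^i$ switches between $i$ and $k'$) and $r=m-1$ versus $r<m-1$, one must pin down precisely which of the operators $\sigma^{i+1}_{i,i+1}$, $\sigma^{i+2}_{i,i+1}$, $\sigma^{\,r+l}_{r_0,r}$, $\sigma^{\,s+h}_{\bullet,s}$ commute with one another and which absorb a linear factor, so that rule~\ref{g-2:b} can be transported to the needed position and the (sign-free) cancellations become manifest. Lemmas~\ref{lemma:compol:1}, \ref{lemma:compol:2} and~\ref{lemma:compol:4.5} supply every commutation relation required; what remains is to organize the telescoping carefully in each of the many cases.
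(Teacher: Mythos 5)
Your proposal follows essentially the same route as the paper: a strengthened identity with a truncation parameter $r$ (or $k$) and the leading factor replaced by $u_{i,r-1}^{\{\cdot\}}$-type products, a downward induction on $r$ using Lemmas~\ref{lemma:compol:1}, \ref{lemma:compol:2}, \ref{lemma:compol:4.5} to commute $\sigma$-operators past the detached $g^{(2)}$-factor and absorb a linear factor, a base case at $r=m-1$ reducing to an auxiliary $\Psi^S$ killed by a secondary induction on $|S|$, and simpler single-term variants for parts (c), (d). One small caution: the indicator $\cond_{i+1<m-1}$ in part~(b) does \emph{not} merely record that the $g^{(2)}$-term's argument degenerates — when $m=i+2$ the argument $(i..m{-}1)\cup\{m\}\cup S=\{m\}\cup S$ is still nonempty and $g^{(2)}_{i,i+1,q,j}(\{m\}\cup S)$ does not vanish; the term must be deliberately dropped for the identity to hold, and the paper treats $m=i+2$, $i'=i+1$ as a separate base computation (its own $\Psi^S$) within part~(b) rather than a degenerate trivial case.
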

\begin{proof}\ref{lemma:compol:13:part:a},~\ref{lemma:compol:13:part:b}. Let $i':=i$
in case~\ref{lemma:compol:13:part:a} and $i':=i+1$ in case~\ref{lemma:compol:13:part:b}.
In both cases, we have $i'<m$. We prove the following equality by downward induction on $k=m-1,\dots,i+1$:
\begin{align*}
&(x_m-y_m)\,u_{i,k-1}^{\{i'\}}\,g^{(2)}_{m,m,q,j}(S)+
\cond_{i+1<m-1\text{ or }i'=i}\,g^{(2)}_{i,i',q,j}\bigl([k..m{-}1)\cup\{m\}\cup S\bigr)
\\
&+(x_{m-1}-x_m)\,g^{(2)}_{i,i',q,j}\bigl([k..m]\cup S\bigr)\ =\ g^{(2)}_{i,i',q,j}\bigl([k..m)\cup S\bigr).
\end{align*}
Let $\Phi^S_k$ denote the difference of the left-hand side and the right-hand side of this formula.
The assertions of parts~\ref{lemma:compol:13:part:a} and~\ref{lemma:compol:13:part:b}
will follow from $\Phi^S_{i+1}=0$.

\smallskip

{\it Case~1:} $m=i+2$ and $i'=i+1$. In this case, the only value of $k$ we need to consider is $i+1$.
By~\ref{g-2:b}, $(x_i-x_{i+1})\,\Phi^S_{i+1}$ equals
\begin{align*}
&(x_i-x_{i+1})\,(x_{i+2}-y_{i+2})\,g^{(2)}_{i+2,i+2,q,j}(S)
\\
&+(x_{i+1}-x_{i+2})\,(\id-\sigma_{i,i+1}^{i+1})g^{(2)}_{i,i+1,q,j}(\{i{+}2\}\cup S)
-(\id-\sigma_{i,i+1}^{i+1})g^{(2)}_{i,i+1,q,j}(S)
\\
=&(x_i-x_{i+1})\,(x_{i+2}-y_{i+2})\,g^{(2)}_{i+2,i+2,q,j}(S)
\\
&+\bigl(\id-\sigma_{i,i+1}^{i+1}\bigr)\bigl[(x_{i+1}-x_{i+2})\,g^{(2)}_{i,i+1,q,j}\bigl(\{i{+}2\}\cup S\bigr)\bigr]-\bigl(\id-\sigma_{i,i+1}^{i+1}\bigr)g^{(2)}_{i,i+1,q,j}(S)
\\
=&(x_i-x_{i+1})\,(x_{i+2}-y_{i+2})\,g^{(2)}_{i+2,i+2,q,j}(S)
\\
&+\bigl(\id-\sigma_{i,i+1}^{i+1}\bigr)\bigl(\id-\sigma_{i+1,i+2}^{i+2}\bigr)\,g^{(2)}_{i,i+1,q,j}(S)-\bigl(\id-\sigma_{i,i+1}^{i+1}\bigr)g^{(2)}_{i,i+1,q,j}(S)
\\
=&(x_i-x_{i+1})\,(x_{i+2}-y_{i+2})\,g^{(2)}_{i+2,i+2,q,j}(S)-\bigl(\id-\sigma_{i,i+1}^{i+1}\bigr)\sigma_{i+1,i+2}^{i+2}\,g^{(2)}_{i,i+1,q,j}(S)=:\Psi^S.
\end{align*}
We 
prove by induction on $|S|$ that $\Psi^S=0$.
By~\ref{g-2:a}, $\Psi^\emptyset$ equals
\begin{align*}
&(x_i-x_{i+1})\,(x_{i+2}-y_{i+2})\cdot(x_{i+2}-y_{i+3})\cdots(x_{i+2}-y_j)\\
&-\bigl(\id-\sigma_{i,i+1}^{i+1}\bigr)\sigma_{i+1,i+2}^{i+2}\bigl[(x_i-y_{i+1})\cdot(x_{i+1}-y_{i+2})\cdots(x_{i+1}-y_j)\bigr]
\\
=&(x_i-x_{i+1})\cdot(x_{i+2}-y_{i+2})\,\cdots(x_{i+2}-y_j)
\\&-\bigl(\id-\sigma_{i,i+1}^{i+1}\bigr)\bigl[(x_i-y_{i+1})\cdot(x_{i+2}-y_{i+2})\cdots(x_{i+2}-y_j)\bigr]
\\
=&(x_i-x_{i+1})(x_{i+2}-y_{i+2})\cdots(x_{i+2}-y_j)-(x_i-y_{i+1})(x_{i+2}-y_{i+2})\cdots(x_{i+2}-y_j)
\\
&+(x_{i+1}-y_{i+1})(x_{i+2}-y_{i+2})\cdots(x_{i+2}-y_j)=0.
\end{align*}
Let $S\ne\emptyset$ set $s:=\min S$.
We have $l^{(2)}_{i+2,i+2,q,j}(s)=l^{(2)}_{i,i+1,q,j}(s)=:h$.
By~\ref{g-2:b}, Lemmas~\ref{lemma:compol:1} and~\ref{lemma:compol:2}
and the inductive hypothesis, $(x_{i+2}-x_s)\,\Psi^S$ equals
\begin{align*}
&(x_i-x_{i+1})\,(x_{i+2}-y_{i+2})\,\bigl(\id-\sigma_{i+2,s}^{s+h}\bigr)g^{(2)}_{i+2,i+2,q,j}(S\setminus\{s\})
\\
&-\bigl(\id-\sigma_{i,i+1}^{i+1}\bigr)\sigma_{i+1,i+2}^{i+2}\bigl[(x_{i+1}-x_s)\,g^{(2)}_{i,i+1,q,j}(S)\bigr]
\\
=&\bigl(\id-\sigma_{i+2,s}^{s+h}\bigr)\bigl[(x_i-x_{i+1})\,(x_{i+2}-y_{i+2})\,g^{(2)}_{i+2,i+2,q,j}\bigl(S\setminus\{s\}\bigr)\bigr]
\\
&-\bigl(\id-\sigma_{i,i+1}^{i+1}\bigr)\sigma_{i+1,i+2}^{i+2}\bigl(\id-\sigma_{i+1,s}^{s+h}\bigr)\,g^{(2)}_{i,i+1,q,j}\bigl(S\setminus\{s\}\bigr)
\\
=&\bigl(\id-\sigma_{i+2,s}^{s+h}\bigr)\bigl[(x_i-x_{i+1})\,(x_{i+2}-y_{i+2})\,g^{(2)}_{i+2,i+2,q,j}\bigl(S\setminus\{s\}\bigr)\bigr]
\\
&-\bigl(\id-\sigma_{i+2,s}^{s+h}\bigr)\bigl(\id-\sigma_{i,i+1}^{i+1}\bigr)\sigma_{i+1,i+2}^{i+2}\,g^{(2)}_{i,i+1,q,j}\bigl(S\setminus\{s\}\bigr)
=\bigl(\id-\sigma_{i+2,s}^{s+h}\bigr)\Psi^{S\setminus\{s\}}=0.
\end{align*}

We have proved that $\Phi^S_{i+1}=0$ for any $S$ as in this lemma.
Thus we have proved part~\ref{lemma:compol:13:part:b} in the case $m=i+2$.

{\it Case 2:} $m>i+2$ or $i'=i$. In this case $i'<m-1$. By~\ref{g-2:b}, $(x_{i'}-x_{m-1})\,\Phi^S_{m-1}$ equals
\begin{align*}
&(x_{i'}-x_{m-1})\,(x_m-y_m)\,u_{i,m-2}^{\{i'\}}\,g^{(2)}_{m,m,q,j}(S)+(x_{i'}-x_{m-1})\,g^{(2)}_{i,i',q,j}\bigl(\{m\}\cup S\bigr)
\\
&+(x_{m-1}-x_m)\bigl(\id-\sigma_{i',m-1}^{m-1}\bigr)g^{(2)}_{i,i',q,j}\bigl(\{m\}\cup S\bigr)-\bigl(\id-\sigma_{i',m-1}^{m-1}\bigr)g^{(2)}_{i,i',q,j}(S)
\\
=&\bigl[(x_{i'}-x_{m-1})+(x_{m-1}-x_m)\bigr]\,g^{(2)}_{i,i',q,j}\bigl(\{m\}\cup S\bigr)
\\
&-\sigma_{i',m-1}^{m-1}\bigl[(x_{i'}-x_m)\,g^{(2)}_{i,i',q,j}\bigl(\{m\}\cup S\bigr)\bigr]
\\
&+(x_{i'}-x_{m-1})\,(x_m-y_m)\,u_{i,m-2}^{\{i'\}}\,g^{(2)}_{m,m,q,j}(S)-\bigl(\id-\sigma_{i',m-1}^{m-1}\bigr)\,g^{(2)}_{i,i',q,j}(S),
\end{align*}
\begin{align*}
=&\bigl(\id-\sigma_{i',m}^m\bigr)g^{(2)}_{i,i',q,j}(S)-\sigma_{i',m-1}^{m-1}\bigl(\id-\sigma_{i',m}^m\bigr)\,g^{(2)}_{i,i',q,j}(S)
\\
&+(x_{i'}-x_{m-1})\,(x_m-y_m)\,u_{i,m-2}^{\{i'\}}\,g^{(2)}_{m,m,q,j}(S)
-\bigl(\id-\sigma_{i',m-1}^{m-1}\bigr)g^{(2)}_{i,i',q,j}(S)
\\
=&(x_{i'}-x_{m-1})\,(x_m-y_m)\,u_{i,m-2}^{\{i'\}}\,g^{(2)}_{m,m,q,j}(S)-\bigl(\id-\sigma_{i',m-1}^{m-1}\bigr)\sigma_{i',m}^m\,g^{(2)}_{i,i',q,j}(S)=:\Psi^S.
\end{align*}
We
prove by induction on $|S|$ that $\Psi^S=0$.
If $m>i+2$ then by~\ref{g-2:a}, $\Psi^\emptyset$ equals
\begin{align*}
&(x_{i'}-x_{m-1})(x_m-y_m)\cdot(x_i-y_{i+1})\cdot(x_{i'}-y_{i+2})\cdots(x_{i'}-y_{m-2})
\\
&\times (x_m-y_{m+1})\cdots(x_m-y_j)
\\
&-(\id-\sigma_{i',m-1}^{m-1})\sigma_{i',m}^m\bigl[(x_i-y_{i+1})\cdot(x_{i'}-y_{i+2})\cdots(x_{i'}-y_j)\bigr]
\\
=&(x_{i'}-x_{m-1})\,(x_i-y_{i+1})\cdot(x_{i'}-y_{i+2})\cdots(x_{i'}-y_{m-2})\cdot(x_m-y_m)\cdots(x_m-y_j)
\\
&-{\hspace{-1 mm}(\id-\sigma_{i',m-1}^{m-1})[(x_i-y_{i+1})(x_{i'}-y_{i+2})\cdots(x_{i'}-y_{m-1})(x_m-y_m)\cdots(x_m-y_j)]}
\\
=&(x_{i'}-x_{m-1})\,(x_i-y_{i+1})(x_{i'}-y_{i+2})\cdots(x_{i'}-y_{m-2})(x_m-y_m)\cdots(x_m-y_j)
\\
&-(x_i-y_{i+1})(x_{i'}-y_{i+2})\cdots(x_{i'}-y_{m-1})(x_m-y_m)\cdots(x_m-y_j)
\\
&+(x_i-y_{i+1})(x_{i'}-y_{i+2})\cdots(x_{i'}-y_{m-2})(x_{m-1}-y_{m-1})(x_m-y_m)\cdots(x_m-y_j),
\end{align*}
which equals $0$.
If $m=i+2$ then $i'=i$. By~\ref{g-2:a}, $\Psi^\emptyset$ equals
\begin{align*}
&(x_i-x_{i+1})\,(x_{i+2}-y_{i+2})\cdot(x_{i+2}-y_{i+3})\cdots(x_{i+2}-y_j)
\\
&-\bigl(\id-\sigma_{i,i+1}^{i+1}\bigr)\sigma_{i,i+2}^{i+2}\bigl[(x_i-y_{i+1})\cdots(x_i-y_j)\bigr]
\\
=&(x_i-x_{i+1})\cdot(x_{i+2}-y_{i+2})\cdots(x_{i+2}-y_j)
\\
&-\bigl(\id-\sigma_{i,i+1}^{i+1}\bigr)\bigl[(x_i-y_{i+1})\cdot(x_{i+2}-y_{i+2})\cdots(x_{i+2}-y_j)\bigr]
\\
=&(x_i-x_{i+1})\cdot(x_{i+2}-y_{i+2})\cdots(x_{i+2}-y_j)
\\
&-(x_i-y_{i+1})\cdot(x_{i+2}-y_{i+2})\cdots(x_{i+2}-y_j)
\\
&+(x_{i+1}-y_{i+1})\cdot(x_{i+2}-y_{i+2})\cdots(x_{i+2}-y_j)=0
\end{align*}
Let $S\ne\emptyset$ and $s:=\min S$. We have $s>m>i'$ and thus
$l^{(2)}_{m,m,q,j}(s)=l^{(2)}_{i,i',q,j}(s)=:h$.
By~\ref{g-2:b}, Lemmas~\ref{lemma:compol:1} and~\ref{lemma:compol:2}
and the inductive hypothesis, $(x_m-x_s)\,\Psi^S$ equals
\begin{align*}
&(x_{i'}-x_{m-1})\,(x_m-y_m)\,u_{i,m-2}^{\{i'\}}\,\bigl(\id-\sigma_{m,s}^{s+h}\bigr)\,g^{(2)}_{m,m,q,j}\bigl(S\setminus\{s\}\bigr)\\
&-\bigl(\id-\sigma_{i',m-1}^{m-1}\bigr)\sigma_{i',m}^m\bigl[(x_{i'}-x_s)\,g^{(2)}_{i,i',q,j}(S)\bigr]
\\
=&\bigl(\id-\sigma_{m,s}^{s+h}\bigr)\bigl[(x_{i'}-x_{m-1})\,(x_m-y_m)\,u_{i,m-2}^{\{i'\}}\,g^{(2)}_{m,m,q,j}\bigl(S\setminus\{s\}\bigr)\bigr]
\\
&-\bigl(\id-\sigma_{i',m-1}^{m-1}\bigr)\sigma_{i',m}^m\bigl(\id-\sigma_{i',s}^{s+h}\bigr)\,g^{(2)}_{i,i',q,j}\bigl(S\setminus\{s\}\bigr)
\\
=&\bigl(\id-\sigma_{m,s}^{s+h}\bigr)\bigl[(x_m-y_m)\,(x_{i'}-x_{m-1})\,u_{i,m-2}^{\{i'\}}\,g^{(2)}_{m,m,q,j}\bigl(S\setminus\{s\}\bigr)\bigr]
\\
&-\bigl(\id-\sigma_{m,s}^{s+h}\bigr)\bigl(\id-\sigma_{i',m-1}^{m-1}\bigr)\sigma_{i',m}^m\,g^{(2)}_{i,i',q,j}\bigl(S\setminus\{s\}\bigr)=\bigl(\id-\sigma_{m,s}^{s+h}\bigr)\Psi^{S\setminus\{s\}}=0.
\end{align*}
Therefore $\Phi^S_{m-1}=0$

Now suppose that $i<k<m-1$ and we already proved $\Phi^S_{k+1}=0$ for any $S\subset(m..j]$.
We set $k_0:=\max_k\{i,i'\}$.
It is easy to see that $\sigma^k_{k_0,k}$ acts identically on $u_{i,k-1}^{\{i'\}}$, $x_m-y_m$ and $x_{m-1}-x_m$.
This operator also acts identically on $g^{(2)}_{m,m,q,j}(S)$ by Lemma~\ref{lemma:compol:4.5}.
On the other hand, $\sigma^k_{k_0,k}(x_{k_0}-y_k)=x_k-y_k$. Therefore,
\begin{align*}
&\bigl(\id-\sigma_{k_0,k}^{k}\bigr)\bigl[u_{i,k}^{\{i'\}}\,g^{(2)}_{m,m,q,j}(S)\bigr]
\\
=&(x_{k_0}-y_k)\,u_{i,k-1}^{\{i'\}}\,g^{(2)}_{m,m,q,j}(S)
-(x_k-y_k)\,u_{i,k-1}^{\{i'\}}\,g^{(2)}_{m,m,q,j}(S)\\
=&(x_{k_0}-x_k)\,u_{i,k-1}^{\{i'\}}\,g^{(2)}_{m,m,q,j}(S).
\end{align*}
By~\ref{g-2:b}, $(x_{k_0}-x_k)\,\Phi_k^S$ equals
\begin{align*}
&(x_m\!-\!y_m)(x_{k_0}\!-\!x_k)u_{i,k-1}^{\{i'\}}g^{(2)}_{m,m,q,j}(S)
+\bigl(\id-\sigma_{k_0,k}^{k}\bigr)g^{(2)}_{i,i',q,j}\bigl([k{+}1..m{-}1)\!\cup\!\{m\}\!\cup\! S\bigr)
\\
&+\!(x_{m-1}-x_m)\bigl(\id\!-\sigma_{k_0,k}^{k}\bigr)g^{(2)}_{i,i',q,j}\bigl([k{+}1..m]\!\cup\! S\bigr)
\!-\!\bigl(\id-\sigma_{k_0,k}^{k}\bigr)g^{(2)}_{i,i',q,j}\bigl([k{+}1..m)\!\cup\! S\bigr)
\\
&=\bigl(\id-\sigma_{k_0,k}^{k}\bigr)\bigl[(x_m-y_m)\,u_{i,k}^{\{i'\}}\,g^{(2)}_{m,m,q,j}(S)
+(x_{m-1}-x_m)\,g^{(2)}_{i,i',q,j}\bigl([k+1..m]\cup S\bigr)
\\
&+g^{(2)}_{i,i',q,j}\bigl([k{+}1..m{-}1)\cup\{m\}\cup S\bigr)-\bigl(\id-\sigma_{k_0,k}^{k}\bigr)g^{(2)}_{i,i',q,j}\bigl([k+1..m)\cup S\bigr)\bigr]
\\
&=\bigl(\id-\sigma_{k_0,k}^{k}\bigr)\Phi_{k+1}^S=0.
\end{align*}
Therefore $\Phi_k^S=0$. This completes the inductive step. 

\smallskip

\ref{lemma:compol:13:part:c} We apply downward induction on $k=m-1,\dots,i+1$ to prove
$$
(x_i-y_{i+1})\cdots(x_i-y_k)\,g^{(2)}_{m,m,q,j}(S)=g^{(2)}_{i,m,q,j}\bigl([k..m{-}1)\cup\{m\}\cup S\bigr)
$$
Let $\Phi^S_k$ denote the difference of the left-hand side and the right-hand side of this formula.
The required equality will follow from $\Phi^S_{i+1}=0$.

First, we consider the case $k=m-1$. By~\ref{g-2:b}, $(x_i-x_m)\,\Phi^S_{m-1}$ equals
$$
(x_i-x_m)\cdot(x_i-y_{i+1})\cdots(x_i-y_{m-1})\cdot g^{(2)}_{m,m,q,j}(S)-\bigl(\id-\sigma_{i,m}^m\bigr)g^{(2)}_{i,m,q,j}(S)=:\Psi^S.
$$
We 
prove by induction on $|S|$ that $\Psi^S=0$. 
By~\ref{g-2:a}, $\Psi^\emptyset$ equals
\begin{align*}
(x_i-x_m)\cdot(x_i-y_{i+1})\cdots(x_i-y_{m-1})\cdot(x_m-y_{m+1})\cdots(x_m-y_j)\\
-\bigl(\id-\sigma_{i,m}^m\bigr)\bigl[(x_i-y_{i+1})\cdots(x_i-y_m)\cdot(x_m-y_{m+1})\cdots(x_m-y_j)\bigr]
\\
=(x_i-x_m)\cdot(x_i-y_{i+1})\cdots(x_i-y_{m-1})\cdot(x_m-y_{m+1})\cdots(x_m-y_j)
\\
-(x_i-y_{i+1})\cdots(x_i-y_m)\cdot(x_m-y_{m+1})\cdots(x_m-y_j)
\\
+(x_i-y_{i+1})\cdots(x_i-y_{m-1})\cdot(x_m-y_m)\cdots(x_m-y_j)=0.
\end{align*}

Let $S\ne\emptyset$ and $s:=\min S$. We have $m<s$ and, therefore,
$l^{(2)}_{m,m,q,j}(s)=l^{(2)}_{i,m,q,j}(s)=:h$.
By~\ref{g-2:b}, Lemma~\ref{lemma:compol:2} and the inductive hypothesis, $(x_m-x_s)\,\Psi^S$ equals
\begin{align*}
&(x_i-x_m)\cdot(x_i-y_{i+1})\cdots(x_i-y_{m-1})\cdot\bigl(\id-\sigma_{m,s}^{s+h}\bigr)g^{(2)}_{m,m,q,j}\bigl(S\setminus\{s\}\bigr)
\\
&-\bigl(\id-\sigma_{i,m}^m\bigr)\bigl[(x_m-x_s)\,g^{(2)}_{i,m,q,j}(S)\bigr]
\\
=&\bigl(\id-\sigma_{m,s}^{s+h}\bigr)\bigl[(x_i-x_m)\cdot(x_i-y_{i+1})\cdots(x_i-y_{m-1})\cdot g^{(2)}_{m,m,q,j}\bigl(S\setminus\{s\}\bigr)\bigr]
\\
&-\bigl(\id-\sigma_{i,m}^m\bigr)\bigl(\id-\sigma_{m,s}^{s+h}\bigr)g^{(2)}_{i,m,q,j}\bigl(S\setminus\{s\}\bigr)
\\
=&\bigl(\id-\sigma_{m,s}^{s+h}\bigr)\bigl[(x_i-x_m)\cdot(x_i-y_{i+1})\cdots(x_i-y_{m-1})\cdot g^{(2)}_{m,m,q,j}\bigl(S\setminus\{s\}\bigr)
\\
&-\bigl(\id-\sigma_{i,m}^m\bigr)g^{(2)}_{i,m,q,j}\bigl(S\setminus\{s\}\bigr)\bigr]=\bigl(\id-\sigma_{m,s}^{s+h}\bigr)\Psi^{S\setminus\{s\}}=0.
\end{align*}
Therefore $\Psi^S=0$.

Now let $i<k<m-1$. By the inductive assumption, $\Phi^S_{k+1}=0$. 
By definition, $\sigma^{k+1}_{i,k}$ acts identically on any polynomial $x_i-y_t$, where $t=i+1,\ldots,k$,
and also on $g^{(2)}_{m,m,q,j}(S)$ by Lemma~\ref{lemma:compol:4.5}.
Therefore
\begin{align*}
&\bigl(\id-\sigma_{i,k}^{k+1}\bigr)\bigl[(x_i-y_{i+1})\cdots(x_i-y_{k+1})\cdot g^{(2)}_{m,m,q,j}(S)\bigr]
\\
=
&(x_i-y_{i+1})\cdots(x_i-y_{k+1})\,g^{(2)}_{m,m,q,j}(S)
\\
&-(x_i-y_{i+1})\cdots(x_i-y_k)\cdot(x_k-y_{k+1})\cdot g^{(2)}_{m,m,q,j}(S)
\\
=&(x_i-x_k)\cdot(x_i-y_{i+1})\cdots(x_i-y_k)\cdot g^{(2)}_{m,m,q,j}(S).
\end{align*}
Applying this formula and~\ref{g-2:b}, we get that $(x_i-x_k)\,\Phi_k^S$ equals
\begin{align*}
&(x_i-x_k)\cdot(x_i-y_{i+1})\cdots(x_i-y_k)\cdot g^{(2)}_{m,m,q,j}(S)
\\
&-\bigl(\id-\sigma_{i,k}^{k+1}\bigr)g^{(2)}_{i,m,q,j}\bigl([k{+}1..m{-}1)\cup\{m\}\cup S\bigr)
\\
=&\bigl(\id-\sigma_{i,k}^{k+1}\bigr)\bigl[(x_i-y_{i+1})\cdots(x_i-y_{k+1})\,g^{(2)}_{m,m,q,j}(S)
\\
&-g^{(2)}_{i,m,q,j}\bigl([k+1..m{-}1)\cup\{m\}\cup S\bigr)\bigr]
=\bigl(\id-\sigma_{i,k}^{k+1}\bigr)\Phi_{k+1}^S=0.
\end{align*}
So $\Phi_k^S=0$. This completes the inductive step.

\smallskip

\ref{lemma:compol:13:part:d} We have $m+1\in S$. Set $T:=S\setminus\{m+1\}$.
We apply downward induction on $k=m,\dots,i+1$ to prove the following equality:
$$
(x_i-y_{i+1})\cdots(x_i-y_k)\,g^{(2)}_{m,m+1,q,j}(\{m{+}1\}\cup T)=g^{(2)}_{i,m+1,q,j}([k..m)\cup\{m{+}1\}\cup T)
$$
for any $T\subset(m{+}1..j]$. Let $\Phi_k^T$ denote the difference of the left-hand side and the right-hand side of
this formula.Part (d) will follow from $\Phi_{i+1}^T=0$. By~\ref{g-2:b},
\begin{align*}
&(x_i-x_{m+1})\,(x_m-x_{m+1})\,\Phi_m^T
\\
=&(x_i-x_{m+1})\cdot(x_i-y_{i+1})\cdots(x_i-y_m)\cdot\bigl(\id-\sigma_{m,m+1}^{m+1}\bigr)g^{(2)}_{m,m+1,q,j}(T)
\\
&-(x_m-x_{m+1})\bigl(\id-\sigma_{i,m+1}^{m+1}\bigr)g^{(2)}_{i,m+1,q,j}(T)=:\Psi^T.
\end{align*}
We 
prove by induction on $|T|$ that $\Psi^T=0$. 
By~\ref{g-2:a}, $\Psi^\emptyset$ equals
\begin{align*}
&(x_i{-}x_{m+1})\cdot(x_i{-}y_{i+1})\cdots(x_i{-}y_m)\\
&\times
\bigl(\id-\sigma_{m,m+1}^{m+1}\bigr)\bigr[(x_m{-}y_{m+1})\cdot(x_{m+1}{-}y_{m+2})\cdots(x_{m+1}{-}y_j)\bigr]
\\
&-(x_m-x_{m+1})\bigl(\id-\sigma_{i,m+1}^{m+1}\bigr)\bigr[(x_i{-}y_{i+1})\cdots(x_i{-}y_{m+1})
\\
&\times(x_{m+1}{-}y_{m+2})\cdots(x_{m+1}-y_j)\bigr]
\\
=&(x_i-x_{m+1})\cdot(x_i-y_{i+1})\cdots(x_i-y_m)\cdot(x_m{-}y_{m+1})
\\
&\times(x_{m+1}{-}y_{m+2})\cdots(x_{m+1}-y_j)
\\
&-(x_i-x_{m+1})\cdot(x_i-y_{i+1})\cdots(x_i-y_m)\cdot(x_{m+1}{-}y_{m+1})
\\
&\times(x_{m+1}{-}y_{m+2})\cdots(x_{m+1}-y_j)
\\
&-(x_m-x_{m+1})\cdot(x_i-y_{i+1})\cdots(x_i-y_{m+1})\cdot(x_{m+1}{-}y_{m+2})\cdots(x_{m+1}-y_j)
\\
&+(x_m-x_{m+1})\cdot(x_i-y_{i+1})\cdots(x_i-y_m)\cdot(x_{m+1}-y_{m+1})
\\
&\times (x_{m+1}{-}y_{m+2})\cdots(x_{m+1}-y_j),
\end{align*}
which equals  $0$.

Now let $T\ne\emptyset$ and set $t:=\min T$. We have $t>m+1$ and therefore $l^{(2)}_{m,m+1,q,j}(t)=l^{(2)}_{i,m+1,q,j}(t)=:h$.
By~\ref{g-2:b}, Lemma~\ref{lemma:compol:2}, and the inductive hypothesis, $(x_{m+1}-x_t)\,\Psi^T$ equals
\begin{align*}
&(x_i-x_{m+1})\cdot(x_i-y_{i+1})\cdots(x_i-y_m)\cdot\bigl(\id-\sigma_{m,m+1}^{m+1}\bigr)\bigl[(x_{m+1}-x_t)\,g^{(2)}_{m,m+1,q,j}(T)\bigr]
\\
&-(x_m-x_{m+1})\bigl(\id-\sigma_{i,m+1}^{m+1}\bigr)\bigl[(x_{m+1}-x_t)\,g^{(2)}_{i,m+1,q,j}(T)\bigr]
\\
=&(x_i{-}x_{m+1})\cdot(x_i{-}y_{i+1})\cdots(x_i{-}y_m)(\id-\sigma_{m,m+1}^{m+1})(\id-\sigma_{m+1,t}^{t+h})g^{(2)}_{m,m+1,q,j}(T\setminus\{t\})
\\
&-(x_m-x_{m+1})\cdot\bigl(\id-\sigma_{i,m+1}^{m+1}\bigr)\bigl(\id-\sigma_{m+1,t}^{t+h}\bigr)\,g^{(2)}_{i,m+1,q,j}(T\setminus\{t\})
\\
=&\bigl(\id-\sigma_{m+1,t}^{t+h}\bigr)\Psi^{T\setminus\{t\}}=0.
\end{align*}
So $\Psi^T=0$, and we have proved $\Phi_m^T=0$ for any $T\subset(m{+}1..j]$.

Now let $i<k<m$. By the inductive hypothesis, $\Phi^T_{k+1}=0$ for any $T\subset(m{+}1..j]$. By definition,
$\sigma_{i,k}^{k+1}$ acts identically on any polynomial $x_i-y_r$, where $r=i{+}1,\ldots,k$, and also on
$g^{(2)}_{m,m+1,q,j}\bigl(\{m{+}1\}\cup T\bigr)$ by Lemma~\ref{lemma:compol:4.5}.
On the other hand, $\sigma_{i,k}^{k+1}(x_i-y_{k+1})=x_k-y_{k+1}$. So
\begin{align*}
&\bigl(\id-\sigma_{i,k}^{k+1}\bigr)\bigl[(x_i-y_{i+1})\cdots(x_i-y_{k+1})\cdot g^{(2)}_{m,m+1,q,j}\bigl(\{m{+}1\}\cup T\bigr)\bigr]
\\
=&(x_i-y_{i+1})\cdots(x_i-y_{k+1})\cdot g^{(2)}_{m,m+1,q,j}\bigl(\{m{+}1\}\cup T\bigr)
\\
&-(x_i-y_{i+1})\cdots(x_i-y_k)\cdot(x_k-y_{k+1})\,g^{(2)}_{m,m+1,q,j}\bigl(\{m{+}1\}\cup T\bigr)
\\
=&(x_i-x_k)\cdot(x_i-y_{i+1})\cdots(x_i-y_k)\cdot g^{(2)}_{m,m+1,q,j}\bigl(\{m{+}1\}\cup T\bigr).
\end{align*}
Hence by~\ref{g-2:b}, $(x_i-x_k)\,\Phi^T_k$ equals
\begin{align*}
&(x_i-x_k)\cdot(x_i-y_{i+1})\cdots(x_i-y_k)\cdot g^{(2)}_{m,m+1,q,j}\bigl(\{m+1\}\cup T\bigr)
\\
&-\bigl(\id-\sigma_{i,k}^{k+1}\bigr)\,g^{(2)}_{i,m+1,q,j}\bigl([k+1..m)\cup\{m+1\}\cup T\bigr)
\\
=&\bigl(\id-\sigma_{i,k}^{k+1}\bigr)\bigl[(x_i-y_{i+1})\cdots(x_i-y_{k+1})\cdot g^{(2)}_{m,m+1,q,j}\bigl(\{m+1\}\cup T\bigr)
\\
&-g^{(2)}_{i,m+1,q,j}\bigl([k+1..m)\cup\{m+1\}\cup T\bigr)\bigr]=\bigl(\id-\sigma_{i,k}^{k+1}\bigr)\Phi^T_{k+1}=0.
\end{align*}
So $\Phi^T_k=0$. This completes the inductive step.

\smallskip

\ref{lemma:compol:13:part:e} We apply downward induction on $r=m-1,\ldots,i+1$ to prove \begin{align*}
&(x_i-y_{i+1})\cdots(x_i-y_r)\cdot g^{(2)}_{m,k,q,j}(S)+g^{(2)}_{i,k,q,j}\bigl([r..m-1)\cup\{m\}\cup S\bigr)\\
&+(x_{m-1}-x_m)\,g^{(2)}_{i,k,q,j}\bigl([r..m]\cup S\bigr)\ =\ g^{(2)}_{i,k,q,j}\bigl([r..m)\cup S\bigr).
\end{align*}
Denote by $\Phi^S_r$ the difference of the left-hand side and the right-hand side of this formula.
Part (e) will follow from $\Phi^S_{i+1}=0$.
By~\ref{g-2:b}, $(x_i-x_{m-1})\,\Phi^S_{m-1}$ equals
\begin{align*}
&(x_i-x_{m-1})\cdot(x_i-y_{i+1})\cdots(x_i-y_{m-1})\cdot g^{(2)}_{m,k,q,j}(S)
\\
&+(x_i{-}x_{m-1})\,g^{(2)}_{i,k,q,j}\bigl(\{m\}\cup S\bigr)+(x_{m-1}{-}x_m)\,\bigl(\id{-}\sigma^m_{i,m-1}\bigr)\,g^{(2)}_{i,k,q,j}\bigl(\{m\}\cup S\bigr)
\\
&-\bigl(\id{-}\sigma^m_{i,m-1}\bigr)\,g^{(2)}_{i,k,q,j}(S)
\\
=&(x_i{-}x_{m-1})\cdot(x_i{-}y_{i+1})\cdots(x_i{-}y_{m-1})\cdot g^{(2)}_{m,k,q,j}(S)
\\
&+\bigl[(x_i{-}x_{m-1})+(x_{m-1}{-}x_m)\bigr]g^{(2)}_{i,k,q,j}\bigl(\{m\}\cup S\bigr)
\\
&-\sigma^m_{i,m-1}\bigl[(x_i-x_m)\,g^{(2)}_{i,k,q,j}\bigl(\{m\}\cup S\bigr)\bigr]-\bigl(\id-\sigma^m_{i,m-1}\bigr)\,g^{(2)}_{i,k,q,j}(S)
\\
=&(x_i-x_{m-1})\cdot(x_i-y_{i+1})\cdots(x_i-y_{m-1})\cdot g^{(2)}_{m,k,q,j}(S)+\bigl(\id-\sigma_{i,m}^{m+1}\bigr)\,g^{(2)}_{i,k,q,j}(S)
\\
&-\sigma^m_{i,m-1}\bigl(\id-\sigma_{i,m}^{m+1}\bigr)\,g^{(2)}_{i,k,q,j}(S)-\bigl(\id-\sigma^m_{i,m-1}\bigr)\,g^{(2)}_{i,k,q,j}(S)
\\
=&(x_i-x_{m-1})(x_i-y_{i+1})\cdots(x_i-y_{m-1}) g^{(2)}_{m,k,q,j}(S)-(\id-\sigma_{i,m-1}^m)\sigma_{i,m}^{m+1}g^{(2)}_{i,k,q,j}(S).
\end{align*}
We denote the last expression by $\Psi^S$ and prove by induction on $|S|$ that $\Psi^S=0$. 
By~\ref{g-2:a}, $\Psi^\emptyset$ equals
\begin{align*}
&(x_i-x_{m-1})\cdot(x_i-y_{i+1})\cdots(x_i-y_{m-1})\cdot(x_m-y_{m+1})\cdots(x_m-y_k)
\\
&\times (x_k-y_{k+1})\cdots(x_k-y_j)
\\
&-\bigl(\id-\sigma_{i,m-1}^m\bigr)\sigma_{i,m}^{m+1}\bigl[(x_i-y_{i+1})\cdots(x_i-y_k)\cdot(x_k-y_{k+1})\cdots(x_k-y_j)\bigr]
\\
=&(x_i{-}x_{m-1})\cdot(x_i{-}y_{i+1})\cdots(x_i{-}y_{m-1})
\\
&\times (x_m-y_{m+1})\cdots(x_m-y_k)\cdot(x_k-y_{k+1})\cdots(x_k-y_j)
\\
&-\bigl(\id-\sigma_{i,m-1}^m\bigr)\bigl[(x_i-y_{i+1})\cdots(x_i-y_m)
\\
&\times (x_m-y_{m+1})\cdots(x_m-y_k)\cdot(x_k-y_{k+1})\cdots(x_k-y_j)\bigr]
\\
=&(x_i{-}x_{m-1})\cdot(x_i{-}y_{i+1})\cdots(x_i{-}y_{m-1})
\\
&\times(x_m-y_{m+1})\cdots(x_m-y_k)\cdot(x_k-y_{k+1})\cdots(x_k-y_j)
\\
&-(x_i-y_{i+1})\cdots(x_i-y_m)\cdot(x_m-y_{m+1})\cdots(x_m-y_k)\cdot(x_k-y_{k+1})\cdots(x_k-y_j)
\\
&+(x_i-y_{i+1})\cdots(x_i-y_{m-1})\cdot(x_{m-1}-y_m)\cdot(x_m-y_{m+1})\cdots(x_m-y_k)
\\
&\times (x_k-y_{k+1})\cdots(x_k-y_j)=0.
\end{align*}

Now let $S\ne\emptyset$ and $s:=\min S$. Since $s>m$, we have $l^{(2)}_{m,k,q,j}(s)=l^{(2)}_{i,k,q,j}(s)=:h$. Moreover, set $s_0:=\max_s\{i,k\}$, $s'_0:=\max_s\{m,k\}$.
Take any polynomial $g\in\R$.

If $s>k$ then we have $s_0=s'_0=k$. Therefore, applying Lemma~\ref{lemma:compol:2}, we get
\begin{align*}
&\bigl(\id-\sigma_{i,m-1}^m\bigr)\sigma_{i,m}^{m+1}\bigl[(x_{s_0}-x_s)\,g\bigr]=\bigl(\id-\sigma_{i,m-1}^m\bigr)\sigma_{i,m}^{m+1}\bigl[(x_k-x_s)\,g\bigr]\\
=&(x_k-x_s)\bigl(\id-\sigma_{i,m-1}^m\bigr)\sigma_{i,m}^{m+1}\,g=(x_{s'_0}-x_s)\,\bigl(\id-\sigma_{i,m-1}^m\bigr)\sigma_{i,m}^{m+1}\,g,\\[6pt]
&\bigl(\id{-}\sigma_{i,m-1}^m\bigr)\sigma_{i,m}^{m+1}\sigma_{s_0,s}^{s+h}=\bigl(\id{-}\sigma_{i,m-1}^m\bigr)\sigma_{i,m}^{m+1}\sigma_{k,s}^{s+h}\\
=&\sigma_{k,s}^{s+h}\bigl(\id{-}\sigma_{i,m-1}^m\bigr)\sigma_{i,m}^{m+1}=\sigma_{s'_0,s}^{s+h}\bigl(\id-\sigma_{i,m-1}^m\bigr)\sigma_{i,m}^{m+1}.
\end{align*}
If $s\le k$ then $s_0=i$ and $s'_0=m$. So, applying Lemmas~\ref{lemma:compol:1} and~\ref{lemma:compol:2},
we get
\begin{align*}
&\bigl(\id-\sigma_{i,m-1}^m\bigr)\sigma_{i,m}^{m+1}\bigl[(x_{s_0}-x_s)\,g\bigr]=\bigl(\id-\sigma_{i,m-1}^m\bigr)\sigma_{i,m}^{m+1}\bigl[(x_i-x_s)\,g\bigr]
\\
=&\bigl(\id-\sigma_{i,m-1}^m\bigr)\bigl[(x_m-x_s)\,\sigma_{i,m}^{m+1}g\bigr]\\
=&(x_m-x_s)\,\bigl(\id-\sigma_{i,m-1}^m\bigr)\sigma_{i,m}^{m+1}g=(x_{s'_0}-x_s)\,\bigl(\id-\sigma_{i,m-1}^m\bigr)\sigma_{i,m}^{m+1}g,\\[6pt]
&\bigl(\id{-}\sigma_{i,m-1}^m\bigr)\sigma_{i,m}^{m+1}\sigma_{s_0,s}^{s+h}=\bigl(\id-\sigma_{i,m-1}^m\bigr)\sigma_{i,m}^{m+1}\sigma_{i,s}^{s+h}\\
=&\bigl(\id-\sigma_{i,m-1}^m\bigr)\sigma_{m,s}^{s+h}\sigma_{i,m}^{m+1}=\sigma_{m,s}^{s+h}\bigl(\id-\sigma_{i,m-1}^m\bigr)\sigma_{i,m}^{m+1}=\sigma_{s'_0,s}^{s+h}\bigl(\id-\sigma_{i,m-1}^m\bigr)\sigma_{i,m}^{m+1}.
\end{align*}
In either case, we have
\begin{align*}
&\bigl(\id-\sigma_{i,m-1}^m\bigr)\sigma_{i,m}^{m+1}\bigl[(x_{s_0}-x_s)\,g\bigr]=(x_{s'_0}-x_s)\,\bigl(\id-\sigma_{i,m-1}^m\bigr)\sigma_{i,m}^{m+1}\,g,\\
&\bigl(\id{-}\sigma_{i,m-1}^m\bigr)\sigma_{i,m}^{m+1}\sigma_{s_0,s}^{s+h}=\sigma_{s'_0,s}^{s+h}\bigl(\id-\sigma_{i,m-1}^m\bigr)\sigma_{i,m}^{m+1}.
\end{align*}
By these formulas,~\ref{g-2:b} and the inductive hypothesis, $(x_{s'_0}-x_s)\,\Psi^S$ equals
\begin{align*}
&(x_i{-}x_{m-1})\cdot(x_i{-}y_{i+1})\cdots(x_i{-}y_{m-1})\cdot\bigl(\id-\sigma^{s+h}_{s'_0,s}\bigr)g^{(2)}_{m,k,q,j}\bigl(S\setminus\{s\}\bigr)\\
&-\bigl(\id-\sigma_{i,m-1}^m\bigr)\sigma_{i,m}^{m+1}\,\bigl(\id-\sigma^{s+h}_{s_0,s}\bigr)g^{(2)}_{i,k,q,j}\bigl(S\setminus\{s\}\bigr)
=\bigl(\id-\sigma^{s+h}_{s'_0,s}\bigr)\Psi^{S\setminus\{s\}}=0.
\end{align*}
Hence $\Psi^S=0$. We have proved $\Phi_{m-1}^S=0$
for any $S\subset(m..j]$.

Now let $i<r<m-1$. By the inductive assumption, $\Phi^S_{r+1}=0$ for any $S\subset(m..j]$.
By definition, $\sigma^{r+1}_{i,r}$ acts identically on any polynomial $x_i-y_t$,
where $t=i+1,\ldots,r$, on $x_{m-1}-x_m$, and also on $g^{(2)}_{m,k,q,j}(S)$ by Lemma~\ref{lemma:compol:4.5}.
On the other hand, we have $\sigma^{r+1}_{i,r}(x_i-y_{r+1})=x_r-y_{r+1}$. So we get
\begin{align*}
&\bigl(\id-\sigma_{i,r}^{r+1}\bigr)\bigl[(x_i-y_{i+1})\cdots(x_i-y_{r+1})\cdot g^{(2)}_{m,k,q,j}(S)\bigr]
\\
=&(x_i-y_{i+1})\cdots(x_i-y_{r+1})\,g^{(2)}_{m,k,q,j}(S)
\\
&-(x_i-y_{i+1})\cdots(x_i-y_r)\cdot(x_r-y_{r+1})\,g^{(2)}_{m,k,q,j}(S)
\\
=&(x_i-x_r)\cdot(x_i-y_{i+1})\cdots(x_i-y_r)\cdot g^{(2)}_{m,k,q,j}(S).
\end{align*}
Applying these formulas and relation~\ref{g-2:b}, we get that $(x_i-x_r)\,\Phi_r^S$ equals
\begin{align*}
&(x_i-x_r)\cdot(x_i-y_{i+1})\cdots(x_i-y_r)\,g^{(2)}_{m,k,q,j}(S)\\
&+\bigl(\id-\sigma_{i,r}^{r+1}\bigr)g^{(2)}_{i,k,q,j}\bigl([r{+}1..m{-}1)\cup\{m\}\cup S\bigr)
\\
&+(x_{m-1}-x_m)\cdot\bigl(\id-\sigma_{i,r}^{r+1}\bigr)\,g^{(2)}_{i,k,q,j}\bigl([r{+}1..m]\cup S\bigr)
\\
&-\bigl(\id-\sigma_{i,r}^{r+1}\bigr)\,g^{(2)}_{i,k,q,j}\bigl([r+1..m)\cup S\bigr)
\\
=&\bigl(\id-\sigma_{i,r}^{r+1}\bigr)\bigl[(x_i-y_{i+1})\cdots(x_i-y_{r+1})\cdot g^{(2)}_{m,k,q,j}(S)
\\
&+g^{(2)}_{i,k,q,j}\bigl([r{+}1..m{-}1)\cup\{m\}\cup S\bigr)
+(x_{m-1}-x_m)\,g^{(2)}_{i,k,q,j}\bigl([r{+}1..m]\cup S\bigr)
\\
&-g^{(2)}_{i,k,q,j}\bigl([r{+}1..m)\cup S\bigr)\bigr]
=\bigl(\id-\sigma_{i,r}^{r+1}\bigr)\Phi_{r+1}^S=0.
\end{align*}
So $\Phi_r^S=0$. This completes the inductive step.
\end{proof}

\chapter{Raising coefficients}\label{Q(n):raising coefficients}

Let $1\le i<j\le n$, $\epsilon\in\{\0,\1\}$, $M$ be a signed $(i..j]$-set containing either $\bar\jmath$ or $j$
and let $\delta:[i..j)\to\{\0,\1\}$ be a function. By the triangular decomposition $U_\Z(n)=U_\Z^-(n)U_\Z^0(n)U_\Z^+(n)$,
there exists a unique $P_{i,j}^{\epsilon,\delta}(M)\in U_\Z^0(n)$ such that
\begin{equation}\label{eq:rcoeff:1}
E^{\delta_i}_i\cdots E^{\delta_{j-1}}_{j-1}S_{i,j}^\epsilon(M)\=P_{i,j}^{\epsilon,\delta}(M)\,\Bigl({\rm mod}{I_{[i..j)}^+}\Bigr)\,.
\end{equation}
We refer to $P_{i,j}^{\epsilon,\delta}(M)$ as a {\em raising coefficient}.
The aim of this section is to calculate $P_{i,j}^{\epsilon,\delta}(M)$ for all $M$ as above
with at most one odd element---this is all we need for
Chapters~\ref{ConstructingUmathbbF(n-1)-primitive vectors} and~\ref{main_results}.

It turns out (see Lemmas~\ref{lemma:rcoeff:1} and~\ref{lemma:rcoeff:2} below)
that the raising coefficients 
can be expressed via polynomials $g^{(1)}_{i,j}(S)$ and $g^{(2)}_{i,k,q,j}(S)$ introduced in Sections~\ref{polinomialsg1} and \ref{polinomialsg2}.
To do it, we need a ring homomorphism $\llbracket\,\rrbracket:\R\to U^0_\Z(n)$ such that
$$
\llbracket x_i\rrbracket=H_i(H_i-1),\quad \llbracket y_i\rrbracket=(H_i+1)H_i \qquad(1\leq i\leq n),
$$
(other variables will not appear, so for them $\llbracket\,\rrbracket$ can be defined in an arbitrary  way).
Thus, for example, $\llbracket x_i-x_j\rrbracket=C(i,j)$ and $\llbracket x_i-y_j\rrbracket=B(i,j)$.

We denote by $\chi_i^z$ the function $:\{i\}\to\{z\}$ on the one element set $\{i\}$.
If $f$ and $g$ are functions on disjoint sets $S$ and $T$ respectively, then
$f\cup g$ denotes the function on $S\cup T$ taking value $f(x)$ at $x\in S$ and value $g(y)$ at $y\in T$.

\section{Inductive formulas}

Applying Lemma~\ref{lemma:ops:2'}(\ref{lemma:ops:2':iii}),(\ref{lemma:ops:2':iv}), we get
\begin{align*}
E^{\delta_i}_i\cdots E^{\delta_{j-1}}_{j-1}S_{i,j}^\epsilon(\{\odd \jmath\})
\=(-1)^{\epsilon\delta_{j-1}}E^{\delta_i}_i\cdots E^{\delta_{j-2}}_{j-2}S_{i,j-1}^{\epsilon+\delta_{j-1}}(\{\odd{j{-}1}\})\\
\=E^{\delta_i}_i\cdots E^{\delta_{j-3}}_{j-3}S_{i,j-2}^{\epsilon+\delta_{j-1}+\delta_{j-2}}(\{\odd{j{-}2}\})
\=\cdots\=E^{\delta_i}_iS_{i,i+1}^{\epsilon+\delta_{j-1}+\cdots+\delta_{i+1}}(\{\odd{i{+}1}\})
\\
\=H^{\epsilon+\delta_{j-1}+\cdots+\delta_i}_i-(-1)^{\delta_i(\epsilon+\delta_{j-1}+\cdots+\delta_{i+1})}H^{\epsilon+\delta_{j-1}+\cdots+\delta_i}_{i+1}\bigl({\rm mod}{I_{[i..j)}^+}\bigr).
\end{align*}
Thus:
$$
P^{\,\epsilon,\delta}_{i,j}\(\{\odd \jmath\}\)=
H_i^{\epsilon+\sum\delta}-(-1)^{\delta_i(\epsilon+\sum\delta|_{(i..j)})}H_{i+1}^{\epsilon+\sum\delta}.
\leqno{\text{(\bf P-1)}}
$$

Applying Lemma~\ref{lemma:ops:2}(\ref{lemma:ops:2:iii}),(\ref{lemma:ops:2:iv}), we get
\begin{align*}
E^{\delta_i}_i\cdots E^{\delta_{j-1}}_{j-1}S_{i,j}^{\epsilon}(\{j\})
\=E^{\delta_i}_i\cdots E^{\delta_{j-2}}_{j-2}S_{i,j-1}^{\epsilon+\delta_{j-1}}(\{j{-}1\})
\\
\=E^{\delta_i}_i\cdots E^{\delta_{j-3}}_{j-3}S_{i,j-2}^{\epsilon+\delta_{j-1}+\delta_{j-2}}(\{j{-}2\})
\=\cdots\=E^{\delta_i}_iS_{i,i+1}^{\epsilon+\delta_{j-1}+\cdots+\delta_{i+1}}(\{i{+}1\})\\
\=\cond_{\delta_i=\epsilon+\delta_{j-1}+\cdots+\delta_{i+1}}B(i,i+1)\,\bigl({\rm mod}{I_{[i..j)}^+}\bigr).
\end{align*}
Substituting $\delta_i+\cdots+\delta_{j-1}$ for $\epsilon$ in the exponent of $-1$, we get
\begin{equation*}
P^{\,\epsilon,\delta}_{i,j}\(\{j\}\)=\cond_{\sum\delta=\epsilon}\,B(i,i+1).
\leqno{\text{(\bf P\,-2)}}
\end{equation*}

Let $\min M=\odd{i+1}<j$. Then by~(\ref{S3}), we have modulo
$I_{[i..j)}^+$:
\begin{align*}
P_{i,j}^{\epsilon,\delta}(M)\=
\sum_{\gamma+\sigma=\epsilon}
({-}1)^{\gamma(\1+\epsilon+\|M_{(i+1..j]}\|)+\gamma\sum\delta|_{(i..j)}}
E^{\delta_i}_i
S^{\,\gamma}_{i,i+1}\(\left\{\odd{i{+}1}\right\}\)
\\
\times
E^{\delta_{i+1}}_{i+1}\cdots E^{\delta_{j-1}}_{j-1}\,S_{i+1,j}^{\,\sigma}\(M_{(i+1..j]}\)
+\sum_{\gamma+\sigma=\epsilon}(-1)^{\sigma(\1+\|M\|)}P_{i,j}^{\,\gamma,\delta}\(M\setminus\left\{\odd{i{+}1}\right\}\)H^\sigma_i
\\
\=\sum_{\gamma+\sigma=\epsilon}
({-}1)^{\gamma(\1+\epsilon+\|M_{(i+1..j]}\|)+\gamma\sum\delta|_{(i..j)}}P_{i,i+1}^{\gamma,\delta|_{\{i\}}}\(\left\{\odd{i{+}1}\right\}\) P_{i+1,j}^{\sigma,\delta|_{[i+1..j)}}\!\(M_{(i+1..j]}\)
\\
+\sum_{\gamma+\sigma=\epsilon}(-1)^{\sigma(\1+\|M\|)}P_{i,j}^{\,\gamma,\delta}\(M\setminus\left\{\odd{i{+}1}\right\}\)H^\sigma_i.
\end{align*}
Thus, if $\min M=\odd{i+1}<j$, then we have
$$
\begin{array}{l}
\displaystyle
P_{i,j}^{\epsilon,\delta}(M)=\sum_{\gamma+\sigma=\epsilon}
({-}1)^{\gamma(\1+\epsilon+\|M_{(i+1..j]}\|)+\gamma\sum\delta|_{(i..j)}} P_{i,i+1}^{\gamma,\delta|_{\{i\}}}(\{\odd{i{+}1}\})
\\ \times P_{i+1,j}^{\sigma,\delta|_{(i..j)}}(M_{(i+1..j]})
\displaystyle+\sum_{\gamma+\sigma=\epsilon}(-1)^{\sigma(\1+\|M\|)}P_{i,j}^{\,\gamma,\delta}\(M\setminus\left\{\odd{i{+}1}\right\}\)H^\sigma_i.
\end{array}
\leqno{\text{(\bf P-3)}}
$$

Let $\min M=i+1<j$. By (\ref{S4}),({\bf P-2}), and Lemma~\ref{lemma:ops:1},
modulo $I_{[i..j)}^+$, we have
\begin{align*}
P_{i,j}^{\epsilon,\delta}(M)\=
\sum_{\gamma+\sigma=\epsilon}
(-1)^{\gamma(\1+\epsilon+\|M_{(i+1..j]}\|)+\gamma(\delta_{i+2}+\cdots+\delta_{j-1})}E_i^{\delta_i}E_{i+1}^{\delta_{i+1}}S^{\gamma}_{i,i+1}(\{i{+}1\})
\\
\times E_{i+2}^{\delta_{i+2}}\cdots E_{j-1}^{\delta_{j-1}}S_{i+1,j}^{\sigma}(M_{(i+1..j]})
+P_{i,j}^{\epsilon,\delta}(M\setminus\{i{+}1\})C(i,i{+}1)
\\
\=\sum_{\gamma+\sigma=\epsilon}
(-1)^{\gamma(\1+\epsilon+\|M_{(i+1..j]}\|)+\gamma\sum\delta|_{(i..j)}}E_i^{\delta_i}S^{\gamma}_{i,i+1}(\{i{+}1\})P_{i+1,j}^{\sigma,\delta|_{(i..j)}}(M_{(i+1..j]})
\\
+\hspace{-1 mm}\sum_{\gamma+\sigma=\epsilon}
\hspace{-2mm}(-1)^{\gamma(\1+\epsilon+\|M_{(i+1..j]}\|)+\gamma\sum\delta|_{(i+1..j)}}E_i^{\delta_i}
{\hspace{-1 mm}\sum_{\xi+\tau=\gamma+\delta_{i+1}}{\hspace {-5 mm}(-1)^{\1+(\delta_{i+1}+\xi)\gamma}S^{\xi}_{i,i+1}(\{\odd{i{+}1}\})}}
\\
\times E_{i+1}^{\,\tau}E_{i+2}^{\delta_{i+2}}\cdots E_{j-1}^{\delta_{j-1}}S_{i+1,j}^{\,\sigma}(M_{(i+1..j]})
+P_{i,j}^{\epsilon,\delta}(M\setminus\{i{+}1\})\,C(i,i{+}1)
\\
\=\sum_{\gamma+\sigma=\epsilon}
(-1)^{\gamma(\1+\epsilon+\|M_{(i+1..j]}\|)+\gamma\sum\delta|_{(i..j)}}
\cond_{\delta_i=\gamma}B(i,i+1)P_{i+1,j}^{\sigma,\delta|_{(i..j)}}(M_{(i+1..j]})
\\
-\sum_{\xi+\tau+\sigma=\epsilon+\delta_{i+1}}
(-1)^{(\xi+\tau+\delta_{i+1})\(\1+\epsilon+\|M_{(i+1..j]}\|+\sum\delta|_{(i..j)}+\xi\)}
\\
\times P_{i,i+1}^{\xi,\delta|_{\{i\}}}(\{\odd{i{+}1}\})P_{i+1,j}^{\sigma,\chi_{i+1}^\tau\cup\delta|_{(i+1..j)}}(M_{(i+1..j]})
+P_{i,j}^{\epsilon,\delta}(M\setminus\{i{+}1\})C(i,i{+}1).
\end{align*}
Thus, if $\min M=i+1<j$, then we have
$$
{\arraycolsep=0pt
\begin{array}{l}
\displaystyle
P_{i,j}^{\epsilon,\delta}(M)\!=\!
(-1)^{\delta_i(\1+\epsilon+\|M_{(i+1..j]}\|)+\delta_i\sum\delta|_{(i..j)}}
\!B(i,\!i\!+\!1)P_{i+1,j}^{\epsilon+\delta_i,\delta|_{(i..j)}}(M_{(i+1..j]})
\\
\displaystyle
-\sum_{\xi+\tau+\sigma=\epsilon+\delta_{i+1}}
\hspace{-1mm}(-1)^{(\xi+\tau+\delta_{i+1})\(\1+\epsilon+\|M_{(i+1..j]}\|+\sum\delta|_{(i..j)}+\xi\)}
P_{i,i+1}^{\xi,\delta|_{\{i\}}}(\{\odd{i{+}1}\})
\\
\times
 P_{i+1,j}^{\sigma,\chi_{i+1}^\tau\cup\delta|_{(i+1..j)}}(M_{(i+1..j]})
 +P_{i,j}^{\epsilon,\delta}(M\setminus\{i{+}1\})C(i,i{+}1).
\end{array}}
\leqno{\text{(\bf P-4)}}
$$

\smallskip

Let $i+1<\min M=\bar m<j$. Then by~(\ref{S5}), we have modulo $I_{[i..j)}^+$:
\begin{align*}
P_{i,j}^{\epsilon,\delta}(M)\=\sum_{\gamma+\sigma=\epsilon}(-1)^{\gamma(\1+\epsilon+\|M_{(m..j]}\|)+\gamma\sum\delta|_{[m..j)}}E_i^{\delta_i}\cdots E_{m-1}^{\delta_{m-1}}S_{i,m}^{\,\gamma}\(\left\{\bar m\right\}\)
\\
\times E_m^{\delta_m}\cdots E_{j-1}^{\delta_{j-1}}S_{m,j}^{\,\sigma}\(M_{(m..j]}\)
+P_{i,j}^{\,\epsilon,\delta}\(M_{\odd{\vphantom{1}m}\mapsto\odd{m{-}1}}\).
\end{align*}
Thus, if $i+1<\min M=\bar m<j$, then  we have
$$
\begin{array}{r}
\displaystyle
P_{i,j}^{\epsilon,\delta}(M)=
\sum_{\gamma+\sigma=\epsilon}(-1)^{\gamma(\1+\epsilon+\|M_{(m..j]}\|\sum\delta|_{[m..j)})}P_{i,m}^{\gamma,\delta|_{[i..m)}}(\{\bar m\})
\\
\times P_{m,j}^{\sigma,\delta_{[m..j)}}(M_{(m..j]})
+P_{i,j}^{\epsilon,\delta}(M_{\odd{\vphantom{1}m}\mapsto\odd{m{-}1}}).
\end{array}
\leqno{\text{(\bf P\,-5)}}
$$

\smallskip

Let $i+1<\min M=m<j$. By (\ref{S6}), we have modulo $I_{[i..j)}^+$:
\begin{align*}
P_{i,j}^{\epsilon,\delta}(M)\=\sum_{\gamma+\sigma=\epsilon}
(-1)^{\gamma(\1+\epsilon+\|M_{(m..j]}\|)+\gamma(\delta_{m+1}+\cdots+\delta_{j-1})}\,E_i^{\delta_i}\cdots E_m^{\delta_m}S^{\,\gamma}_{i,m}(\{m\})
\\
\times E_{m+1}^{\delta_{m+1}}\cdots E_{j-1}^{\delta_{j-1}}S_{m,j}^{\sigma}(M_{(m..j]})
+P_{i,j}^{\epsilon,\delta}(M_{m\mapsto m{-}1})+P_{i,j}^{\epsilon,\delta}(M\setminus\{m\})C(m{-}1,m)
\end{align*}
Moreover, by Lemma~\ref{lemma:ops:1} and ({\bf P-2}), the first sum is equal to
\begin{align*}
\sum_{\gamma+\sigma=\epsilon}
(-1)^{\gamma(\1+\epsilon+\|M_{(m..j]}\|)+\gamma\sum\delta|_{[m..j)}}E_i^{\delta_i}\cdots E_{m-1}^{\delta_{m-1}}S^{\,\gamma}_{i,m}(\{m\})
\\
\times
E_m^{\delta_m}\cdots E_{j-1}^{\delta_{j-1}}S_{m,j}^{\,\sigma}(M_{(m..j]})
+\sum_{\gamma+\sigma=\epsilon}
(-1)^{\gamma(\1+\epsilon+\|M_{(m..j]}\|)+\gamma\sum\delta|_{(m..j)}}\,E_i^{\delta_i}\cdots E_{m-1}^{\delta_{m-1}}
\\
\times\!\!\sum_{\xi+\tau=\gamma+\delta_m}\!(-1)^{1+(\delta_m+\xi)\gamma}S_{i,m}^{\,\xi}(\{\bar m\})
E_m^\tau E_{m+1}^{\delta_{m+1}}\cdots E_{j-1}^{\delta_{j-1}}S_{m,j}^{\,\sigma}\(M_{(m..j]}\)
\\
=\sum_{\gamma+\sigma=\epsilon}
(-1)^{\gamma(\1+\epsilon+\|M_{(m..j]}\|+\sum\delta|_{[m..j)})}
\cond_{\sum\delta|_{[i..m)}=\gamma}B(i,i+1)
P_{m,j}^{\sigma,\delta|_{[m..j)}}(M_{(m..j]})
\\
-
\hspace{-2mm}
\sum_{\gamma+\sigma=\epsilon}
\hspace{-2mm}
(-1)^{\gamma(\1+\epsilon+\|M_{(m..j]}\|+\sum\delta|_{[m..j)})}
\hspace{-5mm}
\sum_{\xi+\tau=\gamma+\delta_m}
\hspace{-5mm}
(-1)^{\xi\gamma}
P_{i,m}^{\xi,\delta|_{[i..m)}}(\{\bar m\})
P_{m,j}^{\sigma,\chi_m^\tau\cup\delta|_{(m..j)}}(M_{(m..j]})
\\
=(-1)^{\sum\delta|_{[i..m)}(\1+\epsilon+\|M_{(m..j]}\|+\sum\delta|_{[m..j)})}
B(i,i+1)P_{m,j}^{\epsilon+\sum\delta|_{[i..m)},\delta|_{[m..j)}}(M_{(m..j]})
\\
-
\hspace{-3.4 mm}
\sum_{\xi+\tau+\sigma=\epsilon+\delta_m}
\hspace{-6.8 mm}
(-1)^{(\xi+\tau+\delta_m)(\1+\epsilon+\|M_{(m..j]}\|+\sum\delta|_{[m..j)}+\xi)}
\!P_{i,m}^{\xi,\delta|_{[i..m)}}\!(\!\{\bar m\}\!)
P_{m,j}^{\sigma,\chi_m^\tau\cup\delta|_{(m..j)}}\!(\!M_{(m..j]}).
\end{align*}

Thus, if $i+1<\min M=m<j$, then we have
$$
\begin{array}{r}
\displaystyle
P_{i,j}^{\epsilon,\delta}(M)
=(-1)^{\sum\delta|_{[i..m)}(\1+\epsilon+\|M_{(m..j]}\|+\sum\delta|_{[m..j)})}
B(i,i+1)
\\
\times P_{m,j}^{\epsilon+\sum\delta|_{[i..m)},\delta|_{[m..j)}}(M_{(m..j]})
\\
\displaystyle-\sum_{\xi+\tau+\sigma=\epsilon+\delta_m}
(-1)^{(\xi+\tau+\delta_m)(\1+\epsilon+\|M_{(m..j]}\|+\sum\delta|_{[m..j)}+\xi)}\\
\times
P_{i,m}^{\xi,\delta|_{[i..m)}}(\{\bar m\})
P_{m,j}^{\sigma,\chi_m^\tau\cup\,\delta|_{(m..j)}}(M_{(m..j]})
\\
+P_{i,j}^{\,\epsilon,\delta}(M_{m\mapsto m{-}1})+P_{i,j}^{\,\epsilon,\delta}(M\setminus\{m\})\,C(m{-}1,m).
\end{array}
\leqno{\text{(\bf P-6)}}
$$

\section{The case of signed sets with only even elements}

\begin{lemma}\label{lemma:rcoeff:1}
If $M$ consists of even elements then we have
$$
P_{i,j}^{\epsilon,\delta}(M)=\cond_{\epsilon=\sum\delta}\ll g^{(1)}_{i,j}\bigl((i..j)\setminus M\bigr)\rr.
$$
\end{lemma}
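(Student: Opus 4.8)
The proof will proceed by induction on $\height M$, mirroring exactly the recursive structure used to define the lowering operators $S_{i,j}^\epsilon(M)$ in Section~\ref{Q(n):lovering operators:def} and the raising coefficients $P_{i,j}^{\epsilon,\delta}(M)$ via the inductive formulas (\textbf{P-1})--(\textbf{P-6}). The base case is $M=\{j\}$: comparing (\textbf{P\,-2}) with the definition \ref{g-1:a} (and Lemma~\ref{lemma:compol:6}, noting $(i..j)\setminus\{j\}=(i..j)$) gives $P_{i,j}^{\epsilon,\delta}(\{j\})=\cond_{\epsilon=\sum\delta}B(i,i+1)=\cond_{\epsilon=\sum\delta}\ll x_i-y_{i+1}\rr=\cond_{\epsilon=\sum\delta}\ll g^{(1)}_{i,j}\bigl((i..j)\bigr)\rr$, as desired. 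For the inductive step, since $M$ consists of even elements and contains $j$ (it must contain $\bar\jmath$ or $j$, and here it is even), we set $m:=\min M$ and split into the cases $m=i+1$ (handled by (\textbf{P-4})) and $m>i+1$ (handled by (\textbf{P-6})).

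In the case $m=i+1$, I will substitute the inductive hypothesis into (\textbf{P-4}). The term $P_{i+1,j}^{\epsilon+\delta_i,\delta|_{(i..j)}}(M_{(i+1..j]})$ produces a Kronecker delta forcing $\epsilon+\delta_i=\sum\delta|_{(i..j)}$, i.e.\ $\epsilon=\sum\delta$, matching the claimed overall delta; and the sum $\sum_{\xi+\tau+\sigma=\epsilon+\delta_{i+1}}(\cdots)P_{i,i+1}^{\xi,\delta|_{\{i\}}}(\{\odd{i{+}1}\})P_{i+1,j}^{\sigma,\chi_{i+1}^\tau\cup\delta|_{(i+1..j)}}(M_{(i+1..j]})$ is zero because $M_{(i+1..j]}=M\setminus\{i+1\}$ still consists of even elements, so that second factor is again a $\cond$ times a $\ll\cdot\rr$, and the two independent parities $\xi$ and $\sigma$ with $\xi+\tau+\sigma$ fixed cause cancellation (this is the same kind of cancellation already seen in the proof of Lemma~\ref{lemma:ops:7}, cases IIc(ii)/IIc(iii)). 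What survives is $\cond_{\epsilon=\sum\delta}\ll x_i-x_{i+1}\rr\ll g^{(1)}_{i,j}\bigl((i..j)\setminus(M\setminus\{i+1\})\bigr)\rr$ plus the recursively-expanded first term. Writing $S:=(i{+}1..j)\setminus M$, this is precisely the identity $(x_i-y_{i+1})g^{(1)}_{i+1,j}(S)+(x_i-x_{i+1})g^{(1)}_{i,j}(\{i{+}1\}\cup S)=g^{(1)}_{i,j}(S)$ of Lemma~\ref{lemma:compol:7}, after applying $\ll\cdot\rr$ (which sends $x_a-x_b\mapsto C(a,b)=\ll x_a-x_b\rr$ and $x_a-y_b\mapsto B(a,b)$, and $B(i,i+1)=\ll x_i-y_{i+1}\rr$). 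Note $(i..j)\setminus M = \{i{+}1\}\cup S$ and $(i{+}1..j)\setminus M = S$ here since $i+1\in M$.

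In the case $m>i+1$, I substitute the inductive hypothesis into (\textbf{P-6}). Again the first (``$B(i,i+1)$'') summand forces $\epsilon+\sum\delta|_{[i..m)}=\sum\delta|_{[m..j)}$, i.e.\ $\epsilon=\sum\delta$, and contributes $\cond_{\epsilon=\sum\delta}B(i,i+1)\,\ll g^{(1)}_{m,j}\bigl((m..j)\setminus M\bigr)\rr$; the triple-sum summand vanishes by the same $\xi/\sigma$ parity cancellation as above; and the two remaining terms $P_{i,j}^{\epsilon,\delta}(M_{m\mapsto m-1})$ and $P_{i,j}^{\epsilon,\delta}(M\setminus\{m\})\,C(m{-}1,m)$ expand by induction. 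Collecting everything and applying $\ll\cdot\rr$, the required identity becomes exactly $\bigl(x_i-y_{i+1}\bigr)g^{(1)}_{m,j}(S)+g^{(1)}_{i,j}\bigl((i..m{-}1)\cup\{m\}\cup S\bigr)+(x_{m-1}-x_m)g^{(1)}_{i,j}\bigl((i..m]\cup S\bigr)=g^{(1)}_{i,j}\bigl((i..m)\cup S\bigr)$ with $S:=(m..j)\setminus M$, which is Lemma~\ref{lemma:compol:8} (here $(i..j)\setminus M=(i..m)\cup S$, $(i..j)\setminus M_{m\mapsto m-1}=(i..m{-}1)\cup\{m\}\cup S$, and $(i..j)\setminus(M\setminus\{m\})=(i..m]\cup S$).

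\textbf{Main obstacle.} The genuinely delicate point is not the combinatorial bookkeeping of which subset of $(i..j)$ appears as the argument of $g^{(1)}$ — that is forced — but the verification that the sign $(-1)^{\cdots}$ multiplying each product of $P$'s in (\textbf{P-4}) and (\textbf{P-6}) degenerates, on the locus $\epsilon=\sum\delta$ cut out by the various $\cond$'s, to exactly $+1$ (so that the $\ll\cdot\rr$-image of the polynomial identities of Lemmas~\ref{lemma:compol:7} and~\ref{lemma:compol:8} is reproduced verbatim), together with checking that the ``impure'' triple sums really do cancel. This requires carefully tracking the parity exponents — which involve $\|M_{(m..j]}\|$, $\sum\delta|$ on various intervals, and the auxiliary parameters $\xi,\tau,\sigma$ — and using that $\|M_{(m..j]}\|=\0$ throughout because $M$ has only even elements. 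I expect this sign reconciliation to be the bulk of the work, but it is routine case analysis in $\Z/2\Z$ of the kind already carried out repeatedly in Sections~\ref{Q(n):lovering operators:propertiesofSijbarjandSij} and the proof of Lemma~\ref{lemma:ops:7}, so no new idea is needed.
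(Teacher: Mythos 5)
Your proposal matches the paper's proof essentially verbatim: same induction on $\height M$, same base case via (\textbf{P-2}) and Lemma~\ref{lemma:compol:6}, same split into $\min M = i+1$ and $\min M > i+1$ handled respectively by (\textbf{P-4}) with Lemma~\ref{lemma:compol:7} and by (\textbf{P-6}) with Lemma~\ref{lemma:compol:8}, and the same observation that the ``triple-sum'' middle terms vanish once the inductive $\cond$ pins down $\sigma$ and the residual sign $(-1)^\tau$ forces cancellation over $\tau\in\{\0,\1\}$. Your self-assessment is also accurate — the paper does compress those sign checks, and they are indeed the only part requiring real care.
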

\begin{proof} Induction on $\height M$. In the base case $M=\{j\}$, by Lemma~\ref{lemma:compol:6}, we have
$$
\left\llbracket g^{(1)}_{i,j}\bigl((i..j)\setminus M\bigr)\right\rrbracket=\left\llbracket g^{(1)}_{i,j}\bigl((i..j)\bigr)\right\rrbracket=\ll x_i-y_{i+1}\rr=B(i,i+1).
$$
To obtain the required result, it remains to apply~({\bf P-2}).

Now suppose that $M\ne\{j\}$. Set $m:=\min M$. Then $i<m<j$.


{\it Case 1: $m=i+1$}. By~{(\bf P\,-4)} and the inductive hypothesis, $P_{i,j}^{\epsilon,\delta}(M)$ equals
\begin{align*}
(-1)^{\delta_i(\1+\epsilon+\sum\delta|_{(i..j)})}
B(i,i+1)\,
\cond_{\epsilon+\delta_i=\sum\delta|_{(i..j)}}\ll g^{(1)}_{i+1,j}\bigl((i{+}1..j)\setminus M_{(i+1..j]}\bigr)\rr
\\
-
\hspace{-3.3mm}\sum_{\xi+\tau+\sigma=\epsilon+\delta_{i+1}}
\hspace{-7.7mm}
(-1)^{(\xi+\tau+\delta_{i+1})(\1+\epsilon+\|M_{(i+1..j]}\|+\sum\delta|_{(i..j)}+\xi)}\! P_{i,i+1}^{\xi,\delta|_{\{i\}}}\!(\!\{\odd{i{+}1}\}\!)
\cond_{\sigma=\sum\chi_{i+1}^\tau\cup\,\delta|_{(i+1..j)}}
\\
\times
\ll g^{(1)}_{i+1,j}\bigl((i{+}1..j)\setminus M_{(i+1..j]}\bigr)\rr
+\cond_{\epsilon=\sum\delta}\ll g^{(1)}_{i,j}\bigl((i..j)\setminus(M\setminus\{i{+}1\})\bigr)\rr C(i,i{+}1).
\end{align*}
In the middle term, we can assume that $\xi=\epsilon+\sum\delta|_{(i..j)}$ and sum over $\tau$ and $\sigma$ such that
$\tau+\sigma=\sum\delta|_{(i+1..j)}$.
We get
\begin{align*}
\cond_{\epsilon=\sum\delta}\,B(i,i+1)\,\ll g^{(1)}_{i+1,j}\bigl((i{+}1..j)\setminus M_{(i+1..j]}\bigr)\rr
\\
-\sum_{\tau+\sigma=\sum\delta|_{(i+1..j)}}
\hspace{-8mm}
(-1)^{\epsilon+\sum\delta|_{(i+1..j)}+\tau} P_{i,i+1}^{\epsilon+\sum\delta|_{(i..j)},\delta_{i}}\!\(\left\{\odd{i{+}1}\right\}\)
\ll g^{(1)}_{i+1,j}\bigl((i{+}1..j)\setminus M_{(i+1..j]}\bigr)\rr
\\
+\cond_{\epsilon=\sum\delta}\ll g^{(1)}_{i,j}\bigl((i..j)\setminus(M\setminus\{i{+}1\})\bigr)\rr C(i,i{+}1).
\end{align*}
The middle term is zero, and by Lemma~\ref{lemma:compol:7} applied to $S=(i{+}1..j)\setminus M$,
we get
$$
\cond_{\epsilon=\sum\delta}\ll g_{i,j}^{(1)}((i{+}1..j)\setminus M)\rr
=\cond_{\epsilon=\sum\delta}\ll g_{i,j}^{(1)}\((i..j)\setminus M\)\rr.
$$


{\it Case 2: $m>i+1$}.  By~({\bf P-6}) and the inductive hypothesis, $P_{i,j}^{\epsilon,\delta}(M)$ equals
\begin{align*}
(-1)^{\sum\delta|_{[i..m)}\(\1+\epsilon\)+\sum\delta|_{[i..m)}\sum\delta|_{[m..j)}}
B(i,i+1)
\\
\times
\cond_{\epsilon+\sum\delta|_{[i..m)}=\sum\delta|_{[m..j)}}\ll g^{(1)}_{m,j}\bigl((m..j)\setminus M_{(m..j]}\bigr)\rr
\\
-
\sum_{\xi+\tau+\sigma=\epsilon+\delta_m}
(-1)^{(\xi+\tau+\delta_m)(\1+\epsilon+\|M_{(m..j]}\|+\sum\delta|_{[m..j)}+\xi)}
P_{i,m}^{\xi,\delta|_{[i..m)}}(\{\bar m\})
\\
\times
\cond_{\sigma=\sum\chi_m^\tau\cup\,\delta|_{(m..j)}}
\!\!\ll g^{(1)}_{m,j}\bigl((m..j){\setminus}M_{(m..j]}\bigr)\rr
+\cond_{\epsilon=\sum\delta}\ll g^{(1)}_{i,j}\bigl((i..j)\setminus M_{m\mapsto m{-}1}\bigr)\rr
\\
+\cond_{\epsilon=\sum\delta}\ll g^{(1)}_{i,j}\bigl((i..j)\setminus(M\setminus\{m\})\bigr)\rr C(m{-}1,m).
\end{align*}
In the second summand, we can assume that $\xi=\epsilon+\sum\delta|_{[m..j)}$ and sum over $\tau$ and $\sigma$ such that
$\tau+\sigma=\sum\delta|_{(m..j)}$. We get
\begin{align*}
P_{i,j}^{\epsilon,\delta}(M)
=\cond_{\epsilon=\sum\delta}
B(i,i+1)
\ll g^{(1)}_{m,j}\bigl((m..j)\setminus M_{(m..j]}\bigr)\rr
\\
-\sum_{\tau+\sigma=\sum\delta|_{(m..j)}}(-1)^{\epsilon+\sum\delta|_{(m..j)}+\tau}
 P_{i,m}^{\epsilon+\sum\delta|_{[m..j)},\delta|_{[i..m)}}(\{\bar m\})\ll g^{(1)}_{m,j}\bigl((m..j){\setminus}M_{(m..j]}\bigr)\rr
\\
+\cond_{\epsilon=\sum\delta} \Big(\ll g^{(1)}_{i,j}\bigl((i..j)\setminus M_{m\mapsto m{-}1}\bigr)\rr
+\ll g^{(1)}_{i,j}\bigl((i..j)\setminus(M\setminus\{m\})\bigr)\rr C(m{-}1,m)\Big).
\end{align*}
The second term is again zero as in the previous case. Thus by Lemma~\ref{lemma:compol:8} applied to $S=(m..j)\setminus M$,
\begin{align*}
P_{i,j}^{\epsilon,\delta}(M)
=\cond_{\epsilon=\sum\delta}\Bigl\llbracket(x_i-y_{i+1})\,g^{(1)}_{m,j}\bigl((m..j)\setminus M\bigr)
\\
+g^{(1)}_{i,j}\bigl((i..m-1)\!\cup\!\{m\}\!\cup\! ((m..j)\setminus M)\bigr)
+(x_{m-1}-x_m)\,g^{(1)}_{i,j}\bigl((i..m]\cup((m..j)\setminus M)\bigr)\Bigr\rrbracket
\\
=\cond_{\epsilon=\sum\delta}\ll g_{i,j}^{(1)}((i..m)\cup((m..j)\setminus M))\rr
=\cond_{\epsilon=\sum\delta}\ll g_{i,j}^{(1)}((i..j)\setminus M))\rr,
\end{align*}
as required.
\end{proof}

\section{The case of signed sets with one odd element}

\begin{lemma}\label{lemma:rcoeff:2}
If $\bar q\in M$ is the only odd element of $M$, then setting $X(i,q,M):=\{k\in[i..q]\setminus M\mid k{-}1\in M\cup\{i{-}1,i\}\}$, we have
\begin{align*}
P_{i,j}^{\epsilon,\delta}(M)=
\sum_{k\in X(i,q,M)}(-1)^{\cscript_{k>i}+\(\1+\epsilon+\sum\delta\)\sum\delta|_{[i..k)}}\ll g_{i,k,q,j}^{(2)}\bigl((i..j]\setminus M\bigr)\rr H_k^{\epsilon+\sum\delta}.
\end{align*}
\end{lemma}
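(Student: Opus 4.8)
The proof will proceed by induction on $\height M$, paralleling the structure of Lemma~\ref{lemma:rcoeff:1} but now tracking the extra datum $\bar q$ and the $H_k^{\epsilon+\sum\delta}$ factors. For the base cases $M=\{\odd\jmath\}$ (so $q=j$) I will invoke (\textbf{P-1}): there $X(i,q,M)=X(i,j,\{\odd\jmath\})=\{i,i+1\}$, the polynomials $g^{(2)}_{i,i,j,j}\bigl((i..j)\bigr)$ and $g^{(2)}_{i,i+1,j,j}\bigl((i..j)\bigr)$ are both $1$ by Lemma~\ref{lemma:compol:9}, and the signs $(-1)^{\cscript_{k>i}+(\1+\epsilon+\sum\delta)\sum\delta|_{[i..k)}}$ for $k=i$ and $k=i+1$ exactly reproduce the two terms $H_i^{\epsilon+\sum\delta}-(-1)^{\delta_i(\epsilon+\sum\delta|_{(i..j)})}H_{i+1}^{\epsilon+\sum\delta}$. (When $q<j$ but $|M|=1$ is impossible since $M$ must contain $\bar\jmath$ or $j$, so the only base case with a single odd element is $M=\{\odd\jmath\}$.)

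\textbf{The inductive step.} Write $m:=\min M$ and split into the four cases according to whether $m=\odd{i+1}$, $m=i+1$, $m=\bar m>i+1$, or $m>i+1$ is even, using the inductive formulas (\textbf{P-3})--(\textbf{P-6}). In each case, one of the factors produced is a raising coefficient for a shorter set (to which the inductive hypothesis, or in the ``lower'' block Lemma~\ref{lemma:rcoeff:1}, applies) times a raising coefficient $P^{\gamma,\cdot}_{i,m}(\{\odd m\})$ or $P^{\gamma,\cdot}_{i,i+1}(\{\odd{i+1}\})$ for the top block, which is handled by (\textbf{P-1}); the remaining summands involve $P_{i,j}^{\epsilon,\delta}$ applied to $M$ with $m$ deleted or decremented, handled directly by induction. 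After applying $\llbracket\,\rrbracket$, each such identity becomes an assertion about the polynomials $g^{(2)}_{i,k,q,j}(\,\cdot\,)$ and $g^{(1)}$, and these are precisely the content of Lemmas~\ref{lemma:compol:10}--\ref{lemma:compol:13}: Case $m=\odd{i+1}$ uses Lemma~\ref{lemma:compol:10}; Case $m=i+1$ uses Lemma~\ref{lemma:compol:11}(a)--(d); Case $m=\bar m$ and the even case $m>i+1$ use Lemma~\ref{lemma:compol:12} and Lemma~\ref{lemma:compol:13} respectively, with the case split in Lemma~\ref{lemma:compol:13} (according as $k=m$, $k=m+1$, or $k\ge m+2$) matching the partition of $X(i,q,M)$ into the new index $k=m$, its neighbours, and the indices surviving from $X(m,q,M_{(m..j]})$. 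Throughout, the summations over the auxiliary parities $\xi,\tau,\sigma$ that appear with a coefficient $\cond$ will collapse exactly as in Lemma~\ref{lemma:rcoeff:1}, forcing $\xi=\epsilon+\sum\delta|_{[m..j)}$ (or the analogous value), while the genuinely free sums of the form $\sum_{\tau+\sigma}(-1)^{\cdots+\tau}(\,\cdot\,)$ vanish by the same parity cancellation used there.

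\textbf{Sign bookkeeping and the main obstacle.} The bulk of the work is verifying that the signs match. The exponent $(\1+\epsilon+\sum\delta)\sum\delta|_{[i..k)}$ in the statement is built so that, when one decomposes the interval $[i..k)$ through the intermediate point $m$ (writing $\sum\delta|_{[i..k)}=\sum\delta|_{[i..m)}+\sum\delta|_{[m..k)}$) and substitutes $\epsilon+\sum\delta|_{[i..m)}$ for the parity $\epsilon'$ carried into the shorter raising coefficient, the product of the sign coming from (\textbf{P-1}) (or from the inductive hypothesis on the top block) with the sign coming from the inductive hypothesis on $M_{(m..j]}$ and with the combinatorial signs in (\textbf{P-3})--(\textbf{P-6}) telescopes to the claimed exponent; the extra $\cscript_{k>i}$ term is inserted precisely to account for the transition from the ``$k=i$ in the inner block'' contribution to a ``$k>i$ overall'' contribution. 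The main obstacle will be this sign verification in Case $m=i+1$ and in the even Case $m>i+1$, where the formulas (\textbf{P-4}) and (\textbf{P-6}) themselves contain a triple sum over $\xi+\tau+\sigma$ and an additional factor $B(i,i+1)$; there one must be careful that the term with the $B(i,i+1)$ factor (which after $\llbracket\,\rrbracket$ becomes $B(i,i+1)=\llbracket x_i-y_{i+1}\rrbracket$, the leading factor of $u^{\{k\}}_{i,j}$) combines with the appropriate summand of the ``$k$-sum'' coming from the $g^{(2)}_{m,\cdot,q,j}$ block via the identity in Lemma~\ref{lemma:compol:13}(c),(d), contributing the new index $k=m$ to $X(i,q,M)$ with exactly the sign $(-1)^{\cscript_{m>i}+(\1+\epsilon+\sum\delta)\sum\delta|_{[i..m)}}$. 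Once the case-by-case sign reconciliation is done, applying $\llbracket\,\rrbracket$ to the polynomial identities of Chapter~3 and reindexing the $k$-sum finishes each case, and hence the induction.
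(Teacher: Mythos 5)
Your overall strategy — induction on $\height M$, case split on $\min M$, appeal to the polynomial identities in Lemmas~\ref{lemma:compol:9}--\ref{lemma:compol:13} together with the recursions ({\bf P-1})--({\bf P-6}), then gather coefficients of the various $H_k^{\epsilon+\sum\delta}$ — is exactly the paper's, and your assignment of which compol-lemma handles which case is correct.

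However, there is a genuine gap in the central claim about how the auxiliary parity sums collapse. You assert that ``the genuinely free sums of the form $\sum_{\tau+\sigma}(-1)^{\cdots+\tau}(\,\cdot\,)$ vanish by the same parity cancellation used'' in Lemma~\ref{lemma:rcoeff:1}. That is false here, and the difference is precisely what makes this lemma harder. In Lemma~\ref{lemma:rcoeff:1}, the inductive hypothesis for the inner block returns a $\cond$-factor, which pins $\xi$ and makes the remaining middle sum vanish. In the present lemma, once $\min M$ is not $\bar q$ itself (Cases~3 and~5 in the paper), the inner raising coefficient $P_{m,j}^{\sigma,\chi_m^\tau\cup\,\delta|_{(m..j)}}(M_{(m..j]})$ still carries the odd element $\bar q$, so the inductive hypothesis gives a \emph{sum over $k\in X(m,q,\cdot)$ of $H_k$-terms with no $\cond$}, and the double sum over $\tau,\sigma$ must be analysed term by term. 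For $k>m$ it does indeed vanish by the parity cancellation you describe, but for $k=m$ the $H$-exponent $\sigma+\tau+\sum\delta|_{(m..j)}$ is independent of $\tau$, the sign loses its $\tau$-dependence, and the sum over $\tau$ produces a factor of $2$ rather than $0$. The paper isolates this as Proposition~\ref{proposition:rcoeff:1}, and the resulting $-2H_m$ (which, under $\llbracket\,\rrbracket$, is $\llbracket x_m-y_m\rrbracket$) is exactly the first term of the identities in Lemma~\ref{lemma:compol:11}\ref{lemma:compol:11:part:a} and Lemma~\ref{lemma:compol:13}\ref{lemma:compol:13:part:a},\ref{lemma:compol:13:part:b}. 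Without this non-vanishing contribution the coefficients of $H_i^{\epsilon+\sum\delta}$ and $H_{i+1}^{\epsilon+\sum\delta}$ cannot be matched, and your computation would not close.

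A related misstatement: you say the $B(i,i+1)$ term contributes ``the new index $k=m$ to $X(i,q,M)$.'' But $m=\min M\in M$, so $m\notin X(i,q,M)$. What actually happens in the paper's Case~5 is the opposite: the coefficient of $H_m^{\epsilon+\sum\delta}$ is shown to be $0$ (via Lemma~\ref{lemma:compol:13}\ref{lemma:compol:13:part:c}), cancelling the $H_m$-contribution inherited from $X(m,q,M_{(m..j]})$. The genuinely new indices picked up at this stage are $k=i$ and $k=i+1$, produced by the $P^{\xi,\cdot}_{i,m}(\{\bar m\})$ factor via ({\bf P-1}) together with the terms $P_{i,j}^{\epsilon,\delta}(M_{m\mapsto m-1})$ and $P_{i,j}^{\epsilon,\delta}(M\setminus\{m\})C(m-1,m)$. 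So when you fill in the case-by-case verification, the bookkeeping must account for exactly these two extra $H_k$-slots and for the $H_m$-slot closing, not for $m$ entering $X$.
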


Before we prove this lemma, we establish the following auxiliary fact.

\begin{proposition}\label{proposition:rcoeff:1}
Let $N$ be a signed $(m..j]$-set, containing either $j$ or $\bar\jmath$, and all of whose elements are even except $\bar q$.
Let $\epsilon,\xi\in\{\0,\1\}$ and $\delta:[m..j)\to\{\0,\1\}$ be a function. Suppose that Lemma~\ref{lemma:rcoeff:2} holds for $P_{m,j}^{\epsilon',\delta'}(N)$ for all $\epsilon'$ and $\delta'$. Then:
\begin{align*}
\sum_{\tau+\sigma=\epsilon+\delta_m+\xi}(-1)^{(\xi+\tau+\delta_m)(\epsilon+\sum\delta|_{[m..j)}+\xi)}P_{m,j}^{\sigma,\chi_m^\tau\cup\,\delta|_{(m..j)}}(N)\\
=2\cond_{\xi=\epsilon+\sum\delta|_{[m..j)}}\ll g_{m,m,q,j}^{(2)}\bigl((m..j]\setminus N\bigr)\rr H_m.
\end{align*}
\end{proposition}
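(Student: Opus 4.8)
\textbf{Proof proposal for Proposition~\ref{proposition:rcoeff:1}.}

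The plan is to expand the left-hand side using the formula for $P_{m,j}^{\sigma,\chi_m^\tau\cup\delta|_{(m..j)}}(N)$ provided by Lemma~\ref{lemma:rcoeff:2} (which we are allowed to assume holds for all $\epsilon'$ and $\delta'$), and then to collapse the resulting double sum. Write $N$ for the signed set, $\bar q$ its unique odd element, and apply Lemma~\ref{lemma:rcoeff:2} with $i$ replaced by $m$, with $\epsilon'=\sigma$ and $\delta'=\chi_m^\tau\cup\delta|_{(m..j)}$. Note that $\sum\delta'=\tau+\sum\delta|_{(m..j)}$, so $\sigma+\sum\delta'=\sigma+\tau+\sum\delta|_{(m..j)}=\epsilon+\delta_m+\xi+\sum\delta|_{(m..j)}=\epsilon+\xi+\sum\delta|_{[m..j)}$, a quantity independent of the splitting $\tau+\sigma$. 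Denote this common value by $\nu$. Thus every term $H_k^{\sigma+\sum\delta'}$ appearing is $H_k^\nu$, and the exponent $\epsilon'+\sum\delta'$ in $g^{(2)}$ is likewise $\nu$ throughout; in particular the polynomials $g_{m,k,q,j}^{(2)}\bigl((m..j]\setminus N\bigr)$ do not depend on $\tau,\sigma$ either.

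Next I would examine the index set $X(m,q,N)=\{k\in[m..q]\setminus N\mid k-1\in N\cup\{m-1,m\}\}$. Since $N$ is a $(m..j]$-set, $m\notin N$, and $m-1\in\{m-1,m\}$ trivially, so $m\in X(m,q,N)$ always (as $m\le q$, using $m<q$ which holds because $\bar q\in N\subset(m..j]$). Collecting the $k=m$ contribution separately and the contributions with $k>m$ separately, the left-hand side becomes
\begin{align*}
\sum_{\tau+\sigma=\epsilon+\delta_m+\xi}(-1)^{(\xi+\tau+\delta_m)(\epsilon+\sum\delta|_{[m..j)}+\xi)}
\Biggl(&\ll g_{m,m,q,j}^{(2)}\bigl((m..j]\setminus N\bigr)\rr H_m^{\nu}\\
&+\sum_{\substack{k\in X(m,q,N)\\ k>m}}(-1)^{\1+\nu'\,\tau}\,(\cdots)\Biggr),
\end{align*}
where the sign $(-1)^{\1+(\1+\nu)\sum\delta'|_{[m..k)}}$ in the $k>m$ term, after substituting $\sum\delta'|_{[m..k)}=\tau+\sum\delta|_{(m..k)}$, picks up a factor $(-1)^{(\1+\nu)\tau}$ that is the \emph{only} $\tau$-dependence left in that term aside from the overall sign $(-1)^{(\xi+\tau+\delta_m)(\epsilon+\sum\delta|_{[m..j)}+\xi)}=(-1)^{(\xi+\tau+\delta_m)(\nu+\delta_m+\tau)}$. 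The point is that for the terms with $k>m$ one gets, after gathering all signs, a factor whose dependence on $\tau$ is $(-1)^{c\tau}$ for some $c\in\{\0,\1\}$ that I will need to compute; when $c=\1$ the sum over the two values $\tau=\0,\1$ (with $\sigma$ determined) vanishes, and when $c=\0$ the two terms add. I expect the arithmetic to show that the $k>m$ contributions all cancel in pairs, and that the $k=m$ term survives: the overall sign there is $(-1)^{(\xi+\tau+\delta_m)(\nu+\delta_m+\tau)}$, and a short case check on $\nu+\delta_m$ shows this is independent of $\tau$ exactly when $\xi=\nu+\delta_m$, i.e.\ $\xi=\epsilon+\sum\delta|_{[m..j)}$ (using $\nu=\epsilon+\xi+\sum\delta|_{[m..j)}$), in which case both values of $\tau$ give $+1$, producing the factor $2$; otherwise the two $\tau$-terms cancel and we get $0$. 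This matches the claimed right-hand side $2\cond_{\xi=\epsilon+\sum\delta|_{[m..j)}}\ll g_{m,m,q,j}^{(2)}((m..j]\setminus N)\rr H_m$, once we observe that on the relevant weight space $H_m^\nu=H_m$ is forced since $\nu=\epsilon+\xi+\sum\delta|_{[m..j)}$ and the $\cond$ forces $\nu=\epsilon+\epsilon+\sum\delta|_{[m..j)}+\sum\delta|_{[m..j)}=\0$... wait, rather $\nu=\epsilon+\xi+\sum\delta|_{[m..j)}$ with $\xi=\epsilon+\sum\delta|_{[m..j)}$ gives $\nu=\0$, hence $H_m^\nu=H_m^\0=H_m$, consistent.

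The main obstacle will be the careful sign bookkeeping in the $k>m$ terms: one must track the exponent $\1+(\1+\nu)\sum\delta'|_{[m..k)}$ from Lemma~\ref{lemma:rcoeff:2}, combine it with the prefactor sign $(-1)^{(\xi+\tau+\delta_m)(\nu+\delta_m+\tau)}$, substitute $\sum\delta'|_{[m..k)}=\tau+\sum\delta|_{(m..k)}$, and verify that the coefficient of $\tau$ in the combined exponent is $\1$ (so the pair cancels) \emph{uniformly} in $k$. I would handle this by splitting into the four cases for $(\nu,\delta_m)\in\{\0,\1\}^2$ (equivalently, cases on $\xi$ versus $\epsilon+\sum\delta|_{[m..j)}$), reducing each to an elementary identity in $\Z/2\Z$. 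The $g^{(2)}$-polynomials and the $H$-factors are inert spectators throughout this step, so no polynomial identities are needed here—only the defining formulas for the raising coefficients and Lemma~\ref{lemma:rcoeff:2} as an inductive input. Once the cancellation is established, the Proposition follows immediately by reading off the surviving $k=m$ term.
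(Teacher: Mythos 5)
Your overall strategy coincides with the paper's: apply Lemma~\ref{lemma:rcoeff:2}, observe that $\nu:=\sigma+\sum\delta'=\epsilon+\xi+\sum\delta|_{[m..j)}$ is constant along the summation so that $g^{(2)}_{m,k,q,j}\bigl((m..j]\setminus N\bigr)$ and $H_k^\nu$ factor out, then separate $k=m$ from $k>m$ and show the $k>m$ terms cancel while the $k=m$ term gives the stated right-hand side. That is exactly what the paper does.

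However, your sign bookkeeping for the $k=m$ term contains a genuine slip that makes the derivation, as written, incorrect. The prefactor exponent is $(\xi+\tau+\delta_m)(\epsilon+\sum\delta|_{[m..j)}+\xi)$, and since $\epsilon+\sum\delta|_{[m..j)}+\xi=\nu$, the correct sign is $(-1)^{(\xi+\tau+\delta_m)\nu}$, not the $(-1)^{(\xi+\tau+\delta_m)(\nu+\delta_m+\tau)}$ you wrote. The difference is $(-1)^{(\xi+\tau+\delta_m)(\delta_m+\tau)}=(-1)^{(\xi+\1)(\delta_m+\tau)}$, which is not always trivial. This error then propagates: from your (wrong) formula, the sign is $\tau$-independent exactly when $\xi\neq\nu$ (in which case it equals $+1$ for a factor of $2$), not when ``$\xi=\nu+\delta_m$'' as you state, and neither of these is the condition $\xi=\epsilon+\sum\delta|_{[m..j)}$ in the proposition --- note that $\xi=\nu+\delta_m$ actually unfolds to $\delta_m=\epsilon+\sum\delta|_{[m..j)}$, not to $\xi=\epsilon+\sum\delta|_{[m..j)}$. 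With the correct sign $(-1)^{(\xi+\tau+\delta_m)\nu}$, the analysis is clean: as $\tau$ runs over $\{\0,\1\}$ the factor $\xi+\tau+\delta_m$ takes both values, so the sign is constantly $+1$ precisely when $\nu=\0$, i.e.\ $\xi=\epsilon+\sum\delta|_{[m..j)}$, producing the $2\cond_{\cdots}$ and the forced $H_m^{\nu}=H_m^{\0}=H_m$. The $k>m$ cancellation you deferred works similarly: the $\tau$-dependent part of the combined exponent is $(\xi+\tau+\delta_m)\nu+(\1+\nu)\tau=\tau+(\text{const})$, so the two $\tau$-values give opposite signs and cancel. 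You should redo the $k=m$ sign computation from the correct expression and spell out the $\tau$-coefficient computation for $k>m$ before the argument is complete.
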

\begin{proof}
We substitute for $P_{m,j}^{\sigma,\chi_m^\tau\cup\,\delta|_{(m..j)}}(N)$ in the left hand side its expression given by  Lemma~\ref{lemma:rcoeff:2}.
This expression is a sum over $k\in X(m,q,N)$. If $k>m$ we get the following contribution:
\begin{align*}
\sum_{\tau+\sigma=\epsilon+\delta_m+\xi}
\hspace{-2mm}
(-1)^{(\xi+\tau+\delta_m)(\epsilon+\sum\delta|_{[m..j)}+\xi)+\1+\(\1+\sigma+\sum\chi_m^\tau\cup\,\delta|_{(m..j)}\)\(\sum(\chi_m^\tau\cup\,\delta|_{(m..j)})|_{[m..k)}\)}
\\
\times\ll g_{m,k,q,j}^{(2)}\bigl((m..j]\setminus N\bigr)\rr H_k^{\sigma+\sum\chi_m^\tau\cup\,\delta|_{(m..j)}}
\\
=\sum_{\tau+\sigma=\epsilon+\delta_m+\xi}
\hspace{-2mm}
(-1)^{(\xi+\tau+\delta_m)(\epsilon+\sum\delta|_{[m..j)}+\xi)+\1+\(\1+\sigma+\tau+\sum\,\delta|_{(m..j)}\)\(\tau+\sum\delta|_{(m..k)}\)}
\\
\times\ll g_{m,k,q,j}^{(2)}\bigl((m..j]\setminus N\bigr)\rr H_k^{\sigma+\tau+\sum\delta|_{(m..j)}}
\\
=\sum_{\tau+\sigma=\epsilon+\delta_m+\xi}
\hspace{-2mm}
(-1)^{(\xi+\tau+\delta_m)(\epsilon+\sum\delta|_{[m..j)}+\xi)+\1+\(\1+\epsilon+\xi+\sum\,\delta|_{[m..j)}\)\(\tau+\sum\delta|_{(m..k)}\)}
\\
\times\ll g_{m,k,q,j}^{(2)}\bigl((m..j]\setminus N\bigr)\rr H_k^{\epsilon+\xi+\sum\delta|_{[m..j)}}.
\end{align*}
It is elementary to check that this sum equals zero.

If $k=m$, we get the following contribution:
\begin{align*}
\sum_{\tau+\sigma=\epsilon+\delta_m+\xi}(-1)^{(\xi+\tau+\delta_m)(\epsilon+\sum\delta|_{[m..j)}+\xi)}\ll g_{m,m,q,j}^{(2)}\bigl((m..j]\setminus N\bigr)\rr H_m^{\epsilon+\xi+\sum\delta|_{[m..j)}}\\
=2\cond_{\sum\delta|_{[m..j)}+\epsilon+\xi=\0}\ll g_{m,m,q,j}^{(2)}\bigl((m..j]\setminus N\bigr)\rr H_m.
\end{align*}


\end{proof}

\begin{proof}[Proof of Lemma~\ref{lemma:rcoeff:2}] We apply induction on $\height M$.

\smallskip

{\it Case 1: $M=\{\bar\jmath\}$.} Then $q=j$, and the required result comes from Lemma~\ref{lemma:compol:9} and~({\bf P-1}).

\smallskip

{\it Case 2: $q=i+1<j$.} In this case $j\in M$. By~({\bf P-3}),~({\bf P-1}) and Lemmas~\ref{lemma:rcoeff:1},~\ref{lemma:compol:10} for $S=(i{+}1..j)\setminus M$,
we get that $P_{i,j}^{\epsilon,\delta}(M)$ equals
\begin{align*}
\sum_{\gamma+\sigma=\epsilon}
\hspace{-2mm}
({-}1)^{\gamma(\1+\epsilon+\sum\delta|_{(i..j)})} P_{i,i+1}^{\gamma,\delta|_{\{i\}}}(\{\odd{i{+}1}\})
\cond_{\sigma=\sum\delta|_{[i+1..j)}}\!\ll g^{(1)}_{i+1,j}\bigl((i{+}1..j)\!\setminus\! M_{(i+1..j]}\bigr)\rr
\end{align*}\begin{align*}
+\sum_{\gamma+\sigma=\epsilon}
\cond_{\gamma=\sum\delta}\ll g^{(1)}_{i,j}\bigl((i..j)\setminus(M\setminus\{\odd{i{+}1}\})\bigr)\rr H^\sigma_i
\\
=({-}1)^{(\epsilon+\sum\delta|_{(i..j)})(\1+\epsilon+\sum\delta|_{(i..j)})} P_{i,i+1}^{\epsilon+\sum\delta|_{(i..j)},\delta|_{\{i\}}}\(\left\{\odd{i{+}1}\right\}\)\ll g^{(1)}_{i+1,j}\bigl((i{+}1..j)\setminus M\bigr)\rr
\\
+\ll g^{(1)}_{i,j}\Bigl((i..j)\setminus M\Bigr)\rr H^{\epsilon+\sum\delta}_i
\\
=\Bigl(H_i^{\epsilon+\sum\delta}-(-1)^{\delta_i\(\epsilon+\sum\delta|_{(i..j)}\)}H_{i+1}^{\epsilon+\sum\delta}\Bigr)\ll g^{(1)}_{i+1,j}\bigl((i{+}1..j)\setminus M\bigr)\rr
\\
+\ll g^{(1)}_{i,j}\Bigl(\{i+1\}\cup\bigl((i+1..j)\setminus M\bigr)\Bigr)\rr H^{\epsilon+\sum\delta}_i
\\
=\Bigl\llbracket g^{(1)}_{i+1,j}\bigl((i{+}1..j)\setminus M\bigr)
+g^{(1)}_{i,j}\Bigl(\{i{+}1\}\cup\bigl((i{+}1..j)\setminus M\bigr)\Bigr)\Bigl\rrbracket H^{\epsilon+\sum\delta}_i
\\
-(-1)^{\delta_i\(\epsilon+\sum\delta|_{(i..j)}\)}\ll g^{(1)}_{i+1,j}\bigl((i{+}1..j)\setminus M\bigr)\rr H_{i+1}^{\epsilon+\sum\delta}
\\
=\ll g_{i,i,i+1,j}^{(2)}\Bigl(\{i{+}1\}\cup\bigl((i{+}1..j\bigr)\setminus M\Bigr)\rr H^{\epsilon+\sum\delta}_i
\\
+(-1)^{\1+\delta_i\(\1+\epsilon+\sum\delta\)}\ll g^{(2)}_{i,i+1,i+1,j}\Bigl(\{i{+}1\}\cup\bigl((i{+}1..j)\setminus M\bigr)\Bigr)\rr H_{i+1}^{\epsilon+\sum\delta}
\\
=\!\!\ll g_{i,i,i+1,j}^{(2)}((i..j]\setminus M)\rr\!\! H^{\epsilon+\sum\delta}_i
\!+\!(-1)^{\1+\delta_i(\1+\epsilon+\sum\delta)}\!\!\ll g^{(2)}_{i,i+1,i+1,j}((i..j]\setminus M)\rr\! H_{i+1}^{\epsilon+\sum\delta}.
\end{align*}

\smallskip

{\it Case~3: $i+1=\min M<j$.} In this case $q>i+1$. By~({\bf P-4}), we have
\begin{align*}
P_{i,j}^{\epsilon,\delta}(M)=(-1)^{\delta_i\epsilon+\delta_i\sum\delta|_{(i..j)}}
B(i,i+1)P_{i+1,j}^{\epsilon+\delta_i,\delta|_{(i..j)}}(M_{(i+1..j]})
\\
-
\hspace{-2mm}
\sum_{\xi\in\{\0,\1\}}
\hspace{-3mm}
P_{i,i+1}^{\xi,\delta|_{\{i\}}}(\{\odd{i{+}1}\})
\hspace{-2mm}
\sum_{\tau+\sigma=\epsilon+\delta_{i+1}+\xi}
\hspace{-6mm}
(-1)^{(\xi+\tau+\delta_{i+1})(\epsilon+\sum\delta|_{(i..j)}+\xi)}
P_{i+1,j}^{\sigma,\chi_{i+1}^\tau\cup\delta|_{(i+1..j)}}\!(M_{(i+1..j]})
\\
+P_{i,j}^{\epsilon,\delta}(M_{(i+1..j]}) C(i,i{+}1).
\end{align*}
By Proposition~\ref{proposition:rcoeff:1} with $m=i+1$ and ({\bf P-1}), the second summand equals
\begin{align*}
-2P_{i,i+1}^{\epsilon+\sum\delta|_{[i+1..j)},\delta_i}\(\left\{\odd{i{+}1}\right\}\)
\ll g_{i+1,i+1,q,j}^{(2)}\bigl((i+1..j]\setminus M_{(i+1..j]}\bigr)\rr H_{i+1}
\\
=\!-2\big(H_i^{\epsilon+\sum\delta}\!\!-\!(-1)^{(\epsilon+\sum\delta|_{[i+1..j)})\delta_i}H_{i+1}^{\epsilon+\sum\delta}\big)
\!\ll g_{i+1,i+1,q,j}^{(2)}((i+1..j]\!\setminus\! M_{(i+1..j]})\rr\! H_{i+1}.
\end{align*}

Now, using the inductive hypothesis, we gather the ``coefficients'' of
$H_i^{\epsilon+\sum\delta}$,
$H_{i+1}^{\epsilon+\sum\delta}$, $H_{i+2}^{\epsilon+\sum\delta}$ and $H_k^{\epsilon+\sum\delta}$ for $k>i+2$.

The coefficient of $H_i^{\epsilon+\sum\delta}$ is
\begin{align*}
-2H_{i+1}\ll g_{i+1,i+1,q,j}^{(2)}\bigl((i{+}1..j]{\setminus}M_{(i+1..j]}\bigr)\rr
+\ll g_{i,i,q,j}^{(2)}\bigl((i..j]{\setminus}M_{(i+1..j]}\bigr)\rr C(i,i{+}1)
\\
=\Bigl\llbracket(x_{i+1}-y_{i+1})g_{i+1,i+1,q,j}^{(2)}((i{+}1..j]{\setminus}M)\\
+(x_i-x_{i+1})g_{i,i,q,j}^{(2)}(\{i{+}1\}\cup((i{+}1..j]{\setminus}M))\Bigr\rrbracket
=\Bigl\llbracket g_{i,i,q,j}^{(2)}\bigl((i{+}1..j]{\setminus}M\bigr)\Bigr\rrbracket,
\end{align*}
where the last equality comes from Lemma~\ref{lemma:compol:11}\ref{lemma:compol:11:part:a} with $S=(i{+}1..j]\setminus M$.

Using Lemma~\ref{lemma:compol:11}\ref{lemma:compol:11:part:b} in the last equality below, we see that the coefficient of $H_{i+1}^{\epsilon+\sum\delta}$ is:
\begin{align*}
(-1)^{\delta_i\epsilon+\delta_i\sum\delta|_{(i..j)}} \Big( B(i,i+1)
\ll g_{i+1,i+1,q,j}^{(2)}\bigl((i{+}1..j]\setminus M_{(i+1..j]}\bigr)\rr
\\
+2
H_{i+1}\!\ll g_{i+1,i+1,q,j}^{(2)}\bigl((i{+}1..j]\setminus M_{(i+1..j]}\bigr)\rr
\!-C(i,i{+}1)
\ll g_{i,i+1,q,j}^{(2)}\bigl((i..j]\setminus\! M_{(i+1..j]}\bigr)\rr\!\Big)
\end{align*}
\begin{align*}
=(-1)^{\delta_i(\epsilon+\sum\delta|_{(i..j)})}C(i,i{+}1)
\\
\times \ll g_{i+1,i+1,q,j}^{(2)}((i{+}1..j]{\setminus}M)
-g_{i,i+1,q,j}^{(2)}(\{i{+}1\}\!\cup((i{+}1..j]{\setminus}M))\rr=0.
\end{align*}

Using Lemma~\ref{lemma:compol:11}\ref{lemma:compol:11:part:c}, we see that the coefficient of $H_{i+2}^{\epsilon+\sum\delta}$ is:
\begin{align*}
\cond_{i+2\notin M}(-1)^{\delta_i\epsilon+\delta_i\sum\delta|_{(i..j)}+\1+(\1+\epsilon+\sum\delta)\delta_{i+1}}\ll (x_i-y_{i+1})
 g_{i+1,i+2,q,j}^{(2)}\bigl((i{+}1..j]\setminus M\bigr)\rr
\\
=\cond_{i+2\notin M}(-1)^{\1+\(\1+\epsilon+\sum\delta\)(\delta_i+\delta_{i+1})}\!
\ll g_{i,i+2,q,j}^{(2)}\bigl((i{+}1..j]{\setminus}M\bigr)\rr\!.
\end{align*}

Finally, using Lemma~\ref{lemma:compol:11}\ref{lemma:compol:11:part:d}, we see that the coefficient of
$H_k^{\epsilon+\sum\delta}$ (where $k>i+2$) is:
\begin{align*}
\sum_{\tiny\begin{array}{c}k\in(i{+}2..q]\!\setminus\! M\\k{-}1\in M\end{array}}
\hspace{-8mm}
(-1)^{\delta_i(\epsilon+\sum\delta|_{(i..j)})+\1+\(\1+\epsilon+\sum\delta\)\sum\delta|_{(i..k)}}\!\Bigl\llbracket (x_i\!-\!y_{i+1})
 \,g_{i+1,k,q,j}^{(2)}\bigl((i{+}1..j]\!\setminus\! M\bigr)\Bigr\rrbracket
\\
+
\sum_{\tiny\begin{array}{c}k\in(i{+}2..q]\setminus M\\k{-}1\in M\end{array}}
\hspace{-8mm}
(-1)^{\1+\(\1+\epsilon+\sum\delta\)\sum\delta|_{[i..k)}}\Bigl\llbracket (x_i-x_{i+1})g_{i,k,q,j}^{(2)}\Bigl(\{i{+}1\}\cup\bigl((i{+}1..j]\setminus M\bigr)\Bigr)\Bigr\rrbracket
\\
=
\sum_{\tiny\begin{array}{c}k\in(i{+}2..q]\setminus M\\k{-}1\in M\end{array}}
(-1)^{\1+\(\1+\epsilon+\sum\delta\)\sum\delta|_{[i..k)}}\\
\times
\Bigl\llbracket(x_i{-}y_{i+1})\,g_{i+1,k,q,j}^{(2)}\bigl((i{+}1..j]\setminus M\bigr)
+(x_i{-}x_{i+1})\,g_{i,k,q,j}^{(2)}\Bigl(\{i{+}1\}\cup\bigl((i{+}1..j]\setminus M\bigr)\Bigr)
\Bigr\rrbracket
\\
=\sum_{\tiny\begin{array}{c}k\in(i{+}2..q]\setminus M\\k{-}1\in M\end{array}}(-1)^{\1+\(\1+\epsilon+\sum\delta\)\sum\delta|_{[i..k)}}\Bigl\llbracket
g_{i,k,q,j}^{(2)}\bigl((i{+}1..j]\setminus M\bigr)\Bigr\rrbracket.
\end{align*}
Summarizing, we have
\begin{align*}
P_{i,j}^{\epsilon,\delta}(M)&=\Bigl\llbracket g_{i,i,q,j}^{(2)}\bigl((i{+}1..j]{\setminus}M\bigr)\Bigr\rrbracket H_i^{\epsilon+\sum\delta}
\\
&+\cond_{i+2\notin M}(-1)^{\1+\(\1+\epsilon+\sum\delta\)(\delta_i+\delta_{i+1})}\ll g_{i,i+2,q,j}^{(2)}\bigl((i{+}1..j]{\setminus}M\bigr)\rr H_{i+2}^{\epsilon+\sum\delta}
\\
&+\!\sum_{\tiny\begin{array}{c}k\in(i{+}2..q]\setminus M\\k{-}1\in M\end{array}}(-1)^{\1+\(\1+\epsilon+\sum\delta\)\sum\delta|_{[i..k)}}\Bigl\llbracket
g_{i,k,q,j}^{(2)}\bigl((i{+}1..j]\setminus M\bigr)\Bigr\rrbracket H_k^{\epsilon+\sum\delta}.
\end{align*}
As $(i{+}1..j]\setminus M=(i..j]\setminus M$, we get the required result.

\smallskip

{\it Case~4: $i+1<\bar q=\min M<j$.} In this case $j\in M$. By~{(\bf P\,-5)}, Lemma~\ref{lemma:rcoeff:1}
and the inductive hypothesis, we get
\begin{align*}
P_{i,j}^{\epsilon,\delta}(M)=
\hspace{-2mm}
\sum_{\gamma+\sigma=\epsilon}
\hspace{-2mm}
(-1)^{\gamma(\1+\epsilon+\sum\delta|_{[q..j)})}P_{i,q}^{\gamma,\delta|_{[i..q)}}\!(\{\bar q\})
\cond_{\sigma=\sum\delta_{[q..j)}}\!\ll g_{q,j}^{(1)}\bigl((q..j)\setminus M_{(q..j]}\bigr)\rr
\\
+
\hspace{-2mm}
\sum_{\tiny
\begin{array}{c}
k\in[i..q{-}1]\setminus M_{\bar{\vphantom{1}q}\mapsto\odd{q{-}1}}
\\
 k{-}1\in M_{\bar{\vphantom{1}q}\mapsto\odd{q{-}1}}\cup\{i{-}1,i\}
 \end{array}
 }
 \hspace{-14mm}
(-1)^{\cscript_{k>i}\cdot\1+\(\1+\epsilon+\sum\delta\)\sum\delta|_{[i..k)}}
\ll g_{i,k,q-1,j}^{(2)}\bigl((i..j]\setminus M_{\bar{\vphantom{1}q}\mapsto\odd{q{-}1}}\bigr)\rr H_k^{\epsilon+\sum\delta}.
\end{align*}
In the last sum, the summation parameter $k$ can take only two values $i$ and $i+1$,
as $k{-}1\in M_{\bar{\vphantom{1}q}\mapsto\odd{q{-}1}}\cup\{i{-}1,i\}$ holds only for these values.
Hence, by Lemma~\ref{lemma:compol:12} applied for $S=(q..j]\setminus M$ and ({\bf P-1}), $P_{i,j}^{\epsilon,\delta}(M)$ equals
\begin{align*}
(-1)^{(\epsilon+\sum\delta|_{[q..j)})(\1+\epsilon+\sum\delta|_{[q..j)})} P_{i,q}^{\epsilon+\sum\delta_{[q..j)},\delta|_{[i..q)}}\!\(\left\{\bar q\right\}\)
\ll g_{q,j}^{(1)}\bigl((q..j)\setminus M_{(q..j]}\bigr)\rr
\\
+\ll g_{i,i,q-1,j}^{(2)}\bigl((i..j]\setminus M_{\bar{\vphantom{1}q}\mapsto\odd{q{-}1}}\bigr)\rr H_i^{\epsilon+\sum\delta}
\\
+(-1)^{\1+\(\1+\epsilon+\sum\delta\)\delta_i}\ll g_{i,i+1,q-1,j}^{(2)}\bigl((i..j]\setminus M_{\bar{\vphantom{1}q}\mapsto\odd{q{-}1}}\bigr)\rr H_{i+1}^{\epsilon+\sum\delta}
\\
=\Bigl(H_i^{\epsilon+\sum\delta}-(-1)^{\delta_i(\epsilon+\sum\delta|_{(i..j)})}H_{i+1}^{\epsilon+\sum\delta}\Bigr)\ll g_{q,j}^{(1)}\bigl((q..j)\setminus M\bigr)\rr
\\
+\!\ll g_{i,i,q-1,j}^{(2)}\bigl((i..j]\!\setminus\! M\bigr)\!\rr\! H_i^{\epsilon+\sum\delta}
\!\!
+\!(-1)^{\1+\(\1+\epsilon+\sum\delta\)\delta_i}
\!\ll g_{i,i+1,q-1,j}^{(2)}\bigl((i..j]\!\setminus\! M\bigr)\rr\! H_{i+1}^{\epsilon+\sum\delta}
\\
=
\ll
g_{q,j}^{(1)}\bigl((q..j)\setminus M\bigr)+g_{i,i,q-1,j}^{(2)}\bigl((i..j)\setminus M\bigr)
\rr
H_i^{\epsilon+\sum\delta}
\\
+(-1)^{\1+\(\1+\epsilon+\sum\delta\)\delta_i}
\ll
g_{q,j}^{(1)}\bigl((q..j)\setminus M\bigr)+g_{i,i+1,q-1,j}^{(2)}\bigl((i..j)\setminus M\bigr)
\rr
H_{i+1}^{\epsilon+\sum\delta}
\\
=
\ll
g_{i,i,q,j}^{(2)}\bigl((i..j]\setminus M\bigr)
\rr
H_i^{\epsilon+\sum\delta}
+(-1)^{\1+\(\1+\epsilon+\sum\delta\)\delta_i}
\ll
g_{i,i+1,q,j}^{(2)}((i..j]\setminus M)
\rr
H_{i+1}^{\epsilon+\sum\delta}.
\end{align*}

{\it Case 5: $i+1<\min M<q\le j$.} We set $m:=\min M$.  By~({\bf P-6}) and~({\bf P-1}), we get
\begin{align*}
P_{i,j}^{\epsilon,\delta}(M)
=(-1)^{\sum\delta|_{[i..m)}(\epsilon+\sum\delta|_{[m..j)})}
B(i,i+1)
\,P_{m,j}^{\epsilon+\sum\delta|_{[i..m)},\delta|_{[m..j)}}\(M_{(m..j]}\)
\\
-
\hspace{-2 mm}
\sum_{\xi\in\{\0,\1\}}
\hspace{-2 mm}
P_{i,m}^{\xi,\delta|_{[i..m)}}(\{\bar m\})
\hspace{-2 mm}
\sum_{\tau+\sigma=\epsilon+\delta_m+\xi}
\hspace{-2 mm}
(-1)^{(\xi+\tau+\delta_m)(\epsilon+\sum\delta|_{[m..j)}+\xi)}
P_{m,j}^{\sigma,\chi_m^\tau\cup\,\delta|_{(m..j)}}\!\(M_{(m..j]}\)
\\
+P_{i,j}^{\,\epsilon,\delta}(M_{m\mapsto m{-}1})+P_{i,j}^{\,\epsilon,\delta}(M\setminus\{m\})\,C(m{-}1,m)
\end{align*}

By Proposition~\ref{proposition:rcoeff:1}, the middle summand equals
\begin{align*}
-2P_{i,m}^{\epsilon+\sum\delta|_{[m..j)},\delta|_{[i..m)}}(\{\bar m\})
\ll g_{m,m,q,j}^{(2)}\bigl((m..j]\setminus M_{(m..j]}\bigr)\rr H_m
\\
=-2\Bigl(H_i^{\epsilon+\sum\delta}-(-1)^{\delta_i\(\epsilon+\sum\delta|_{(i..j)}\)}H_{i+1}^{\epsilon+\sum\delta}\Bigr)
\ll g_{m,m,q,j}^{(2)}\bigl((m..j]\setminus M_{(m..j]}\bigr)\rr H_m
\\
=-2
\Bigl(H_i^{\epsilon+\sum\delta}-(-1)^{\delta_i\(\epsilon+\sum\delta|_{(i..j)}\)}H_{i+1}^{\epsilon+\sum\delta}\Bigr)
\ll g_{m,m,q,j}^{(2)}\bigl((m..j]\setminus M_{(m..j]}\bigr)\rr H_m.
\end{align*}

Now, using the inductive hypothesis, we gather the ``coefficients'' of
$H_i^{\epsilon+\sum\delta}$,
$H_{i+1}^{\epsilon+\sum\delta}$, $H_m^{\epsilon+\sum\delta}$,
$H_{m+1}^{\epsilon+\sum\delta}$ and $H_k^{\epsilon+\sum\delta}$ for $k>m+1$.

The coefficient of $H_i^{\epsilon+\sum\delta}$ is
\begin{align*}
P_{i,j}^{\epsilon,\delta}(M)
=-2H_m\ll g_{m,m,q,j}^{(2)}\bigl((m..j]\setminus M_{(m..j]}\bigr)\rr
+\ll g_{i,i,q,j}^{(2)}\bigl((i..j]\setminus M_{m\mapsto m{-}1}\bigr)\rr
\\
+C(m{-}1,m)\ll g_{i,i,q,j}^{(2)}\Bigl((i..j]\setminus\bigl(M\setminus\{m\}\bigr)\Bigr)\rr\\
=\Bigl\llbracket(x_m-y_m)\,g_{m,m,q,j}^{(2)}\bigl((m..j]\setminus M\bigr)
+g_{i,i,q,j}^{(2)}\Bigl((i..m{-}1)\cup\{m\}\cup\bigl((m..j]\setminus M\bigr)\Bigr)\\
+(x_{m-1}-x_m)\,g_{i,i,q,j}^{(2)}\Bigl((i..m]\cup\bigl((m..j]\setminus M\bigr)\Bigr)\Bigr\rrbracket
=\Bigl\llbracket g^{(2)}_{i,i,q,j}\Bigl((i..m)\cup\bigl((m..j]\setminus M\bigr)\Bigr)\Bigr\rrbracket,
\end{align*}
where the last equality comes from Lemma~\ref{lemma:compol:13}\ref{lemma:compol:13:part:a}.

Using Lemma~\ref{lemma:compol:13}\ref{lemma:compol:13:part:b} in the last equality below,
we see that the coefficient of $H_{i+1}^{\epsilon+\sum\delta}$ is:
\begin{align*}
(-1)^{\1+\(\1+\epsilon+\sum\delta\)\delta_i}\Bigl(
-2\,H_m\ll g_{m,m,q,j}^{(2)}\bigl((m..j]{\setminus}M_{(m..j]}\bigr)\rr
\\
+\cond_{i+1<m-1}\!\ll g^{(2)}_{i,i+1,q,j}\bigl((i..j]{\setminus}M_{m\mapsto m{-}1}\bigr)\rr
\\
+C(m{-}1,m)\ll g^{(2)}_{i,i+1,q,j}\Bigl((i..j]\setminus\bigl(M\setminus\{m\}\bigr)\Bigr)\rr\Bigr)\\
=(-1)^{\1+\(\1+\epsilon+\sum\delta\)\delta_i}\Bigl\llbracket(x_m-y_m)\,g_{m,m,q,j}^{(2)}\bigl((m..j]{\setminus}M\bigr)
\\
+\cond_{i+1<m-1}g^{(2)}_{i,i+1,q,j}\Bigl((i..m{-}1)\cup\{m\}\cup\bigl((m..j]{\setminus}M\bigr)\Bigr)
\\
+(x_{m-1}-x_m)\,g^{(2)}_{i,i+1,q,j}\Bigl((i..m]\cup\bigl((m..j]\setminus M\bigr)\Bigr)\Bigr\rrbracket\\
=(-1)^{\1+\(\1+\epsilon+\sum\delta\)\delta_i}
\ll g^{(2)}_{i,i+1,q,j}\Bigl((i..m)\cup\bigl((m..j]\setminus M\bigr)\Bigr)\rr.
\end{align*}

Using Lemma~\ref{lemma:compol:13}\ref{lemma:compol:13:part:c} in the last equality below,
we see that the coefficient of $H_m^{\epsilon+\sum\delta}$ is:
\begin{align*}
(-1)^{\sum\delta|_{[i..m)}\epsilon+\sum\delta|_{[i..m)}\sum\delta|_{[m..j)}}
B(i,i+1)\ll g^{(2)}_{m,m,q,j}\bigl((m..j]\setminus M_{(m..j]}\bigr)\rr
\\
+(-1)^{\1+(\1+\epsilon+\sum\delta)\sum\delta|_{[i..m)}}\ll g_{i,m,q,j}^{(2)}\bigl((i..j]\setminus M_{m\mapsto m{-}1}\bigr)\rr
\\
=(-1)^{\1+(\1+\epsilon+\sum\delta)\sum\delta|_{[i..m)}}
\Bigl\llbracket-(x_i-y_{i+1})\,g^{(2)}_{m,m,q,j}\bigl((m..j]\setminus M\bigr)
\\
+g_{i,m,q,j}^{(2)}
\Bigl((i..m{-}1)\cup\{m\}\cup\bigl((m..j]{\setminus}M\bigr)\Bigr)\Bigr\rrbracket=0.
\end{align*}

Using Lemma~\ref{lemma:compol:13}\ref{lemma:compol:13:part:d} in the last equality below,
we see that the coefficient of $H_{m+1}^{\epsilon+\sum\delta}$ is:

\begin{align*}
\cond_{\hspace{-.1mm}m+1\notin M}
(-1)^{\sum\delta|_{[i..m)}\epsilon+\sum\delta|_{[i..m)}\sum\delta|_{[m..j)}+\1+(\1+\epsilon+\sum\delta)\delta_m}
B(i,i+1)
\\
\times
\ll g_{m,m+1,q,j}^{(2)}((m..j]\setminus M_{(m..j]})\rr
\\
=\cond_{m+1\notin M}(-1)^{\1+(\1+\epsilon+\sum\delta)\sum\delta|_{[i..m]}}
\ll(x_i-y_{i+1})\,g_{m,m+1,q,j}^{(2)}((m..j]\setminus M)\rr
\\
=\cond_{m+1\notin M}(-1)^{\1+(\1+\epsilon+\sum\delta)\sum\delta|_{[i..m+1)}}
\ll g_{m,m+1,q,j}^{(2)}\Bigl((i..m)\cup\bigl((m..j]\setminus M\bigr)\Bigr)\rr.
\end{align*}

By Lemma~\ref{lemma:compol:13}\ref{lemma:compol:13:part:e},
we see that for $k>m+1$, the coefficient of $H_k^{\epsilon+\sum\delta}$ is:
\begin{align*}
(-1)^{\sum\delta|_{[i..m)}\epsilon+\sum\delta|_{[i..m)}\sum\delta|_{[m..j)}}\,
B(i,i+1)
\\
\times
\sum_
{
\tiny
\begin{array}{c}
k\in(m{+}1..q]\setminus M_{(m..j]}\\ k{-}1\in M_{(m..j]}\cup\{m{-}1,m\}
\end{array}
}
\hspace{-15mm}(-1)^{\1+(\1+\epsilon+\sum\delta)\sum\delta|_{[m..k)}}\!\!\ll g_{m,k,q,j}^{(2)}\bigl((m..j]{\setminus}M_{(m..j]}\bigr)\!\rr
\\
+\sum_
{\tiny
\begin{array}{c}
k\in(m{+}1..q]\setminus M_{m\mapsto m{-}1}\\ k{-}1\in M_{m\mapsto m{-}1}\cup\{i{-}1,i\}
\end{array}
}
\hspace{-15mm}
(-1)^{\1+\(\1+\epsilon+\sum\delta\)\sum\delta|_{[i..k)}}\ll g_{i,k,q,j}^{(2)}\bigl((i..j]\setminus M_{m\mapsto m{-}1}\bigr)\rr
\\
+C(m-1,m)
\hspace{-8mm}
\sum_{
\tiny
\begin{array}{c}
k\in(m{+}1..q]\setminus\bigl(M\setminus\{m\}\bigr)\\ k{-}1\in\bigl(M\setminus\{m\}\bigr)\cup\{i{-}1,i\}
\end{array}}
\hspace{-15mm}(-1)^{\1+\(\1+\epsilon+\sum\delta\)\sum\delta|_{[i..k)}}\ll g_{i,k,q,j}^{(2)}\Bigl((i..j]\setminus\bigl(M\setminus\{m\}\bigr)\Bigr)\rr
\end{align*}
\begin{align*}
=
\sum_
{
\tiny
\begin{array}{c}
k\in(m{+}1..q]\setminus M\\ k{-}1\in M
\end{array}
}
(-1)^{\1+(\1+\epsilon+\sum\delta)\sum\delta|_{[i..k)}}\\
\times\Bigl\llbracket
(x_i-y_{i+1})\,g_{m,k,q,j}^{(2)}\bigl((m..j]{\setminus}M\bigr)
+g_{i,k,q,j}^{(2)}\Bigl((i..m{-}1)\cup\{m\}\cup\bigl((m..j]\setminus M\bigr)\Bigr)
\\
+(x_{m-1}-x_m)\,g_{i,k,q,j}^{(2)}\Bigl((i..m]\cup\bigl((m..j]\setminus M\bigr)\Bigr)
\Bigr\rrbracket
\\
=
\sum_
{
\tiny
\begin{array}{c}
k\in(m{+}1..q]\setminus M\\ k{-}1\in M
\end{array}
}
(-1)^{\1+(\1+\epsilon+\sum\delta)\sum\delta|_{[i..k)}}
\Bigl\llbracket
g^{(2)}_{i,k,q,j}\Bigl((i..m)\cup\bigl((m..j]\setminus M\bigr)\Bigr)
\Bigr\rrbracket.
\end{align*}

Note that $(i..m)\cup\bigl((m..j]\setminus M\bigr)=(i..j]\setminus M$, since $m\in M$. Hence and from the above formulas
we get
\begin{align*}
P_{i,j}^{\epsilon,\delta}(M)=\ll g_{i,i,q,j}^{(2)}\bigl((i..j]\setminus M\bigr)\rr H_i^{\epsilon+\sum\delta}
\\
+(-1)^{\1+\(\1+\epsilon+\sum\delta\)\delta_i}\ll g^{(2)}_{i,i+1,q,j}\bigl((i..j]\setminus M\bigr)\rr H_{i+1}^{\epsilon+\sum\delta}
\\
+\cond_{m+1\notin M}(-1)^{\1+\(\1+\epsilon+\sum\delta\)\sum\delta|_{[i..m{+}1)}}\!\ll g_{i,m+1,q,j}^{(2)}\bigl((i..j]\setminus M\bigr)\rr\! H_{m+1}^{\epsilon+\sum\delta}
\\
+\sum_{\tiny
\begin{array}{c}
k\in(m{+}1..q]\setminus M\\
k{-}1\in M
\end{array}
}
\hspace{-2 mm}(-1)^{\1+\(\1+\epsilon+\sum\delta\)\sum\delta|_{[i..k)}}\Bigl\llbracket g_{i,k,q,j}^{(2)}\bigl((i..j]\setminus M\bigr)\Bigr\rrbracket H_k^{\epsilon+\sum\delta},
\end{align*}
which is exactly the required formula.
\end{proof}

\chapter{Combinatorics of signature sequences}\label{ChComb}

\section{Marked signature sequences}

A {\em signature sequence} is a finite sequence with entries $+$ or $-$.
A {\em marked signature sequence} is a finite sequence with entries of the form
$+_i$ or $-_i$
with $i\in\Z$. We refer to ``$+_i$'' as a ``$+$'' marked with $i$ and similarly for $-$.
Here is an example of a marked signature sequence:
$+_1-_2$.
When convenient, we will ignore marks and consider a marked signature sequence as just a signature sequence.
Given several marked signature sequences $w_1,\dots,w_n$ we denote by $w_1\dots w_n$ the marked signature sequence obtained by the concatenation of
$w_1,\dots,w_n$.
We denote the empty sequence by $\emptyset$.

Our signature sequences will usually arise from the following set up.
Let $I$ be a finite subset of $\Z$ and $u$ be a map from $I$ to the set of signature sequences (usually each $u_i$ will be a one-element or a two-element sequence).
If $J=\{j_1<\cdots<j_m\}$
is a subset of $I$, we denote by $\prod_{i\in J} u_i$ the {\em marked}\, signature sequence $u_{j_1}\cdots u_{j_m}$, where we mark the entries of each $u_{k}$ with $k$. We will also abbreviate $\prod u=\prod_{i\in I} u_i$.


Let $u$ be a marked signature sequence.
The {\it reduction} of $u$, denoted $[u]$, is the
marked signature sequence obtained from $u$ by
successively erasing all subsequences $-+$ (whatever the marks are).
We point out that this definition is different from the one used in the Introduction (where we were erasing $+-$'s instead). The reason for this discrepancy is that in the Introduction it was convenient to read off the signature sequence of a partition $(\la_1,\la_2,\dots)$ going from bottom left to top right, while in the main body of the paper we are going to read off the signature sequence of a weight $(\la_1,\dots,\la_n)$ starting with $\la_1$ and continuing onto $\la_n$.

We will use the well-known fact that the reduction is well defined, that is, it is independent of the order in which one erases subsequences $-+$:

\begin{proposition}\label{proposition:pm:1}
The reduction of any marked signature sequence is well defined.
\end{proposition}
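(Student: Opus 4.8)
The statement is the standard ``diamond lemma'' / confluence fact for the rewriting system that erases occurrences of $-+$. I would prove it by the usual local-confluence-plus-termination argument. Termination is immediate: each erasure of a $-+$ subsequence decreases the length by $2$, so any sequence of erasures starting from a fixed $u$ must stop after at most $\lfloor (\text{length }u)/2\rfloor$ steps. Hence by Newman's lemma it suffices to establish \emph{local confluence}: if $u$ rewrites in one step to $v_1$ and in one step to $v_2$ (by erasing two, possibly different, occurrences of $-+$), then $v_1$ and $v_2$ have a common further reduct.

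For local confluence I would distinguish cases according to how the two erased subsequences sit inside $u$. Write $u = a\,(-+)\,b\,(-+)\,c$ for the generic ``disjoint'' case, where the two copies of $-+$ being erased do not overlap; then erasing the first gives $v_1 = a\,b\,(-+)\,c$ and erasing the second gives $v_2 = a\,(-+)\,b\,c$, and a further erasure of the remaining $-+$ in each yields the common reduct $a\,b\,c$. (Here the positions in $v_1$ and $v_2$ are literally still present, so no subtlety arises; the marks are irrelevant throughout, as the definition specifies.) The remaining case is when the two occurrences of $-+$ overlap, which for the length-$2$ pattern $-+$ can only mean they share exactly one symbol: $u$ contains a factor $-\,+\,-\,+$ wait — overlap of $-+$ with $-+$ sharing one letter would require the pattern $+\,-$ wait, let me phrase it carefully. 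Two length-$2$ windows overlapping in one position inside $u$ means $u$ has a factor $xyz$ with $xy = -+$ and $yz = -+$, forcing $y = +$ and $y = -$, impossible. So overlapping occurrences of $-+$ cannot share exactly one letter, and sharing both letters means they are the same occurrence. Therefore overlaps never occur, and the disjoint case above is the only one to check.

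Putting these together: local confluence holds (trivially, since only the disjoint case is possible, and it was handled), termination holds, so by Newman's lemma the rewriting system is confluent; combined with termination this gives the existence of a \emph{unique} normal form, i.e. the reduction $[u]$ is well defined independently of the order of erasures. I would write this up in a few lines, perhaps inducting directly on the length of $u$ rather than invoking Newman's lemma by name, since the argument is so short: if $u$ has no $-+$ there is nothing to prove; otherwise pick any two one-step reducts $v_1, v_2$, observe (by the case analysis above) that they have a common one- or two-step reduct $w$, and apply the inductive hypothesis to $v_1$, $v_2$, $w$ (all strictly shorter than $u$) to conclude that the normal form reached from $u$ via $v_1$ equals that reached via $w$, which equals that reached via $v_2$. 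The only mild subtlety — and the ``main obstacle'' such as it is — is simply being careful that overlapping occurrences of the pattern are genuinely impossible, so that the disjoint case is exhaustive; everything else is routine.
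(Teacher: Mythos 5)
Your proof is correct. The paper itself treats this as a well-known fact and does not print a proof (a short proof exists in the source but is suppressed behind an \texttt{iffalse}); that hidden argument is an induction on the length of $u$ that peels off the last letter: writing $u = Wa$, one shows any two one-step reducts of $u$ reduce to a common form by reducing $W$ first, with a separate case when one of the erasures removes the final two symbols. Your route is a local-confluence argument in the style of Newman's lemma, and its clean point — which the paper's version does not isolate — is the observation that two occurrences of the pattern $-+$ can never overlap in exactly one position, since that would force a single symbol to be both $+$ and $-$; hence the only case is disjoint erasures, which obviously commute. That observation makes local confluence essentially trivial, so your version is, if anything, shorter and more transparent than the suppressed one. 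Your concluding remark about inducting directly on length rather than invoking Newman's lemma by name is sound: one shows the two first steps lead to reducts $v_1,v_2$ of $u$ with a common further reduct $w$, and then the inductive hypothesis applied to the strictly shorter $v_1$, $v_2$, $w$ identifies all three normal forms. No gap.
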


\begin{corollary}\label{corollary:pm:1}
For arbitrary marked signature sequences $u,v$, we have $[uv]=\bigl[[u]v\bigr]=\bigl[u[v]\bigr]=\bigl[[u][v]\bigr]$.
\end{corollary}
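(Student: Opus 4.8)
\textbf{Plan for proving Corollary~\ref{corollary:pm:1}.}

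The idea is to deduce all four equalities from Proposition~\ref{proposition:pm:1}, using the key notion $U\leadsto V$ (meaning $V$ is obtained from $U$ by successively erasing some subsequences $-+$) and the fact that reduction is well defined: if $U\leadsto V$ then $[U]=[V]$. Indeed, if $U\leadsto V$, then since $[V]$ is obtained from $V$ by further erasures of $-+$, we have $U\leadsto[V]$, and $[V]$ admits no more erasures, so by uniqueness $[U]=[V]$.

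First I would establish the elementary monotonicity fact: if $U\leadsto U'$, then $WU\leadsto WU'$ and $UW\leadsto U'W$ for any marked signature sequence $W$ (erasing a subsequence inside $U$ is still a legal erasure inside the longer sequence, since marks are irrelevant to the erasure rule). Combined with the transitivity of $\leadsto$, this gives for instance $uv\leadsto [u]v$ because $u\leadsto[u]$. Then applying the observation above, $[uv]=[[u]v]$. Symmetrically $uv\leadsto u[v]$ gives $[uv]=[u[v]]$. For the last equality, chain the two: $uv\leadsto[u]v\leadsto[u][v]$, so $[uv]=[[u][v]]$. That is the entire argument.

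I do not expect any real obstacle here; the only point requiring a little care is making sure the well-definedness statement is invoked in the correct form, namely that once a sequence is irreducible (no $-+$ subsequence remains), it is \emph{the} reduction of anything that reduces to it — this is exactly what Proposition~\ref{proposition:pm:1} provides. One should also note explicitly that the erasure operation ignores marks, so erasures valid in a subword remain valid in the whole word; this is already emphasized in the text (``whatever the marks are''). If one wanted to avoid even mentioning $\leadsto$, an alternative is a direct induction on the total length of $uv$: pick any $-+$ inside $u$ (or inside $v$), erase it, and apply the induction hypothesis together with Proposition~\ref{proposition:pm:1} to patch things up; but the $\leadsto$ formulation is cleaner and I would present that one.
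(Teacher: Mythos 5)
Your argument is correct and is exactly the intended deduction from Proposition~\ref{proposition:pm:1}: the paper states the corollary without proof, treating it as an immediate consequence of well-definedness of reduction, and your chain $uv\leadsto[u]v\leadsto[u][v]$ together with the observation that $U\leadsto V$ forces $[U]=[V]$ supplies precisely the missing details. Nothing further is needed.
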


It is also clear that the reduction $[u]$ is always a sequence of $+$'s (possibly empty) followed by a sequence of $-$'s (possibly empty):

\begin{proposition}\label{proposition:pm:2}
If $u$ contains $a$ symbols $+$ and $b$ symbols $-$ (with whatever marks), then $[u]=+^s-^r$ (with some marks), where $s-r=a-b$.
\end{proposition}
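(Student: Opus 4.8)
The statement to prove is Proposition~\ref{proposition:pm:2}: if $u$ contains $a$ plusses and $b$ minuses, then $[u] = +^s -^r$ with $s - r = a - b$. The plan is to derive this almost entirely from what has already been established about reduction, so that only a short argument remains.

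First I would record the two facts that make reduction a well-behaved operation. By Proposition~\ref{proposition:pm:1}, $[u]$ is well defined: the result does not depend on the order in which we erase subsequences $-+$. And each erasure of a subsequence $-+$ removes exactly one $+$ and exactly one $-$, so the difference (number of $+$'s) minus (number of $-$'s) is an invariant of the erasure process. Hence if $[u]$ has $s$ plusses and $r$ minuses, then $s - r = a - b$. This takes care of the numerical claim immediately, so the only remaining content is the \emph{shape} claim: that $[u]$ consists of all its $+$'s followed by all its $-$'s, i.e.\ that no $-$ precedes a $+$ in $[u]$.

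For the shape claim I would argue by contradiction, or equivalently observe that $[u]$ is by definition \emph{not reducible}: it contains no subsequence $-+$. Suppose a $-$ occurred somewhere to the left of a $+$ in $[u]$. Then, reading left to right, there is a first position where a $+$ is immediately preceded (after deleting nothing further) by a $-$; more carefully, if the sequence is not of the form $+^s-^r$, then there exist adjacent entries equal to $-$ followed by $+$ — because in any finite $\pm$ sequence that is not ``all $+$'s then all $-$'s'' there must be an index $k$ with the $k$th entry $-$ and the $(k{+}1)$st entry $+$ (take $k$ to be the position of the last $+$ that has some $-$ before it; the entry just before that $+$, if it is the nearest such, can be taken to be $-$ after noting any intervening $+$'s would contradict minimality — I would phrase this cleanly as: a binary word avoiding the factor $-+$ is necessarily of the form $+^s-^r$, which is an elementary fact). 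Such an adjacent $-+$ could still be erased, contradicting the fact that $[u]$ is fully reduced. Therefore $[u] = +^s-^r$ for some $s,r \ge 0$, and combined with the invariant above, $s - r = a - b$.

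The argument is short and the only place requiring any care is the elementary combinatorial lemma that a $\pm$-word with no factor $-+$ has the form $+^s-^r$; this is the ``main obstacle'' only in the sense of needing a crisp one-line justification (e.g.\ once a $-$ appears, every later entry must be $-$, since a later $+$ would, taking the last $-$ before it, produce an adjacent $-+$). Everything else is bookkeeping with the invariant and a direct appeal to Proposition~\ref{proposition:pm:1} for well-definedness. I would present it as: (i) well-definedness gives $[u]$ is unambiguous and contains no $-+$ factor; (ii) the elementary lemma forces the shape $+^s-^r$; (iii) counting under each erasure gives $s-r = a-b$.
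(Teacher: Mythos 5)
Your argument is correct, and it is essentially the proof the paper has in mind: the paper states this proposition without proof, introducing it with "it is also clear that the reduction $[u]$ is always a sequence of $+$'s followed by $-$'s," and your write-up simply spells out the two obvious observations (the $-+$-free shape forces $+^s-^r$, and the count difference is invariant under erasures) that make that clear.
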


\begin{corollary}
\label{corollary:pm:2}
Let $I$ be a finite subset of $\Z$ and
$u:I\to\{\emptyset,--,+-,++\}$ with 
$\bigl[\prod u\bigr]=+^s-^r$. Then $s-r\=0\pmod 2$.
\end{corollary}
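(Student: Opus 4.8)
The statement to prove is Corollary~\ref{corollary:pm:2}: if $u:I\to\{\emptyset,--,+-,++\}$ and $[\prod u]=+^s-^r$, then $s-r\equiv 0\pmod 2$. The key observation is that every value of $u$ lies in the set $\{\emptyset,--,+-,++\}$, and each of these sequences has \emph{even length}: $|\emptyset|=0$, $|{-}{-}|=|{+}{-}|=|{+}{+}|=2$. Consequently the concatenation $\prod u$ has even length, say $2N$ where $N=\#\{i\in I\mid u_i\neq\emptyset\}$.

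\medskip

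First I would record that reduction preserves the parity of the length: erasing a subsequence $-+$ removes exactly $2$ entries, so $[\prod u]$ has the same length parity as $\prod u$, namely even. Thus $s+r$ is even. Now apply Proposition~\ref{proposition:pm:2}: writing $a$ for the total number of $+$'s in $\prod u$ and $b$ for the total number of $-$'s, we have $s-r=a-b$. But $s-r$ and $s+r$ have the same parity (their difference is $2r$), and we have just shown $s+r$ is even; hence $s-r$ is even, i.e.\ $s-r\equiv 0\pmod 2$, as claimed. This is essentially a one-line deduction once the length-parity bookkeeping is in place.

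\medskip

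Alternatively, and perhaps more transparently, one can argue directly at the level of $a$ and $b$: each $u_i\in\{\emptyset,--,+-,++\}$ contributes to $(a,b)$ one of the pairs $(0,0)$, $(0,2)$, $(1,1)$, $(2,0)$, and in every case the contribution to $a-b$ is even (namely $0$, $-2$, $0$, $2$ respectively). Summing over $i\in I$, we conclude $a-b$ is even, and since $s-r=a-b$ by Proposition~\ref{proposition:pm:2}, we are done.

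\medskip

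There is no real obstacle here: the corollary is an immediate consequence of Proposition~\ref{proposition:pm:2} together with the trivial fact that every allowed value of $u$ has even length (equivalently, contributes an even amount to $a-b$). The only thing worth spelling out carefully is that Proposition~\ref{proposition:pm:2} is applicable to the marked sequence $\prod u$ — which it is, since that proposition is stated for arbitrary marked signature sequences — and that we may ignore marks throughout, as both the reduction and the counts $a,b$ are independent of marks.
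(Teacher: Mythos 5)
Your proof is correct, and your second argument (each $u_i$ contributes one of $(0,0)$, $(0,2)$, $(1,1)$, $(2,0)$ to the pair $(a,b)$, hence an even amount to $a-b$, and then $s-r=a-b$ by Proposition~\ref{proposition:pm:2}) is exactly the intended deduction; the paper leaves this corollary without an explicit proof precisely because it is this immediate.
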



\begin{definition}\label{definition:pm:1}
{\rm
Let $I$ be a finite subset of $\Z$.
A {\em flow} on $I$  is a set of pairs $\Gamma=\{(a_1,b_1),\ldots,(a_N,b_N)\}$ satisfying the conditions \ref{definition:pm:1:1}-\ref{definition:pm:1:3} below:
{\leftmargini=30pt
\renewcommand{\labelenumi}{{\rm \theenumi}}
\renewcommand{\theenumi}{{\rm(\arabic{enumi})}}
\begin{enumerate}
\item\label{definition:pm:1:1}   $a_1,b_1,\ldots,a_N,b_N$ belong to $I$;
\item\label{definition:pm:1:2}  $a_1,\ldots,a_N$ are all distinct and $b_1,\ldots,b_N$ are also all distinct;
\item\label{definition:pm:1:3} $a_k<b_k$ for any $k=1,\ldots,N$.
\end{enumerate}}
\noindent If, instead, $\Gamma$ satisfies~\ref{definition:pm:1:1},~\ref{definition:pm:1:2} and \ref{definition:pm:1:3'}  below, then we call it a {\em weak flow}.
{\leftmargini=30pt
\renewcommand{\labelenumi}{{\rm \theenumi}}
\renewcommand{\theenumi}{{\rm(\arabic{enumi}${}^\prime$)}}
\begin{enumerate}
\setcounter{enumi}{2}
\item\label{definition:pm:1:3'} $a_k\le b_k$ for any $k=1,\ldots,N$,
\end{enumerate}}
\noindent
}
\end{definition}

By a {\it graph} we mean a set $\Gamma$ of pairs of integers, called the {\it edges} of $\Gamma$. If $E=(s,t)$ is an edge, then $s$ is its {\it source}
and $t$ is its {\it target}. We also say that $E$ {\it begins} at $s$ and {\it ends} at $t$. All of this applies to flows which are examples of graphs.

\begin{definition}\label{definition:pm:2}
{\rm
Let $u$ be a map from $I$ to the set of signature sequences.
A weak flow $\Gamma$ on $I$ is {\em coherent with $u$} if the following conditions \ref{definition:pm:2:4} and \ref{definition:pm:2:5} hold:
{\leftmargini=30pt
\renewcommand{\labelenumi}{{\rm \theenumi}}
\renewcommand{\theenumi}{{\rm(\arabic{enumi})}}
\begin{enumerate}
\setcounter{enumi}{3}
\item\label{definition:pm:2:4} if an edge of $\Gamma$ begins at $a$, then $u_a$ contains $-${\rm;}
\item\label{definition:pm:2:5} if an edge of $\Gamma$ ends at $b$, then $u_b$ contains $+$.
\end{enumerate}}
\noindent
\noindent
Moreover, $\Gamma$ is {\em fully coherent with $u$} if, in addition to \ref{definition:pm:2:4} and  \ref{definition:pm:2:5}, the following condition \ref{definition:pm:2:6} holds:
{\leftmargini=30pt
\renewcommand{\labelenumi}{{\rm \theenumi}}
\renewcommand{\theenumi}{{\rm(\arabic{enumi})}}
\begin{enumerate}
\setcounter{enumi}{5}
\item\label{definition:pm:2:6} for any $b\in I$ such that $u_b$ contains $+$,
there exists an edge of $\Gamma$ ending at $b$.
\end{enumerate}}
\noindent
Finally, an element $c\in I$ is a {\em bud} of $\Gamma$ with respect to $u$ if:
{\leftmargini=30pt
\renewcommand{\labelenumi}{{\rm \theenumi}}
\renewcommand{\theenumi}{{\rm(\arabic{enumi})}}
\begin{enumerate}
\setcounter{enumi}{6}
\item\label{definition:pm:2:7} no edge of $\Gamma$ begins at $c${\rm;}
\item\label{definition:pm:2:8} $u_c$ contains $-$.
\end{enumerate}}
}
\end{definition}

From now on until the end of the section $I$ is a finite subset of $\Z$.

\begin{lemma}\label{lemma:pm:1}
Let
$u:I\to\{\emptyset,-,+\}$ be 
such that
$\bigl[\prod u\bigr]=-^m$. Then there exists a flow on $I$ fully coherent with $u$ and having exactly $m$ buds with respect to~$u$.
\end{lemma}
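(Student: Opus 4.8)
The statement concerns a map $u:I\to\{\emptyset,-,+\}$ whose product reduces to $-^m$, and asks for a flow fully coherent with $u$ having exactly $m$ buds. The natural approach is induction on $|I|$, pairing off a $-$ with the $+$ immediately to its right whenever an adjacent cancelling pair $-+$ occurs in $\prod u$. Concretely, list $I=\{i_1<\dots<i_k\}$; if every $u_{i_t}$ is $\emptyset$ or $-$ there is nothing to cancel, the reduction is already $-^m$ with $m$ equal to the number of $-$'s, and the empty flow works (every $-$-position is a bud, no edges). Otherwise pick the \emph{first} position $i_t$ with $u_{i_t}=+$; by Proposition~\ref{proposition:pm:2} (or just by $\bigl[\prod u\bigr]=-^m$ having no $+$) this $+$ must get cancelled, so there is a nearest position $i_s<i_t$ to its left with $u_{i_s}=-$ and $u_{i_r}=\emptyset$ for all $s<r<t$. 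Add the edge $(i_s,i_t)$ to the flow and pass to $I':=I\setminus\{i_s,i_t\}$ with $u':=u|_{I'}$.

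The key verification is that this reduction is sound: erasing the substring $-+$ at positions $i_s,i_t$ from $\prod u$ is exactly one of the reduction moves, so by Proposition~\ref{proposition:pm:1} (well-definedness of reduction) we get $\bigl[\prod u'\bigr]=\bigl[\prod u\bigr]=-^m$, and $|I'|=|I|-2<|I|$, so the inductive hypothesis applies: there is a flow $\Gamma'$ on $I'$ fully coherent with $u'$ with exactly $m$ buds with respect to $u'$. I then set $\Gamma:=\Gamma'\cup\{(i_s,i_t)\}$. One checks $\Gamma$ is a flow on $I$: the new source $i_s$ and target $i_t$ are not used by $\Gamma'$ since they lie outside $I'$, so sources remain distinct and targets remain distinct, and $i_s<i_t$ gives condition~\ref{definition:pm:1:3}. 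Coherence with $u$: the new edge begins at $i_s$ with $u_{i_s}=-$ (condition~\ref{definition:pm:2:4}) and ends at $i_t$ with $u_{i_t}=+$ (condition~\ref{definition:pm:2:5}); old edges still satisfy these since $u'$ agrees with $u$ on $I'$. Full coherence: the only position in $I\setminus I'$ carrying a $+$ is $i_t$, and the new edge ends there; every other $+$-position lies in $I'$ and is covered by $\Gamma'$ by~\ref{definition:pm:2:6}.

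The delicate point is the bud count, and this is where I expect the main (minor) obstacle. A bud of $\Gamma$ w.r.t.\ $u$ is a position $c\in I$ with $u_c=-$ and no $\Gamma$-edge beginning at $c$. The position $i_s$ is \emph{not} a bud of $\Gamma$ (the new edge begins there), although it also was not an element of $I'$ at all. For $c\in I'$: $u_c=u'_c$, and $c$ has a $\Gamma$-edge beginning at it iff it has a $\Gamma'$-edge beginning at it (the new edge begins only at $i_s\notin I'$); so the buds of $\Gamma$ in $I'$ are precisely the buds of $\Gamma'$ w.r.t.\ $u'$, namely $m$ of them. Since $i_s$ is the only element of $I\setminus I'$ that could conceivably be a bud ($i_t$ has $u_{i_t}=+$) and it is not one, the total bud count is exactly $m$, completing the induction. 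I should double-check the base case $|I|\le 1$: if $I=\emptyset$ then $m=0$ and the empty flow works; if $|I|=1$ with $u_{i_1}=-$ then $m=1$ and the empty flow has $i_1$ as its unique bud; if $u_{i_1}\in\{\emptyset,+\}$ then $\bigl[\prod u\bigr]\ne -^m$ for $m\ge1$, and for $m=0$ the empty flow has no buds. The whole argument is essentially bookkeeping; the only thing requiring care is making sure the "first $+$, nearest $-$ to its left" choice always exists, which follows because a reduction sequence from $\prod u$ to $-^m$ must eventually erase that $+$, and the first erasure touching it pairs it with an adjacent $-$ on its left.
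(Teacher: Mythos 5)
Your proof is correct and rests on the same core idea as the paper's one-line argument, which simply observes that the collection of pairs $-_{a_i}+_{b_i}$ erased during any reduction of $\prod u$ to $-^m$ forms the desired flow. You unfold this inductively by always erasing the leftmost available $-+$ pair and verifying the flow, coherence, and bud-count conditions step by step, whereas the paper takes the global view in one stroke; the substance is the same.
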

\begin{proof}
By assumption, $\emptyset$'s and pairs $-+$ can be `erased' from $\prod u$ so that the sequence $-^m$ is left. Let the $i$th pair we have `erased' be $-_{a_i}+_{b_i}$. Then $\{(a_1,b_1),\dots,(a_N,b_N)\}$ is the desired flow.
\end{proof}

\begin{lemma}\label{lemma:pm:2}
Let 
$u:I\to\{\emptyset,-,+\}$ be 
such that $\bigl[\prod u\bigr]$
contains at least one $+$.
Then there exist a beginning $J$ of $I$ such that $\bigl[\prod_{i\in J} u_i\bigr]=+$ and a flow $\Gamma$ on $J$
coherent but not fully coherent with $u|_J$ and having no buds on $J$.
\end{lemma}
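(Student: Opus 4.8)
The statement asks, given $u:I\to\{\emptyset,-,+\}$ with $\bigl[\prod u\bigr]$ containing at least one $+$, for a beginning $J$ of $I$ with $\bigl[\prod_{i\in J}u_i\bigr]=+$ together with a flow $\Gamma$ on $J$ coherent but not fully coherent with $u|_J$ and having no buds on $J$. The natural approach is to locate the relevant $+$ in the reduced sequence and read off the data from the erasure process, exactly in the spirit of the proof of Lemma~\ref{lemma:pm:1}. First I would look at the beginnings $J$ of $I$ (the sets $I\cap(-\infty..i)$) and let the sequence $\bigl[\prod_{i\in J}u_i\bigr]$ grow as $J$ grows. By Proposition~\ref{proposition:pm:2} each such reduced sequence is of the form $+^{s}-^{r}$, and when $J$ passes through an index $c$ with $u_c=+$ the value $s-r$ jumps up by one, while passing through $u_c=-$ it drops by one, and $u_c=\emptyset$ leaves it unchanged. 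Since $\bigl[\prod u\bigr]$ contains a $+$, we have $s-r\ge 1$ for $J=I$; hence there is a first beginning $J$ at which $\bigl[\prod_{i\in J}u_i\bigr]$ first attains the form with exactly one more $+$ than $-$, i.e.\ $s-r=1$ where previously $s-r\le 0$. I will argue that at this first moment the reduced sequence is in fact $+$ (a single plus): the index $c=\max J$ must satisfy $u_c=+$, and just before adding $c$ the reduced word $\bigl[\prod_{i\in J\setminus\{c\}}u_i\bigr]$ had $s-r=0$, hence by Proposition~\ref{proposition:pm:2} it is of the form $+^t-^t$; appending a $+$ and reducing kills nothing and gives $+^t-^t+$, which reduces to $+^{t}-^{t-1}+$\dots — here I need to be a little careful, and the cleaner route is to choose $J$ to be the \emph{first} beginning with $\bigl[\prod_{i\in J}u_i\bigr]=+$ (single plus) rather than the first with $s-r=1$. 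Such a $J$ exists: track the reduced word along beginnings, note it is empty for $J=\emptyset$, and observe that each step changes it in a controlled way; the first time the reduced word equals the one-letter sequence $+$ gives the desired $J$, and I would verify existence by an easy induction on $|I|$ mirroring Lemma~\ref{lemma:pm:1}'s case analysis on $u_e$.

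Once $J$ is fixed with $\bigl[\prod_{i\in J}u_i\bigr]=+$, I construct $\Gamma$ by recording the erasure process, exactly as in Lemma~\ref{lemma:pm:1}: reduce $\prod_{i\in J}u_i$ to $+$ by successively deleting subsequences $-_{a}+_{b}$ and discarding $\emptyset$'s, and set $\Gamma:=\{(a_1,b_1),\dots,(a_N,b_N)\}$ where the $i$th deleted pair was $-_{a_i}+_{b_i}$. By Proposition~\ref{proposition:pm:1} the reduction is well defined, so $\Gamma$ does not depend on the order of erasures up to the multiset of pairs, which is all we need. Conditions \ref{definition:pm:1:1}--\ref{definition:pm:1:3} of a flow hold because each deleted pair has its $-$ strictly to the left of its $+$ (so $a_k<b_k$), and because each position of $\prod_{i\in J}u_i$ is deleted at most once (so the $a_k$'s are distinct and the $b_k$'s are distinct); here I use that each $u_i$ is a single symbol, so ``position'' and ``index'' coincide and the marks $a_k,b_k$ are genuinely elements of $J$. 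Coherence \ref{definition:pm:2:4}--\ref{definition:pm:2:5} is immediate: an edge begins at $a$ only if the symbol at position $a$ was a deleted $-$, so $u_a=-$ contains $-$; similarly an edge ends at $b$ only if $u_b=+$.

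It remains to check the two negative/quantitative assertions: $\Gamma$ has \emph{no} buds on $J$, and $\Gamma$ is \emph{not} fully coherent. For buds: a bud is a $c\in J$ with $u_c=-$ at which no edge of $\Gamma$ begins. But after all erasures the surviving word is a single $+$, which contains no $-$; hence every position carrying a $-$ in $\prod_{i\in J}u_i$ was eventually deleted, i.e.\ is the source of some edge of $\Gamma$. So there are no buds. For failure of full coherence: the word $+$ surviving after reduction is exactly one $+$ that was \emph{not} deleted, so the position $b\in J$ carrying it has $u_b=+$ yet no edge of $\Gamma$ ends at $b$; this violates \ref{definition:pm:2:6}, so $\Gamma$ is not fully coherent. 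Assembling these points finishes the proof.

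\textbf{Main obstacle.} The only delicate point is the existence and correct choice of $J$ — specifically, pinning down a beginning $J$ for which the reduced word is precisely the single symbol $+$ (not merely a word with one more $+$ than $-$), and making sure such a $J$ actually occurs as we enlarge beginnings. I expect to handle this by the same induction on $|I|$ used in Lemma~\ref{lemma:pm:1}: examining $e=\max I$, the three cases $u_e=\emptyset,-,+$, and using Corollary~\ref{corollary:pm:1} to relate $\bigl[\prod u\bigr]$ to $\bigl[\bigl(\bigl[\prod_{i\in I\setminus\{e\}}u_i\bigr]\bigr)u_e\bigr]$; the hypothesis that $\bigl[\prod u\bigr]$ contains a $+$ forces, at some stage, the reduced word over a beginning to cross from having no $+$ to having one, and the first such crossing — which necessarily happens at an index $c$ with $u_c=+$ applied to a reduced word of the form $-^r$ — yields the reduced word $+-^{r-1}$; iterating or, better, stopping at the first beginning where the reduced word is a single $+$ gives the statement. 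Everything else is bookkeeping of the erasure process already rehearsed in Lemma~\ref{lemma:pm:1}.
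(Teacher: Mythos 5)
Your argument is correct and rests on the same core device as the paper's — recording the $-+$ erasures as edges, as in Lemma~\ref{lemma:pm:1} — but you organize it differently. The paper runs a short induction on $|I|$: set $e=\max I$; if $\bigl[\prod_{i\in I\setminus\{e\}}u_i\bigr]$ still contains a $+$, recurse; otherwise (as you observe too) that reduced word must be empty and $u_e=+$, so Lemma~\ref{lemma:pm:1} directly supplies a budless flow fully coherent with $u|_{I\setminus\{e\}}$, which on $J:=I$ is coherent, fails condition~(6) at $e$, and is still budless. You instead identify $J$ outright as the first beginning whose reduced word is $+$ (your tracking argument is sound: the number of $+$'s in the reduced word along growing beginnings is non-decreasing and first becomes $1$ exactly when a $+$ is appended to an already empty reduced word), and then rerun the erasure-recording construction on all of $J$, in effect inlining Lemma~\ref{lemma:pm:1}. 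The paper's route is shorter because it cites Lemma~\ref{lemma:pm:1} as a black box; yours is a touch longer but self-contained. Two minor slips, neither affecting correctness: Proposition~\ref{proposition:pm:1} only asserts that the reduced \emph{word} is independent of the erasure order, not the multiset of erased pairs, so the sentence claiming $\Gamma$ is order-independent is an unjustified (and unnecessary) overclaim — any choice of erasure order gives a valid $\Gamma$; and in your closing paragraph the formula $+-^{r-1}$ for $\bigl[-^{r}+\bigr]$ is wrong when $r\ge 1$ (one gets $-^{r-1}$, with no $+$), but you immediately switch to the correct characterization, so the argument stands.
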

\begin{proof}
Induction on $|I|$. If $|I|=1$ then $u$ takes value $+$ at the only point of $I$, and we can set $J:=I$, $\Gamma:=\emptyset$.
Let $|I|>1$. Set $e:=\max I$, $E:=I\setminus\{e\}$, and write $\left[\prod\nolimits_{i\in E}u_i\right]=+^s-^r$.
By Corollary~\ref{corollary:pm:1},
\begin{equation}\label{equation:pm:2}
\textstyle
-^m=\bigl[\prod u\bigr]=
\bigl[\bigl[\prod\nolimits_{i\in E}u_i\bigr]u_e\bigr]=\bigl[(+^s-^r)\,u_e\bigr].
\end{equation}
If $s>0$, we can apply the inductive hypothesis to the restriction $u|_E$ to obtain
a suitable beginning $J$ of $E$ and a suitable flow $\Gamma$ on $J$.
Let
$s=0$. As $\bigl[\prod u\bigr]$
contains a `$+$', we must have $r=0$ and $u_e=+$. By Lemma~\ref{lemma:pm:1}, there exists a flow $\Gamma$ on $E$
fully coherent with $u|_E$ having no buds on $E$. It remains to set $J:=I$.
\end{proof}



\begin{lemma}\label{lemma:pm:3}
Let 
$u:I\to\{\emptyset,--,+-,++\}$
be such that $\bigl[\prod u\bigr]=-^m$ for some $m\geq 0$.
Then there exists a flow on $I$ fully coherent with $u$ and having exactly $m/2$ buds with respect to $u$.
\end{lemma}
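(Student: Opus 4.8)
The plan is to reduce Lemma~\ref{lemma:pm:3} (the two-symbol-per-index case) to Lemma~\ref{lemma:pm:1} (the one-symbol case), by ``splitting'' each two-element block $u_i$ into two separate one-element blocks sitting at two artificially distinct locations, running Lemma~\ref{lemma:pm:1} there, and then transporting the resulting flow back to $I$. Concretely, for each $i\in I$ introduce two new points $i'<i''$ lying strictly between $i$ and the next element of $I$ (so the natural order on the enlarged index set $\widetilde I:=\{i',i''\mid i\in I\}$ agrees, block by block, with the order used to form $\prod u$), and define $\widetilde u:\widetilde I\to\{\emptyset,-,+\}$ by letting $\widetilde u_{i'}$ be the first symbol of $u_i$ and $\widetilde u_{i''}$ the second. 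Then $\prod\widetilde u$ is literally the same signature sequence as $\prod u$ (only the marks change), so $[\prod\widetilde u]=-^m$, and Corollary~\ref{corollary:pm:2} tells us $m$ is even, say $m=2t$.

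First I would apply Lemma~\ref{lemma:pm:1} to $\widetilde u$ on $\widetilde I$: this yields a flow $\widetilde\Gamma=\{(\tilde a_k,\tilde b_k)\}$ on $\widetilde I$, fully coherent with $\widetilde u$, with exactly $m$ buds with respect to $\widetilde u$. Next I would define a map $\pi:\widetilde I\to I$ by $\pi(i')=\pi(i'')=i$ and push $\widetilde\Gamma$ forward: set $\Gamma:=\{(\pi(\tilde a_k),\pi(\tilde b_k))\mid k\}$, discarding any pair that becomes a loop (i.e.\ where $\pi(\tilde a_k)=\pi(\tilde b_k)$, which happens exactly for the edge inside a block $u_i=+-$, connecting $i'$ to $i''$). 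The point is that after this pushforward the endpoints are genuinely smaller/larger in $I$, so $\Gamma$ satisfies the strict inequality (3) of Definition~\ref{definition:pm:1} and hence is an honest flow on $I$. Coherence conditions \ref{definition:pm:2:4}, \ref{definition:pm:2:5} are immediate since $\widetilde u_{i'},\widetilde u_{i''}$ are the two symbols of $u_i$; full coherence \ref{definition:pm:2:6} for $\Gamma$ follows from full coherence of $\widetilde\Gamma$ once one checks that if $u_b$ contains a $+$ then the corresponding point $b'$ or $b''$ in $\widetilde I$ receives an edge of $\widetilde\Gamma$ whose source does not lie over $b$ itself.

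The heart of the argument — and the step I expect to be the main obstacle — is the bud count. A bud of $\widetilde\Gamma$ is a point of $\widetilde I$ carrying a $-$ with no outgoing edge; there are exactly $m=2t$ of them. When we pass to $\Gamma$ we must show the number of buds of $\Gamma$ with respect to $u$ is exactly $t$, i.e.\ the buds of $\widetilde\Gamma$ pair up two-to-one over the buds of $\Gamma$. The key local claim is: for each $i\in I$, the block $u_i$ (which is one of $\emptyset,--,+-,++$) contributes either $0$ or $2$ buds of $\widetilde\Gamma$, never $1$ — and $i$ is a bud of $\Gamma$ iff that contribution is $2$. For $u_i=\emptyset$ or $++$ this is clear. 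For $u_i=--$ the two points $i',i''$ both carry $-$; I must rule out the situation where exactly one of them is a source in $\widetilde\Gamma$, which amounts to a parity/matching argument on $\widetilde\Gamma$ restricted to the first $\le$ some index — essentially re-deriving Corollary~\ref{corollary:pm:2} ``locally''. The subtle case is $u_i=+-$: here $i'$ carries $+$, $i''$ carries $-$, and the internal edge $(i',i'')$ may or may not be present in $\widetilde\Gamma$; I would argue that full coherence forces $i'$ to be an edge-target, and then a counting argument shows $i''$ is a source in $\widetilde\Gamma$ precisely when it is not matched back to $i'$, so that the net bud contribution of this block to $\Gamma$ is $0$ — consistent with ``$0$ or $2$''. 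Rather than chase all these cases by hand, it is cleaner to instead mimic the short proof of Lemma~\ref{lemma:pm:1}: run the erasure of $\emptyset$'s and $-+$ pairs on $\prod u$ directly, record each erased pair $-_{a_k}+_{b_k}$, pushforward the marks through $\pi$, and observe that the $m$ residual $-$'s of $[\prod u\,]=-^m$ come in pairs sharing a common index (two residual $-$'s from the same block $--$), whence $\Gamma$ has $m/2$ buds. I would write the final proof in this second, more direct style, using the reordering freedom of Proposition~\ref{proposition:pm:1} (and Corollary~\ref{corollary:pm:1}) to justify erasing within-block and cross-block pairs in whatever order makes the pairing of residual $-$'s transparent.
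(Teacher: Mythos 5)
Your reduction to Lemma~\ref{lemma:pm:1} has two genuine gaps, both concerning the pushforward $\Gamma=\{(\pi(\tilde a_k),\pi(\tilde b_k))\}$.

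First, $\Gamma$ need not be a flow: condition~\ref{definition:pm:1:2} of Definition~\ref{definition:pm:1} (distinct sources, distinct targets) can fail. When $u_i=--$, both $i'$ and $i''$ carry $-$ and may both be edge-sources in $\widetilde\Gamma$; after pushforward they coalesce into the single source $i$, which then has two outgoing edges with different targets. Concretely, take $I=\{1,2,3\}$ with $u_1=--$, $u_2=+-$, $u_3=++$. Then $\prod u=-_1-_1+_2-_2+_3+_3$ reduces to $\emptyset$ (so $m=0$), and the unique non-crossing bracket matching gives $\widetilde\Gamma=\{(1'',2'),(2'',3'),(1',3'')\}$; the pushforward contains both $(1,2)$ and $(1,3)$, two edges out of $1$. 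A symmetric problem with coalescing targets arises when $u_j=++$. (Incidentally your worry about loops is misplaced: in a $+-$ block the $+$ precedes the $-$, so no within-block $-+$ pair ever forms, and loops never occur — the real issue is the coalescing.)

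Second, the bud count. The assertion you settle on — that ``the $m$ residual $-$'s of $[\prod u]=-^m$ come in pairs sharing a common index'' — is false. Take $I=\{1,2\}$, $u_1=--$, $u_2=+-$: then $\prod u=-_1-_1+_2-_2$ reduces in one step to $-_1-_2$, so $m=2$ but the two surviving $-$'s carry different marks. (Here the pushforward $\{(1,2)\}$ does happen to be a flow with $m/2=1$ bud, namely $2$, but not for the reason you give; your local claim that each block contributes $0$ or $2$ buds to $\widetilde\Gamma$, never $1$, already fails at block $1$, which contributes exactly the one bud $1'$.) The paper instead proves Lemma~\ref{lemma:pm:3} by a fresh induction on $|I|$, parallel to rather than reducing to Lemma~\ref{lemma:pm:1}: it peels off $e=\max I$, uses Corollary~\ref{corollary:pm:1} to pin down $[\prod_{i\in E}u_i]=+^s-^r$ in each of the four cases of $u_e$, and then either declares $e$ a new bud (case $u_e=--$) or grafts a single new edge $(d,e)$ from some existing bud $d$ onto the flow obtained inductively on $E$ (cases $u_e=+-,++$), tracking the bud count explicitly. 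That argument stays inside the class of flows at every step and never confronts either problem above.
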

\begin{proof} By Corollary~\ref{corollary:pm:2}, $m$ is even. We apply induction
on $|I|$. The case $I=\emptyset$ is clear. Let $|I|>0$. Set $e:=\max I$,
$E:=I\setminus\{e\}$, and $\left[\prod\nolimits_{i\in E}u_i\right]=+^s-^r$.
By Corollary~\ref{corollary:pm:1},
\begin{equation}\label{equation:pm:2.5}
\textstyle
-^m=\bigl[\prod u\bigr]=
\bigl[\bigl[\prod\nolimits_{i\in E}u_i\bigr]u_e\bigr]=\bigl[(+^s-^r)\,u_e\bigr].
\end{equation}

{\it Case 0: $u_e=\emptyset$}. The required flow for $u$ is the same as for $u|_E$.

{\it Case 1: $u_e=--$}. By~(\ref{equation:pm:2.5}), we have
$
-^m=[(+^s-^r)--]=+^s-^{r+2}.
$
So $s=0$, $r=m-2$, and the inductive hypothesis applies to the restriction $u|_E$.
Let $\Gamma$ be a flow on $E$ fully coherent with $u|_E$ and having on $E$ exactly $(m-2)/2$ buds.
Clearly, $\Gamma$ is fully coherent with $u$. Moreover, $e$ is its extra bud with respect to $u$. So $\Gamma$ has exactly $(m-2)/2+1=m/2$ buds.

{\it Case 2: $u_e=+-$}. By~(\ref{equation:pm:2.5}), we have $-^m=[(+^s-^r)+-]$.
So $s=0$ and $r=m>0$. So the inductive hypothesis applies to $u|_E$.
Let $\Gamma'$ be a flow on $E$ fully coherent with $u|_E$ having on $E$ exactly $m/2$ buds.
Since $m>0$, we obtain that $m/2>0$ and that $\Gamma'$ has at least one bud on $E$.
We denote it by $d$. Then $\Gamma:=\Gamma'\cup\{(d,e)\}$ is a flow fully coherent
with $u$. The flows $\Gamma$ and $\Gamma'$ both have $m/2$ buds, since $d$ is not a bud of $\Gamma$,
while $e$, on the contrary, is.

{\it Case 3: $u_e=++$}. By~(\ref{equation:pm:2.5}), we obtain
$$
-^m=[(+^s-^r)++]=
\left\{
{\arraycolsep=1pt
\begin{array}{ll}
+^{s+2-r}   & \mbox{ if } r<2;\\[1pt]
+^s-^{r-2}  & \mbox{ if } r\ge2.
\end{array}}
\right.
$$
Hence $s=0$, $r\ge2$, $-^m=-^{r-2}$, and $r=m+2$.
So the inductive hypothesis applies to $u|_E$.
Let $\Gamma'$ be a flow on $E$ fully coherent with $u|_E$ having on $E$ exactly $(m+2)/2$ buds.
Since $(m+2)/2>0$, the flow $\Gamma'$ has at least one bud on $E$. We denote it by $d$.
Then $\Gamma:=\Gamma'\cup\{(d,e)\}$ is a flow fully coherent with $u$.
Moreover, $\Gamma$ has one bud less than $\Gamma'$, since $d$ is not a bud of $\Gamma$.
\end{proof}

\begin{lemma}\label{lemma:pm:3.5}
Let 
$u:I\to\{\emptyset,--,+-,++\}$ be 
such that
$\bigl[\prod u\bigr]=-^m$ with $m>0$. Then there exists an index $a\in I$ such that
\begin{enumerate}
\item[{\rm (i)}] $u_a=--${\rm;}
\item[{\rm (ii)}] $\bigl[\prod_{i\in I\cap(a..+\infty)} u_i\bigr]$ equals either $\emptyset$ or $+-${\rm;}
\item[{\rm (iii)}] $\bigl[\prod_{i\in I\cap(-\infty..a]} u_i\bigr]=-^m$.
\end{enumerate}
\end{lemma}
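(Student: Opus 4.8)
The plan is to induct on $|I|$, the case $I=\emptyset$ being vacuous since then $\bigl[\prod u\bigr]=\emptyset\ne-^m$ for $m>0$. For $|I|>0$ put $e:=\max I$ and $E:=I\setminus\{e\}$, and write $\bigl[\prod_{i\in E}u_i\bigr]=+^s-^r$. By Corollary~\ref{corollary:pm:1} we have $-^m=\bigl[\prod u\bigr]=\bigl[(+^s-^r)\,u_e\bigr]$, and we analyze the four possibilities for $u_e$ exactly as in the proof of Lemma~\ref{lemma:pm:3}. If $u_e=\emptyset$ then $\bigl[\prod_{i\in E}u_i\bigr]=-^m$ and the inductive hypothesis applied to $u|_E$ produces an index $a\in E$; this same $a$ works for $I$, since $\bigl[\prod_{i\in I\cap(a..+\infty)}u_i\bigr]=\bigl[\bigl(\prod_{i\in E\cap(a..+\infty)}u_i\bigr)\emptyset\bigr]$ is unchanged (condition (ii) survives) and conditions (i), (iii) are literally about $E$.

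For the remaining cases I would use the sign bookkeeping already worked out in Lemma~\ref{lemma:pm:3}. If $u_e=+-$, then $[(+^s-^r)+-]=-^m$ forces $s=0$ and $r=m>0$; applying the inductive hypothesis to $u|_E$ gives $a\in E$ with $u_a=--$, with $\bigl[\prod_{i\in E\cap(a..+\infty)}u_i\bigr]\in\{\emptyset,+-\}$, and with $\bigl[\prod_{i\in E\cap(-\infty..a]}u_i\bigr]=-^m$. Again $a$ works for $I$: for (iii), $I\cap(-\infty..a]=E\cap(-\infty..a]$ since $a<e$; for (ii), $\bigl[\prod_{i\in I\cap(a..+\infty)}u_i\bigr]=\bigl[\bigl(\prod_{i\in E\cap(a..+\infty)}u_i\bigr)(+-)\bigr]$, and one checks directly that prepending $+-$ to either $\emptyset$ or $+-$ and reducing gives $+-$ in both cases, so (ii) holds. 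The case $u_e=++$ is similar: $[(+^s-^r)++]=-^m$ forces $s=0$, $r=m+2$, so $\bigl[\prod_{i\in E}u_i\bigr]=-^{m+2}$ with $m+2>0$, and the inductive hypothesis on $u|_E$ gives $a\in E$ with $\bigl[\prod_{i\in E\cap(-\infty..a]}u_i\bigr]=-^{m+2}$ and $\bigl[\prod_{i\in E\cap(a..+\infty)}u_i\bigr]\in\{\emptyset,+-\}$. For (iii) I must now verify $\bigl[\prod_{i\in I\cap(-\infty..a]}u_i\bigr]=-^m$: this equals $\bigl[\bigl(\prod_{i\in E\cap(-\infty..a]}u_i\bigr)u_e'\bigr]$ where $u_e'$ collects the entries of $I\cap(-\infty..a]$ that lie strictly above $a$ — but those entries are precisely $\prod_{i\in E\cap(a..+\infty)}u_i$ together with $u_e=++$, so I actually want $\bigl[(-^{m+2})\,v\,(++)\bigr]$ with $v\in\{\emptyset,+-\}$, which reduces to $-^m$ in both subcases; condition (ii) is immediate because $I\cap(a..+\infty)=E\cap(a..+\infty)$ in this subcase only if $e\notin(a..+\infty)$, which is false, so here (ii) needs the same computation $\bigl[v\,(++)\bigr]$ which again lies in $\{\emptyset,+-\}$ (indeed it equals $\emptyset$ when $v=+-$ would give $+-++\leadsto++$, so one must be careful — see below).

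The genuine subtlety, and the step I expect to be the main obstacle, is the case $u_e=--$. Here $[(+^s-^r)--]=+^s-^{r+2}=-^m$ forces $s=0$ and $r=m-2\ge0$, so $\bigl[\prod_{i\in E}u_i\bigr]=-^{m-2}$. If $m-2>0$ the inductive hypothesis on $u|_E$ supplies an index $a\in E$ with the three properties relative to $E$, and I claim $a$ works for $I$: (i) is unchanged, (iii) is unchanged since $I\cap(-\infty..a]=E\cap(-\infty..a]$, and for (ii) I need $\bigl[\prod_{i\in I\cap(a..+\infty)}u_i\bigr]=\bigl[\bigl(\prod_{i\in E\cap(a..+\infty)}u_i\bigr)(--)\bigr]$ to lie in $\{\emptyset,+-\}$ — but appending $--$ to $\emptyset$ gives $--$, not $\emptyset$ or $+-$, so the naive inheritance fails. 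The fix is that when $u_e=--$ one should instead take $a:=e$ itself: then (i) holds trivially, (ii) holds because $I\cap(e..+\infty)=\emptyset$ so $\bigl[\prod_{i\in I\cap(e..+\infty)}u_i\bigr]=\emptyset$, and (iii) holds because $\bigl[\prod_{i\in I\cap(-\infty..e]}u_i\bigr]=\bigl[\prod u\bigr]=-^m$. Thus the case $u_e=--$ needs \emph{no} induction at all; only the cases $u_e\in\{\emptyset,+-,++\}$ invoke the inductive hypothesis, and in exactly those cases the trailing factor appended when passing back from $E$ to $I$ — namely $\emptyset$, $+-$, or $++$ — has the property that prepending it (after the already-established tail $v\in\{\emptyset,+-\}$) keeps the reduced tail inside $\{\emptyset,+-\}$; the one case that could break this, prepending $++$ in front of $+-$, gives $+-++\leadsto++$, which is \emph{not} in $\{\emptyset,+-\}$, so I will need to recheck the $u_e=++$ bookkeeping and, if necessary, also take $a:=e$ is impossible there (since $u_e=++\ne--$) — instead one notes that in the $u_e=++$ case $r=m+2\ge2$ means the inductively produced tail $v=\bigl[\prod_{i\in E\cap(a..+\infty)}u_i\bigr]$ cannot be $+-$ when there are no $+$'s available to the right, forcing $v=\emptyset$ and hence $\bigl[v(++)\bigr]=++\notin\{\emptyset,+-\}$ — at which point the correct conclusion is that the $u_e=++$ case cannot actually occur when $m>0$, or else the statement's index $a$ must be sought deeper. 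Resolving this tension correctly — i.e. pinning down exactly which $e$-cases are live and producing $a$ in each — is where the real work lies, and I would carry it out by first proving the auxiliary fact that whenever $\bigl[\prod_{i\in I}u_i\bigr]=-^m$ with $m>0$ the \emph{rightmost} index whose value is $--$ and which is ``fully to the right'' in the reduction does the job, making the choice $a:=\max\{i\in I: u_i=--,\ \bigl[\prod_{i<k\in I}u_k\bigr]\text{ has no unmatched }-\}$ and verifying (i)--(iii) directly from Corollary~\ref{corollary:pm:1}.
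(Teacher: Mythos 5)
Your handling of the cases $u_e\in\{\emptyset,--,+-\}$ matches the paper's proof, and in particular you correctly spot that when $u_e=--$ one should simply take $a:=e$ rather than inherit an index from $E$. You also correctly diagnose the obstruction in the case $u_e=++$: the naive inheritance of $a$ from the inductive hypothesis on $E$ fails because $\bigl[v\,(++)\bigr]\notin\{\emptyset,+-\}$ for both $v=\emptyset$ and $v=+-$. But you do not resolve it. Your speculation that the tail $v$ must be $\emptyset$ is in fact false (both $v=\emptyset$ and $v=+-$ can occur, and both break), and your fallback suggestion that "the $u_e=++$ case cannot actually occur when $m>0$" is simply wrong — for example $u_1=--$, $u_2=--$, $u_3=++$ gives $\bigl[\prod u\bigr]=-^2$. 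The closing paragraph is a plan, not an argument, and the proposed choice $a:=\max\{i:u_i=--,\dots\}$ is neither precisely defined nor verified.

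The paper's resolution of the $u_e=++$ case is a second application of the inductive hypothesis. First apply it to $u|_E$ (which has $\bigl[\prod_{i\in E}u_i\bigr]=-^{m+2}$) to obtain an index $b\in E$ with $u_b=--$ and tail $v:=\bigl[\prod_{i\in E\cap(b..+\infty)}u_i\bigr]\in\{\emptyset,+-\}$. The key observation is then that
$$
\bigl[\textstyle\prod_{i\in I\cap[b..+\infty)}u_i\bigr]=\bigl[\,(--)\,v\,(++)\,\bigr]=\emptyset
$$
in both subcases $v=\emptyset$ and $v=+-$, so by Corollary~\ref{corollary:pm:1} the prefix satisfies $\bigl[\prod_{i\in I\cap(-\infty..b)}u_i\bigr]=-^m$. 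One then applies the inductive hypothesis a second time, to the strictly smaller set $I\cap(-\infty..b)$, to obtain the actual index $a$, and verifies (ii) for the full $I$ using $\bigl[\prod_{i\in I\cap(a..+\infty)}u_i\bigr]=\bigl[\prod_{i\in I\cap(a..b)}u_i\bigr]$. This "peel off the top, find a breakpoint $b$ that collapses the whole tail to $\emptyset$, then recurse into the prefix" step is exactly the idea missing from your proposal.
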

\begin{proof} Induction on $|I|$. If $|I|=1$ we can take for $a$ the only element of $I$.
Let $|I|>1$. Set $e:=\max I$, $E:=I\setminus\{e\}$, and $\left[\prod\nolimits_{i\in E}u_i\right]=+^s-^r$.
By Corollary~\ref{corollary:pm:1},
\begin{equation}\label{equation:pm:2.625}
\textstyle
-^m=\bigl[\prod u\bigr]=
\bigl[\bigl[\prod\nolimits_{i\in E}u_i\bigr]u_e\bigr]=[(+^s-^r)\,u_e].
\end{equation}

{\it Case 0: $u_e=\emptyset$}. The required index for $u$ is the same as for $u|_E$.

{\it Case 1: $u_e=--$}. In this case, we can take  $a:=e$.

{\it Case 2: $u_e=+-$}. By~(\ref{equation:pm:2.625}), we get
$$
-^m=[(+^s-^r)+-]=\left\{
{\arraycolsep=0pt
\begin{array}{l}
+^{s+1}-\text{ if }r=0;\\[6pt]
+^s-^r\text{ if }r>0.
\end{array}}
\right.
$$
So $s=0$ and $r=m>0$. In particular, we can apply the inductive hypothesis to the restriction $u|_E$
to obtain an index $a\in E$ such that:
\begin{itemize}
\itemsep=4pt
\item $u_a=--${\rm;}
\item $\bigl[\prod_{i\in E\cap(a..+\infty)} u_i\bigr]$ equals either $\emptyset$ or $+-${\rm;}
\item $\bigl[\prod_{i\in E\cap(-\infty..a]} u_i\bigr]=-^m$.
\end{itemize}
If $\bigl[\prod_{i\in E\cap(a..+\infty)} u_i\bigr]=\emptyset$ then
$
\textstyle
\bigl[\prod_{i\in I\cap(a..+\infty)} u_i\bigr]=\bigl[\bigl[\prod\nolimits_{i\in E\cap(a..+\infty)} u_i\bigr]+-\bigr]=+-,
$
and if $\bigl[\prod_{i\in E\cap(a..+\infty)} u_i\bigr]=+-$ then still
$$
\textstyle
\bigl[\prod_{i\in I\cap(a..+\infty)} u_i\bigr]=\bigl[\bigl[\prod\nolimits_{i\in E\cap(a..+\infty)} u_i\bigr]+-\bigr]=[+-+-]=+-.
$$
On the other hand,
$
\textstyle
\bigl[\prod_{i\in I\cap(-\infty..a]} u_i\bigr]=
\bigl[\prod_{i\in E\cap(-\infty..a]} u_i\bigr]=-^m.
$

{\it Case 3: $u_e=++$}. We have
$$
-^m=[(+^s-^r)++]=\left\{
{\arraycolsep=0pt
\begin{array}{l}
+^{s+2-r}\text{ if }r<2;\\[6pt]
+^s-^{r-2}\text{ if }r\ge2.
\end{array}}
\right.
$$
So $s=0$ and $r=m+2$.
The inductive hypothesis applied to $u|_E$ yields $b\in E$ with
\begin{itemize}
\itemsep=4pt
\item $u_b=--${\rm;}
\item $\bigl[\prod_{i\in E\cap(b..+\infty)} u_i\bigr]$ equals either $\emptyset$ or $+-${\rm;}
\end{itemize}
By Corollary~\ref{corollary:pm:1}, 
$
\textstyle
\bigl[\prod_{i\in I\cap[b..+\infty)}u_i\bigr]=
\bigl[--\bigl[\prod_{i\in E\cap(b..+\infty)}u_i\bigr]++\bigr]=\emptyset,
$
so
$$
\textstyle
-^m=\bigl[\prod u\bigr]=\Bigl[\bigl[\prod_{i\in I\cap(-\infty..b)}u_i\bigr]\bigl[\prod_{i\in I\cap[b..+\infty)}u_i\bigr]\Bigr]=\bigl[\prod_{i\in I\cap(-\infty..b)}u_i\bigr].
$$
Therefore, we can apply the inductive hypothesis to the restriction $u|_{I\cap(-\infty..b)}$ and
find an index $a\in I\cap(-\infty..b)$ satisfying the following properties:
\begin{itemize}
\itemsep=4pt
\item $u_a=--${\rm;}
\item $\bigl[\prod_{i\in I\cap(-\infty..b)\cap(a..+\infty)} u_i\bigr]=\bigl[\prod_{i\in I\cap(a..b)} u_i\bigr]$ equals either $\emptyset$ or $+-${\rm;}
\item $\bigl[\prod_{i\in I\cap(-\infty..b)\cap(-\infty..a]} u_i\bigr]=\bigl[\prod_{i\in I\cap(-\infty..a]} u_i\bigr]=-^m$.
\end{itemize}
\noindent
Finally, we have
$$
\textstyle
\bigl[\prod_{i\in I\cap(a..+\infty)} u_i\bigr]=\Bigl[\bigl[\prod_{i\in I\cap(a..b)} u_i\bigr]\bigl[\prod_{i\in I\cap[b..+\infty)} u_i\bigr]\Bigr]=\bigl[\prod_{i\in I\cap(a..b)} u_i\bigr],
$$
which is $\emptyset$ or $+-$, as required.
\end{proof}

\begin{lemma}\label{lemma:pm:4}
Let 
$u:I\to\{\emptyset,--,+-,++\}$ be
such that $\bigl[\prod u\bigr]{=}+-^m$.
Then there exists an index $a\in I$ such that $u_a=+-$ and $\bigl[\prod_{i\in I\cap(-\infty..a)} u_i\bigr]=\emptyset$.
\end{lemma}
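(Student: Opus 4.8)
\textbf{Proof plan for Lemma~\ref{lemma:pm:4}.}
The plan is to mimic the proof of Lemma~\ref{lemma:pm:3.5}, running an induction on $|I|$ and peeling off the largest element $e:=\max I$. Write $E:=I\setminus\{e\}$ and $\bigl[\prod_{i\in E}u_i\bigr]=+^s-^r$. By Corollary~\ref{corollary:pm:1} we have $+-^m=\bigl[\prod u\bigr]=\bigl[(+^s-^r)\,u_e\bigr]$, and the proof proceeds by the four cases $u_e=\emptyset,--,+-,++$, exactly as in Lemma~\ref{lemma:pm:3.5}.

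In the base case $|I|=1$, the hypothesis forces $m=0$ and $u$ to take the value $+-$ at the unique point $a$ of $I$, with $I\cap(-\infty..a)=\emptyset$, so $a$ works. For the inductive step, in \emph{Case 0} ($u_e=\emptyset$) the index supplied by the inductive hypothesis applied to $u|_E$ also works for $u$. In \emph{Case 1} ($u_e=--$): since $\bigl[(+^s-^r)--\bigr]=+^s-^{r+2}$ must equal $+-^m$, we get $s=1$, $r=m-2$, so we apply the inductive hypothesis to $u|_E$ (whose reduction is $+-^{m-2}$) and use that same index $a\in E$; here one must check $\bigl[\prod_{i\in I\cap(-\infty..a)}u_i\bigr]=\bigl[\prod_{i\in E\cap(-\infty..a)}u_i\bigr]=\emptyset$, which holds because $a<e$. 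In \emph{Case 2} ($u_e=+-$): from $\bigl[(+^s-^r)+-\bigr]=+-^m$ we read off that either $s=0,r=0,m=1$ — in which case we may take $a:=e$, and then $\bigl[\prod_{i\in I\cap(-\infty..e)}u_i\bigr]=\bigl[\prod_{i\in E}u_i\bigr]=+^0-^0=\emptyset$ — or $s=1$ and $r=m$, in which case we apply the inductive hypothesis to $u|_E$ (reduction $+-^m$) to obtain $a\in E$ with the required properties, again unchanged by adjoining $e>a$. In \emph{Case 3} ($u_e=++$): from $\bigl[(+^s-^r)++\bigr]=+-^m$, the only possibility is $r\ge2$ with $s=1$ and $r-2=m$, i.e. $r=m+2$; then one applies the inductive hypothesis to $u|_E$ to get $b\in E$ with $u_b=+-$ and $\bigl[\prod_{i\in E\cap(-\infty..b)}u_i\bigr]=\emptyset$, and one takes $a:=b$, checking via Corollary~\ref{corollary:pm:1} that $\bigl[\prod_{i\in I\cap(-\infty..b)}u_i\bigr]=\bigl[\prod_{i\in E\cap(-\infty..b)}u_i\bigr]=\emptyset$ since $b<e$.

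The routine part is the sign/count bookkeeping of the reductions $\bigl[(+^s-^r)u_e\bigr]$ in each case, which is entirely parallel to what is done in Lemmas~\ref{lemma:pm:3} and~\ref{lemma:pm:3.5}. The only point that needs a little care — and the main (mild) obstacle — is Case~2, where the reduction $+-^m$ of $\prod u$ can arise in two genuinely different ways depending on whether $r=0$: either $e$ itself is the sought index, or the index comes from $E$. One must not overlook the first subcase, since for $m=1$ the inductive hypothesis would not even be applicable to $u|_E$ (whose reduction could be $+$, not of the form $+-^{m'}$). Apart from that split, each case reduces either to a direct choice of $a$ or to a single application of the inductive hypothesis followed by the observation that prepending nothing and appending an element $>a$ leaves $\bigl[\prod_{i\in I\cap(-\infty..a)}u_i\bigr]$ unchanged.
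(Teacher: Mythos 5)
Your proof is correct and takes essentially the same approach as the paper: induction on $|I|$, peeling off $e=\max I$, writing $\bigl[\prod_{i\in E}u_i\bigr]=+^s-^r$, and using Corollary~\ref{corollary:pm:1} together with a case analysis. The paper organizes the cases slightly more economically (first observing $s\le 1$, handling $s=1$ once by the inductive hypothesis irrespective of $u_e$, and only then splitting on $u_e$ for $s=0$), whereas you split on $u_e$ from the start, but the underlying argument and the key Case~2 subcase $s=r=0$, $m=1$, $a:=e$ are the same.
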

\begin{proof}
Induction on $|I|$. If $|I|=1$ we can take $a$ to be the only element of $I$.
Let $|I|>1$. Set $e:=\max I$, $E:=I\setminus\{e\}$, and $\left[\prod\nolimits_{i\in E}u_i\right]=+^s-^r$.
By Corollary~\ref{corollary:pm:1},
\begin{equation}\label{equation:pm:3}
\textstyle
+-^m=\bigl[\prod u\bigr]=
\bigl[\bigl[\prod\nolimits_{i\in E}u_i\bigr]u_e\bigr]
=\bigl[(+^s-^r)u_e\bigr].
\end{equation}
So~\mbox{$s\le1$.}
If $s=1$ we can apply the inductive hypothesis to the restriction $u|_E$.
Let $s=0$. Then the cases $u_e=\emptyset$ and $u_e=--$ are clearly impossible.
Moreover, if
$u_e=++$,
then by~(\ref{equation:pm:3}), we have
$
+-^m=[-^r++]
$.
By Corollary~\ref{corollary:pm:2}, $m$ is odd, so $m>0$, and we get a contradiction.
Finally, let $u_e=+-$. By~(\ref{equation:pm:3}), $+-^m=[-^r+-]$.
Hence $r=0$, and we can set $a:=e$.
\end{proof}

\begin{lemma}\label{lemma:pm:5}
Let $u:I\to\{\emptyset,--,+-,++\}$ be such that
$\bigl[\prod u\bigr]=+-^m$. Then there exist indices $a_1<\cdots<a_h$, with $h>0$, belonging to $I$ such that
\begin{enumerate}
\item[{\rm (i)}] $u_{a_1}=\cdots=u_{a_h}=+-${\rm;}
\item[{\rm (ii)}] $\bigl[\prod_{i\in I\cap(a_{k-1}..a_k)}u_i\bigr]=\emptyset$ for all $k=1,\ldots,h$, where $a_0:=-\infty${\rm;}
\item[{\rm (iii)}] $\bigl[\prod_{i\in I\cap(a_h..+\infty)}u_i\bigr]=-^{m-1}$.
\end{enumerate}
\end{lemma}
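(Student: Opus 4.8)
\textbf{Proof proposal for Lemma~\ref{lemma:pm:5}.}

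The plan is to induct on $|I|$, following the same pattern as in Lemmas~\ref{lemma:pm:3}, \ref{lemma:pm:3.5} and \ref{lemma:pm:4}, but now producing a whole increasing chain of indices rather than a single one. First I would isolate a ``bottom'' index using Lemma~\ref{lemma:pm:4}: since $\bigl[\prod u\bigr]=+-^m$, that lemma gives an index $a_1\in I$ with $u_{a_1}=+-$ and $\bigl[\prod_{i\in I\cap(-\infty..a_1)}u_i\bigr]=\emptyset$. This index will be the first element of the chain; properties (i) and (ii) for $k=1$ are then immediate with $a_0=-\infty$. The remaining task is to extend the chain through $I\cap(a_1..+\infty)$.

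Next I would analyze $\bigl[\prod_{i\in I\cap(a_1..+\infty)}u_i\bigr]$. Using Corollary~\ref{corollary:pm:1} together with the facts $\bigl[\prod_{i\in I\cap(-\infty..a_1)}u_i\bigr]=\emptyset$ and $u_{a_1}=+-$, we have
$$
+-^m=\bigl[\prod u\bigr]=\Bigl[(+-)\bigl[{\textstyle\prod_{i\in I\cap(a_1..+\infty)}}u_i\bigr]\Bigr].
$$
Writing $\bigl[\prod_{i\in I\cap(a_1..+\infty)}u_i\bigr]=+^s-^r$ (Proposition~\ref{proposition:pm:2}), a short case check on how $(+-)$ absorbs into $+^s-^r$ shows that either $s=0$, $r=m-1$ (so the tail reduces to $-^{m-1}$), or $s\ge1$ in which case the tail is $+^s-^{r}$ with $1+s-r=m$, i.e.\ the tail is of the form $+\cdot(+^{s-1}-^{r})$ and $\bigl[+^{s-1}-^r\bigr]$ is again of the form $+^{s-1}-^{(\cdot)}$. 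In the first case we are done: set $h=1$, and property (iii) holds. In the second case the tail $\bigl[\prod_{i\in I\cap(a_1..+\infty)}u_i\bigr]$ still has the shape $+-^{m'}$ for a suitable $m'$ (after absorbing, $+^s-^r$ with $s\ge 1$ still reduces to a sequence starting with $+$), so I can apply the inductive hypothesis to the restriction $u|_{I\cap(a_1..+\infty)}$, obtaining indices $a_2<\cdots<a_h$ in $I\cap(a_1..+\infty)$ with the three properties relative to that smaller set. Concatenating $a_1$ with this chain and using Corollary~\ref{corollary:pm:1} once more to glue the reductions on the intervals $(a_{k-1}..a_k)$, all three properties transfer to the full chain on $I$.

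The main obstacle I expect is the careful bookkeeping of the reduced signature of the tail in the inductive step: I must verify that after removing the block $u_{a_1}=+-$ (and the vacuous prefix), what remains is genuinely again of the form $+-^{m'}$ with $m'\ge 0$ (so the inductive hypothesis applies, in particular $h'\ge1$ is produced), rather than degenerating to $-^{m'}$ or to something with several leading $+$'s that the hypothesis does not cover. This is where the precise combinatorics of erasing $-+$ pairs — and the observation in Proposition~\ref{proposition:pm:2} that reductions are always $+^s-^r$ — has to be invoked with care; the key point is that since $\bigl[\prod u\bigr]$ begins with exactly one $+$, the tail after stripping $a_1$ cannot accumulate extra $+$'s, so it has the form $+-^{m-2}$ or $\emptyset$ or $-^{m-1}$, and only the first of these requires recursion. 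Handling the boundary subcases ($m=1$, the tail empty, etc.) and confirming that property (ii) for the newly glued interval $(a_1..a_2)$ follows from the inductive $(−\infty..a_2)$ statement restricted to $(a_1..+\infty)$ are the remaining routine verifications.
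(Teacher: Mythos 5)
Your proposal follows exactly the same route as the paper's proof: isolate a bottom index $a_1$ via Lemma~\ref{lemma:pm:4}, set $J:=I\cap(a_1..+\infty)$, compute $\bigl[\prod_{i\in J}u_i\bigr]=+^s-^r$ via Corollary~\ref{corollary:pm:1}, then split on $s=0$ (take $h=1$) versus $s\ge1$ (recurse on $u|_J$ and concatenate). The structure is sound and matches the paper.

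There is, however, a concrete numerical slip in the recursive case that would cause property (iii) to fail as written. From $+-^m=\bigl[\emptyset\cdot(+-)\cdot(+^s-^r)\bigr]=\bigl[+-\cdot+^s-^r\bigr]$, when $s\ge1$ the $-$ cancels against one leading $+$, giving $\bigl[+\cdot+^{s-1}-^r\bigr]=+^s-^r$. Equating to $+-^m$ forces $s=1$ and $r=m$, so $\bigl[\prod_{i\in J}u_i\bigr]=+-^{m}$, not $+-^{m-2}$; likewise your identity ``$1+s-r=m$'' has the sign reversed (it should be $r-s=m-1$). With the erroneous $m'=m-2$, the inductive hypothesis would deliver $\bigl[\prod_{i\in J\cap(b_q..+\infty)}u_i\bigr]=-^{m-3}$, and since $J\cap(b_q..+\infty)=I\cap(a_h..+\infty)$ this would give the wrong exponent in (iii). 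Correcting $m'$ to $m$ (so $s=1$, $r=m$) repairs the argument, and property (iii) then glues verbatim because the tail of $J$ past $b_q$ coincides with the tail of $I$ past $a_h$.
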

\begin{proof}
Induction on $|I|$. By Lemma~\ref{lemma:pm:4}, there is $a\in I$ such that $u_a=+-$,
and $\bigl[\prod_{i\in I\cap(-\infty..a)} u_i\bigr]=\emptyset$. Set $J:=I\cap(a..{+}\infty)$ and let
$\left[\prod\nolimits_{i\in J}u_i\right]=+^s-^r$.
By Corollary~\ref{corollary:pm:1},
$$
+-^m=\Bigl[\bigl[{\textstyle\prod\nolimits}_{i\in I\cap(-\infty..a)} u_i\bigr]\bigl[+-\bigr]\bigl[{\textstyle\prod\nolimits}_{i\in J}u_i\bigr]\Bigr]
=\bigl[+-(+^s-^r)\bigr]=\left\{
{\arraycolsep=1pt
\begin{array}{ll}
+-^{r+1}&\text{ if }s=0;\\
+^s-^r  &\text{ if }s>0.
\end{array}}
\right.
$$
If $s=0$ then $r=m-1$, and we can take $h:=1$ and $a_1:=a$.

Now let $s>0$. Then $s=1$ and $r=m$. By the inductive hypothesis applied to $u|_J$,
there are indices $b_1<\ldots<b_q$ belonging to $J$ such that
\begin{itemize}
\item $u_{b_1}=\cdots=u_{b_q}=+-${\rm;}\\[-6pt]
\item $\bigl[\prod_{i\in J\cap(b_{k-1}..b_k)}u_i\bigr]=\emptyset$ for any $k=1,\ldots,q$, where $b_0=a${\rm;}\\[-6pt]
\item $\bigl[\prod_{i\in J\cap(b_q..+\infty)}u_i\bigr]=-^{m-1}$.
\end{itemize}
\noindent
Now we can take $h:=q+1$, $a_1:=a$ and $a_k:=b_{k-1}$ for $k=2,\ldots,h$.
\end{proof}

\begin{definition}\label{definition:pm:3}
{\rm The set~$\{a_1<\cdots<a_h\}$ as in Lemma~\ref{lemma:pm:5} is called a {\em section} of $u$.
By Lemma~\ref{lemma:pm:3}, for all $k=1,\ldots,h+1$, there exists a graph $\Gamma_k$ fully coherent with
$u|_{I\cap(a_{k-1}..a_k)}$, where $a_0=-\infty$ and $a_{h+1}=+\infty$. The graph
$
\{(a_1,a_1),\ldots,(a_h,a_h)\}\cup\,\bigcup_{k=1}^{h+1}\Gamma_k
$
is called a {\em resolution} of $u$.
}
\end{definition}

By definition:

\begin{proposition}\label{proposition:A}
A resolution of a map $u:I\to\{\emptyset,--,+-,++\}$ is a weak flow
but not a flow
on $I$ fully coherent with $u$.
\end{proposition}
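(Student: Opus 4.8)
The statement to prove is Proposition~\ref{proposition:A}: a resolution $\Gamma$ of a map $u:I\to\{\emptyset,--,+-,++\}$ with $\bigl[\prod u\bigr]=+-^m$ is a weak flow on $I$ fully coherent with $u$, but is \emph{not} a flow. The plan is to unwind Definition~\ref{definition:pm:3} and check each of the three conditions --- ``weak flow'', ``fully coherent'', ``not a flow'' --- directly from the structure of the resolution
$$
\Gamma=\{(a_1,a_1),\ldots,(a_h,a_h)\}\cup\bigcup_{k=1}^{h+1}\Gamma_k,
$$
where $\{a_1<\cdots<a_h\}$ is a section of $u$ (Lemma~\ref{lemma:pm:5}) and each $\Gamma_k$ is a flow fully coherent with $u|_{I\cap(a_{k-1}..a_k)}$ supplied by Lemma~\ref{lemma:pm:3}, with $a_0=-\infty$ and $a_{h+1}=+\infty$.

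First I would verify that $\Gamma$ is a weak flow. Conditions \ref{definition:pm:1:1} and \ref{definition:pm:1:3'} are immediate: every edge of each $\Gamma_k$ has source strictly less than target (each $\Gamma_k$ is even a flow), and each loop $(a_k,a_k)$ satisfies $a_k\le a_k$. For \ref{definition:pm:1:2} --- distinctness of sources and distinctness of targets --- note that the index sets $I\cap(a_{k-1}..a_k)$ for $k=1,\ldots,h+1$ are pairwise disjoint and disjoint from $\{a_1,\ldots,a_h\}$, so the sources (resp.\ targets) of edges from distinct $\Gamma_k$'s are distinct, they are distinct from the $a_k$'s, and within a single $\Gamma_k$ they are distinct because $\Gamma_k$ is a flow. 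Next, full coherence: for \ref{definition:pm:2:4} and \ref{definition:pm:2:5}, an edge of $\Gamma_k$ begins at some $a$ with $u_a$ containing $-$ and ends at some $b$ with $u_b$ containing $+$ (coherence of $\Gamma_k$), while a loop $(a_k,a_k)$ has $u_{a_k}=+-$ by Lemma~\ref{lemma:pm:5}(i), which contains both $-$ and $+$. For \ref{definition:pm:2:6} I must show every $b\in I$ with $u_b$ containing $+$ is the target of some edge of $\Gamma$: if $b=a_k$ for some $k$ the loop $(a_k,a_k)$ does the job; otherwise $b$ lies in exactly one interval $I\cap(a_{k-1}..a_k)$, and since $\Gamma_k$ is \emph{fully} coherent with $u|_{I\cap(a_{k-1}..a_k)}$ some edge of $\Gamma_k$ ends at $b$.

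Finally, to see that $\Gamma$ is not a flow, observe that it contains the loop $(a_1,a_1)$ (here $h>0$ by Lemma~\ref{lemma:pm:5}), which violates Definition~\ref{definition:pm:1}\ref{definition:pm:1:3}, requiring $a_k<b_k$ strictly. This is the only place where the hypothesis $\bigl[\prod u\bigr]=+-^m$ (rather than $-^m$) is essential, as it is exactly what forces a nonempty section and hence a genuine loop. I do not anticipate a real obstacle here; the statement is essentially a bookkeeping consequence of the two preceding lemmas, and the only mild care needed is the disjointness argument for condition \ref{definition:pm:1:2}, which relies on the intervals $(a_{k-1}..a_k)$ partitioning $I\setminus\{a_1,\ldots,a_h\}$.
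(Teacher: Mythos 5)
Your proof is correct and is exactly the careful unwinding of Definition~\ref{definition:pm:3} together with Lemmas~\ref{lemma:pm:3} and~\ref{lemma:pm:5} that the paper has in mind; the paper itself treats the statement as immediate, prefacing it only with ``By definition:''. There is no substantive difference in approach, since the only way to prove this is to check the defining conditions one by one, which you have done accurately, including the disjointness of the index sets $I\cap(a_{k-1}..a_k)$ and $\{a_1,\ldots,a_h\}$ for condition~\ref{definition:pm:1:2} and the observation that $h>0$ forces at least one loop $(a_1,a_1)$, ruling out the flow condition~\ref{definition:pm:1:3}.
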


\begin{lemma}\label{lemma:pm:6}
Let 
$u:I\to\{\emptyset,--,+-,++\}$ be
such that $\bigl[\prod u\bigr]$ contains more that one $+$.
Then there exists a beginning $J$ of $I$ such that $\bigl[\prod_{i\in J}u_i\bigr]=++$ and a flow $\Gamma$ on $J$
coherent, but not fully coherent, with $u|_J$, and having no buds on $J$.
\end{lemma}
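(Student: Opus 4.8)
The statement to prove is Lemma~\ref{lemma:pm:6}: if $u\colon I\to\{\emptyset,--,+-,++\}$ has $[\prod u]$ containing more than one $+$, then there is a beginning $J$ of $I$ with $[\prod_{i\in J}u_i]=++$ together with a flow $\Gamma$ on $J$ coherent but not fully coherent with $u|_J$ and having no buds on $J$. The natural approach is induction on $|I|$, imitating the proof of Lemma~\ref{lemma:pm:2} (which handles the analogous statement for one extra $+$), but pushing the ``front'' of the reduced word from a single $+$ out to $++$. Write $[\prod u]=+^s-^r$; by hypothesis $s\ge 2$. The plan is to peel off the largest element $e:=\max I$, set $E:=I\setminus\{e\}$, write $[\prod_{i\in E}u_i]=+^{s'}-^{r'}$, and use Corollary~\ref{corollary:pm:1} to get $[\prod u]=[(+^{s'}-^{r'})\,u_e]$.

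\textbf{Key steps.} First I would do the case analysis on $u_e\in\{\emptyset,--,+-,++\}$ and on whether $s'\ge 2$ or $s'<2$. If $s'\ge 2$, then $[\prod_{i\in E}u_i]$ already contains more than one $+$, so the inductive hypothesis applied to $u|_E$ yields a beginning $J\subseteq E$ of $E$ with $[\prod_{i\in J}u_i]=++$ and the desired flow $\Gamma$ on $J$; since $J$ is then also a beginning of $I$ (beginnings of $E=I\setminus\{\max I\}$ are beginnings of $I$) and $\Gamma$, being coherent-not-fully-coherent and bud-free for $u|_J$, remains so, we are done. If $s'<2$, i.e. $s'\in\{0,1\}$, then to obtain $s\ge 2$ from $[(+^{s'}-^{r'})\,u_e]$ one checks the small number of possibilities: this forces $r'=0$, $u_e=++$, and $s'\in\{0,1\}$ (the cases $u_e=\emptyset,--,+-$ cannot raise the number of leading $+$'s by enough, and a $-^{r'}$ prefix with $r'>0$ would absorb the $+$'s). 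So in the surviving subcase $[\prod_{i\in E}u_i]=+^{s'}$ with $s'\in\{0,1\}$. Now I would take $J:=I$, which is a beginning of itself, with $[\prod_{i\in J}u_i]=[+^{s'}++]=++$. It remains to produce the flow on $J=I$. Use Lemma~\ref{lemma:pm:1}: since $[\prod_{i\in E}u_i]=+^{s'}$ — wait, that has only $+$'s; instead note $[\prod_{i\in E}(-u_i)]$ type reasoning is not needed, rather apply Lemma~\ref{lemma:pm:2} when $s'=1$ (there is a $+$ in $[\prod_{i\in E}u_i]$) to get a beginning $J'$ of $E$, a flow coherent-not-fully-coherent with $u|_{J'}$ and having no buds, and when $s'=0$ apply Lemma~\ref{lemma:pm:3} to $u|_E$ (here $[\prod_{i\in E}u_i]=\emptyset=-^0$) to get a flow fully coherent with $u|_E$ having no buds. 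In either case, extending that flow to $J=I$ by doing nothing at $e$ gives a flow on $I$ coherent with $u|_I$; it is not fully coherent because the ``$+$ at $e$'' (the one in $u_e=++$ that survives into the reduced word) has no incoming edge; and it has no buds since we added no new source-free $-$: indeed the entries of $u_e$ are $++$, so $e$ is not a bud.

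\textbf{Main obstacle.} The delicate point is the bookkeeping of ``coherent but not fully coherent, no buds'' when we enlarge $J'$ (a beginning of $E$) up to $I$ by adjoining $e$ with $u_e=++$: I must be sure that neither of the two $+$'s in $u_e$ is required by fully-coherence to have an incoming edge in the new flow (that is exactly what makes it \emph{not} fully coherent, which is \emph{good}), and that adjoining $e$ creates no new bud (true since $u_e=++$ contains no $-$). A secondary subtlety is the exhaustiveness of the elementary reduction computations $[(+^{s'}-^{r'})\,u_e]$ forcing $s'<2\Rightarrow(r'=0,\ u_e=++)$, and in particular ruling out $u_e=+-$ with $s'=1$ giving $[++{-}]=+$, which has only one $+$ — consistent with the hypothesis being violated there, so that branch is vacuous. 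Once these case checks and the coherence bookkeeping are pinned down, the rest is a routine transcription of the pattern already used in Lemmas~\ref{lemma:pm:1}--\ref{lemma:pm:5}.
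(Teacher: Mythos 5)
The central case analysis in your proposal is wrong. You claim that when $s'<2$ the hypothesis forces $r'=0$, so that ``in the surviving subcase $[\prod_{i\in E}u_i]=+^{s'}$ with $s'\in\{0,1\}$.'' But by Corollary~\ref{corollary:pm:2} the pair $(s',r')=(1,0)$ is impossible for a map valued in $\{\emptyset,--,+-,++\}$ (one would have $s'-r'$ odd), whereas the pair $(s',r')=(1,1)$ with $u_e=++$ is both possible and does survive the hypothesis: $[+{-}{+}{+}]=++$ has two $+$'s. That is precisely the hard case of the lemma and the one your proposal has no argument for. In the paper's proof (written there with $s$ in place of your $s'$, with $s=1$ forcing $r$ odd and then $r=1$) this case is handled by taking a section $\{a_1<\cdots<a_h\}$ of $u|_E$ (Lemma~\ref{lemma:pm:5}, Definition~\ref{definition:pm:3}), applying Lemma~\ref{lemma:pm:3} on each interval $I\cap(a_{k-1}..a_k)$ to get fully coherent bud-free flows $\Gamma_k$, and then chaining the section points to one another and to $e$, so that the flow $\{(a_1,a_2),\ldots,(a_{h-1},a_h),(a_h,e)\}\cup\bigcup_{k=1}^{h+1}\Gamma_k$ is coherent (not fully: nothing ends at $a_1$) and has no buds on $J=I$. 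Your proposal has no analogue of this chaining construction, and you cannot get away with the simpler ``extend by doing nothing at $e$'' move, because with $r'=1$ there is a genuine leftover minus that must be accounted for.

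A second, independent error compounds the first: in the $s'=1$ subcase you propose to apply Lemma~\ref{lemma:pm:2}. That lemma is stated for single-character maps $u:I\to\{\emptyset,-,+\}$, not for $u|_E:E\to\{\emptyset,--,+-,++\}$, so the invocation is a type mismatch; indeed its conclusion $[\prod_{i\in J}u_i]=+$ can never hold for a map valued in the two-character alphabet, again by Corollary~\ref{corollary:pm:2}. Your $s'\ge 2$ branch (inductive hypothesis) and your $s'=0$ branch (Lemma~\ref{lemma:pm:3} on $u|_E$, then extend to $I$, noting that $u_e=++$ breaks full coherence and creates no bud) both match the paper and are fine; the gap is entirely concentrated in $s'=1$.
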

\begin{proof}
Induction on $|I|$. If $|I|=1$, then $u$ takes value $++$ at the only point of $I$, and we can take  $J:=I$ and $\Gamma:=\emptyset$.
Let now $|I|>1$. Set $e:=\max I$, $E:=I\setminus\{e\}$,
and let $\left[\prod\nolimits_{i\in E}u_i\right]=+^s-^r$.
By Corollary~\ref{corollary:pm:1}, we have
\begin{equation}\label{equation:pm:4}
\textstyle
\bigl[\prod u\bigr]
=\bigl[\bigl[\prod\nolimits_{i\in E}u_i\bigr]u_e\bigr]
=\bigl[(+^s-^r)u_e\bigr].
\end{equation}

If $s>1$, we can apply the inductive hypothesis to $u|_E$.
Consider the case $s=0$. The only possibility for $\bigl[\prod u\bigr]$ to contain more than one $+$ is the following:
$u_e=++$ and $\bigl[\prod\nolimits_{i\in E}u_i\bigr]=\emptyset$.
By Lemma~\ref{lemma:pm:3}, there exists a flow $\Gamma$ on $E$ fully coherent with $u|_E$ having no buds on $E$.
The same flow $\Gamma$ is coherent, but not fully coherent, with $u$ (as none of its edges ends in $e$),
and $\Gamma$ has no buds on~$I$. So we can take $J:=I$.

Finally, we consider the case $s=1$. By Corollary~\ref{corollary:pm:2}, $r$ is odd, in particular  $r>0$.
The case where $u_e=\emptyset$, $u_e=--$ and $u_e=+-$ are impossible, since then $\bigl[\prod u\bigr]$ contain at most one $+$.
So $u_e=++$.
By~(\ref{equation:pm:4}), we get
$$
\textstyle
\bigl[\prod u\bigr]=\bigl[+-^r++\bigr]=
\left\{
{\arraycolsep=0pt
\begin{array}{ll}
++      &\text{ if }r=1;\\
+-^{r-2}&\text{ if }r>1.
\end{array}}
\right.
$$
Hence $r=1$.
Let $\{a_1<\cdots<a_h\}$ be a section of $u|_E$.
By Lemma~\ref{lemma:pm:3}, for \mbox{$k=1,\ldots,h+1$} there exists a flow $\Gamma_k$
fully coherent with $u|_{I\cap(a_{k-1}..a_k)}$ and having no buds on $I\cap(a_{k-1}..a_k)$ (with $a_0:=-\infty$, $a_{h+1}:=+\infty$). Then the flow
$
\Gamma=
\{(a_1,a_2),\ldots,(a_{h-1},a_h),(a_h,e)\}\cup\bigcup_{k=1}^{h+1}\Gamma_k
$
is coherent, but not fully coherent, with $u$ (as none of its edges ends at $a_1$) and has no buds on $I$. It remains to set $J:=I$.
\end{proof}

\section{Normal and good indices}\label{SNormGood}
For any
weight $\lm=(\lm_1,\ldots,\lm_n)\in X(n)$, we define the map
$r_{\mathbf 0}(\lambda):[1..n]\to\{\emptyset,--,+-,++\}$
as follows:
$$
r_{\mathbf 0}(\lambda)_k:=\left\{ {\arraycolsep=0pt
\begin{array}{cl}
--&\mbox{ if }\res_p\lm_k=\mathbf0\mbox{ and }\res_p(\lm_k{-}1)=\mathbf0\,\bigl(\mbox{i.e.}\,\lm_k\=1\!\!\!\!\!\pmod p\bigr);\\
+-&\mbox{ if }\res_p(\lm_k{+}1)=\mathbf0\mbox{ and }\res_p\lm_k=\mathbf0\,\bigl(\mbox{i.e.}\,\lm_k\=0\!\!\!\!\!\pmod p\bigr);\\
++&\mbox{ if }\res_p(\lm_k{+}2)=\mathbf 0\mbox{ and }\res_p(\lm_k{+}1)\=\mathbf 0\,\bigl(\mbox{i.e.}\,\lm_k\=-1\!\!\!\!\!\pmod p\bigr);\\
\emptyset&\mbox{ otherwise}.
\end{array}}
\right.
$$
For any residue $\beta\in\Z/p\Z$ not equal to $\mathbf0$, we define the map
$r_\beta(\lambda):[1..n]\to\{-,+,\emptyset\}$ by the following rule:
$$
r_\beta(\lambda)_k:=\left\{ {\arraycolsep=0pt
\begin{array}{cl}
-&\mbox{ if }\res_p\lm_k=\beta;\\
+&\mbox{ if }\res_p(\lm_k{+}1)=\beta;\\
\emptyset&\mbox{ otherwise}.
\end{array}}
\right.
$$

We record the following obvious observation.

\begin{proposition}\label{proposition:rbeta}
Let $\lm\in X(n)$, $\beta\in \Z/p\Z$, and $1\leq k\leq n$.
Then
{
\renewcommand{\labelenumi}{{\rm \theenumi}}
\renewcommand{\theenumi}{{\rm(\arabic{enumi})}}
\begin{enumerate}
\item\label{proposition:rbeta:part:1} $r_\beta(\lambda)_k$ contains $-$ if and only if $\res_p\lm_k=\beta$.
\item\label{proposition:rbeta:part:2} $r_\beta(\lambda)_k$ contains $+$ if and only if $\res_p(\lm_k+1)=\beta$.
\end{enumerate}}
\end{proposition}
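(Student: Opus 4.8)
\emph{The plan is to unwind both definitions directly.} Proposition~\ref{proposition:rbeta} asserts two equivalences, one for the symbol ``$-$'' and one for ``$+$'', and each follows from simply reading off the three-way case definition of $r_\beta(\lambda)_k$. So the proof is a short case-check, not an inductive argument; the label ``obvious observation'' in the excerpt is accurate, and the proof will just make the trivial verification explicit.

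\textbf{Part (1).} Recall that for $\beta\neq\mathbf0$ the sequence $r_\beta(\lambda)_k$ is defined to be $-$ when $\res_p\lm_k=\beta$, to be $+$ when $\res_p(\lm_k+1)=\beta$, and to be $\emptyset$ otherwise. First I would observe that these three cases are mutually exclusive: if both $\res_p\lm_k=\beta$ and $\res_p(\lm_k+1)=\beta$ held, then subtracting would give $\res_p\lm_k-\res_p(\lm_k+1)=\mathbf0$, i.e. $\lm_k(\lm_k-1)-(\lm_k+1)\lm_k\=0\pmod p$, which simplifies to $-2\lm_k\=0$; since $p\neq2$ this forces $\lm_k\=0$, hence $\res_p\lm_k=\mathbf0$, contradicting $\beta\neq\mathbf0$. (In fact one need not even establish disjointness for the stated equivalences, but it clarifies that the ``otherwise'' branch is genuinely the complement.) Now, the only value of $r_\beta(\lambda)_k$ that contains the symbol ``$-$'' is the value $-$ itself, and that value is taken precisely in the first case, $\res_p\lm_k=\beta$. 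Conversely, if $\res_p\lm_k=\beta$ we are in the first case and $r_\beta(\lambda)_k=-$ contains $-$. This is the claimed equivalence.

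\textbf{Part (2).} This is symmetric: the only value of $r_\beta(\lambda)_k$ that contains the symbol ``$+$'' is the value $+$, which is taken precisely when $\res_p(\lm_k+1)=\beta$. Hence $r_\beta(\lambda)_k$ contains $+$ if and only if $\res_p(\lm_k+1)=\beta$.

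\textbf{Anticipated difficulty.} There is essentially no obstacle here; the one point requiring a sentence of care is noting why the ``otherwise'' case cannot secretly coincide with one of the first two (handled by the $p\neq 2$ remark above), and making sure the reading of ``contains $-$'' versus ``contains $+$'' is unambiguous given that for $\beta\neq\mathbf 0$ each $r_\beta(\lambda)_k$ is a sequence of length at most one. Since the paper has already fixed $p\neq2$ and defined $\res_p$ in~(\ref{EDefRes}), all ingredients are in place, and the proof is two or three lines.
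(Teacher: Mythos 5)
Your approach — read off the three-way case definition directly — is the right one, and the paper itself gives no proof at all (it labels the statement ``obvious observation''). However, your proof handles only the case $\beta\neq\mathbf0$, and the case $\beta=\mathbf0$ is silently omitted, even though it is structurally different and the statement is not a priori trivially true there.

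For $\beta=\mathbf0$, the map $r_{\mathbf0}(\lambda)_k$ takes values in $\{--,\,+-,\,++,\,\emptyset\}$, not $\{-,+,\emptyset\}$, according to whether $\lm_k\equiv1$, $\lm_k\equiv0$, $\lm_k\equiv-1\pmod p$, or none of these. Your sentence ``the only value of $r_\beta(\lambda)_k$ that contains the symbol $-$ is the value $-$ itself'' is simply false here: both $--$ and $+-$ contain $-$. You need to check separately that $r_{\mathbf0}(\lambda)_k$ contains $-$ iff $\lm_k\equiv0$ or $1\pmod p$, which is exactly the condition $\res_p\lm_k=\lm_k(\lm_k-1)\equiv0\pmod p$; and that $r_{\mathbf0}(\lambda)_k$ contains $+$ iff $\lm_k\equiv0$ or $-1\pmod p$, which is $\res_p(\lm_k+1)=(\lm_k+1)\lm_k\equiv0\pmod p$. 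These two small quadratic factorizations are the only genuine content in the $\beta=\mathbf0$ case, and they must appear in the proof. (The $p\neq2$ disjointness remark you give for $\beta\neq\mathbf0$ is, as you note, not logically needed there, but a similar observation \emph{is} implicitly needed to justify the $\lm_k\equiv-1$ branch of the definition, where the extra hypothesis $\res_p(\lm_k+1)\neq\mathbf0$ rules out $\lm_k\equiv0$.) Once you add the $\beta=\mathbf0$ verification, the proof is complete.
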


Recall that, according to our agreement, the entries of $\prod_{k\in J}r_\beta(\lambda)_k$,  for $J\subseteq [1..n]$, coming from the signature sequence $r_\be(\la)_k$ are marked with $k$.

\begin{definition}\label{definition:constr:4tens}
{\rm Let $\lm\in X(n)$ and $1\le i\leq n$. Set $\beta:=\res_p\lm_i$.
The index $i$ is called {\em tensor $\lm$-normal} if the reduction $\bigl[\prod_{i\le k\le n}r_\beta(\lambda)_k\bigr]$
contains the symbol $-_i$.
}
\end{definition}

Note that $n$ is always a tensor $\la$-normal index for $\la\in X(n)$.

\begin{definition}\label{definition:constr:4}
{\rm Let $\lm\in X(n)$ and $1\le i<n$. Set $\beta:=\res_p\lm_i$.
The index $i$ is called {\em $\lm$-normal} if the reduction $\bigl[\prod_{i\le k<n}r_\beta(\lambda)_k\bigr]$
contains the symbol $-_i$ and the following two conditions do {\em not} both hold:
(1)
$\bigl[\prod_{i<k<n}r_\beta(\lambda)_k\bigr]=\emptyset$ and (2) $\lm_i\=\lm_n\=0\pmod p$.
}
\end{definition}

\begin{definition}\label{definition:main:5tens}
{\rm Let $\lm\in X(n)$ and $1\le i\le n$. The index $i$ is called {\em tensor $\lm$-good} if it is tensor $\lm$-normal and there is no other
tensor $\lm$-normal index $h$ such that $\res_p\lm_h=\res_p\lm_i$ and $h<i$.
}
\end{definition}

\begin{definition}\label{definition:main:5}
{\rm Let $\lm\in X(n)$ and $1\le i<n$. The index $i$ is called {\em $\lm$-good} if it is  $\lm$-normal and there is no other $\lm$-normal index $h$ such that $\res_p\lm_h=\res_p\lm_i$ and $h<i$.
}
\end{definition}

To determine (tensor) normal and good nodes of a fixed residue $\beta$, we actually need to do less work
than is suggested by the definitions above.   Corollary~\ref{corollary:main:1} below shows that to determine normal and good indices, it suffices to calculate just the reduction  $[\prod_{1\le k<n} r_\beta(\lm)_k]$, and to calculate tensor normal and tensor good indices, it suffices to calculate just the reduction  $[\prod_{1\le k\le n} r_\beta(\lm)_k]$. Recall that we abbreviate $\prod r_\beta(\lm):=\prod_{1\le k\le n} r_\beta(\lm)_k$.

\begin{lemma}\label{lemma:main:2} Let $\lm\in X(n)$.
\begin{enumerate}
\item[{\rm (i)}] If $1\le i<n$ and $\beta:=\res_p\lm_i$, then 
$\bigl[\prod_{i\le k<n} r_\beta(\lm)_k\bigr]$ contains $-_i$ if and only if
 $\bigl[\prod_{1\le k<n} r_\beta(\lm)_k\bigr]$ contains $-_i$.
\item[{\rm (ii)}] If $1\le i\le n$ and $\beta:=\res_p\lm_i$, then 
$\bigl[\prod_{i\le k\le n} r_\beta(\lm)_k\bigr]$ contains $-_i$ if and only if
 $\bigl[\prod r_\beta(\lm)]$ contains $-_i$.
\end{enumerate}
\end{lemma}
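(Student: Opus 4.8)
The statement has two parts, (i) and (ii), which are parallel and differ only in whether the top end of the range is $n-1$ or $n$; part (ii) for $r_\beta$ with $\beta\neq\mathbf 0$ is really the ``easy'' model case, so I would prove (ii) first and then note that (i) follows by the same argument applied to the restriction of $\lambda$ to $[1..n-1]$ (or, more precisely, to the first $n-1$ entries, keeping in mind that $r_\beta(\lambda)_k$ depends only on $\lambda_k$). The crux of both parts is the following assertion: if $w_0,w_1,\dots$ are marked signature sequences and $v$ is the concatenation $w_0 w_1\cdots$, and if $s$ is a mark appearing on a ``$-$'' in one of the $w_t$'s, then the symbol $-_s$ survives in $[v]$ if and only if it survives in $[w_t w_{t+1}\cdots]$ — i.e. prepending $w_0\cdots w_{t-1}$ cannot save a ``$-$'' that was already going to be cancelled, and cannot cancel a ``$-$'' that was already going to survive. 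In our situation $w_t = r_\beta(\lambda)_t$ (a one-element sequence $-$, $+$, or $\emptyset$), the distinguished mark is $s=i$, and $\res_p\lambda_i=\beta$ guarantees by Proposition~\ref{proposition:rbeta}(1) that $r_\beta(\lambda)_i$ is indeed a ``$-$'', so the assertion applies with $t=i$.

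\textbf{Key steps.} First I would record, using Corollary~\ref{corollary:pm:1}, that $\bigl[\prod_{1\le k\le n}r_\beta(\lambda)_k\bigr] = \bigl[\,\bigl[\prod_{1\le k<i}r_\beta(\lambda)_k\bigr]\,\bigl[\prod_{i\le k\le n}r_\beta(\lambda)_k\bigr]\,\bigr]$, so that the whole question reduces to understanding how left-multiplication by a reduced sequence $[u]=+^a-^b$ (here $u=\prod_{1\le k<i}r_\beta(\lambda)_k$, and we may use Proposition~\ref{proposition:pm:2}) affects the fate of a specific ``$-$'' inside a second sequence $v=\prod_{i\le k\le n}r_\beta(\lambda)_k$. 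Since by Proposition~\ref{proposition:pm:2} the reduction $[v]$ is of the form $+^c-^d$, and the first symbol of $v$ is $-_i$, the symbol $-_i$ survives in $[v]$ exactly when $[v]$ still ``starts with'' that $-$; more usefully, $-_i$ survives in $[v]$ iff $[v]$ contains at least one $-$ whose position corresponds to it, and because reduction only ever erases a $-$ together with a later $+$, the leftmost $-$ of $v$ (which is $-_i$, as $v$ begins with it) is the last $-$ to be erased — so $-_i\in[v]$ iff the reduced form $[v]$ has a nonempty $-$ block, i.e. iff $v$ has at least as many $-$'s as $+$'s in every suffix starting... I would instead phrase it cleanly: $-_i$ survives in $[v]$ iff $i$ is not the source of any edge of some (equivalently, every) flow coherent with $r_\beta(\lambda)|_{[i..n]}$, which by Lemmas~\ref{lemma:pm:1} and~\ref{lemma:pm:2} and their proofs is equivalent to a counting condition on suffixes. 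Then the main lemma is: for reduced $[u]=+^a-^b$, we have $-_i\in[\,[u]v\,]$ iff $-_i\in[v]$. The ``$\Leftarrow$'' direction: if $-_i\in[v]$ then $[v]=+^c-^d$ with $d\ge1$ and $-_i$ is (a mark on) the first of those $d$ copies of $-$; then $[[u]v]=[+^a-^b+^c-^d]=[+^a(-^b+^c)-^d]$, and since reducing $-^b+^c$ only pairs $-$'s from the $-^b$ block with $+$'s from the $+^c$ block, the trailing $-^d$ (hence $-_i$) is untouched. The ``$\Rightarrow$'' direction: if $-_i\notin[v]$, then $[v]=+^c$ (the $-$ block is empty, so in particular $-_i$ was erased inside the reduction of $v$ alone), and prepending anything on the left cannot undo an erasure that already happened strictly to the right, so $-_i\notin[[u]v]$ either; formally, $[[u]v]=[\,[u]\,[v]\,]=[+^a-^b+^c]$ which contains no marked symbol from $v$ other than survivors of $[v]$, and $-_i$ is not among them. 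Finally, apply this with $v=\prod_{i\le k<n}r_\beta(\lambda)_k$ for (i) (after first checking, exactly as above, that passing from $[\prod_{i\le k<n}]$ to $[\prod_{1\le k<n}]$ is the same kind of left-prepending operation) and with $v=\prod_{i\le k\le n}r_\beta(\lambda)_k$ for (ii).

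\textbf{Main obstacle.} The only real subtlety is bookkeeping with marks: the reduction procedure of Proposition~\ref{proposition:pm:1} erases pairs $-+$ without regard to marks, so when I write $[v]=+^c-^d$ I must be careful to justify that the \emph{specific} marked symbol $-_i$ — the first entry of $v$ — is the last $-$ of $v$ to be cancelled, equivalently that it survives iff any $-$ survives iff $d\ge1$. This follows because any erasure pairs a $-$ with a strictly later $+$, so a $-$ can only be erased if there is an unmatched $+$ to its right; the leftmost $-$ has the most $+$'s to its right competing for matching, hence is the most ``protected''. I would make this precise either by the explicit well-definedness of reduction (erase from the left, greedily) or by invoking the flow language of Lemma~\ref{lemma:pm:1}: in a fully coherent flow, sources and targets satisfy $a_k<b_k$, and one can always arrange the flow so that the leftmost potential source is a bud. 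Modulo that careful argument about which $-$ is ``the'' surviving one, everything else is the routine concatenation identity $[u v]=[[u][v]]$ from Corollary~\ref{corollary:pm:1} together with the trivial reduction $[+^a-^b+^c-^d]$ computation. Once part (ii) is done, part (i) is immediate since $r_\beta(\lambda)_k$ for $k<n$ does not involve $\lambda_n$ at all, so $\prod_{1\le k<n}r_\beta(\lambda)_k$ and $\prod_{i\le k<n}r_\beta(\lambda)_k$ are genuine sequences to which the same lemma applies verbatim.
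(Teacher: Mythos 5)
Your argument is essentially the paper's: split at index $i$ via Corollary~\ref{corollary:pm:1}, use Proposition~\ref{proposition:pm:2} to write $[v]=+^c-^d$ where $v=\prod_{i\le k\le n}r_\beta(\lm)_k$, observe that $-_i$ (if it survives in $[v]$) sits in the trailing $-^d$, which reducing $[+^a-^b]\cdot[+^c-^d]$ cannot touch since it has no $+$ to its right; and in the other direction, $[\,[u]\,[v]\,]$ carries no symbol marked $i$ beyond what $[v]$ already had, because all marks of $u=\prod_{1\le k<i}r_\beta(\lm)_k$ are less than $i$ and reduction only deletes. That is a correct proof, and the reduction of (i) to (ii) (or vice versa) is a harmless reorganisation.

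However, two intermediate claims you assert are false, though neither is load-bearing. You claim ``$-_i\in[v]$ iff $[v]$ has a nonempty $-$ block,'' and later, in the $\Rightarrow$ direction, that ``$-_i\notin[v]$ implies $[v]=+^c$.'' A counterexample to both: with $\beta\ne\mathbf0$, take $v=-_i+_{i+1}-_{i+2}$; then $[v]=-_{i+2}$, so $-_i\notin[v]$ yet the $-$-block of $[v]$ is nonempty. The related heuristic that the leftmost $-$ is ``the most protected'' (or ``the last $-$ to be erased'') fails for the same reason: a $-$ immediately followed by a $+$ is erased first regardless of what lies further right. So the ``main obstacle'' you worry about is actually a non-issue — you never need to decide which $-$ is the surviving one. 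Your $\Leftarrow$ step only needs that a surviving $-_i$ lies in $-^d$ (true, since $[v]$ is $+$'s then $-$'s), and your $\Rightarrow$ step already correctly closes with the remark that $[\,[u]\,[v]\,]$ contains no $i$-marked symbol other than survivors of $[v]$. Delete the two false side-claims and the proof stands as written.
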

\begin{proof}
We prove (i), (ii) being similar. By Corollary~\ref{corollary:pm:1}, we get
$$
\textstyle
\bigl[
\prod_{1\le k<n} r_\beta(\lm)_k
\bigr]=
\Bigl[
\prod_{1\le k<i} r_\beta(\lm)_k
\bigl[
\prod_{i\le k<n} r_\beta(\lm)_k
\bigr]
\Bigr].
$$
So, if $\bigl[\prod_{i\le k<n} r_\beta(\lm)_k\bigr]$ does not contain $-_i$, then
$\bigl[\prod_{1\le k<n} r_\beta(\lm)_k\bigr]$ also does not contain $-_i$.
Conversely, if
$\bigl[\prod_{i\le k<n} r_\beta(\lm)_k\bigr]$ contains $-_i$, then this symbol can not be erased as we reduce $\prod_{1\le k<i} r_\beta(\lm)_k
\bigl[
\prod_{i\le k<n} r_\beta(\lm)_k
\bigr]$.
\end{proof}

\begin{lemma}\label{lemma:main:3} Let $\lm\in X(n)$ and $1\le i<n$. Set $\beta:=\res_p\lm_i$.
Then $\bigl[\prod_{i<k<n} r_\beta(\lm)_k\bigr]$ is empty if and only if $-_i$ is the last symbol of
$\bigl[\prod_{1\le k<n} r_\beta(\lm)_k\bigr]$.
\end{lemma}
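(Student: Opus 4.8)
The plan is to fix $i$ with $1\le i<n$ and $\beta:=\res_p\lm_i$, and to split $\prod_{1\le k<n}r_\beta(\lm)_k$ into its ``head'' $\prod_{1\le k\le i}r_\beta(\lm)_k$ and its ``tail'' $\prod_{i<k<n}r_\beta(\lm)_k$, using Corollary~\ref{corollary:pm:1} to pass freely to the reductions of each piece. First I would note that, by Proposition~\ref{proposition:rbeta}, since $\res_p\lm_i=\beta$ the signature sequence $r_\beta(\lm)_i$ contains the symbol $-$; I would record as a preliminary that $\bigl[\prod_{1\le k\le i}r_\beta(\lm)_k\bigr]$ ends in $-_i$. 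Indeed, $\prod_{1\le k\le i}r_\beta(\lm)_k=\bigl(\prod_{1\le k<i}r_\beta(\lm)_k\bigr)r_\beta(\lm)_i$, and $r_\beta(\lm)_i$ is one of $-$ or $+-$; when we reduce, the final $-_i$ can never be erased because there is no $+$ to its right inside this head, so $\bigl[\prod_{1\le k\le i}r_\beta(\lm)_k\bigr]=w\,-_i$ for some reduced sequence $w$ (which by Proposition~\ref{proposition:pm:2} has the form $+^s-^r$, so actually $w=+^s-^r$ with the displayed $-_i$ appended, i.e.\ the whole thing is $+^s-^{r+1}$ with the last minus marked $i$). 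Write $T:=\bigl[\prod_{i<k<n}r_\beta(\lm)_k\bigr]$; by Proposition~\ref{proposition:pm:2}, $T=+^{s'}-^{r'}$.

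Next, by Corollary~\ref{corollary:pm:1}, $\bigl[\prod_{1\le k<n}r_\beta(\lm)_k\bigr]=\bigl[\,\bigl[\prod_{1\le k\le i}r_\beta(\lm)_k\bigr]\,T\,\bigr]=\bigl[\,(\cdots\,-_i)\,(+^{s'}-^{r'})\,\bigr]$. Now I would argue both implications. If $T=\emptyset$, then the concatenation is already reduced up to the last symbol being $-_i$: more precisely the whole expression is $+^s-^{r+1}$ with the last minus marked $i$, so $-_i$ is indeed the last symbol of $\bigl[\prod_{1\le k<n}r_\beta(\lm)_k\bigr]$. Conversely, suppose $T\neq\emptyset$. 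If $T$ begins with a $+$ (i.e.\ $s'>0$), then in reducing $(\cdots\,-_i)(+\cdots)$ that leading $+$ of $T$ cancels the $-_i$ — more carefully, after full reduction there is no occurrence of the symbol $-_i$ at all, by the confluence guaranteed by Proposition~\ref{proposition:pm:1}, so $-_i$ is certainly not the last symbol. If $T$ consists only of minuses ($s'=0$, $r'>0$), then the concatenation is $+^s-^{r+1}-^{r'}$ with the $i$-marked minus sitting in position $r+1$ from the first minus; since $r'>0$ there is at least one further minus to its right, so $-_i$ is not the last symbol. This exhausts the cases and gives the equivalence.

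The step I expect to require the most care is the ``conversely'' direction when $T$ starts with a $+$: one must be sure that the $+$ at the front of $T$ really does annihilate precisely the marked $-_i$ (and not some earlier unmarked $-$), so that $-_i$ disappears entirely from the reduction. This is exactly where the well-definedness of reduction (Proposition~\ref{proposition:pm:1}) is used: choose to erase the pair $-_i+$ first, obtaining a shorter sequence in which the symbol $-_i$ no longer occurs; since the final reduced word is independent of the order of erasures, $-_i$ does not occur in $\bigl[\prod_{1\le k<n}r_\beta(\lm)_k\bigr]$ either, and in particular it is not the last symbol. Everything else is bookkeeping with Proposition~\ref{proposition:pm:2} on the shape $+^s-^r$ of reduced words and with Corollary~\ref{corollary:pm:1} to localize the reduction to the relevant factors.
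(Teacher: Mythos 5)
Your decomposition and your forward direction are both fine, and your case analysis largely parallels the paper's. But there is a genuine gap in your ``$T$ begins with $+$'' branch, and it comes from a misstatement at the very start: you assert that $r_\beta(\lm)_i$ is ``one of $-$ or $+-$''. This is incomplete. When $\beta=\mathbf0$ and $\lm_i\equiv1\pmod p$, the definition gives $r_\mathbf0(\lm)_i=--$, so the reduced head can end in \emph{two} consecutive $-_i$'s, not one. Consequently your key claim that erasing ``the pair $-_i+$ first'' produces a sequence with no occurrence of $-_i$ is simply false when the tail $T$ starts with exactly one $+$: the other $-_i$ survives. A concrete counterexample: $n=3$, $i=1$, $\beta=\mathbf0$, $\lm_1\equiv1$ and $\lm_2\equiv0\pmod p$, so the product is $-_1-_1+_2-_2$, which reduces to $-_1-_2$; here $T=+_2-_2\ne\emptyset$ and yet $-_1$ does occur in the reduction (it is just not last).

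The lemma's conclusion still holds in that situation, but for a different reason than the one you give, and supplying it requires an extra ingredient you didn't invoke. This is exactly where the paper diverges from you: when $\bigl[\prod_{i<k<n}r_\beta(\lm)_k\bigr]$ contains exactly one $+$, the paper splits on whether $\beta=\mathbf0$; for $\beta\ne\mathbf0$ your argument is fine (there is only one $-_i$), but for $\beta=\mathbf0$ it appeals to Corollary~\ref{corollary:pm:2} (the parity constraint $s'-r'\equiv0\pmod2$ for maps into $\{\emptyset,--,+-,++\}$) to conclude that $T$ also contains a $-_q$ with $q>i$, and that surviving $-_q$ sits to the right of any surviving $-_i$. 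You never use Corollary~\ref{corollary:pm:2}, and that is precisely the missing ingredient. To repair your proof, replace the claim ``$-_i$ disappears entirely'' with the weaker and correct statement that after reduction the last symbol is a $-_q$ with $q>i$ (using the parity argument when $T$ has exactly one $+$ and $\beta=\mathbf0$), and note that in the $\ge2$-pluses case all $-_i$'s really do get erased.
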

\begin{proof}
By Corollary~\ref{corollary:pm:1}, we get
$$
\textstyle
\bigl[
\prod_{1\le k<n} r_\beta(\lm)_k
\bigr]=
\Bigl[\Bigl(
\prod_{1\le k<i} r_\beta(\lm)_k\Bigr)\,
r_\beta(\lm)_i
\,
\bigl[
\prod_{i<k<n} r_\beta(\lm)_k
\bigr]
\Bigr].
$$
Therefore, if $\bigl[\prod_{i<k<n} r_\beta(\lm)_k\bigr]=\emptyset$, then $-_i$ is the last symbol of
$\bigl[\prod_{1\le k<n} r_\beta(\lm)_k\bigr]$, since this symbol is contained in $r_\beta(\lm)_i$
and can not be further erased.

Now suppose that $\bigl[\prod_{i<k<n}r_\beta(\lm)_k\bigr]\ne\emptyset$.
If $\bigl[\prod_{i<k<n}r_\beta(\lm)_k\bigr]$ contains at least two $+$'s,
then all symbols $-_i$ contained in $r_\beta(\lm)_i$ get erased during reduction, and
 so $\bigl[\prod_{1\le k<n} r_\beta(\lm)_k\bigr]$ does not contain $-_i$.

Let $\bigl[\prod_{i<k<n}r_\beta(\lm)_k\bigr]$ contain exactly one $+$.
If $\beta\ne\mathbf0$ then $r_\beta(\lm)_i=-_i$, and $-_i$ gets erased during reduction.
On the other hand, if $\beta=\mathbf0$ then by Corollary~\ref{corollary:pm:2}, the reduction
$\bigl[\prod_{i<k<n}r_\beta(\lm)_k\bigr]$ contains at lest one symbol $-_q$ with $q>i$.
This symbol cannot be erased and so $-_i$ is not the last symbol of $\bigl[\prod_{1\le k<n} r_\beta(\lm)_k\bigr]$.

Finally, if $\bigl[\prod_{i<k<n}r_\beta(\lm)_k\bigr]$ does not contain any $+$'s,
then it contains $-_q$ with $q>i$ at the end, and and so again $-_i$ is not the last symbol of $\bigl[\prod_{1\le k<n} r_\beta(\lm)_k\bigr]$.
\end{proof}

\begin{corollary}\label{corollary:main:1}
Let $\lm\in X(n)$.
\begin{enumerate}
\item[{\rm (i)}] Let $1\le i<n$ and $\beta:=\res_p\lm_i$. Then
$i$ is $\lm$-normal if and only if $\bigl[\prod_{1\le k<n} r_\beta(\lm)_k\bigr]$ contains $-_i$, and $-_i$ is not its last symbol
if $\lm_i\=\lm_n\=0\pmod p$.
\item[{\rm (ii)}] Let $1\le i\le n$ and $\beta:=\res_p\lm_i$. Then
$i$ is tensor $\lm$-normal if and only if $\bigl[\prod r_\beta(\lm)\bigr]$ contains $-_i$.
\end{enumerate}
\end{corollary}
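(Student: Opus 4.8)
The plan is to deduce both statements by combining Lemma~\ref{lemma:main:2} with Lemma~\ref{lemma:main:3}. For part~(ii), the argument is immediate: Lemma~\ref{lemma:main:2}(ii) says that $\bigl[\prod_{i\le k\le n} r_\beta(\lm)_k\bigr]$ contains $-_i$ if and only if $\bigl[\prod r_\beta(\lm)\bigr]$ contains $-_i$, and by Definition~\ref{definition:constr:4tens} the left-hand condition is precisely what it means for $i$ to be tensor $\lm$-normal. So part~(ii) requires no further work.

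For part~(i), recall Definition~\ref{definition:constr:4}: the index $i$ is $\lm$-normal when $\bigl[\prod_{i\le k<n} r_\beta(\lm)_k\bigr]$ contains $-_i$ and it is \emph{not} the case that both (1) $\bigl[\prod_{i<k<n} r_\beta(\lm)_k\bigr]=\emptyset$ and (2) $\lm_i\=\lm_n\=0\pmod p$ hold. First I would rewrite the first clause: by Lemma~\ref{lemma:main:2}(i), $\bigl[\prod_{i\le k<n} r_\beta(\lm)_k\bigr]$ contains $-_i$ if and only if $\bigl[\prod_{1\le k<n} r_\beta(\lm)_k\bigr]$ contains $-_i$. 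Next I would rewrite the negation of the conjunction of (1) and (2). Since (2) involves only the fixed data $\lm_i,\lm_n$, the condition ``$i$ is $\lm$-normal'' becomes: $\bigl[\prod_{1\le k<n} r_\beta(\lm)_k\bigr]$ contains $-_i$, \emph{and}, if $\lm_i\=\lm_n\=0\pmod p$, then (1) fails, i.e. $\bigl[\prod_{i<k<n} r_\beta(\lm)_k\bigr]\ne\emptyset$. Now invoke Lemma~\ref{lemma:main:3}: assuming $\bigl[\prod_{1\le k<n} r_\beta(\lm)_k\bigr]$ contains $-_i$, the condition $\bigl[\prod_{i<k<n} r_\beta(\lm)_k\bigr]=\emptyset$ is equivalent to $-_i$ being the last symbol of $\bigl[\prod_{1\le k<n} r_\beta(\lm)_k\bigr]$. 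Substituting, ``$i$ is $\lm$-normal'' becomes exactly: $\bigl[\prod_{1\le k<n} r_\beta(\lm)_k\bigr]$ contains $-_i$, and $-_i$ is not its last symbol if $\lm_i\=\lm_n\=0\pmod p$. This is the claimed characterization.

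There is essentially no hard step here; this is a bookkeeping argument that reassembles two lemmas into a single clean criterion. The only point requiring a little care is that Lemma~\ref{lemma:main:3} is stated under the standing hypothesis $\beta=\res_p\lm_i$ and is applied only in the regime where $-_i$ already occurs in the reduction, so one should phrase the deduction so that Lemma~\ref{lemma:main:3} is invoked \emph{after} the first clause of $\lm$-normality has been secured; the case $\lm_i\not\=0$ or $\lm_n\not\=0\pmod p$ trivially imposes no second condition, matching the ``if'' in the corollary's statement.
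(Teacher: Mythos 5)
Your argument is correct and is precisely what the paper intends: Corollary~\ref{corollary:main:1} is stated without proof immediately after Lemmas~\ref{lemma:main:2} and~\ref{lemma:main:3}, which the surrounding prose presents as the ingredients, and your assembly of them is the expected one. One small remark: the caveat you raise at the end is unnecessary, since Lemma~\ref{lemma:main:3} is stated as an unconditional equivalence (it does not presuppose that $-_i$ occurs in the reduction), so the substitution of ``$\bigl[\prod_{i<k<n} r_\beta(\lm)_k\bigr]=\emptyset$'' by ``$-_i$ is the last symbol'' is valid without any ordering of the clauses.
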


In the case where $\lm_n\=0\pmod p$ we can simplify Corollary~\ref{corollary:main:1} as follows:

\begin{corollary}\label{corollary:main:2}
Let $\lm\in X(n)$, $\lm_n\=0\pmod p$, $1\le i<n$, and $\beta:=\res_p\lm_i$.
Then $i$ is $\lm$-normal if and only if
$\bigl[\prod r_\beta(\lm)\bigr]$
contains $-_i$ if and only if $i$ is tensor $\la$-normal.
\end{corollary}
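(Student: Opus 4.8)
The plan is to deduce Corollary~\ref{corollary:main:2} directly from Corollary~\ref{corollary:main:1} by exploiting the hypothesis $\lm_n\=0\pmod p$, which forces $r_{\mathbf0}(\lm)_n=+-$ and hence ties the two reductions together. First I would treat the equivalence ``$i$ is $\lm$-normal $\iff$ $i$ is tensor $\lm$-normal'' and the equivalence ``$i$ is tensor $\lm$-normal $\iff$ $[\prod r_\beta(\lm)]$ contains $-_i$'' separately; the latter is just Corollary~\ref{corollary:main:1}(ii), so the real content is the former together with relating $[\prod_{1\le k<n}r_\beta(\lm)_k]$ to $[\prod_{1\le k\le n}r_\beta(\lm)_k]=[\prod r_\beta(\lm)]$.

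The key observation is that, since $\res_p\lm_n=\mathbf0$, the $n$th entry of the relevant signature map is a two-symbol block of the form $+-$ (it is $r_{\mathbf0}(\lm)_n=+-$ when $\beta=\mathbf0$, and it is $\emptyset$ when $\beta\ne\mathbf0$, because then $r_\beta(\lm)_n$ contains $-$ only if $\res_p\lm_n=\beta$, impossible, and contains $+$ only if $\res_p(\lm_n+1)=\beta$, also impossible as $\res_p\lm_n=\mathbf0$ forces $\res_p(\lm_n+1)=\res_p1\neq\mathbf 0$; wait—one must be slightly careful here, since $\res_p1=\mathbf0$ iff $p\mid 1\cdot0=0$, i.e.\ always, so $\res_p(\lm_n+1)=\mathbf0$; thus if $\beta\ne\mathbf0$ then $r_\beta(\lm)_n=\emptyset$). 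So in all cases $r_\beta(\lm)_n$ is either $\emptyset$ or $+-$. By Corollary~\ref{corollary:pm:1},
$$
\bigl[{\textstyle\prod r_\beta(\lm)}\bigr]=\Bigl[\bigl[{\textstyle\prod_{1\le k<n}r_\beta(\lm)_k}\bigr]\,r_\beta(\lm)_n\Bigr],
$$
and appending $\emptyset$ or $+-$ to a reduced word $+^s-^r$ either does nothing or produces $[+^s-^r+-]$, which equals $+^{s+1}-$ if $r=0$ and $+^s-^r$ if $r>0$. In particular: (a) a symbol $-_i$ with $i<n$ survives in $[\prod_{1\le k<n}r_\beta(\lm)_k]$ if and only if it survives in $[\prod r_\beta(\lm)]$; and (b) $-_i$ is the \emph{last} symbol of $[\prod_{1\le k<n}r_\beta(\lm)_k]$ (the case $r=1$, $s=0$) if and only if after appending $+-$ we get $[+-]=+-$, so that $-_i$ is no longer present in $[\prod r_\beta(\lm)]$; conversely if $-_i$ is present in $[\prod_{1\le k<n}r_\beta(\lm)_k]$ but not its last symbol, it stays present in $[\prod r_\beta(\lm)]$.

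Combining this with Corollary~\ref{corollary:main:1}(i): $i$ is $\lm$-normal iff $[\prod_{1\le k<n}r_\beta(\lm)_k]$ contains $-_i$ and, because here $\lm_i\=0$ (as $\beta=\res_p\lm_i=\res_p\lm_n=\mathbf 0$ in the only case that can occur, namely $\beta=\mathbf0$; if $\beta\ne\mathbf 0$ the side condition is vacuous) and $\lm_n\=0$, also $-_i$ is not the last symbol of $[\prod_{1\le k<n}r_\beta(\lm)_k]$. By observation (b) this is precisely the condition that $[\prod r_\beta(\lm)]$ contains $-_i$, which by Corollary~\ref{corollary:main:1}(ii) says exactly that $i$ is tensor $\lm$-normal. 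When $\beta\ne\mathbf0$ the chain is even shorter since $r_\beta(\lm)_n=\emptyset$, so $[\prod_{1\le k<n}r_\beta(\lm)_k]=[\prod r_\beta(\lm)]$ and the side condition in Corollary~\ref{corollary:main:1}(i) is not triggered. I expect the only delicate point to be the careful bookkeeping of the ``last symbol'' condition versus presence of $-_i$ after appending the $+-$ block—essentially verifying observation (b) in the boundary case $r=1$—but this is a short computation on reduced words of the form $+^s-^r$ and presents no real obstacle; everything else is a direct citation of Corollary~\ref{corollary:pm:1} and Corollary~\ref{corollary:main:1}.
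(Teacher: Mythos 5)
Your overall strategy (decompose $\bigl[\prod r_\beta(\lm)\bigr]$ via Corollary~\ref{corollary:pm:1} as $\bigl[[\prod_{1\le k<n}r_\beta(\lm)_k]\,r_\beta(\lm)_n\bigr]$, note that $r_\beta(\lm)_n$ is $\emptyset$ or $+_n-_n$, and then compare with Corollary~\ref{corollary:main:1}) is exactly the paper's strategy. The case $\beta\ne\mathbf0$ is handled correctly. But in the case $\beta=\mathbf0$ there are two intertwined gaps.

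\textbf{First}, you assert that $\beta=\mathbf0$ implies $\lm_i\=0\pmod p$. This is false. Since $\res_p\lm_i=\lm_i(\lm_i-1)+p\Z$, the equality $\res_p\lm_i=\mathbf0$ means $\lm_i\=0$ \emph{or} $\lm_i\=1\pmod p$. In the second case, the side condition ``$\lm_i\=\lm_n\=0\pmod p$'' in Corollary~\ref{corollary:main:1}(i) is \emph{not} triggered, so $\lm$-normality is simply ``$[\prod_{1\le k<n}r_\beta(\lm)_k]$ contains $-_i$'' with no restriction on where $-_i$ sits. Your chain ``$\lm$-normal $\iff$ contains $-_i$ and $-_i$ not last $\iff$ tensor $\lm$-normal'' therefore breaks down when $\lm_i\=1\pmod p$: the first equivalence fails outright.

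\textbf{Second}, your observation~(b) is false as stated. If $\lm_i\=1\pmod p$ then $r_{\mathbf0}(\lm)_i=-_i-_i$, and when $-_i$ is the last symbol of $[\prod_{1\le k<n}r_\beta(\lm)_k]$ (equivalently, by Lemma~\ref{lemma:main:3}, when $[\prod_{i<k<n}r_\beta(\lm)_k]=\emptyset$), the second-to-last symbol is \emph{also} $-_i$. Appending $+_n-_n$ erases only the last one, so $-_i$ survives in $[\prod r_\beta(\lm)]$. Thus ``$-_i$ is the last symbol'' does \emph{not} imply that $-_i$ disappears from $[\prod r_\beta(\lm)]$; the parenthetical ``(the case $r=1$, $s=0$)'' signals confusion here. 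Happily, these two errors compensate: precisely when $\lm_i\=1\pmod p$, both ``$i$ is $\lm$-normal'' and ``$i$ is tensor $\lm$-normal'' hold whenever $-_i$ is present, even when it is last. But your argument gives no proof of that; you need to separate the sub-cases $\lm_i\=0$ and $\lm_i\=1\pmod p$ and verify the equivalence in each, which is what the paper does. Concretely, the paper shows that if $[\prod_{1\le k<n}r_\beta(\lm)_k]$ contains $-_i$ exactly once and at the end, then Lemma~\ref{lemma:main:3} forces $[\prod_{i<k<n}]=\emptyset$, whence $r_\beta(\lm)_i=+_i-_i$ (because $-_i-_i$ would yield two surviving $-_i$'s), so $\lm_i\=0\pmod p$; only \emph{then} is the extra clause in Corollary~\ref{corollary:main:1}(i) active, and it rules out $\lm$-normality in exactly the same situations in which $-_i$ is cancelled by the appended $+_n$. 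Your sketch skips this reconciliation, which is the entire content of the $\beta=\mathbf0$ case.
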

\begin{proof}
The second ``if-and-only-if'' comes from Corollary~\ref{corollary:main:1}(ii). We now prove the first one. If $\beta\ne\mathbf0$ then we get
$\bigl[\prod r_\beta(\lm)\bigr]=\bigl[\prod_{1\le k<n} r_\beta(\lm)_k\bigr]$
and the condition~$\lm_i\=\lm_n\=0\pmod p$ is not satisfied, so the result follows from Corollary~\ref{corollary:main:1}.
Now let $\beta=\mathbf0$.

Let $i$ be $\lm$-normal.
By Corollary~\ref{corollary:main:1}, $\bigl[\prod_{1\le k<n}r_\beta(\lambda)_k\bigr]$
contains $-_i$. By Corollary~\ref{corollary:pm:1}, we have
$$
\textstyle
\bigl[\prod r_\beta(\lm)\bigr]=
\bigl[[\prod_{1\le k<n} r_\beta(\lm)_k]+_n-_n\bigr].
$$
If $-_i$ is not the last symbol of $\bigl[\prod_{1\le k<n}r_\beta(\lambda)_k\bigr]$,
then there is another $-$ after it, and so $-_i$ is not erased together with $+_n$.
So $\bigl[\prod r_\beta(\lm)\bigr]$ contains $-_i$.
It remains to consider the case where $\bigl[\prod_{1\le k<n}r_\beta(\lambda)_k\bigr]$
contains exactly one $-_i$ and this $-_i$  appears in the end. By Lemma~\ref{lemma:main:3},
$\bigl[\prod_{i<k<n}r_\beta(\lm)_k\bigr]=\emptyset$.
Hence by Corollary~\ref{corollary:pm:1},
$$
\textstyle
\bigl[\prod_{1\le k<n}r_\beta(\lm)_k\bigr]=
\Bigl[\bigl[\prod_{1\le k\le i}r_\beta(\lm)_k\bigr]
\bigl[\prod_{i<k<n}r_\beta(\lm)_k\bigr]\Bigr]
=\big[\prod_{1\le k\le i}r_\beta(\lm)_k\big].
$$
It now follows that  $r_\beta(\lm)_i=+_i-_i$, i.e. $\lm_i\=0\pmod p$.
Then we have $\la_i\=\lm_n\=0\pmod p$ and $\bigl[\prod_{i<k<n}r_\beta(\lm)_k\bigr]=\emptyset$, which
contradicts the
$\lm$-normalty of
$i$.

Now let $i$ be not $\lm$-normal. If
$\bigl[\prod_{i\le k<n} r_\beta(\lambda)_k\bigr]$
does not contain $-_i$, then
$
\textstyle
\bigl[\prod r_\beta(\lm)\bigr]
=\bigl[[\prod_{1\le k<i} r_\beta(\lm)_k][\prod_{i\le k<n} r_\beta(\lm)_k]+_n-_n\bigr]
$
also does not contain $-_i$.
On the other hand, if $\bigl[\prod_{i<k<n}r_\beta(\lambda)_k\bigr]=\emptyset$ and $\lm_i\=\lm_n\=0\pmod p$, then
\begin{align*}
\textstyle
&\bigl[\prod r_\beta(\lm)\bigr]
=\bigl[\bigl[\prod_{1\le k<i} r_\beta(\lm)_k\bigr]+_i-_i\bigl[\prod_{i<k<n} r_\beta(\lm)_k\bigr]+_n-_n\bigr]\\
\textstyle
=&\bigl[\bigl[\prod_{1\le k<i} r_\beta(\lm)_k\bigr]+_i-_i+_n-_n\bigr]=\bigl[[\prod_{1\le k<i} r_\beta(\lm)_k]+_i-_n\bigr]
\end{align*}
again does not contain $-_i$.
\end{proof}

\section{Tensor conormal and tensor cogood indices}\label{SSTensConCog}

We now introduce the notion dual to that of the tensor $\la$-normal index (cf. Corollary~\ref{corollary:main:1}(ii)).

\begin{definition}\label{DConorm}
{\rm Let $\lm\in X(n)$ and $1\le i\le n$. Set $\beta:=\res_p(\lm_i+1)$.
The index $i$ is called {\em tensor $\lm$-conormal} if
$\bigl[\prod r_\beta(\lm)\bigr]$ contains $+_i$.}
\end{definition}

Note that $1$ is always a tensor $\lm$-conormal index.

\begin{definition}\label{DGood}
{\rm
Let $\lm\in X(n)$ and $1\le i\le n$. The index $i$ is called {\em tensor $\lm$-cogood} if it is tensor $\lm$-conormal and there is no other
tensor $\lm$-conormal index $h$ such that $\res_p\lm_h=\res_p\lm_i$ and $h>i$.
}
\end{definition}

We fix $n$ and consider the map $w_0:[1..n]\to[1..n]$ given by $w_0i=n+1-i$.
For a marked signature sequence $u$,
we denote by $-w_0u$ the sequence obtained from $u$ by substitutions $-_i\mapsto+_{w_0i}$ and $+_i\mapsto-_{w_0i}$
for each $i$ and rewriting the resulting sequence in the reverse order.
Note that the definition of $-w_0u$ depends on $n$.

\begin{lemma}\label{lemma:-w_0u}
If $v$ is obtained from $u$ by erasing some subsequences
$-+$
then $-w_0v$ is obtained from $-w_0u$ by erasing similar subsequences.
In particular, we have $[-w_0u]=-w_0[u]$.
\end{lemma}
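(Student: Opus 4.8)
The statement to prove is Lemma~\ref{lemma:-w_0u}: if $v$ is obtained from $u$ by erasing some subsequences $-+$, then $-w_0v$ is obtained from $-w_0u$ by erasing similar subsequences; in particular $[-w_0u]=-w_0[u]$.

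\textbf{Approach.} The operation $u\mapsto -w_0u$ is built from two commuting operations: (a) applying the sign/mark substitution $-_i\mapsto+_{w_0i}$, $+_i\mapsto-_{w_0i}$ entrywise, and (b) reversing the order of the sequence. It therefore suffices to understand how ``erasing a subsequence $-+$'' interacts with each of these. The plan is to reduce to the case of erasing a \emph{single} subsequence $-+$ and then to track where that subsequence goes under $-w_0$.

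\textbf{Key steps.} First I would reduce to a single erasure: since $v$ is obtained from $u$ by a finite sequence of single erasures $u=u^{(0)}\leadsto u^{(1)}\leadsto\cdots\leadsto u^{(k)}=v$, and since $-w_0$ is a bijection on marked signature sequences, it is enough to show that a single erasure step $u'\leadsto u''$ (removing one occurrence of a consecutive pair $-+$) is transformed by $-w_0$ into a single erasure step $-w_0u'\leadsto -w_0u''$. So write $u'=a\,(-_i\,+_j)\,b$ where the displayed $-_i$ and $+_j$ are two adjacent entries and $u''=a\,b$. Applying the entrywise substitution of (a) turns the block $-_i\,+_j$ into $+_{w_0i}\,-_{w_0j}$ and turns $a,b$ into sequences $\tilde a,\tilde b$ of substituted entries; then applying the reversal of (b) yields $-w_0u'=(\text{reverse of }\tilde b)\,(-_{w_0j}\,+_{w_0i})\,(\text{reverse of }\tilde a)$. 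Crucially, the two middle entries are now adjacent and in the order $-\,+$, so they form an erasable subsequence, and erasing them gives exactly $(\text{reverse of }\tilde b)(\text{reverse of }\tilde a)=-w_0u''$. This is the whole content: the substitution swaps $-$ with $+$ and also swaps the marks $i\leftrightarrow j$-positions, and the reversal swaps the left-right order of the block's two entries, so the net effect on a $-+$ block is again a $-+$ block (just with the roles of the two slots interchanged), which remains erasable. The ``In particular'' clause then follows: take $v=[u]$, which is obtained from $u$ by erasing subsequences $-+$ until none remain; then $-w_0[u]$ is obtained from $-w_0u$ by erasing the corresponding subsequences $-+$, and $-w_0[u]$ contains no subsequence $-+$ because, again reversing the argument of step two, any such subsequence in $-w_0[u]$ would pull back (under the inverse of $-w_0$, which has the same shape) to a subsequence $-+$ in $[u]$, contradicting that $[u]$ is reduced. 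By Proposition~\ref{proposition:pm:1} the reduction is well-defined, so $[-w_0u]=-w_0[u]$.

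\textbf{Main obstacle.} The only thing that needs genuine care is the bookkeeping of the \emph{marks} and the \emph{direction} of the reversal: one must check that ``$-_i$ immediately followed by $+_j$'' really does map to ``$-$ (with some mark) immediately followed by $+$ (with some mark)'', not to ``$+$ followed by $-$''. This is exactly where the combination of the sign flip (which by itself would turn $-+$ into $+-$) and the order reversal (which by itself would also turn $-+$ into $+-$) conspire: the two flips cancel, and $-+$ is sent to $-+$. I expect no computational difficulty beyond this sign-and-order check; everything else is an immediate induction on the number of erasure steps together with the observation that $-w_0$ is an involution-like bijection (strictly, $-w_0$ composed with itself is the identity on marks since $w_0^2=\id$ and the sign substitution is an involution), which lets the ``only if'' direction in the reduced-sequence argument go through symmetrically.
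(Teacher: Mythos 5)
Your proof is correct and follows essentially the same route as the paper's: the paper asserts the first statement is ``obvious'' and then, for the second, applies it with $v=[u]$ and notes that $-w_0[u]$ is irreducible, while you fill in the details of why a single erasure of a $-+$ block commutes with $-w_0$ (the sign flip and the order reversal each reverse the pair, so together they preserve it) and why $-w_0[u]$ cannot be further reduced.
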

\begin{proof}
The first statement is obvious. To prove the second one, take $v=[u]$.
Then $-w_0[u]$ is obtained from $-w_0u$ by erasing some
subsequences of the form $-_i+_j$. However the sequence $-w_0[u]$ can not be further reduced and
thus is the reduction of $-w_0u$.
\end{proof}

\begin{lemma}\label{lemma:r_beta(-w_0lm)}
For any $\lm\in X(n)$ and $\beta\in\Z/p\Z$, we have $\prod r_\beta(-w_0\lm)=-w_0\Bigl(\prod r_\beta(\lm)\Bigr)$.
\end{lemma}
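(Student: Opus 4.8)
The plan is to prove the identity $\prod r_\beta(-w_0\lm)=-w_0\Bigl(\prod r_\beta(\lm)\Bigr)$ by unwinding the definitions of both sides entry by entry, reducing everything to Proposition~\ref{proposition:rbeta} and the combinatorial definition of $-w_0(-)$ on marked signature sequences.

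First I would recall that $\prod r_\beta(\lm)=r_\beta(\lm)_1 r_\beta(\lm)_2\cdots r_\beta(\lm)_n$, where the entries of the one- or two-element sequence $r_\beta(\lm)_k$ are marked with $k$. Applying $-w_0$ to this concatenation reverses the order of the factors and turns each $r_\beta(\lm)_k$ (marked with $k$) into a sequence marked with $w_0k=n+1-k$, with every $+$ replaced by $-$ and every $-$ by $+$, and written backwards. Since reversing the order of the big concatenation sends the $k$th factor to position $n+1-k$, the factor sitting in position $i$ of $-w_0\bigl(\prod r_\beta(\lm)\bigr)$ is precisely $-w_0\bigl(r_\beta(\lm)_{w_0i}\bigr)$ (the local reversal inside a one- or two-symbol block and the reversal of marks is automatically bookkept). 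On the other side, the factor in position $i$ of $\prod r_\beta(-w_0\lm)$ is $r_\beta(-w_0\lm)_i$. So it suffices to check, for each $i\in[1..n]$ and each residue $\beta$, the local identity
$$
r_\beta(-w_0\lm)_i = -w_0\bigl(r_\beta(\lm)_{w_0i}\bigr),
$$
interpreting the right-hand side as: take the block $r_\beta(\lm)_{w_0i}$, swap $+\leftrightarrow-$, reverse it, and re-mark with $i=w_0(w_0i)$.

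For the verification of this local identity I would split according to whether $\beta=\mathbf 0$ or $\beta\ne\mathbf 0$, and use Proposition~\ref{proposition:rbeta} together with the definitions of $r_{\mathbf 0}$ and $r_\beta$. Note that $(-w_0\lm)_i = -\lm_{n+1-i}=-\lm_{w_0i}$ by the definition $-w_0\lm=(-\lm_n,\dots,-\lm_1)$. In the case $\beta\ne\mathbf 0$: $r_\beta(\lm)_{w_0i}$ contains $-$ iff $\res_p\lm_{w_0i}=\beta$ (Proposition~\ref{proposition:rbeta}(\ref{proposition:rbeta:part:1})) and contains $+$ iff $\res_p(\lm_{w_0i}+1)=\beta$ (Proposition~\ref{proposition:rbeta}(\ref{proposition:rbeta:part:2})); after the $+\leftrightarrow-$ swap these become ``contains $+$ iff $\res_p\lm_{w_0i}=\beta$'' and ``contains $-$ iff $\res_p(\lm_{w_0i}+1)=\beta$''. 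Meanwhile $r_\beta(-w_0\lm)_i$ contains $-$ iff $\res_p(-\lm_{w_0i})=\beta$ and contains $+$ iff $\res_p(-\lm_{w_0i}+1)=\beta$. Using $\res_p j = j(j-1)+p\Z$, one checks $\res_p(-\lm_{w_0i}+1)=\res_p\lm_{w_0i}$ and $\res_p(-\lm_{w_0i})=\res_p(\lm_{w_0i}+1)$ — indeed $(-j+1)(-j)=j(j-1)$ and $(-j)(-j-1)=(j+1)j$. These are exactly the swapped conditions, and since the blocks are one-symbol (for $\beta\ne\mathbf 0$) the reversal is vacuous, so the local identity holds. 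The case $\beta=\mathbf 0$ is the genuinely interesting one: here the blocks have two symbols drawn from $\{\emptyset,--,+-,++\}$, and I would run through the three congruence classes $\lm_{w_0i}\equiv 1, 0, -1\pmod p$ defining $r_{\mathbf 0}(\lm)_{w_0i}$ as $--,+-,++$ respectively (and $\emptyset$ otherwise), applying $-w_0$ (swap and reverse: $--\mapsto++$, $+-\mapsto+-$, $++\mapsto--$, $\emptyset\mapsto\emptyset$), and comparing with $r_{\mathbf 0}(-w_0\lm)_i$ computed from the congruence class of $-\lm_{w_0i}$. Since $\lm_{w_0i}\equiv 1\Leftrightarrow -\lm_{w_0i}\equiv -1$, $\lm_{w_0i}\equiv 0\Leftrightarrow -\lm_{w_0i}\equiv 0$, and $\lm_{w_0i}\equiv -1\Leftrightarrow -\lm_{w_0i}\equiv 1\pmod p$, the three cases match $++$, $+-$, $--$ respectively, which is exactly what $-w_0$ produced. (Here one uses $p\ne 2$ so that $1,0,-1$ are genuinely three distinct classes in the relevant sense, matching the setup of the paper.)

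The main obstacle, such as it is, is purely bookkeeping: keeping the mark relabelling $k\mapsto w_0k$ consistent with the reversal of the outer concatenation, so that ``factor $k$ of $\prod r_\beta(\lm)$'' really does land in ``position $n+1-k$ of $-w_0\bigl(\prod r_\beta(\lm)\bigr)$'' with the correct mark $i$. Once that indexing is pinned down and the per-entry identity above is in hand, concatenating over all $i\in[1..n]$ gives the claim. I would present this as: (1) reduce to the per-entry identity via the definition of $-w_0$ on concatenations; (2) prove the per-entry identity using Proposition~\ref{proposition:rbeta}, the formula $(-w_0\lm)_i=-\lm_{w_0i}$, and the arithmetic identities $\res_p(1-j)=\res_p j$, $\res_p(-j)=\res_p(j+1)$; (3) handle $\beta=\mathbf 0$ separately by the three-case check on $r_{\mathbf 0}$. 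No step requires anything beyond the stated results.
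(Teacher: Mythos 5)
Your argument is correct, and it is the same approach as the paper's, which simply states that the lemma "follows from the definitions." You have supplied the entry-by-entry verification — the reduction to the per-index identity $r_\beta(-w_0\lm)_i=-w_0\bigl(r_\beta(\lm)_{w_0i}\bigr)$, the arithmetic facts $\res_p(1-j)=\res_p j$ and $\res_p(-j)=\res_p(j+1)$, and the three-case check for $\beta=\mathbf 0$ — that the paper leaves to the reader.
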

\begin{proof}
Follows from the definitions.
\end{proof}

\begin{corollary}\label{corollary:conormal}
Let $\la\in X(n)$ and $1\leq i\leq n$. Then $i$ is tensor $\lm$-conormal
if and only if the index $w_0i$ is tensor $-w_0\la$-normal.
\end{corollary}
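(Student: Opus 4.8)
The proof of Corollary~\ref{corollary:conormal} is a direct unwinding of definitions combined with the two preceding lemmas. The plan is to translate the statement ``$i$ is tensor $\lm$-conormal'' into a statement about reductions of signature sequences, apply Lemma~\ref{lemma:r_beta(-w_0lm)} and Lemma~\ref{lemma:-w_0u} to pass to $-w_0\lm$, and recognize the result as the statement ``$w_0i$ is tensor $-w_0\lm$-normal.''

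First I would fix $\la\in X(n)$ and $1\le i\le n$, and set $\beta:=\res_p(\lm_i+1)$. By Definition~\ref{DConorm}, the index $i$ is tensor $\lm$-conormal if and only if $\bigl[\prod r_\beta(\lm)\bigr]$ contains the symbol $+_i$. Next I would observe the key numerical compatibility: by the definition of $-w_0\lm=(-\lm_n,\dots,-\lm_1)$, the $(w_0i)$th entry of $-w_0\lm$ is $(-w_0\lm)_{w_0i}=-\lm_i$, so $\res_p\bigl((-w_0\lm)_{w_0i}\bigr)=\res_p(-\lm_i)$. Recalling $\res_p j=j(j-1)+p\Z$, one has $\res_p(-\lm_i)=(-\lm_i)(-\lm_i-1)+p\Z=\lm_i(\lm_i+1)+p\Z=\res_p(\lm_i+1)=\beta$. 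Hence the residue $\beta$ that governs tensor $\lm$-conormality of $i$ is exactly the residue governing tensor $(-w_0\lm)$-normality of $w_0i$ (via Corollary~\ref{corollary:main:1}(ii)).

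Then I would apply Lemma~\ref{lemma:r_beta(-w_0lm)}, which gives $\prod r_\beta(-w_0\lm)=-w_0\bigl(\prod r_\beta(\lm)\bigr)$, together with Lemma~\ref{lemma:-w_0u}, which gives $\bigl[-w_0 u\bigr]=-w_0[u]$ for any marked signature sequence $u$. Combining, $\bigl[\prod r_\beta(-w_0\lm)\bigr]=-w_0\bigl[\prod r_\beta(\lm)\bigr]$. Since the operation $-w_0$ on marked sequences sends $+_i\mapsto -_{w_0i}$ and is a bijection on symbols, the reduced sequence $\bigl[\prod r_\beta(\lm)\bigr]$ contains $+_i$ if and only if $-w_0\bigl[\prod r_\beta(\lm)\bigr]=\bigl[\prod r_\beta(-w_0\lm)\bigr]$ contains $-_{w_0i}$. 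By Corollary~\ref{corollary:main:1}(ii) applied to the weight $-w_0\lm$ and the index $w_0i$ (whose associated residue is $\res_p\bigl((-w_0\lm)_{w_0i}\bigr)=\beta$ by the computation above), the latter holds if and only if $w_0i$ is tensor $(-w_0\lm)$-normal. Chaining the equivalences yields the claim.

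There is no real obstacle here; the only point requiring a little care is the residue bookkeeping in the second paragraph—checking that $\res_p(-\lm_i)=\res_p(\lm_i+1)$ so that ``the $\beta$ for conormality of $i$'' matches ``the $\beta$ for normality of $w_0i$ under $-w_0\lm$''—and making sure one invokes the cleaned-up criterion Corollary~\ref{corollary:main:1}(ii) rather than the raw Definition~\ref{definition:constr:4tens}, so that the reduction of the \emph{full} product $\prod r_\beta$ is what appears on both sides. Everything else is formal.
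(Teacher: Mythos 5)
Your proof is correct and takes essentially the same route as the paper: combine Lemmas~\ref{lemma:-w_0u} and~\ref{lemma:r_beta(-w_0lm)} to obtain $\bigl[\prod r_\beta(-w_0\lm)\bigr]=-w_0\bigl[\prod r_\beta(\lm)\bigr]$, then unwind Definition~\ref{DConorm} and Corollary~\ref{corollary:main:1}(ii). Your explicit check that $\res_p(-\lm_i)=\res_p(\lm_i+1)$, so that the residue governing conormality of $i$ matches the one governing normality of $w_0i$, is a small bookkeeping detail the paper leaves implicit but is indeed needed.
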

\begin{proof}
Combining Lemmas~\ref{lemma:-w_0u} and~\ref{lemma:r_beta(-w_0lm)},
we get
$$
\textstyle
\bigl[\prod r_\beta(-w_0\lm)\bigr]=-w_0\bigl[\prod r_\beta(\lm)\bigr],
$$
and it remains to apply Definition~\ref{DConorm} and Corollary~\ref{corollary:main:1}(ii).
\end{proof}

\begin{corollary}\label{CCogood}
Let $\la\in X(n)$ and $1\leq i\leq n$. Then $i$ is tensor $\lm$-good
if and only if $w_0i$ is $-w_0\la$-cogood.
\end{corollary}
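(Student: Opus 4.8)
The plan is to reduce Corollary~\ref{CCogood} to Corollary~\ref{corollary:conormal} by the same trick that converts ``conormal'' statements into ``normal'' statements, namely twisting by $-w_0$, now applied one level up: ``cogood'' is to ``conormal'' as ``good'' is to ``normal''. The key point is that both the $\lm$-good/cogood notions and the $\lm$-normal/conormal notions are governed entirely by the reductions $\bigl[\prod r_\beta(\lm)\bigr]$ together with the residue-matching condition $\res_p\lm_h=\res_p\lm_i$, and the operation $\lm\mapsto -w_0\lm$ together with $i\mapsto w_0i$ interchanges the two families while reversing the relevant orderings.

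First I would unwind the definitions. Let $i\in[1..n]$ and set $\beta:=\res_p(\lm_i+1)$; note that $\res_p\lm_h=\res_p\lm_i$ if and only if $\res_p(\lm_h+1)=\res_p(\lm_i+1)=\beta$, so the residue-matching condition is symmetric in this respect. By Definition~\ref{DGood}, $i$ is tensor $\lm$-cogood exactly when $i$ is tensor $\lm$-conormal and there is no tensor $\lm$-conormal index $h$ with $\res_p\lm_h=\res_p\lm_i$ and $h>i$. Now apply Corollary~\ref{corollary:conormal}: $i$ is tensor $\lm$-conormal iff $w_0i$ is tensor $-w_0\lm$-normal, and similarly any such competitor $h$ is tensor $\lm$-conormal iff $w_0h$ is tensor $-w_0\lm$-normal. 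Under $h\mapsto w_0h$ the condition $h>i$ becomes $w_0h<w_0i$, and the residue condition $\res_p\lm_h=\res_p\lm_i$ translates to $\res_p(-w_0\lm)_{w_0h}=\res_p(-w_0\lm)_{w_0i}$ since the $w_0h$-th entry of $-w_0\lm$ is $-\lm_h$ and $\res_p(-\lm_h)=(-\lm_h)(-\lm_h-1)+p\Z=\lm_h(\lm_h+1)+p\Z=\res_p(\lm_h+1)$, so $\res_p(-\lm_h)=\res_p(-\lm_i)$ iff $\res_p(\lm_h+1)=\res_p(\lm_i+1)$ iff $\res_p\lm_h=\res_p\lm_i$.

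Putting this together: $i$ is tensor $\lm$-cogood iff $w_0i$ is tensor $-w_0\lm$-normal and there is no tensor $-w_0\lm$-normal index $h'$ (namely $h'=w_0h$) with $\res_p(-w_0\lm)_{h'}=\res_p(-w_0\lm)_{w_0i}$ and $h'<w_0i$; by Definition~\ref{definition:main:5tens} this is precisely the statement that $w_0i$ is tensor $-w_0\lm$-good, which by Definition~\ref{DGood} applied to $-w_0\lm$ is the same as saying $w_0i$ is $-w_0\lm$-cogood. Wait --- that last identification is backwards; I should be careful here. The cleanest route is: $i$ tensor $\lm$-cogood $\iff$ $w_0i$ tensor $-w_0\lm$-good, and then separately observe this is exactly the claimed statement once one matches it against Definition~\ref{DGood} with the roles of $\lm$ and $-w_0\lm$ swapped (using $-w_0(-w_0\lm)=\lm$ and $w_0w_0=\id$). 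I expect the only real subtlety --- the ``main obstacle'' --- to be precisely this bookkeeping of which residue ($\res_p\lm_h$ versus $\res_p(\lm_h+1)$) controls the matching and checking that it is preserved under $\lm\mapsto-w_0\lm$; the computation $\res_p(-m)=\res_p(m+1)$ from~(\ref{EDefRes}) is what makes everything go through, and once it is in hand the rest is a direct translation via Corollary~\ref{corollary:conormal} with no further input needed.
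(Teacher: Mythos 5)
Your overall strategy---reducing to Corollary~\ref{corollary:conormal} via the $-w_0$ twist applied one level up---is exactly the paper's route. But the residue bookkeeping you yourself flag as the ``main obstacle'' is where the argument fails: the equivalence $\res_p(\lm_h+1)=\res_p(\lm_i+1)\iff\res_p\lm_h=\res_p\lm_i$, which you assert both up front (``so the residue-matching condition is symmetric in this respect'') and again as the final link of your displayed residue chain, is false. By~(\ref{EDefRes}), the first condition says $(\lm_h-\lm_i)(\lm_h+\lm_i+1)\equiv 0\pmod p$ while the second says $(\lm_h-\lm_i)(\lm_h+\lm_i-1)\equiv 0\pmod p$; these coincide only when $\lm_h\equiv\lm_i$. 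For instance $p=5$, $\lm_h=4$, $\lm_i=2$ gives $\res_5 4=\res_5 2=\mathbf 2$ but $\res_5 5=\mathbf 0\neq\mathbf 1=\res_5 3$. So the chain does not close, and you have not shown that the competitor sets on the two sides of the claimed iff agree.

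The paper's proof sidesteps this by fixing one residue $\beta=\res_p\lm_i$ throughout and never converting between the $\res_p(\,\cdot\,)$ and $\res_p(\,\cdot\,+1)$ normalizations: using Corollary~\ref{corollary:conormal} together with the identity $\res_p(-m+1)=\res_p m$ (so that $\res_p((-w_0\lm)_{h'}+1)=\res_p\lm_{w_0h'}$), one sees directly that ``$i$ is the smallest tensor $\lm$-normal index $h$ with $\res_p\lm_h=\beta$'' is equivalent to ``$w_0i$ is the largest tensor $-w_0\lm$-conormal index $h'$ with $\res_p((-w_0\lm)_{h'}+1)=\beta$.'' Here the residue used to group conormal indices is the quantity $\res_p(\,\cdot\,+1)$ appearing in Definition~\ref{DConorm}, not $\res_p(\,\cdot\,)$. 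Matching this against the cogood condition requires reading the residue-matching in Definition~\ref{DGood} in that same conormal normalization, i.e.\ as $\res_p(\lm_h+1)=\res_p(\lm_i+1)$ rather than the literal $\res_p\lm_h=\res_p\lm_i$---which is also what the node-combinatorial picture requires, since two addable nodes in rows $h$ and $i$ share a content precisely when $\res_p(\lm_h+1)=\res_p(\lm_i+1)$. Your false iff is exactly the artifact of trying to force the literal text of Definition~\ref{DGood} onto the residue convention the translation actually preserves; once the cogood definition is read consistently with the conormal one, the extra bridge you tried to build is unnecessary, your residue chain stops one step earlier, and your argument reduces to the paper's.
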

\begin{proof}
Let $\beta=\res_p\la_i$. By definition, $i$ is tensor $\la$-good if and only if $i$ is the smallest tensor $\la$-normal index of residue $\beta$. By Corollary~\ref{corollary:conormal}, this is equivalent to $w_0i$ being the largest   $-w_0\la$-conormal index of residue $\beta$. 
\end{proof}

\begin{lemma} \label{LGoodTopNormal}
Let $\la\in X(n)$ and $1\le i\le n$. Then:
\begin{enumerate}
\item[{\rm (i)}] $i$ is tensor $\lm$-good if and only if it is tensor $\la$-normal and tensor $(\la-\eps_i)$-conormal.
\item[{\rm (ii)}] $i$ is tensor $\lm$-cogood if and only if it is tensor $\la$-conormal and tensor $(\la+\eps_i)$-normal.
\end{enumerate}
\end{lemma}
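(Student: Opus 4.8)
The plan is to prove Lemma~\ref{LGoodTopNormal} by translating everything to the combinatorial statements about the reduced signature sequence $[\prod r_\beta(\la)]$, using Corollary~\ref{corollary:main:1}(ii) and Definition~\ref{DConorm}. Fix $i$ and set $\beta:=\res_p\la_i$. First I would record the elementary fact about how the one-letter signature $r_\beta(\la)_i$ changes when we pass from $\la$ to $\la-\eps_i$ or to $\la+\eps_i$: removing $1$ from $\la_i$ shifts $\res_p\la_i$ from $\beta$ to $\res_p(\la_i-1)$, so the ``$-_i$'' coming from $r_\beta(\la)_i$ is exactly replaced by a ``$+_i$'' in $r_\beta(\la-\eps_i)_i$ — in the notation $\beta':=\res_p((\la-\eps_i)_i+1)=\res_p\la_i=\beta$, so the relevant residue does not change, and $r_{\beta}(\la-\eps_i)_k=r_\beta(\la)_k$ for all $k\neq i$, while $r_\beta(\la-\eps_i)_i$ contains $+_i$ iff $r_\beta(\la)_i$ contains $-_i$ (one checks this from Proposition~\ref{proposition:rbeta}, distinguishing $\beta=\mathbf 0$ from $\beta\neq\mathbf 0$; in the $\beta=\mathbf 0$ case the possibilities $+-$, $++$, $--$ must be examined, but in each case the change is as claimed). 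A symmetric statement holds for $\la+\eps_i$.

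For part (i): by definition $i$ is tensor $\la$-good iff it is tensor $\la$-normal and there is no tensor $\la$-normal index $h<i$ with $\res_p\la_h=\beta$. By Corollary~\ref{corollary:main:1}(ii), tensor $\la$-normality of an index $m$ of residue $\beta$ is equivalent to $[\prod r_\beta(\la)]$ containing $-_m$. Since a reduced sequence is $+^s-^r$ with marks, ``$-_i$ appears and no $-_h$ with $h<i$ appears'' is equivalent to ``$-_i$ is the \emph{leftmost} (hence topmost) minus in $[\prod r_\beta(\la)]$'': indeed the marks decrease as we move left among the minuses (this monotonicity of marks in a reduced sequence is the one point needing a short argument — it follows because during reduction every erased pair $-_a+_b$ has the structure forced by Proposition~\ref{proposition:pm:2}, and erasing never creates an out-of-order adjacency). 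So $i$ is tensor $\la$-good iff $-_i$ is the leftmost minus of $[\prod r_\beta(\la)]$. On the other hand, $i$ is tensor $(\la-\eps_i)$-conormal iff $[\prod r_\beta(\la-\eps_i)]$ contains $+_i$ (Definition~\ref{DConorm}, with residue $\beta$ as computed above). Using the first paragraph, $\prod r_\beta(\la-\eps_i)$ is obtained from $\prod r_\beta(\la)$ by flipping the single letter $-_i$ to $+_i$. I would then analyze the reduction of this modified sequence: if originally $-_i$ was the leftmost minus, then after flipping, the new $+_i$ sits just after the block of $+$'s, and everything to its left in $[\prod r_\beta(\la)]$ is plus, so $+_i$ survives the reduction (there is nothing to its left that can pair with it as the ``$-$'' of a ``$-+$''); conversely, if $-_i$ was \emph{not} the leftmost minus, there is a $-_h$ with $h<i$ to its left, and the new $+_i$ gets erased against some such minus, or more precisely $[\prod r_\beta(\la-\eps_i)]$ does not contain $+_i$. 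Combining with tensor $\la$-normality of $i$ being equivalent to ``$-_i$ appears in $[\prod r_\beta(\la)]$'', this yields exactly: $i$ is tensor $\la$-good iff $i$ is tensor $\la$-normal and tensor $(\la-\eps_i)$-conormal.

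For part (ii) I would do the dual argument directly, or — cleaner — deduce it from (i) via the $w_0$-symmetry already set up: by Corollary~\ref{corollary:conormal}, $i$ is tensor $\la$-conormal iff $w_0i$ is tensor $-w_0\la$-normal; by Corollary~\ref{CCogood}, $i$ is tensor $\la$-cogood iff $w_0i$ is tensor $-w_0\la$-good; and one checks $-w_0(\la+\eps_i)=(-w_0\la)-\eps_{w_0i}$, so ``$i$ tensor $(\la+\eps_i)$-normal'' translates to ``$w_0i$ tensor $((-w_0\la)-\eps_{w_0i})$-conormal.'' Applying part (i) to the weight $-w_0\la$ and the index $w_0i$ then gives (ii) immediately. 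The main obstacle I anticipate is the first step: carefully verifying, in the $\beta=\mathbf 0$ case where $r_\mathbf{0}(\la)_i$ is a \emph{two}-letter sequence ($+-$, $++$, or $--$), that subtracting $\eps_i$ really does ``flip a single $-$ to a $+$'' in the intended sense at the right residue — the bookkeeping between $r_\beta(\la)$ and $r_\beta(\la-\eps_i)$ for the three subcases, together with the monotonicity-of-marks fact in a reduced sequence, is where the real content lies; once those are pinned down, the rest is a routine reduction analysis.
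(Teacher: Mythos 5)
Your proposal takes essentially the same route as the paper: reduce both notions to statements about the reduced signature sequence $[\prod r_\beta(\la)]$ via Corollary~\ref{corollary:main:1}(ii) and Definition~\ref{DConorm}, note that subtracting $\eps_i$ replaces the leftmost $-_i$ of the $i$th block by $+_i$ in the unreduced product while leaving the relevant residue $\beta$ unchanged, analyze the resulting reduction, and then deduce (ii) from (i) by the $w_0$-symmetry of Corollaries~\ref{corollary:conormal} and~\ref{CCogood} --- that last step is precisely the paper's proof of (ii). Your reformulation of ``tensor $\la$-good'' as ``$-_i$ is the leftmost minus in $[\prod r_\beta(\la)]$'' is a clean way to state what the paper uses implicitly, and the monotonicity-of-marks fact you flag is correct for the reason you indicate (erasure preserves relative order, and the marks in the unreduced product are weakly increasing).

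The one place you need to be careful is the step ``after flipping, the new $+_i$ sits just after the block of $+$'s\ldots so $+_i$ survives the reduction.'' As written this reads as though $[\prod r_\beta(\la-\eps_i)]$ arises from $[\prod r_\beta(\la)]$ by flipping a letter of the \emph{reduced} sequence, which is false: the flip happens in the unreduced product, and re-reducing can rearrange the output substantially. For instance with $\beta\ne\mathbf 0$, the product $-_1+_2-_3$ reduces to $-_3$, while $-_1+_2+_3$ reduces to $+_3$; the conclusion is right, but the reduced sequences differ in more than the flipped letter. To close this one must write $[\prod r_\beta(\la-\eps_i)]=\bigl[[\prod_{k<i}r_\beta(\la)_k]\cdot r_\beta(\la-\eps_i)_i\cdot[\prod_{k>i}r_\beta(\la)_k]\bigr]$ (Corollary~\ref{corollary:pm:1}) and case-check the possible shapes of $[\prod_{k<i}]$ and $[\prod_{k>i}]$ under the leftmost-minus hypothesis --- which is exactly the content of Claims I and II in the paper's proof, including the $\beta=\mathbf 0$ subcases $\emptyset$ and $+-$ that your leftmost-minus framing helps organize but does not eliminate. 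You explicitly flag this bookkeeping as ``where the real content lies,'' and that assessment is accurate; the plan is sound and the strategy coincides with the paper's, with the remaining work being the detailed reduction analysis you anticipate.
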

\begin{proof}
(i) Let $i$ be a tensor $\la$-good index and $\beta:=\res_p\lm_i$. By definition, $i$ is the smallest among the $\la$-normal indices $j$ such that $\res_p\lm_j=\beta$. Let $j>i$ be a $\la$-normal index with $\res_p\lm_j=\beta$. We need to prove the following two claims:

\begin{enumerate}
\item[{\rm I.}] $j$ is not tensor $(\la-\eps_j)$-conormal;
\item[{\rm II.}] $i$ is tensor $(\la-\eps_i)$-conormal.
\end{enumerate}

First, we prove Claim I.
Observe that $\prod r_\beta(\lm-\epsilon_j)$ is obtained from $\prod r_\beta(\lm)$
by replacing the first symbol $-_j$ by $+_j$. If   $\beta\ne\mathbf0$, then $\bigl[\prod_{i<k<j}r_\beta(\lm)_k\bigr]=-^m$
for some $m\ge0$, and
$$
\textstyle
\bigl[\prod r_\beta(\lm-\epsilon_j)\bigr]=
\bigl[\bigl[\prod_{1\le k<i} r_\beta(\lm)_k\bigr]-_i-^m+_j\bigl[\prod_{j<k\le n} r_\beta(\lm)_k\bigr]\bigr].
$$
Clearly, the sequence in the right-hand side does not contain $+_j$, i.e.
$j$ is not tensor
$\lm-\epsilon_j$-conormal.

Now suppose that $\beta=\mathbf0$. If the reduction $\bigl[\prod_{i<k<j}r_\beta(\lm)_k\bigr]$ contains
more than one sign $-$, then $j$ is not tensor
$\lm-\epsilon_j$-conormal exactly as above. On the other hand,
if this reduction contained more than one sign $+$, then $i$ could not be tensor $\lm$-normal. By Corollary~\ref{corollary:pm:2}, we only have to consider the following two cases.

{\it Case 1:} $\bigl[\prod_{i<k<j}r_\beta(\lm)_k\bigr]=\emptyset$.
We have
$$
\textstyle
\bigl[\prod r_\beta(\lm-\epsilon_j)\bigr]=
\bigl[\bigl[\prod_{1\le k<i} r_\beta(\lm)_k\bigr]
[r_\beta(\lm)_ir_\beta(\lm-\epsilon_j)_j]
\bigl[\prod_{j<k\le n} r_\beta(\lm)_k\bigr]\bigr].
$$
The only chance for at least one $+_j$ to survive is $r_\beta(\lm)_i=+_i-_i$ and $r_\beta(\lm-\epsilon_j)_j=+_j+_j$.
Hence $r_\beta(\lm)_j=+_j-_j$ and
\begin{align*}
\textstyle
\bigl[\prod r_\beta(\lm)\bigr]=
\bigl[\bigl[\prod_{1\le k<i} r_\beta(\lm)_k\bigr]
[+_i-_i+_j-_j]
\bigl[\prod_{j<k\le n} r_\beta(\lm)_k\bigr]\bigr],
\end{align*}
in which case $-_i$ will not survive.
This contradicts the tensor $\lm$-normality of $i$.

{\it Case 2:} $\bigl[\prod_{i<k<j}r_\beta(\lm)_k\bigr]=+-$. We have
$$
\textstyle
\bigl[\prod r_\beta(\lm)\bigr]=
\bigl[\bigl[\prod_{1\le k<i} r_\beta(\lm)_k\bigr]
[r_\beta(\lm)_i+-]
\bigl[\prod_{j\le k\le n} r_\beta(\lm)_k\bigr]\bigr].
$$
As $i$ is tensor $\lm$-normal we must have $r_\beta(\lm)_i=-_i-_i$. Now the sequence
\begin{align*}
\textstyle
\bigl[\prod r_\beta(\lm-\epsilon_j)\bigr]=
\bigl[\bigl[\prod_{1\le k<i} r_\beta(\lm)_k\bigr]
[-_i-_i+-r_\beta(\lm-\epsilon_j)_j]
\bigl[\prod_{j<k\le n} r_\beta(\lm)_k\bigr]\bigr]\\
\textstyle
=\bigl[\bigl[\prod_{1\le k<i} r_\beta(\lm)_k\bigr]
[--r_\beta(\lm-\epsilon_j)_j]
\bigl[\prod_{j<k\le n} r_\beta(\lm)_k\bigr]\bigr].
\end{align*}
does not contain $+_j$, i.e. $j$ is not tensor $(\lm-\eps_j)$-conormal.

Now we prove Claim II. Note that $\bigl[\prod_{i\le k\le n}r_\beta(\lm)_k\bigr]$ is a sequence of pluses followed by  minuses, containing $-_i$. So the only plus it can contain is~$+_i$.

{\it Case a:} $\bigl[\prod_{i\le k\le n}r_\beta(\lm)_k\bigr]$ does not contain pluses.
We always have
$
\textstyle
\bigl[\prod r_\beta(\lm)\bigr]=
\bigl[\bigl[\prod_{1\le k<i} r_\beta(\lm)_k\bigr]
\bigl[\prod_{i\le k\le n} r_\beta(\lm)_k\bigr]\bigr]
$,
so any symbol $-_t$ occurring in
$\bigl[\prod_{1\le k<i} r_\beta(\lm)_k\bigr]$
occurs also in $\bigl[\prod r_\beta(\lm)\bigr]$. This $t$ is then tensor $\lm$-normal
of residue $\beta$, which is a contradiction as $t<i$.
Hence $\bigl[\prod_{1\le k<i} r_\beta(\lm)_k\bigr]=+^m$ for some $m\ge0$.
Now we get
$$
\textstyle
\bigl[\prod r_\beta(\lm-\epsilon_i)\bigr]=
\bigl[
\bigl[+^m r_\beta(\lm-\epsilon_i)_i\bigr]
\bigl[\prod_{i<k\le n} r_\beta(\lm)_k\bigr]\bigr].
$$
Therefore the symbol $+_i$ contained in $r_\beta(\lm-\epsilon_i)_i$ survives.

{\it Case b:} $\bigl[\prod_{i\le k\le n}r_\beta(\lm)_k\bigr]$ contains exactly one $+$.
In this case, $\beta=\mathbf 0$ and $r_{\mathbf 0}(\lm)_i=+_i-_i$.
If $\bigl[\prod_{1\le k<i} r_\beta(\lm)_k\bigr]$ contains at least two minuses, then similarly to Case~a, one of them survives in~$\bigl[\prod r_\beta(\lm)\bigr]$,
which contradicts the minimality of $i$. Hence $\bigl[\prod_{1\le k<i} r_\beta(\lm)_k\bigr]=+^m-^s$ for $s\le1$.
Now we get
$$
\textstyle
\bigl[\prod r_\beta(\lm-\epsilon_i)\bigr]=
\bigl[
\bigl[+^m-^s+_i+_i\bigr]
\bigl[\prod_{i<k\le n} r_\beta(\lm)_k\bigr]\bigr].
$$
Therefore the rightmost symbol $+_i$ contained in $r_\beta(\lm-\epsilon_i)_i=+_i+_i$ again survives.

(ii) By Corollary~\ref{CCogood}, $i$ is tensor $\la$-cogood if and only if
$w_0i$ is tensor $-w_0\lm$-good.
Now (i) implies (ii) using Corollary~\ref{corollary:conormal}.
\end{proof}

\begin{corollary}\label{CGoodCogood}
Let $\la\in X(n)$ and $1\le i\le n$. Then $i$ is tensor $\la$-good if and only if $i$ is tensor $(\la-\eps_i)$-cogood.
\end{corollary}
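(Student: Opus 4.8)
The plan is to deduce this purely formally from Lemma~\ref{LGoodTopNormal}, which has already characterized both tensor good and tensor cogood indices in terms of (tensor) normality and conormality of nearby weights. No new combinatorial work on signature sequences is needed; the statement is a bookkeeping consequence of the two ``if and only if'' descriptions in parts (i) and (ii) of that lemma.

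Concretely, first I would apply Lemma~\ref{LGoodTopNormal}(ii) with $\la$ replaced by the weight $\mu:=\la-\eps_i$. This gives that $i$ is tensor $\mu$-cogood if and only if $i$ is tensor $\mu$-conormal and tensor $(\mu+\eps_i)$-normal. Since $\mu+\eps_i=\la$, this says: $i$ is tensor $(\la-\eps_i)$-cogood if and only if $i$ is tensor $(\la-\eps_i)$-conormal and tensor $\la$-normal.

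Next I would invoke Lemma~\ref{LGoodTopNormal}(i) directly, which states that $i$ is tensor $\la$-good if and only if $i$ is tensor $\la$-normal and tensor $(\la-\eps_i)$-conormal. Comparing the two displayed conditions, they are literally the same pair of requirements on $i$ (``tensor $\la$-normal'' together with ``tensor $(\la-\eps_i)$-conormal''), so the equivalence ``$i$ is tensor $\la$-good $\iff$ $i$ is tensor $(\la-\eps_i)$-cogood'' follows at once.

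There is essentially no obstacle here: the content lies entirely in Lemma~\ref{LGoodTopNormal}, whose proof in turn rests on Corollaries~\ref{corollary:main:1}, \ref{CCogood} and~\ref{corollary:conormal} and the signature-sequence manipulations of Section~\ref{SNormGood}. The only point requiring the slightest care is the substitution $\la\mapsto\la-\eps_i$ in part (ii): one must check the hypothesis $1\le i\le n$ is preserved (it is, as it does not depend on $\la$) and that $(\la-\eps_i)+\eps_i=\la$ as weights in $X(n)$, which is immediate.
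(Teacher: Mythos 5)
Your argument is correct and is exactly the deduction the paper intends: the corollary is stated without proof immediately after Lemma~\ref{LGoodTopNormal} because it follows by comparing part~(i) for $\la$ with part~(ii) applied to $\la-\eps_i$, which is precisely what you did. Nothing is missing.
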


\section{Removable and addable nodes for dominant $p$-strict weights}\label{addableandremovablenodes}
Let $\lm\in X(n)$. We identify $\la$ and its ``Young diagram'':
$$
\lm:=\{(i,j)\in\Z^2\suchthat1\le i\le n, j\le \lm_i\}.
$$
Elements of $\Z^2$ are called {\it nodes}.
The {\it residue} of a node $(i,j)$ is $\res_p (i,j):=j(j-1)+p\,\Z$. Thus we will speak of nodes of $\la$, will remove nodes of $\la$, add nodes to $\la$, and so on.

Let $\beta\in\Z/p\Z$ and $\lm\in X^+_p(n)$ be a dominant $p$-strict weight. A node $A=(i,j)$ is called {\it $\beta$-removable} for $\lm$ if $\res_p A=\beta$ and one of the following two conditions holds:
\begin{itemize}
\itemsep=4pt
\item[(R1)]
$A\in\lm$ and $\lm\setminus\{A\}\in X^+_p(n)$;
\item[(R2)] the node $B=(i,j+1)$ immediately to the right of $A$ belongs to $\lm$, $\res_p B=\res_p A$,
            $\lm\setminus\{B\}\in X^+_p(n)$ and $\lm\setminus\{A,B\}\in X^+_p(n)$.
\end{itemize}
A node $B=(i,j)$ is called {\it $\beta$-addable} for $\lm$
if $\res_p B=\beta$ and one of the following two conditions holds:
\begin{itemize}
\itemsep=4pt
\item[(A1)] $B\notin\lm$ and $\lm\cup\{B\}\in X^+_p(n)$;
\item[(A2)] the node $A=(i,j-1)$ immediately to the left of $B$ does not belong to $\lm$, $\res_p B=\res_p A$,
            $\lm\cup\{A\}\in X^+_p(n)$ and $\lm\cup\{A,B\}\in X^+_p(n)$.
\end{itemize}
Of course, $(i,j)$ can be $\beta$-removable or $\beta$-addable for $\lm\in X^+_p(n)$
only if  $1\le i\le n$.

We introduce the following order on $\Z^2$: $(i,j)<(i',j')$ if and only if either
$i<i'$ or $i'=i$ and $j>j'$.
Consider now all $\beta$-removable and $\beta$-addable nodes of $\lm\in X^+_p(n)$ for a fixed $\beta$. Reading these nodes
in the increasing order 
and  assigning $-_i$ to $\beta$-removable node of the form $(i,j)$ and $+_i$ to a $\beta$-addable node of the form $(i,j)$, we get the {\it $\beta$-signature} of $\lm$.
The {\it reduced $\beta$-signature} of $\lm$ is the sequence obtained from the $\beta$-signature of $\lm$
by erasing all possible subsequences of the form $-+$, i.e. the reduced $\beta$-signature of $\lm$ is the reduction of the $\beta$-signature of $\lm$.

\begin{lemma}\label{LNormRemovAdd}
 Let $\lm\in X^+_p(n)$. Then the reduced $\beta$-signature of $\lm$ equals
              $\bigl[\prod r_\beta(\lambda)\bigr]$.
\end{lemma}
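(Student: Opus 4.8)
\textbf{Proof plan for Lemma~\ref{LNormRemovAdd}.}

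The plan is to show that the $\beta$-signature of $\lambda$ and the sequence $\prod r_\beta(\lambda)$ have the same reduction, by comparing them ``position by position'' (i.e. row by row). Since reduction commutes with concatenation (Corollary~\ref{corollary:pm:1}), it is enough to analyze, for each row $i$, the contribution coming from $\lambda_i$ to each of the two sequences, and to show the two contributions differ only by insertions/deletions of subsequences $-+$; more precisely I will show that for each $i$ the $\beta$-signature entries produced by row $i$ have the same reduction as $r_\beta(\lambda)_i$, \emph{after} taking into account the interaction with neighbouring rows (the subtlety being that addable/removable nodes of row $i$ depend on $\lambda_{i-1}$ and $\lambda_{i+1}$, whereas $r_\beta(\lambda)_i$ depends only on $\lambda_i$).

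First I would fix $\beta\in\Z/p\Z$ and recall the combinatorial meaning of the column contents. A node $(i,j)$ has residue $j(j-1)+p\Z$; the map $j\mapsto j(j-1)+p\Z$ realizes exactly the ``hook'' pattern $0,1,\dots,\ell-1,\ell,\ell-1,\dots,1,0$ of the Introduction. Consequently, in row $i$ the only candidates for a $\beta$-removable node are the rightmost node, i.e. $(i,\lambda_i)$, and—when $\beta=\mathbf 0$—also $(i,\lambda_i-1)$ together with $(i,\lambda_i)$ via (R2); and the only candidates for a $\beta$-addable node are $(i,\lambda_i+1)$, and—when $\beta=\mathbf 0$—also $(i,\lambda_i+1)$ and $(i,\lambda_i+2)$ via (A2). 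This matches the four cases defining $r_{\mathbf 0}(\lambda)_i$ and the two cases defining $r_\beta(\lambda)_i$ for $\beta\ne\mathbf 0$: the symbol(s) attached to row $i$ in the \emph{unreduced} $\beta$-signature are governed by whether $\res_p\lambda_i$, $\res_p(\lambda_i-1)$, $\res_p(\lambda_i+1)$, $\res_p(\lambda_i+2)$ equal $\beta$, exactly as in the definition of $r_\beta(\lambda)_i$ in Section~\ref{SNormGood}.

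Next I would handle the genuine difference between the two sequences: the $p$-strictness side conditions ``$\lambda\setminus\{A\}\in X^+_p(n)$'' etc., which can \emph{suppress} a symbol that $r_\beta(\lambda)_i$ would nonetheless record. The key observation is that such a suppression always pairs a would-be $-_i$ in row $i$ with a would-be $+_{i}$ or $+_{i-1}$, or a would-be $+_i$ with a would-be $-_{i+1}$, coming from the adjacent row, because a removal (resp.\ addition) is blocked precisely when it would create an equality $\lambda_i=\lambda_{i+1}$ with $p\nmid\lambda_i$ (resp.\ destroy one), and the $p$-strictness of $\lambda$ forces the blocked symbol and its partner to be adjacent in the $\beta$-signature in the order $-+$. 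Hence every discrepancy between the unreduced $\beta$-signature and $\prod r_\beta(\lambda)$ is an insertion or deletion of a subsequence $-+$, and by Proposition~\ref{proposition:pm:1} and Corollary~\ref{corollary:pm:1} the two sequences have the same reduction. I would organize this as a case check on $\beta=\mathbf 0$ versus $\beta\ne\mathbf 0$ and, within each, on the residue data of $\lambda_{i-1},\lambda_i,\lambda_{i+1}$; in the $\beta=\mathbf 0$ case one must be a little careful because rows can contribute two symbols, but the pairing argument is the same.

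The main obstacle I anticipate is precisely this last bookkeeping step: verifying that every place where a $p$-strictness condition kills a removable or addable node is exactly cancelled against a neighbouring-row symbol forming a $-+$ pair, and that no ``genuine'' symbol is ever lost. This requires a careful but elementary enumeration of the local configurations of $(\lambda_{i-1},\lambda_i)$ and $(\lambda_i,\lambda_{i+1})$ modulo $p$ (whether consecutive parts are equal, differ by the extremal allowed amount, etc.), using that $\lambda\in X^+_p(n)$. Everything else—the description of which nodes in a row can be $\beta$-removable/addable, and the invariance of reduction under inserting/deleting $-+$—is immediate from the content pattern and from the results of Section~\ref{SNormGood} already in hand.
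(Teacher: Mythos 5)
Your plan has the right shape, and the underlying observation---that the $\beta$-signature of $\lambda$ is obtained from $\prod r_\beta(\lambda)$ by deleting blocked symbols, and that a blocked removable symbol and a blocked addable symbol from adjacent rows cancel as a $-+$---is precisely what drives the paper's proof. The paper organizes the local bookkeeping as an induction on $n$: it treats row $n$ as a strip whose $\beta$-signature equals $r_\beta(\lambda)_n$, records in explicit formulas how the removable and addable sets of $\lambda$ relate to those of $\bar\lambda=(\lambda_1,\dots,\lambda_{n-1})$ and of the strip, and then resolves the reduction in nine cases according to the difference $\lambda_{n-1}-\lambda_n$ and the residues involved. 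That case check is precisely the ``main obstacle'' you defer; it is the proof, not a detail around it.

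Before that bookkeeping would close, two slips in your sketch need correcting. First, the pairing direction is reversed. Under the order on nodes (rows increasing; within a row, columns decreasing), the addable nodes of a row are read before its removable nodes, and row $i$ precedes row $i+1$; so a blocked $-_i$ can only be immediately followed by a blocked $+_{i+1}$, the addition in row $i+1$ being blocked by the same near-coincidence of $\lambda_i$ and $\lambda_{i+1}$. That gives the cancellable pattern $-_i\,+_{i+1}$. The pairings you name ($-_i$ with $+_i$ or $+_{i-1}$; $+_i$ with $-_{i+1}$) would all appear in the order $+-$, which the reduction does not erase, so the argument as phrased does not produce deletable pairs. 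Second, the blocking criterion is misstated: removing $(i,\lambda_i)$ fails to stay in $X^+_p(n)$ when $\lambda_i-1=\lambda_{i+1}$ with $p\nmid\lambda_i-1$ (loss of $p$-strictness) or when $\lambda_i=\lambda_{i+1}$ (loss of dominance, which for $\lambda\in X^+_p(n)$ forces $p\mid\lambda_i$, hence $\beta=\mathbf 0$). The configuration ``$\lambda_i=\lambda_{i+1}$ with $p\nmid\lambda_i$'' you describe cannot occur in a $p$-strict weight. With these two points repaired, the remaining work is exactly the paper's case-by-case verification.
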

\begin{proof} We apply induction on $n$. If $n=1$ then the $\beta$-signature of $\lm=(\lm_1)$ equals
$r_\beta(\lm)_1$. Therefore the reduced  $\beta$-signature of $\lm$ equals
$[r_\beta(\lm)_1]=\bigl[\prod r_\beta(\lambda)\bigr]$.

Now let $n>1$ and set $\bar\lm:=(\lm_1,\ldots,\lm_{n-1})$.
We first investigate what happens to the set of $\beta$-removable and $\beta$-addable nodes when we pass from
$\bar\lm$ to $\lm$. Consider the strip $S=\{n\}\times(-\infty..\lm_n]$.
It can be considered as the diagram of $(\lm_n)$ shifted to row $n$. We can consider its $\beta$-removable and $\beta$-addable nodes.
More precisely, a node $A=(n,j)$ is  $\beta$-removable for $S$ if $\res_p j=\beta$ and
one of the following conditions holds: (R1$^\prime$) $j=\la_n$;
(R2$^\prime$) $j=\la_n-1$ and $\res_p (j+1)=\beta$.
The set of $\beta$-removable nodes of $S$ is denoted by $\Rem_S$.
A node $B=(n,j)$ is $\beta$-addable for $S$ if $\res_p j=\beta$
one of the following conditions holds:
(A1$^\prime$) $j=\lm_n+1$; (A2$^\prime$) $j=\lm_n+2$ and $\res_p(j-1)=\beta$.
The set of $\beta$-addable nodes of $S$ is denoted by $\Add_S$.
Note that he $\beta$-signature of $S$ equals $r_\beta(\lm)_n$.

Let $\Rem$ and $\overline\Rem$ denote the sets of $\beta$-removable nodes of $\lm$ and $\bar\lm$
respectively.
Also set $\overline\Rem_{<n-1}:=\{(i,j)\in\overline\Rem\mid i<n-1\}$.
Let $\Add$ and $\overline\Add$ denote the sets of $\beta$-addable nodes of $\lm$ and
$\bar\lm$ respectively.
One can easily verify the following formulas:
$$
\Rem=\left\{
{\arraycolsep=0pt
\begin{array}{ll}
\overline\Rem\cup\Rem_S&\text{ if }\lm_n<\lm_{n-1}-2;\\[3pt]
\overline\Rem\cup\Rem_S&\text{ if }\lm_n=\lm_{n-1}-2\\[3pt]
&\hfill\text{ and }\lm_{n-1}\not\equiv1\!\!\!\!\pmod p\text{ or }\beta\ne\mathbf0;\\[3pt]
\overline\Rem_{<n-1}\cup\{(n{-}1,\lm_{n-1})\}&\text{ if }\lm_n=\lm_{n-1}-2\\[3pt]
&\hfill\text{ and }\lm_{n-1}\equiv1\!\!\!\!\pmod p\text{ and }\beta=\mathbf0;\\[3pt]
\overline\Rem_{<n-1}&\text{ if }\lm_n=\lm_{n-1}-1\\[3pt]
&\hfill\text{ and }\res_p\lm_{n-1}=\beta\ne\mathbf0;\\[3pt]
\overline\Rem\cup\Rem_S&\text{ if }\lm_n=\lm_{n-1}-1\text{ and }\res_p\lm_{n-1}\ne\beta;\\[3pt]
\overline\Rem_{<n-1}\cup\{(n{-}1,\lm_{n-1}),(n,\lm_n)\}&\text{ if }\lm_n=\lm_{n-1}-1\\[3pt]
&\hfill\text{ and }\lm_{n-1}\equiv1\!\!\!\!\pmod p\text{ and }\beta=\mathbf0;\\[3pt]
\overline\Rem_{<n-1}&\text{ if }\lm_n=\lm_{n-1}-1\\[3pt]
&\hfill\text{ and }\lm_{n-1}\equiv0\!\!\!\!\pmod p\text{ and }\beta=\mathbf0;\\[3pt]
\overline\Rem\cup\Rem_S&\text{ if }\lm_n=\lm_{n-1}\text{ and }\beta\ne\mathbf0;\\[3pt]
\overline\Rem_{<n-1}\cup\{(n,\lm_n)\}&\text{ if }\lm_n=\lm_{n-1}\text{ and }\beta=\mathbf0.
\end{array}}
\right.
$$

$$
\Add=\left\{
{\arraycolsep=0pt
\begin{array}{ll}
\overline\Add\cup \Add_S&\text{ if }\lm_n<\lm_{n-1}-2;\\[3pt]
\overline\Add\cup \Add_S&\text{ if }\lm_n=\lm_{n-1}-2\text{ and }\lm_n\not\equiv-1\!\!\!\!\pmod p\text{ or }\beta\ne\mathbf0;\\[3pt]
\overline\Add\cup\{(n,\lm_n+1)\}&\text{ if }\lm_n=\lm_{n-1}-2\text{ and }\lm_n\equiv-1\!\!\!\!\pmod p\text{ and }\beta=\mathbf0;\\[3pt]
\overline\Add&\text{ if }\lm_n=\lm_{n-1}-1\text{ and }\res_p\lm_{n-1}=\beta\ne\mathbf0;\\[3pt]
\overline\Add\cup \Add_S&\text{ if }\lm_n=\lm_{n-1}-1\text{ and }\res_p\lm_{n-1}\ne\beta;\\[3pt]
\overline\Add&\text{ if }\lm_n=\lm_{n-1}-1\text{ and }\lm_{n-1}\equiv1\!\!\!\!\pmod p\text{ and }\beta=\mathbf0;\\[3pt]
\overline\Add\cup\{(n,\lm_n+1)\}&\text{ if }\lm_n=\lm_{n-1}-1\text{ and }\lm_{n-1}\equiv0\!\!\!\!\pmod p\text{ and }\beta=\mathbf0;\\[3pt]
\overline\Add\cup \Add_S&\text{ if }\lm_n=\lm_{n-1}\text{ and }\beta\ne\mathbf0;\\[3pt]
\overline\Add&\text{ if }\lm_n=\lm_{n-1}\text{ and }\beta=\mathbf0.
\end{array}}
\right.
$$

Finally, let $U$ and $\bar U$ denote the $\beta$-signatures of $\lm$ and $\bar\lm$ respectively.
By the inductive hypothesis, we have $\bigl[\prod r_\beta\bigl(\bar\lm\bigr)\bigr]=\bigl[\bar U\bigr]$.
Hence by Corollary~\ref{corollary:pm:1},
\begin{equation}\label{equation:main:5}
\textstyle
\bigl[\prod r_\beta(\lm)\bigr]=\Bigl[\bigl[\prod r_\beta\bigl(\bar\lm\bigr)\bigr]r_\beta(\lm)_n\Bigr]=\Bigl[\bigl[\bar U\bigr]r_\beta(\lm)_n\Bigr]
=\bigl[\bar Ur_\beta(\lm)_n\bigr].
\end{equation}

Further, we have $\bar U=\bar U'\bar U''$,
where $\bar U''$ consists if all symbols $-_{n-1}$ occurring in $\bar U$ and $\bar U'$ consists of symbols $-_k$ coming from the nodes of $\overline\Rem_{<n-1}$ and symbols $+_l$ coming from the nodes of $\overline\Add$. Now we consider several cases.

{\it Case~1:} $\lm_m<\lm_{m-1}{-}2$. We have $U=\bar Ur_\beta(\lm)_n$. Therefore by~(\ref{equation:main:5}),
we get $\bigl[\prod r_\beta(\lm)\bigr]=[U]$, as required.

{\it Case~2:} $\lm_n=\lm_{n-1}-2$ and $\lm_{n-1}\not\equiv1\!\pmod p$ or $\beta\ne\mathbf0$.
We again have $U=\bar Ur_\beta(\lm)_n$. So this case is similar to Case~1.

{\it Case~3: $\lm_n=\lm_{n-1}-2$ and $\lm_{n-1}\equiv1\!\pmod p$ and $\beta=\mathbf0$}.
We have $\bar U''=-_{n-1}-_{n-1}$,
$r_\mathbf 0(\lm)_n=+_n+_n$ and $U=\bar U'-_{n-1}+_n$.
Therefore by~(\ref{equation:main:5}), we get
\vspace{-1mm}
$$
\textstyle
\Bigl[\prod r_\beta(\lm)\Bigr]=\bigl[\bar U'\bar U''r_\beta(\lm)_n\bigr]=\bigl[\bar U'-_{n-1}-_{n-1}+_n+_n\bigr]
=\bigl[\bar U'-_{n-1}+_n\bigr]=[U].
$$

\vspace{-1mm}
{\it Case~4: $\lm_n=\lm_{n-1}-1$ and $\res_p\lm_{n-1}=\beta\ne\mathbf0$}.
In this case, $\bar U''=-_{n-1}$, $r_\beta(\lm)_n=+_n$ and $\bar U'=U$.
By~(\ref{equation:main:5}), we get
\vspace{-1mm}
$$
\textstyle
\Bigl[\prod r_\beta(\lm)\Bigr]=\bigl[\bar U'\bar U''r_\beta(\lm)_n\bigr]=\bigl[\bar U'-_{n-1}+_n\bigr]=\bigl[\bar U'\bigr]=[U].
$$

\vspace{-1mm}
{\it Case~5: $\lm_n=\lm_{n-1}-1$ and $\res_p\lm_{n-1}\ne\beta$}.
We have $U=\bar Ur_\beta(\lm)_n$. Therefore, this case is similar to case~1.

{\it Case~6: $\lm_n=\lm_{n-1}-1$ and $\lm_{n-1}\equiv1\!\pmod p$ and $\beta=\mathbf0$}.
In this case, $\bar U''=-_{n-1}-_{n-1}$, $r_\beta(\lm)_n=+_n-_n$ and $U=\bar U'-_{n-1}-_n$.
By~(\ref{equation:main:5}), we get
\vspace{-1mm}
$$
\textstyle
\Bigl[\prod r_\beta(\lm)\Bigr]=\bigl[\bar U'\bar U''r_\beta(\lm)_n\bigr]=\bigl[\bar U'-_{n-1}-_{n-1}+_n-_n\bigr]=\bigl[\bar U'-_{n-1}-_n\bigr]=[U].
$$

\vspace{-1mm}
{\it Case~7: $\lm_n=\lm_{n-1}-1$ and $\lm_{n-1}\equiv0\!\pmod p$ and $\beta=\mathbf0$}.
In this case, $\bar U''=-_{n-1}$, $r_\beta(\lm)_n=+_n+_n$ and $U=\bar U'+_{n-1}$.
By~(\ref{equation:main:5}), we get
\vspace{-1mm}
$$
\textstyle
\Bigl[\prod r_\beta(\lm)\Bigr]=\bigl[\bar U'\bar U''r_\beta(\lm)_n\bigr]
=\bigl[\bar U'-_{n-1}+_n+_n\bigr]=\bigl[\bar U'+_n\bigr]=[U].
$$

\vspace{-1mm}

{\it Case~8: $\lm_n=\lm_{n-1}$ and $\beta\ne\mathbf0$}.  We have $U=\bar Ur_\beta(\lm)_n$.
Therefore, this case is similar to case~1.

{\it Case~9: $\lm_n=\lm_{n-1}$ and $\beta=\mathbf0$}.
In this case, $\bar U''=-_{n-1}$, $r_\beta(\lm)_n=+_n-_n$ and $U=\bar U'-_{n}$.
By~(\ref{equation:main:5}), we get
$$
\textstyle
\Bigl[\prod r_\beta(\lm)\Bigr]=\bigl[\bar U'\bar U''r_\beta(\lm)_n\bigr]
=\bigl[\bar U'-_{n-1}+_n-_n\bigr]=\bigl[\bar U'-_n\bigr]=[U],
$$
\vspace{-1mm}
as required.
\end{proof}

\chapter{Constructing $U(n-1)$-primitive vectors}\label{ConstructingUmathbbF(n-1)-primitive vectors}

In this chapter, we consider constructions of $U(n-1)$-primitive vectors of weights $\lm-\alpha(i,n)$
in the irreducible $U(n)$-module $L(\lm)$. There will be six such constructions: the first three
(Theorems~\ref{theorem:constr:1},~\ref{theorem:constr:3} and~\ref{theorem:constr:2})
produce $U(n-1)$-primitive vectors in $L(\lm)$ from a nonzero highest weight vector of $L(\lm)$;
the other three (see Theorems~\ref{theorem:socle:0.5},~\ref{theorem:socle:1} and~\ref{theorem:socle:2})
allow us to `extend' a nonzero $U(n-1)$-primitive vector $v\in L(\lm)$ of weight $\lm-\alpha(i,n)$ to
a nonzero $U(n{-}1)$-primitive vector  $w\in U(i)v$ of weight $\lm-\alpha(h,n)$, where $1\le h<i<n$ and $\res_p\lm_h=\res_p\lm_i$.

In what follows, we follow our usual agreement and denote again by $X$ the element $X\otimes 1\in U(n)=U_\Z(n)\otimes\mathbb F$ for $X\in U_\Z(n)$.
Thus we have various
$\ll x\rr, P_{i,j}^{\delta,\epsilon}(M)\in U^0(n)$ and $S^\epsilon_{i,j}(M)\in U(n)$.

\section{Construction: case $\bigl[\prod_{i<k\le n} r_\beta(\lm)_k\bigr]=-^m$.}\label{method:1}
This construction uses only signed sets containing even elements.

\begin{lemma}\label{lemma:constr:1}
Let $1\le i<j$ and $v$ be a primitive vector of weight $\lm\in X(j)$ in an arbitrary $U(j)$-supermodule. Pick $\epsilon\in\{\0,\1\}$ and $M\subset(i..j]$ such that $j\in M$.
%
\makeatletter
\renewcommand{\p@enumii}{}
\makeatother
\renewcommand{\labelenumi}{{\rm \theenumi}}
\renewcommand{\theenumi}{{\rm(\roman{enumi})}}
\begin{enumerate}
\item\label{lemma:constr:1:part:i}
Suppose that $\psi:(i..j]\setminus M\to(i..j]$ is an injection such that
\renewcommand{\labelenumii}{{\rm \theenumii}}
\renewcommand{\theenumii}{{\rm(\alph{enumii})}}
\begin{enumerate}
\item\label{lemma:constr:1:condition:a} $\res_p\lm_t=\res_p\bigl(\lm_{\psi(t)}+1\bigr)$ for all $t\in(i..j]\setminus M$;\\[-10pt]
\item\label{lemma:constr:1:condition:b} $\psi(t)>t$ for all $t\in(i..j]\setminus M$.
\end{enumerate}
\noindent
Then for any function $\delta:[i..j)\to\{\0,\1\}$ we have
\begin{align*}
E^{\delta_i}_i\cdots
E^{\delta_{j-1}}_{j-1} S_{i,j}^{\,\epsilon}(M)v
=\cond_{\sum\delta=\epsilon}
\prod
_{t\in(i..j]\setminus\psi((i..j]\setminus M)}\bigl(\res_p\lm_i-\res_p(\lm_t+1)\bigr)v;
\end{align*}
\item\label{lemma:constr:1:part:ii} Let $i\le l<j-1$, $\delta\in\{\0,\1\}$, and 
either $l+1\in M$ or $ S^{\,\sigma}_{l+1,j}(M_{(l+1..j]})v=0$ for $\sigma=\0,\1$.
           Then $E_l^\delta S_{i,j}^{\,\epsilon}(M)\,v=0$.
\end{enumerate}
\end{lemma}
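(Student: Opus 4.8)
\textbf{Proof proposal for Lemma~\ref{lemma:constr:1}.}

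The plan is to prove both parts simultaneously by downward induction on $\min M$ (equivalently, induction on $\height M$), using the inductive formulas (\textbf{S-1})--(\textbf{S-6}) for the lowering operators together with the commutation results of Chapter~\ref{Q(n):lovering operators}, specifically Lemmas~\ref{lemma:ops:6} and~\ref{lemma:ops:7} for pushing $E^\delta_l$ past $S^\epsilon_{i,j}(M)$, and Lemma~\ref{lemma:rcoeff:1} for identifying the resulting $U^0(n)$-coefficient.

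For part~\ref{lemma:constr:1:part:ii}, I would argue as follows. If $l+1\in M$ then Lemma~\ref{lemma:ops:7}\ref{lemma:ops:7:i} (or Lemma~\ref{lemma:ops:6}\ref{lemma:ops:6:i} when $l=i$) gives $E^\delta_l S^\epsilon_{i,j}(M)\equiv 0\pmod{I^+_l}$, and since $v$ is primitive it is annihilated by $I^+_l$, so $E^\delta_l S^\epsilon_{i,j}(M)v=0$. Otherwise, Lemmas~\ref{lemma:ops:6} and~\ref{lemma:ops:7} express $E^\delta_l S^\epsilon_{i,j}(M)$ modulo $I^+_l$ as a sum of terms each of which contains a factor of the form $S^\sigma_{l+1,j}(M_{(l+1..j]})$ (times operators in $U^0$ and in $U(l)$, which commute past to the right or act on the primitive vector scalarly); since those factors kill $v$ by hypothesis, again $E^\delta_l S^\epsilon_{i,j}(M)v=0$. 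The cases $\odd{l+1}\in M$ versus $\odd{l+1}\notin M$, and the presence of the extra $H^\tau_{l+1}$ factor, only affect bookkeeping, not the conclusion.

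For part~\ref{lemma:constr:1:part:i}, the idea is to compute $E^{\delta_i}_i\cdots E^{\delta_{j-1}}_{j-1}S^\epsilon_{i,j}(M)v$ by first replacing $E^{\delta_i}_i\cdots E^{\delta_{j-1}}_{j-1}S^\epsilon_{i,j}(M)$ modulo $I^+_{[i..j)}$ by the raising coefficient $P^{\epsilon,\delta}_{i,j}(M)$ (using (\ref{eq:rcoeff:1}) and the fact that $v$ is $U(j)$-primitive, hence annihilated by $I^+_{[i..j)}$). By Lemma~\ref{lemma:rcoeff:1}, since $M$ consists of even elements, $P^{\epsilon,\delta}_{i,j}(M)=\cond_{\epsilon=\sum\delta}\ll g^{(1)}_{i,j}\bigl((i..j)\setminus M\bigr)\rr$. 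Then $P^{\epsilon,\delta}_{i,j}(M)v$ is computed by applying the homomorphism $\ll\,\rr$ composed with evaluation at the weight $\lm$: here $\ll x_t\rr=H_t(H_t-1)$ acts on $v$ by $\res_p\lm_t$ and $\ll y_t\rr=(H_t+1)H_t$ acts by $\res_p(\lm_t+1)$. So the problem reduces to showing that, under hypotheses~\ref{lemma:constr:1:condition:a} and~\ref{lemma:constr:1:condition:b}, the polynomial $g^{(1)}_{i,j}\bigl((i..j)\setminus M\bigr)$, after specializing $x_t\mapsto\res_p\lm_t$, $y_t\mapsto\res_p(\lm_t+1)$, equals $\prod_{t\in(i..j]\setminus\psi((i..j]\setminus M)}\bigl(\res_p\lm_i-\res_p(\lm_t+1)\bigr)$.

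The main obstacle is exactly this last combinatorial-polynomial identity. I would derive it from Lemma~\ref{lemma:compol:4}, applied with $D=\emptyset$, $l\equiv 1$, $S=(i..j)\setminus M$, the end $R=S$, and the injection $\phi=\psi$ (noting $\psi(t)\ge t+1=t+l(t)$ by~\ref{lemma:constr:1:condition:b}, and $l|_{R\cap D}=l|_\emptyset$ is vacuously $0$). The hypothesis~\ref{lemma:constr:1:condition:a} says precisely that each generator of type~(2) of the ideal $\I$ in Lemma~\ref{lemma:compol:4}, namely $x_t-y_{\psi(t)}$ for $t\in S\setminus\{j\}$ with $[t+1..\psi(t))\cap D=\emptyset$ (which holds since $D=\emptyset$), specializes to $\res_p\lm_t-\res_p(\lm_{\psi(t)}+1)=0$; the type~(1) generators are absent. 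Since $D=\emptyset$ we are in the second case of Lemma~\ref{lemma:compol:4}, giving $g^{(1)}_{i,j}(S)\equiv\prod_{t\in(i..j]\setminus\psi(S)}(x_{D_t^i}-y_t)=\prod_{t\in(i..j]\setminus\psi(S)}(x_i-y_t)\pmod\I$; after specialization $x_i\mapsto\res_p\lm_i$, $y_t\mapsto\res_p(\lm_t+1)$ this is exactly the claimed product. One must check that the specialization homomorphism $\R\to\mathbb F$ indeed kills $\I$ (it kills the generators, hence the ideal), and that $j\in M$ ensures $j\notin S$ so the edge case $s=j$ in Lemma~\ref{lemma:compol:4} does not arise for $\psi$; this is where I expect to spend the most care.
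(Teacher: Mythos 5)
Your proof follows the paper's own argument essentially verbatim: part (i) reduces to the raising coefficient via (\ref{eq:rcoeff:1}), identifies it with $\cond_{\sum\delta=\epsilon}\ll g^{(1)}_{i,j}((i..j]\setminus M)\rr$ by Lemma~\ref{lemma:rcoeff:1}, and then evaluates the polynomial on $v$ via Lemma~\ref{lemma:compol:4} with $D=\emptyset$, $\phi=\psi$, $l\equiv 1$, observing that hypothesis (a) kills the ideal $\I$; part (ii) invokes Lemmas~\ref{lemma:ops:6} and~\ref{lemma:ops:7} exactly as the paper does. The only (harmless) wobbles are the opening framing as an induction on $\height M$, which you effectively abandon since the cited lemmas already absorb it, a mention of $\odd{l+1}\in M$ and the $H^\tau_{l+1}$ case which cannot arise here because $M\subset(i..j]$ has only even elements, and the worry about the ``$s=j$ edge case,'' which is moot since $j\in M$ guarantees $j\notin S$.
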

\begin{proof}\ref{lemma:constr:1:part:i}
By Lemma~\ref{lemma:rcoeff:1}, we have
\begin{equation}\label{eq:constr:1}
E^{\delta_i}_i\cdots E^{\delta_{j-1}}_{j-1} S_{i,j}^{\,\epsilon}(M)\,v=
 P_{i,j}^{\delta,\epsilon}(M)\,v=
\cond_{\sum\delta=\epsilon}\Bigl\llbracket g^{(1)}_{i,j}\bigl((i..j]{\setminus}M\bigr)\Bigr\rrbracket \,v.
\end{equation}
By Lemma~\ref{lemma:compol:4} with $D=\emptyset$, $R=S=(i..j ]\setminus M$,
$\phi=\psi$ and $l=1$,  
we have
\begin{equation}\label{eq:constr:2}
g^{(1)}_{i,j }\bigl((i..j ]{\setminus}M\bigr)\=\prod\nolimits_{t\in(i..j ]\,\setminus\,\psi((i..j ]\setminus M)}(x_i-y_t)\pmod\I,
\end{equation}
where $\I$ is the ideal of $\R$ generated by the polynomials $x_t-y_{\psi(t)}$ with $t\in(i..j ]\setminus M$.
By the condition~\ref{lemma:constr:1:condition:a}, we get
\begin{align*}
\ll x_t-y_{\psi(t)}\rr v=( H_t( H_t-1)-( H_{\psi(t)}+1) H_{\psi(t)})v
\\
=(\lm_t(\lm_t-1)-(\lm_{\psi(t)}+1)\lm_{\psi(t)})v
=(\res_p\lm_t-\res_p(\lm_{\psi(t)}+1)) v=0.
\end{align*}
Thus we have proved $\bigl\llbracket\I\,\bigr\rrbracket \,v=0$.
So, by~(\ref{eq:constr:1}) and~(\ref{eq:constr:2}), we have
\begin{align*}
E^{\delta_i}_i\cdots E^{\delta_{j -1}}_{j -1} S_{i,j }^{\,\epsilon}(M)v
=\cond_{\sum\delta=\epsilon}\ll\prod\nolimits_{t\in(i..j ]\setminus\psi((i..j ]\setminus M)}(x_i-y_t)\rr v
\\
=\cond_{\sum\delta=\epsilon}\prod\nolimits_{t\in(i..j ]\setminus\psi((i..j ]\setminus M)}\bigl(\lm_i(\lm_i-1)-(\lm_t+1)\lm_t\bigr)v
\\
=\cond_{\sum\delta=\epsilon}\prod\nolimits_{t\in(i..j ]\,\setminus\,\psi((i..j ]\setminus M)}\bigl(\res_p\lm_i-\res_p(\lm_t+1)\bigr)v.
\end{align*}

\ref{lemma:constr:1:part:ii}
By Lemmas~\ref{lemma:ops:6} and~\ref{lemma:ops:7}, we get $E_l^\delta S_{i,j }^{\,\epsilon}(M)\,v=0$
if $l+1\in M$.
Let $l+1\notin M$. In the case $l=i$, by Lemma~\ref{lemma:ops:6}\ref{lemma:ops:6:ii}, we get
\begin{align*}
E_l^\delta S_{i,j }^{\,\epsilon}(M)\,v=(-1)^{\1+\delta(\epsilon+\1)} S_{i+1,j }^{\,\epsilon+\delta}(M)\,v
=(-1)^{\1+\delta(\epsilon+\1)}S_{l+1,j }^{\,\epsilon+\delta}(M_{(l+1..j ]})\,v=0.
\end{align*}
In the case $i<l<j -1$, by Lemma~\ref{lemma:ops:7}, we have $E_l^\delta S_{i,j }^{\,\epsilon}(M)v=0$ if $l\notin M$, and
\begin{align*}
E_l^\delta S_{i,j }^{\,\epsilon}(M)\,v
=\sum_{\gamma+\sigma=\epsilon+\delta}(-1)^{\1+(\delta+\gamma)(\epsilon+\1)} S^{\,\gamma}_{i,l}\bigl(M_{(i..l]}\bigr) S^{\,\sigma}_{l+1,j }\bigl(M_{(l+1..j ]}\bigr)v=0
\end{align*}
if $l\in M$.
\end{proof}

\begin{lemma}\label{lemma:constr:2} Let $\lm\in X(n)$, $v^+_\lm\in L(\lm)^\lm$, $1\le i<j\le n$, $\epsilon\in\{\0,\1\}$ and
$(i..j]\supset M\ni j$. 
Let $\psi:[i..j]\setminus M\to[i..j]$ be an injection such that
{\renewcommand{\labelenumi}{{\rm \theenumi}}
\renewcommand{\theenumi}{{\rm(\alph{enumi})}}
\begin{enumerate}
\item\label{lemma:constr:2:condition:a} $\res_p\lm_t=\res_p\bigl(\lm_{\psi(t)}+1\bigr)$ for all $t\in[i..j]\setminus M${\rm;}\\[-10pt]
\item\label{lemma:constr:2:condition:b} $\psi(t)>t$ for all $t\in[i..j]\setminus M$.
\end{enumerate}}
\noindent
Then we have $ S_{i,j}^{\epsilon}(M)\,v^+_\lm=0$.
\end{lemma}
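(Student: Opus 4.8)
The statement asserts that the lowering operator $S_{i,j}^\epsilon(M)$ annihilates a highest-weight vector $v^+_\lm$ of $L(\lm)$, under the existence of an injection $\psi$ satisfying the ``raising-residue'' condition~\ref{lemma:constr:2:condition:a} and the strict-increase condition~\ref{lemma:constr:2:condition:b} on the \emph{larger} index set $[i..j]\setminus M$ (note the closed bracket: the hypotheses now also constrain $\psi$ at $t=i$, which is the extra ingredient compared with Lemma~\ref{lemma:constr:1}). The strategy is to use Proposition~\ref{proposition:intro:5}: since $S_{i,j}^\epsilon(M)\,v^+_\lm$ lies in the weight space $L(\lm)^{\lm-\alpha(i,j)}$ (here I identify $\al(i,j)$ inside $X(n)$ in the obvious way, padding by zeros), it suffices to exhibit, for every element $E$ of a spanning set of $U^+(n)^{\al(i,j)}$, that $E\,S_{i,j}^\epsilon(M)\,v^+_\lm=0$. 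By the triangular decomposition and Proposition~\ref{proposition:intro:1}, such $E$ can be taken to be products of the Chevalley generators $E^{\delta_i}_i,\dots,E^{\delta_{j-1}}_{j-1}$ in some order; since $v^+_\lm$ is killed by every $E^\delta_l$ and by every $I^+_l$, only the ``straightened'' monomial $E^{\delta_i}_i E^{\delta_{i+1}}_{i+1}\cdots E^{\delta_{j-1}}_{j-1}$ survives modulo $I^+_{[i..j)}$, and its action is governed precisely by the raising coefficient: $E^{\delta_i}_i\cdots E^{\delta_{j-1}}_{j-1}S_{i,j}^\epsilon(M)\,v^+_\lm = P_{i,j}^{\epsilon,\delta}(M)\,v^+_\lm$.

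\textbf{Main computation.} Now invoke Lemma~\ref{lemma:rcoeff:1}, which applies because $M$ consists of even elements: $P_{i,j}^{\epsilon,\delta}(M)=\cond_{\sum\delta=\epsilon}\,\ll g^{(1)}_{i,j}\bigl((i..j)\setminus M\bigr)\rr$. The task is therefore reduced to showing $\bigl\llbracket g^{(1)}_{i,j}\bigl((i..j)\setminus M\bigr)\bigr\rrbracket\,v^+_\lm=0$. Apply Lemma~\ref{lemma:compol:4} with $D=\emptyset$, $l\equiv 1$, $S=(i..j)\setminus M$ and end $R=S$, taking $\phi:=\psi$ restricted to $S=(i..j)\setminus M$; the hypotheses $\psi(t)\ge t+1$ are exactly~\ref{lemma:constr:2:condition:b}, and with $D=\emptyset$ the generators of type~(1) are absent, so $g^{(1)}_{i,j}(S)\equiv \prod_{t\in(i..j)\setminus\psi(S)}(x_i-y_t)\pmod{\I}$, where $\I$ is generated by the polynomials $x_t-y_{\psi(t)}$, $t\in(i..j)\setminus M$. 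Under $\ll\cdot\rr$ and evaluation at $v^+_\lm$, each generator becomes $C(i,t)$-type: more precisely $\ll x_t-y_{\psi(t)}\rr v^+_\lm=\bigl(\res_p\lm_t-\res_p(\lm_{\psi(t)}+1)\bigr)v^+_\lm=0$ by~\ref{lemma:constr:2:condition:a}, so $\ll\I\rr v^+_\lm=0$; hence the whole raising coefficient acts as $\cond_{\sum\delta=\epsilon}\prod_{t\in(i..j)\setminus\psi(S)}\bigl(\res_p\lm_i-\res_p(\lm_t+1)\bigr)v^+_\lm$. It remains to see that this product vanishes. This is where the \emph{closed}-bracket hypothesis enters: since $\psi$ is injective on $[i..j]\setminus M$, the set $\psi\bigl([i..j]\setminus M\bigr)$ has the same cardinality as $[i..j]\setminus M$, which is one more than the cardinality of $(i..j)\setminus M = \bigl([i..j]\setminus M\bigr)\setminus\{i\}$; since $\psi\bigl([i..j]\setminus M\bigr)\subseteq [i..j]$ and $i\notin\psi\bigl([i..j]\setminus M\bigr)$ (as $\psi(t)>t\ge i$), the image $\psi\bigl((i..j)\setminus M\bigr)$ misses at least one element of $(i..j]$, namely $\psi(i)$. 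Thus $(i..j]\setminus\psi\bigl((i..j)\setminus M\bigr)\ne\emptyset$; choosing $t_0:=\psi(i)$ in this set, the corresponding factor is $\res_p\lm_i-\res_p(\lm_{\psi(i)}+1)=0$ again by~\ref{lemma:constr:2:condition:a} applied at $t=i$. Hence the product is zero, and $P_{i,j}^{\epsilon,\delta}(M)\,v^+_\lm=0$ for all $\delta$.

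\textbf{Conclusion and anticipated difficulty.} Having shown $E\,S_{i,j}^\epsilon(M)\,v^+_\lm=0$ for every monomial $E$ in the $E_l^\delta$'s, Proposition~\ref{proposition:intro:5} forces $S_{i,j}^\epsilon(M)\,v^+_\lm=0$. The routine part is the reduction to the raising coefficient and the bookkeeping with $I^+_{[i..j)}$; the only genuinely delicate point is the combinatorial argument that $(i..j]\setminus\psi\bigl((i..j)\setminus M\bigr)$ contains an index $t$ with $\res_p\lm_i=\res_p(\lm_t+1)$ — i.e. that the strengthening of Lemma~\ref{lemma:constr:1} to the closed interval genuinely produces an extra vanishing factor via $\psi(i)$. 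One must be a little careful that $\psi(i)$ indeed lies in $(i..j]$ (it does, since $\psi(i)>i$ and $\psi(i)\in[i..j]$) and that it is not cancelled, i.e. $\psi(i)\notin\psi\bigl((i..j)\setminus M\bigr)$, which is immediate from injectivity of $\psi$ on $[i..j]\setminus M$ together with $i\notin(i..j)\setminus M$. With that observation in hand the proof is complete; alternatively one could phrase the whole thing as a direct corollary of Lemma~\ref{lemma:constr:1}\ref{lemma:constr:1:part:i} by noting that its product already contains the factor indexed by $\psi(i)$, which vanishes.
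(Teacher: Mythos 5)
Your proof verifies only \emph{one} of the two things that are needed. Recall how Proposition~\ref{proposition:intro:5} is used here: to conclude $u:=S_{i,j}^\epsilon(M)\,v^+_\lm=0$ one must check $Eu=0$ for \emph{every} $E$ in a spanning set of $U^+(n)^{\al(i,j)}$, and this spanning set consists of products of $E_l^{\de}$ ($l\in[i..j)$, each appearing once) in \emph{every possible order}, not just the straightened one. Your assertion that ``only the straightened monomial $E^{\delta_i}_i\cdots E^{\delta_{j-1}}_{j-1}$ survives modulo $I^+_{[i..j)}$'' is unjustified and is in fact the crux of the matter, not a triviality. Concretely: for the reversed product you would compute
$$E_{i+1}^{\de_{i+1}}E_i^{\de_i}\,u \;=\; (-1)^{\de_i\de_{i+1}}\,E_i^{\de_i}E_{i+1}^{\de_{i+1}}\,u \;\mp\; (-1)^{\de_i\de_{i+1}}\,E_{i,i+2}^{\de_i+\de_{i+1}}\,u,$$
and while the first term vanishes by your straightened computation, the commutator term $E_{i,i+2}^{\,\cdot}\,u$ lands in $L(\lm)^\lm$ and is \emph{not} eliminated by weight; expanding $E_{i,i+2}$ back into the simple-root generators just returns you to $E_{i+1}E_i\,u$, so the argument is circular. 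What breaks this circle is the additional input
$$E_l^\de\,S_{i,j}^\epsilon(M)\,v^+_\lm=0 \qquad\text{for all } l\in[i..j-1),\ \de\in\{\0,\1\},$$
which is precisely what the paper's proof establishes, by induction on $j-i$: for each such $l$, Lemma~\ref{lemma:constr:1}\ref{lemma:constr:1:part:ii} reduces to showing $S^{\,\sigma}_{l+1,j}(M_{(l+1..j]})\,v^+_\lm=0$, and this is the inductive hypothesis applied to the restricted injection $\psi|_{[l+1..j]\setminus M}$. Without this step your argument does not close.

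By contrast, your treatment of the straightened monomial itself is essentially correct and is the same as the paper's (though you re-derive the product formula from Lemmas~\ref{lemma:rcoeff:1} and~\ref{lemma:compol:4} where the paper simply cites Lemma~\ref{lemma:constr:1}\ref{lemma:constr:1:part:i}): the extra hypothesis at $t=i$ forces $\psi(i)\in(i..j]\setminus\psi((i..j]\setminus M)$ and hence the factor $\res_p\lm_i-\res_p(\lm_{\psi(i)}+1)=0$, which kills the raising coefficient. That observation is exactly the new ingredient over Lemma~\ref{lemma:constr:1}\ref{lemma:constr:1:part:i}. But it handles only condition (b) above; you still need to establish condition (a), and the inductive machinery to do so is a genuine part of the proof, not bookkeeping.
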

\begin{proof}
By Proposition~\ref{proposition:intro:5}  and weight considerations, it suffices to prove that $E_l^\delta S_{i,j}^{\,\epsilon}(M)v^+_\lm=0$ for all $l\in[i..j-1)$, $\de=\0,\1$,  and
\begin{equation}\label{eq:constr:5}
E^{\delta_i}_i\cdots E^{\delta_{j-1}}_{j-1}\, S_{i,j}^{\,\epsilon}(M)\,v^+_\lm=0
\end{equation}
for all $\delta:[i..j)\to\{\0,\1\}$.
We apply induction on $j-i$, the inductive base $j-i=1$ coming from Lemma~\ref{lemma:constr:1}\ref{lemma:constr:1:part:i}. 
Let $j-i>1$.
By Lemma~\ref{lemma:constr:1}\ref{lemma:constr:1:part:ii}, in proving that $E_l^\delta S_{i,j}^{\,\epsilon}(M)v^+_\lm=0$, we may assume that $l+1\notin M$
and prove under this assumption that $ S^{\,\sigma}_{l+1,j}\bigl(M_{(l+1..j]}\bigr)v^+_\lm=0$ for any $\sigma=\0,\1$. Now,  $l+1\notin M$ implies
$[l{+}1..j]\setminus M_{(l+1..j]}=[l{+}1..j]\setminus M$. So we can consider the restriction
$\psi'=\psi|_{[l{+}1..j]\setminus M_{(l+1..j]}}$, which obviously is an injection of
$[l{+}1..j]\setminus M_{(l+1..j]}$ into $[l{+}1..j]$ and satisfies the following conditions:
{\renewcommand{\labelenumi}{{\rm \theenumi}}
\renewcommand{\theenumi}{{\rm(\alph{enumi}$^\prime$)}}
\begin{enumerate}
\item\label{lemma:constr:2:condition:a'} $\res_p\lm_t=\res_p\bigl(\lm_{\psi'(t)}+1\bigr)$ for any $t\in[l{+}1..j]\setminus M_{(l+1..j]}${\rm;}\\[-10pt]
\item\label{lemma:constr:2:condition:b'} $\psi'(t)>t$ for any $t\in[l{+}1..j]\setminus M_{(l+1..j]}$,
\end{enumerate}}
\noindent
similar to~\ref{lemma:constr:2:condition:a} and~\ref{lemma:constr:2:condition:b}.
So $ S^{\,\sigma}_{l+1,j}\bigl(M_{(l+1..j]}\bigr)\,v^+_\lm=0$ by the inductive hypothesis.

Finally, by Lemma~\ref{lemma:constr:1}\ref{lemma:constr:1:part:i}, we have
\begin{align*}
E^{\delta_i}_i\cdots E^{\delta_{j-1}}_{j-1} S_{i,j}^{\,\epsilon}(M)v^+_\lm
=\cond_{\sum\delta=\epsilon}\prod\nolimits_{t\in(i..j]\,\setminus\,\psi((i..j]\setminus M)}(\res_p\lm_i-\res_p(\lm_t+1))v^+_\lm.
\end{align*}
We have $\psi(i)\in(i..j]\setminus\psi((i..j]{\setminus}M)$, since $\psi$ is an injection.
By condition~\ref{lemma:constr:2:condition:a}, we get $\res_p\lm_i=\res_p(\lm_{\psi(i)}+1)$,
whence~(\ref{eq:constr:5}) follows.
\end{proof}

\begin{theorem}\label{theorem:constr:1}
Let $\lm\in X(n)$ and $1\le i<n$. Set $\beta:=\res_p\lm_i$, and assume that
$\bigl[\prod_{i<k\le n} r_\beta(\lm)_k\bigr]=-^m$ for some $m\ge 0$.
Then there exist nonzero homogeneous $U(n{-}1)$-primitive vectors $v,v'\in L(\lm)$
of weight $\lm-\alpha(i,n)$ such that 
$E_{j,n}^\de v=E_{j,n}^{\de+\1}v'$ for all $1\leq j<n$ and $\de\in\{0,1\}$.
\end{theorem}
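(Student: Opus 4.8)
The idea is to build $v$ and $v'$ by applying the lowering operators $S^{\,\epsilon}_{i,n}(M)$ for a suitably chosen signed set $M$ to a fixed nonzero highest weight vector $v^+_\lm\in L(\lm)^\lm$. First I would apply Lemma~\ref{lemma:pm:3} to the map $u=r_\beta(\lm)|_{(i..n]}$ (extended appropriately so that the relevant reduction is $-^m$): since $\bigl[\prod_{i<k\le n} r_\beta(\lm)_k\bigr]=-^m$, by Corollary~\ref{corollary:pm:2} the integer $m$ is even, and the flow structure produced by Lemma~\ref{lemma:pm:3} yields an injection $\psi$ from a set of indices to $(i..n]$ satisfying the two key conditions of Lemma~\ref{lemma:constr:2}: $\res_p\lm_t=\res_p(\lm_{\psi(t)}+1)$ and $\psi(t)>t$. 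The complement of the image of $\psi$ then determines the signed set $M\subset(i..n]$ (with $n\in M$), and one should arrange that $\psi$ is defined precisely on $(i..n]\setminus M$.

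\textbf{Constructing the primitive vectors.} With $M$ and $\psi$ in hand, set $v:=S^{\0}_{i,n}(M)\,v^+_\lm$ and $v':=S^{\1}_{i,n}(M)\,v^+_\lm$. By Proposition~\ref{proposition:ops:1} these lie in $L(\lm)^{\lm-\alpha(i,n)}$, of parities $\0$ and $\1$ respectively. The core of the argument is to verify that $v$ and $v'$ are $U(n{-}1)$-primitive, i.e. $E^\delta_l v = E^\delta_l v' = 0$ for $1\le l<n-1$ and $\delta\in\{\0,\1\}$, together with $E^\delta_{n-1}v = E^\delta_{n-1}v'=0$. For the former I would use Lemma~\ref{lemma:constr:1}\ref{lemma:constr:1:part:ii}, which reduces the vanishing of $E^\delta_l S^{\,\epsilon}_{i,n}(M)v^+_\lm$ either to the combinatorial condition $l+1\in M$ or to the vanishing $S^{\,\sigma}_{l+1,n}(M_{(l+1..n]})v^+_\lm=0$; the latter vanishing follows by restricting $\psi$ and invoking Lemma~\ref{lemma:constr:2} (the restriction $\psi|_{(l+1..n]\setminus M}$ still satisfies conditions (a),(b)). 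For $E^\delta_{n-1}$, I would instead appeal to Lemma~\ref{lemma:socle:1.5}, whose hypotheses become: all the products $E^{\delta_l}_l\cdots E^{\delta_{n-2}}_{n-2}S^{\,\epsilon}_{i,n}(M)v^+_\lm$ vanish; these follow from Lemma~\ref{lemma:constr:1}\ref{lemma:constr:1:part:i}, since condition~\ref{lemma:constr:1:condition:a} applied to $t=\psi^{-1}(\text{something in the product range})$ forces a factor $\res_p\lm_i-\res_p(\lm_t+1)=0$. Crucially $v\ne 0$: by Lemma~\ref{lemma:constr:1}\ref{lemma:constr:1:part:i} there is a product $E^{\delta_i}_i\cdots E^{\delta_{n-1}}_{n-1}$ with $\sum\delta=\0$ acting nontrivially (the empty-image-complement product being nonzero because none of its content factors $\res_p\lm_i-\res_p(\lm_t+1)$ vanishes — here one needs that no index outside the image of $\psi$, other than those forced, contributes a zero factor, which is exactly what the section/resolution construction of $\psi$ guarantees), so Proposition~\ref{proposition:intro:5} gives $v\ne 0$; similarly $v'\ne 0$ using $\sum\delta=\1$.

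\textbf{The relation $E^\de_{j,n}v = E^{\de+\1}_{j,n}v'$.} This is the new point beyond the ``separate'' construction of $v$ and $v'$. The identity $v'=\bar H_i\,v$ (up to scalar) is the natural guess: by Lemma~\ref{lemma:ops:5.5}\ref{lemma:ops:5.5:i}, $[\bar H_i,S^{\0}_{i,n}(M)]$ is proportional to $S^{\1}_{i,n}(M)$, and $\bar H_i v^+_\lm$ lies in $\u(\lm)^\lm$, so $\bar H_i v$ and $v'$ differ by a controllable combination. Then for any $j$, since $E^\de_{j,n}=F^\de_{j,n}$-dual-type raising operators and $\bar H_i$ supercommutes past $E^\de_{j,n}$ only up to lower terms, one computes $E^{\de}_{j,n}v'$ in terms of $\bar H_{\cdot}E^{\de+\1}_{j,n}v$ using the commutation relations~(\ref{equation:intro:1}); the parity shift by $\1$ on the $E$ side exactly matches the odd generator $\bar H_i$. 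The hard part will be pinning down the precise normalization and tracking signs in this last step, and ensuring that the ``$\bar H$-twist'' relating $v$ and $v'$ is compatible simultaneously with \emph{all} $E^\de_{j,n}$ rather than just one; I expect this to require a careful weight-space argument showing that both $E^\de_{j,n}v$ and $E^{\de+\1}_{j,n}v'$ are determined by their images under further raising to the highest weight, where they coincide by Proposition~\ref{proposition:intro:5} and the explicit raising-coefficient formulas of Lemmas~\ref{lemma:rcoeff:1} and~\ref{lemma:rcoeff:2}.
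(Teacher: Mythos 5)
Your construction of $v$ and $v'$ is exactly the paper's: take a flow fully coherent with $r_\beta(\lm)|_{(i..n]}$ (via Lemma~\ref{lemma:pm:1} for $\beta\ne\mathbf 0$, Lemma~\ref{lemma:pm:3} for $\beta=\mathbf 0$), let $S$ be the set of sources, set $M:=(i..n]\setminus S$, and put $v:=S^{\,\epsilon}_{i,n}(M)v^+_\lm$, $v':=S^{\,\epsilon+\1}_{i,n}(M)v^+_\lm$. Your handling of primitivity (reduce via Lemma~\ref{lemma:constr:1}\ref{lemma:constr:1:part:ii} to $l+1\in M$ or to $S^{\,\sigma}_{l+1,n}(M_{(l+1..n]})v^+_\lm=0$, the latter by Lemma~\ref{lemma:constr:2} applied to $\psi|_{[l+1..n]\setminus M}$) and of non-vanishing (full coherence $\Rightarrow$ no zero factors $\beta-\res_p(\lm_t+1)$ in Lemma~\ref{lemma:constr:1}\ref{lemma:constr:1:part:i}) is also essentially the paper's. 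Two points you should fix, one minor and one a genuine gap.

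\emph{Minor but real error on primitivity.} You add the condition ``$E^\delta_{n-1}v=E^\delta_{n-1}v'=0$'' and appeal to Lemma~\ref{lemma:socle:1.5} for it. This is wrong: if $v$ were killed by every $E^\delta_l$, $l=1,\dots,n-1$, it would be a $U(n)$-highest weight vector of weight $\lm-\alpha(i,n)<\lm$, forcing $v=0$ by irreducibility of $L(\lm)$, contradicting the claim $v\ne 0$. $U(n-1)$-primitivity only requires $E^\delta_l v=0$ for $l\leq n-2$ (together with the divided-power vanishings, which follow by weight). Lemma~\ref{lemma:socle:1.5} has nothing to do with this; it is the tool for the \emph{extension} constructions (Theorems~\ref{theorem:socle:0.5}--\ref{theorem:socle:2}), where one applies an operator $F$ of weight $-\alpha(h,i)$ to an already-constructed primitive vector of weight $\lm-\alpha(i,n)$; here you are constructing the primitive vector from scratch.

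\emph{The gap: the complement relation.} Your proposed identity $v'\propto\bar H_i v$ is false. By Lemma~\ref{lemma:ops:5.5}\ref{lemma:ops:5.5:i}, $[\bar H_i,S^{\0}_{i,n}(M)]=(-1)^{\|M\|}S^{\1}_{i,n}(M)$, so $\bar H_i v = S^{\0}_{i,n}(M)\,\bar H_iv^+_\lm + (-1)^{\|M\|}v'$, and the correction term $S^{\0}_{i,n}(M)\bar H_iv^+_\lm$ is a genuinely different lowering of a different highest weight vector in $\u(\lm)$; it does not vanish in general. Your fallback idea --- argue by raising to the highest weight and invoking Proposition~\ref{proposition:intro:5} --- is the right direction, but you do not identify the precise reduction that makes it work. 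The paper's argument is: first establish the case $j=i$ directly from Lemma~\ref{lemma:constr:1}\ref{lemma:constr:1:part:i}, since
\begin{align*}
E^\delta_{i,n}v \;=\; \cond_{\delta=\epsilon}\prod_{t\in(i..n]\setminus\psi(S)}\bigl(\beta-\res_p(\lm_t+1)\bigr)v^+_\lm \;=\; E^{\delta+\1}_{i,n}v'
\end{align*}
for every $\delta$. For $j<i$, both sides are zero by weight. For $j>i$, consider $w:=E^\delta_{j,n}v-E^{\delta+\1}_{j,n}v'$; it has weight $\lm-\alpha(i,j)$, is $U(j-1)$-primitive, and so vanishes iff $E^\epsilon_{i,j}w=0$ for $\epsilon\in\{\0,\1\}$; but by the commutation relations $E^\epsilon_{i,j}w=[E^\epsilon_{i,j},E^\delta_{j,n}]v-[E^\epsilon_{i,j},E^{\delta+\1}_{j,n}]v'=E^{\epsilon+\delta}_{i,n}v-E^{\epsilon+\delta+\1}_{i,n}v'=0$ by the $j=i$ case. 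This reduction of $j>i$ to $j=i$ via supercommutators is the ingredient missing from your sketch; without it the ``careful weight-space argument'' you anticipate does not close.
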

\begin{proof} By Lemmas~\ref{lemma:pm:1} and~\ref{lemma:pm:3}, there exists a flow $\Gamma$ on $(i..n]$,
fully coherent with $r_\beta(\lm)|_{(i..n]}$. Let $S$ denote the set of all sources of edges of $\Gamma$. Then for all
$t\in S$ there exists a unique integer $\psi(t)\in(i..n]$ such that $(t,\psi(t))\in\Gamma$. Moreover, $\psi$ is an injection
of $S$ into $(i..n]$, and  
{\renewcommand{\labelenumi}{{\rm \theenumi}}
\renewcommand{\theenumi}{{\rm(\alph{enumi})}}
\begin{enumerate}
\item\label{theorem:constr:1:property:a} $\res_p\lm_t=\res_p\bigl(\lm_{\psi(t)}+1\bigr)=\beta$ for any $t\in S$;\\[-10pt]
\item\label{theorem:constr:1:property:b} $\psi(t)>t$ for any $t\in S$.
\end{enumerate}
}

Set $M:=(i..n]\setminus S$. Note that $n\in M$, as otherwise $n\in S$, and $n<\psi(n)\in(i..n]$ is a contradiction.
Choose any nonzero homogeneous $v^+_\lm\in L(\lm)$ and $\epsilon\in\{\0,\1\}$. Set
$v:=S_{i,n}^{\,\epsilon}(M)\,v^+_\lm$ and $v':=S_{i,n}^{\,\epsilon+\1}(M)\,v^+_\lm$
We claim that $(v,v')$ is the required pair.

Let us prove that $v$ and $v'$ are $U(n-1)$-primitive. We consider only the vector $v$, the argument for $v'$
being similar. It suffices to prove that
$E_l^\delta S_{i,n}^{\,\epsilon}(M)\,v^+_\lm=0$ for $l=i,\ldots,n-2$ and $\delta=\0,\1$.
By Lemma~\ref{lemma:constr:1}\ref{lemma:constr:1:part:ii},
we may assume that $l+1\notin M$ and prove that
$ S^{\,\sigma}_{l+1,j}\bigl(M_{(l+1..j]}\bigr)\,v^+_\lm=0$ for $\sigma=\0,\1$.
But this equality follows from Lemma~\ref{lemma:constr:2} applied to the injection
$\psi|_{[l+1..j]\setminus M_{(l+1..j]}}$. Note that $l+1\notin M$ implies
$[l{+}1..j]\setminus M_{(l{+}1..j]}=[l{+}1..j]\setminus M=[l{+}1..j]\cap S$. Therefore the restriction $\psi|_{[l+1..j]\setminus M_{(l+1..j]}}$ is well defined.

Now by Lemma~\ref{lemma:constr:1}\ref{lemma:constr:1:part:i}, we have (using the $U(n-1)$-primitivity of $v$ and $v'$ already established)
$$
 E_{i,n}^\delta v=
 E^\delta_iE_{i+1}\cdots E_{n-1} S_{i,n}^{\,\epsilon}(M)\,v^+_\lm=\cond_{\delta=\epsilon}\prod\nolimits_{t\in(i..n]\,\setminus\,\psi(S)}\bigl(\beta-\res_p(\lm_t+1)\bigr)v^+_\lm
$$
and similarly
$$
E_{i,n}^{\delta+1}v'=\cond_{\delta=\epsilon}\prod\nolimits_{t\in(i..n]\,\setminus\,\psi(S)}\bigl(\beta-\res_p(\lm_t+1)\bigr)v^+_\lm
$$
for any $\delta\in\{\0,\1\}$.
In particular, $E_{i,n}^\delta v=E_{i,n}^{\delta+1}v'$.

Recall that $\Gamma$ is fully coherent with $r_\beta(\lm)|_{(i..n]}$.
So, by Definition~\ref{definition:pm:2}\ref{definition:pm:2:6} and
Proposition~\ref{proposition:rbeta}\ref{proposition:rbeta:part:2},
every $t\in(i..n]$ with $\res_p(\lm_t+1)=\beta$ is the target of an edge of $\Gamma$,
i.e. it belongs to $\psi(S)$. So
$$
\prod\nolimits_{t\in(i..n]\,\setminus\,\psi(S)}\bigl(\beta-\res_p(\lm_t+1)\bigr)\ne\mathbf 0.
$$
Hence $E_{i,n}^\epsilon v\ne0$ and $E_{i,n}^{\epsilon+\1}v'\ne0$, whence $v\ne0$ and $v'\ne0$.

We now complete the proof of the equality $E_{j,n}^\de v=E_{j,n}^{\de+\1}v'$ for all $1\leq j<n$ and $\de\in\{0,1\}$.
If $j<i$ then $E_{j,n}^\delta v=0=E_{j,n}^{\delta+\1}v'$ by weight consideration.
The case $j=i$ is done above.
Finally, suppose that $j>i$.
Consider the vector $w:=E_{j,n}^\delta v-E_{j,n}^{\delta+\1}v'$.
It has weight $\lambda-\alpha(i,j)$ and is $U(j-1)$-primitive.
Hence it equals zero if and only if $E_{i,j}^\epsilon w=0$
for any $\epsilon\in\{\0,\1\}$. We have
\begin{align*}
E_{i,j}^\epsilon w\!=\!E_{i,j}^\epsilon E_{j,n}^\delta v-E_{i,j}^\epsilon E_{j,n}^{\delta+\1}v'\!
=\![E_{i,j}^\epsilon,E_{j,n}^\delta]v-[E_{i,j}^\epsilon,E_{j,n}^{\delta+\1}]v'\!
=\!E_{i,n}^{\epsilon+\delta}v-E_{i,n}^{\epsilon+\delta+\1}v',
\end{align*}
which is zero by the case $j=i$, which we have already considered.
\end{proof}

\section[Construction: case 2]{Construction: case $\bigl[\prod_{i<k<n} r_\beta(\lm)_k\bigr]=-^m$ and $\lm_i$, $\lm_n$ are not both divisible by $p$}
This construction is similar to the previous one, but uses operators $ S_{i,j}^{\,\epsilon}(M)$, where the signed set $M$ contains exactly one odd element $\bar\jmath$.

\begin{lemma}\label{lemma:constr:5} Let $1\le i<j$ and $v$ be a primitive vector of weight $\lm\in X(j)$ in an arbitrary $U(j)$-supermodule. Pick $\epsilon\in\{\0,\1\}$ and
a signed $(i..j]$-set $M\ni \bar\jmath $ all of whose elements are even except $\bar\jmath$.
\renewcommand{\labelenumi}{{\rm \theenumi}}
\renewcommand{\theenumi}{{\rm(\roman{enumi})}}
\makeatletter
\renewcommand{\p@enumii}{}
\makeatother
\begin{enumerate}
\item\label{lemma:constr:5:part:i}
Suppose that $\psi:(i..j)\setminus M\to(i..j)$ is an injection such that
\renewcommand{\labelenumii}{{\rm \theenumii}}
\renewcommand{\theenumii}{{\rm(\alph{enumii})}}
\begin{enumerate}
\item\label{lemma:constr:5:condition:a} $\res_p\lm_t=\res_p\bigl(\lm_{\psi(t)}+1\bigr)$ for any $t\in(i..j)\setminus M${\rm;}\\[-10pt]
\item\label{lemma:constr:5:condition:b} $\psi(t)>t$ for any $t\in(i..j)\setminus M$.
\end{enumerate}
\noindent
Then for any function $\delta:[i..j)\to\{\0,\1\}$, we have
\begin{align*}
 E^{\delta_i}_i\cdots E^{\delta_{j-1}}_{j-1} S_{i,j}^{\,\epsilon}(M)v=\\
\prod\nolimits_{t\in(i..j)\,\setminus\,\psi((i..j)\setminus M)}\bigl(\res_p\lm_i{-}\res_p(\lm_t{+}1)\bigr)( H_i^{\epsilon+\sum\delta}{-}(-1)^{\epsilon\sum\delta} H_j^{\epsilon+\sum\delta})v.
\end{align*}
\item\label{lemma:constr:5:part:ii} Let
$i\le l<j-1$, $\delta\in\{\0,\1\}$, and  either $l+1\in M$ or
$ S^{\,\sigma}_{l+1,j}(M_{(l+1..j]})v=0$ for $\sigma=\0,\1$.
Then $ E_l^\delta S_{i,j}^{\,\epsilon}(M)v=0$.
\end{enumerate}
\end{lemma}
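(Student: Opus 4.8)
The statement to prove is Lemma~\ref{lemma:constr:5}, which is the exact analogue of Lemma~\ref{lemma:constr:1} but for signed sets $M$ containing the odd element $\bar\jmath$ (and only that one odd element). So the plan is to mirror the proof of Lemma~\ref{lemma:constr:1} step by step, substituting the ``odd-$j$'' versions of each auxiliary result.

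For part~\ref{lemma:constr:5:part:i}: I would begin by invoking Lemma~\ref{lemma:rcoeff:1}, exactly as in the proof of Lemma~\ref{lemma:constr:1}\ref{lemma:constr:1:part:i}, to rewrite $E^{\delta_i}_i\cdots E^{\delta_{j-1}}_{j-1}S_{i,j}^{\,\epsilon}(M)v$ as $P_{i,j}^{\delta,\epsilon}(M)v$. But now, since $\bar\jmath\in M$ with $\bar\jmath$ the unique odd element and all other elements even, the relevant raising-coefficient formula is Lemma~\ref{lemma:rcoeff:2} with $q=j$ (so that $X(i,j,M)$ collapses appropriately); this expresses $P_{i,j}^{\epsilon,\delta}(M)$ via the polynomials $g^{(2)}_{i,k,j,j}$ and factors $H_k^{\epsilon+\sum\delta}$. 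Actually the cleanest route is to use {\bf P-1} together with the inductive structure, but the most efficient is to observe that Lemma~\ref{lemma:rcoeff:2} with $q=j$ gives $P_{i,j}^{\epsilon,\delta}(M)=\llbracket g^{(2)}_{i,i,j,j}((i..j]\setminus M)\rrbracket H_i^{\epsilon+\sum\delta}+(-1)^{\1+(\1+\epsilon+\sum\delta)\delta_i}\llbracket g^{(2)}_{i,i+1,j,j}((i..j]\setminus M)\rrbracket H_{i+1}^{\epsilon+\sum\delta}$, and then apply the $D=\{i\}$ case of Lemma~\ref{lemma:compol:4} to reduce both $g^{(2)}$'s modulo the ideal generated by $\{x_t-y_{\psi(t)}\}$; here $l=l^{(2)}_{i,k,j,j}$ has the property $l|_D=0$ since $i\notin(i..j)$, so the hypothesis of Lemma~\ref{lemma:compol:4} is met. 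Conditions~\ref{lemma:constr:5:condition:a} and~\ref{lemma:constr:5:condition:b} translate into $\llbracket x_t-y_{\psi(t)}\rrbracket v = (\res_p\lm_t-\res_p(\lm_{\psi(t)}+1))v = 0$, so the ideal kills $v$, leaving the stated product of $(\res_p\lm_i-\res_p(\lm_t+1))$ factors times the combination $\llbracket g^{(2)}\rrbracket$ evaluated to a product over $(i..j)\setminus\psi((i..j)\setminus M)$; finally $\llbracket H_i(H_i-1)\rrbracket$-type bookkeeping and the formula $\llbracket x_i-y_t\rrbracket=B(i,t)$ assemble the coefficient $H_i^{\epsilon+\sum\delta}-(-1)^{\epsilon\sum\delta}H_j^{\epsilon+\sum\delta}$. (One must track the sign $(-1)^{\epsilon\sum\delta}$ carefully — comparing with the $\cond_{\sum\delta=\epsilon}$ in the even case this is the place where the extra $H_j$ and the sign $(-1)^{\epsilon\sum\delta}$ enter, and I expect this sign-matching to be the fiddliest part.)

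For part~\ref{lemma:constr:5:part:ii}: this is essentially identical to the proof of Lemma~\ref{lemma:constr:1}\ref{lemma:constr:1:part:ii}. I would first appeal to Lemmas~\ref{lemma:ops:6} and~\ref{lemma:ops:7} to conclude $E_l^\delta S_{i,j}^{\,\epsilon}(M)v=0$ when $l+1\in M$. When $l+1\notin M$, split into the cases $l=i$ and $i<l<j-1$: for $l=i$ use Lemma~\ref{lemma:ops:6}\ref{lemma:ops:6:ii} (noting $i+1,\overline{i+1}\notin M$ since $l+1=i+1\notin M$ and the only odd element of $M$ is $\bar\jmath$, and $j>i+1$ so $\bar\jmath\neq\overline{i+1}$) to get $E_i^\delta S_{i,j}^{\,\epsilon}(M)v = \pm S_{i+1,j}^{\,\epsilon+\delta}(M)v = \pm S_{l+1,j}^{\,\epsilon+\delta}(M_{(l+1..j]})v = 0$; for $i<l<j-1$ use Lemma~\ref{lemma:ops:7} — either $l,\bar l\notin M$ so the result is $0$ by~\ref{lemma:ops:7:i}, or $l\in M$ (note $\bar l\notin M$ since $l\neq j$) and $\overline{l+1},l+1\notin M$, so by~\ref{lemma:ops:7:ii} we get $E_l^\delta S_{i,j}^{\,\epsilon}(M)v = \sum_{\gamma+\sigma=\epsilon+\delta}\pm S^{\,\gamma}_{i,l}(M_{(i..l]})S^{\,\sigma}_{l+1,j}(M_{(l+1..j]})v = 0$ by the hypothesis that $S^{\,\sigma}_{l+1,j}(M_{(l+1..j]})v=0$.

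The main obstacle I anticipate is not conceptual but bookkeeping: verifying that the sign exponents coming out of Lemma~\ref{lemma:rcoeff:2} (with $q=j$) collapse to exactly $(-1)^{\epsilon\sum\delta}$ on the $H_j$-term and $1$ on the $H_i$-term, and that the polynomial $g^{(2)}_{i,i,j,j}((i..j]\setminus M)$ and $g^{(2)}_{i,i+1,j,j}((i..j]\setminus M)$ both reduce, via Lemma~\ref{lemma:compol:4}, to the \emph{same} product $\prod_{t\in(i..j)\setminus\psi((i..j)\setminus M)}(x_i-y_t)$ — which requires checking that in both cases $k=i$ and $k=i+1$ the hypothesis $l^{(2)}_{i,k,j,j}|_{R\cap D}\equiv 0$ holds and that $D=\{i\}$ contributes trivially to the reduction since $i\notin(i..j)=R$. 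Everything else is a transcription of the even-element argument with the obvious replacements.
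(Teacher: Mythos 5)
Part~(ii) of your proposal is correct and follows the paper's argument essentially verbatim. The problem is in part~(i), where your application of Lemma~\ref{lemma:rcoeff:2} is wrong in a way that cannot produce the stated conclusion. With $q=j$ that lemma gives $P_{i,j}^{\epsilon,\delta}(M)$ as a sum over \emph{all} of $X(i,j,M)=\{k\in[i..j]\setminus M\mid k-1\in M\cup\{i-1,i\}\}$, and this set is not $\{i,i+1\}$ in general: every $k\notin M$ with $k-1\in M$ contributes a summand $\ll g_{i,k,j,j}^{(2)}\bigl((i..j]\setminus M\bigr)\rr H_k^{\epsilon+\sum\delta}$. So your two-term formula with $H_i^{\epsilon+\sum\delta}$ and $H_{i+1}^{\epsilon+\sum\delta}$ is not what the lemma says. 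More importantly, the conclusion you must reach contains $H_j^{\epsilon+\sum\delta}$ with coefficient $-(-1)^{\epsilon\sum\delta}$, and the only summand that can supply it is $k=j$; note $j\in X(i,j,M)$ precisely because hypothesis~(b) on $\psi$ forces $j-1\in M$ or $j-1=i$. Your formula has no $k=j$ term at all (unless $j=i+1$), so the $H_j$ contribution you yourself acknowledge needing enters nowhere --- it certainly does not ``assemble'' out of $\ll x_i-y_t\rr=B(i,t)$, which is a scalar-type element of $U^0$ acting on the weight vector.

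The missing idea is a two-stage reduction. First, for each intermediate $k\in X(i,j,M)$ with $i<k<j$ one must show that $\ll g_{i,k,j,j}^{(2)}\bigl((i..j]\setminus M\bigr)\rr v=0$: apply Lemma~\ref{lemma:compol:4} with $D=\{k\}$, $R=[k..j]\setminus M$ and the map $\phi$ extending $\psi$ by $\phi(j)=j$ (legitimate since $l^{(2)}_{i,k,j,j}(j)=0$); because $k\in\bigl[k+l(k)..\phi(k)\bigr)\cap D$ one lands in the first alternative of that lemma, so $g^{(2)}_{i,k,j,j}$ lies in the ideal $\I$, and $\ll\I\,\rr v=0$ by condition~(a). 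This is where the injection $\psi$ is genuinely used to collapse the sum, and it cannot be skipped. Only then does one evaluate the two surviving terms $k=i$ and $k=j$, applying Lemma~\ref{lemma:compol:4} again with $D=\{i\}$ and $D=\{j\}$ respectively; both reduce modulo the same ideal to $\prod_{t\in(i..j)\setminus\psi((i..j)\setminus M)}(x_i-y_t)$, and the sign of the $k=j$ summand, $(-1)^{\1+(\1+\epsilon+\sum\delta)\sum\delta}=-(-1)^{\epsilon\sum\delta}$, is exactly the coefficient of $H_j^{\epsilon+\sum\delta}$ in the statement. Your $D=\{i\}$ computation is fine as far as it goes; the object $g^{(2)}_{i,i+1,j,j}$ you propose to reduce in place of $g^{(2)}_{i,j,j,j}$ is simply not one of the terms that survives.
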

\begin{proof}\ref{lemma:constr:5:part:i}
By Lemma~\ref{lemma:rcoeff:2}, we have
\begin{equation}\label{eq:constr:5.5}
\begin{array}{l}
 E^{\delta_i}_i\cdots E^{\delta_{j-1}}_{j-1} S_{i,j}^{\,\epsilon}(M)\,v
=P_{i,j}^{\delta,\epsilon}(M)\,v\\
\displaystyle
=\sum_{k\in X(i,j,M)}(-1)^{\cscript_{k>i}+(\1+\epsilon+\sum\delta)\sum\delta|_{[i..k)}}\!\ll g_{i,k,j,j}^{(2)}\bigl((i..j]{\setminus}M\bigr)\rr  H_k^{\epsilon+\sum\delta}v
\end{array}
\end{equation}
We claim that the only non-zero contribution to the sum in the right hand side can come from $k=i$ and $k=j$. Indeed, let $k\in(i..j)\setminus M$ and apply Lemma~\ref{lemma:compol:4}
with $D=\{k\}$, $S=(i..j]\setminus M$, $R=[k..j]{\setminus}M$, $l=l^{(2)}_{i,k,j,j}$ and
$$
\phi(t) =\left\{
{\arraycolsep=0pt
\begin{array}{cl}
\psi(t)&\text{ if }t\in R\setminus\{j\};\\
      j&\text{ if }t=j.
\end{array}}
\right.
$$
Clearly, we have $\phi(t)\ge t+l(t)$ for any $t\in R$, since $\phi(t)=\psi(t)>t$ if $t\in R\setminus\{j\}$
and $l(j)=l^{(2)}_{i,k,j,j}(j)=0$.
The condition $l|_{R\cap D}=0$ in our case takes the form $l(k)=0$, which is satisfied by the definition
of $l^{(2)}_{i,k,q,j}$.
Consider the ideal $\I$ of $\R$ generated by the following polynomials:
\renewcommand{\labelenumi}{{\rm \theenumi}}
\renewcommand{\theenumi}{{\rm(\arabic{enumi})}}
\begin{enumerate}
\item\label{lemma:constr:5:gen:type:1} $x_{\{k\}_{\phi(k)}^i}-y_{\phi(t)}$, where $t\in R$
     such that $k\in\bigl[t{+}l(t)..\phi(t)\bigr)$;
\item\label{lemma:constr:5:gen:type:2} $x_t-y_{\phi(t)}$, where $t\in R\setminus\{j\}$
     such that $k\notin\bigl[t{+}l(t)..\phi(t)\bigr)$.
\end{enumerate}
Note that $k\in R$ and $k\in\bigl[k{+}l(k)..\phi(k)\bigr)$, as $l(k)=0$ and $\phi(k)=\psi(k)>k$. So by Lemma~\ref{lemma:compol:4} we have $g_{i,k,j,j}^{(2)}\bigl((i..j]{\setminus}M\bigr)\in\I$.
It remains to prove that $\bigl\llbracket\I\,\bigr\rrbracket \,v=0$. Indeed, the only generator of
type~\ref{lemma:constr:5:gen:type:1}
corresponds to $t=k$. For this generator:
$$
\bigl\llbracket x_{\{k\}_{\phi(k)}^i}-y_{\phi(k)}\bigr\rrbracket \,v
=\bigl\llbracket x_k-y_{\psi(k)}\bigr\rrbracket \,v
=(\res_p\lm_k-\res_p\bigl(\lm_{\psi(k)}+1\bigr))\,v=0,
$$
by condition~\ref{lemma:constr:5:condition:a}. Similarly, for a generator of
type~\ref{lemma:constr:5:gen:type:2},
we get by condition~\ref{lemma:constr:5:condition:a}:
$$
\llbracket x_t-y_{\phi(t)}\rrbracket \,v=(\res_p\lm_t-\res_p\bigl(\lm_{\psi(t)}+1\bigr))\,v=0.
$$

Now note that $j-1\notin(i..j)\setminus M$, as otherwise by ~\ref{lemma:constr:5:condition:b}, we would have $j-1<\psi(j-1)\in(i..j)$, which is a contradiction. In other words, $j-1\in M$ or $j-1=i$.
So in~(\ref{eq:constr:5.5}), both values $k=i$ and $k=j$ belong to $X(i,j,M)$.
Thus
\begin{equation}\label{eq:constr:12}
\begin{array}{l}
 E^{\delta_i}_i\cdots E^{\delta_{j-1}}_{j-1} S_{i,j}^{\,\epsilon}(M)\,v
 \\
=\Bigl(\ll g_{i,i,j,j}^{(2)}\bigl((i..j]{\setminus}M\bigr)\rr\!  H_i^{\,\epsilon+\sum\delta}\!-\!(-1)^{\epsilon\sum\delta}\ll g_{i,j,j,j}^{(2)}\bigl((i..j]{\setminus}M\bigr)\rr  H_j^{\,\epsilon+\sum\delta}\Bigr)
v.
\end{array}
\end{equation}

To calculate $\ll g_{i,i,j,j}^{(2)}\bigl((i..j]{\setminus}M\bigr)\rr \,v$, we apply Lemma~\ref{lemma:compol:4}
with $D=\{i\}$, $S=R=(i..j]\setminus M$, $l=l^{(2)}_{i,i,j,j}=0$ and
$$
\phi(t) =\left\{
{\arraycolsep=0pt
\begin{array}{cl}
\psi(t)&\text{ if }t\in R\setminus\{j\};\\
      j&\text{ if }t=j.
\end{array}}
\right.
$$
We have $\phi(t)\ge t+l(t)$ for any $t\in R$, since $\phi(t)=\psi(t)\ge t$ and $l=0$.
The condition $l|_{R\cap D}=0$ is now satisfied, since $R\cap D=\emptyset$.
Consider the ideal $\I$ of $\R$ generated by the polynomials
$x_t-y_{\phi(t)}$, where $t\in R\setminus\{j\}=(i..j)\setminus M$.
By Lemma~\ref{lemma:compol:4}, modulo $\I$ we have
\begin{equation}\label{eq:constr:13}
g_{i,i,j,j}^{(2)}\bigl((i..j]{\setminus}M\bigr)\=\prod_{t\in(i..j]\,\setminus\,\phi((i..j]\setminus M)}(x_i-y_t)
=\prod_{t\in(i..j)\,\setminus\,\psi((i..j)\setminus M)}(x_i-y_t).
\end{equation}
The equality $\bigl\llbracket\I\,\bigr\rrbracket \,v=0$ can be proved in the same way as
we proved the similar equality above.
Also, $\bigl\llbracket x_i-y_t \bigr\rrbracket \,v=\Bigl(\res_p\lm_i-\res_p(\lm_t+1)\Bigr)\,v$.
So by~(\ref{eq:constr:13}), we get
\begin{equation}\label{eq:constr:14}
\begin{array}{l}
\displaystyle
\ll g_{i,i,j,j}^{(2)}\bigl((i..j]{\setminus}M\bigr)\rr v
=\ll \prod\nolimits_{t\in(i..j)\,\setminus\,\psi((i..j)\setminus M)}(x_i-y_t)\rr v
\\
\displaystyle=\prod\nolimits_{t\in(i..j)\,\setminus\,\psi((i..j)\setminus M)}\bigl(\res_p\lm_i{-}\res_p(\lm_t+1)\bigr)v.
\end{array}
\end{equation}

To calculate $\ll g_{i,j,j,j}^{(2)}\bigl((i..j]{\setminus}M\bigr)\rr v$, we apply Lemma~\ref{lemma:compol:4} with $D=\{j\}$, $S=R=(i..j]\setminus M$, $l=l^{(2)}_{i,j,j,j}$, and
$$
\phi(t) =\left\{
{\arraycolsep=0pt
\begin{array}{cl}
\psi(t)&\text{ if }t\in R\setminus\{j\};\\
      j&\text{ if }t=j.
\end{array}}
\right.
$$
Clearly, $\phi(t)\ge t+l(t)$ for any $t\in R$, as $\phi(t)=\psi(t)>t$ if $t\in R\setminus\{j\}$ and $l(j)=0$.
By Lemma~\ref{lemma:compol:4}, modulo the ideal $\I$ defined above, we get
$$
g_{i,j,j,j}^{(2)}\bigl((i..j]{\setminus}M\bigr)
\!\=\prod\nolimits_{t\in(i..j)\,\setminus\,\psi((i..j)\setminus M)}(x_i-y_t)\,v
=\prod\nolimits_{t\in(i..j)\,\setminus\,\psi((i..j)\setminus M)}(x_i-y_t).
$$
Hence
\begin{equation}\label{eq:constr:15}
\begin{array}{l}
\displaystyle
\ll g_{i,j,j,j}^{(2)}\bigl((i..j]{\setminus}M\bigr)\rr \,v
=\ll \prod\nolimits_{t\in(i..j)\,\setminus\,\psi((i..j)\setminus M)}(x_i-y_t)\rr v
\\
\displaystyle=\prod\nolimits_{t\in(i..j)\,\setminus\,\psi((i..j)\setminus M)}\bigl(\res_p\lm_i{-}\res_p(\lm_t{+}1)\bigr) v.
\end{array}
\end{equation}
Substituting~(\ref{eq:constr:14}) and~(\ref{eq:constr:15}) to~(\ref{eq:constr:12}), we obtain the required result.

\ref{lemma:constr:5:part:ii} Note that $\bar l,\odd{l+1}\not\in M$.
By Lemmas~\ref{lemma:ops:6} and~\ref{lemma:ops:7}, we have $ E_l^\delta S_{i,j}^{\,\epsilon}(M)\,v=0$ if $l+1\in M$.
Now let $l+1\notin M$. In the case $l=i$, by Lemma~\ref{lemma:ops:6}\ref{lemma:ops:6:ii},
we get
\begin{equation*}\label{eq:constr:16}
 E_l^\delta S_{i,j}^{\,\epsilon}(M)v=(-1)^{\1+\delta\epsilon} S_{i+1,j}^{\,\epsilon+\delta}(M)v=(-1)^{\1+\delta\epsilon} S_{i+1,j}^{\,\epsilon+\delta}(M_{(i+1..j]})v=0.
\end{equation*}
In the case $i<l<j-1$, by Lemma~\ref{lemma:ops:7}, we get $ E_l^\delta S_{i,j}^{\,\epsilon}(M)\,v=0$ if $l\notin M$ and
\begin{align*}
 E_l^\delta S_{i,j}^{\,\epsilon}(M)v
 =\sum_{\gamma+\sigma=\epsilon+\delta}(-1)^{\1+(\delta+\gamma)\epsilon}S^{\,\gamma}_{i,l}\bigl(M_{(i..l]}\bigr) S^{\,\sigma}_{l+1,j}\bigl(M_{(l+1..j]}\bigr)v=0
\end{align*}
if $l\in M$.
\end{proof}

\begin{lemma}\label{lemma:constr:6} Let $\lm\in X(n)$, $v^+_\lm\in L(\lm)^\lm$, $1\le i<j\le n$, $\epsilon\in\{\0,\1\}$, and
$M\ni \bar\jmath$ be a signed $(i..j]$-set all elements of which are even except $\bar\jmath$.
Suppose that $\psi:[i..j)\setminus M\to[i..j)$ is an injection such that
{\renewcommand{\labelenumi}{{\rm \theenumi}}
\renewcommand{\theenumi}{{\rm(\alph{enumi})}}
\begin{enumerate}
\item\label{lemma:constr:6:condition:a} $\res _p\lm_t=\res_p\bigl(\lm_{\psi(t)}+1\bigr)$ for any $t\in[i..j)\setminus M${\rm;}\\[-10pt]
\item\label{lemma:constr:6:condition:b} $\psi(t)>t$ for any $t\in[i..j)\setminus M$.
\end{enumerate}}
\noindent
Then $ S_{i,j}^{\,\epsilon}(M)\,v^+_\lm=0$.
\end{lemma}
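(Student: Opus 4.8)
The plan is to mimic the proof of Lemma~\ref{lemma:constr:2}, replacing the reference to Lemma~\ref{lemma:constr:1} by the analogous Lemma~\ref{lemma:constr:5}. By Proposition~\ref{proposition:intro:5} and weight considerations it suffices to prove that $E_l^\delta S_{i,j}^{\,\epsilon}(M)v^+_\lm=0$ for all $l\in[i..j-1)$ and $\delta\in\{\0,\1\}$, together with $E^{\delta_i}_i\cdots E^{\delta_{j-1}}_{j-1}S_{i,j}^{\,\epsilon}(M)v^+_\lm=0$ for all functions $\delta:[i..j)\to\{\0,\1\}$. I would induct on $j-i$. The base case $j-i=1$ forces $M=\{\odd{i{+}1}\}$, so there are no $l$ to consider, and the single equality follows from Lemma~\ref{lemma:constr:5}\ref{lemma:constr:5:part:i}: in that case $(i..j)\setminus M=\emptyset$, the empty product is $1$, and $H_i^{\epsilon+\sum\delta}-(-1)^{\epsilon\sum\delta}H_j^{\epsilon+\sum\delta}$ kills $v^+_\lm$ unless $\epsilon+\sum\delta=\0$, in which case it acts as $1-1=0$ on the weight space (here one uses that $\odd{i{+}1}\in M$ and condition~\ref{lemma:constr:6:condition:a},~\ref{lemma:constr:6:condition:b} are vacuous).

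For the inductive step with $j-i>1$, first handle the vectors $E_l^\delta S_{i,j}^{\,\epsilon}(M)v^+_\lm$. By Lemma~\ref{lemma:constr:5}\ref{lemma:constr:5:part:ii} this vanishes automatically if $l+1\in M$, so assume $l+1\notin M$; then it suffices to show $S^{\,\sigma}_{l+1,j}(M_{(l+1..j]})v^+_\lm=0$ for $\sigma=\0,\1$. Since $l+1\notin M$ we have $[l{+}1..j)\setminus M_{(l+1..j]}=[l{+}1..j)\setminus M$, so the restriction $\psi':=\psi|_{[l{+}1..j)\setminus M}$ is a well-defined injection into $[l{+}1..j)$ (note $\psi(t)>t\ge l+1$ for such $t$), and it inherits conditions~\ref{lemma:constr:6:condition:a},~\ref{lemma:constr:6:condition:b}. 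Moreover $M_{(l+1..j]}$ is still a signed $(l..j]$-set containing $\odd{j}$ with all other elements even, so the inductive hypothesis applies to give $S^{\,\sigma}_{l+1,j}(M_{(l+1..j]})v^+_\lm=0$.

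Finally, for the product $E^{\delta_i}_i\cdots E^{\delta_{j-1}}_{j-1}S_{i,j}^{\,\epsilon}(M)v^+_\lm$, apply Lemma~\ref{lemma:constr:5}\ref{lemma:constr:5:part:i} with the given $\psi$: the result is a scalar multiple (by the product $\prod_{t\in(i..j)\setminus\psi((i..j)\setminus M)}(\res_p\lm_i-\res_p(\lm_t+1))$) of $(H_i^{\epsilon+\sum\delta}-(-1)^{\epsilon\sum\delta}H_j^{\epsilon+\sum\delta})v^+_\lm$. Since $\psi$ is an injection and $i\in[i..j)\setminus M$ (here one uses $\odd{i{+}1}$ or whatever element witnesses $i\notin M$; in fact $i\notin M$ automatically as $M\subset(i..j]$, and $i\in[i..j)$), the index $\psi(i)$ lies in $(i..j)\setminus\psi((i..j)\setminus M)$, so by condition~\ref{lemma:constr:6:condition:a} the factor $\res_p\lm_i-\res_p(\lm_{\psi(i)}+1)$ is zero, and the whole product vanishes. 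The main obstacle is purely bookkeeping: making sure that at each stage the signed set $M_{(l+1..j]}$ still satisfies the hypotheses (exactly one odd element $\odd{j}$, all else even) and that the restricted injection $\psi'$ maps into the smaller interval $[l{+}1..j)$ rather than merely $[i..j)$; this is exactly as in Lemma~\ref{lemma:constr:2} and presents no real difficulty.
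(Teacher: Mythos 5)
Your overall plan mirrors the paper's proof: reduce to killing by $E_l^\delta$ and by the full product $E_i^{\delta_i}\cdots E_{j-1}^{\delta_{j-1}}$, handle the product using Lemma~\ref{lemma:constr:5}\ref{lemma:constr:5:part:i} and the observation that the factor at $t=\psi(i)$ vanishes, and handle the $E_l^\delta$ by the Lemma~\ref{lemma:constr:5}\ref{lemma:constr:5:part:ii} reduction to $S^\sigma_{l+1,j}(M_{(l+1..j]})v_\lm^+$ together with an induction on $j-i$ via the restricted injection $\psi'$. The inductive step is right.

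The base case argument, however, is incorrect on two counts. First, for $j=i+1$ you claim conditions~\ref{lemma:constr:6:condition:a}, \ref{lemma:constr:6:condition:b} are vacuous, but they are not: $[i..j)\setminus M=\{i\}\setminus\{\odd{i{+}1}\}=\{i\}$ is nonempty, so the hypothesis on $\psi$ has real content. Second, you assert that $H_i^{\epsilon+\sum\delta}-(-1)^{\epsilon\sum\delta}H_j^{\epsilon+\sum\delta}$ kills $v^+_\lm$, giving ``$1-1=0$ on the weight space''; this is false, since $H_i^\0=\tbinom{H_i}1=H_i$ acts on $v^+_\lm$ as the scalar $\lm_i$, not as $1$, so the operator acts as $\lm_i\mp\lm_j$ (or as $\bar H_i\mp\bar H_j$), neither of which vanishes in general. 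The actual reason the base case holds is that the hypotheses are \emph{unsatisfiable} when $j=i+1$: an injection $\psi:\{i\}\to\{i\}$ forces $\psi(i)=i$, contradicting~\ref{lemma:constr:6:condition:b}. Equivalently, in the paper's structure the product-vanishing argument uses the factor at $t=\psi(i)\in(i..j)$, which already implicitly requires $j>i+1$; for $j=i+1$ the lemma is vacuously true and there is nothing to prove. Once you replace your flawed scalar computation with this vacuousness observation, the proof is complete and coincides with the paper's.
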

\begin{proof}
By Lemma~\ref{lemma:constr:5}\ref{lemma:constr:5:part:i}, for any $\delta:[i..j)\to\{\0,\1\}$, we have
\begin{align*}
E^{\delta_i}_i\cdots E^{\delta_{j-1}}_{j-1}\, S_{i,j}^{\,\epsilon}(M)\,v^+_\lm
\\
=\prod\nolimits_{t\in(i..j)\,\setminus\,\psi((i..j)\setminus M)}\bigl(\res_p\lm_i{-}\res_p(\lm_t{+}1)\bigr)\big( H_i^{\epsilon+\sum\delta}{-}(-1)^{\epsilon\sum\delta} H_j^{\epsilon+\sum\delta}\big)v^+_\lm=0
\end{align*}
because for the factor corresponding to $t=\psi(i)$ is
$(\res_p\lm_i-\res_p(\lm_{\psi(i)}+1))=0$.

By Proposition~\ref{proposition:intro:5} and weight considerations, it now suffices to prove that $E_l^\delta S_{i,j}^{\,\epsilon}(M)\,v^+_\lm=0$ for all $i\le l<j-1$, $\delta\in\{\0,\1\}$. We apply induction on $j-i$. If $j-i=1$, the result follows from the previous paragraph. Let now
$j-i>1$. By Lemma~\ref{lemma:constr:5}\ref{lemma:constr:5:part:ii}, we may assume that $l+1\notin M$
and prove that $ S^{\,\sigma}_{l+1,j}(M_{(l+1..j]})v^+_\lm=0$ for any $\sigma\in\{\0,\1\}$.
As $l+1\notin M$, we have
$[l{+}1..j)\setminus M_{(l+1..j]}=[l{+}1..j)\setminus M$. So we can consider the restriction
$\psi'=\psi|_{[l{+}1..j)\setminus M_{(l+1..j]}}$, which is an injection of
$[l{+}1..j)\setminus M_{(l+1..j]}$ into $[l{+}1..j)$ such that
{\renewcommand{\labelenumi}{{\rm \theenumi}}
\renewcommand{\theenumi}{{\rm(\alph{enumi}$^\prime$)}}
\begin{enumerate}
\item\label{lemma:constr:6:condition:a'} $\res_p\lm_t=\res_p\bigl(\lm_{\psi'(t)}+1\bigr)$ for all  $t\in[l{+}1..j)\setminus M_{(l+1..j]}${\rm;}\\[-10pt]
\item\label{lemma:constr:6:condition:b'} $\psi'(t)>t$ for all $t\in[l{+}1..j)\setminus M_{(l+1..j]}$
\end{enumerate}}
\noindent
Hence $ S^{\,\sigma}_{l+1,j}(M_{(l+1..j]})v^+_\lm=0$ by the inductive hypothesis.
\end{proof}

\begin{theorem}\label{theorem:constr:3}
Let $\lm\in X(n)$, $1\le i<n$, and  $\beta:=\res_p\lm_i$. Suppose that $\bigl[\prod_{i<k<n} r_\beta(\lm)_k\bigr]=-^m$ for some $m\geq 0$.
If $\lm_i$ and $\lm_n$ are not both divisible by $p$, then there exists a nonzero homogeneous $U(n-1)$-primitive
vector $v\in L(\lm)$ of weight $\lm-\alpha(i,n)$.
\end{theorem}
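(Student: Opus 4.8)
The plan is to mimic the proof of Theorem~\ref{theorem:constr:1}, replacing the lowering operator $S_{i,n}^{\,\epsilon}(M)$ built from a signed set with only even elements by one whose signed set contains the single odd element $\bar n$; the relevant machinery has already been prepared in Lemmas~\ref{lemma:constr:5} and~\ref{lemma:constr:6}. Concretely, I would first invoke the combinatorics of signature sequences. Since $\bigl[\prod_{i<k<n} r_\beta(\lm)_k\bigr]=-^m$, Lemmas~\ref{lemma:pm:1} and~\ref{lemma:pm:3} (applied according to whether $\be=\mathbf 0$ or not) produce a flow $\Gamma$ on $(i..n)$ fully coherent with $r_\beta(\lm)|_{(i..n)}$. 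Let $S$ be the set of sources of $\Gamma$ and let $\psi\colon S\to(i..n)$ send $t$ to the target of the edge beginning at $t$; then $\psi$ is an injection with $\res_p\lm_t=\res_p(\lm_{\psi(t)}+1)=\beta$ and $\psi(t)>t$ for all $t\in S$. Put $M:=\bigl((i..n)\setminus S\bigr)\cup\{\bar n\}$, a signed $(i..n]$-set all of whose elements are even except $\bar n$.

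Next I would fix a nonzero homogeneous highest weight vector $v^+_\lm\in L(\lm)^\lm$ and a parity $\eps\in\{\0,\1\}$, and set $v:=S_{i,n}^{\,\epsilon}(M)\,v^+_\lm$. The $U(n-1)$-primitivity of $v$ is checked exactly as in Theorem~\ref{theorem:constr:1}: one must show $E_l^\delta S_{i,n}^{\,\epsilon}(M)\,v^+_\lm=0$ for $l=i,\dots,n-2$ and $\delta\in\{\0,\1\}$. By Lemma~\ref{lemma:constr:5}\ref{lemma:constr:5:part:ii} it suffices, when $l+1\notin M$, to prove $S^{\,\sigma}_{l+1,n}\bigl(M_{(l+1..n]}\bigr)v^+_\lm=0$ for $\sigma=\0,\1$, and this follows from Lemma~\ref{lemma:constr:6} applied to the restriction $\psi|_{[l+1..n)\setminus M_{(l+1..n]}}$ — note $l+1\notin M$ forces $[l{+}1..n)\setminus M_{(l+1..n]}=[l{+}1..n)\cap S$, so the restriction is well defined and inherits properties \ref{lemma:constr:6:condition:a} and \ref{lemma:constr:6:condition:b}. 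Then, using the primitivity just established, Lemma~\ref{lemma:constr:5}\ref{lemma:constr:5:part:i} (with $\delta\equiv\0$, say) gives
$$
E_i^{\,\0}E_{i+1}\cdots E_{n-1}\,v=\prod_{t\in(i..n)\,\setminus\,\psi(S)}\bigl(\res_p\lm_i-\res_p(\lm_t+1)\bigr)\,\bigl(H_i^{\epsilon}-(-1)^{\cond_{\epsilon=\1}\cdot\0}H_n^{\epsilon}\bigr)v^+_\lm,
$$
and more generally, for a function $\delta$ with $\sum\delta=\epsilon$, the coefficient is that same product times $(H_i^{\0}-H_n^{\0})v^+_\lm$ after simplification, while for $\sum\delta\ne\epsilon$ the $H$-factor is $H_i^\1-(-1)^\eps H_n^\1=\bar H_i\mp\bar H_n$.

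It remains to see that this does not vanish. Full coherence of $\Gamma$ with $r_\beta(\lm)|_{(i..n)}$, via Definition~\ref{definition:pm:2}\ref{definition:pm:2:6} and Proposition~\ref{proposition:rbeta}\ref{proposition:rbeta:part:2}, guarantees that every $t\in(i..n)$ with $\res_p(\lm_t+1)=\beta$ lies in $\psi(S)$; hence $\prod_{t\in(i..n)\setminus\psi(S)}\bigl(\res_p\lm_i-\res_p(\lm_t+1)\bigr)\neq\mathbf 0$ in $\mathbb F$. The genuinely new point, and the one I expect to be the main obstacle, is the non-vanishing of the weight-space factor $\bigl(H_i^\sigma-(-1)^{\eps\cdot\ldots}H_n^\sigma\bigr)v^+_\lm$ for a suitable choice of $\sigma$ and $\eps$: this is where the hypothesis that $\lm_i$ and $\lm_n$ are \emph{not both divisible by $p$} enters. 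If $\lm_i\not\equiv0\pmod p$ then $h_{p'}(\lambda)\ge 1$ and one works with $\sigma=\1$, using Proposition~\ref{PUMu}(ii) together with the relations $\bar H_i^2=H_i$ and $(\bar H_i\pm\bar H_n)^2=H_i+H_n$ from~(\ref{equation:intro:1}) to see that $(\bar H_i\pm\bar H_n)$ acts invertibly (for an appropriate sign) on the irreducible $U^0(n)$-module $\u(\la)=L(\la)^\la$; if instead $\lm_n\not\equiv0\pmod p$ one argues symmetrically, or passes through the $\sigma$ twist. Once $\sigma$ and $\eps$ are chosen so that this factor is nonzero on $v^+_\lm$, we conclude $E_i^{\delta_i}\cdots E_{n-1}^{\delta_{n-1}}v\neq 0$, hence $v\neq 0$ by Proposition~\ref{proposition:intro:5}, completing the proof. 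I would present the $U^0(n)$-module computation carefully as a short lemma or inline argument, since it is the only step not already packaged in the preceding sections.
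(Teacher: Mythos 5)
Your construction coincides with the paper's: the same flow $\Gamma$ on $(i..n)$, the same signed set $M=\bigl((i..n)\setminus S\bigr)\cup\{\bar n\}$, the same vector $v=S_{i,n}^{\,\epsilon}(M)v^+_\lm$, and the same primitivity check via Lemma~\ref{lemma:constr:5}(ii) and Lemma~\ref{lemma:constr:6}. The gap is in the non-vanishing step, and it is twofold. First, a sign slip: when $\sum\delta=\epsilon$, Lemma~\ref{lemma:constr:5}(i) produces the factor $H_i^{\0}-(-1)^{\epsilon\cdot\epsilon}H_n^{\0}=H_i-(-1)^{\epsilon}H_n$, not $H_i^{\0}-H_n^{\0}$. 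The sign $(-1)^{\epsilon}$ you dropped is exactly where the hypothesis enters. Second, the mechanism you propose to rescue non-vanishing --- passing to $\sigma=\1$ and claiming $\bar H_i\pm\bar H_n$ acts invertibly on $\u(\lm)$ for an appropriate sign --- does not work: by~(\ref{equation:intro:1}), $(\bar H_i\pm\bar H_n)^2=H_i+H_n$ for \emph{either} sign, and this acts on $L(\lm)^\lm$ by the scalar $\lm_i+\lm_n$. If, say, $\lm_i\equiv 1$ and $\lm_n\equiv -1\pmod p$, the hypothesis of the theorem holds but $\lm_i+\lm_n\equiv 0$, so both elements $\bar H_i\pm\bar H_n$ are nilpotent, and there is no guarantee they act nonzero on the particular chosen vector $v^+_\lm$.

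The correct argument is simpler and stays entirely in the even part. Since $\lm_i$ and $\lm_n$ are not both divisible by $p$ and $p\neq 2$, at least one of $\lm_i-\lm_n$, $\lm_i+\lm_n$ is nonzero modulo $p$ (if both vanished, then $2\lm_i\equiv 2\lm_n\equiv 0$). So \emph{choose} $\epsilon$ at the outset with $\lm_i-(-1)^{\epsilon}\lm_n\not\equiv 0\pmod p$, define $v:=S_{i,n}^{\,\epsilon}(M)v^+_\lm$ with that $\epsilon$, and take any $\delta$ with $\sum\delta=\epsilon$; the factor $H_i-(-1)^{\epsilon}H_n$ then acts by the nonzero scalar $\lm_i-(-1)^{\epsilon}\lm_n$, the product over $(i..n)\setminus\psi(S)$ is nonzero by full coherence, and $v\neq 0$ by Proposition~\ref{proposition:intro:5}. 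The point you missed is that, unlike in Theorem~\ref{theorem:constr:1}, the parity $\epsilon$ is \emph{not} arbitrary here: it is the parameter that selects the sign in $\lm_i\mp\lm_n$.
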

\begin{proof}
By Lemmas~\ref{lemma:pm:1},~\ref{lemma:pm:3}, there exists a flow $\Gamma$ on $(i..n)$
fully coherent with $r_\beta(\lm)|_{(i..n)}$. Let $S$ be the set of all sources of edges of $\Gamma$.
Thus for any $t\in S$, there exists a unique $\psi(t)\in(i..n)$ such that $(t,\psi(t))\in\Gamma$,  and $\psi:S\to (i..n)$ is an injection.
Set $M:=\bigl((i..n)\setminus S\bigr)\cup\{\bar n\}$. As $\lm_i$ and $\lm_n$
are not both divisible by $p$ and $p\ne2$, we have that either $\lm_i-\lm_n\not\=0\pmod p$ or $\lm_i+\lm_n\not\=0\pmod p$.
So there is $\epsilon\in\{\0,\1\}$ such that $\lm_i{-}(-1)^\epsilon\lm_n\not\=0\pmod p$.
Let $v^+_\lm\in L(\lm)^\lm$ be a homogeneous nonzero vector, and set
$
v:= S_{i,n}^{\,\epsilon}(M)\,v^+_\lm.
$

First, we prove that $v\neq 0$. Note $(i..n)\setminus M=S$.
By Proposition~\ref{proposition:rbeta},
{\renewcommand{\labelenumi}{{\rm \theenumi}}
\renewcommand{\theenumi}{{\rm(\alph{enumi})}}
\begin{enumerate}
\item\label{theorem:constr:3:property:a} $\res_p\lm_t=\res_p\bigl(\lm_{\psi(t)}+1\bigr)=\beta$ for all $t\in(i..n)\setminus M$;\\[-10pt]
\item\label{theorem:constr:3:property:b} $\psi(t)>t$ for all $t\in(i..n)\setminus M$;\\[-10pt]
\end{enumerate}
}
\noindent
By Lemma~\ref{lemma:constr:5}\ref{lemma:constr:5:part:i}, for  $\delta:[i..n)\to\{\0,\1\}$ with $\sum\delta=\epsilon$, we have
\begin{align*}
 E^{\delta_i}_i\cdots E^{\delta_{n-1}}_{n-1}\! S_{i,n}^{\epsilon}(M)v^+_\lm
=\prod\nolimits_{t\in(i..n)\setminus\psi(S)}\!(\res_p\lm_i-\res_p(\lm_t+1))( H_i\!-\!(-1)^\epsilon H_n)v^+_\lm\\
=\prod\nolimits_{t\in(i..n)\setminus\psi(S)}\!(\res_p\lm_i-\res_p(\lm_t+1))(\lm_i-(-1)^\epsilon\lm_n)v^+_\lm\neq 0,
\end{align*}
since, by Definition~\ref{definition:pm:2}\ref{definition:pm:2:6} and
Proposition~\ref{proposition:rbeta}\ref{proposition:rbeta:part:2}, any $t\in(i..n)$ such that $\res_p(\lm_t+1)\=\beta$ is a target of an edge of $\Gamma$, that is, belongs
to $\psi(S)$.

Now, we prove that $v$ is a $U(n-1)$-primitive vector.
We need to check that $ E_l^\delta S_{i,n}^{\,\epsilon}(M)\,v^+_\lm=0$ for all $i\le l<n-1$ and all $\de$.
By Lemma~\ref{lemma:constr:5}\ref{lemma:constr:5:part:ii}, we may assume that $l+1\notin M$
and prove that
$ S^{\sigma}_{l+1,n}(M_{(l+1..n]})v^+_\lm=0$ for $\sigma=\0,\1$.
But the last equality follows from Lemma~\ref{lemma:constr:6} applied to the injection
$\psi|_{[l+1..n)\setminus M_{(l+1..n]}}$, which is well defined because $l+1\notin M$ implies
$[l{+}1..n)\setminus M_{(l{+}1..n]}=[l{+}1..n)\cap S$.
\end{proof}

The following result will be used in Theorem~\ref{theorem:main:2}.

\begin{lemma}\label{lemma:constr:7} Let $\lm\in X(n)$, $v^+_\lm\in L(\lm)^\lm$, $1\le i<j\le n$,
$M\ni\bar\jmath$ be a signed $(i..j]$-set all of whose elements are even except $\bar\jmath$, and  $\epsilon\in\{\0,\1\}$.
Suppose that $\lm_i\=\lm_j\=0\pmod p$ and $\psi:(i..j)\setminus M\to(i..j)$ is an injection such that
{\renewcommand{\labelenumi}{{\rm \theenumi}}
\renewcommand{\theenumi}{{\rm(\alph{enumi})}}
\begin{enumerate}
\item\label{lemma:constr:7:condition:a} $\res_p\lm_t=\res_p\bigl(\lm_{\psi(t)}+1\bigr)$ for all $t\in(i..j)\setminus M${\rm;}\\[-10pt]
\item\label{lemma:constr:7:condition:b} $\psi(t)>t$ for all $t\in(i..j)\setminus M$.
\end{enumerate}}
\noindent
Then $ S_{i,j}^{\,\epsilon}(M)\,v^+_\lm=0$.
\end{lemma}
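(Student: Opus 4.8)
The plan is to follow the template of the proof of Lemma~\ref{lemma:constr:6}. There the injection was defined on all of $[i..j)\setminus M$, so its value $\psi(i)$ at the bottom index produced a vanishing factor $\res_p\lm_i-\res_p(\lm_{\psi(i)}+1)=0$; here $\psi$ is defined only on $(i..j)\setminus M$, and instead the vanishing at the bottom level will come from the hypothesis $\lm_i\=\lm_j\=0\pmod p$, which kills the ``torus factor'' appearing in Lemma~\ref{lemma:constr:5}\ref{lemma:constr:5:part:i}. If $v^+_\lm=0$ there is nothing to prove, so assume $v^+_\lm\ne0$; then $v^+_\lm$ is primitive. By Proposition~\ref{proposition:intro:5} and weight considerations, exactly as in Lemma~\ref{lemma:constr:6}, it suffices to prove two things: (i) $E^{\delta_i}_i\cdots E^{\delta_{j-1}}_{j-1}S_{i,j}^{\,\epsilon}(M)v^+_\lm=0$ for every $\delta:[i..j)\to\{\0,\1\}$, and (ii) $E_l^\delta S_{i,j}^{\,\epsilon}(M)v^+_\lm=0$ for all $i\le l<j-1$ and $\delta\in\{\0,\1\}$.

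For (i), I apply Lemma~\ref{lemma:constr:5}\ref{lemma:constr:5:part:i} with the given injection $\psi$, so that the left-hand side becomes
$$
\prod_{t\in(i..j)\setminus\psi((i..j)\setminus M)}\bigl(\res_p\lm_i-\res_p(\lm_t+1)\bigr)\,\bigl(H_i^{\epsilon+\sum\delta}-(-1)^{\epsilon\sum\delta}H_j^{\epsilon+\sum\delta}\bigr)v^+_\lm .
$$
It then remains to observe that $\bigl(H_i^{\epsilon+\sum\delta}-(-1)^{\epsilon\sum\delta}H_j^{\epsilon+\sum\delta}\bigr)v^+_\lm=0$. Since $L(\lm)^\lm\cong\u(\lm)$ as $U^0(n)$-modules (Lemma~\ref{Vermamod}) and $\lm_i\=\lm_j\=0\pmod p$, Proposition~\ref{PUMu}(ii) gives $\bar H_iv^+_\lm=\bar H_jv^+_\lm=0$, which disposes of the case $\epsilon+\sum\delta=\1$. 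When $\epsilon+\sum\delta=\0$ we have $\epsilon=\sum\delta$, so the factor equals $(\lm_i-(-1)^{\epsilon}\lm_j)v^+_\lm$; as $\lm_i$ and $\lm_j$ are both $\=0\pmod p$, the integer $\lm_i\mp\lm_j$ is divisible by $p$ and hence is $0$ in $\mathbb F$. This proves (i).

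For (ii), fix $l$ with $i\le l<j-1$. By Lemma~\ref{lemma:constr:5}\ref{lemma:constr:5:part:ii} we may assume $l+1\notin M$, and then it suffices to show $S_{l+1,j}^{\,\sigma}(M_{(l+1..j]})v^+_\lm=0$ for $\sigma\in\{\0,\1\}$. Since $i\le l<j-1$ we have $l+1\in(i..j)$, and together with $l+1\notin M$ this places $l+1$ in the domain $(i..j)\setminus M$ of $\psi$, with $\psi(l+1)>l+1$ by hypothesis (b). As $l+1\notin M$, the set $[l+1..j)\setminus M_{(l+1..j]}$ equals $\{l+1\}\cup\bigl((l+1..j)\setminus M\bigr)$, which is contained in $(i..j)\setminus M$; hence $\psi':=\psi|_{[l+1..j)\setminus M_{(l+1..j]}}$ is a well-defined injection of $[l+1..j)\setminus M_{(l+1..j]}$ into $[l+1..j)$ still satisfying (a) and (b) with $i$ replaced by $l+1$. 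Since $M_{(l+1..j]}$ is a signed $(l+1..j]$-set containing $\bar\jmath$ all of whose other elements are even, Lemma~\ref{lemma:constr:6} applies with $(i,M,\psi)$ replaced by $(l+1,M_{(l+1..j]},\psi')$ --- this being the already-proved Lemma~\ref{lemma:constr:6}, not a circular use of the present lemma --- and yields $S_{l+1,j}^{\,\sigma}(M_{(l+1..j]})v^+_\lm=0$, which completes (ii) and hence the proof. (When $j=i+1$ item (ii) is vacuous, $M=\{\bar\jmath\}$, and only (i) is needed.)

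I do not expect a serious obstacle: Lemma~\ref{lemma:constr:5} and Lemma~\ref{lemma:constr:6} already carry all the computational weight, so the work left is the conceptual point that the two divisibility conditions $\lm_i,\lm_j\=0\pmod p$ replace the use of the value $\psi(i)$, together with the routine bookkeeping of the signed sets $M_{(l+1..j]}$ and of the index ranges when restricting $\psi$.
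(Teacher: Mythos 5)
Your proof is correct and follows essentially the same route as the paper's: the paper likewise handles $E_l^\delta$ for $i\le l<j-1$ by quoting Lemma~\ref{lemma:constr:6} directly (in place of an inductive hypothesis) on the restricted injection, and kills the full strings via Lemma~\ref{lemma:constr:5}(i) together with the observation that $\lm_i\equiv\lm_j\equiv 0\pmod p$ annihilates the factor $H_i^{\epsilon+\sum\delta}-(-1)^{\epsilon\sum\delta}H_j^{\epsilon+\sum\delta}$ on $L(\lm)^\lm\cong\mathfrak u(\lm)$. Your write-up merely makes explicit the even/odd case split for that torus factor, which the paper leaves implicit.
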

\begin{proof}
The proof that $E^\de_l S_{i,j}^{\,\epsilon}(M)v^+_\lm=0$ for $l\neq j-1$ is the same as in Lemma~\ref{lemma:constr:6}, except that one uses Lemma~\ref{lemma:constr:6} instead of the inductive hypothesis. Now, for any $\delta:[i..j)\to\{\0,\1\}$, by
 Lemma~\ref{lemma:constr:5}\ref{lemma:constr:5:part:i}, we have
\begin{align*}
E^{\delta_i}_i\cdots E^{\delta_{j-1}}_{j-1} S_{i,j}^{\,\epsilon}(M)\,v^+_\lm\\
=\prod\nolimits_{t\in(i..j)\setminus\psi((i..j)\setminus M)}(\res_p\lm_i{-}\res_p(\lm_t{+}1))( H_i^{\epsilon+\sum\delta}{-}(-1)^{\epsilon\sum\delta} H_j^{\epsilon+\sum\delta})v^+_\lm=0
\end{align*}
since $\lm_i\=\lm_j\=0\pmod p$ and $v^+_\lm$ belongs to the $U^0(n)$-supermodule $L(\lm)^\lm$ isomorphic to $\mathfrak u(\lm)$, see Proposition~\ref{proposition:intro:4}.
\end{proof}

\section[Case 3]{Construction: case $\lm_i\=1\pmod p$ and $\bigl[\prod_{i<k\le n}r_\mathbf0(\lm)_k\bigr]=+-^m$}
This construction is a generalization of the previous one: we use operators $ S_{i,j}^{\,\epsilon}(M)$,
where the signed set $M$ contains exactly one odd element $\bar q$. 

\begin{lemma}\label{lemma:constr:3} Let $\lm\in X(n)$, $v^+_\lm\in L(\lm)^\lm$, $1\le i<j\le n$, $\epsilon\in\{\0,\1\}$, and
$M$ be a signed $(i..j]$-set containing either $\bar\jmath$ or $j$, all of whose elements are even except for some $\bar q$.
\renewcommand{\labelenumi}{{\rm \theenumi}}
\renewcommand{\theenumi}{{\rm(\roman{enumi})}}
\makeatletter
\renewcommand{\p@enumii}{}
\makeatother
\begin{enumerate}
\item\label{lemma:constr:3:part:i}
Suppose that $\psi:(i..j]\setminus M\to(i..j]$ is an injection such that
{\renewcommand{\labelenumii}{{\rm \theenumii}}
\renewcommand{\theenumii}{{\rm(\alph{enumii})}}
\begin{enumerate}
\item\label{lemma:constr:3:condition:a} $\res_p\lm_t=\res_p\bigl(\lm_{\psi(t)}+1\bigr)$ for all $t\in(i..j]\setminus M${\rm;}\\[-10pt]
\item\label{lemma:constr:3:condition:b} $\psi(t)\ge t$ for all $t\in(i..j]\setminus M${\rm;}\\[-10pt]
\item\label{lemma:constr:3:condition:c} if $\psi(t)=t$ then $t\le q$.
\end{enumerate}}
\noindent
Then for any $\delta:[i..j)\to\{\0,\1\}$, we have
\begin{align*}
 E^{\delta_i}_i{\cdots} E^{\delta_{j-1}}_{j-1} S_{i,j}^{\epsilon}(M)v^+_\lm=\prod\nolimits_{t\in(i..j]\setminus\psi((i..j]\setminus M)}(\res_p\lm_i-\res_p(\lm_t{+}1)) H_i^{\epsilon+\sum\delta}v^+_\lm.
\end{align*}
\item\label{lemma:constr:3:part:ii} Let $i\le l<j-1$ and $\delta\in\{\0,\1\}$. Suppose that either $l+1\in M$
     or one of the following conditions holds\,{\rm:}
            \begin{itemize}
            \item $l=q-1$ and $\lm_q\=0\pmod p$;
            \item $l\ne q-1$ and $ S^{\,\sigma}_{l+1,j}\bigl(M_{(l+1..j]}\bigr)\,v^+_\lm=0$ for all $\sigma\in\{\0,\1\}$.
            \end{itemize}
            Then $ E_l^\delta S_{i,j}^{\,\epsilon}(M)\,v^+_\lm=0$.
\end{enumerate}
\end{lemma}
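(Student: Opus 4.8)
The plan is to handle the two parts separately, following the pattern of Lemmas~\ref{lemma:constr:1} and~\ref{lemma:constr:5}, but now feeding in the raising-coefficient formula of Lemma~\ref{lemma:rcoeff:2} and the module-theoretic input of Proposition~\ref{proposition:intro:4}(ii).

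For part~(i), I would start from Lemma~\ref{lemma:rcoeff:2}, which gives
\begin{align*}
E^{\delta_i}_i\cdots E^{\delta_{j-1}}_{j-1}S^{\,\epsilon}_{i,j}(M)\,v^+_\lm
&=P_{i,j}^{\delta,\epsilon}(M)\,v^+_\lm\\
&=\sum_{k\in X(i,q,M)}(-1)^{\cscript_{k>i}+(\1+\epsilon+\sum\delta)\sum\delta|_{[i..k)}}\,\ll g^{(2)}_{i,k,q,j}\bigl((i..j]\setminus M\bigr)\rr H_k^{\epsilon+\sum\delta}v^+_\lm.
\end{align*}
The first task is to kill every summand with $k>i$. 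Each such $k$ lies in $(i..j]\setminus M$, the domain of $\psi$, and $\psi(k)\ge k$ by condition~(b). If $\psi(k)>k$, I would argue exactly as in the proof of Lemma~\ref{lemma:constr:5}(i): apply Lemma~\ref{lemma:compol:4} with $D=\{k\}$, $S=(i..j]\setminus M$, $R=[k..j]\setminus M$, $l=l^{(2)}_{i,k,q,j}$ and $\phi=\psi|_R$ --- the inequality $\phi(t)\ge t+l(t)$ following from conditions~(b),~(c) and $l^{(2)}_{i,k,q,j}(k)=0$ --- to conclude that $g^{(2)}_{i,k,q,j}((i..j]\setminus M)$ lies in the ideal $\I$ of that lemma; since $t\ge k$ for all $t\in R$, the unique type-(1) generator is the one for $t=k$, namely $x_k-y_{\psi(k)}$, and every generator of $\I$ is annihilated by $v^+_\lm$ by condition~(a) together with $\ll x_s-y_t\rr v^+_\lm=(\res_p\lm_s-\res_p(\lm_t+1))\,v^+_\lm$; hence $\ll g^{(2)}_{i,k,q,j}((i..j]\setminus M)\rr v^+_\lm=0$. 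If instead $\psi(k)=k$, then condition~(a) forces $\res_p\lm_k=\res_p(\lm_k+1)$, i.e. $2\lm_k\equiv0\pmod p$, so $\lm_k\equiv0\pmod p$ (as $p\neq2$); then $H^{\0}_k v^+_\lm=\lm_k v^+_\lm=0$ and, using $L(\lm)^\lm\cong\mathfrak u(\lm)$ (Proposition~\ref{proposition:intro:4}(iii)) together with Proposition~\ref{proposition:intro:4}(ii), $H^{\1}_k v^+_\lm=\bar H_k v^+_\lm=0$, so the summand again vanishes. The remaining $k=i$ term carries sign $+1$; applying Lemma~\ref{lemma:compol:4} with $D=\{i\}$, $R=S=(i..j]\setminus M$, $l=l^{(2)}_{i,i,q,j}$, $\phi=\psi$ (here $\bigcup_{t\in S}[t+l(t)..\phi(t))\cap D=\emptyset$, as every $t\in S$ satisfies $t>i$) shows that $g^{(2)}_{i,i,q,j}((i..j]\setminus M)$ is congruent to $\prod_{t\in(i..j]\setminus\psi((i..j]\setminus M)}(x_i-y_t)$ modulo an ideal annihilated by $v^+_\lm$, which gives the asserted formula.

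For part~(ii), I would use that $v^+_\lm$ is primitive, so $I^+_l v^+_\lm=0$; it therefore suffices to evaluate, on $v^+_\lm$, the description of $E^\delta_l S^{\,\epsilon}_{i,j}(M)$ modulo $I^+_l$ provided by Lemma~\ref{lemma:ops:6} (when $l=i$) or Lemma~\ref{lemma:ops:7} (when $i<l<j-1$). If $l+1\in M$, parts~(i) of those lemmas give $E^\delta_l S^{\,\epsilon}_{i,j}(M)\equiv0\pmod{I^+_l}$. If $l+1\notin M$ and $l\neq q-1$, then also $\odd{l+1}\notin M$ (otherwise $q=l+1$), so Lemma~\ref{lemma:ops:6}(ii) (for $l=i$, with $M=M_{(i+1..j]}$) resp. Lemma~\ref{lemma:ops:7}(i) or~(ii) (for $l>i$) rewrites $E^\delta_l S^{\,\epsilon}_{i,j}(M)$ modulo $I^+_l$ as $0$, as a multiple of $S^{\epsilon+\delta}_{i+1,j}(M_{(i+1..j]})$, or as a combination of products $S^{\gamma}_{i,l}(M_{(i..l]})S^{\sigma}_{l+1,j}(M_{(l+1..j]})$, each of which kills $v^+_\lm$ by hypothesis. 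If $l=q-1$ and $\lm_q\equiv0\pmod p$, then $l+1=q\notin M$ while $\odd{l+1}=\bar q\in M$; now Lemma~\ref{lemma:ops:6}(iii) (for $l=i$) resp. Lemma~\ref{lemma:ops:7}(iii) when $l$ or $\bar l$ lies in $M$ and Lemma~\ref{lemma:ops:7}(i) otherwise (for $l>i$) rewrites $E^\delta_l S^{\,\epsilon}_{i,j}(M)$ modulo $I^+_l$ as $0$ or as a sum of terms each ending in $H^\tau_{l+1}=H^\tau_q$ with $\tau\in\{\0,\1\}$; since $\lm_q\equiv0\pmod p$ we have $H^{\1}_q v^+_\lm=\bar H_q v^+_\lm=0$ by Proposition~\ref{proposition:intro:4}(ii) and $H^{\0}_q v^+_\lm=\lm_q v^+_\lm=0$, whence $E^\delta_l S^{\,\epsilon}_{i,j}(M)v^+_\lm=0$ in every case.

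The main obstacle is the combinatorial bookkeeping inside part~(i): choosing the end $R$ and the injection $\phi$ extending $\psi$ so that Lemma~\ref{lemma:compol:4} applies, and pinning down exactly which generators of its auxiliary ideal occur. The genuinely new ingredient, compared with Lemmas~\ref{lemma:constr:1} and~\ref{lemma:constr:5}, is that a fixed point of $\psi$ (in part~(i)) and the hypothesis $\lm_q\equiv0\pmod p$ (in part~(ii)) can no longer be absorbed into the polynomial computation and must instead be treated through the irreducible $U^0(n)$-module structure of $L(\lm)^\lm\cong\mathfrak u(\lm)$ via Proposition~\ref{proposition:intro:4}(ii); verifying that every subcase of Lemmas~\ref{lemma:ops:6} and~\ref{lemma:ops:7} is accounted for --- in particular the equivalence $\odd{l+1}\in M\iff l=q-1$ --- is routine but needs care.
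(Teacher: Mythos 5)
Your proposal is correct and follows essentially the same route as the paper's proof: part (i) via Lemma~\ref{lemma:rcoeff:2} and Lemma~\ref{lemma:compol:4} with the same choices of $D$, $R$, $\phi$, killing the $k>i$ terms by splitting into $\psi(k)>k$ (polynomial ideal argument) and $\psi(k)=k$ (forcing $\lm_k\equiv0\pmod p$ and invoking Proposition~\ref{proposition:intro:4}); and part (ii) via the same case analysis through Lemmas~\ref{lemma:ops:6} and~\ref{lemma:ops:7}, with the $l=q-1$ case disposed of by $H_q^\tau v^+_\lm=0$. No gaps.
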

\begin{proof}\ref{lemma:constr:3:part:i}
By Lemma~\ref{lemma:rcoeff:2}, we get
\begin{equation}\label{eq:constr:6}
\begin{array}{l}
 E^{\delta_i}_i\cdots E^{\delta_{j-1}}_{j-1} S_{i,j}^{\,\epsilon}(M)\,v^+_\lm
= P_{i,j}^{\delta,\epsilon}(M)\,v^+_\lm\\
\displaystyle
=\sum_{k\in X(i,q,M)}(-1)^{\cscript_{k>i}\cdot\1+\(\1+\epsilon+\sum\delta\)\sum\delta|_{[i..k)}}\!\ll g_{i,k,q,j}^{(2)}\bigl((i..j]{\setminus}M\bigr)\rr  H_k^{\epsilon+\sum\delta}\,v^+_\lm,
\end{array}
\end{equation}
where $X(i,q,M)=\{k\in[i..q]\setminus M\mid k{-}1\in M\cup\{i{-}1,i\}\}$.

We claim that the only non-zero contribution to the sum above can come from the summation index $k=i\in X(i,q,M)$. Indeed, let $k\in(i..q]\setminus M$. If $\psi(k)=k$ then $\res_p\lm_k=\res_p(\lm_k{+}1)$
by condition~\ref{lemma:constr:3:condition:a}. Hence $\lm_k(\lm_k-1)\=(\lm_k+1)\lm_k$ and $\lm_k\=0\pmod p$, since $p\ne2$.
Therefore, $ H_k^{\epsilon+\sum\delta}\,v_\lm^+=0$, since $v_\lm^+$ belongs to the $U^0(n)$-supermodule $L(\lm)^\lm$
isomorphic to $\mathfrak u(\lm)$ (see Proposition~\ref{proposition:intro:4}).
Now suppose that $\psi(k)>k$. To calculate $\ll g_{i,k,q,j}^{(2)}((i..j]{\setminus}M)\rr v^+_\lm$, we apply
Lemma~\ref{lemma:compol:4} for $D=\{k\}$, $S=(i..j]\setminus M$, $R=[k..j]\setminus M$, $\phi=\psi|_R$, and $l=l^{(2)}_{i,k,q,j}$.
Let us check that $\phi(t)\ge t+l(t)$ for any $t\in R$. Indeed, if $t\in R$ and $t>q$, then $\phi(t)=\psi(t)>t$
by conditions~\ref{lemma:constr:3:condition:b} and~\ref{lemma:constr:3:condition:c}.
On the other hand, if $t\in R$ and $t\le q$,
then $k\le t\le q$ and by the definition of $l^{(2)}_{i,k,q,j}$, we get
$l(t)=l^{(2)}_{i,k,q,j}(t)=0$. Hence again $\phi(t)=\psi(t)\ge t=t+l(t)$ by condition~\ref{lemma:constr:3:condition:b}. The condition $l|_{R\cap D}=0$ in our case takes the form $l(k)=0$. This equality obviously holds by the definition of $l^{(2)}_{i,k,q,j}$.
Now, consider the ideal $\I$ of $\R$ generated by the polynomials:
\renewcommand{\labelenumi}{{\rm \theenumi}}
\renewcommand{\theenumi}{{\rm(\arabic{enumi})}}
\begin{enumerate}
\item\label{lemma:constr:3:gen:type:1} $x_{\{k\}_{\phi(t)}^i}\!\!\!-y_{\phi(t)}$, for $t\in R$ such that $k\in\bigl[t{+}l(t)..\phi(t)\bigr)$;
\item\label{lemma:constr:3:gen:type:2} $x_t-y_{\phi(t)}$, for $t\in R\setminus\{j\}$ such that $k\notin\bigl[t{+}l(t)..\phi(t)\bigr)$.
\end{enumerate}
Note that $k\in R$ and $k\in\bigl[k{+}l(k)..\phi(k)\bigr)$, since $l(k)=0$ and $\phi(k)=\psi(k)>k$.
Therefore, by Lemma~\ref{lemma:compol:4}, we get $g_{i,k,q,j}^{(2)}\bigl((i..j]{\setminus}M\bigr)\in\I$.
It remains to prove that $\bigl\llbracket\I\,\bigr\rrbracket \,v^+_\lm=0$. Well, the only generator of type~\ref{lemma:constr:3:gen:type:1}
corresponds to $t=k$, and
$$
\llbracket x_{\{k\}_{\phi(k)}^i}-y_{\phi(k)}\rrbracket v^+_\lm
=\llbracket x_k-y_{\psi(k)}\rrbracket v^+_\lm
=(\res_p\lm_k-\res_p(\lm_{\psi(k)}+1))v^+_\lm=0
$$
by condition~\ref{lemma:constr:3:condition:a}. Similarly, for a generator of
type~\ref{lemma:constr:3:gen:type:2},
we have
$$
\llbracket x_t-y_{\phi(t)}\rrbracket v^+_\lm=(\res_p\lm_t-\res_p(\lm_{\psi(t)}+1))v^+_\lm=0,
$$
which again follows from~\ref{lemma:constr:3:condition:a}.
Thus, we have proved that all summand corresponding to $k\neq i$ in~(\ref{eq:constr:6}) are zero, and so
\begin{equation}\label{eq:constr:7}
 E^{\delta_i}_i\cdots E^{\delta_{j-1}}_{j-1} S_{i,j}^{\,\epsilon}(M)\,v^+_\lm
 =
\ll g_{i,i,q,j}^{(2)}\bigl((i..j]{\setminus}M\bigr)\rr  H_i^{\epsilon+\sum\delta}\\
\,v^+_\lm.
\end{equation}

To calculate $\ll g_{i,i,q,j}^{(2)}((i..j]{\setminus}M)\rr v^+_\lm$,
we apply Lemma~\ref{lemma:compol:4} for $D=\{i\}$, $S=R=(i..j]\setminus M$, $\phi=\psi$ and $l=l^{(2)}_{i,i,q,j}$. As above it is easy to check that
$\phi(t)\ge t+l(t)$ for any $t\in (i..j]\setminus M$. Moreover, the condition $l|_{R\cap D}=0$ holds in this case, since $R\cap D=\emptyset$. Consider the ideal $\I$ of $\R$ generated by the polynomials $x_t-y_{\phi(t)}$ for $t\in R\setminus\{j\}$.
By Lemma~\ref{lemma:compol:4}, we get
\begin{equation}\label{eq:constr:8}
g_{i,i,q,j}^{(2)}\bigl((i..j]{\setminus}M\bigr)\=\prod\nolimits_{t\in(i..j]\setminus\psi((i..j]\setminus M)}(x_i-y_t)\pmod\I.
\end{equation}
Using~\ref{lemma:constr:3:condition:a}, one can easily see that
$\llbracket\I\,\rrbracket v^+_\lm=0$.
Thus by~(\ref{eq:constr:7}) and~(\ref{eq:constr:8}), we get
\begin{align*}
 E^{\delta_i}_i\cdots E^{\delta_{j-1}}_{j-1} S_{i,j}^{\epsilon}(M) v^+_\lm
=\prod\nolimits_{t\in(i..j]\setminus\psi((i..j]\setminus M)}(\res_p\lm_i-\res_p(\lm_t+1)) H_i^{\epsilon+\sum\delta} v^+_\lm.
\end{align*}

\ref{lemma:constr:3:part:ii} In view of Lemmas~\ref{lemma:ops:6} and~\ref{lemma:ops:7},
we
may assume that $l+1\notin M$.

{\it Case~1: $l=q-1$.} Then $\lm_q\=0\pmod p$ and so $ H_q\,v^+_\lm=\bar H_q\,v^+_\lm=0$.
If $l=i$, by Lemma~\ref{lemma:ops:6}\ref{lemma:ops:6:iii}, we have
$$
 E^\delta_l S_{i,j}^{\,\epsilon}(M)\,v^+_\lm=\sum_{\gamma+\sigma=\epsilon+\delta}(-1)^{\1+\delta\epsilon} S^{\,\gamma}_{i+1,j}\(M{\setminus}\left\{\overline{i{+}1}\right\}\) H^{\,\sigma}_q\,v^+_\lm=0.
$$
If $i<l<j-1$, by Lemma~\ref{lemma:ops:7}, we have $ E^\delta_l S^\epsilon_{i,j}(M)\,v^+_\lm=0$ if
$\bar l,l\notin M$ and
\begin{align*}
 E^\delta_l S^{\,\epsilon}_{i,j}(M)v^+_\lm
 =\sum_{\gamma+\sigma+\tau=\epsilon+\delta}(-1)^{\1+(\delta+\gamma)\epsilon}
  S_{i,l}^{\,\gamma}\(M_{(i..l]}\) S^{\,\sigma}_{l+1,j}\(M_{(l+1..j]}\) H_q^\tau\,v^+_\lm=0
\end{align*}
otherwise.

{\it Case~2: $l\ne q-1$.} Then $l+1\ne q$ and so $\odd{l{+}1},l{+}1\notin M$.
If $l=i$, by Lemma~\ref{lemma:ops:6}\ref{lemma:ops:6:ii}, we have
\begin{align*}
 E_l^\delta S_{i,j}^{\,\epsilon}(M)\,v^+_\lm=(-1)^{\1+\delta\epsilon} S_{i+1,j}^{\,\epsilon+\delta}(M)\,v^+_\lm
=(-1)^{\1+\delta\epsilon} S_{i+1,j}^{\,\epsilon+\delta}(M_{(i+1..j]})\,v^+_\lm=0
\end{align*}
by assumption.
If $i<l<j-1$, by Lemma~\ref{lemma:ops:7}, we have $ E^\delta_l S^{\,\epsilon}_{i,j}(M)\,v^+_\lm=0$ if $\bar l,l\notin M$,
and
\begin{align*}
 E^\delta_l S^\epsilon_{i,j}(M)v^+_\lm=\hspace{-3mm}\sum_{\gamma+\sigma=\epsilon+\delta}
 \hspace{-3mm}
 (-1)^{\1+(\delta+\gamma)(\epsilon+\1+\|M_{(l+1..j]}\|)} S^{\gamma}_{i,l}(M_{(i..l]}) S^{\sigma}_{l+1,j}(M_{(l+1..j]})v^+_\lm=0
\end{align*}
otherwise.
\end{proof}

\begin{lemma}\label{lemma:constr:4} Let $\lm\in X(n)$, $v^+_\lm\in L(\lm)^\lm$, $1\le i<j\le n$, $\epsilon\in\{\0,\1\}$
and $M$ be a signed $(i..j]$-set containing either $\bar\jmath$ or $j$, all of whose elements are even, except for some $\odd q$.
Suppose that $\lm_q\=0\pmod p$ and there exists an injection $\psi:[i..j]\setminus M\to[i..j]$
such that
{\renewcommand{\labelenumi}{{\rm \theenumi}}
\renewcommand{\theenumi}{{\rm(\alph{enumi})}}
\begin{enumerate}
\item\label{lemma:constr:4:condition:a} $\res_p\lm_t=\res_p\bigl(\lm_{\psi(t)}+1\bigr)$ for all $t\in[i..j]\setminus M${\rm;}\\[-10pt]
\item\label{lemma:constr:4:condition:b} $\psi(t)\ge t$ for all $t\in[i..j]\setminus M${\rm;}\\[-10pt]
\item\label{lemma:constr:4:condition:c} if $\psi(t)=t$ then $t\le q$.
\end{enumerate}}
\noindent
Then $ S_{i,j}^{\,\epsilon}(M)\,v^+_\lm=0$.
\end{lemma}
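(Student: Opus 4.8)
The plan is to mimic closely the proof of Lemma~\ref{lemma:constr:6}, replacing the use of Lemma~\ref{lemma:constr:5} by the corresponding parts of Lemma~\ref{lemma:constr:3}. By Proposition~\ref{proposition:intro:5} and weight considerations, it suffices to show that $E_l^\delta S_{i,j}^{\,\epsilon}(M)\,v^+_\lm=0$ for all $i\le l<j-1$ and $\delta\in\{\0,\1\}$, together with $E^{\delta_i}_i\cdots E^{\delta_{j-1}}_{j-1} S_{i,j}^{\,\epsilon}(M)\,v^+_\lm=0$ for all functions $\delta:[i..j)\to\{\0,\1\}$. The last equality is immediate from Lemma~\ref{lemma:constr:3}\ref{lemma:constr:3:part:i}: the product there contains the factor corresponding to $t=\psi(i)$, which is $\res_p\lm_i-\res_p(\lm_{\psi(i)}+1)=0$ by hypothesis~\ref{lemma:constr:4:condition:a} (note that $\psi$ being an injection, $\psi(i)\in[i..j]\setminus\psi([i..j]\setminus M)$ is impossible, so $\psi(i)\notin\psi([i..j]\setminus M)^{c}$; rather, one checks $\psi(i)\in[i..j]\setminus\psi((i..j]\setminus M)$ exactly as in Lemma~\ref{lemma:constr:2}), hence the whole product vanishes.

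For the remaining vanishing statements $E_l^\delta S_{i,j}^{\,\epsilon}(M)\,v^+_\lm=0$ with $i\le l<j-1$, I would induct on $j-i$. By Lemma~\ref{lemma:constr:3}\ref{lemma:constr:3:part:ii} we may assume $l+1\notin M$, and then we are reduced to one of two cases. If $l=q-1$, then $\lm_q\=0\pmod p$ is exactly the hypothesis, and Lemma~\ref{lemma:constr:3}\ref{lemma:constr:3:part:ii} already gives $E_l^\delta S_{i,j}^{\,\epsilon}(M)\,v^+_\lm=0$. If $l\ne q-1$, then by the same lemma it suffices to prove $S^{\,\sigma}_{l+1,j}\bigl(M_{(l+1..j]}\bigr)\,v^+_\lm=0$ for $\sigma\in\{\0,\1\}$. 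Since $l+1\notin M$, we have $[l{+}1..j]\setminus M_{(l+1..j]}=[l{+}1..j]\setminus M$, so the restriction $\psi'=\psi|_{[l{+}1..j]\setminus M_{(l+1..j]}}$ is a well-defined injection of $[l{+}1..j]\setminus M_{(l+1..j]}$ into $[l{+}1..j]$ (after checking its image lies in $[l{+}1..j]$, which follows from $\psi'(t)\ge t\ge l+1$). Conditions~\ref{lemma:constr:4:condition:a}--\ref{lemma:constr:4:condition:c} are inherited by $\psi'$, and $M_{(l+1..j]}$ still contains either $\bar\jmath$ or $j$ and has $\odd q$ as its only odd element (using $l+1\ne q$). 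Hence $S^{\,\sigma}_{l+1,j}\bigl(M_{(l+1..j]}\bigr)\,v^+_\lm=0$ by the inductive hypothesis, completing the induction. The base case $j-i=1$ is vacuous since there is no $l$ with $i\le l<j-1$; the only thing to verify there is the reduced product relation, which is the content of the preceding paragraph.

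The main obstacle, such as it is, lies in the bookkeeping of Case~$l=q-1$ versus $l\ne q-1$ and in verifying that the combinatorial hypotheses on $\psi$ pass correctly to the restriction $\psi'$; one has to be careful that when $l+1\notin M$ the set $M_{(l+1..j]}$ retains exactly one odd element at the correct position $q$ (which forces the separate treatment of $l=q-1$, where one cannot restrict but instead invokes the divisibility directly). There is also the subtle point that Lemma~\ref{lemma:constr:3}\ref{lemma:constr:3:part:ii} is stated for $v^+_\lm\in L(\lm)^\lm$ specifically, so the induction must be run over highest weight vectors only, exactly as in Lemma~\ref{lemma:constr:7}; this is why the proof parallels Lemma~\ref{lemma:constr:6} but with the inductive hypothesis (the present lemma for smaller $j-i$) playing the role that Lemma~\ref{lemma:constr:6} played there. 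No genuinely new calculation is required: all the analytic input is already packaged in Lemmas~\ref{lemma:ops:6}, \ref{lemma:ops:7}, \ref{lemma:rcoeff:2}, \ref{lemma:compol:4} and their consequences assembled in Lemma~\ref{lemma:constr:3}.
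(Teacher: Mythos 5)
There are two genuine gaps, both coming from the fact that condition (b) here is the weak inequality $\psi(t)\ge t$ rather than the strict one used in Lemmas~\ref{lemma:constr:2} and~\ref{lemma:constr:6}.

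First, your argument that $E^{\delta_i}_i\cdots E^{\delta_{j-1}}_{j-1}S_{i,j}^{\,\epsilon}(M)v^+_\lm=0$ relies on the factor at $t=\psi(i)$ vanishing. But the product in Lemma~\ref{lemma:constr:3}\ref{lemma:constr:3:part:i} runs over $t\in(i..j]\setminus\psi((i..j]\setminus M)$, and if $\psi(i)=i$ (which (b) permits), then $\psi(i)\notin(i..j]$ and no such factor appears. In that case one must argue differently: condition (a) gives $\res_p\lm_i=\res_p(\lm_i+1)$, hence $\lm_i\equiv 0\pmod p$ (as $p\ne 2$), and then the trailing factor $H_i^{\epsilon+\sum\delta}v^+_\lm$ vanishes — for the odd case using Proposition~\ref{proposition:intro:4}(ii). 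This extra case is not optional; it is exactly where the hypothesis $v^+_\lm\in L(\lm)^\lm$ with $\mathfrak u(\lm)$-structure is used.

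Second, your induction breaks when $q\le l<j-1$. You claim $M_{(l+1..j]}$ "still has $\odd q$ as its only odd element (using $l+1\ne q$)", but $\bar q\in M_{(l+1..j]}$ requires $q>l+1$, not merely $q\ne l+1$. For $l\ge q$ the set $M_{(l+1..j]}$ consists only of even elements, so the inductive hypothesis of the present lemma (which presupposes an odd element $\bar q$) does not apply. The correct move is to invoke Lemma~\ref{lemma:constr:2} for this range, and to do so one must upgrade (b) to the strict inequality $\psi(t)>t$ on $[l{+}1..j]\setminus M$ — which follows from condition (c), since $t\ge l+1>q$ rules out $\psi(t)=t$. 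This is precisely the role of hypothesis (c), which your proposal never actually uses in the induction. The case $i\le l<q-1$, where $\bar q$ does survive and the inductive hypothesis applies, is handled correctly.
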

\begin{proof}
By Lemma~\ref{lemma:constr:3}\ref{lemma:constr:3:part:i}, for any $\delta:[i..j)\to\{\0,\1\}$,
we have
\begin{align*}
 E^{\delta_i}_i{\cdots} E^{\delta_{j-1}}_{j-1} S_{i,j}^{\,\epsilon}(M)v^+_\lm=\prod\nolimits_{t\in(i..j]\setminus\psi((i..j]\setminus M)}\bigl(\res_p\lm_i-\res_p(\lm_t{+}1)\bigr) H_i^{\,\epsilon+\sum\delta}v^+_\lm.
\end{align*}
We claim that the last expression is zero. Indeed,
if $\psi(i)>i$ then $\psi(i)\in(i..j]\setminus\psi((i..j]{\setminus}M)$, since
$\psi$ is an injection, and by~\ref{lemma:constr:4:condition:a}, we have $\res_p\lm_i=\res_p(\lm_{\psi(i)}+1)$. On the other hand, if $\psi(i)=i$ then by~\ref{lemma:constr:4:condition:a}, we get $\res_p\lm_i\=\res_p(\lm_i+1)$, and
since $p\ne2$, we have $\lm_i\=0\pmod p$, whence $H_i^{\epsilon+\sum\delta}v^+_\lm{=}0$.

It remains to prove that $E_l^\delta S_{i,j}^{\,\epsilon}(M)v^+_\lm=0$ for all $i\le l<j-1$ and all $\de$.
Apply induction on $j-i$, the base case $j-i=1$ coming from the previous paragraph.
Let $j-i>1$.
By Lemma~\ref{lemma:constr:3}\ref{lemma:constr:3:part:ii}, we may assume that $l+1\notin M$ and $l\ne q-1$
and prove that $ S^{\,\sigma}_{l+1,j}\bigl(M_{(l+1..j]}\bigr)\,v^+_\lm=0$
for $\sigma=\0,\1$. 

{\it Case~1: $i\le l<q-1$.} 
Then $\bar q\in M_{(l+1..i]}$, and  $[l{+}1..j]\setminus M_{(l+1..j]}=[l{+}1..j]\setminus M$.
Consider the restriction $\psi'=\psi|_{[l{+}1..j]\setminus M_{(l+1..j]}}$. This restriction is obviously an injection of
$[l{+}1..j]\setminus M_{(l+1..j]}$ into $[l{+}1..j]$ and satisfies the following conditions:
{\renewcommand{\labelenumi}{{\rm \theenumi}}
\renewcommand{\theenumi}{{\rm(\alph{enumi}$^\prime$)}}
\begin{enumerate}
\item\label{lemma:constr:4:condition:a'} $\res_p\lm_t\=\res_p\bigl(\lm_{\psi'(t)}+1\bigr)$ for all $t\in[l{+}1..j]\setminus M${\rm;}\\[-10pt]
\item\label{lemma:constr:4:condition:b'} $\psi'(t)\ge t$ for all $t\in[l{+}1..j]\setminus M${\rm;}\\[-10pt]
\item\label{lemma:constr:4:condition:c'} if $\psi'(t)=t$ then $t\le q$.
\end{enumerate}}
\noindent
So $S^{\,\sigma}_{l+1,j}(M_{(l+1..j]})v^+_\lm=0$ by the inductive hypothesis.

{\it Case~2: $q\le l<j-1$.} We want to apply Lemma~\ref{lemma:constr:2}. Note that $M_{(l+1..j]}$
consists only of even elements. Moreover, $[l{+}1..j]\setminus M_{(l+1..j]}=[l{+}1..j]\setminus M$
and we can consider the restriction
$\psi''=\psi|_{[l{+}1..j]\setminus M_{(l+1..j]}}$. This restriction is obviously an injection of $[l{+}1..j]\setminus M_{(l+1..j]}$ into $[l{+}1..j]$ and satisfies the following conditions:
{\renewcommand{\labelenumi}{{\rm \theenumi}}
\renewcommand{\theenumi}{{\rm(\alph{enumi}$^{\prime\prime}$)}}
\begin{enumerate}
\item\label{lemma:constr:4:condition:a''} $\res_p\lm_t=\res_p\bigl(\lm_{\psi''(t)}+1\bigr)$ for any $t\in[l{+}1..j]\setminus M${\rm;}\\[-10pt]
\item\label{lemma:constr:4:condition:b''} $\psi''(t)>t$ for any $t\in[l{+}1..j]\setminus M$
\end{enumerate}}
\noindent
Note that \ref{lemma:constr:4:condition:b''} follows from~\ref{lemma:constr:4:condition:c}.
Now $ S^{\,\sigma}_{l+1,j}\bigl(M_{(l+1..j]}\bigr)\,v^+_\lm=0$ by Lemma~\ref{lemma:constr:2}.
\end{proof}

\begin{theorem}\label{theorem:constr:2}${}$\!\!
Let $\lm\in X(n)$, $1\le i<n$ and $\lm_i\=1\pmod p$. Suppose that $\bigl[\prod_{i<k\le n}r_{\mathbf 0}(\lm)_k\bigr]=+-^m$. Then there exists a non-zero homogeneous $U(n-1)$-primitive vector $v\in L(\lm)$ of weight $\lm-\alpha(i,n)$.
\end{theorem}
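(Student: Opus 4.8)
The plan is to imitate the proof of Theorem~\ref{theorem:constr:3}, building the desired vector as $S_{i,n}^{\0}(M)\,v^+_\lm$ for a signed set $M$ with one odd element $\bar q$, attached this time to a \emph{resolution} (Definition~\ref{definition:pm:3}) of $r_{\mathbf 0}(\lm)|_{(i..n]}$ rather than to an ordinary flow. The new feature is that $q$ must be chosen to be the \emph{largest} index of the section produced by Lemma~\ref{lemma:pm:5}, so that every self-loop of the resolution sits at an index $\le q$; this is exactly what condition~(c) of Lemmas~\ref{lemma:constr:3} and~\ref{lemma:constr:4} requires.

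In detail: since $\lm_i\=1\pmod p$ we have $\res_p\lm_i=\mathbf 0$, so with $\beta:=\res_p\lm_i=\mathbf 0$ the map $u:=r_{\mathbf 0}(\lm)|_{(i..n]}=r_\beta(\lm)|_{(i..n]}$ satisfies $\bigl[\prod u\bigr]=+-^m$, and $m$ is odd by Corollary~\ref{corollary:pm:2}. By Lemma~\ref{lemma:pm:5} there is a section $\{a_1<\cdots<a_h\}\subseteq(i..n]$ with $h>0$ and $u_{a_k}=+-$ for all $k$; I set $q:=a_h$, noting $u_q=+-$ forces $\lm_q\=0\pmod p$. Using Lemma~\ref{lemma:pm:3} on each gap (the relevant reductions $-^0$ and $-^{m-1}$ have even length) form a resolution $\Gamma$ of $u$, which by Proposition~\ref{proposition:A} is a weak flow on $(i..n]$ fully coherent with $u$ whose only self-loops are $(a_1,a_1),\dots,(a_h,a_h)$, all at indices $\le q$. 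Let $S$ be the set of sources of $\Gamma$ and $\psi\colon S\to(i..n]$ send a source to the target of its outgoing edge; then $\psi$ is injective, $\psi(t)\ge t$, $\psi(t)=t$ only for $t\in\{a_1,\dots,a_h\}$, and (coherence) $\res_p\lm_t=\mathbf 0=\res_p(\lm_{\psi(t)}+1)$ for $t\in S$. Finally set $M:=\{t\in(i..n]\mid t\ne q,\ t\notin S\}\cup\{\bar q\}$, a signed $(i..n]$-set all of whose elements are even except $\bar q$, with $(i..n]\setminus M=S$ (because $q\in S$) and containing $n$ or $\bar n$.

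Fix a nonzero homogeneous $v^+_\lm\in L(\lm)^\lm$ and put $v:=S_{i,n}^{\0}(M)\,v^+_\lm$, homogeneous of weight $\lm-\alpha(i,n)$ by Proposition~\ref{proposition:ops:1}. To see $v$ is $U(n-1)$-primitive, by Proposition~\ref{proposition:intro:5} and weight considerations (the generators attached to $l<i$ supercommuting with $S_{i,n}^{\0}(M)$ by Corollary~\ref{corollary:ops:2}) it suffices to check $E_l^\delta S_{i,n}^{\0}(M)\,v^+_\lm=0$ for $i\le l<n-1$, $\delta\in\{\0,\1\}$; here I would invoke Lemma~\ref{lemma:constr:3}\ref{lemma:constr:3:part:ii}. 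If $l+1\in M$ we are done; if $l+1\notin M$ then $l+1\in S\cup\{q\}$, and when $l=q-1$ the first bullet applies because $\lm_q\=0\pmod p$, while when $l\ne q-1$ it remains to show $S^\sigma_{l+1,n}(M_{(l+1..n]})\,v^+_\lm=0$ for $\sigma\in\{\0,\1\}$. For this, if $q>l+1$ then $\bar q\in M_{(l+1..n]}$ and the claim follows from Lemma~\ref{lemma:constr:4} applied to the restriction of $\psi$ to $[l+1..n]\cap S$ (conditions (a)--(c) being inherited, a fixed point at $q$ still permitted since $q\le q$); if $q\le l$ then $M_{(l+1..n]}$ has only even elements and the same restriction of $\psi$ has no fixed points (these would lie at some $a_k\le q\le l<l+1$), so $\psi(t)>t$ there and Lemma~\ref{lemma:constr:2} applies.

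For $v\ne0$, apply Lemma~\ref{lemma:constr:3}\ref{lemma:constr:3:part:i} to $\psi\colon(i..n]\setminus M=S\to(i..n]$ with $\delta\equiv\0$:
$$
E_iE_{i+1}\cdots E_{n-1}\,S_{i,n}^{\0}(M)\,v^+_\lm=\prod_{t\in(i..n]\setminus\psi(S)}\bigl(\res_p\lm_i-\res_p(\lm_t+1)\bigr)\,H_i\,v^+_\lm .
$$
Since $\lm_i\=1\pmod p$ we have $H_iv^+_\lm=v^+_\lm$, and every $t\in(i..n]$ with $\res_p(\lm_t+1)=\mathbf 0$ — equivalently with $u_t$ containing $+$ — lies in $\psi(S)$ by full coherence of $\Gamma$, so each surviving factor $\res_p\lm_i-\res_p(\lm_t+1)=-\res_p(\lm_t+1)$ is a nonzero scalar; hence the right-hand side is a nonzero multiple of $v^+_\lm$ and $v\ne0$ by Proposition~\ref{proposition:intro:5}. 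The essential work is the bookkeeping in the primitivity step, matching the restricted data to the hypotheses of Lemmas~\ref{lemma:constr:2},~\ref{lemma:constr:3} and~\ref{lemma:constr:4}; the one delicate point is that taking $q=a_h$ is exactly what keeps condition~(c) (``$\psi(t)=t\Rightarrow t\le q$'') valid for the resolution, so there is no new idea beyond Theorem~\ref{theorem:constr:3} — only the wrinkle that $\psi$ is now allowed a fixed point at $q$, corresponding to the self-loop of the resolution on the $+-$-slot of $u_q$.
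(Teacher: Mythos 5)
Your proposal is correct and follows essentially the same route as the paper's proof: the same signed set $M=\bigl((i..n]\setminus S\bigr)\cup\{\bar q\}$ built from a resolution of $r_{\mathbf 0}(\lm)|_{(i..n]}$, the same identification of $q$ (your $a_h$ is exactly the paper's maximal fixed point of $\psi$), and the same appeals to Lemmas~\ref{lemma:constr:2},~\ref{lemma:constr:3} and~\ref{lemma:constr:4} for non-vanishing and primitivity. The only cosmetic difference is that you fix $\epsilon=\0$ where the paper keeps $\epsilon$ arbitrary, which is immaterial.
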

\begin{proof} By Lemma~\ref{lemma:pm:5} there exists a resolution $\Gamma$ of $r_{\mathbf 0}(\lm)|_{(i..n]}$.
Let $S$ be the set of all sources of edges of $\Gamma$.
For any $t\in S$, there exists a unique index $\psi(t)\in(i..n]$ such that $(t,\psi(t))\in\Gamma$.
Moreover, $\psi$ is an injection of $S$ into $(i..n]$.
Let $q$ be the maximal element of $S$ such that $q=\psi(q)$. Such an element exists, because $\Gamma$ is not a flow, see Proposition~\ref{proposition:A}. Pick a non-zero homogeneous vector  $v^+_\lm\in L(\lm)$, $\epsilon\in\{\0,\1\}$ and set
$M:=\bigl((i..n]\setminus S\bigr)\cup\{\bar q\}$ and
$$
v:= S_{i,n}^{\,\epsilon}(M)\,v^+_\lm.
$$

If $q<n$ then $n\notin S$, since otherwise $n<\psi(n)\in(i..n]$
by the choice of $q$. Thus $q<n$ implies $n\in M$. Note also that $q\in S$ and
so $q\notin M$. We have proved that $M$ is a signed $(i..n]$-set containing either $\bar n$ or $n$.
Note also that $(i..n]\setminus M=S$.

We have the following properties of $\psi$ following from the fact that $\Gamma$ is a weak flow coherent with $r_\beta(\lm)|_{(i..n]}$
and the choice of $q$:
{\renewcommand{\labelenumi}{{\rm \theenumi}}
\renewcommand{\theenumi}{{\rm(\alph{enumi})}}
\begin{enumerate}
\item\label{theorem:constr:2:property:a} $\res_p\lm_t=\res_p\bigl(\lm_{\psi(t)}+1\bigr)={\mathbf0}$ for all $t\in(i..n]\setminus M$;\\[-10pt]
\item\label{theorem:constr:2:property:b} $\psi(t)\ge t$ for all $t\in(i..n]\setminus M$;\\[-10pt]
\item\label{theorem:constr:2:property:c} if $\psi(t)=t$ then $t\le q$.
\end{enumerate}
}
\noindent
By Lemma~\ref{lemma:constr:3}\ref{lemma:constr:3:part:i},
for any $\delta:[i..n)\to\{\0,\1\}$ such that $\sum\delta=\epsilon$, we have
$$
 E^{\delta_i}_i{\cdots} E^{\delta_{n-1}}_{n-1} S_{i,n}^{\,\epsilon}(M)\,v^+_\lm
=\prod\nolimits_{t\in(i..n]\,\setminus\,\psi(S)}\bigl(-\res_p(\lm_t{+}1)\bigr)v^+_\lm.
$$

As $\Gamma$ is fully coherent with $r_{\mathbf0}(\lm)|_{(i..n]}$, any $t\in(i..n]$ such that $\res_p(\lm_t+1)\=\mathbf0$ is the target of an edge of $\Gamma$, i.e.
belongs to $\psi(S)$. Therefore,
$$
\prod\nolimits_{t\in(i..n]\,\setminus\,\psi(S)}\bigl(-\res_p(\lm_t{+}1)\bigr)\ne{\mathbf0}.
$$
Hence $ E^{\delta_i}_i\cdots E^{\delta_{n-1}}_{n-1} S_{i,n}^{\,\epsilon}(M)\,v^+_\lm\ne0$ and $v\ne0$.

Let us finally prove that $v$ is a $U(n-1)$-primitive vector. We need to show that
$ E_l^\delta S_{i,n}^{\,\epsilon}(M)\,v^+_\lm=0$ for all $i\le l<n-1$ and all  $\de$.
Note that $\lm_q\=0\pmod p$ by~\ref{theorem:constr:2:property:a} as $\psi(q)=q$.
So, by Lemma~\ref{lemma:constr:3}\ref{lemma:constr:3:part:ii}, we may assume that $l+1\notin M$, $l\ne q-1$,
and prove that
$ S^{\,\sigma}_{l+1,j}\bigl(M_{(l+1..j]}\bigr)\,v^+_\lm=0$ for any $\sigma$.
But the last equality follows from Lemma~\ref{lemma:constr:4} if $l<q-1$ and from Lemma~\ref{lemma:constr:2} if $l>q-1$,
where we consider the injection $\psi|_{[l+1..j]\setminus M_{(l+1..j]}}$, noting that
$[l{+}1..j]\setminus M_{(l{+}1..j]}=[l{+}1..j]\setminus M$, since $l+1\notin M$.
\end{proof}

\section[Extension: case 1]{Extension: case  $\lm_h\=0\pmod p$, $\lm_i\=1\pmod p$, 
$\bigl[\prod_{h<k\le i} r_\mathbf0(\lm)_k\bigr]=-^m$}\label{extending:1}

\begin{lemma}\label{lemma:socle:1.75} Let $\lm\in X(n)$, $1\le h<i<n$, $v$ be a $U(n-1)$-primitive vector of $L(\lm)^{\lm-\alpha(i,n)}$,
$\epsilon\in\{\0,\1\}$, and $M\ni\bar\imath$ be a signed $(h..i]$-set
all of whose elements except $\bar\imath$ are even.
Suppose that $\psi:[h..i)\setminus M\to[h..i)$ is an injection such that
{\renewcommand{\labelenumi}{{\rm \theenumi}}
\renewcommand{\theenumi}{{\rm(\alph{enumi})}}
\begin{enumerate}
\item\label{lemma:socle:1.75:condition:a} $\res_p\lm_t=\res_p\bigl(\lm_{\psi(t)}+1\bigr)$ for any $t\in[h..i)\setminus M${\rm;}\\[-10pt]
\item\label{lemma:socle:1.75:condition:b} $\psi(t)>t$ for any $t\in[h..i)\setminus M$.
\end{enumerate}}
\noindent
Then $ S_{h,i}^{\,\epsilon}(M)\,v=0$.
\end{lemma}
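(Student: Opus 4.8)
The plan is to mimic the proof of Lemma~\ref{lemma:constr:6}, adapting it to the situation where the highest weight vector is replaced by a $U(n-1)$-primitive vector $v$ of weight $\lm-\alpha(i,n)$. The key observation is that $v$ is a primitive vector for the subalgebra $U(i)\subset U(n)$, of weight $\lm|_{[1..i]}$ in the sense of the subalgebra: indeed $E^{(m)}_{k,l}v=\bar E_{k,l}v=0$ for all $1\le k<l\le i$ and $m>0$, since $U(n-1)$-primitivity already gives vanishing of $E_l^\delta v$ for $l<n-1$, and in particular for $l<i$, and then Proposition~\ref{proposition:intro:5} (applied inside $U(i)$) promotes this to vanishing of all of $U^+(i)^{>0}$ acting on $v$. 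So Lemmas~\ref{lemma:constr:5} and~\ref{lemma:ops:6},~\ref{lemma:ops:7},~\ref{lemma:rcoeff:2},~\ref{lemma:compol:4} all apply with $j$ replaced by $i$ and the ambient module treated as a $U(i)$-module on which $v$ is primitive.

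The argument then runs as follows. First I would invoke Lemma~\ref{lemma:constr:5}\ref{lemma:constr:5:part:i} with $j$ replaced by $i$: for every $\delta:[h..i)\to\{\0,\1\}$,
$$
E^{\delta_h}_h\cdots E^{\delta_{i-1}}_{i-1} S_{h,i}^{\,\epsilon}(M)\,v
=\prod\nolimits_{t\in(h..i)\,\setminus\,\psi((h..i)\setminus M)}\bigl(\res_p\lm_h-\res_p(\lm_t+1)\bigr)\bigl( H_h^{\epsilon+\sum\delta}-(-1)^{\epsilon\sum\delta} H_i^{\epsilon+\sum\delta}\bigr)v.
$$
Since $\psi$ is an injection, $\psi(h)\in(h..i)\setminus\psi((h..i)\setminus M)$, and condition~\ref{lemma:socle:1.75:condition:a} gives $\res_p\lm_h=\res_p(\lm_{\psi(h)}+1)$, so the corresponding factor vanishes and the whole right-hand side is $0$. (Here one must check $h\in[h..i)\setminus M$, i.e. $h\notin M$; this holds because $M$ is a signed $(h..i]$-set, so its elements have absolute value in $(h..i]$, hence $h\notin M$.)

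Next, by Proposition~\ref{proposition:intro:5} and weight considerations it remains to show $E_l^\delta S_{h,i}^{\,\epsilon}(M)\,v=0$ for all $h\le l<i-1$ and $\delta\in\{\0,\1\}$. I would prove this by induction on $i-h$, the base $i-h=1$ being the previous paragraph. For the inductive step, by Lemma~\ref{lemma:constr:5}\ref{lemma:constr:5:part:ii} (with $j$ replaced by $i$) we may assume $l+1\notin M$ and reduce to showing $S^{\,\sigma}_{l+1,i}(M_{(l+1..i]})\,v=0$ for $\sigma=\0,\1$. Since $l+1\notin M$ we have $[l{+}1..i)\setminus M_{(l+1..i]}=[l{+}1..i)\setminus M$, so the restriction $\psi|_{[l+1..i)\setminus M_{(l+1..i]}}$ is a well-defined injection of $[l{+}1..i)\setminus M_{(l+1..i]}$ into $[l{+}1..i)$ satisfying the analogues of~\ref{lemma:socle:1.75:condition:a} and~\ref{lemma:socle:1.75:condition:b}. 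The inductive hypothesis then yields $S^{\,\sigma}_{l+1,i}(M_{(l+1..i]})\,v=0$, completing the induction and the proof.

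The main obstacle I anticipate is not any single calculation but the careful verification that $v$, being $U(n-1)$-primitive of weight $\lm-\alpha(i,n)$, really does behave as a primitive vector for $U(i)$ with the right weight — so that Lemma~\ref{lemma:constr:5} and its supporting lemmas (all stated for primitive vectors in $U(j)$-supermodules) are legitimately applicable with $j=i$. This is where I would be most careful: one needs that $\tbinom{H_k}{m}v=\tbinom{\lm_k}{m}v$ for $k\le i$ (true since $v$ has weight $\lm-\alpha(i,n)$ and $\alpha(i,n)$ has zero coefficient on $\eps_k$ for $k<i$ while the $\eps_i$-coefficient must still be handled), and that the $U(i)$-primitivity of $v$ follows from $U(n-1)$-primitivity via Proposition~\ref{proposition:intro:5}. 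Once this identification is in place, the rest is a routine transcription of the proof of Lemma~\ref{lemma:constr:6}.
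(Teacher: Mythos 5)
There is a genuine gap at the point where you say ``by Proposition~\ref{proposition:intro:5} and weight considerations it remains to show $E_l^\delta S_{h,i}^{\,\epsilon}(M)v=0$ for all $h\le l<i-1$.'' The vector $S_{h,i}^{\,\epsilon}(M)v$ lies in $L(\lm)^{\lm-\alpha(h,n)}$, and $\alpha(h,n)$ involves $\eps_n$; so Proposition~\ref{proposition:intro:5} (which is a statement about the irreducible $U(n)$-supermodule $L(\lm)$, not about any $U(i)$-submodule) requires you to kill this vector with \emph{every} element of $U^+(n)^{\alpha(h,n)}$, and such elements necessarily involve $E_{k,n}^{(m)}$ or $\bar E_{k,n}$. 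Your two established facts -- namely $E_h^{\delta_h}\cdots E_{i-1}^{\delta_{i-1}}S_{h,i}^{\,\epsilon}(M)v=0$ and $E_l^\delta S_{h,i}^{\,\epsilon}(M)v=0$ for $h\le l<i-1$ -- together with Lemma~\ref{lemma:socle:1.5} only establish that $S_{h,i}^{\,\epsilon}(M)v$ is $U(n-1)$-primitive; they do not by themselves show it is zero. In Lemma~\ref{lemma:constr:6} the analogous reduction works because $v^+_\lm$ is a genuine highest weight vector, annihilated by \emph{all} $E_l^\delta$ including $l\ge j$; here $v$ is only $U(n-1)$-primitive and $E_l^\delta v\ne0$ in general for $l\ge i$, so the transcription is not routine.

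The missing piece is the final step of the paper's argument: one must verify $E_h^{\delta_h}\cdots E_{n-1}^{\delta_{n-1}}S_{h,i}^{\,\epsilon}(M)v=0$ for all $\delta:[h..n)\to\{\0,\1\}$. Since $\bar\imath\in M$, Corollary~\ref{corollary:ops:2} lets one supercommute $E_i^{\delta_i}\cdots E_{n-1}^{\delta_{n-1}}$ past $S_{h,i}^{\,\epsilon}(M)$, giving (up to sign) $E_h^{\delta_h}\cdots E_{i-1}^{\delta_{i-1}}S_{h,i}^{\,\epsilon}(M)\,\bigl(E_i^{\delta_i}\cdots E_{n-1}^{\delta_{n-1}}v\bigr)$; the inner vector $E_i^{\delta_i}\cdots E_{n-1}^{\delta_{n-1}}v$ \emph{is} a highest weight vector in $L(\lm)^\lm$, and only now is Lemma~\ref{lemma:constr:6} applicable. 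Your attempt to sidestep this by treating $v$ as a $U(i)$-primitive vector and ``applying Lemma~\ref{lemma:constr:5} with $j=i$'' does not close the loop, because the $U(i)$-submodule generated by $v$ need not be irreducible, so no $U(i)$-analogue of Proposition~\ref{proposition:intro:5} is available; vanishing of $S_{h,i}^{\,\epsilon}(M)v$ in $L(\lm)$ has to be detected by the full $U^+(n)$.
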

\begin{proof}
Set $\mu:=\lm-\alpha(i,n)$. We first prove that
$
 E_l^\delta S_{h,i}^{\,\epsilon}(M)\,v=0
$
for all $l\neq n-1,i-1$ and $\delta\in\{\0,\1\}$. By weights, we may assume that $h\le l<i-1$.
We apply induction on $i-h$.
By Lemma~\ref{lemma:constr:5}\ref{lemma:constr:5:part:ii}, we may assume that
$l+1\notin M$ and prove that $ S^{\,\sigma}_{l+1,i}(M_{(l+1..i]})v=0$
for any $\sigma$. But this equality follows from the inductive hypothesis if we consider the restriction
$\psi'=\psi|_{[l{+}1..i)\setminus M_{(l+1..i]}}$, which is obviously an injection of
$[l{+}1..i)\setminus M_{(l+1..i]}=[l{+}1..i)\setminus M$ into $[l{+}1..i)$
satisfying the conditions similar to~\ref{lemma:socle:1.75:condition:a} and~\ref{lemma:socle:1.75:condition:b}.

Next, we prove that $ S^{\,\epsilon}_{h,i}(M)v$ is $U(n-1)$-primitive.
By Lemma~\ref{lemma:socle:1.5}, it suffices to show that
$ E_h^{\delta_h}\cdots E_{i-1}^{\delta_{i-1}} S^{\,\epsilon}_{h,i}\bigl(M\bigr)\,v=0$
for all $\delta:[h..i)\to\{\0,\1\}$. By~\ref{lemma:socle:1.75:condition:a} and the fact that
$\psi$ is an injection of $[h..i)\setminus M$ into $[h..i)$, we have:
{\renewcommand{\labelenumi}{{\rm \theenumi}}
\renewcommand{\theenumi}{{\rm(\alph{enumi}$^\prime$)}}
\begin{enumerate}
\item\label{lemma:socle:1.75:condition:a'} $\res_p\mu_t=\res_p\bigl(\mu_{\psi(t)}+1\bigr)$ for all $t\in[h..i)\setminus M$.
\end{enumerate}}
\noindent
Hence by Lemma~\ref{lemma:constr:5}\ref{lemma:constr:5:part:i}, we get
\begin{align*}
 E^{\delta_h}_h\cdots E^{\delta_{i-1}}_{i-1} S_{h,i}^{\,\epsilon}(M)\,v
 \\
=\prod\nolimits_{t\in(h..i)\,\setminus\,\psi((h..i)\setminus M)}\bigl(\res_p\mu_h{-}\res_p(\mu_t{+}1)\bigr)
( H_h^{\epsilon+\sum\delta}{-}(-1)^{\epsilon\sum\delta} H_i^{\epsilon+\sum\delta})v
\\
=\prod\nolimits_{t\in(h..i)\,\setminus\,\psi((h..i)\setminus M)}\bigl(\res_p\lm_h{-}\res_p(\lm_t{+}1)\bigr)
( H_h^{\epsilon+\sum\delta}{-}(-1)^{\epsilon\sum\delta} H_i^{\epsilon+\sum\delta})v=0,
\end{align*}
because $\psi(h)\in(h..i)\setminus\psi((h..i){\setminus}M)$, and
by~\ref{lemma:socle:1.75:condition:a}, we have $\res_p\lm_h=\res_p(\lm_{\psi(h)}+1)$.

Now, it suffice to prove that
$
 E_h^{\delta_h}\cdots E_{n-1}^{\delta_{n-1}} S_{h,i}^{\,\epsilon}(M)\,v=0
$
for all $\delta:[h..n)\to\{\0,\1\}$. By Corollary~\ref{corollary:ops:2}, 
we have
$$
 E_h^{\delta_h}\cdots E_{n-1}^{\delta_{n-1}} S_{h,i}^{\,\epsilon}(M)\,v
=(-1)^{\,\epsilon\sum\delta|_{[i..n)}} E_h^{\delta_h}\cdots E_{i-1}^{\delta_{i-1}} S_{h,i}^{\,\epsilon}(M)\, E_i^{\delta_i}\cdots E_{n-1}^{\delta_{n-1}}v=0
$$
by Lemma~\ref{lemma:constr:6} with $v^+_\lm= E_i^{\delta_i}\cdots E_{n-1}^{\delta_{n-1}}v$.
\end{proof}

\begin{theorem}\label{theorem:socle:0.5}
Let $\lm\in X(n)$, $1\le h<i<n$, $\lm_h\=0\pmod p$, $\lm_i\=1\pmod p$, and $\bigl[\prod_{h<k\le i} r_{\mathbf0}(\lm)_k\bigr]=-^m$.
Then for any nonzero homogeneous $U(n-1)$-primitive
vector $v\in L(\lm)^{\lm-\alpha(i,n)}$ there exists a nonzero homogeneous $U(n-1)$-primitive vector
$w\in U(i)v$ of weight $\lm-\alpha(h,n)$.
\end{theorem}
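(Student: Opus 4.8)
The plan is to build $w$ explicitly as $w = S_{h,i}^{\,\epsilon}(M)\,v$ for a suitable signed $(h..i]$-set $M$ containing exactly one odd element $\bar\imath$, exactly as in the proof of Theorem~\ref{theorem:constr:3}, but now applying the operator to the given $U(n-1)$-primitive vector $v$ of weight $\lm-\alpha(i,n)$ rather than to a highest weight vector. Set $\mu := \lm-\alpha(i,n)$; the key combinatorial input is that $\res_p\mu_t=\res_p\lm_t$ for all $t\in[h..i)$, so the signature data on the relevant range is unchanged. Since $\bigl[\prod_{h<k\le i}r_{\mathbf 0}(\lm)_k\bigr]=-^m$, Lemmas~\ref{lemma:pm:1} and~\ref{lemma:pm:3} produce a flow $\Gamma$ on $(h..i]$ fully coherent with $r_{\mathbf 0}(\lm)|_{(h..i]}$; let $S$ be the set of sources of $\Gamma$, let $\psi\colon S\to(h..i]$ send each source to its target, and put $M:=\bigl((h..i]\setminus S\bigr)\cup\{\bar\imath\}$. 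One checks $i\in S$ cannot happen (it would force $i<\psi(i)\le i$), so $\bar\imath\in M$ is legitimate and $(h..i]\setminus M = (h..i)\cap S$. The properties of $\psi$ needed are exactly \ref{theorem:constr:3:property:a}--\ref{theorem:constr:3:property:b}: $\res_p\lm_t=\res_p(\lm_{\psi(t)}+1)={\mathbf 0}$ and $\psi(t)>t$ for $t\in(h..i)\setminus M$; these follow from Definition~\ref{definition:pm:2} and Proposition~\ref{proposition:rbeta}.

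First I would verify that $w:=S_{h,i}^{\,\epsilon}(M)\,v$ is $U(n-1)$-primitive, which is precisely the content of Lemma~\ref{lemma:socle:1.75}: the hypotheses \ref{lemma:socle:1.75:condition:a}, \ref{lemma:socle:1.75:condition:b} of that lemma are the properties of $\psi$ just listed (restricted to $[h..i)\setminus M$), so the lemma applies verbatim and $S_{h,i}^{\,\epsilon}(M)\,v$ is $U(n-1)$-primitive of weight $\lm-\alpha(h,n)$. Note that Lemma~\ref{lemma:socle:1.75} internally invokes Lemma~\ref{lemma:constr:6} with $v^+_\lm = E_i^{\delta_i}\cdots E_{n-1}^{\delta_{n-1}}v$, and this is exactly where the hypothesis $\lm_h\=0\pmod p$ will be felt: on the highest weight vector $v^+_\lm$ of $L(\lm)$ the element $H_h$ acts by $\lm_h$, so certain factors in the raising-coefficient formula vanish; but we actually need the nonvanishing statement for $\mu$, where $\mu_h=\lm_h$ still, so the residues line up.

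Next I would prove $w\ne 0$. By Proposition~\ref{proposition:intro:5} it suffices to exhibit $E\in U^+(n)^{\alpha(h,n)}$ with $Ew\ne 0$, and the natural candidate is $E_h^{\delta_h}\cdots E_{n-1}^{\delta_{n-1}}$ with $\sum\delta=\epsilon$. Using Corollary~\ref{corollary:ops:2} to commute the operators $E_i^{\delta_i},\dots,E_{n-1}^{\delta_{n-1}}$ past $S_{h,i}^{\,\epsilon}(M)$ (which is legitimate since $\bar\imath\in M$), this reduces to computing $E_h^{\delta_h}\cdots E_{i-1}^{\delta_{i-1}}S_{h,i}^{\,\epsilon}(M)\,\bigl(E_i^{\delta_i}\cdots E_{n-1}^{\delta_{n-1}}v\bigr)$. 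Since $v$ is $U(n-1)$-primitive of weight $\mu$ and $E_i^{\delta_i}\cdots E_{n-1}^{\delta_{n-1}}v$ lies in the highest weight space (it has weight $\mu+\alpha(i,n)=\lm$ and is annihilated by all raising operators — this follows from $v$ being $U(n-1)$-primitive together with $L(\lm)^\lm$ being the full primitive space), Lemma~\ref{lemma:constr:5}\ref{lemma:constr:5:part:i} gives the value as a product of factors $\bigl(\res_p\lm_h-\res_p(\lm_t+1)\bigr)$ over $t\in(h..i)\setminus\psi((h..i)\setminus M)$, times $\bigl(H_h^{\epsilon}-(-1)^{\epsilon\cdot\epsilon}H_i^{\epsilon}\bigr)$ applied to the highest weight vector. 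Full coherence of $\Gamma$ ensures every $t$ with $\res_p(\lm_t+1)={\mathbf 0}$ is a target, so none of these factors vanishes; and the factor $H_h-H_i$ (resp. the $\epsilon=\1$ analogue) acts nontrivially because $\lm_h\=0$ while $\lm_i\=1\pmod p$, so $\res_p\lm_h\ne\res_p\lm_i$, forcing $C(h,i)$ to act by a nonzero scalar.

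\textbf{The main obstacle.} The delicate point is not the combinatorics but the interaction at the ``seam'': applying $S_{h,i}^{\,\epsilon}(M)$ to $v$ (supported on weights $\le\mu$), then raising by $E_i^{\delta_i}\cdots E_{n-1}^{\delta_{n-1}}$ back up to weight $\lm$, and making sure that the raised vector really does land in $L(\lm)^\lm$ and is nonzero there. Concretely, one must justify that $E_i^{\delta_i}\cdots E_{n-1}^{\delta_{n-1}}v$ is a \emph{highest weight} vector (killed by all $E_{a,b}^{(m)}$, $\bar E_{a,b}$), which uses the $U(n-1)$-primitivity of $v$ for the index range $[1..n-1)$ plus a short argument that the remaining raising operators involving row $n$ also annihilate it — this is the kind of weight-space/commutator bookkeeping done in Lemma~\ref{lemma:socle:1.5} and in the last paragraph of the proof of Theorem~\ref{theorem:constr:1}, and I would quote those. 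Once the seam is handled, everything else is an assembly of Lemmas~\ref{lemma:socle:1.75}, \ref{lemma:constr:5}, \ref{lemma:constr:6}, together with the flow-existence Lemmas~\ref{lemma:pm:1} and~\ref{lemma:pm:3}, exactly parallel to Theorem~\ref{theorem:constr:3}.
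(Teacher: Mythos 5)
Your overall architecture (flow on $(h..i]$, the set $M=\bigl((h..i)\setminus S\bigr)\cup\{\bar\imath\}$, primitivity via the raising computation, nonvanishing via full coherence) matches the paper's, but there is one missing idea that the argument cannot do without. You apply $S_{h,i}^{\,\epsilon}(M)$ directly to $v$. In the primitivity step, Lemma~\ref{lemma:constr:5}\ref{lemma:constr:5:part:i} reduces $E_h^{\delta_h}\cdots E_{i-1}^{\delta_{i-1}}S_{h,i}^{\,\epsilon}(M)v$ to a nonzero scalar (full coherence makes the residue product nonvanishing here, since $h$ is \emph{not} in the domain of $\psi$) times $\bigl(H_h^{\epsilon+\sum\delta}-(-1)^{\epsilon\sum\delta}H_i^{\epsilon+\sum\delta}\bigr)v$. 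When $\epsilon+\sum\delta=\0$ this vanishes because $\mu_h\equiv\mu_i\equiv 0\pmod p$; but when $\epsilon+\sum\delta=\1$ you need $\bar H_h v$ and $\bar H_i v$ to vanish, and there is no reason they do: Proposition~\ref{proposition:intro:4}(ii) applies to the irreducible $U^0$-module $\u(\mu)$, not to an arbitrary primitive vector $v$ in the (possibly reducible) $U^0(n-1)$-module of primitive vectors. The paper fixes this by first replacing $v$ with $v':=\bar H_h^a\bar H_i^b v$ for $(a,b)$ maximal with $v'\ne0$; then $H_h^\sigma v'=H_i^\sigma v'=0$ for both parities $\sigma$, $v'$ still lies in $U(i)v$ and is still primitive, and the whole computation closes. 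Without this step your $w$ need not be primitive.

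Two further corrections. First, Lemma~\ref{lemma:socle:1.75} is a \emph{vanishing} lemma: its conclusion is $S_{h,i}^{\,\epsilon}(M)v=0$, and its hypothesis requires an injection defined on $[h..i)\setminus M$, which contains $h$ — not available for your main $M$ (the flow has no source at $h$). If it did apply "verbatim" as you claim, it would give $w=0$, the opposite of what you want. It is only used for the truncated sets $M_{(l+1..i]}$ in checking $E_l^\delta w=0$ for $h\le l<i-1$; the genuine primitivity at the remaining indices comes from Lemma~\ref{lemma:socle:1.5} plus the displayed computation with $v'$. Second, your nonvanishing argument via "$\res_p\lm_h\ne\res_p\lm_i$, forcing $C(h,i)$ to act by a nonzero scalar" is false: $\lm_h\equiv0$ and $\lm_i\equiv1$ give $\res_p\lm_h=\res_p\lm_i=\mathbf0$, so $C(h,i)$ acts by zero. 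The correct source of nonvanishing is the factor $H_h^{\,\epsilon+\sum\delta|_{[h..i)}}-(-1)^{\epsilon\sum\delta|_{[h..i)}}H_i^{\,\epsilon+\sum\delta|_{[h..i)}}$ evaluated (after choosing $\sum\delta|_{[h..i)}=\epsilon$) on the $\lm$-weight vector $E_i^{\delta_i}\cdots E_{n-1}^{\delta_{n-1}}v'$, which gives $\lm_h-(-1)^{\epsilon}\lm_i\equiv\mp1\not\equiv0\pmod p$.
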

\begin{proof}
Let $(a,b)\in\Z_{\geq 0}^2$ be a pair with maximal possible sum $a+b$ such that
$\bar{\! H}_h^a\,\bar{\! H}_i^b\,v\ne0$. As $ H_iv= H_hv=0$, we have $a,b<2$. We set $v':=\,\bar{\! H}_h^a\,\bar{\! H}_i^bv$.
Then $v'\in U(i)v$ and $ H_h^{\sigma}v'= H_i^{\sigma}v'=0$ for all $\sigma\in\{\0,\1\}$.

By Lemma~\ref{lemma:pm:3}, there exists a flow $\Gamma$ on $(h..i]$ fully coherent with $r_{\mathbf0}(\lm)|_{(h..i]}$.
Let $S$ be the set of all sources of edges of $\Gamma$. Thus for any $t\in S$ there exists a unique
$\psi(t)\in(h..i]$ with $(t,\psi(t))\in\Gamma$, $\psi$ is an injection of $S$ into $(h..i]$, and
{\renewcommand{\labelenumi}{{\rm \theenumi}}
\renewcommand{\theenumi}{{\rm(\alph{enumi})}}
\begin{enumerate}
\item\label{theorem:socle:0.5:property:a} $\res_p\lm_t=\res_p\bigl(\lm_{\psi(t)}+1\bigr)=\mathbf0$ for all $t\in S$;\\[-10pt]
\item\label{theorem:socle:0.5:property:b} $\psi(t)>t$ for all $t\in S$.
\end{enumerate}
}
Note that $\psi$ is actually an injection of $S$ into $(h..i)$.
Indeed, suppose that $\psi(t)=i$ for some $t\in S$. Then by~\ref{theorem:socle:0.5:property:a},
we get $\mathbf0=\res_p(\lm_i+1)=\mathbf2$, which is a contradiction.

Set $M:=((h..i)\setminus S)\cup\{\bar\imath\}$. Note that $i\notin S$
for otherwise $i<\psi(i)\in(h..i]$, giving a contradiction.
Hence we get $(h..i)\setminus M=S$.
Choose 
$\epsilon\in\{\0,\1\}$ and set
$$
w:= S^{\,\epsilon}_{h,i}(M)\,v'.
$$

To prove that $w$ is a $U(n-1)$-primitive vector, we must show that $ E_l^\delta S^{\,\epsilon}_{h,i}(M)\,v'$\linebreak$=0$
for $l=1,\ldots,n-2$ and $\delta\in\{\0,\1\}$.
As $v'$ is $U(n-1)$-primitive, and by weights,  we may assume that $h\le l<i$.
If $h\le l<i-1$, then by Lemma~\ref{lemma:constr:5}\ref{lemma:constr:5:part:ii},
it suffices to prove that $ S^{\sigma}_{l+1,i}(M_{(l+1..i]})v'=0$ in the case $l+1\notin M$.
But this equality follows from Lemma~\ref{lemma:socle:1.75} if we consider the injection
$\psi|_{[l+1..i)\setminus M_{(l+1..i]}}$ of $[l{+}1..i)\setminus M_{(l+1..i]}=[l{+}1..i)\setminus M$ into $[l{+}1..i)$.

Now, by Lemma~\ref{lemma:socle:1.5}, to prove the $U(n-1)$-primitivity of $ S^{\epsilon}_{h,i}(M)v'$
it suffices to show that $ E_h^{\delta_h}\cdots E_{i-1}^{\delta_{i-1}} S^{\epsilon}_{h,i}(M)v'=0$
for all $\delta:[h..i)\to\{\0,\1\}$.
Applying Lemma~\ref{lemma:constr:5}\ref{lemma:constr:5:part:i} as in the proof of Lemma~\ref{lemma:socle:1.75}, we get
\begin{align*}
 E^{\delta_h}_h\cdots E^{\delta_{i-1}}_{i-1} S_{h,i}^{\,\epsilon}(M)\,v'\\
 =\prod\nolimits_{t\in(h..i)\,\setminus\,\psi((h..i)\setminus M)}\bigl(\res_p\lm_h{-}\res_p(\lm_t{+}1)\bigr)( H_h^{\epsilon+\sum\delta}{-}(-1)^{\epsilon\sum\delta} H_i^{\epsilon+\sum\delta})v'=0
\end{align*}
by the choice of $v'$ in the beginning of this proof.

Finally, we prove that $w\neq 0$. We are going to prove that $ E_h^{\delta_h}\cdots E_{n-1}^{\delta_{n-1}}w\ne0$
for some function $\delta:[h..n)\to\{\0,\1\}$.
Applying Lemma~\ref{lemma:constr:5}\ref{lemma:constr:5:part:i} as in the proof of Lemma~\ref{lemma:socle:1.75}, we get
\begin{align*}
 E_h^{\delta_h}\cdots E_{n-1}^{\delta_{n-1}} S_{h,i}^{\,\epsilon}(M)v'
=(-1)^{\epsilon\sum\delta|_{[i..n)}} E_h^{\delta_h}\cdots E_{i-1}^{\delta_{i-1}} S_{h,i}^{\epsilon}(M) E_i^{\delta_i}\cdots E_{n-1}^{\delta_{n-1}}v'
\\
=(-1)^{\,\epsilon\sum\delta|_{[i..n)}}\prod
_{t\in(h..i)\,\setminus\,\psi((h..i)\setminus M)}(-\res_p(\lm_t{+}1))
\\
\times( H_h^{\,\epsilon+\sum\delta|_{[h..i)}}{-}(-1)^{\epsilon\sum\delta|_{[h..i)}} H_i^{\epsilon+\sum\delta|_{[h..i)}}) E_i^{\delta_i}\cdots E_{n-1}^{\delta_{n-1}}v'.
\end{align*}
Since the flow $\Gamma$ is fully coherent with $r_\beta(\lm)|_{(h..i]}$, we have
$$
\prod\nolimits_{t\in(h..i)\setminus\psi((h..i)\setminus M)}(-\res_p(\lm_t{+}1))\ne\mathbf0
$$
by Proposition~\ref{proposition:rbeta}\ref{proposition:rbeta:part:2}.
Now to complete the proof that $w\neq 0$,  choose $\delta_h,\ldots,\delta_{i-1}$ so that $\sum\delta|_{[h..i)}=\epsilon$, choose $\delta_i,\ldots,\delta_{n-1}$ so that $ E_i^{\delta_i}\cdots E_{n-1}^{\delta_{n-1}}\,v'\ne0$, and recall that $\lm_h\=0\pmod p$ and $\lm_i\=1\pmod p$.
\end{proof}

\section[Extension: Case 2]{Extension: case $\lm_i\not\=0\pmod p$, $\bigl[\prod_{h<k\le i} r_\beta(\lm)_k\bigr]=-^m$, \\
and $\lm_h\=0\pmod p$, $\lm_i\=1\pmod p$ do not both hold}\label{extending:2}

\begin{lemma}\label{lemma:socle:2} Let $\lm\in X(n)$, $1\le h<i<n$, $v$ be a $U(n-1)$-primitive vector in $L(\lm)^{\lm-\alpha(i,n)}$,
$\epsilon\in\{\0,\1\}$, and
$M\subset(h..i]$ be a subset containing $i\in M$.
Suppose that there exists an injection $\psi:[h..i)\setminus M\to[h..i)$ such that
{\renewcommand{\labelenumi}{{\rm \theenumi}}
\renewcommand{\theenumi}{{\rm(\alph{enumi})}}
\begin{enumerate}
\item\label{lemma:socle:2:condition:a} $\res_p\lm_t=\res_p\bigl(\lm_{\psi(t)}+1\bigr)$ for all $t\in[h..i)\setminus M${\rm;}\\[-10pt]
\item\label{lemma:socle:2:condition:b} $\psi(t)>t$ for all $t\in[h..i)\setminus M$.
\end{enumerate}}
\noindent
Then $ S_{h,i}^{\,\epsilon}(M)\,v=0$.
\end{lemma}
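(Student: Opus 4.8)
The plan is to prove Lemma~\ref{lemma:socle:2} by mimicking closely the structure of the proof of Lemma~\ref{lemma:socle:1.75}, replacing the ``odd'' signed set $M\ni\bar\imath$ by the ``even'' signed set $M\ni i$ (all elements even) and correspondingly using the ``$S(\{j\})$'' lemmas in place of the ``$S(\{\bar\jmath\})$'' lemmas. Set $\mu:=\lm-\alpha(i,n)$. The proof has three parts, exactly as in Lemma~\ref{lemma:socle:1.75}: first show $E_l^\delta S_{h,i}^{\,\epsilon}(M)v=0$ for $l\neq n-1,i-1$; then use Lemma~\ref{lemma:socle:1.5} to deduce $U(n-1)$-primitivity; and finally kill off the remaining raising operators using the already-established primitivity.

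First I would handle $E_l^\delta S_{h,i}^{\,\epsilon}(M)v=0$ for $l\neq n-1,i-1$. By weight considerations only $h\le l<i-1$ matters. Here I would apply induction on $i-h$ and invoke Lemma~\ref{lemma:constr:1}\ref{lemma:constr:1:part:ii} (the ``even $M$'' analogue of Lemma~\ref{lemma:constr:5}\ref{lemma:constr:5:part:ii}): we may assume $l+1\notin M$ and must show $S^{\,\sigma}_{l+1,i}(M_{(l+1..i]})v=0$ for $\sigma=\0,\1$. This follows from the inductive hypothesis applied to the restriction $\psi'=\psi|_{[l+1..i)\setminus M_{(l+1..i]}}$, which is an injection of $[l+1..i)\setminus M_{(l+1..i]}=[l+1..i)\setminus M$ into $[l+1..i)$ satisfying conditions analogous to \ref{lemma:socle:2:condition:a} and \ref{lemma:socle:2:condition:b}. (The base case $i-h=1$ is vacuous since there is no such $l$.)

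Next I would show that $S^{\,\epsilon}_{h,i}(M)v$ is $U(n-1)$-primitive via Lemma~\ref{lemma:socle:1.5}: it suffices to prove $E_h^{\delta_h}\cdots E_{i-1}^{\delta_{i-1}}S^{\,\epsilon}_{h,i}(M)v=0$ for all $\delta:[h..i)\to\{\0,\1\}$. Since $\res_p\mu_t=\res_p\lm_t$ and $\res_p(\mu_{\psi(t)}+1)=\res_p(\lm_{\psi(t)}+1)$ for $t\in[h..i)\setminus M$ (because $\psi$ maps into $[h..i)$ and hence never hits $n$), condition~\ref{lemma:socle:2:condition:a} holds for $\mu$ too. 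Then Lemma~\ref{lemma:constr:1}\ref{lemma:constr:1:part:i} (the ``even $M$'' analogue of Lemma~\ref{lemma:constr:5}\ref{lemma:constr:5:part:i}) gives
\begin{align*}
E^{\delta_h}_h\cdots E^{\delta_{i-1}}_{i-1} S_{h,i}^{\,\epsilon}(M)\,v
&=\cond_{\sum\delta=\epsilon}\prod\nolimits_{t\in(h..i]\,\setminus\,\psi((h..i]\setminus M)}\bigl(\res_p\mu_h-\res_p(\mu_t+1)\bigr)v\\
&=\cond_{\sum\delta=\epsilon}\prod\nolimits_{t\in(h..i]\,\setminus\,\psi((h..i]\setminus M)}\bigl(\res_p\lm_h-\res_p(\lm_t+1)\bigr)v,
\end{align*}
which vanishes because $\psi(h)\in(h..i]\setminus\psi((h..i]\setminus M)$ (injectivity) and $\res_p\lm_h=\res_p(\lm_{\psi(h)}+1)$ by~\ref{lemma:socle:2:condition:a}. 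Finally, to reduce $S_{h,i}^{\,\epsilon}(M)v=0$ to a primitive-vector statement, by Proposition~\ref{proposition:intro:5} and weight considerations it suffices to prove $E_h^{\delta_h}\cdots E_{n-1}^{\delta_{n-1}} S_{h,i}^{\,\epsilon}(M)\,v=0$ for all $\delta:[h..n)\to\{\0,\1\}$; using Corollary~\ref{corollary:ops:2} to slide $S_{h,i}^{\,\epsilon}(M)$ past $E_i^{\delta_i}\cdots E_{n-1}^{\delta_{n-1}}$ (up to a sign $(-1)^{\epsilon\sum\delta|_{[i..n)}}$ coming from supercommutation, noting $i\in M$ so the hypothesis of Corollary~\ref{corollary:ops:2} needs the ``$l>j$'' clause with $j=i$, and for $l=i$ the fact that $S_{h,i}(M)$ has weight $-\alpha(h,i)$ makes $E_i$ commute past it up to sign), this becomes $E_h^{\delta_h}\cdots E_{i-1}^{\delta_{i-1}} S_{h,i}^{\,\epsilon}(M)\, w^+=0$ with $w^+=E_i^{\delta_i}\cdots E_{n-1}^{\delta_{n-1}}v\in L(\lm)^\lm$, which follows from Lemma~\ref{lemma:constr:2} (even $M$, restricted injection $\psi$).

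The main obstacle I anticipate is the bookkeeping in the last step: the element $E_i^{\delta_i}\cdots E_{n-1}^{\delta_{n-1}}v$ need not be a \emph{nonzero} highest weight vector, but Lemma~\ref{lemma:constr:2} requires $v^+_\lm\in L(\lm)^\lm$ only, with no nonvanishing assumption, so this is fine; one just has to make sure the supercommutation signs from sliding past $E_i,\dots,E_{n-1}$ are handled uniformly (they only contribute an overall $\pm1$, which does not affect vanishing). A secondary subtlety is confirming that Corollary~\ref{corollary:ops:2} genuinely applies when $i\in M$ rather than $\bar\imath\in M$ — here one uses that $S_{h,i}^{\,\epsilon}(M)\in U^{\leq 0}(n)$ has weight $-\alpha(h,i)$, so $E_l^\delta$ for $l\ge i$ supercommutes with it modulo a sign by the ``$l>j$'' part of Corollary~\ref{corollary:ops:2} together with a direct weight argument for $l=i$, exactly as in the analogous step of Theorem~\ref{theorem:socle:0.5}.
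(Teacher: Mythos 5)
Your overall architecture matches the paper's (first the $E_l^\delta$-vanishing for $h\le l<i-1$ by induction via Lemma~\ref{lemma:constr:1}\ref{lemma:constr:1:part:ii}, then primitivity via Lemma~\ref{lemma:socle:1.5}, then killing the full products $E_h^{\delta_h}\cdots E_{n-1}^{\delta_{n-1}}$), and the first two parts are fine. But the last step has a genuine gap: you assert that $E_i^{\delta_i}$ slides past $S_{h,i}^{\,\epsilon}(M)$ up to a sign because $S_{h,i}^{\,\epsilon}(M)$ has weight $-\alpha(h,i)$. That is false when $i\in M$ is \emph{even}. Corollary~\ref{corollary:ops:2} only gives supercommutation for $l>j$, or for $l=j$ when $\bar\jmath\in M$; here $l=j=i$ and $i\in M$, so it does not apply. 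Concretely, by Proposition~\ref{proposition:ops:1} the operator $S_{h,i}^{\,\epsilon}(M)$ with $i\in M$ contains factors $H_i^{\gamma}$, and $[H_i^{\de},E_i^{\eps}]\ne0$; a weight count cannot detect this. The actual supercommutator is computed in Lemma~\ref{lemma:socle:1}:
$$
[E^{\delta}_i,S^{\,\epsilon}_{h,i}(M)]=\sum_{\gamma+\sigma=\epsilon+\delta}(-1)^{\1+(\delta+\gamma)\epsilon}\,S_{h,i}^{\,\gamma}\bigl(M_{i\mapsto \bar\imath}\bigr)E_i^{\,\sigma},
$$
which is nonzero in general. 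So after commuting $E_{i+1}^{\delta_{i+1}}\cdots E_{n-1}^{\delta_{n-1}}$ past (which \emph{is} legitimate by Corollary~\ref{corollary:ops:2}), the term $E_h^{\delta_h}\cdots E_i^{\delta_i}S_{h,i}^{\,\epsilon}(M)E_{i+1}^{\delta_{i+1}}\cdots E_{n-1}^{\delta_{n-1}}v$ splits into your term $\pm E_h^{\delta_h}\cdots E_{i-1}^{\delta_{i-1}}S_{h,i}^{\,\epsilon}(M)v'$ \emph{plus} an extra sum $\sum_{\gamma+\sigma=\epsilon+\delta_i}(\pm)\,E_h^{\delta_h}\cdots E_{i-1}^{\delta_{i-1}}S_{h,i}^{\,\gamma}(M')\,v^{\sigma}$, where $M'=(M\setminus\{i\})\cup\{\bar\imath\}$ and $v^{\sigma}=E_i^{\sigma}E_{i+1}^{\delta_{i+1}}\cdots E_{n-1}^{\delta_{n-1}}v$.

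Your argument kills only the first term (via Lemma~\ref{lemma:constr:2}, correctly). The extra term does vanish, but this requires a separate input: Lemma~\ref{lemma:constr:6}, applied to the signed set $M'$ (which now contains the odd element $\bar\imath$) and the \emph{same} injection $\psi$, noting $[h..i)\setminus M'=[h..i)\setminus M$ so the hypotheses of that lemma are met and $S_{h,i}^{\,\gamma}(M')\,v^{\sigma}=0$ for each $\gamma,\sigma$. This is exactly the point of the paper's proof and is the reason the statement needs both the ``even'' construction lemma (Lemma~\ref{lemma:constr:2}) and the ``one odd element'' one (Lemma~\ref{lemma:constr:6}); without supplying the second, the proof is incomplete.
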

\begin{proof}
Set $\mu:=\lm-\alpha(i,n)$.
Note that
{\renewcommand{\labelenumi}{{\rm \theenumi}}
\renewcommand{\theenumi}{{\rm(\alph{enumi}$^\prime$)}}
\begin{enumerate}
\item\label{lemma:socle:2:condition:a'} $\res_p\mu_t=\res_p\bigl(\mu_{\psi(t)}+1\bigr)$ for all $t\in[h..i)\setminus M$.
\end{enumerate}}
\noindent
Hence and by Lemma~\ref{lemma:constr:1}\ref{lemma:constr:1:part:i}, we get for all $\delta_h,\dots, \delta_{i-1}$:
\begin{align*}
 E^{\delta_h}_h\cdots E^{\delta_{i-1}}_{i-1} S_{h,i}^{\epsilon}(M)v
=\cond_{\sum\delta=\epsilon}\prod\nolimits_{t\in(h..i]\,\setminus\,\psi((h..i]\setminus M)}\bigl(\res_p\mu_h-\res_p(\mu_t+1)\bigr)v
\\
=\prod\nolimits_{t\in(h..i)\setminus\psi((h..i)\setminus M)}\!(\res_p\lm_h\!-\res_p(\lm_t+1))(\res_p\lm_h-\res_p(\mu_i+1))v,
\end{align*}
which is zero because $\psi(h)\in(h..i)\setminus\psi((h..i){\setminus}M)$.


Let $\eps,\delta_h,\dots,\delta_{n-1}\in\{\0,\1\}$.
Set $M':=(M\setminus\{i\})\cup\{\bar\imath\}$,
$v':= E_i^{\delta_i}\cdots E_{n-1}^{\delta_{n-1}}v$, and
$v^\sigma:= E_i^{\,\sigma} E_{i+1}^{\delta_{i+1}}\cdots E_{n-1}^{\delta_{n-1}}v$. Note that  $v', v^\sigma\in L(\lm)^\lm$. 
By Corollary~\ref{corollary:ops:2} and Lemma~\ref{lemma:socle:1}, we have:
\begin{align*}
 E_h^{\delta_h}\cdots E_{n-1}^{\delta_{n-1}} S_{h,i}^{\,\epsilon}(M)\,v
=(-1)^{\,\epsilon\sum\delta|_{(i..n)}} E_h^{\delta_h}\cdots E_i^{\delta_i} S_{h,i}^{\,\epsilon}(M)\, E_{i+1}^{\delta_{i+1}}\cdots E_{n-1}^{\delta_{n-1}}\,v
\\
=(-1)^{\,\epsilon\sum\delta|_{[i..n)}} E_h^{\delta_h}\cdots E_{i-1}^{\delta_{i-1}} S_{h,i}^{\,\epsilon}(M)\, E_i^{\delta_i}\cdots E_{n-1}^{\delta_{n-1}}\,v
\\
+(-1)^{\,\epsilon\sum\delta|_{(i..n)}}\sum_{\gamma+\sigma=\epsilon+\delta_i}(-1)^{\1+(\delta_i+\gamma)\epsilon}
E_h^{\delta_h}\cdots E_{i-1}^{\delta_{i-1}} S_{h,i}^{\,\gamma}(M') E_i^{\,\sigma} E_{i+1}^{\delta_{i+1}}\cdots E_{n-1}^{\delta_{n-1}}v
\\
=(-1)^{\epsilon\sum\delta|_{[i..n)}} E_h^{\delta_h}\cdots E_{i-1}^{\delta_{i-1}} S_{h,i}^{\,\epsilon}(M)\,v'
\\
+(-1)^{\,\epsilon\sum\delta|_{(i..n)}}\sum_{\gamma+\sigma=\epsilon+\delta_i}(-1)^{\1+(\delta_i+\gamma)\epsilon}E_h^{\delta_h}\cdots E_{i-1}^{\delta_{i-1}} S_{h,i}^{\,\gamma}(M')\,v^\sigma=0,
\end{align*}
because, by Lemma~\ref{lemma:constr:2}, $ S_{h,i}^{\epsilon}(M)v'\!=\!0$,
and by Lemma~\ref{lemma:constr:6},  $ S_{h,i}^{\gamma}(M')v^\sigma\!=\!0$.

By Proposition~\ref{proposition:intro:5}, Lemma~\ref{lemma:socle:1.5}, and Lemma~\ref{lemma:constr:1}\ref{lemma:constr:1:part:ii}, it now suffices to prove that $
E_l^\delta S_{h,i}^{\,\epsilon}(M)v=0$ for all $h\le l<i-1$ and all $\de$. Apply induction on $i-h$.
By Lemma~\ref{lemma:constr:1}\ref{lemma:constr:1:part:ii}, we may assume that
$l+1\notin M$ and prove that $ S^{\,\sigma}_{l+1,i}\bigl(M_{(l+1..i]}\bigr)v=0$
for any $\sigma$. But this fact follows from the inductive hypothesis by considering the restriction
$\psi'=\psi|_{[l{+}1..i)\setminus M_{(l+1..i]}}$, which is an injection of
$[l{+}1..i)\setminus M_{(l+1..i]}=[l{+}1..i)\setminus M$ into $[l{+}1..i)$, satisfying conditions similar to~\ref{lemma:socle:2:condition:a} and~\ref{lemma:socle:2:condition:b}.
\end{proof}

\begin{theorem}\label{theorem:socle:1}
Let $\lm\in X(n)$, $1\le h<i<n$, $\lm_i\not\=0\pmod p$, and $\lm_h\not\equiv0\pmod p$ or  $\lm_i\not\equiv1\pmod p$. Let $\res_p\lm_h=\res_p\lm_i=:\be$, and $\bigl[\prod_{h<k\le i} r_\beta(\lm)_k\bigr]=-^m$. Then for any nonzero homogeneous $U(n{-}1)$-primitive vector
$v\in L(\lm)^{\lm-\alpha(i,n)}$, there exists a nonzero homogeneous $U(n-1)$-primitive vector $w\in U(i)v$ of weight
$\lm-\alpha(h,n)$.
\end{theorem}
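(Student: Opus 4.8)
### Proof Plan for Theorem~\ref{theorem:socle:1}

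The plan is to mirror the proof of Theorem~\ref{theorem:socle:0.5}, but working with ordinary (unbarred) lowering operators $S^{\,\epsilon}_{h,i}(M)$ where $M$ is a genuine subset of $(h..i]$ containing $i$ (no odd element), so that Lemmas~\ref{lemma:socle:2}, \ref{lemma:constr:1}, \ref{lemma:constr:2} take the place of Lemmas~\ref{lemma:socle:1.75}, \ref{lemma:constr:5}, \ref{lemma:constr:6} respectively. First I would invoke Lemma~\ref{lemma:pm:3} applied to $r_\beta(\lm)|_{(h..i]}$: since $\bigl[\prod_{h<k\le i}r_\beta(\lm)_k\bigr]=-^m$, there is a flow $\Gamma$ on $(h..i]$ fully coherent with $r_\beta(\lm)|_{(h..i]}$. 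Let $S$ be the set of sources of $\Gamma$ and $\psi:S\to(h..i]$ the induced injection sending a source to its target. Coherence of $\Gamma$ with $r_\beta(\lm)|_{(h..i]}$ together with Proposition~\ref{proposition:rbeta} gives $\res_p\lm_t=\res_p(\lm_{\psi(t)}+1)=\beta$ and $\psi(t)>t$ for all $t\in S$. As in Theorem~\ref{theorem:socle:0.5}, one checks $i\notin S$ (otherwise $i<\psi(i)\in(h..i]$, impossible), so setting $M:=\bigl((h..i)\setminus S\bigr)\cup\{i\}$ we get $i\in M$ and $(h..i]\setminus M = (h..i)\setminus M = S$.

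The next step handles the highest-weight $U^0(n)$-behaviour, which is the point where the present theorem differs from Theorem~\ref{theorem:socle:0.5}. There the coefficient of $v'$ was an expression of the form $H_h^{\epsilon+\sum\delta}-(-1)^{\epsilon\sum\delta}H_i^{\epsilon+\sum\delta}$ and the trick was to first replace $v$ by $v':=\bar H_h^a\bar H_i^b v$ with $a+b$ maximal so that these $H^{\,\1}$-type terms vanish. In the present setting, Lemma~\ref{lemma:rcoeff:1} (via Lemma~\ref{lemma:constr:1}\ref{lemma:constr:1:part:i}) shows that $E^{\delta_h}_h\cdots E^{\delta_{i-1}}_{i-1}S^{\,\epsilon}_{h,i}(M)v$ involves no $H$-factors at all beyond the scalar product $\prod_{t\in(h..i]\setminus\psi((h..i]\setminus M)}(\res_p\mu_h-\res_p(\mu_t+1))$ times $\cond_{\sum\delta=\epsilon}$, where $\mu=\lm-\alpha(i,n)$ — so no preliminary modification of $v$ is needed; one simply takes $w:=S^{\,\epsilon}_{h,i}(M)v$ for either choice of $\epsilon$ (say $\epsilon=\0$ for definiteness, or whichever makes the nonvanishing argument below work; in fact both work). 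I would then verify $U(n-1)$-primitivity of $w$ exactly as in Theorem~\ref{theorem:socle:0.5}: for $h\le l<i-1$ use Lemma~\ref{lemma:constr:1}\ref{lemma:constr:1:part:ii} to reduce to showing $S^{\,\sigma}_{l+1,i}(M_{(l+1..i]})v=0$ when $l+1\notin M$, which follows from Lemma~\ref{lemma:socle:2} applied to the restricted injection $\psi|_{[l+1..i)\setminus M_{(l+1..i]}}$; then use Lemma~\ref{lemma:socle:1.5} together with Lemma~\ref{lemma:constr:1}\ref{lemma:constr:1:part:i} (the coefficient vanishes because $\psi(h)\in(h..i]\setminus\psi((h..i]\setminus M)$ and $\res_p\lm_h=\res_p(\lm_{\psi(h)}+1)$) to kill the remaining $E_h^{\delta_h}\cdots E_{i-1}^{\delta_{i-1}}$-compositions; finally use Corollary~\ref{corollary:ops:2} to slide $E_i^{\delta_i}\cdots E_{n-1}^{\delta_{n-1}}$ past $S^{\,\epsilon}_{h,i}(M)$ and apply Lemma~\ref{lemma:constr:2} with $v^+_\lm=E_i^{\delta_i}\cdots E_{n-1}^{\delta_{n-1}}v$ to finish the case $l=i-1$.

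Finally, to see $w\ne 0$ I would compute $E_h^{\delta_h}\cdots E_{n-1}^{\delta_{n-1}}w$ by choosing $\delta_i,\dots,\delta_{n-1}$ so that $v':=E_i^{\delta_i}\cdots E_{n-1}^{\delta_{n-1}}v\ne 0$ (possible since $v\ne 0$ by Proposition~\ref{proposition:intro:5}), choosing $\delta_h,\dots,\delta_{i-1}$ with $\sum\delta|_{[h..i)}=\epsilon$, sliding the top $E$'s past $S^{\,\epsilon}_{h,i}(M)$ via Corollary~\ref{corollary:ops:2}, and applying Lemma~\ref{lemma:constr:1}\ref{lemma:constr:1:part:i} to $v'$ to get $E_h^{\delta_h}\cdots E_{n-1}^{\delta_{n-1}}w = \pm\prod_{t\in(h..i]\setminus\psi(S)}\bigl(\res_p\lm_h-\res_p(\lm_t+1)\bigr)v'$. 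Since $\Gamma$ is \emph{fully} coherent with $r_\beta(\lm)|_{(h..i]}$, Definition~\ref{definition:pm:2}\ref{definition:pm:2:6} and Proposition~\ref{proposition:rbeta}\ref{proposition:rbeta:part:2} ensure every $t\in(h..i]$ with $\res_p(\lm_t+1)=\beta$ lies in $\psi(S)$, so the product is a nonzero scalar, giving $w\ne 0$. The main obstacle — and the reason this is genuinely a separate theorem rather than a corollary of Theorem~\ref{theorem:socle:0.5} — is being careful that with $M$ containing $i$ (rather than $\bar\imath$) the element $i$ really must be excluded from $S$ and that the factor of $w$ in the final scalar product ends up being the \emph{nonzero} vector $v'\in L(\lm)^\lm$; the hypotheses ``$\lm_i\not\equiv 0\pmod p$'' and ``not both $\lm_h\equiv 0$ and $\lm_i\equiv 1 \pmod p$'' are exactly what guarantee that a flow on $(h..i]$ (rather than on $(h..i)$, or a resolution) is the correct combinatorial object and that $\psi(i)\ne i$ cannot happen, so no $H^{\,\1}$-obstruction of the Theorem~\ref{theorem:socle:0.5} type arises and no preliminary twist of $v$ is required.
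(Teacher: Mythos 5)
Your setup coincides with the paper's: the flow $\Gamma$ on $(h..i]$, the set $M=(h..i]\setminus S$ with $i\in M$, and the candidate $w=S^{\,\epsilon}_{h,i}(M)v$ (the paper takes $\epsilon=\1$, i.e.\ $w=\bar S_{h,i}(M)v$). The primitivity argument is also essentially the paper's, although your stated reason for the vanishing of $E_h^{\delta_h}\cdots E_{i-1}^{\delta_{i-1}}S^{\,\epsilon}_{h,i}(M)v$ is off: here $\psi$ is only defined on $S\subset(h..i]$, so ``$\psi(h)$'' does not exist; the coefficient vanishes because $i\in(h..i]\setminus\psi(S)$ and the corresponding factor is $\res_p\mu_h-\res_p(\mu_i+1)=\res_p\lm_h-\res_p\lm_i=\mathbf 0$, where $\mu=\lm-\alpha(i,n)$.

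The genuine gap is in the non-vanishing argument. You propose to ``slide $E_i^{\delta_i}\cdots E_{n-1}^{\delta_{n-1}}$ past $S^{\,\epsilon}_{h,i}(M)$ via Corollary~\ref{corollary:ops:2}'', but that corollary only gives supercommutation of $E_j^\delta$ with $S^{\,\epsilon}_{i,j}(M)$ when $M$ contains $\bar\jmath$. Here $M$ contains the \emph{even} element $i$, so $[E_i^{\delta_i},S^{\,\epsilon}_{h,i}(M)]\neq 0$; by Lemma~\ref{lemma:socle:1} it equals $\sum_{\gamma+\sigma=\epsilon+\delta_i}\pm S_{h,i}^{\,\gamma}(M_{i\mapsto\bar\imath})E_i^{\,\sigma}$. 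Consequently $E_h^{\delta_h}\cdots E_{n-1}^{\delta_{n-1}}w$ is a sum of \emph{two} contributions (the paper's equation~(\ref{equation:socle:2})): the term you wrote down, which carries the extra factor $\cond_{\sum\delta|_{[h..i)}=\epsilon}(\res_p\lm_h-\res_p(\lm_i+1))$, plus commutator terms of the form $S_{h,i}^{\,\gamma}(M')v^\sigma$ with $M'=(M\setminus\{i\})\cup\{\bar\imath\}$, which by Lemma~\ref{lemma:constr:5}(i) produce the operators $H_h^{\,\gamma+\sum\delta|_{[h..i)}}\mp H_i^{\,\gamma+\sum\delta|_{[h..i)}}$ acting on highest weight vectors. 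Your closed formula for $E_h^{\delta_h}\cdots E_{n-1}^{\delta_{n-1}}w$ is therefore false, and the clean ``the product of residues is nonzero'' conclusion does not follow. The paper instead argues by contradiction: assuming $w=0$ yields the pair of identities~(\ref{equation:socle:3}) and~(\ref{equation:socle:4}) (one for $\sum\delta|_{[h..i)}=\1$, one for $=\0$), which are combined via $(\bar H_h-\bar H_i)^2=H_h+H_i$ to force $-2\lm_i(\lm_h-\lm_i+1)\=0$ and ultimately $\lm_i\=1$, $\lm_h\=0\pmod p$, contradicting the hypothesis. In particular, the assumption that $\lm_h\=0$ and $\lm_i\=1\pmod p$ do not both hold is consumed precisely in this final arithmetic step, not (as you suggest) merely in ruling out $\psi(i)=i$; your proposal never actually uses it, which is a reliable sign that the argument as written cannot be complete.
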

\begin{proof}
By Lemmas~\ref{lemma:pm:1} and~\ref{lemma:pm:3}, there exists a flow $\Gamma$ on $(h..i]$
fully coherent with $r_\beta(\lm)|_{(h..i]}$. Let $S$ be the set of all sources of edges of $\Gamma$.
Thus for any $t\in S$, there exists a unique $\psi(t)\in(h..i]$ such that $(t,\psi(t))\in\Gamma$, and $\psi$ is an injection of $S$ into $(h..i]$.
By Proposition~\ref{proposition:rbeta}, we also have
{\renewcommand{\labelenumi}{{\rm \theenumi}}
\renewcommand{\theenumi}{{\rm(\alph{enumi})}}
\begin{enumerate}
\item\label{theorem:socle:1:property:a} $\res_p\lm_t=\res_p\bigl(\lm_{\psi(t)}+1\bigr)=\beta$ for all $t\in S$;\\[-10pt]
\item\label{theorem:socle:1:property:b} $\psi(t)>t$ for all $t\in S$.
\end{enumerate}}
\noindent
Note further that $\psi(S)\subset (h..i)$, for if $\psi(t)=i$ for some $t\in S$, then by~\ref{theorem:socle:1:property:a}, we have $\res_p(\lm_i+1)=\beta=\res_p\lm_i$, and so $\lm_i\=0\pmod p$, which contradicts our assumptions.

Denote $M:=(h..i]\setminus S$. Note that $i\in M$, for otherwise $i\in S$, and $i<\psi(i)\in(h..i]$ gives a contradiction. Set
$
w:=\,\bar{\! S}_{h,i}(M)v.
$
We first prove that $w$ is $U(n-1)$-primitive. As $v$ is $U(n-1)$-primitive and by weights,
it suffices to prove that $ E_l^\delta\bar{\! S}_{h,i}(M)v=0$ for all $h\le l<i$ and all $\de$.
Assume first that $h\le l<i-1$. By Lemma~\ref{lemma:constr:1}\ref{lemma:constr:1:part:ii},
it suffices to prove that $ S^{\,\sigma}_{l+1,i}\bigl(M_{(l+1..i]}\bigr)\,v=0$ in the case $l+1\notin M$. But this follows from Lemma~\ref{lemma:socle:2} if we consider the injection
$\psi|_{[l+1..i)\setminus M_{(l+1..i]}}$.
Now, by Lemma~\ref{lemma:socle:1.5}, to that $w$ is $U(n-1)$-primitive,
it suffices to prove that $E_h^{\delta_h}\cdots E_{i-1}^{\delta_{i-1}}\,\bar{\! S}_{h,i}\bigl(M\bigr)\,v=0$
for any $\delta:[h..i)\to\{\0,\1\}$.
By Lemma~\ref{lemma:constr:1}\ref{lemma:constr:1:part:i}, we have
\begin{align*}
 E_h^{\delta_h}\cdots E_{i-1}^{\delta_{i-1}}\,\bar{\! S}_{h,i}(M)\,v
 \\
 \cond_{\sum\delta=\1}\prod\nolimits_{t\in(h..i)\setminus\psi((h..i)\setminus M)}
(\res_p\lm_h-\res_p(\lm_t+1))(\res_p\lm_h-\res_p(\mu_i+1))v,
\end{align*}
where $\mu=\lm-\alpha(i,n)$. The last expression is zero because
$\mu_i+1=\lm_i$, and so $\res_p\lm_h-\res_p(\mu_i+1)=\res_p\lm_h-\res_p\lm_i=\mathbf0$.

Finally, we prove that $w\neq 0$. If $w=0$, then $ E_h^{\delta_h}\cdots E_{n-1}^{\delta_{n-1}}w=0$ for all $\delta:[h..n)\to\{\0,\1\}$.
We set
$$
c:=\prod\nolimits_{t\in(h..i)\,\setminus\,\psi(S)}\bigl(\beta-\res_p(\lm_t+1)\bigr).
$$
As the flow $\Gamma$ is fully coherent with $r_\beta(\lm)|_{(h..i]}$, we have $c\neq 0$.
Denoting $M':=(M\setminus\{i\})\cup\{\bar\imath\}$ and applying parts~(i) of Lemmas~\ref{lemma:constr:1} and~\ref{lemma:constr:5},
we get: 
\begin{equation}\label{equation:socle:2}
\begin{array}{l}
0= E_h^{\delta_h}\cdots E_{n-1}^{\delta_{n-1}}w
=(-1)^{\sum\delta|_{[i..n)}} E_h^{\delta_h}\cdots E_{i-1}^{\delta_{i-1}}\,\bar{\! S}_{h,i}(M)\, E_i^{\delta_i}\cdots E_{n-1}^{\delta_{n-1}}\,v
\\[2pt]
\displaystyle
+(-1)^{\sum\delta|_{(i..n)}}\hspace{-5mm}\sum_{\gamma+\sigma=\1+\delta_i}
\hspace{-4mm}
(-1)^{\1+\delta_i+\gamma} E_h^{\delta_h}\cdots E_{i-1}^{\delta_{i-1}}S_{h,i}^{\,\gamma}(M')
 E_i^{\sigma} E_{i+1}^{\delta_{i+1}}\cdots E_{n-1}^{\delta_{n-1}}v
\\[2pt]
=(-1)^{\sum\delta|_{[i..n)}}
\cond_{\sum\delta|_{[h..i)}=\1}\,c\,\bigl(\res_p\lm_h-\res_p(\lm_i+1)\bigr) E_i^{\delta_i}\cdots E_{n-1}^{\delta_{n-1}}v
\\[3pt]
\displaystyle
+(-1)^{\sum\delta|_{[i..n)}}\sum_{\gamma+\sigma=\1+\delta_i}
\hspace{-4.5mm}
(-1)^{\1+\gamma}
c( H_h^{\gamma+\sum\delta|_{[h..i)}}{-}(-1)^{\gamma\sum\delta|_{[h..i)}} H_i^{\gamma+\sum\delta|_{[h..i)}})
\\[4pt]
\times E_i^{\,\sigma}
E_{i+1}^{\delta_{i+1}}\cdots E_{n-1}^{\delta_{n-1}}\,v.
\end{array}
\end{equation}
Choose $\delta_h,\ldots,\delta_{n-1}$ so that
$ E_i^{\,\delta_i+\1} E_{i+1}^{\delta_{i+1}}\cdots E_{n-1}^{\delta_{n-1}}v\ne0$
If $\delta_h+\dots+\delta_{i-1}=\1$ then cancelling out $c\,(-1)^{\sum\delta|_{[i..n)}}$ in~(\ref{equation:socle:2}), we get
\begin{equation}\label{equation:socle:3}
\begin{array}{l}
0=\bigl(\lm_h(\lm_h-1)-(\lm_i+1)\lm_i\bigr)\, E_i^{\delta_i}\cdots E_{n-1}^{\delta_{n-1}}\,v
+(\lm_h+\lm_i)\, E_i^{\delta_i}\cdots E_{n-1}^{\delta_{n-1}}\,v\\
-(\,\bar{\! H}_h-\,\bar{\! H}_i)\, E_i^{\,\delta_i+\1} E_{i+1}^{\delta_{i+1}}\cdots E_{n-1}^{\delta_{n-1}}\,v.
\end{array}
\end{equation}
On the other hand, if $\delta_h+\cdots+\delta_{i-1}=\0$, we get similarly:
\begin{equation}\label{equation:socle:4}
\begin{array}{l}
0=(\,\bar{\! H}_h-\,\bar{\! H}_i)\, E_i^{\delta_i}\cdots E_{n-1}^{\delta_{n-1}}\,v
-(\lm_h-\lm_i)\, E_i^{\,\delta_i+\1} E_{i+1}^{\delta_{i+1}}\cdots E_{n-1}^{\delta_{n-1}}\,v.
\end{array}
\end{equation}
Multiplying~(\ref{equation:socle:3}) by $\,\bar{\! H}_h-\,\bar{\! H}_i$,
using~(\ref{equation:socle:4}) and the equality $(\,\bar{\! H}_h-\,\bar{\! H}_i)^2=H_h+H_i$,
we get
$$
\Bigl[
\bigl(\lm_h(\lm_h-1)-(\lm_i+1)\lm_i+\lm_h+\lm_i\bigr)\,(\lm_h-\lm_i)-(\lm_h+\lm_i)
\Bigr] E_i^{\,\delta_i+\1} E_{i+1}^{\delta_{i+1}}\cdots E_{n-1}^{\delta_{n-1}}\,v=0.
$$
As $ E_i^{\,\delta_i+\1} E_{i+1}^{\delta_{i+1}}\cdots E_{n-1}^{\delta_{n-1}}\,v\ne0$, we have
$$
\bigl(\lm_h(\lm_h-1)-(\lm_i+1)\lm_i+\lm_h+\lm_i\bigr)\,(\lm_h-\lm_i)-(\lm_h+\lm_i)\=0\pmod p.
$$
The equality $\res_p\lm_h=\res_p\lm_i$ yields $\lm_h(\lm_h-1)-\lm_i(\lm_i-1)\=0\pmod p$. This allows us to write
the above formula as
\begin{align*}
0\=&\bigl(\lm_h(\lm_h\!-\!1\!)\!-\!(\lm_i\!+\!1)\lm_i\!-\!\bigl[\lm_h(\lm_h\!-\!1)\!-\!\lm_i(\lm_i\!-\!1)\bigr]\!+\!\lm_h\!+\!\lm_i\bigr)(\lm_h\!-\!\lm_i)\!-\!(\lm_h\!+\!\lm_i)\\
=&(\lm_h-\lm_i)^2\!-\!(\lm_h\!+\!\lm_i)\=\lm_h^2-2\lm_h\lm_i+\lm_i^2-\lm_h-\lm_i-\bigl[\lm_h(\lm_h-1)-\lm_i(\lm_i-1)\bigr]\\
=&-2\lm_i(\lm_h-\lm_i+1)\=0\pmod p.
\end{align*}
Since by assumption $\lm_i\not\=0\pmod p$, we get $\lm_h\=\lm_i-1\pmod p$.
Substituting this value of $\lm_h$ to
$\lm_h(\lm_h-1)-\lm_i(\lm_i-1)\=0\pmod p$, we get
$$
0\=(\lm_i-1)(\lm_i-2)-\lm_i(\lm_i-1)=-2\lm_i+2\pmod p.
$$
Hence $\lm_i\=1\pmod p$ and $\lm_h\=0\pmod p$, which is a contradiction.
\end{proof}

\section[Extension: case 3]{Extension: case  $\lm_h\=1\pmod p$, $\lm_i\=0\pmod p$, and $[\prod_{h<k\le i} r_\mathbf0(\lm)_k]=+-^m$}\label{extending:3}

\begin{lemma}\label{lemma:socle:5}
Let $\lm\in X(n)$, $1\le h<i<n$, $v$ be a $U(n-1)$-primitive vector of $L(\lm)^{\lm-\alpha(i,n)}$, $\epsilon\in\{\0,\1\}$, and $M\subset(h..i]$ be such that $i\in M$.
Suppose that there exists an injection $\psi:[h..i]\setminus M\to[h..i]$ such that
{\renewcommand{\labelenumi}{{\rm \theenumi}}
\renewcommand{\theenumi}{{\rm(\alph{enumi}$_\psi$)}}
\begin{enumerate}
\item\label{lemma:socle:5:psi:condition:a} $\res_p\lm_t=\res_p\bigl(\lm_{\psi(t)}+1\bigr)$ for all $t\in[h..i]\setminus M${\rm;}\\[-10pt]
\item\label{lemma:socle:5:psi:condition:b} $\psi(t)>t$ for all $t\in[h..i]\setminus M$
\end{enumerate}}
\noindent
and an injection $\theta:\bigl([h..i]\setminus M\bigr)\cup\{i\}\to[h..i]$ such that
{\renewcommand{\labelenumi}{{\rm \theenumi}}
\renewcommand{\theenumi}{{\rm(\alph{enumi}$_\theta$)}}
\begin{enumerate}
\item\label{lemma:socle:5:theta:condition:a} $\res_p\lm_t=\res_p\bigl(\lm_{\theta(t)}+1\bigr)$ for all $t\in\bigl([h..i]\setminus M\bigr)\cup\{i\}${\rm;}\\[-10pt]
\item\label{lemma:socle:5:theta:condition:b} $\theta(t)\ge t$ for all $t\in\bigl([h..i]\setminus M\bigr)\cup\{i\}${\rm;}\\[-10pt]
\end{enumerate}}
\noindent
Then $ S_{h,i}^{\,\epsilon}(M)\,v=0$.
\end{lemma}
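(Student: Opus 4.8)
The statement of Lemma~\ref{lemma:socle:5} fits the established pattern of the ``extension'' lemmas (Lemmas~\ref{lemma:socle:1.75} and~\ref{lemma:socle:2}): we want to show $S_{h,i}^{\,\epsilon}(M)\,v=0$ for a $U(n-1)$-primitive vector $v\in L(\lm)^{\lm-\alpha(i,n)}$, and the key is to reduce this to an application of Proposition~\ref{proposition:intro:5} together with the computations of $E_l^\delta S_{h,i}^\epsilon(M)\,v$ from Chapter~\ref{Q(n):raising coefficients}. Write $\mu:=\lm-\alpha(i,n)$, so that $v$ is a highest weight vector for $U(n-1)$ and $\mu_i=\lm_i-1$, $\mu_n=\lm_n+1$, while $\mu_t=\lm_t$ for $h\le t<i$. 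By Proposition~\ref{proposition:intro:5} and weight considerations it suffices to prove that $E_l^\delta S_{h,i}^\epsilon(M)\,v=0$ for all $l\in[h..i-1)$ and $\delta\in\{\0,\1\}$, and that $E_h^{\delta_h}\cdots E_{n-1}^{\delta_{n-1}}S_{h,i}^\epsilon(M)\,v=0$ for all functions $\delta:[h..n)\to\{\0,\1\}$.

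\textbf{The two ``product'' computations.} First I would handle $E_h^{\delta_h}\cdots E_{i-1}^{\delta_{i-1}}S_{h,i}^\epsilon(M)\,v$. Since $i\in M$ and $M$ consists of even elements, Lemma~\ref{lemma:constr:1}\ref{lemma:constr:1:part:i} applies with the injection $\psi$ restricted to $[h..i)\setminus M$ (note $\psi$ is defined on $[h..i]\setminus M$, and by~\ref{lemma:socle:5:psi:condition:b} its value at any such $t<i$ is in $[h..i]$; but actually we only need it as an injection into $(h..i]$, so $\psi|_{[h..i)\setminus M}$ works): the output is $\cond_{\sum\delta=\epsilon}\prod_{t\in(h..i]\setminus\psi((h..i]\setminus M)}(\res_p\mu_h-\res_p(\mu_t+1))\,v$. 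Here $\psi(h)\in(h..i]\setminus\psi((h..i]\setminus M)$ because $\psi$ is injective, and by~\ref{lemma:socle:5:psi:condition:a} the corresponding factor $\res_p\mu_h-\res_p(\mu_{\psi(h)}+1)=\res_p\lm_h-\res_p(\lm_{\psi(h)}+1)=0$ (using $\mu_h=\lm_h$ and $\mu_{\psi(h)}=\lm_{\psi(h)}$ since $\psi(h)<i$ when $\psi(h)\neq i$, and separately handling $\psi(h)=i$ via~\ref{lemma:socle:5:psi:condition:b}). So this expression vanishes. Then, using Corollary~\ref{corollary:ops:2} to slide $S_{h,i}^\epsilon(M)$ past $E_i^{\delta_i}\cdots E_{n-1}^{\delta_{n-1}}$ (which commute with $S_{h,i}^\epsilon(M)$ since $i\in M$ and $l>i-1$ means $l\ge i$, but we need $l\geq i$ strictly — here $E_i,\dots,E_{n-1}$ all supercommute with $S_{h,i}^\epsilon(M)$ as $l\ge i>$ nothing in the range; actually $l=i$ needs care, but $i\in M$ with $i$ even means $\bar\imath\notin M$, so Corollary~\ref{corollary:ops:2} does not directly give commutation with $E_i$ — this is exactly where $\theta$ enters). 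When we push the $E$'s to the right, the commutator of $E_i^{\delta_i}$ with $S_{h,i}^\epsilon(M)$ produces, by Lemma~\ref{lemma:socle:1}, terms of the form $S_{h,i}^\gamma(M_{i\mapsto\bar\imath})E_i^\sigma$; these act on $v':=E_i^{\sigma}E_{i+1}^{\delta_{i+1}}\cdots E_{n-1}^{\delta_{n-1}}v\in L(\lm)^\lm$, and I would kill them using Lemma~\ref{lemma:constr:3} (the ``one odd element'' construction-vanishing lemma) applied with the injection $\theta$: conditions \ref{lemma:socle:5:theta:condition:a} and~\ref{lemma:socle:5:theta:condition:b} are precisely tailored to the hypotheses \ref{lemma:constr:3:condition:a},~\ref{lemma:constr:3:condition:b},~\ref{lemma:constr:3:condition:c} of Lemma~\ref{lemma:constr:3} once one notes $\psi(t)=t$ forces $\lm_t\=0\pmod p$, i.e.\ $t$ equals the distinguished $q$. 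The main term $S_{h,i}^\epsilon(M)\,v'$ with $v':=E_i^{\delta_i}\cdots E_{n-1}^{\delta_{n-1}}v\in L(\lm)^\lm$ is killed by Lemma~\ref{lemma:constr:2} using $\psi$ again.

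\textbf{The $E_l$ computations and the obstacle.} For $h\le l<i-1$, by Lemma~\ref{lemma:constr:1}\ref{lemma:constr:1:part:ii} it suffices, in the case $l+1\notin M$, to show $S^{\,\sigma}_{l+1,i}(M_{(l+1..i]})\,v=0$ for $\sigma\in\{\0,\1\}$; this follows by induction on $i-h$ once I check that the restrictions $\psi|_{[l+1..i]\setminus M_{(l+1..i]}}$ and $\theta|_{([l+1..i]\setminus M_{(l+1..i]})\cup\{i\}}$ still satisfy the analogues of conditions $(\mathrm{a}_\psi)$--$(\mathrm{b}_\theta)$ — which they do since $l+1\notin M$ gives $[l{+}1..i]\setminus M_{(l+1..i]}=[l{+}1..i]\setminus M$ and the conditions are inherited verbatim. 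The step I expect to be the genuine obstacle is the bookkeeping around sliding $E_i^{\delta_i}$ past $S_{h,i}^\epsilon(M)$ and checking that \emph{both} vanishing lemmas (Lemma~\ref{lemma:constr:2} for the $\psi$-term and Lemma~\ref{lemma:constr:3} for the $\theta$-terms $S_{h,i}^\gamma(M_{i\mapsto\bar\imath})\,v^\sigma$) apply with the correct signs and the correct residue/inequality data; in particular one must verify that $\theta$ is used for the modified set $M':=(M\setminus\{i\})\cup\{\bar\imath\}$ (whose only odd element is $\bar\imath$, so $q=i$ there, and $\lm_i\=0\pmod p$ is exactly what we are given, making hypothesis ``$\lm_q\=0\pmod p$'' of Lemma~\ref{lemma:constr:4}/Lemma~\ref{lemma:constr:3}\ref{lemma:constr:3:part:ii} automatic), while $\psi$ handles $M$ itself. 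Once these applications are matched up, the proof closes exactly as in Lemma~\ref{lemma:socle:2}: combine Proposition~\ref{proposition:intro:5}, Lemma~\ref{lemma:socle:1.5}, and the two displayed vanishing computations.
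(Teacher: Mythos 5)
Your proposal is correct and follows the same route as the paper: establish the weight-shifted residue condition for $\psi$, kill the products $E_h^{\delta_h}\cdots E_{i-1}^{\delta_{i-1}}S_{h,i}^{\,\epsilon}(M)v$ via Lemma~\ref{lemma:constr:1}(i), use Lemma~\ref{lemma:socle:1} to commute $E_i^{\delta_i}$ past $S_{h,i}^{\,\epsilon}(M)$ so that the main term dies by Lemma~\ref{lemma:constr:2} (via $\psi$) and the $M':=(M\setminus\{i\})\cup\{\bar\imath\}$ terms die by Lemma~\ref{lemma:constr:4} with $q=i$ (via $\theta$), then finish the remaining $E_l^\delta$ cases by induction on $i-h$ with restricted injections.

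One point of your justification is off, though easily repaired. When $\psi(t)=i$, the passage from $\res_p\lm_t=\res_p(\lm_i+1)$ to the needed $\res_p\mu_t=\res_p(\mu_i+1)=\res_p\lm_i$ is \emph{not} handled by $(\mathrm{b}_\psi)$, and $\lm_i\equiv 0\pmod p$ is \emph{not} a hypothesis of this lemma (it is one only in Theorem~\ref{theorem:socle:2}, where the lemma is applied). It is a consequence of the $\theta$ data: $(\mathrm{b}_\theta)$ forces $\theta(i)=i$, and then $(\mathrm{a}_\theta)$ gives $\res_p\lm_i=\res_p(\lm_i+1)$, i.e.\ $\lm_i\equiv 0\pmod p$ since $p\ne 2$. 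This is precisely the second place (besides killing the $M'$ terms) where the existence of $\theta$ is essential, so it should be stated explicitly rather than attributed to $(\mathrm{b}_\psi)$ or to the hypotheses.
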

\begin{proof}
Set $\mu:=\lm-\alpha(i,n)$.
Note the following property of $\psi$:
{\renewcommand{\labelenumi}{{\rm \theenumi}}
\renewcommand{\theenumi}{{\rm(\alph{enumi}$'_\psi$)}}
\begin{enumerate}
\item\label{lemma:socle:5:psi:condition:a'} $\res_p\mu_t=\res_p\bigl(\mu_{\psi(t)}+1\bigr)$ for all $t\in[h..i]\setminus M${\rm;}\\[-10pt]
\end{enumerate}}
\noindent
Indeed, we only need to check that $\res_p\mu_t=\res_p\bigl(\mu_i+1\bigr)$
for $t\in[h..i]\setminus M$ such that $\psi(t)=i$. We have $\theta(i)=i$ by property~\ref{lemma:socle:5:theta:condition:b}
and $t<i$ by property~\ref{lemma:socle:5:psi:condition:b}. Therefore by properties~\ref{lemma:socle:5:psi:condition:a}
and~\ref{lemma:socle:5:theta:condition:a}, we get
\begin{equation}\label{equation:socle:4.5}
\res_p\mu_t\!=\!\res_p\lm_t\!=\!\res_p(\lm_i+1)\!=\!\res_p(\lm_{\theta(i)}+1)\!=\!\res_p\lm_i\!=\!\res_p\bigl(\mu_i+1\bigr).
\end{equation}
Note that in particular $\res_p(\lm_i+1)=\res_p\lm_i$ implies $\lm_i\=0\pmod p$.

Let $\delta_h,\dots,\delta_{i-1}\!\in\!\{\0,\1\}$.
As $v$ has weight $\mu$, we have by~\ref{lemma:socle:5:psi:condition:a'} and Lemma~\ref{lemma:constr:1}\ref{lemma:constr:1:part:i}:
\begin{align*}
 E^{\delta_h}_h\cdots E^{\delta_{i-1}}_{i-1} \!S_{h,i}^{\,\epsilon}(M)v
\!=\!\cond_{\sum\delta=\epsilon}\prod\nolimits_{t\in(h..i]\,\setminus\,\psi((h..i]\setminus M)}\!(\res_p\mu_h-\res_p(\mu_t+1))v\!=\!0,
\end{align*}
since by ~\ref{lemma:socle:5:psi:condition:b}, $\psi(h)\in(h..i]\setminus\psi((h..i]{\setminus}M)$, and by~\ref{lemma:socle:5:psi:condition:a'}, $\res_p\mu_h=\res_p(\mu_{\psi(h)}+1)$.

Let $\delta_h,\dots, \delta_{n-1}\!\in\!\{\0,\1\}$, and set $M'\!:=\!(M\!\setminus\!\{i\})\!\cup\!\{\bar\imath\}$, $v':= E_i^{\delta_i}\cdots E_{n-1}^{\delta_{n-1}}v\in L(\lm)^\lm$, $v^\sigma:= E_i^{\,\sigma} E_{i+1}^{\delta_{i+1}}\cdots E_{n-1}^{\delta_{n-1}}\,v\in L(\lm)^\lm$.
By Lemma~\ref{lemma:socle:1}, we have
\begin{equation}\label{equation:socle:5}
\begin{array}{l}
\displaystyle E_h^{\delta_h}\cdots E_{n-1}^{\delta_{n-1}} S_{h,i}^{\,\epsilon}(M)\,v
=(-1)^{\,\epsilon\sum\delta|_{[i..n)}} E_h^{\delta_h}\cdots E_{i-1}^{\delta_{i-1}} S_{h,i}^{\,\epsilon}(M)\,v'\\
\displaystyle
+(-1)^{\,\epsilon\sum\delta|_{(i..n)}}\sum_{\gamma+\sigma=\epsilon+\delta_i}(-1)^{\1+(\delta_i+\gamma)\epsilon} E_h^{\delta_h}\cdots E_{i-1}^{\delta_{i-1}} S_{h,i}^{\,\gamma}(M')\,v^\sigma.
\end{array}
\end{equation}
Using $\psi$, by Lemma~\ref{lemma:constr:2}, we get $ S_{h,i}^{\,\epsilon}(M)\,v'=0$.
On the other hand, using $\theta$, by Lemma~\ref{lemma:constr:4} with $q=i$, we get $ S_{h,i}^{\,\gamma}(M')\,v^\sigma=0$.

By Proposition~\ref{proposition:intro:5} and Lemma~\ref{lemma:socle:1.5}, it now suffices to prove
$
E_l^\delta S_{h,i}^{\,\epsilon}(M)\,v=0
$
for any $h\le l<i-1$ and $\delta\in\{\0,\1\}$.
We apply induction on $i-h$. By Lemma~\ref{lemma:constr:1}\ref{lemma:constr:1:part:ii} we may assume that
$l+1\notin M$ and prove that $ S^{\,\sigma}_{l+1,i}(M_{(l+1..i]})v=0$
for any $\sigma$. But this equality follows from the inductive hypothesis if we consider the restrictions
$\psi'=\psi|_{[l{+}1..i]\setminus M_{(l+1..i]}}$ and $\theta'=\theta|_{\([l{+}1..i]\setminus M_{(l+1..i]}\)\cup\{i\}}$,
which are injections from $[l{+}1..i]\setminus M_{(l+1..i]}=[l{+}1..i]\setminus M$ and
$\([l{+}1..i]\setminus M_{(l+1..i]}\)\cup\{i\}=\([l{+}1..i]\setminus M\)\cup\{i\}$ to $[l{+}1..i]$
satisfying conditions similar to~\ref{lemma:socle:5:psi:condition:a},
\ref{lemma:socle:5:psi:condition:b}, \ref{lemma:socle:5:theta:condition:a}
and~\ref{lemma:socle:5:theta:condition:b}.
\end{proof}

\begin{theorem}\label{theorem:socle:2}
Let $\lm\in X(n)$ and $1\le h<i<n$,  $\lm_h\=1\pmod p$, $\lm_i\=0\pmod p$,
and $\bigl[\prod_{h<k\le i} r_\mathbf0(\lm)_k\bigr]=+-^m$. Then for any nonzero homogeneous $U(n{-}1)$-primitive
vector $v\in L(\lm)^{\lm-\alpha(i,n)}$, there exists a nonzero homogeneous $U(n-1)$-primitive vector $w\in U(i)\,v$
of weight $\lm-\alpha(h,n)$. 
\end{theorem}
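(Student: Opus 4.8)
The plan is to adapt the proof of Theorem~\ref{theorem:constr:2} (the ``Construction'' statement in which a \emph{resolution} rather than a flow intervenes), in the same spirit as Theorem~\ref{theorem:socle:1} adapts Theorem~\ref{theorem:constr:1}: the vanishing inputs for highest weight vectors are replaced by the ``extension'' versions for the given primitive vector $v$, the chief of which is Lemma~\ref{lemma:socle:5}.

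\emph{Set-up.} From $\bigl[\prod_{h<k\le i} r_{\mathbf0}(\lm)_k\bigr]=+-^m$ and Lemma~\ref{lemma:pm:5} one extracts a section of $r_{\mathbf0}(\lm)|_{(h..i]}$, hence (Definition~\ref{definition:pm:3}) a resolution $\Gamma$ of $r_{\mathbf0}(\lm)|_{(h..i]}$. Let $S$ be the set of sources of the edges of $\Gamma$, $\psi\colon S\to(h..i]$ the injective edge map, and $q$ the largest element of $S$ with $\psi(q)=q$ (such an element exists since a resolution is a weak flow but not a flow, Proposition~\ref{proposition:A}). Put $M:=((h..i]\setminus S)\cup\{\bar q\}$ and fix $\epsilon\in\{\0,\1\}$. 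One checks: $M$ is a signed $(h..i]$-set with $(h..i]\setminus M=S$ containing the integer $i$ if $q<i$ and $\bar\imath$ if $q=i$; for $t\in S$ one has $\res_p\lm_t=\res_p(\lm_{\psi(t)}+1)=\mathbf0$, $\psi(t)\ge t$, and $\psi(t)=t\Rightarrow t\le q$; $\lm_q\equiv0\pmod p$; and, by full coherence, every $t\in(h..i]$ with $\res_p(\lm_t+1)=\mathbf0$ lies in $\psi(S)$. Set $w:=S^{\,\epsilon}_{h,i}(M)\,v$; as $S^{\,\epsilon}_{h,i}(M)\in U^{\leq 0}(i)$ has weight $-\alpha(h,i)$, this lies in $U(i)v$ and has weight $\lm-\alpha(h,n)$.

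\emph{Primitivity of $w$.} Since $v$ is $U(n-1)$-primitive, Lemma~\ref{lemma:socle:1.5} reduces the claim to $E^\delta_l S^{\,\epsilon}_{h,i}(M)v=0$ for $h\le l<i-1$ and $E^{\delta_h}_h\cdots E^{\delta_{i-1}}_{i-1}S^{\,\epsilon}_{h,i}(M)v=0$ for all $\delta\colon[h..i)\to\{\0,\1\}$. The first is handled as in Theorem~\ref{theorem:constr:2}: Lemmas~\ref{lemma:ops:6}--\ref{lemma:ops:7} let one assume $l+1\notin M$, the case $l=q-1$ being immediate from $\lm_q\equiv0\pmod p$, and otherwise one reduces to the vanishing of a sub-operator $S^{\,\sigma}_{l+1,i}(M_{(l+1..i]})v$, which is supplied by Lemma~\ref{lemma:socle:5} or Lemma~\ref{lemma:socle:2} (with suitable restrictions of $\psi$ and of a second injection $\theta$ forced to obey $\theta(i)=i$ — legitimate exactly because $\lm_i\equiv0\pmod p$, which is what Lemma~\ref{lemma:socle:5} is built around) according to whether the restricted set still carries $\bar q$. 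The second follows by expanding $S^{\,\epsilon}_{h,i}(M)$ with the raising-coefficient formula Lemma~\ref{lemma:rcoeff:2} and acting on $v$, just as in the proof of Lemma~\ref{lemma:constr:3}(i): the off-diagonal terms die via $\llbracket x_t-y_{\psi(t)}\rrbracket v=(\res_p\lm_t-\res_p(\lm_{\psi(t)}+1))v=0$, and the surviving term is a multiple of $H_h^{\epsilon+\sum\delta}v$ with scalar $\prod_{t\in(h..i]\setminus\psi(S)}(\res_p\mu_h-\res_p(\mu_t+1))$, $\mu:=\lm-\alpha(i,n)$, which contains the factor $\res_p\lm_h-\res_p\lm_i$ indexed by $t=i$ (here $\mu_i+1=\lm_i$), and this is $\mathbf0$ because $\lm_h\equiv1$, $\lm_i\equiv0\pmod p$ force $\res_p\lm_h=\res_p\lm_i=\mathbf0$.

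\emph{Nonvanishing of $w$, and the main obstacle.} To see $w\ne0$ one produces, via Proposition~\ref{proposition:intro:5}, a $\delta\colon[h..n)\to\{\0,\1\}$ with $E^{\delta_h}_h\cdots E^{\delta_{n-1}}_{n-1}w\ne0$: commute $E^{\delta_i}_i\cdots E^{\delta_{n-1}}_{n-1}$ past $S^{\,\epsilon}_{h,i}(M)$ using Corollary~\ref{corollary:ops:2} for the indices $>i$ and, when the integer $i$ lies in $M$, Lemma~\ref{lemma:socle:1} for $[E^{\delta_i}_i,S^{\,\epsilon}_{h,i}(M)]$; with $v':=E^{\delta_i}_i\cdots E^{\delta_{n-1}}_{n-1}v\in L(\lm)^\lm$, the main term becomes $\pm E^{\delta_h}_h\cdots E^{\delta_{i-1}}_{i-1}S^{\,\epsilon}_{h,i}(M)v'$, which by Lemma~\ref{lemma:constr:3}(i) equals $\pm\prod_{t\in(h..i]\setminus\psi(S)}(-\res_p(\lm_t+1))\,H_h^{\epsilon+\sum\delta}v'$; choosing $\sum\delta|_{[h..i)}=\epsilon$ (so $H_h^{\0}v'=\lm_h v'=v'$ as $\lm_h\equiv1\pmod p$) and then $\delta_i,\dots,\delta_{n-1}$ so that $v'\ne0$, this is nonzero by full coherence. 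The step I expect to be the genuine obstacle is the correction term produced by $[E^{\delta_i}_i,S^{\,\epsilon}_{h,i}(M)]$ when $q<i$: it has the form $S^{\,\gamma}_{h,i}(M_{i\mapsto\bar\imath})v^\sigma$ with $v^\sigma\in L(\lm)^\lm$, where $M_{i\mapsto\bar\imath}$ now carries \emph{two} odd entries $\bar q$ and $\bar\imath$, so neither Lemma~\ref{lemma:constr:4} nor Lemma~\ref{lemma:constr:7} applies verbatim; making this term vanish — presumably by treating the cases $q=i$ (where $\bar\imath\in M$ and $E^{\delta_i}_i$ simply supercommutes past $S^{\,\epsilon}_{h,i}(M)$, paralleling the split $q=n$ versus $q<n$ in Theorem~\ref{theorem:constr:2}) and $q<i$ separately, with an auxiliary vanishing estimate for signed sets with two odd elements in the latter — is the part of the argument that does not reduce to bookkeeping with the lemmas already established.
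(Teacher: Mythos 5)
Your construction does not match the paper's, and it breaks before the obstacle you flag. You put the odd entry $\bar q$ into $M$ and take $S$ to be the source set of a \emph{resolution} of $r_{\mathbf0}(\lm)|_{(h..i]}$, mimicking Theorem~\ref{theorem:constr:2}. But a resolution has no edge out of $h$, and by full coherence (which you yourself invoke) every $t\in(h..i]$ with $\res_p(\lm_t+1)=\mathbf 0$ — in particular $t=i$, since $\lm_i\equiv 0\pmod p$ — lies in $\psi(S)$. Hence the factor $\res_p\lm_h-\res_p(\mu_i+1)$ that you claim kills $E^{\delta_h}_h\cdots E^{\delta_{i-1}}_{i-1}S^{\,\epsilon}_{h,i}(M)v$ is \emph{absent} from the product $\prod_{t\in(h..i]\setminus\psi((h..i]\setminus M)}(\cdots)$, and every factor that does appear is nonzero by the same coherence. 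Since $H_h v=\lm_h v=v\neq 0$ (as $\lm_h\equiv1$), the expression does not vanish, so by Lemma~\ref{lemma:socle:1.5} your $w$ is not $U(n-1)$-primitive. (There is a secondary issue: the Lemma~\ref{lemma:constr:3}(i)-type evaluation is derived for a highest weight vector, using $H_k^{\0/\1}v^+_\lm=0$ at the diagonal indices $k=\psi(k)$; for a general primitive $v$ the odd operators $\bar H_k$ need not annihilate $v$ — compare the preprocessing $v\mapsto \bar H_h^a\bar H_i^b v$ in Theorem~\ref{theorem:socle:0.5}.)

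The paper's proof is organized differently precisely to avoid both this and the two-odd-elements problem you identify at the end. It builds a genuine \emph{flow} $\Gamma$ on $[h..i]$ with an edge out of $h$, obtained by threading the section elements into a chain $h\to a_1\to\cdots\to a_q\to i$ and adjoining the fully coherent flows $\Gamma_l$ on the gaps; the resulting injection $\psi$ satisfies $\psi(t)>t$ everywhere, and $\psi(h)$ supplies the vanishing factor $\res_p\mu_h-\res_p(\mu_{\psi(h)}+1)=\mathbf0$ in the primitivity computation. The set $M:=(h..i]\setminus S$ is then purely even with $i\in M$, so the operator is $S^{\,\epsilon}_{h,i}(M)$ with no odd entries; the ``diagonal'' data $(a_1,a_1),\dots,(i,i)$ is packaged into a second, \emph{weak} flow $\Delta$ whose injection $\theta$ (with $\theta(i)=i$) enters only through the hypotheses of the vanishing Lemma~\ref{lemma:socle:5} and through the correction set $M'=(M\setminus\{i\})\cup\{\bar\imath\}$, which carries exactly one odd element and is handled by Lemmas~\ref{lemma:constr:3} and~\ref{lemma:constr:4}. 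Finally, $w\neq0$ is proved by contradiction: assuming $w^{\0}=w^{\1}=0$, the identity $\sum_{\gamma+\sigma=\epsilon+\delta_i}(-1)^{\gamma\epsilon}H_h^{\gamma+\sum\delta}v^\sigma=0$ for both $\epsilon$ forces $v=0$. Your plan cannot be repaired by a local patch for signed sets with two odd elements; the choice of $M$ itself has to change.
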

\begin{proof}
We construct a flow $\Gamma$ on $[h..i]$ and a weak flow $\Delta$ on $(h..i]$ as follows.
If $\bigl[\prod_{h<k<i} r_{\mathbf0}(\lm)_k\bigr]=\emptyset$,
then by Lemma~\ref{lemma:pm:3}, there exists a flow $\Gamma_0$ on $(h..i)$ fully coherent with
$r_{\mathbf0}(\lm)|_{(h..i)}$, and we set $\Gamma:=\Gamma_0\cup\{(h,i)\}$ and $\Delta:=\Gamma_0\cup\{(i,i)\}$.
Now let $\bigl[\prod_{h<k<i} r_{\mathbf0}(\lm)_k\bigr]\ne\emptyset$.
We have $\bigl[\prod_{h<k<i} r_{\mathbf0}(\lm)_k\bigr]=+^s-^r$ by Proposition~\ref{proposition:pm:2}.
Then
$$
\textstyle
+-^m=\bigl[\prod\nolimits_{h<k\le i} r_{\mathbf0}(\lm)_k\bigr]
=\bigl[(+^s-^r)+-\bigr]
=\left\{
{\arraycolsep=0pt
\begin{array}{l}
+^{s+1}-\text{ if }r=0;\\[6pt]
+^s-^r\text{ if }r>0.
\end{array}}
\right.
$$
So $r=0$ implies $s=0$. But we cannot have $r=s=0$,
as $\bigl[\prod_{h<k<i} r_{\mathbf0}(\lm)_k\bigr]\ne\emptyset$. So $r>0$ and $s=1$.
Applying Lemma~\ref{lemma:pm:5} to $r_{\mathbf0}(\lm)|_{(h..i)}$, there exist
$h<a_1<\cdots<a_q<i$, with $q>0$, such that
{
\begin{itemize}
\item $r_{\mathbf0}(\lm)_{a_1}=\cdots=r_{\mathbf0}(\lm)_{a_q}=+-${\rm;}\\[-8pt]
\item $\bigl[\prod_{k\in(a_{l-1}..a_l)}r_{\mathbf0}(\lm)_k\bigr]$ contains only $-$'s for all  $l=1,\ldots,q+1$, where $a_0:=h$ and $a_{q+1}:=i${\rm;}\\[-8pt]
\end{itemize}}
\noindent
By Lemma~\ref{lemma:pm:3}, for $l=1,\ldots,q+1$, there exists a flow $\Gamma_l$
fully coherent with $r_{\mathbf0}(\lm)|_{(a_{l-1}..a_l)}$. We set $\Gamma:=\{(h,a_1),(a_1,a_2),\ldots,(a_q{-}1,a_q),(a_q,i)\}\cup\,\bigcup_{\,l=1}^{\,q+1}\Gamma_l$
and $\Delta:=\{(a_1,a_1),\ldots,(a_q,a_q),(i,i)\}\cup\,\bigcup_{\,l=1}^{\,q+1}\Gamma_l$. Then $\Gamma$ is a flow and
$\Delta$ is a weak flow.

We denote by $S$ the set of all sources of edges of $\Gamma$ except $h$.
Then the set of all sources of edges of $\Gamma$ is $S\cup\{h\}$
and the set of all sources of edges of $\Delta$ is $S\cup\{i\}$.
For any $t\in S\cup\{h\}$, there exists a unique  $\psi(t)\in(h..i]$ such that $(t,\psi(t))\in\Gamma$.
For any $t\in S\cup\{i\}$, there exists a unique  $\theta(t)\in(h..i]$ such that $(t,\theta(t))\in\Delta$.
Clearly, $\psi$ and $\theta$ are embeddings of $S\cup\{h\}$ and $S\cup\{i\}$, respectively,  into $(h..i]$.

We note the following properties of $\psi$ and $\theta$, following from the fact that each $\Gamma_l$ is a flow
coherent with $r_{\mathbf0}(\lm)|_{(a_{l-1}..a_l)}$, Definition~\ref{definition:pm:2} and
Proposition~\ref{proposition:rbeta}:
{\renewcommand{\labelenumi}{{\rm \theenumi}}
\renewcommand{\theenumi}{{\rm(\alph{enumi}$_\psi$)}}
\begin{enumerate}
\item\label{theorem:socle:2:psi:property:a} $\res_p\lm_t=\res_p\bigl(\lm_{\psi(t)}+1\bigr)$ for all $t\in S\cup\{h\}$;\\[-10pt]
\item\label{theorem:socle:2:psi:property:b} $\psi(t)>t$ for all $t\in S\cup\{h\}$.
\end{enumerate}
}

{\renewcommand{\labelenumi}{{\rm \theenumi}}
\renewcommand{\theenumi}{{\rm(\alph{enumi}$_\theta$)}}
\begin{enumerate}
\item\label{theorem:socle:2:theta:property:a} $\res_p\lm_t=\res_p\bigl(\lm_{\theta(t)}+1\bigr)$ for all $t\in S\cup\{i\}${\rm;}\\[-10pt]
\item\label{theorem:socle:2:theta:property:b} $\theta(t)\ge t$ for all $t\in S\cup\{i\}${\rm;}\\[-10pt]
\end{enumerate}}

Now set $M:=(h..i]\setminus S$ and, for $\epsilon\in\{\0,\1\}$, define
$
w^\epsilon:= S_{h,i}^{\,\epsilon}(M)\,v.
$
We claim that $w^{\0}$ or $w^{\1}$ is the required $U(n-1)$-primitive vector.

We first prove that $w^\epsilon$ is $U(n-1)$-primitive. By~\ref{theorem:socle:2:psi:property:a} and~(\ref{equation:socle:4.5}), we have
{\renewcommand{\labelenumi}{{\rm \theenumi}}
\renewcommand{\theenumi}{{\rm(\alph{enumi}$'_\psi$)}}
\begin{enumerate}
\item\label{theorem:socle:2:psi:property:a'} $\res_p\mu_t=\res_p\bigl(\mu_{\psi(t)}+1\bigr)$ for any $t\in S\cup\{h\}${\rm;}\\[-10pt]
\end{enumerate}}
\noindent
Note that $\psi(h)\in(h..i]\setminus\psi((h..i]\setminus M)$, as $\psi(h)>h$
and $\psi$ is an injection. So, by Lemma~\ref{lemma:constr:1}\ref{lemma:constr:1:part:i} and~\ref{theorem:socle:2:psi:property:a'}, we get
\begin{align*}
 E^{\delta_h}_h\cdots
 E^{\delta_{i-1}}_{i-1}\! S_{h,i}^{\epsilon}(M)v
\! =\! \cond_{\sum\delta=\epsilon}\prod\nolimits_{t\in(h..i]\,\setminus\,\psi((h..i]\setminus M)}\!(\res_p\mu_i\!-\!\res_p(\mu_t+1))v=0.
\end{align*}
Now by Lemma~\ref{lemma:socle:1.5} and weights, it remains to show that
$ E_l^\delta S^{\,\epsilon}_{h,i}(M)v=0$ for any $h\le l<i-1$ and any $\de$.
By Lemma~\ref{lemma:constr:1}\ref{lemma:constr:1:part:ii},
we may assume that $l+1\notin M$ and prove that $ S^{\,\sigma}_{l+1,i}\bigl(M_{(l+1..i]}\bigr)v=0$
for any $\sigma\in\{\0,\1\}$. But this equality follows from Lemma~\ref{lemma:socle:5} if we consider the restrictions
$\psi'=\psi|_{[l+1..i]\setminus M_{(l+1..i]}}$ and $\theta'=\theta|_{\([l+1..i]\setminus M_{(l+1..i]}\)\cup\{i\}}$.
To check that $\psi'$ and $\theta'$ are well defined, note that $l+1\notin M$, and so
$
[l{+}1..i]\setminus M_{(l+1..i]}=[l{+}1..i]\setminus M\subset(h..i]\setminus M=S.
$

Now suppose that $w^{\0}=w^{\1}=0$. Then $ E_h^{\delta_h}\cdots E_{n-1}^{\delta_{n-1}}w^\epsilon=0$ for all $\delta_h,\dots,\de_{n-1}$, and  the equality~(\ref{equation:socle:5}) holds, where $v'= E_i^{\delta_i}\cdots E_{n-1}^{\delta_{n-1}}v$, $v^\sigma= E_i^{\sigma} E_{i+1}^{\delta_{i+1}}\cdots E_{n-1}^{\delta_{n-1}}v$, $M'\!=\!(M\!\setminus\!\{i\})\!\cup\!\{\bar\imath\}$.
Moreover, note that
$S\cup\{h\}=[h..i]\setminus M$ and~\ref{theorem:socle:2:psi:property:a} and~\ref{theorem:socle:2:psi:property:b} hold, so by Lemma~\ref{lemma:constr:2}, $ S_{h,i}^{\,\epsilon}(M)\,v'=0$.
Thus, by~(\ref{equation:socle:5}) and Lemma~\ref{lemma:constr:3}\ref{lemma:constr:3:part:i}, we have
\begin{align*}
0\!=\! E_h^{\delta_h}\cdots E_{n-1}^{\delta_{n-1}} S_{h,i}^{\epsilon}(M)v
\!=\!(-1)^{\epsilon\sum\delta|_{[i..n)}}
\hspace{-5mm}
\sum_{\gamma+\sigma=\epsilon+\delta_i}
\hspace{-4mm}
(-1)^{\1+(\delta_i+\gamma)\epsilon} E_h^{\delta_h}\cdots E_{i-1}^{\delta_{i-1}} S_{h,i}^{\gamma}(M')v^\sigma
\\
=(-1)^{\epsilon\sum\delta|_{[i..n)}}\!
\prod_{t\in(h..i]\setminus\theta((h..i]\setminus M')}\!(-\res_p(\lm_t{+}1))
\sum_{\gamma+\sigma=\epsilon+\delta_i}
\hspace{-3mm}
(-1)^{\1+\gamma\epsilon}H_h^{\gamma+\sum\delta|_{[h..i)}}v^\sigma.
\end{align*}
\noindent
Note that $(h..i]\setminus M'=S\cup\{i\}$.
Since every $\Gamma_l$ is fully coherent with $r_{\mathbf0}(\lm)|_{(a_{l-1}..a_l)}$, we see  that
$\Delta$ is fully coherent with $r_{\mathbf0}(\lm)|_{(h..i]}$.
So, by
Proposition~\ref{proposition:rbeta}\ref{proposition:rbeta:part:2},
we get
$
\prod_{t\in(h..i]\,\setminus\,\theta((h..i]\setminus M')}\res_p(\lm_t{+}1)\ne\mathbf0.
$
Hence
$$
\sum_{\gamma+\sigma=\epsilon+\delta_i}(-1)^{\gamma\epsilon} H_h^{\,\gamma+\sum\delta|_{[h..i)}}v^\sigma=0.
$$
Choosing $\delta_h,\ldots,\delta_{i-1}$ so that $\delta_h+\dots+\delta_{i-1}=\epsilon$, we get
$
(-1)^\epsilon v^{\delta_i}+\bar{\! H}_hv^{\delta_i+\1}=0
$
for any $\epsilon\in\{\0,\1\}$. As $p>2$, have $v^{\delta_i}= E_i^{\delta_i}\cdots E_{n-1}^{\delta_{n-1}}\,v=0$.
As $\delta_i,\ldots,\delta_{n-1}$ are arbitrary and
$v$ is $U(n-1)$-primitive, we get $v=0$, which is a contradiction.
\end{proof}

\section{Normal indices and primitive vectors}\label{normal_indices}

\begin{theorem}\label{theorem:constr:4} Let $\lm\in X(n)$ and $i$ be a $\lm$-normal index.
Then in $L(\lm)$, there exists a nonzero $U(n-1)$-primitive homogeneous vector $v$ of weight
$\lm-\alpha(i,n)$.
\end{theorem}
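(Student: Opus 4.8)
The plan is to reduce the statement to the six explicit constructions of $U(n-1)$-primitive vectors developed in Sections~\ref{method:1}--\ref{extending:3}, using the combinatorial machinery of Chapter~\ref{ChComb}. First I would set $\beta:=\res_p\lm_i$ and invoke Corollary~\ref{corollary:main:1}(i): since $i$ is $\lm$-normal, the reduction $\bigl[\prod_{1\le k<n}r_\beta(\lm)_k\bigr]$ contains $-_i$, and by Lemma~\ref{lemma:main:2}(i) so does $\bigl[\prod_{i\le k<n}r_\beta(\lm)_k\bigr]$. Writing $u:=r_\beta(\lm)|_{(i..n]}$ (or its truncation to $(i..n)$ when needed), Propositions~\ref{proposition:pm:2} and Corollary~\ref{corollary:pm:2} tell us the relevant reductions have one of the forms $-^m$ or $+-^m$, and I would organize the proof according to which form occurs and according to the residues of $\lm_i$ and $\lm_n$ mod $p$.

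The case analysis runs as follows. If $\bigl[\prod_{i<k\le n}r_\beta(\lm)_k\bigr]=-^m$, then Theorem~\ref{theorem:constr:1} directly produces the desired vector $v$ (and in fact a companion $v'$). Otherwise $-_i$ is not the last symbol of $\bigl[\prod_{1\le k<n}r_\beta(\lm)_k\bigr]$ or $\lm_i\not\equiv\lm_n\pmod p$ in the right way; here Lemma~\ref{lemma:main:3} identifies when $\bigl[\prod_{i<k<n}r_\beta(\lm)_k\bigr]=\emptyset$. When $\bigl[\prod_{i<k<n}r_\beta(\lm)_k\bigr]=-^m$ and $\lm_i,\lm_n$ are not both divisible by $p$, Theorem~\ref{theorem:constr:3} applies. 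The remaining subtle case is $\beta=\mathbf 0$, where $r_{\mathbf 0}(\lm)_k$ can be a two-symbol sequence; here I would use Lemma~\ref{lemma:main:3} together with the definition of $\lm$-normal (which explicitly excludes the bad configuration ``$\bigl[\prod_{i<k<n}r_\beta(\lm)_k\bigr]=\emptyset$ and $\lm_i\equiv\lm_n\equiv0$'') to show that we are in a situation where $\bigl[\prod_{i<k\le n}r_{\mathbf 0}(\lm)_k\bigr]=+-^m$ (after appending $+_n-_n$ from $r_{\mathbf0}(\lm)_n$ when $\lm_n\equiv0$), so that Theorem~\ref{theorem:constr:2} applies. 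One must also handle the case $\lm_i\equiv0\pmod p$ but $\lm_n$ arbitrary, and the case $m=0$; in each of these Corollary~\ref{corollary:main:1}(i) and Lemma~\ref{lemma:main:3} pin down exactly which of Theorems~\ref{theorem:constr:1}, \ref{theorem:constr:3}, \ref{theorem:constr:2} is the relevant one.

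The main obstacle, and the part requiring genuine care, is the bookkeeping when $\beta=\mathbf 0$: the maps $r_{\mathbf0}(\lm)_k$ take values in $\{\emptyset,--,+-,++\}$, so the reduction behavior is governed by the two-symbol lemmas (Lemmas~\ref{lemma:pm:3}, \ref{lemma:pm:4}, \ref{lemma:pm:5}) rather than the one-symbol ones, and one must carefully translate the condition ``$\bigl[\prod_{i\le k<n}r_{\mathbf 0}(\lm)_k\bigr]$ contains $-_i$ and the two forbidden conditions do not both hold'' into the precise hypotheses of Theorem~\ref{theorem:constr:2} versus Theorem~\ref{theorem:constr:3}. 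Concretely, when $\lm_i\equiv1\pmod p$ one is aiming at the $+-^m$ hypothesis of Theorem~\ref{theorem:constr:2}, and Lemma~\ref{lemma:main:3} is what guarantees that appending $r_{\mathbf0}(\lm)_n$ to $\bigl[\prod_{i<k<n}r_{\mathbf0}(\lm)_k\bigr]$ lands in the form $+-^m$ rather than $-^m$; the $\lm$-normality hypothesis is exactly what rules out the one configuration where this fails. I would also double-check the edge cases $i=n-1$ (where the products over $i<k<n$ are empty) and cases where $\lm_i\equiv -1\pmod p$, but these should reduce to the generic analysis. Once the correct theorem is selected in each branch, the output vector is produced by that theorem and the proof is complete.
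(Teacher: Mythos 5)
Your overall strategy --- a case analysis on $\beta=\res_p\lm_i$ and on the form of the relevant reductions, feeding each branch into one of the explicit constructions --- is the same as the paper's, and your treatment of the branches that land in Theorems~\ref{theorem:constr:1} and~\ref{theorem:constr:3} is essentially correct. But there is a genuine gap in the $\beta=\mathbf0$, $\lm_i\equiv1\pmod p$ branch. You claim that normality guarantees that appending $r_{\mathbf0}(\lm)_n$ to $\bigl[\prod_{i<k<n}r_{\mathbf0}(\lm)_k\bigr]=+-^m$ always lands in the form $+-^{m'}$ required by Theorem~\ref{theorem:constr:2}. This is false when $\lm_n\equiv-1\pmod p$, i.e.\ $r_{\mathbf0}(\lm)_n=++$: then $\bigl[\prod_{i<k\le n}r_{\mathbf0}(\lm)_k\bigr]=[+-^m++]$, which equals $++$ when $m=1$, so the hypothesis of Theorem~\ref{theorem:constr:2} fails outright. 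Yet $i$ is still $\lm$-normal in this configuration, since normality only refers to $\prod_{i\le k<n}$ (which reduces to $-^{m+1}$ and contains $-_i$) and the excluded condition requires $\bigl[\prod_{i<k<n}r_\beta(\lm)_k\bigr]=\emptyset$, which does not hold here. Lemma~\ref{lemma:main:3} does not rescue you: it only characterizes when that inner reduction is empty.

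The missing idea is that the three construction theorems do not suffice on their own; you also need one of the extension theorems. In the problematic case the paper takes the maximal element $a$ of a section of $r_{\mathbf0}(\lm)|_{(i..n)}$ (Lemma~\ref{lemma:pm:5}, Definition~\ref{definition:pm:3}), for which $\lm_a\equiv0\pmod p$, $\bigl[\prod_{a<k<n}r_{\mathbf0}(\lm)_k\bigr]$ contains no $+$'s, and $\bigl[\prod_{i<k\le a}r_{\mathbf0}(\lm)_k\bigr]=+-$. The first two properties put $a$ into the already-settled cases, producing a nonzero $U(n-1)$-primitive vector $w$ of weight $\lm-\alpha(a,n)$; the third is exactly the hypothesis of Theorem~\ref{theorem:socle:2}, which then extends $w$ to a nonzero $U(n-1)$-primitive vector of weight $\lm-\alpha(i,n)$ inside $U(a)w$. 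You mention ``the six constructions'' in passing, but your actual argument never invokes Theorems~\ref{theorem:socle:0.5}--\ref{theorem:socle:2}, and without this two-step detour the case $\lm_i\equiv1$, $\lm_n\equiv-1\pmod p$ is not covered.
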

\begin{proof} Set $\beta:=\res_p\lm_i$. As $i$ is $\la$-normal, $\bigl[\prod_{i\le k<n}r_\beta(\lambda)_k\bigr]$ contains $-_i$.
Assume first that $\beta\ne\mathbf 0$. Then $\bigl[\prod_{i<k<n}r_\beta(\lm)_k\bigr]$ cannot contain any $+$'s.

{\it Case~1:} $\bigl[\prod_{i<k<n}r_\beta(\lm)_k\bigr]=-^r$, with $r>0$.
In  this case, $\bigl[\prod_{i<k\le n} r_\beta(\lm)_k\bigr]$ does not contain $+$ for any value of $\lm_n$.
The required result follows now from Theorem~\ref{theorem:constr:1}.

{\it Case~2: $\bigl[\prod_{i<k<n}r_\beta(\lm)_k\bigr]=\emptyset$.}
As $\lm_i\not\equiv {\mathbf 0}\pmod{p}$,
the required result follows from Theorem~\ref{theorem:constr:3}.

Now let $\beta=\mathbf0$. Then one of the following two conditions must holds:
\begin{itemize}
\item $\bigl[\prod_{i<k<n}r_\mathbf0(\lambda)_k\bigr]=-^r$ for some even non-negative $r$ (cf. Corollary~\ref{corollary:pm:2});\\[-6pt]
\item $r_\mathbf0(\lm)_i=--$ and $\bigl[\prod_{i<k<n}r_\mathbf0(\lambda)_k\bigr]$ contains exactly one $+$.
\end{itemize}

{\it Case~1: $\bigl[\prod_{i<k<n}r_\mathbf0(\lambda)_k\bigr]=-^r$, with $r\ge2$.} In this case,
$\Bigl[\prod_{i<k\le n}r_\mathbf0(\lambda)_k\Bigr]$ does not contain the sign $+$ for any  $\lm_n$, and we can apply Theorem~\ref{theorem:constr:1}.

{\it Case~2: $\bigl[\prod_{i<k<n}r_\mathbf 0(\lm)_k\bigr]=\emptyset$}. In this case,
$\lm_i$ and $\lm_n$ are not both divisible by $p$ by the definition of a normal index, and we can apply Theorem~\ref{theorem:constr:3}.

{\it Case~3:} $\lm_i\=1\pmod p$ (equivalently $r_\mathbf0(\lm)_i=--$), $\bigl[\prod_{i<k<n}r_\mathbf0(\lm)_k\bigr]$ is of the form $+-^m$, and $\lm_n\not\=-1\pmod p$.
Then $m>0$ by Corollary~\ref{corollary:pm:2}, and $r_\mathbf0(\lm)_n\ne++$. So $\bigl[\prod_{i<k\le n}r_\mathbf0(\lm)_k\bigr]$
contains exactly one $+$. The required result now follows from Theorem~\ref{theorem:constr:2}

{\it Case~4:} $\lm_i\=1\pmod p$, $\bigl[\prod_{i<k<n}r_\mathbf0(\lm)_k\bigr]$ is of the form $+-^m$, and $\lm_n\=-1\pmod p$.
By Lemma~\ref{lemma:pm:5}, there exists a section of $r_\mathbf0(\lm)|_{(i..n)}$, see Definition~\ref{definition:pm:3}.
Let $a$ be its maximal element. We have the following properties:
\begin{itemize}
\item $\lm_a\=0\pmod p$;\\[-8pt]
\item $\bigl[\prod_{a<k<n}r_\mathbf0(\lm)_k\bigr]$ does not contain any $+$'s;\\[-6pt]
\item $\bigl[\prod_{i<k\le a}r_\mathbf0(\lm)_k\bigr]=+-$.
\end{itemize}
Applying cases~1 and~2, which we have already considered, we obtain that there exists a nonzero $U(n-1)$-primitive
homogeneous vector $w\in L(\lm)$ of weight $\lm-\alpha(a,n)$.
Next, we apply Theorem~\ref{theorem:socle:2} to obtain a nonzero homogeneous $U(n-1)$-primitive  vector
$v\in U(a)\,w\subset L(\lm)$ of weight $\lm-\alpha(i,n)$.
\end{proof}

\begin{theorem}\label{theorem:constr:5} Let $\lm\in X(n)$, $h$ be a $\lm$-normal index, $h<i<n$, and $\res_p\lm_h=\res_p\lm_i$.
Then for any nonzero homogeneous $U(n-1)$-primitive vector $v\in L(\lm)^{\lm-\alpha(i,n)}$,
there exists a nonzero homogeneous $U(n-1)$-primitive vector $w\in U(i)v$ of weight $\lm-\alpha(h,n)$.
\end{theorem}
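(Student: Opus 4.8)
Theorem~\ref{theorem:constr:5} is proved by reducing to the case-by-case extension results already established in Sections~\ref{extending:1}--\ref{extending:3}. The plan is to set $\beta:=\res_p\lm_h=\res_p\lm_i$ and to analyze the shape of the reduction $\bigl[\prod_{h<k\le i}r_\beta(\lm)_k\bigr]$, showing that $\la$-normality of $h$ forces it into one of the shapes handled by Theorems~\ref{theorem:socle:0.5},~\ref{theorem:socle:1}, or~\ref{theorem:socle:2}, possibly after an intermediate step.

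First I would dispose of the case $\beta\ne\mathbf0$. Here $r_\beta(\lm)_k$ is a one-element sequence for each $k$, so $\bigl[\prod_{h<k\le i}r_\beta(\lm)_k\bigr]$ is either $+^s-^r$ with the constraint coming from Proposition~\ref{proposition:pm:2}; but since $h$ is $\la$-normal, $\bigl[\prod_{h\le k<n}r_\beta(\lm)_k\bigr]$ contains $-_h$, and Lemma~\ref{lemma:main:2}(i) together with the fact that $r_\beta(\lm)_h=-_h$ forces $\bigl[\prod_{h<k\le i}r_\beta(\lm)_k\bigr]$ to contain no $+$'s, i.e. it equals $-^m$ for some $m\ge0$. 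Since $\beta\ne\mathbf0$ we have $\lm_i\not\equiv0\pmod p$, and $\lm_h\equiv1,\lm_i\equiv0$ cannot both hold (as $\res_p$ of a residue $\equiv1$ and a residue $\equiv0$ differ). So Theorem~\ref{theorem:socle:1} applies directly and yields the desired $w$.

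Next I would handle $\beta=\mathbf0$. As in the proof of Theorem~\ref{theorem:constr:4}, $\la$-normality of $h$ means $r_{\mathbf0}(\lm)_h$ is $--$ or $+-$, and correspondingly $\bigl[\prod_{h<k\le i}r_{\mathbf0}(\lm)_k\bigr]$ must be $-^m$ (an even number of $-$'s, by Corollary~\ref{corollary:pm:2}) when $r_{\mathbf0}(\lm)_h=--$, or $+-^m$ when $r_{\mathbf0}(\lm)_h=+-$. If it is $-^m$, then either $\lm_h\equiv0,\lm_i\equiv1$ and Theorem~\ref{theorem:socle:0.5} applies, or the hypothesis ``$\lm_h\equiv0$ and $\lm_i\equiv1$ do not both hold'' part of Theorem~\ref{theorem:socle:1} is satisfied (note $\lm_i\not\equiv0\pmod p$ is forced when $r_{\mathbf0}(\lm)_i=--$, i.e. $\lm_i\equiv1$), and Theorem~\ref{theorem:socle:1} applies. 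The remaining subcase is $\bigl[\prod_{h<k\le i}r_{\mathbf0}(\lm)_k\bigr]=+-^m$ with $r_{\mathbf0}(\lm)_h=+-$, i.e. $\lm_h\equiv0\pmod p$; if moreover $\lm_i\equiv0\pmod p$ we cannot be in this situation since $r_{\mathbf0}(\lm)_i=+-$ would put a $+_i$ that does not get erased; so actually $\lm_i\equiv1\pmod p$ is the relevant situation --- wait, here one must be careful: Theorem~\ref{theorem:socle:2} requires $\lm_h\equiv1$, $\lm_i\equiv0$. So this subcase needs a two-step argument exactly parallel to Case~4 of Theorem~\ref{theorem:constr:4}: apply Lemma~\ref{lemma:pm:5} to find a section of $r_{\mathbf0}(\lm)|_{(h..i)}$, let $a$ be its maximal element (so $\lm_a\equiv0\pmod p$ and $\bigl[\prod_{a<k\le i}r_{\mathbf0}(\lm)_k\bigr]=-^m$ with $\lm_i\equiv1$ --- i.e. $r_{\mathbf0}(\lm)_i=--$ --- making $h,a$ fit Theorem~\ref{theorem:socle:2} after checking $\bigl[\prod_{h<k\le a}r_{\mathbf0}(\lm)_k\bigr]=+-$), then first extend $v$ across $[a..i)$ using the already-settled $-^m$ cases to get a $U(n-1)$-primitive vector of weight $\lm-\alpha(a,n)$, then apply Theorem~\ref{theorem:socle:2} to extend from $a$ to $h$.

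The main obstacle is the bookkeeping in the last $\beta=\mathbf0$ subcase: one must correctly identify which of $\lm_h,\lm_i$ are divisible by $p$ in each shape, verify that the intermediate index $a$ has the right residue and that the reductions restrict correctly to the subintervals $(h..a]$ and $(a..i]$ (using Corollary~\ref{corollary:pm:1} and Lemmas~\ref{lemma:main:2},~\ref{lemma:main:3}), and check that the hypothesis $\res_p\lm_a=\res_p\lm_i=\beta$ needed to re-invoke the $-^m$ extension theorems is genuinely available. Everything else is a direct appeal to the six construction/extension theorems; no new computation with lowering operators is required.
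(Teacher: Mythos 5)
Your overall strategy --- reducing to the extension theorems of Sections~\ref{extending:1}--\ref{extending:3} by analyzing the shape of $\bigl[\prod_{h<k\le i}r_\beta(\lm)_k\bigr]$ --- is exactly the paper's, and your treatment of $\beta\ne\mathbf0$ is correct. But your case analysis for $\beta=\mathbf0$ has two genuine gaps.

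First, when $\bigl[\prod_{h<k\le i}r_{\mathbf0}(\lm)_k\bigr]=-^m$ you assert that $\lm_i\not\equiv0\pmod p$ is forced. It is not: one can have $r_{\mathbf0}(\lm)_i=+_i-_i$ with the $+_i$ erased by an earlier $-$ (e.g.\ a $--$ somewhere in $(h..i)$), so $\lm_i\equiv0\pmod p$ is compatible with the reduction being $-^m$. In that situation neither Theorem~\ref{theorem:socle:1} (which needs $\lm_i\not\equiv0$) nor Theorem~\ref{theorem:socle:0.5} (which needs $\lm_i\equiv1$) applies, and the paper inserts an intermediate index: Lemma~\ref{lemma:pm:3.5} produces $a\in(h..i)$ with $\lm_a\equiv1\pmod p$, $\bigl[\prod_{a<k\le i}r_{\mathbf0}(\lm)_k\bigr]=+-$ and $\bigl[\prod_{h<k\le a}r_{\mathbf0}(\lm)_k\bigr]=-^m$; one first extends from $i$ to $a$ via Theorem~\ref{theorem:socle:2} and then from $a$ to $h$ via the already-settled $-^m$ cases. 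Your proof omits this case entirely.

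Second, in the case $\bigl[\prod_{h<k\le i}r_{\mathbf0}(\lm)_k\bigr]=+-^m$ you take $r_{\mathbf0}(\lm)_h=+-$, i.e.\ $\lm_h\equiv0\pmod p$. This has the implication backwards: if $r_{\mathbf0}(\lm)_h$ were $+_h-_h$, then the surviving $+$ of $+-^m$ would erase $-_h$ in $\bigl[\prod_{h\le k<n}r_{\mathbf0}(\lm)_k\bigr]$, contradicting the normality of $h$; so in fact $r_{\mathbf0}(\lm)_h=--$ and $\lm_h\equiv1\pmod p$ --- which is exactly what Theorem~\ref{theorem:socle:2} requires. You also wrongly rule out $\lm_i\equiv0\pmod p$ here (again, the $+_i$ of $r_{\mathbf0}(\lm)_i=+_i-_i$ can be erased by a preceding $-$); that subcase is the one where Theorem~\ref{theorem:socle:2} applies in a single step, while $\lm_i\equiv1$ is the subcase needing the section argument (Lemma~\ref{lemma:pm:5}, then Theorem~\ref{theorem:socle:0.5} from $i$ to $a$, then Theorem~\ref{theorem:socle:2} from $a$ to $h$). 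With the residues corrected, your two-step sketch does match the paper's Case~4, but as written the argument rests on a false premise about $\lm_h$ and excludes a case that actually occurs.
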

\begin{proof} Set $\beta:=\res_p\lm_i$.
As $h$ is $\la$-normal, $\bigl[\prod_{h\le k<n}r_\beta(\lambda)_k\bigr]$ contains $-_h$.
Assume first that $\beta\ne\mathbf 0$.
Then $\bigl[\prod_{h<k\le i}r_\beta(\lm)_k\bigr]$ does not contain any $+$'s.
Now the required result follows from Theorem~\ref{theorem:socle:1}.

Now let $\beta=\mathbf0$. Note that $\lm_h\=0$ or $1\pmod p$, and $\lm_i\=0$ or $1\pmod p$.
Moreover, one of the following conditions holds:
\begin{itemize}
\item $\bigl[\prod_{h<k\le i}r_\mathbf0(\lambda)_k\bigr]$ does not contain any $+$'s;\\[-6pt]
\item $r_\mathbf0(\lm)_h=--$ and $\bigl[\prod_{h<k\le i}r_\mathbf0(\lambda)_k\bigr]$ contains exactly one $+$.
\end{itemize}

{\it Case~1:} $\bigl[\prod_{h<k\le i} r_\mathbf0(\lm)_k\bigr]=-^m$, $\lm_h\=1\pmod p$ and $\lm_i\=1\pmod p$.
In this case the result follows from Theorem~\ref{theorem:socle:1}.

{\it Case~2:} $\bigl[\prod_{h<k\le i} r_{\mathbf0}(\lm)_k\bigr]=-^m$, $\lm_h\=0\pmod p$ and $\lm_i\=1\pmod p$.
In this case the result follows from Theorem~\ref{theorem:socle:0.5}.

{\it Case~3:} $\bigl[\prod_{h<k\le i} r_\mathbf0(\lm)_k\bigr]=-^m$ and $\lm_i\=0\pmod p$.
If $\bigl[\prod_{h<k<i} r_\mathbf0(\lm)_k\bigr]=+^s-^r$, then
$$
-^m=\bigl[\prod_{h<k\le i} r_\mathbf0(\lm)_k\bigr]=\bigl[\bigl[\prod_{h<k<i} r_\mathbf0(\lm)_k\bigr]+-\bigr]=[(+^s-^r)+-]
=\left\{
{\arraycolsep=0pt
\begin{array}{ll}
+^{s+1}-&\text{ if }r=0;\\[6pt]
+^s-^r&\text{ if }r>0.
\end{array}}
\right.
$$
Hence $s=0$ and $r=m>0$. Considering the map $r_\mathbf0(\lm)|_{(h..i)}$,
By Lemma~\ref{lemma:pm:3.5}, we get that there exists an index $a\in(h..i)$ such that
\begin{itemize}
\item $r_\mathbf0(\lm)_a=--$ (that is, $\lm_a\=1\pmod p$);
\item $\bigl[\prod_{a<k<i} r_\mathbf0(\lm)_k\bigr]$ equals either $\emptyset$ or $+-$;
\item $\bigl[\prod_{h<k\le a} r_\mathbf0(\lm)_k\bigr]=-^m$.
\end{itemize}
Regardless of which possibility in the second line holds, we have $\bigl[\prod_{a<k\le i} r_\mathbf0(\lm)_k\bigr]=+-$.
By Theorem~\ref{theorem:socle:2}, there exists a nonzero homogeneous $U(n-1)$-primitive vector
$u\in U(i)\,v$ of weight $\lm-\alpha(a,n)$.
To finish the proof in this case, it remains to apply cases~1 and~2, where we consider $a$ instead of $i$.

{\it Case~4:} $\bigl[\prod_{h<k\le i} r_\mathbf0(\lm)_k\bigr]=+-^m$ and $\lm_i\=1\pmod p$.
In this case $\lm_h\=1\pmod p$.
By Lemma~\ref{lemma:pm:5}, there exists a section of $r_\mathbf0(\lm)|_{(h..i]}$, see Definition~\ref{definition:pm:3}. Let $a$ be its maximal element.
We have the following properties:
\begin{itemize}
\item $\lm_a\=0\pmod p$;\\[-6pt]
\item $\bigl[\prod_{a<k\le i}r_\mathbf0(\lm)_k\bigr]=-^{m-1}$;\\[-6pt]
\item $\bigl[\prod_{h<k\le a}r_\mathbf0(\lm)_k\bigr]=+-$.
\end{itemize}
By Theorem~\ref{theorem:socle:0.5}, there is a nonzero homogeneous $U(n-1)$-primitive vector
$u\in U(i)\,v$ of weight $\lm-\alpha(a,n)$.
Now, by Theorem~\ref{theorem:socle:2}, there is a nonzero homogeneous $U(n-1)$-primitive vector
$w\in U(a)\,u\subset U(a)\,U(i)\,v=U(i)\,v$ of weight $\lm-\alpha(h,n)$.

{\it Case~5: $\bigl[\prod_{h<k\le i} r_\mathbf0(\lm)_k\bigr]=+-^m$ and $\lm_i\=0\pmod p$.}
In this case $\lm_h\=1\pmod p$. The result now follows from Theorem~\ref{theorem:socle:2}.
\end{proof}

\chapter{Main results on $U(n)$}\label{main_results}

\section{Criterion for existence of nonzero $U(n-1)$-primitive vectors}

\begin{proposition}\label{proposition:main:1}
Let $\lm,\mu=(\mu_1,\dots,\mu_n)\in X(n)$ and $\mu\leq \la$. Let $\bar\mu:=(\mu_1,\dots,\mu_{n-1})\in X(n-1)$.
A vector $v\in L(\la)$ is a $\mu$-weight vector of $L(\la)$ if and only if it is a $\bar\mu$-weight vector of the restriction $L(\la)_{U(n-1)}$.
\end{proposition}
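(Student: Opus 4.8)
The ``only if'' direction is immediate from the definition of a weight space: if $\tbinom{H_i}{m}v=\tbinom{\mu_i}{m}v$ for all $1\le i\le n$ and all $m\ge 1$, then in particular these equations hold for all $1\le i\le n-1$, which is exactly the condition for $v$ to lie in the $\bar\mu$-weight space of $L(\la)_{U(n-1)}$. So the plan is to prove the ``if'' direction, and the whole point is to compare the $U(n)$- and $U(n-1)$-weight decompositions of $L(\la)$.

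First I would use that $L(\la)$ is an object of $\O$ (Lemma~\ref{Vermamod}), so that $L(\la)=\bigoplus_{\nu\in X(n)}L(\la)^\nu$, and that $L(\la)$, being the simple head of the Verma supermodule $M(\la)$ (Lemma~\ref{Vermamod}), has all of its weights $\le\la$, i.e.\ $L(\la)^\nu\neq 0$ implies $\la-\nu\in Q_+(n)$. Write the given $v$ as a finite sum $v=\sum_\nu v_\nu$ with $v_\nu\in L(\la)^\nu$. Next, for a fixed $i\in[1..n-1]$ and $m\ge1$, I would apply $\tbinom{H_i}{m}$ to this decomposition and compare with the hypothesis $\tbinom{H_i}{m}v=\tbinom{\mu_i}{m}v$; linear independence of the weight spaces $L(\la)^\nu$ then gives $\tbinom{\nu_i}{m}=\tbinom{\mu_i}{m}$ in $\mathbb F$ for every $\nu$ with $v_\nu\neq 0$. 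Since this holds for all $m\ge1$, distinct integral coordinates are separated (one may apply Lucas' theorem with $m=p^k$ to recover the base-$p$ digits), so $\nu_i=\mu_i$ for all $i\le n-1$; that is, $\bar\nu=\bar\mu$ whenever $v_\nu\neq 0$.

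It then remains to pin down the last coordinate. For any $\nu$ with $v_\nu\neq0$ we have $\la-\nu\in Q_+(n)$, which lies in the root lattice of $A_{n-1}$ and hence has coordinate sum zero; thus $\sum\nu=\sum\la$. The hypothesis $\mu\le\la$ gives $\la-\mu\in Q_+(n)$ and likewise $\sum\mu=\sum\la$. Combining these with $\bar\nu=\bar\mu$ forces $\nu_n=\sum\la-\sum_{k<n}\mu_k=\mu_n$, so $\nu=\mu$, whence $v=v_\mu\in L(\la)^\mu$. There is no serious obstacle in this argument; the only step requiring (routine) care is the separation of integral weights by the characters $m\mapsto\tbinom{\cdot}{m}$, handled by Lucas' theorem, and it is worth noting that the hypothesis $\mu\le\la$ enters precisely at this last step, to fix the coordinate $\nu_n$ via the coordinate-sum identity.
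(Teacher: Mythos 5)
Your proof is correct and rests on the same key observation as the paper's one-line argument: every weight $\nu$ of $L(\la)$ satisfies $\nu_1+\dots+\nu_n=\la_1+\dots+\la_n$ (weights differ from $\la$ by elements of the root lattice, which have coordinate sum zero), so the last coordinate is recoverable from the first $n-1$. The extra steps you supply --- decomposing $v$ into $U(n)$-weight components and separating distinct integral weights via the operators $\tbinom{H_i}{m}$ --- are routine verifications that the paper leaves implicit.
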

\begin{proof}
This follows from the fact that $\nu_1+\dots+\nu_n=\la_1+\dots+\la_n$ for any weight $\nu$ of $L(\la)$, and so any weight of $L(\la)$ can be recovered from its first $n-1$ components.
\end{proof}

Recall
functions $\cf^{\,\delta}$ from Section~\ref{some_coefficients}.
We will abbreviate $\cf=\cf^\0$ and $\odd\cf=\cf^\1$.

\begin{theorem}\label{theorem:main:1}
Let $\lm\in X(n)$ and $1\le i<n$. Suppose that there exist $F,F'\in U^{\leq 0}(n)^{-\alpha(i,n)}$ such that $FL(\lm)^\lm=F'L(\lm)^\lm=0$ and
$$
\cf_{i,n}(F)\in\mathbb F^\times,\quad \overline\cf_{i,n}(F)=0,\quad
\cf_{i,n}(F')=0,\quad\overline\cf_{i,n}(F')\in\mathbb F^\times.
$$
Then there is no nonzero $U(n{-}1)$-primitive vector of weight $\lm-\alpha(i,n)$ in $L(\lm)$.
\end{theorem}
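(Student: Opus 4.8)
Suppose, for contradiction, that $v$ is a nonzero $U(n-1)$-primitive vector of weight $\mu:=\lm-\alpha(i,n)$ in $L(\lm)$; passing to a homogeneous component we may assume $v$ is homogeneous. Since $L(\lm)^\tau\cong L(\lm)$, there is a non-degenerate contravariant bilinear form $\langle-,-\rangle$ on $L(\lm)$, i.e. one with $\langle xu,w\rangle=\langle u,\tau(x)w\rangle$ for all $x\in U(n)$, under which weight spaces of distinct weights are orthogonal (using $\tau(\tbinom{H_i}m)=\tbinom{H_i}m$). In particular its restriction to $L(\lm)^\mu$ is non-degenerate, so it suffices to show $\langle v,z\rangle=0$ for all $z\in L(\lm)^\mu$, which forces $v=0$.

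The first step is to describe $L(\lm)^\mu$. As $L(\lm)$ is irreducible with $L(\lm)^\lm\neq 0$, and $U^+(n)$ acts on $L(\lm)^\lm$ through its degree-zero part $\mathbb F$, the triangular decomposition gives $L(\lm)=U^-(n)\,L(\lm)^\lm$, hence $L(\lm)^\mu=U^-(n)^{-\alpha(i,n)}\,L(\lm)^\lm$. Writing an element of $U^{\le 0}(n)^{-\alpha(i,n)}$ as in (\ref{eq:rev:1}) and using that $U^0(n)$ preserves $L(\lm)^\lm$, one obtains
\begin{equation}\label{plandecomp}
L(\lm)^\mu=F^{(1)}_{i,n}L(\lm)^\lm+\bar F_{i,n}L(\lm)^\lm+\sum_{C}F_C\,L(\lm)^\lm,
\end{equation}
the last sum over all chains $C=(i<a_1<\dots<a_m<n)$ with $m\ge 1$, where $F_C=F^{\epsilon_0}_{i,a_1}F^{\epsilon_1}_{a_1,a_2}\cdots F^{\epsilon_m}_{a_m,n}$ for suitable $\epsilon_0,\dots,\epsilon_m\in\{\0,\1\}$.

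Next I would show $v$ is orthogonal to each summand on the right of (\ref{plandecomp}). For a chain $C$ with $m\ge 1$ the anti-automorphism $\tau$ sends $F_C$ to $E^{\epsilon_m}_{a_m,n}\cdots E^{\epsilon_1}_{a_1,a_2}E^{\epsilon_0}_{i,a_1}$, whose rightmost factor $E^{\epsilon_0}_{i,a_1}$ satisfies $1\le i<a_1\le n-1$ and so annihilates the $U(n-1)$-primitive vector $v$; thus $\tau(F_C)v=0$ and $\langle v,F_C u\rangle=\langle\tau(F_C)v,u\rangle=0$ for every $u\in L(\lm)^\lm$. For the remaining two summands I bring in $F$ and $F'$. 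In the decomposition (\ref{eq:rev:1}) of $F$, the single-factor part is $F^{(1)}_{i,n}\cf_{i,n}(F)+\bar F_{i,n}\,\overline{\cf}_{i,n}(F)$, so the hypotheses $\cf_{i,n}(F)\in\mathbb F^\times$, $\overline{\cf}_{i,n}(F)=0$ say exactly that $F=c\,F^{(1)}_{i,n}+\sum_C F_C H_C$ for some $c\in\mathbb F^\times$, some $H_C\in U^0(n)$, and $C$ ranging over chains with $m\ge 1$. Since $FL(\lm)^\lm=0$, for each $u\in L(\lm)^\lm$ we get $c\,F^{(1)}_{i,n}u=-\sum_C F_C(H_C u)$ with $H_C u\in L(\lm)^\lm$; pairing with $v$ and using the previous paragraph gives $c\,\langle v,F^{(1)}_{i,n}u\rangle=0$, so $\langle v,F^{(1)}_{i,n}L(\lm)^\lm\rangle=0$ as $c\ne 0$. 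The same argument applied to $F'$, using $\cf_{i,n}(F')=0$ and $\overline{\cf}_{i,n}(F')\in\mathbb F^\times$, yields $\langle v,\bar F_{i,n}L(\lm)^\lm\rangle=0$. By (\ref{plandecomp}), $\langle v,L(\lm)^\mu\rangle=0$, hence $v=0$, a contradiction.

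The routine verifications are that $L(\lm)^\tau\cong L(\lm)$ really does supply a non-degenerate contravariant form orthogonal on distinct weight spaces, that $L(\lm)=U^-(n)L(\lm)^\lm$, and that the $F^{(1)}_{i,n}$- and $\bar F_{i,n}$-coefficients in (\ref{eq:rev:1}) are by definition $\cf_{i,n}(\cdot)$ and $\overline{\cf}_{i,n}(\cdot)$. I do not expect a genuine obstacle; the only step demanding care is the bookkeeping for $\tau(F_C)$, where it is essential that a chain with $m\ge 1$ has an intermediate index $a_1\le n-1$, so that its image under $\tau$ begins (reading from the right) with an operator that kills a $U(n-1)$-primitive vector.
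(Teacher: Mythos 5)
Your proof is correct, and at its core it is the same argument as the paper's: both rest on the chain decomposition of $U^-(n)^{-\alpha(i,n)}$ (so that $L(\lm)^{\lm-\alpha(i,n)}$ is spanned by $F_{i,n}L(\lm)^\lm$, $\bar F_{i,n}L(\lm)^\lm$ and longer chains $F_C L(\lm)^\lm$), on the observation that the longer chains are invisible to a $U(n-1)$-primitive vector because $\tau(F_C)$ ends in a raising operator of $U(n-1)$, and on using $F$, $F'$ together with the coefficient hypotheses to dispose of the two remaining summands. Where you differ is only in the packaging: the paper turns the primitive vector into a nonzero $U^{\leq 0}(n-1)$-homomorphism $\phi:L(\lm)\to\u(\mu)$ (via the universality of Verma modules and the adjunction coming from $M(\mu)^\tau\cong\operatorname{coind}\,\u(\mu)$) and shows $\phi=0$, whereas you pair the vector $v$ directly against $L(\lm)^{\lm-\alpha(i,n)}$ using the contravariant form supplied by $L(\lm)^\tau\cong L(\lm)$. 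Your version is slightly more economical --- it works with a single vector and bypasses the discussion of the $U^0(n-1)$-module of primitive vectors and Proposition~\ref{proposition:main:1} --- at the price of tracking a few sign conventions for the super contravariant form, none of which affect the vanishing statements you actually need.
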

\begin{proof}
Suppose on the contrary that such a vector exists.
Then the vector space $W$ of all $U(n{-}1)$-primitive vectors in $L(\lm)^{\lm-\alpha(i,n)}$ is nontrivial.
In fact, $W$ is a $U^0(n-1)$-subsupermodule of $L(\lm)$. Let $W_0$ be an irreducible
$U^0(n{-}1)$-subsupermodule of $W$. By Proposition~\ref{proposition:intro:4},
we have $W_0\cong{\mathfrak u}(\mu)$, where $\mu:=(\lm_1,\ldots,\lm_{i-1},$ $\lm_i{-}1,\lm_{i+1},\ldots,\lm_{n-1})$. By the universality of Verma modules,
there is a non-zero $U(n-1)$-homomorphism from the $U(n-1)$-Verma module $M(\mu)$ to the restriction $L(\la)_{U(n-1)}$. 
So, using (\ref{EVermaDual}), we have
\begin{align*}
0\ne\Hom_{U(n-1)}\bigl(M(\mu),L(\lm)_{U(n-1)}\bigr)
&\cong\Hom_{U(n-1)}\bigl(L(\lm)_{U(n-1)},M(\mu)^\tau\bigr)
\\
&\cong\Hom_{U^{\leq 0}(n-1)}\bigl(L(\lm)_{U^{\leq 0}(n-1)},{\mathfrak u}(\mu)\bigr).
\end{align*}
So there exists a non-zero $U^{\leq 0}(n-1)$-homomorphism $\phi:L(\lm)\to{\mathfrak u}(\mu)$.
We may assume that $\phi$ homogeneous.
Noting that $\mu=(\lm-\alpha(i,n))|_{U^0(n-1)}$, Proposition~\ref{proposition:main:1} implies that $\phi$ is zero on each $U^0(n)$-weight space of $L(\lm)$ except $L(\lm)^{\lm-\alpha(i,n)}$.
The space $L(\lm)^{\lm-\alpha(i,n)}$ is spanned by vectors of the form
\begin{equation}\label{lemma:main:1}
 F_{i,i_1}^{\,\epsilon_0} F_{i_1,i_2}^{\,\epsilon_1}\cdots F_{i_{N-1},n}^{\,\epsilon_{N-1}}w,
\end{equation}
where $i<i_1<\cdots<i_{N-1}<n$, $\epsilon_0,\ldots,\epsilon_{N-1}\in\{\0,\1\}$ and $w\in L(\lm)^\lm$.
If $N>1$ then $i_1<n$ and we get
$$
\phi\bigl( F_{i,i_1}^{\,\epsilon_0} F_{i_1,i_2}^{\,\epsilon_1}\cdots F_{i_{N-1},n}^{\,\epsilon_{N-1}}w\bigr)=(-1)^{\|\phi\|\epsilon_0}
 F_{i,i_1}^{\,\epsilon_0}\,\phi\bigl( F_{i_1,i_2}^{\,\epsilon_1}\cdots F_{i_{N-1},n}^{\,\epsilon_{N-1}}w\bigr)=0,
$$
since the vector $ F_{i_1,i_2}^{\,\epsilon_1}\cdots F_{i_{N-1},n}^{\,\epsilon_{N-1}}w$
has weight $\lm-\alpha(i_1,n)\neq \lm-\alpha(i,n)$.
Hence
$$
0=\phi(Fw)=\phi\bigl( F_{i,n}\cf_{i,n}(F)\,w+\bar{\! F}_{i,n}\overline\cf_{i,n}(F)\,w\bigr)=\cf_{i,n}(F)\,\phi( F_{i,n}w).
$$
Since $\cf_{i,n}(F)\ne0$, we get $\phi( F_{i,n}w)=0$. Similarly from $0=\phi(F'w)$ and
$\overline\cf_{i,n}(F')\ne0$, we get $\phi(\bar{F}_{i,n}w)=0$.
Hence $\phi$ take all vectors~(\ref{lemma:main:1}) to zero. Thus we have proved that $\phi=0$, which is a contradiction.
\end{proof}


Let $\la\in X(n)$. We introduce a linear map $\ev_\lm$ on $U^{\leq 0}(n)^{-\alpha(i,j)}$ as follows. For
$$
H=\tbinom{ H_1}{a_1}\cdots\tbinom{ H_n}{a_n}\,\,\bar{H}^{b_1}_1\cdots\,\bar{H}^{b_n}_n
$$
with $a_1,\ldots,a_n\ge0$ and $b_1,\ldots,b_n\in\{0,1\}$, we set
$$
\ev_\lm(H):=\tbinom{\lm_1}{a_1}\cdots\tbinom{\lm_n}{a_n}\,\,\bar{H}^{b_1}_1\cdots\,\bar{H}^{b_n}_n,
$$
and extend by linearity to the whole $U^0(n)$.
Now we set
$$
\ev_\lm\Bigl( F^{\,\epsilon_0}_{i,a_1} F^{\,\epsilon_1}_{a_1,a_2}\cdots F^{\,\epsilon_m}_{a_m,j}\,H_{i,\,a_1,\ldots,a_m,j}^{\,\epsilon_0,\ldots,\epsilon_m}\Bigr)
:= F^{\,\epsilon_0}_{i,a_1} F^{\,\epsilon_1}_{a_1,a_2}\cdots F^{\,\epsilon_m}_{a_m,j}\,\ev_\lm\bigl(H_{i,\,a_1,\ldots,a_m,j}^{\,\epsilon_0,\ldots,\epsilon_m}\bigr)
$$
and extend $\ev_\lm$ linearly to the whole $U^{\geq 0}(n)$.
By definitions, we have:

\begin{proposition}\label{proposition:constr:2}
Let $F\in U^{\leq 0}(n)^{-\alpha(i,j)}$ and $\lm\in X(n)$.
Then
$$\cf_{i,j}^\delta(\ev_\lm(F))=\ev_\lm(\cf_{i,j}^\delta(F)).$$
Moreover, $Fv=\ev_\lm(F)v$
for any vector $v$ of weight $\lm$ belonging to some $U(n)$-supermodule.
\end{proposition}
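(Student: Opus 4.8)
The plan is to reduce everything to the triangular normal form~(\ref{eq:rev:1}) supplied by Proposition~\ref{proposition:intro:1}: any $F\in U^{\leq 0}(n)^{-\alpha(i,j)}$ is a finite sum of terms $F^{\,\epsilon_0}_{i,a_1}\cdots F^{\,\epsilon_m}_{a_m,j}\,H^{\,\epsilon_0,\ldots,\epsilon_m}_{i,a_1,\ldots,a_m,j}$ over chains $i<a_1<\cdots<a_m<j$ with $H^{\,\epsilon_0,\ldots,\epsilon_m}_{i,a_1,\ldots,a_m,j}\in U^0(n)$, and both $\ev_\lm$ and $\cf_{i,j}^{\,\delta}$ are linear maps defined directly in terms of this form. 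So it suffices to track a single term and then sum.

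For the identity $\cf_{i,j}^{\,\delta}(\ev_\lm(F))=\ev_\lm(\cf_{i,j}^{\,\delta}(F))$ I would write $F$ in the form~(\ref{eq:rev:1}) and observe that, by the very definition of $\ev_\lm$ on $U^{\leq 0}(n)$, $\ev_\lm(F)$ is obtained by replacing each coefficient $H^{\,\epsilon_0,\ldots,\epsilon_m}_{i,a_1,\ldots,a_m,j}$ by $\ev_\lm\bigl(H^{\,\epsilon_0,\ldots,\epsilon_m}_{i,a_1,\ldots,a_m,j}\bigr)$ --- which again lies in $U^0(n)$, as is clear from the definition of $\ev_\lm$ on the PBW basis $\tbinom{H_1}{a_1}\cdots\tbinom{H_n}{a_n}\bar H_1^{b_1}\cdots\bar H_n^{b_n}$ --- while leaving the monomial $F^{\,\epsilon_0}_{i,a_1}\cdots F^{\,\epsilon_m}_{a_m,j}$ unchanged. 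Hence $\ev_\lm(F)$ is again presented in the form~(\ref{eq:rev:1}) over the same chains, and the $U^0(n)$-coefficient it attaches to the chain $F^{\,\delta}_{i,j}$ --- the $m=0$ term with $\epsilon_0=\delta$, which is exactly what $\cf_{i,j}^{\,\delta}$ reads off --- is precisely $\ev_\lm(H_{i,j}^{\,\delta})=\ev_\lm(\cf_{i,j}^{\,\delta}(F))$.

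For $Fv=\ev_\lm(F)\,v$ with $v$ of weight $\lm$, I would first prove the special case $Hv=\ev_\lm(H)\,v$ for $H\in U^0(n)$, reducing by linearity to a basis monomial $H=\tbinom{H_1}{a_1}\cdots\tbinom{H_n}{a_n}\bar H_1^{b_1}\cdots\bar H_n^{b_n}$ written in the order used to define $\ev_\lm$. The one algebraic input is that each divided power $\tbinom{H_t}{m}$ is central in $U^0(n)$: it commutes with every $\tbinom{H_s}{l}$, and with every $\bar H_s$ because $\tbinom{X_{t,t}}{m}$ is a polynomial in $X_{t,t}$ with $[X_{t,t},\bar X_{s,s}]=0$ in $U_{\mathbb C}(n)$ --- equivalently, $U^0(n)$ preserves weight spaces. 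Granting this, $v':=\bar H_1^{b_1}\cdots\bar H_n^{b_n}v$ again has weight $\lm$, so $Hv=\tbinom{H_1}{a_1}\cdots\tbinom{H_n}{a_n}v'=\tbinom{\lm_1}{a_1}\cdots\tbinom{\lm_n}{a_n}v'=\ev_\lm(H)\,v$. Then, expanding $F$ as in~(\ref{eq:rev:1}) and applying the special case to each coefficient $H^{\,\epsilon_0,\ldots,\epsilon_m}_{i,a_1,\ldots,a_m,j}\in U^0(n)$ --- the monomial $F^{\,\epsilon_0}_{i,a_1}\cdots F^{\,\epsilon_m}_{a_m,j}$ being applied afterwards and left untouched --- gives $Fv=\ev_\lm(F)\,v$ after summing over chains. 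I do not expect a substantive obstacle here: the statement is precisely the formal compatibility of the two-sided normal form~(\ref{eq:rev:1}) with the definition of $\ev_\lm$ and with evaluation at a weight vector (hence ``by definitions''), and the only point needing attention is the centrality of the divided powers of the torus in $U^0(n)$, which is what licenses moving the $\tbinom{H_t}{m}$ past the $\bar H_s$'s.
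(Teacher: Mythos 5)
Your proof is correct and is exactly the argument the paper intends: the paper's entire proof is the phrase ``By definitions,'' and your write-up is the faithful unpacking of those definitions via the normal form~(\ref{eq:rev:1}), including the one genuinely non-formal input (that the divided powers $\tbinom{H_t}{m}$ commute with the $\bar H_s$, which follows from the relation $[H_i^\de,H_i^\eps]=(1-(-1)^{\de\eps})H_i^{\de+\eps}$ and $[H_i^\de,H_j^\eps]=0$ in~(\ref{equation:intro:1})). No gaps.
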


We now prove one of our main results.

\begin{theorem}\label{theorem:main:2} Let $\lm\in X(n)$ and $1\le i<n$.
There exists a nonzero $U(n-1)$-primitive vector of weight $\lm-\alpha(i,n)$ in $L(\lm)$ if and only if
$i$ is a $\lm$-normal index.
\end{theorem}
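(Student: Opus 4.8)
The plan is to prove the two implications separately, using the machinery developed in the previous chapters. For the ``if'' direction---that a $\lm$-normal index $i$ yields a nonzero $U(n-1)$-primitive vector of weight $\lm-\alpha(i,n)$ in $L(\lm)$---I would simply invoke Theorem~\ref{theorem:constr:4}, which is precisely this statement. So essentially all the work is in the ``only if'' direction: if $i$ is \emph{not} $\lm$-normal, then $L(\lm)$ has no nonzero $U(n-1)$-primitive vector of that weight.

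For the ``only if'' direction, the strategy is to apply Theorem~\ref{theorem:main:1}. That is, I would construct elements $F, F' \in U^{\leq 0}(n)^{-\alpha(i,n)}$ with $F L(\lm)^\lm = F' L(\lm)^\lm = 0$ and with the coefficients $\cf_{i,n}(F), \overline\cf_{i,n}(F'),$ invertible while $\overline\cf_{i,n}(F) = \cf_{i,n}(F') = 0$. The natural candidates are lowering operators of the form $\ev_\lm(S_{i,n}^{\,\epsilon}(M))$ for suitable signed sets $M$ and parities $\epsilon$: by Proposition~\ref{proposition:constr:2} it suffices to arrange the vanishing on $L(\lm)^\lm$ and the coefficient conditions for $S_{i,n}^{\,\epsilon}(M)$ itself, after evaluation at $\lm$. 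The vanishing $S_{i,n}^{\,\epsilon}(M)\,v^+_\lm = 0$ will come from Lemmas~\ref{lemma:constr:2}, \ref{lemma:constr:6}, \ref{lemma:constr:7}, or \ref{lemma:constr:4}, applied to an appropriate injection $\psi$; the values of the coefficients $\cf_{i,n}^{\,\delta}(S_{i,n}^{\,\epsilon}(M))$ are given explicitly by Lemmas~\ref{lemma:rev:1} and~\ref{lemma:rev:1.5} in terms of products $\prod C(i,t)$ and the factors $H_i^{\epsilon+\delta}, H_n^{\epsilon+\delta}$. The combinatorial input is that ``$i$ is not $\lm$-normal'' means either $\bigl[\prod_{i\le k<n} r_\beta(\lm)_k\bigr]$ does not contain $-_i$, or it does but $\bigl[\prod_{i<k<n} r_\beta(\lm)_k\bigr]=\emptyset$ and $\lm_i\=\lm_n\=0\pmod p$ (here $\beta:=\res_p\lm_i$). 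In the first sub-case the reduction has ``too many $+$'s'' to the right of position $i$, which---via Lemmas~\ref{lemma:pm:2} and~\ref{lemma:pm:6} producing flows coherent but not fully coherent with $r_\beta(\lm)$---gives injections $\psi$ hitting $i$ itself, forcing a factor $C(i,i)$ or $B(i,i)$ into the coefficient; since $\ev_\lm(C(i,i))=0$ this would kill the coefficient unless we compensate by choice of $\epsilon$, and comparing $\epsilon=\0$ with $\epsilon=\1$ is exactly what separates $\cf$ from $\overline\cf$. In the degenerate second sub-case $\lm_i\=\lm_n\=0\pmod p$ forces $H_i^{\epsilon+\delta}$ and $H_n^{\epsilon+\delta}$ to act as zero on $L(\lm)^\lm\cong\u(\lm)$ when the exponent is odd, which is why Lemma~\ref{lemma:constr:7} is available and gives the needed vanishing while the relevant coefficient, evaluated at $\lm$, is a nonzero scalar.

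Concretely, I would break the ``only if'' direction into cases mirroring Theorem~\ref{theorem:constr:4} (distinguishing $\beta\ne\mathbf0$ from $\beta=\mathbf0$, and within $\beta=\mathbf0$ the sub-cases determined by the shape of $\bigl[\prod_{i<k<n} r_\mathbf0(\lm)_k\bigr]$ and the residues of $\lm_i,\lm_n$), and in each case select $M$ (a signed $(i..n]$-set, possibly with one odd element to produce the $H$-factor) and $\epsilon\in\{\0,\1\}$ together with an injection $\psi$ witnessing the coherence-but-not-full-coherence of a flow on $(i..n]$ or $(i..n)$. Then $F:=\ev_\lm(S_{i,n}^{\,\0}(M))$ and $F':=\ev_\lm(S_{i,n}^{\,\1}(M))$ (or a parity-swapped variant, or $F,F'$ built from two different $M$'s when the $H$-factor enters) will satisfy the hypotheses of Theorem~\ref{theorem:main:1}: the product-of-$C$'s part of $\cf_{i,n}^{\,\delta}$ contains a $C(i,i)$ which evaluates to zero, or alternatively the $H_i^{\epsilon+\delta}$/$H_n^{\epsilon+\delta}$ part evaluates to zero, in exactly the one of $\cf_{i,n},\overline\cf_{i,n}$ that we want to vanish, while the other is the nonzero scalar $\prod_{t}(\beta-\res_p(\lm_t+1))$ coming from full coherence of the remaining edges.

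The main obstacle I anticipate is the bookkeeping in the $\beta=\mathbf0$, $\lm_i\not\=1\pmod p$-fails cases: there one must juggle the parity $\epsilon$, the presence or absence of an odd element $\bar q$ in $M$, and the divisibility of $\lm_i$ and $\lm_n$ by $p$ simultaneously, and verify that in each configuration the two coefficient conditions of Theorem~\ref{theorem:main:1} can be met with a single pair $(F,F')$. The degenerate case $\lm_i\=\lm_n\=0\pmod p$ with empty reduced signature in between is the most delicate: it is precisely the configuration excluded by the definition of $\lm$-normal but not by that of tensor $\lm$-normal (cf. Definition~\ref{definition:constr:4}), so the non-existence of the primitive vector there is genuinely a ``$U(n-1)$ versus tensor'' phenomenon and must be extracted from the $H$-factors in Lemma~\ref{lemma:rev:1.5} (via $\bar H_i\,\u(\lm)=0$, Proposition~\ref{proposition:intro:4}(ii)) rather than from the $C$-product alone. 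Once this case is handled, the remaining cases are routine applications of the constructions already assembled, so I would present the $\beta\ne\mathbf0$ and non-degenerate $\beta=\mathbf0$ cases briefly and spend the bulk of the argument on the degenerate one.
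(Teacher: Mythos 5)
Your overall architecture coincides with the paper's: Theorem~\ref{theorem:constr:4} settles the ``if'' direction, and the ``only if'' direction is reduced to producing a pair $F,F'$ satisfying the hypotheses of Theorem~\ref{theorem:main:1}, built from lowering operators that annihilate $L(\lm)^\lm$ via Lemmas~\ref{lemma:constr:2}, \ref{lemma:constr:6}, \ref{lemma:constr:7}, with coefficients computed by Lemma~\ref{lemma:rev:1} and Proposition~\ref{proposition:constr:2}. But there is a genuine gap in your choice of operators: you commit to $F=\ev_\lm(S_{i,n}^{\,\0}(M))$ and $F'=\ev_\lm(S_{i,n}^{\,\1}(M))$ with $M$ a signed $(i..n]$-set, i.e.\ to operators supported on the full interval. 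This cannot work in general. By Lemma~\ref{lemma:rev:1} the even coefficient of $S_{i,n}^{\,\0}(M)$ evaluates at $\lm$ to $\prod_{t}(\beta-\res_p\lm_t)$ over the even $t\in M$ below the top element, so invertibility forces $M$ to avoid every $t\in(i..n)$ with $\res_p\lm_t=\beta$; those $t$ then lie in the domain of the injection $\psi$ required by Lemma~\ref{lemma:constr:6} or~\ref{lemma:constr:2}, and each must go to a strictly larger position of residue $\beta-1$ (in the sense $\res_p(\lm_{\psi(t)}+1)=\beta$). When the unmatched minuses sit to the \emph{right} of the plus that destroys normality --- e.g.\ $\beta\ne\mathbf0$ and $r_\beta(\lm)|_{(i..n)}$ reading $+,-,-$ --- no such injection exists, so vanishing on $L(\lm)^\lm$ and invertibility of the coefficient cannot be achieved simultaneously by any full-range $S_{i,n}^{\,\epsilon}(M)$. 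The missing device is truncation: Lemmas~\ref{lemma:pm:2} and~\ref{lemma:pm:6} only furnish the witnessing flow on a \emph{beginning} $(i..j]$ of the interval; one must take $M\subset(i..j{+}1]$, use the operators $\ev_\lm\bigl(F_{j+1,n}\,S^{\,\epsilon}_{i,j+1}(M)\bigr)$, and invoke Lemma~\ref{lemma:main:2'} to see that left multiplication by $F_{j+1,n}$ transports the coefficient, $\cf^{\,\delta}_{i,n}(F_{j+1,n}X)=\cf^{\,\delta}_{i,j+1}(X)$. Without this step the construction fails.

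Two smaller inaccuracies in your mechanism. First, the vanishing $S^{\,\epsilon}(M)v^+_\lm=0$ in the ``too many pluses'' cases is caused by the zero factor $\res_p\lm_i-\res_p(\lm_{\psi(i)}+1)$ inside Lemma~\ref{lemma:constr:6}\ref{lemma:constr:5:part:i}, not by any factor ``$C(i,i)$'' in the coefficient, and it holds for both parities $\epsilon$; what separates $\cf$ from $\overline\cf$ is only the indicator $\epsilon=\delta$ in Lemma~\ref{lemma:rev:1}, so the pair $(F,F')$ is simply the two parities of the same operator. Second, in the degenerate case $\lm_i\=\lm_n\=0\pmod p$ with empty reduction in between, the relevant vanishing lemma is Lemma~\ref{lemma:constr:7} (whose proof uses $H_i^{\,\epsilon+\sum\delta}$ and $H_n^{\,\epsilon+\sum\delta}$ killing $\u(\lm)$, as you correctly sense), but the coefficient fed into Theorem~\ref{theorem:main:1} is still the plain $C$-product of Lemma~\ref{lemma:rev:1}, since the top element of $M$ is the odd $\bar n$; Lemma~\ref{lemma:rev:1.5} and its $H$-factors belong to the tensor-product argument of Theorem~\ref{theorem:tensor:2}, not to this theorem.
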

\begin{proof} In view of Theorem~\ref{theorem:constr:4}, it suffices to prove that if $i$ is not $\lm$-normal, then there is no nonzero
$U(n-1)$-primitive vector of weight $\lm-\alpha(i,n)$ in $L(\lm)$.

So let $i$ be not $\lm$-normal. We set $\beta:=\res_p\lm_i$.
By Definition~\ref{definition:constr:4}, one of the following conditions holds:
{
\renewcommand{\labelenumi}{{\rm \theenumi}}
\renewcommand{\theenumi}{{\rm(\alph{enumi})}}
\begin{enumerate}
\itemsep=4pt
\item\label{theorem:main:2:case:a} $\beta\ne\mathbf0$ and $\bigl[\prod_{i<k<n}r_\beta(\lambda)_k\bigr]$ contains at least one sign $+$;
\item\label{theorem:main:2:case:b} $\beta=\mathbf0$ and $\bigl[\prod_{i<k<n}r_\beta(\lambda)_k\bigr]$ contains at least two signs $+$;
\item\label{theorem:main:2:case:c} $r_\beta(\lambda)_i=+-$ and $\bigl[\prod_{i<k<n}r_\beta(\lambda)_k\bigr]$ contain exactly one sign $+$.
\item\label{theorem:main:2:case:d} $\bigl[\prod_{i<k<n}r_\beta(\lm)_k\bigr]=\emptyset$ and $\lm_i\=\lm_n\=0\pmod p$.
\end{enumerate}}

First, we consider the cases~\ref{theorem:main:2:case:a} and~\ref{theorem:main:2:case:b}.
By Lemmas~\ref{lemma:pm:2} and~\ref{lemma:pm:6}, there exist
$j\in\{i+1,\ldots,n-1\}$ and a flow $\Gamma$ on $(i..j]$ coherent but not fully with $r_\beta(\lm)|_{(i..j]}$,
and not having buds on $(i..j]$.
Let $S$ be the set of all sources of edges of $\Gamma$ and set $M:=\bigl((i..j]\setminus S\bigr)\cup\{\odd{j+1}\}$.
We define the injection $\psi:\{i\}\cup S\to[i..j{+}1)$ as follows.
If $s\in S$ then we set $\psi(s)$ equal to the unique index such that $(s,\psi(s))\in\Gamma$.
As $\Gamma$ is not fully coherent with $r_\beta(\lm)|_{(i..j]}$, there exists $e\in(i..j]$ such that
$r_\beta(\lm)_e$ contains $+$ and no edge of $\Gamma$ ends at $e$. We set $\psi(i):=e$.
Now we can apply Lemma~\ref{lemma:constr:6} to conclude that for any $\epsilon\in\{\0,\1\}$ and $v^+_\lm\in L(\lm)^\lm$, we have
\begin{equation}\label{equation:main:1}
 S^{\,\epsilon}_{i,j+1}(M)\,v^+_\lm=0
\end{equation}

Now, we set
$$
F:=\left\{
\begin{array}{ll}
\ev_\lm( F_{j+1,n} S_{i,j+1}(M))&\text{ if }j+1<n;\\
\ev_\lm( S_{i,j+1}(M))&\text{ if }j+1=n,
\end{array}
\right.
$$
$$
F':=\left\{
\begin{array}{ll}
\ev_\lm( F_{j+1,n}\,\bar{\! S}_{i,j+1}(M))&\text{ if }j+1<n;\\
\ev_\lm(\bar{\! S}_{i,j+1}(M))&\text{ if }j+1=n.
\end{array}
\right.
$$
\noindent
By~(\ref{equation:main:1}) and Proposition~\ref{proposition:constr:2}, we get $FL(\lm)^\lm=F'L(\lm)^\lm=0$.
If $j+1<n$ then by Proposition~\ref{proposition:constr:2} and Lemmas~\ref{lemma:main:2'} and~\ref{lemma:rev:1}, we get
\begin{align*}
&\cf_{i,n}^{\delta}\bigl(\ev_\lm( F_{j+1,n} S^{\,\epsilon}_{i,j+1}(M))\bigr)
\\
=&\ev_\lm\bigl(\cf_{i,n}^{\,\delta}( F_{j+1,n} S^{\,\epsilon}_{i,j+1}(M))\bigr)
=\ev_\lm\bigl(\cf_{i,j+1}^{\,\delta}( S^{\,\epsilon}_{i,j+1}(M))\bigr)
\\
=&\cond_{\epsilon=\delta}\prod\nolimits_{t\in M\setminus\{\odd{j+1}\}}\ev_\lm( C(i,t))
=\cond_{\epsilon=\delta}\prod\nolimits_{t\in(i..j]\setminus S}(\beta-\res_p\lm_t).
\end{align*}
If $j+1=n$ then by Proposition~\ref{proposition:constr:2} and Lemma~\ref{lemma:rev:1}, we still get
\begin{align*}
&\cf_{i,n}^{\,\delta}\bigl(\ev_\lm( S^{\,\epsilon}_{i,j+1}(M))\bigr)
=\ev_\lm\bigl(\cf_{i,j+1}^{\,\delta}( S^{\,\epsilon}_{i,j+1}(M))\bigr)
\\
=&\cond_{\epsilon=\delta}\prod\nolimits_{t\in M\setminus\{\odd{j+1}\}}\ev_\lm( C(i,t))
=\cond_{\epsilon=\delta}\prod\nolimits_{t\in(i..j]\setminus S}(\beta-\res_p\lm_t).
\end{align*}
Thus we have obtained the following formulas:
$$
\begin{array}{ll}
\displaystyle
\cf_{i,n}(F)=\prod\nolimits_{t\in(i..j]\setminus S}(\beta-\res_p\lm_t),&\overline\cf_{i,n}(F)=0,
\\
\cf_{i,n}(F')=0,&\displaystyle\overline\cf_{i,n}(F')=\prod\nolimits_{t\in(i..j]\setminus S}(\beta-\res_p\lm_t).
\end{array}
$$
We claim that $\prod\nolimits_{t\in(i..j]\setminus S}(\beta-\res_p\lm_t)\ne\mathbf0$. Otherwise we would have
$\res_p\lm_t=\beta$ for some $t\in(i..j]\setminus S$.
By Proposition~\ref{proposition:rbeta}\ref{proposition:rbeta:part:1},
we obtain that $r_\beta(\lambda)_t$ contains $-$
ant therefore $t$ is a bud of $\Gamma$ on $(i..j]$ with respect to $r_\beta(\lambda)|_{(i..j]}$, see Definition~\ref{definition:pm:2}\ref{definition:pm:2:7},\ref{definition:pm:2:8}.
This is a contradiction. By Theorem~\ref{theorem:main:1}, we now conclude that there are no nonzero
$U(n-1)$-primitive vectors of weight $\lm-\alpha(i,n)$ in $L(\lm)$.

Now we consider the remaining cases~\ref{theorem:main:2:case:c} and~\ref{theorem:main:2:case:d}.
In the case \ref{theorem:main:2:case:c}, we set $j$ equal to the minimal element of some section of $r_\mathbf0(\lm)|_{(i..n)}$, see Definition~\ref{definition:pm:3}, and in the case  \ref{theorem:main:2:case:d} we set $j:=n$.
In both cases, we have
\begin{itemize}
\item $\lm_i\=\lm_j\=0\pmod p$;
\item $\bigl[\prod_{i<k<j}r_\mathbf0(\lm)_k\bigr]=\emptyset$.
\end{itemize}
By Lemma~\ref{lemma:pm:3}, there exists a flow $\Gamma$ on $(i..j)$ fully coherent with $r_\mathbf0(\lm)|_{(i..j)}$ and having no buds on $(i..j)$. Let $S$ be the set of all sources of edges of $\Gamma$ and define $M:=\bigl((i..j)\setminus S\bigr)\cup\{\bar\jmath\}$.
For any $s\in S$, let $\psi(s)$ be the unique number such that $(s,\psi(s))\in\Gamma$.
Clearly, $\psi$ is an injection of $S=(i..j)\setminus M$ into $(i..j)$.
Now we can apply Lemma~\ref{lemma:constr:7} to conclude that for any $\epsilon\in\{\0,\1\}$ and $v^+_\lm\in L(\lm)^\lm$, we have
\begin{equation}\label{equation:main:2}
 S^{\,\epsilon}_{i,j}(M)\,v^+_\lm=0
\end{equation}

Now, we set
$$
{
\arraycolsep=0pt
F:=\left\{
\begin{array}{ll}
\ev_\lm( F_{j,n} S_{i,j}(M))&\text{ if }j<n;\\[6pt]
\ev_\lm( S_{i,j}(M))&\text{ if }j=n,
\end{array}
\right.}
\quad
{
\arraycolsep=0pt
F':=\left\{
\begin{array}{ll}
\ev_\lm( F_{j,n}\,\bar{\! S}_{i,j}(M))&\text{ if }j<n;\\[6pt]
\ev_\lm(\bar{\! S}_{i,j}(M))&\text{ if }j=n.
\end{array}
\right.}
$$
\noindent
By~(\ref{equation:main:2}) and Proposition~\ref{proposition:constr:2}, we have $FL(\lm)^\lm=F'L(\lm)^\lm=0$.
If $j<n$ then by Proposition~\ref{proposition:constr:2} and Lemmas~\ref{lemma:main:2'} and~\ref{lemma:rev:1}, we get
\begin{align*}
&\cf_{i,n}^{\,\delta}\bigl(\ev_\lm( F_{j,n} S^{\,\epsilon}_{i,j}(M))\bigr)
=\ev_\lm\bigl(\cf_{i,n}^{\,\delta}( F_{j,n} S^{\,\epsilon}_{i,j}(M))\bigr)
=\ev_\lm\bigl(\cf_{i,j}^{\,\delta}( S^{\,\epsilon}_{i,j}(M))\bigr)\\
=&\cond_{\epsilon=\delta}\prod\nolimits_{t\in M\setminus\{\bar\jmath\}}\ev_\lm( C(i,t))
=\cond_{\epsilon=\delta}\prod\nolimits_{t\in(i..j)\setminus S}(\beta-\res_p\lm_t).
\end{align*}
If $j=n$ then by Proposition~\ref{proposition:constr:2} and Lemma~\ref{lemma:rev:1}, we still get
\begin{align*}
&\cf_{i,n}^{\,\delta}\bigl(\ev_\lm( S^{\,\epsilon}_{i,j}(M))\bigr)
=\ev_\lm\bigl(\cf_{i,j}^{\,\delta}( S^{\,\epsilon}_{i,j}(M))\bigr)\\
=&\cond_{\epsilon=\delta}\prod\nolimits_{t\in M\setminus\{\bar\jmath\}}\ev_\lm( C(i,t))
=\cond_{\epsilon=\delta}\prod\nolimits_{t\in(i..j)\setminus S}(\beta-\res_p\lm_t).
\end{align*}

Thus we have obtained the following formulas:
$$
\begin{array}{ll}
\displaystyle
\cf_{i,n}(F)=\prod\nolimits_{t\in(i..j)\setminus S}(\beta-\res_p\lm_t),&\overline\cf_{i,n}(F)=0,
\\
\cf_{i,n}(F')=0,&\displaystyle\overline\cf_{i,n}(F')=\prod\nolimits_{t\in(i..j)\setminus S}(\beta-\res_p\lm_t).
\end{array}
$$
Now, we have $\prod\nolimits_{t\in(i..j)\setminus S}(\beta-\res_p\lm_t)\ne\mathbf0$, since the flow $\Gamma$
has no buds on $(i..j)$.
By Theorem~\ref{theorem:main:1}, we conclude that there are no
 nonzero $U(n-1)$-primitive vector of weight $\lm-\alpha(i,n)$  in $L(\lm)$.
\end{proof}

\section{The socle of the first level}\label{The_socle_of_the_first_level}
Before proving the main result, we need to establish the following simple fact.

\begin{proposition}\label{proposition:main:2}
Let $\lm\in X(n)$ and $v$ be a nonzero $U(n-1)$-primitive vector in $L(\la)^\tau$.
Then $\tau=\lm-\alpha(i_1,n)-\cdots-\alpha(i_m,n)$ for some indices  $i_1,\ldots,i_m\in\{1,\ldots,n-1\}$.
\end{proposition}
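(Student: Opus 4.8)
The plan is to prove this by induction on the ``level'' $m$, i.e.\ on $n-1-k$ where $k$ is the smallest index such that $v$ has a nonzero component in a weight space $L(\la)^\mu$ with $\mu_k\neq\la_k$. Actually, it is cleanest to phrase the induction differently: since $L(\la)^\tau\cong L(\la)$ as a $U(n)$-supermodule (because $L(\la)^\tau\cong L(\la)$, stated after Lemma~\ref{Vermamod}), it suffices to prove the statement for nonzero $U(n-1)$-primitive vectors in $L(\la)$ itself. So I would first replace $L(\la)^\tau$ by $L(\la)$.

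The key observation is that $v$ lies in a single weight space $L(\la)^\tau$, and since $L(\la)$ is a highest weight module of highest weight $\la$, any weight $\tau$ of $L(\la)$ satisfies $\tau=\la-\gamma$ for some $\gamma\in Q_+(n)$, i.e.\ $\gamma=\sum_{l=1}^{n-1}c_l\alpha_l$ with $c_l\in\Z_{\ge0}$; equivalently $\gamma=\sum_{1\le p<q\le n}d_{p,q}\alpha(p,q)$ with $d_{p,q}\ge 0$. What must be shown is the sharper statement that $\gamma$ is in fact a sum of positive roots of the special form $\alpha(i,n)$. The mechanism for this is $U(n-1)$-primitivity: if $\tau\neq\la$, then $\tau=\la-\gamma$ with $\gamma\neq 0$; I claim the last coordinate $\tau_n=\la_n-(\text{number of roots }\alpha(\cdot,n)\text{ in }\gamma)$ must be strictly less than $\la_n$. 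Indeed, write $\gamma=\sum_{l=1}^{n-1}c_l\alpha_l$; then $\tau_n=\la_n-c_{n-1}$. If $c_{n-1}=0$, then $\gamma=\sum_{l=1}^{n-2}c_l\alpha_l\in Q_+(n-1)$, so $\tau|_{[1..n-1]}=\la|_{[1..n-1]}-\gamma$ is a weight strictly below $\la|_{[1..n-1]}$ in the dominance order of $X(n-1)$; but then $v$, being a nonzero $U(n-1)$-weight vector of weight $\tau|_{[1..n-1]}$ and $U(n-1)$-primitive, would generate a $U(n-1)$-subsupermodule whose highest weight $\nu$ satisfies $\nu\le\tau|_{[1..n-1]}<\la|_{[1..n-1]}$, yet $v$ itself is primitive of weight $\tau|_{[1..n-1]}$, forcing $v$ to lie in the highest weight space of that submodule. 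This is not yet a contradiction; the real point is to set up the descending induction.

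The cleanest argument: apply downward induction on $\tau_n$. If $\tau_n=\la_n$ then (by the previous paragraph, using that $v$ is a $U(n-1)$-primitive vector and $L(\la)$ has highest weight $\la$, together with $\sum\tau=\sum\la$, Proposition~\ref{proposition:main:1}) one shows $\tau=\la$ and $m=0$ works. If $\tau_n<\la_n$, consider the $U(n)$-submodule generated by $v$; since $v$ is $U(n-1)$-primitive, there exists $j$ with $1\le j<n$ and $\de\in\{\0,\1\}$ and some $m'>0$ or element $E\in U^+(n)^{\al(j,n)}$ with $E v\neq 0$—more precisely, by Proposition~\ref{proposition:intro:5} applied within $L(\la)$ together with the fact that $v$ is killed by all $E_{h,i}^{(m)},\bar E_{h,i}$ with $i<n$ but not by all of $U^+(n)$ (else $v$ would be a highest weight vector of weight $\tau<\la$, impossible), there is an index $j<n$ such that $E_{j,n}^\de v\neq 0$ for some $\de$, or more generally some $E_{j,n}^{(m)}v\ne 0$. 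The vector $w:=E_{j,n}^{(m)}v$ (resp.\ $\bar E_{j,n}v$) has weight $\tau+m\,\alpha(j,n)$, hence strictly larger last coordinate, and is again $U(n-1)$-primitive: indeed $E_{h,i}^{(s)}$ and $\bar E_{h,i}$ for $i<n$ supercommute with $E_{j,n}^{(m)}$ modulo terms of the form $E_{h',n}^{(\cdot)}$ (commutation relations~(\ref{equation:intro:1})), which kill $v$ as $v$ is $U(n-1)$-primitive; a short computation using Lemma~\ref{lemma:socle:1.5} or direct manipulation with~(\ref{equation:intro:1}) shows $w$ is $U(n-1)$-primitive. By the induction hypothesis (applied to $w$, whose last weight coordinate is larger), $\tau+m\,\alpha(j,n)=\la-\sum_{l=1}^{m''}\alpha(i_l,n)$ for some indices $i_l<n$. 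Rearranging, $\tau=\la-\sum_l\alpha(i_l,n)-m\,\alpha(j,n)$, which is of the required form (absorbing the $m$ copies of $\alpha(j,n)$ into the list).

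I expect the main technical obstacle to be the verification that $w=E_{j,n}^{(m)}v$ (or $\bar E_{j,n}v$) remains $U(n-1)$-primitive—this requires carefully tracking the supercommutators $[E_{h,i}^{(s)},E_{j,n}^{(m)}]$ and $[\bar E_{h,i},\bar E_{j,n}]$ for $h<i<n$ using the relations~(\ref{equation:intro:1}) and checking that all the ``error terms'' that arise either vanish on $v$ (because they again involve only operators $E_{h',n}$ raising to the $n$-th coordinate, which annihilate the $U(n-1)$-primitive $v$) or are themselves of the right form. In fact Lemma~\ref{lemma:socle:1.5} is tailor-made for exactly this kind of argument, so I would invoke it rather than redo the computation. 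Once primitivity of $w$ is established, the induction closes immediately. The base case $\tau_n=\la_n$ reduces, via Proposition~\ref{proposition:main:1} (which says the weight is determined by its first $n-1$ coordinates since $\sum\tau=\sum\la$) together with $\tau\le\la$, to showing $\tau=\la$, which follows because $\tau_n=\la_n$ and $\tau\le\la$ force $\gamma=\la-\tau\in Q_+(n-1)$, and then $v$ being a $U(n-1)$-primitive vector of weight $\tau|_{[1..n-1]}\le\la|_{[1..n-1]}$ inside $L(\la)$—whose $U(n-1)$-socle structure is controlled, but more simply: the $\la$-weight space of $L(\la)$ is one-dimensional up to the $U^0$-action and is the unique primitive line of weight class $\ge$ everything, so if $\tau|_{[1..n-1]}<\la|_{[1..n-1]}$ strictly we still need an argument. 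Here I would instead just note that $m=0$ is allowed and the statement is vacuously about the existence of \emph{some} such indices—when $\tau=\la$ take $m=0$; when $\tau_n=\la_n$ but $\tau\neq\la$ this case does not actually occur among $U(n-1)$-primitive vectors by a direct weight/primitivity argument, or else is swept into the inductive case since then $E_{j,n}v=0$ for all $j$ would make $v$ a highest weight vector of weight $\tau\lneq\la$ in $L(\la)$, a contradiction, so some $E_{j,n}^{(m)}v\neq0$ and we proceed as above.
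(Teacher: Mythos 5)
There is a genuine gap at the heart of your inductive step. You claim that if $v$ is $U(n-1)$-primitive then $w:=E_{j,n}^{(m)}v$ (or $\bar E_{j,n}v$) is again $U(n-1)$-primitive, because the supercommutators $[E_{h,i}^{(s)},E_{j,n}^{(m)}]$ produce only ``error terms of the form $E_{h',n}^{(\cdot)}$, which kill $v$.'' But $U(n-1)$-primitivity only means that $v$ is killed by $E_{h,i}^{(s)}$ and $\bar E_{h,i}$ with $i<n$; the operators $E_{h',n}$ raise into the $n$-th column and do \emph{not} annihilate $v$ in general. Concretely, for $i=j$ one has $E_{h,j}^{(s)}E_{j,n}^{(m)}v=\sum_t c_t\,E_{j,n}^{(m-t)}E_{h,n}^{(t)}E_{h,j}^{(s-t)}v$, and the surviving term $E_{j,n}^{(m-s)}E_{h,n}^{(s)}v$ need not vanish. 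A minimal counterexample is the natural module: for $n=3$ the vector $v=v_3$ of weight $\eps_3$ is $U(2)$-primitive, but $E_{2,3}v=v_2$ is not, since $E_{1,2}v_2=v_1\ne0$ (equivalently, $E_{1,3}v\ne0$). So the induction does not close as written, and no choice of $j$ is specified that would make it close; Lemma~\ref{lemma:socle:1.5}, which you propose to invoke, concerns \emph{lowering} operators $F\in U(n)^{-\alpha(h,i)}$ and does not apply here. (A side remark: $L(\la)^\tau$ in the statement is the $\tau$-weight space, not the contravariant dual, so your opening reduction is based on a misreading, though it does not affect the rest.)

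The paper's proof avoids this difficulty entirely with a one-step PBW argument. By Proposition~\ref{proposition:intro:1} one may choose a basis of $U^+(n)$ consisting of products in which all factors $E_{k,l}^{(a)},\bar E_{k,l}$ with $l<n$ stand to the \emph{right} of all factors $E_{k,n}^{(a)},\bar E_{k,n}$. By Proposition~\ref{proposition:intro:5} some basis element sends $v$ to a nonzero vector of $L(\la)^\la$; since $v$ is $U(n-1)$-primitive, the right-hand block must be empty, so the operator has weight in $\sum_k\Z_{\geq0}\,\alpha(k,n)$ and hence $\la-\tau$ lies in that cone. If you want to salvage your induction, the fix is essentially to prove this factorization $U^+(n)=U^+_{\mathrm{col}\,n}\cdot U^+(n-1)$ anyway, at which point the induction becomes superfluous.
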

\begin{proof} By Proposition~\ref{proposition:intro:1}, the operators of the form
$$
\prod_{1\le k<n} E_{k,n}^{(a_{k,n})}
\cdot\prod_{1\le k<n}\,\bar{\! E}_{k,n}^{\,b_{k,n}}
\cdot\prod_{1\le k<l<n} E_{k,l}^{(a_{k,l})}
\cdot\prod_{1\le k<l<n}\,\bar{\! E}_{k,l}^{\,b_{k,l}},
$$
with $a_{k,l}\in\Z_{\geq 0}$ and $b_{k,l}\in\{0,1\}$, form a basis of $U^+(n)$
(the order of factors in each of these four products is arbitrary but fixed). So one of these operators must move $v$ to a nonzero vector of $L(\lm)^\lm$. As $v$ is $U(n-1)$-primitive, the last two products must be empty.
Thus this operator has weight
$\sum_{1\le k<n}a_{k,n}\alpha(k,n)+\sum_{1\le k<n}b_{k,n}\alpha(k,n)$ and the vector $v$ has weight
$\lm-\sum_{1\le k<n}a_{k,n}\alpha(k,n)-\sum_{1\le k<n}b_{k,n}\alpha(k,n)$.
\end{proof}

Now we prove our second main result:

\begin{theorem}\label{TSocleQ}
Let $\lm\in X(n)$,  $1\leq i<n$, and
$$
\mu:=(\lm_1,\ldots,\lm_{i-1},\lm_i-1,\lm_{i+1},\ldots,\lm_{n-1}).
$$
There exists a $U(n-1)$-subsupermodule of $L(\lm)$ isomorphic to $L(\mu)$ if and only if $i$ is a $\lm$-good index.
\end{theorem}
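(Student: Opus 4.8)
The plan is to reduce the statement to the combinatorial results already established about $U(n-1)$-primitive vectors (Theorem~\ref{theorem:main:2} and Theorem~\ref{theorem:constr:5}), together with the duality $\tau$ and the universality of Verma modules. The irreducible $U(n-1)$-supermodule of the prescribed highest weight is $L(\mu)$, and since $\tau$ is a contravariant duality with $L(\mu)^\tau\cong L(\mu)$, detecting a submodule of $L(\lm)_{U(n-1)}$ isomorphic to $L(\mu)$ is the same as detecting a nonzero homomorphism $M(\mu)\to L(\lm)_{U(n-1)}$, which (exactly as in the proof of Theorem~\ref{theorem:main:1}, via~(\ref{EVermaDual})) is the same as the existence of a nonzero $U(n-1)$-primitive vector of weight $\lm-\alpha(i,n)$ in $L(\lm)$ that \emph{generates} a copy of $L(\mu)$ — equivalently, one which lies in the socle of $L(\lm)_{U(n-1)}$.

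\textbf{Step 1 (necessity).} Suppose $i$ is not $\lm$-good. If $i$ is not even $\lm$-normal, then by Theorem~\ref{theorem:main:2} there is no nonzero $U(n-1)$-primitive vector of weight $\lm-\alpha(i,n)$ at all, so certainly no submodule isomorphic to $L(\mu)$. If $i$ is $\lm$-normal but not $\lm$-good, then by Definition~\ref{definition:main:5} there is a $\lm$-normal index $h<i$ with $\res_p\lm_h=\res_p\lm_i$; I may take $h$ minimal, hence $\lm$-good. The idea now is: any $U(n-1)$-primitive vector $v$ of weight $\lm-\alpha(i,n)$ must fail to generate an irreducible $U(n-1)$-submodule, because applying $U(i)$ to it — via Theorem~\ref{theorem:constr:5} — produces a further nonzero $U(n-1)$-primitive vector $w\in U(i)v$ of weight $\lm-\alpha(h,n)\neq\lm-\alpha(i,n)$. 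Since $w$ lies in the $U(n-1)$-submodule generated by $v$ but has a different weight from $v$'s highest weight, that submodule has more than one $U(n-1)$-primitive vector up to the $U^0(n-1)$-action (in particular its head has a composition factor of highest weight $\neq\mu$), so it cannot be isomorphic to $L(\mu)$. One must phrase this carefully: the precise statement is that the socle of $L(\lm)_{U(n-1)}$ contains no copy of $L(\mu)$, for which I argue by contradiction — a submodule $\cong L(\mu)$ would have $v$ as its unique (up to $U^0$) primitive vector and would be $U^0(n-1)$-irreducible of type $\mathfrak u(\mu)$, yet it contains $w$ of the wrong weight.

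\textbf{Step 2 (sufficiency).} Suppose $i$ is $\lm$-good. By Theorem~\ref{theorem:constr:4}/Theorem~\ref{theorem:main:2} there is a nonzero homogeneous $U(n-1)$-primitive vector $v\in L(\lm)^{\lm-\alpha(i,n)}$. Let $N$ be the $U(n-1)$-submodule of $L(\lm)$ generated by $v$; it has a composition factor $L(\mu)$ in its head. I must show $L(\mu)$ actually occurs in the \emph{socle}, equivalently that some $U(n-1)$-submodule is isomorphic to $L(\mu)$. The natural approach is to pick a minimal nonzero $U(n-1)$-submodule $N'\subseteq N$; it is irreducible, say $N'\cong L(\nu)$ for some $\nu\leq\lm$, and by Proposition~\ref{proposition:main:2} $\nu=\lm-\alpha(i_1,n)-\cdots-\alpha(i_m,n)$. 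A nonzero $U(n-1)$-primitive vector of weight $\nu$ sits in $L(\lm)$, so by Theorem~\ref{theorem:main:2} each such first-level case forces $m=1$ and $i_1$ to be $\lm$-normal; iterating (using that $N'$ is generated inside $U(i_1)\cdots$ applied to $v$ and applying Theorem~\ref{theorem:constr:5} in reverse) one shows $\nu=\lm-\alpha(j,n)$ for some $\lm$-normal $j\leq i$ with $\res_p\lm_j=\res_p\lm_i$, and minimality of the good index $i$ among such $j$ forces $j=i$, i.e. $\nu=\mu$. Hence $N'\cong L(\mu)$.

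\textbf{The main obstacle} I anticipate is Step 2, specifically pinning down that the minimal submodule $N'$ has highest weight exactly $\lm-\alpha(i,n)$ rather than $\lm-\alpha(j,n)$ for some other normal index $j$ lying ``below'' $i$ in the branching; one needs the precise interplay between the extension theorems (Theorems~\ref{theorem:socle:0.5},~\ref{theorem:socle:1},~\ref{theorem:socle:2}, packaged as Theorem~\ref{theorem:constr:5}) and the fact that $i$ is \emph{good} — that is, that $U(i)v$ cannot contain a primitive vector of weight $\lm-\alpha(j,n)$ for $j<i$ with the same residue in a way that would shrink the socle constituent, while it \emph{can} when $i$ is not good. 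I expect this to require a careful argument that the assignment $v\mapsto$ (the set of $\lm$-normal indices reachable from $i$ by applying $U(i),U(i-1),\dots$) is controlled by the combinatorics of the reduced $\beta$-signature, so that the socle constituent attached to $v$ has highest weight governed precisely by the good index of that residue. The rest — the $\tau$-duality reformulation, the Verma-universality step, and Proposition~\ref{proposition:main:2} — is routine given the machinery already in place.
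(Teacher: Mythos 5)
Your overall architecture matches the paper's: necessity via Theorem~\ref{theorem:main:2} (when $i$ is not normal) and Theorem~\ref{theorem:constr:5} (when $i$ is normal but not good, producing a primitive vector $w\in U(i)v$ of weight $\lm-\alpha(h,n)$ inside the putative copy of $L(\mu)$, contradicting irreducibility); sufficiency by taking the $U(n-1)$-submodule generated by the primitive vector from Theorem~\ref{theorem:constr:4} and showing it is irreducible. Your Step 1 is essentially identical to the paper's argument.

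The genuine gap is exactly where you flag "the main obstacle," and your proposed way around it does not work. In the sufficiency direction, after locating a maximal-weight vector $v'$ in a hypothetical proper submodule $M'\subseteq M$ and using Proposition~\ref{proposition:main:2} plus Theorem~\ref{theorem:main:2} to conclude that $v'$ is $U(n-1)$-primitive of weight $\lm-\alpha(h,n)$ for some $\lm$-\emph{normal} $h<i$, you still must show $\res_p\lm_h=\res_p\lm_i$ in order to contradict the goodness of $i$. Your suggestion to get this by "applying Theorem~\ref{theorem:constr:5} in reverse" or by signature combinatorics is not a proof: Theorem~\ref{theorem:constr:5} only produces primitive vectors when the residues are \emph{already known} to agree, and it says nothing that would exclude a primitive vector of weight $\lm-\alpha(h,n)$ with $\res_p\lm_h\neq\res_p\lm_i$ from lying inside $M$. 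The paper closes this gap by a different tool: since $M$ is generated by a primitive $U^0(n-1)$-subsupermodule isomorphic to $\mathfrak u(\mu)$ (note that one must generate $M$ from such a subsupermodule $W\subseteq U^0(n-1)v$, not merely from $v$, to invoke the universality of Verma modules), $M$ is a quotient of $M(\mu)$, so $L(\mu-\alpha(h,i))$ is a composition factor of $M(\mu)$, and the Linkage Principle for $Q(n)$ (\cite[Theorem 8.10]{Kleshchev_Brundan_Modular_Representations_of_the_supergroup_Q(n)_I}) then forces $\res_p\lm_h=\res_p\lm_i$. Without the Linkage Principle (or an equivalent block-theoretic input), your Step 2 does not go through.
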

\begin{proof}
Suppose first that $i$ is not a $\lm$-good index. If $i$ is not $\lm$-normal, then by Theorem~\ref{theorem:main:2},
there is no nonzero $U(n-1)$-primitive vector in $L(\lm)^{\lm-\alpha(i,n)}$.
Then by Proposition~\ref{proposition:main:1}, there is no nonzero $U(n-1)$-primitive vector of weight
$\mu$ with respect to $U^0(n-1)$ in $L(\lm)$. So $L(\mu)$ is not a submodule of the restriction $L(\la)_{U(n-1)}$.
So we may assume that $i$ is $\lm$-normal, but there exists another $\lm$-normal index $h<i$ such that
$\res_p\lm_h=\res_p\lm_i$, see Definition~\ref{definition:main:5}. Assume that $L(\mu)\subseteq L(\la)_{U(n-1)}$. Pick a  nonzero vector $v\in L(\mu)^\mu$. By Proposition~\ref{proposition:main:1},
we get $v\in L(\la)^{\lm-\alpha(i,n)}$. By Theorem~\ref{theorem:constr:5}, there exists a nonzero
$U(n-1)$-primitive vector $w\in U(i)v\subset L(\mu)$ of weight $\lm-\alpha(h,n)$. This contradicts the irreducibility of $L(\mu)$.

Conversely, let $i$ be a $\lm$-good node. Then by Theorem~\ref{theorem:constr:4}, there exists
a nonzero $U(n-1)$-primitive homogeneous vector $v\in L(\lm)^{\lm-\alpha(i,n)}$.
Let $W$ be an irreducible subsupermodule of the $U^0(n-1)$ supermodule $U^0(n-1)v$ generated by $v$.
By Proposition~\ref{proposition:intro:4}, we have $W\cong{\mathfrak u}(\mu)$.
Consider the $U(n-1)$-subsupermodule $M$ of $L(\lm)$ generated by $W$.
It suffices to prove that $M$ is an irreducible $U(n-1)$-supermodule.

If not, then $M$ contains a proper nonzero subsupermodule $M'$. Note that $W\cap M'=0$.
Choose any nonzero weight vector $v'$ in $M'$
of maximal possible weight.
Then $v'$ has weight $\mu-\beta$, where $\beta$ is a sum of positive roots of the form $\alpha(k,l)$ with $1\leq k<l<n$. The maximality of the weight of $v'$
implies that $v'$ is $U(n-1)$-primitive.
Since $\mu=\bigl(\lm-\alpha(i,n)\bigr)|_{[1..n)}$, we obtain $\mu-\beta=\bigl(\lm-\alpha(i,n)-\bigr)|_{[1..n)}-\beta$.
By Proposition~\ref{proposition:main:1}, we get $v'\in L(\lm)^{\lm-\alpha(i,n)-\beta}$. So, by Proposition~\ref{proposition:main:2}, we must have $\beta=\al(h,i)$ and
$v'\in L(\lm)^{\lm-\alpha(h,n)}$ for some $h<i$.
By Theorem~\ref{theorem:main:2}, $h$ is a $\lm$-normal index.
Moreover, by the universal property of Verma modules,
$M$ is a quotient of $M(\mu)$, and so $L(\mu-\al(h,i))$ is a composition factor of $M(\mu)$. By the Linkage  Principle of \cite[Theorem 8.10]{Kleshchev_Brundan_Modular_Representations_of_the_supergroup_Q(n)_I}, it is easy to see that $\res_p\lm_h=\res_p\lm_i$. This contradicts the fact that $i$ is a $\lm$-good index, and so the proof is complete.
\end{proof}

Let $\la\in X(n)$ and consider the irreducible supermodule $L(\la)$. Recall the simple roots $\al_1,\dots,\al_{n-1}$. Define the {\em $j$th level} of $L(\la)$ to be
$$
L(\la)_j:=\bigoplus_{\mu\in X(n)}L(\la)^\mu,
$$
where the sum is over all weights $\mu$ of the form $\mu=\la-j\al_{n-1}-\sum_{i=1}^{n-2}m_i\al_i$. Note that $L(\la)_j$ is invariant with resect to the action of $U(n-1)\subset U(n)$. So we have
$$
L(\la)_{U(n-1)}=\bigoplus_{j\geq 0} L(\la)_j.
$$

We now restate the main results on $U(n)$ obtained above as the results on the first level $L(\la)_1$.

\begin{theorem}\label{TFirstLevelMain}
Let $\la\in X(n)$, and $\mu\in X(n-1)$. Then
\begin{enumerate}
\item[{\rm (i)}] $\Hom_{U(n-1)}(M(\mu),L(\la)_1)\neq 0$ if and only if $\mu=(\la_1,\dots,\la_{n-1})-\eps_i$ for some $\la$-normal index $i$.
\item[{\rm (ii)}] $\Hom_{U(n-1)}(L(\mu),L(\la)_1)\neq 0$ if and only if $\mu=(\la_1,\dots,\la_{n-1})-\eps_i$ for some $\la$-good index $i$.
\item[{\rm (iii)}] Assume in addition that $\la\in X^+_p(n)$ and $\mu\in X^+_p(n-1)$. Then we have $\Hom_{U(n-1)}(V(\mu),L(\la)_1)\neq 0$ if and only if $\mu=(\la_1,\dots,\la_{n-1})-\eps_i$ for some $\la$-normal index $i$.
\end{enumerate}
\end{theorem}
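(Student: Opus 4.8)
The plan is to deduce all three parts from Theorem~\ref{theorem:main:2} and Theorem~\ref{TSocleQ} after setting up a short dictionary between $\Hom$-spaces into $L(\la)_1$ and $U(n-1)$-primitive vectors in $L(\la)$. First I would record the bookkeeping. Since $\alpha(i,n)=\alpha_i+\dots+\alpha_{n-1}$, the weight $\la-\alpha(i,n)$ involves $\alpha_{n-1}$ with coefficient $1$, so $L(\la)^{\la-\alpha(i,n)}\subseteq L(\la)_1$; conversely, by Proposition~\ref{proposition:main:1} the last coordinate of any weight of $L(\la)$ is determined by the first $n-1$, so a weight $\nu$ of $L(\la)_1$ with $\bar\nu=(\la_1,\dots,\la_{n-1})-\eps_i$ must equal $\la-\alpha(i,n)$. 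The subalgebra $U(n-1)$ preserves each level $L(\la)_j$ (its generators $F_{k,l},\bar F_{k,l}$ with $k<l<n$ shift weights by roots not involving $\alpha_{n-1}$), and $L(\la)_1\in\O(n-1)$ since its weight spaces are finite dimensional and its $U(n-1)$-weights lie below $(\la_1,\dots,\la_{n-1})-\eps_{n-1}$. Finally, by Proposition~\ref{proposition:main:2} any nonzero $U(n-1)$-primitive vector in $L(\la)_1$ has weight $\la-\alpha(i,n)$ for some $i\in\{1,\dots,n-1\}$; combined with the above this forces any $\mu$ with $\Hom_{U(n-1)}(M(\mu),L(\la)_1)\neq 0$ (or with a nonzero map from $L(\mu)$ or $V(\mu)$) to be of the form $(\la_1,\dots,\la_{n-1})-\eps_i$.

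For part (i), by the universal property of Verma supermodules in $\O(n-1)$ (Lemma~\ref{Vermamod}), $\Hom_{U(n-1)}(M(\mu),L(\la)_1)\neq 0$ if and only if $L(\la)_1$ contains a primitive $U^0(n-1)$-subsupermodule isomorphic to $\u(\mu)$: ``if'' follows by composing the universal surjection onto the $U(n-1)$-submodule generated by such a subsupermodule with the inclusion of that submodule, and ``only if'' follows since $M(\mu)=U^-(n-1)M(\mu)^\mu$, so a nonzero homomorphism does not annihilate $M(\mu)^\mu\cong\u(\mu)$, and its image is a primitive $\u(\mu)$. This condition is in turn equivalent to the existence of a nonzero $U(n-1)$-primitive vector $v$ of weight $\mu$ in $L(\la)$: the $U(n-1)$-primitive vectors in $L(\la)^\mu$ form a finite dimensional $U^0(n-1)$-supermodule (a fact already used in the proof of Theorem~\ref{theorem:main:1}) all of whose composition factors are $\u(\mu)$ by Proposition~\ref{PUMu}, hence it contains an irreducible, necessarily primitive, submodule isomorphic to $\u(\mu)$; conversely the top space of such a submodule yields $v$. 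By the first paragraph $v$ has weight $\la-\alpha(i,n)$ with $\mu=(\la_1,\dots,\la_{n-1})-\eps_i$, and Theorem~\ref{theorem:main:2} says this happens precisely when $i$ is $\la$-normal, giving (i).

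Part (iii) is proved identically, with the universal property of the Weyl supermodule (Theorem~\ref{TUnivProperty}) replacing that of $M(\mu)$: here $\la\in X^+_p(n)$ makes $L(\la)$, hence $L(\la)_1$, finite dimensional, so the universal property of $V(\mu)$ among finite dimensional supermodules applies, while $\mu\in X^+_p(n-1)$ ensures $V(\mu)\neq 0$; one argues as in (i) that a nonzero map $V(\mu)\to L(\la)_1$ corresponds to a primitive $\u(\mu)$ in $L(\la)_1$, i.e.\ to a $U(n-1)$-primitive vector of weight $\mu$, and then invokes Theorem~\ref{theorem:main:2}. For part (ii), since $L(\mu)$ is irreducible, $\Hom_{U(n-1)}(L(\mu),L(\la)_1)\neq 0$ is equivalent to $L(\la)_1$ containing a $U(n-1)$-subsupermodule isomorphic to $L(\mu)$; such a submodule is generated by its highest weight space, which consists of $U(n-1)$-primitive vectors, so $\mu=(\la_1,\dots,\la_{n-1})-\eps_i$ for some $i$ by the first paragraph, and Theorem~\ref{TSocleQ} then says this occurs if and only if $i$ is $\la$-good (the submodule produced by Theorem~\ref{TSocleQ} lying in $L(\la)_1$ because it is generated by a vector of weight $\la-\alpha(i,n)$).

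I do not expect a serious obstacle here: the statement repackages Theorems~\ref{theorem:main:2} and~\ref{TSocleQ}. The only points needing care are the two translations used above—between nonzero $\Hom$-spaces out of $M(\mu)$, $V(\mu)$, or $L(\mu)$ and $U(n-1)$-primitive vectors of weight $\mu$, resting on Proposition~\ref{PUMu} and on primitive vectors of a fixed weight forming a $U^0(n-1)$-submodule—and the level/weight bookkeeping of the first paragraph, which confines everything to $L(\la)_1$ and pins the weight to $\la-\alpha(i,n)$.
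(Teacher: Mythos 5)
Your proof is correct and follows essentially the same route as the paper: pin down $\mu=(\la_1,\dots,\la_{n-1})-\eps_i$ via Proposition~\ref{proposition:main:2}, translate nonzero $\Hom$-spaces out of $M(\mu)$, $V(\mu)$, $L(\mu)$ into $U(n-1)$-primitive vectors using the universal properties, and then quote Theorems~\ref{theorem:main:2} and~\ref{TSocleQ}. The paper's proof is a two-sentence version of exactly this argument; you have merely supplied the routine details it leaves implicit.
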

\begin{proof}
Note, using Proposition~\ref{proposition:main:2}, that $\Hom_{U(n-1)}(L(\mu),L(\la)_1)\neq 0$ implies that $\Hom_{U(n-1)}(M(\mu),L(\la)_1)\neq 0$, which in turn  implies that $\mu$ is of the form $(\la_1,\dots,\la_{n-1})-\eps_i$. Now the result follows from Theorems~\ref{theorem:main:2} and \ref{TSocleQ} and the universality of Verma modules $M(\mu)$ and Weyl modules $V(\mu)$.
\end{proof}

\section{Complement pairs}\label{Complement_pairs}
Let $M$ be a $U(n)$-supermodule. We call a pair of vectors $(v,v')$ of $M$ a {\it complement pair} if
\begin{equation}\label{equation:tensor:1}
E^\epsilon_{i,n}v=E_{i,n}^{\epsilon+\1}v'
\end{equation}
for all $i=1,\ldots,n-1$ and $\epsilon\in\{\0,\1\}$.
The set of all complement pairs of $M$ is denoted by $\cp(M)$. Clearly, $\cp(M)$ is an $\mathbb F$-linear space
under componentwise sum and multiplication by scalars.

The space $\cp(M)$ has the following $\Z_2$-grading:
$$
\begin{array}{l}
\cp(M)_\0=\bigl\{(v,v')\in\cp(M)\suchthat v\in M_\0\ \text{and}\ v'\in M_\1\bigl\},\\[3pt]
\cp(M)_\1=\bigl\{(v,v')\in\cp(M)\suchthat v\in M_\1\ \text{and}\ v'\in M_\0\bigl\}.
\end{array}
$$
To prove that $\cp(M)=\cp(M)_\0\oplus\cp(M)_\1$, let $(v,v')\in \cp(M)$ and  decompose  $v=v_\0+v_\1$ and $v'=v'_\0+v'_\1$, where $v_\0,v'_\0\in M_\0$ and $v_\1,v'_\1\in M_\1$. Then $(v,v')=(v_\0,v'_\1)+(v_\1,v'_\0)$ and~(\ref{equation:tensor:1}) gives
$
E^\epsilon_{i,n}v_\0+E^\epsilon_{i,n}v_\1=E^{\epsilon+\1}_{i,n}v'_\0+E^{\epsilon+\1}_{i,n}v'_\1,
$
whence
$
E^\epsilon_{i,n}v_\0=E^{\epsilon+\1}_{i,n}v'_\1,\quad
E^\epsilon_{i,n}v_\1=E^{\epsilon+\1}_{i,n}v'_\0,
$
i.e. $(v_\0,v'_\1)\in\cp(M)_\0$ and $(v_\1,v'_\0)\in\cp(M)_\1$.

Consider the new action of $U(n)$ on $M$ given by $x\bullet m:=(-1)^{\|x\|}xm$ for any homogeneous $x\in U(n)$.
Under this action $M$ is still a $U(n)$-supermodule.
When dealing with this action, it is convenient to consider the opposite grading on $M$.
This new supermodule module is then denoted by $\Pi M$, see~\cite[Section 12.1]{Kbook}.

We denote by $U^0_{ev}(n)$ the subalgebra of $U(n)$ generated by all~$\tbinom{H_i}m$.

\begin{lemma}\label{lemma:tensor:1}
The space
$\cp(M)$ is a $U(n-1)U^0_{ev}(n)$-module under the action $x(v,v')=(xv,x\bullet v')$.
\end{lemma}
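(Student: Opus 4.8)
The plan is to verify directly that the formula $x(v,v'):=(xv,\,x\bullet v')$ defines a module action of the algebra $A:=U(n-1)U^0_{ev}(n)$ on $\cp(M)$, i.e. that (1) it is well defined — the pair $(xv,x\bullet v')$ again lies in $\cp(M)$ — and (2) it is associative and unital. Step (2) is immediate once (1) is known: both $M$ (with the ordinary action) and $\Pi M$ (with the $\bullet$-action) are $U(n)$-supermodules, hence $a$-fortiori $A$-modules, so the diagonal-type formula automatically respects products and the identity. Thus the entire content is in checking the closure condition (1), and I would organize the proof around that.

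First I would recall that a pair $(v,v')$ of homogeneous vectors lies in $\cp(M)$ precisely when the defining equations $E^\epsilon_{i,n}v=E^{\epsilon+\1}_{i,n}v'$ hold for all $1\le i<n$ and $\epsilon\in\{\0,\1\}$, and — using the grading decomposition $\cp(M)=\cp(M)_\0\oplus\cp(M)_\1$ established just before the lemma — it suffices to treat homogeneous pairs and then extend by linearity. So fix a homogeneous generator $x$ of $A$: either $x=E^\delta_j$, $x=F^\delta_j$ with $1\le j<n-1$ (equivalently any generator of $U(n-1)$), or $x=\tbinom{H_j}{m}$ with $1\le j\le n$. For each such $x$ I would compute $E^\epsilon_{i,n}(xv)$ by moving $E^\epsilon_{i,n}$ past $x$ using the supercommutation relations (\ref{equation:intro:1}): since $x\in U(n-1)$ or $x\in U^0_{ev}(n)$ while $E^\epsilon_{i,n}$ raises along a root involving $n$, the supercommutator $[E^\epsilon_{i,n},x]$ is again of the form (scalar or $U^0$-element) times some $E^{\epsilon'}_{k,n}$ or is zero; crucially, because $x$ is \emph{even} (all generators of $U(n-1)$ that we need, and all $\tbinom{H_j}m$), the sign $(-1)^{\|x\|\|E^\epsilon_{i,n}\|}$ in the supercommutator equals $1$, so $E^\epsilon_{i,n}x = xE^\epsilon_{i,n}+[E^\epsilon_{i,n},x]$ with no sign. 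The point of introducing $\Pi M$ and the $\bullet$-action is exactly to absorb the sign that \emph{would} appear on the $v'$ side: $E^{\epsilon+\1}_{i,n}(x\bullet v')=(-1)^{\|x\|}E^{\epsilon+\1}_{i,n}(xv')$, and after commuting one gets the matching expression. I would write out the comparison of $E^\epsilon_{i,n}(xv)$ with $E^{\epsilon+\1}_{i,n}(x\bullet v')$ term by term, using the hypothesis $(v,v')\in\cp(M)$ on each resulting lower term, and check the parities of all the parameter shifts ($\epsilon\mapsto\epsilon'$, $\epsilon+\1\mapsto\epsilon'+\1$) so that the defining relations of $\cp(M)$ apply.

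The main obstacle — really the only subtle point — is the bookkeeping of parities and signs: one must confirm that the map $\delta\mapsto\delta+1$ on the auxiliary superscripts commutes with the commutation relations in the right way, and that the definition of $\bullet$ (together with the flipped grading on $\Pi M$) precisely cancels the sign $(-1)^{\|x\|}$ that obstructs closure when one naively uses the same action on both components. I would treat the three families of generators ($E$'s, $F$'s, $H$'s of the relevant ranks) as three short cases, noting that for generators of $U(n-1)$ the relevant commutators with $E^\epsilon_{i,n}$ are given by the $[E^\delta_j,F_{\cdot,\cdot}]$-type and $[E^\delta_j,E_{i,n}^\epsilon]$-type relations in (\ref{equation:intro:1}), and for $\tbinom{H_j}{m}\in U^0_{ev}(n)$ one uses $[H_j^\de,E_{i,n}^\eps]$ which is again a multiple of $E_{i,n}^{\de+\eps}$ (and the higher divided powers follow by Leibniz). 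Once closure is verified on generators it extends to all of $A$ by the argument above, and associativity/unitality are inherited from the two honest supermodule structures on $M$ and $\Pi M$; this completes the proof.
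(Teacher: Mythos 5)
Your overall strategy is the same as the paper's: reduce to checking closure of $\cp(M)$ under each generator of $U(n-1)U^0_{ev}(n)$ by commuting $E^\epsilon_{i,n}$ past $x$, with associativity and unitality inherited from the two supermodule structures on $M$ and $\Pi M$. However, there is a concrete error at the heart of your sign analysis: the claim that ``all generators of $U(n-1)$ that we need'' are even, so that $(-1)^{\|x\|\|E^\epsilon_{i,n}\|}=1$, is false. The superalgebra $U(n-1)$ is generated by $E_{i,j}^{(m)}, F_{i,j}^{(m)}, \tbinom{H_i}m$ together with the \emph{odd} elements $\bar E_{i,j}, \bar F_{i,j}, \bar H_i$ (and the individual $\bar H_i$, which your generator list omits entirely, are not products of the others). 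These odd generators are precisely where the lemma has content: for $x=\bar H_i$, $\bar E_{l,i}$ or $\bar F_{i,l}$ the supercommutator with $E^\epsilon_{i,n}$ carries the sign $(-1)^{\epsilon}$ and produces extra terms such as $\pm E^{\epsilon+\1}_{l,n}v$, and one must check that these extra terms cancel against the corresponding terms on the $v'$ side using the defining relation of $\cp(M)$ at the shifted parity $\epsilon+\1$. This is exactly what the paper's Cases 4, 6 and 8 do, and it is exactly where the factor $(-1)^{\|x\|}$ in the $\bullet$-action is indispensable. Your own subsequent (correct) remark that the $\bullet$-action absorbs the sign $(-1)^{\|x\|}$ contradicts the evenness claim: if every $x$ were even there would be nothing to absorb and $\Pi M$ would be pointless. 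As written, the proposal licenses skipping the only delicate cases, so the evenness assertion must be deleted and the odd generators treated explicitly (the even divided powers, handled by induction on $m$ as in the paper rather than a bare appeal to Leibniz, are comparatively routine).
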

\begin{proof} It suffices to prove that $(xv,x\bullet v')\in \cp(M)$ for $x$ being a generator of $U(n-1)U^0_{ev}(n)$
and $(v,v')\in\cp(M)$. We fix some $i\in\{1,\ldots,n-1\}$ and $\epsilon\in\{\0,\1\}$ and
check that $E^\epsilon_{i,n}xv=E_{i,n}^{\epsilon+\1}(x\bullet v')$.

{\em Case~1:} $x$ supercommutes with $E_{i,n}$ and $\bar E_{i,n}$.
Multiplying both sides of~(\ref{equation:tensor:1}) by $x$, we get
$$
(-1)^{\epsilon\|x\|}E^\epsilon_{i,n}xv=(-1)^{(\epsilon+\1)\|x\|}E^{\epsilon+\1}_{i,n}xv',
$$
whence
$$
E^\epsilon_{i,n}xv=(-1)^{\|x\|}E^{\epsilon+\1}_{i,n}xv'=E^{\epsilon+\1}_{i,n}(x\bullet v').
$$

{\em Case~2:} $x=\tbinom{H_i}m$. Apply induction on $m$, the case $m=0$ being obvious.
Multiplying both sides of~(\ref{equation:tensor:1}) by $x$, we get
$$
E^\epsilon_{i,n}\tbinom{H_i+1}mv=E_{i,n}^{\epsilon+\1}\tbinom{H_i+1}mv'.
$$
Hence
$$
E^\epsilon_{i,n}\tbinom{H_i}mv+E^\epsilon_{i,n}\tbinom{H_i}{m-1}v=E_{i,n}^{\epsilon+\1}\tbinom{H_i}mv'+E_{i,n}^{\epsilon+\1}\tbinom{H_i}{m-1}v',
$$
and we are done by the inductive hypothesis.

{\em Case~3:} $x=\tbinom{H_n}m$. Apply induction on $m$, the case $m=0$ being obvious.
Multiplying both sides of~(\ref{equation:tensor:1}) by $x$, we get
$$
E^\epsilon_{i,n}\tbinom{H_n-1}mv=E_{i,n}^{\epsilon+\1}\tbinom{H_n-1}mv'.
$$
Hence
$$
E^\epsilon_{i,n}\tbinom{H_n}mv-E^\epsilon_{i,n}\tbinom{H_n-1}{m-1}v=E_{i,n}^{\epsilon+\1}\tbinom{H_n}mv'-E_{i,n}^{\epsilon+\1}\tbinom{H_n-1}{m-1}v'
$$
and we are done by the inductive hypothesis since 
$\tbinom{H_n-1}{m-1}=\sum_{k+l=m-1}\tbinom{H_n}k\tbinom{-1\vphantom{H_n}}l$.

{\em Case~4:} $x=\bar H_i$. Multiplying both sides of~(\ref{equation:tensor:1}) by $x$, we get
$$
(-1)^\epsilon E^\epsilon_{i,n}\bar H_iv+E^{\epsilon+\1}_{i,n}v=(-1)^{\epsilon+\1}E_{i,n}^{\epsilon+\1}\bar H_iv'+E^\epsilon_{i,n}v',
$$
and the result follows.

{\em Case~5:} $x=E_{l,i}^{(m)}$. Apply induction on $m$, the case $m=0$ being obvious.
Multiplying both sides of~(\ref{equation:tensor:1}) by $x$, we get
$$
E^\epsilon_{i,n}E_{l,i}^{(m)}v+E^\epsilon_{l,n}E_{l,i}^{(m-1)}v=E_{i,n}^{\epsilon+\1}E_{l,i}^{(m)}v'+E_{l,n}^{\epsilon+\1}E_{l,i}^{(m-1)}v'.
$$
Applying the inductive hypothesis, we obtain the required formula.

{\em Case~6:} $x=\bar E_{l,i}$. Multiplying both sides of~(\ref{equation:tensor:1}) by $x$, we get
$$
(-1)^\epsilon E^\epsilon_{i,n}\bar E_{l,i}v+E^{\epsilon+\1}_{l,n}v=(-1)^{\epsilon+\1}E_{i,n}^{\epsilon+\1}\bar E_{l,i}v'+E_{l,n}^\epsilon v'
$$
or
$$
E^\epsilon_{i,n}\bar E_{l,i}v=-E_{i,n}^{\epsilon+\1}\bar E_{l,i}v'=E_{i,n}^{\epsilon+\1}(\bar E_{l,i}\bullet v').
$$

{\em Case~7:} $x=F_{i,l}^{(m)}$. Apply induction on $m$, the case $m=0$ being obvious.
Multiplying both sides of~(\ref{equation:tensor:1}) by $x$, we get
$$
E^\epsilon_{i,n}F_{i,l}^{(m)}v+E^\epsilon_{l,n}F_{i,l}^{(m-1)}v=E_{i,n}^{\epsilon+\1}F_{i,l}^{(m)}v'+E^{\epsilon+\1}_{l,n}F_{i,l}^{(m-1)}v'.
$$
Applying the inductive hypothesis, we obtain the required formula.

{\em Case~8:} $x=\bar F_{i,l}$. Multiplying both sides of~(\ref{equation:tensor:1}) by $x$, we get
$$
(-1)^\epsilon E^\epsilon_{i,n}\bar F_{i,l}v+E^{\epsilon+\1}_{l,n}v=(-1)^{\epsilon+\1}E_{i,n}^{\epsilon+\1}\bar F_{i,l}v'+E_{l,n}^\epsilon v',
$$
or
$$
E^\epsilon_{i,n}\bar F_{i,l}v=-E_{i,n}^{\epsilon+\1}\bar F_{i,l}v'=E_{i,n}^{\epsilon+\1}(\bar F_{i,l}\bullet v'),
$$
as required.
\end{proof}

\begin{remark}\label{remark:tensor:0} {\rm The action of $\bar H_n$ on $\cp(M)$ is not well-defined.
However, if $M$ has a weight space decomposition, then the action of $U^0_{ev}(n)$ yields the weight space decomposition $\cp(M)=\bigoplus_{\mu\in X(n)}\cp(M)^\mu$ such that
a pair $(v,v')\in\cp(M)$ has weight $\mu\in X(n)$ if and only if $v$ and $v'$ both have weight $\mu$.
In fact, the $\Z_2$-grading of $\cp(M)$ and the action of $U(n-1)U^0_{ev}(n)$ on $\cp(M)$ make $\cp(M)$
into a $U(n-1)U^0_{ev}(n)$-subsupermudule of $M\oplus\Pi M$.}
\end{remark}

Denote by $V$ the {\em natural}\, $U(n)$-supermodule. By definition, $V_\0$ has basis $v_1,\ldots,v_n$ and $V_\1$
has basis $\bar v_1,\ldots,\bar v_n$ so that, setting $v_i^\0:=v_i$ and $v_i^\1:=\bar v_i$, we have
$$
X^\delta_{i,j}v^\epsilon_k=\cond_{j=k}v_i^{\epsilon+\delta},
$$
and all elements $X^{(m)}_{i,j}$ and $\tbinom{H_i}m$ with $m>0$ act on $V$ as zero.

Let us consider in more detail the dual natural module $V^*$. It has the grading $V^*=V^*_\0\oplus V^*_\1$, where
$V^*_\0=\{f\in V^*\suchthat f(V_\1)=0\}$ and $V^*_\1=\{f\in V^*\suchthat f(V_\0)=0\}$. The action of $U(n)$ on $V^*$
is then given by
the formula $(xf)(v)=(-1)^{\|x\|\|f\|}f(\eta(x)v)$ for homogeneous $f$ and $x$. Let us write down this action
explicitly. Let $f_i^\epsilon:V\to\mathbb F$ be the linear map given by $f^\epsilon_i(v^\delta_j)=\cond_{i=j\and \epsilon=\delta}1_\mathbb F$.
We have
\begin{align*}
\bigl(X^\delta_{i,j}f_l^\epsilon\bigr)(v^\tau_k)=(-1)^{\epsilon\delta}f_l^\epsilon\bigl(\eta(X^\delta_{i,j})v^\tau_k\bigr)
=(-1)^{\epsilon\delta+\1}f_l^\epsilon\bigl(X^\delta_{i,j}v^\tau_k\bigr)\\
=(-1)^{\epsilon\delta+\1}\cond_{j=k}f_l^\epsilon\bigl(v^{\tau+\delta}_i\bigr)
=(-1)^{\epsilon\delta+\1}\cond_{j=k}\cond_{i=l\and\epsilon=\tau+\delta}1_\mathbb F\\
=(-1)^{\epsilon\delta+\1}\cond_{i=l}\cond_{j=k\and\epsilon+\delta=\tau}=(-1)^{\epsilon\delta+\1}\cond_{i=l}f^{\epsilon+\delta}_j(v^\tau_k).
\end{align*}
Hence we get
$$
X^\delta_{i,j}f_l^\epsilon=\cond_{i=l}(-1)^{\1+\epsilon\delta}f^{\epsilon+\delta}_j.
$$
Clearly all elements $X^{(m)}_{i,j}$ and $\tbinom{H_i}m$ with $m>0$ act on $V^*$ as zero.
As usual, we set $f_i:=f_i^\0$ and $\bar f_i:=f_i^\1$.

We introduce the  linear map $e:\cp(M)\to M\otimes V^*$ by
\begin{equation}\label{equation:tensor:2}
e(v,v'):=v\otimes f_n+\sum_{h=1}^{n-1}E_{h,n}v\otimes f_h-(-1)^{\|v'\|}v'\otimes \bar f_n-(-1)^{\|v'\|}\sum_{h=1}^{n-1}E_{h,n}v'\otimes \bar f_h
\end{equation}
for any homogeneous pair $(v,v')\in \cp(M)$, which extends by linearity. Note that this formula works even
for $v'=0$, in which case $\|v'\|$ is not defined.

For any $U(n)$-module $W$, we define the {\em $Y$-invariants} of $W$ to be
$$
W^Y:=\bigl\{w\in W\suchthat E_{i,n}^{(m)}w=\bar E_{i,n} w=0\text{ for all $1\leq i<n$ and }m>0\bigr\}.
$$

The following lemma is an analogue of \cite[Lemma 5.3]{BKtr}.

\begin{lemma}\label{lemma:tensor:2}
For any $U(n)$-supermodule $M$ with weight space decomposition, the map $e:\cp(M)\to M\otimes V^*$ is an even injective homomorphism of $U(n-1)$-supermodules whose image
contains $(M\otimes V^*)^Y$.
\end{lemma}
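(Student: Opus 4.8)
The plan is to verify the four claimed properties of $e$ in turn: it is even, it is a $U(n-1)$-supermodule homomorphism, it is injective, and its image contains $(M\otimes V^*)^Y$. The first property is immediate from the definition~(\ref{equation:tensor:2}): if $(v,v')\in\cp(M)_\0$, so $v\in M_\0$ and $v'\in M_\1$, then each $E_{h,n}v\otimes f_h$ lies in $(M\otimes V^*)_\0$ (as $E_{h,n}$ and $f_h$ are even), while $v'\otimes\bar f_n$ and the $E_{h,n}v'\otimes\bar f_h$ lie in $(M\otimes V^*)_\1\cdot\,(V^*)_\1\subset (M\otimes V^*)_\0$; similarly for $\cp(M)_\1$. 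So $e$ respects the $\Z_2$-grading.

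For the homomorphism property, I would check $e(x(v,v'))=xe(v,v')$ for $x$ running over the generators of $U(n-1)$, i.e. the $E^\de_l, F^\de_l, H^\de_l$ for $1\le l<n-1$ together with $\tbinom{H_i}m$, $\bar H_i$ for $i<n$ (the $U^0_{ev}(n-1)$-part plus $\bar H_i$). The action on $\cp(M)$ is the one from Lemma~\ref{lemma:tensor:1}, namely $x(v,v')=(xv,x\bullet v')$, and the action on $M\otimes V^*$ is via the comultiplication $\Delta$, using the explicit formula $X^\delta_{i,j}f_l^\epsilon=\cond_{i=l}(-1)^{\1+\epsilon\delta}f^{\epsilon+\delta}_j$ derived just above. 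The computation is routine but lengthy: one applies $\Delta(x)=x\otimes 1+1\otimes x$ (for the generators that are primitive) or $\Delta(\tbinom{H_i}m)=\sum_t\tbinom{H_i}t\otimes\tbinom{H_i}{m-t}$, expands both sides of $e(xv,x\bullet v')$ and $xe(v,v')$, and uses the commutation relations~(\ref{equation:intro:1}) between the generators of $U(n-1)$ and the $E_{h,n}$ to match terms. The complement-pair relation~(\ref{equation:tensor:1}), $E^\epsilon_{h,n}v=E_{h,n}^{\epsilon+\1}v'$, is exactly what makes the $\bar f_h$-components produced by $\bar H_l$, $\bar E_{l,i}$ etc. from the $v$-part cancel against the $f_h$-components produced from the $v'$-part. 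I expect this to be the main obstacle: there are many cases and the signs $(-1)^{\|v'\|}$, $(-1)^{\epsilon\delta}$ must be tracked carefully, exactly as in the referenced \cite[Lemma 5.3]{BKtr}.

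For injectivity, suppose $e(v,v')=0$. Looking at the $f_n$-component and the $\bar f_n$-component of~(\ref{equation:tensor:2}), which involve $v\otimes f_n$ and $v'\otimes\bar f_n$ and no other terms (since all the $E_{h,n}v\otimes f_h$ and $E_{h,n}v'\otimes\bar f_h$ lie in $M\otimes\operatorname{span}(f_h,\bar f_h)$ with $h<n$), we get $v=0$ and $v'=0$. Hence $e$ is injective. Finally, for $(M\otimes V^*)^Y\subseteq\operatorname{im} e$, take a homogeneous $Y$-invariant $z\in M\otimes V^*$ and write $z=\sum_{h=1}^n a_h\otimes f_h+\sum_{h=1}^n b_h\otimes\bar f_h$ with $a_h,b_h\in M$. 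Setting $v:=a_n$ and $v':=\mp b_n$ (with the sign matching the convention in~(\ref{equation:tensor:2})), I would use the conditions $E_{i,n}^{(m)}z=\bar E_{i,n}z=0$ together with the explicit action of $E^\de_{i,n}$ on the $f_h,\bar f_h$ (which sends $f_n\mapsto\pm f_i$, $\bar f_n\mapsto \pm\bar f_i$ and kills $f_h,\bar f_h$ for $h\neq n$, plus the action on the $M$-factor via $\Delta$) to deduce first that $(v,v')\in\cp(M)$ — the relation $E^\epsilon_{i,n}v=E_{i,n}^{\epsilon+\1}v'$ drops out of $\bar E_{i,n}z=0$ — and then that $a_h=E_{h,n}v$, $b_h=\mp E_{h,n}v'$ for $h<n$, so that $z=e(v,v')$. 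This part is a direct, if slightly fiddly, unwinding of the $Y$-invariance conditions component by component; it should go through without serious difficulty once the homomorphism property is in hand.
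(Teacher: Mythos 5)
Your proposal is correct and follows essentially the same route as the paper: a generator-by-generator verification of the homomorphism property using the complement-pair relation~(\ref{equation:tensor:1}) and $\|v\|+\|v'\|=\1$ for the cancellations, injectivity read off from the $f_n$- and $\bar f_n$-components, and a component-wise unwinding of the $Y$-invariance conditions to hit every element of $(M\otimes V^*)^Y$. One tiny point: the complement-pair relation for $(v_n,\pm v'_n)$ requires combining \emph{both} $E_{i,n}z=0$ and $\bar E_{i,n}z=0$ (the former gives $v_i=E_{i,n}v_n$, $v'_i=E_{i,n}v'_n$, the latter gives $v_i=(-1)^{\epsilon}\bar E_{i,n}v'_n$, $v'_i=(-1)^{\epsilon}\bar E_{i,n}v_n$), not $\bar E_{i,n}z=0$ alone.
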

\begin{proof}
The injectivity is straightforward from the linear independence of $f_1,\ldots,f_n,\bar f_1,\ldots,\bar f_n$.

To show that $e$ is a homomorphism of $U(n-1)$-modules, it suffices to prove that
$
xe(v,v')=e(x v,x\bullet v'),
$
for any generator $x$ of $U(n-1)$ and homogeneous pair $(v,v')$.
Note that the last condition implies $\|v\|+\|v'\|=\1$ if $v\ne0$ and $v'\ne0$.

{\em Case 1:} $x=\tbinom{H_i}m$ with $i<n$ and $m>0$. We may assume that $v$ and $v'$ are weight vectors. Then
the result follows from the fact that $f^\delta_h$ has weight $-\epsilon_h$ and $E_{h,n}$
has weight $\alpha(h,n)=\epsilon_h-\epsilon_n$.

{\em Case 2:} $x=\bar H_i$ with $i<n$.
Multiplying~(\ref{equation:tensor:2}) by $x$, we get
\begin{align*}
\bar H_ie(v,v')=\bar H_iv\otimes f_n+\sum_{h=1}^{n-1}\bar H_iE_{h,n}v\otimes f_h-(-1)^{\|v\|}E_{i,n}v\otimes\bar f_i
\\
-(-1)^{\|v'\|}\bar H_iv'\otimes \bar f_n-(-1)^{\|v'\|}\sum_{h=1}^{n-1}\bar H_iE_{h,n}v'\otimes \bar f_h- E_{i,n}v'\otimes f_i\\
=\bar H_iv\otimes f_n+\sum_{h=1}^{n-1}E_{h,n}\bar H_iv\otimes f_h+\bar E_{i,n}v\otimes f_i-(-1)^{\|v\|}E_{i,n}v\otimes\bar f_i
\\
-(-1)^{\|v'\|}\bar H_iv'\otimes \bar f_n\!-\!(-1)^{\|v'\|}\sum_{h=1}^{n-1}E_{h,n}\bar H_iv'\otimes \bar f_h
\!-\!(-1)^{\|v'\|}\bar E_{i,n}v'\otimes \bar f_i
\!-\!E_{i,n}v'\otimes f_i
\\
=e(\bar H_iv,\bar H_i\bullet v')+\Bigl[\bar E_{i,n}v\otimes f_i-E_{i,n}v'\otimes f_i\Bigr]
\\
-\Bigl[(-1)^{\|v\|}E_{i,n}v\otimes\bar f_i+(-1)^{\|v'\|}\bar E_{i,n}v'\otimes \bar f_i\Bigr].
\end{align*}
The sums in the square brackets equal zero by~(\ref{equation:tensor:1}) and $\|v\|+\|v'\|=\1$ (if $v=0$ or $v'=0$, then both summands in the second pair of square brackets equal zero).

{\em Case 3:} $x=E^{(m)}_{i,j}$ with $m>0$, $1\leq i<j<n$. Multiplying~(\ref{equation:tensor:2}) by $x$, we get
\begin{align*}
E^{(m)}_{i,j}e(v,v')=E^{(m)}_{i,j}v\otimes f_n+\sum_{h=1}^{n-1}E^{(m)}_{i,j}E_{h,n}v\otimes f_h-E^{(m-1)}_{i,j}E_{i,n}v\otimes f_j
\end{align*}
\begin{align*}
-(-1)^{\|v'\|}E^{(m)}_{i,j}v'\otimes \bar f_n-(-1)^{\|v'\|}\sum_{h=1}^{n-1}E^{(m)}_{i,j}E_{h,n}v'\otimes \bar f_h
\\
+(-1)^{\|v'\|}E^{(m-1)}_{i,j}E_{i,n}v'\otimes\bar f_j\\
=E^{(m)}_{i,j}v\otimes f_n+\sum_{h=1}^{n-1}E_{h,n}E^{(m)}_{i,j}v\otimes f_h-(-1)^{\|v'\|}E^{(m)}_{i,j}v'\otimes \bar f_n\\
-(-1)^{\|v'\|}\sum_{h=1}^{n-1}E_{h,n}E^{(m)}_{i,j}v'\otimes \bar f_h
=e(E^{(m)}_{i,j}v,E^{(m)}_{i,j}\bullet v').
\end{align*}

{\em Case 4:} $x=\bar E_{i,j}$ with $1\leq i<j<n$. Multiplying~(\ref{equation:tensor:2}) by $x$, we get
\begin{align*}
\bar E_{i,j}e(v,v')=
\bar E_{i,j}v\otimes f_n+\sum_{h=1}^{n-1}\bar E_{i,j}E_{h,n}v\otimes f_h-(-1)^{\|v\|}E_{i,n}v\otimes\bar f_j
\\
-(-1)^{\|v'\|}\bar E_{i,j}v'\otimes \bar f_n-(-1)^{\|v'\|}\sum_{h=1}^{n-1}\bar E_{i,j}E_{h,n}v'\otimes \bar f_h
- E_{i,n}v'\otimes f_j
\\
=\bar E_{i,j}v\otimes f_n+\sum_{h=1}^{n-1}E_{h,n}\bar E_{i,j}v\otimes f_h+\bar E_{i,n}v\otimes f_j-(-1)^{\|v\|}E_{i,n}v\otimes\bar f_j
\\
-(-1)^{\|v'\|}\bar E_{i,j}v'\otimes \bar f_n-(-1)^{\|v'\|}\sum_{h=1}^{n-1}E_{h,n}\bar E_{i,j}v'\otimes \bar f_h
-(-1)^{\|v'\|}\bar E_{i,n}v'\otimes \bar f_j
\\
- E_{i,n}v'\otimes f_j
\,=\,e(\bar E_{i,j}v,\bar E_{i,j}\bullet v')+[\bar E_{i,n}v\otimes f_j
- E_{i,n}v'\otimes f_j]
\\-[(-1)^{\|v\|}E_{i,n}v\otimes\bar f_j+(-1)^{\|v'\|}\bar E_{i,n}v'\otimes \bar f_j]
\end{align*}
The sums in the square brackets equal zero by~(\ref{equation:tensor:1}) and $\|v\|+\|v'\|=\1$.

{\em Case 5:} $x=F^{(m)}_{i,j}$ with $m>0$, $1\leq i<j<n$. Multiplying~(\ref{equation:tensor:2}) by $x$, we get
\begin{align*}
F^{(m)}_{i,j}e(v,v')=F^{(m)}_{i,j}v\otimes f_n+\sum_{h=1}^{n-1}F^{(m)}_{i,j}E_{h,n}v\otimes f_h-F^{(m-1)}_{i,j}E_{j,n}v\otimes f_i\\
-(-1)^{\|v'\|}F^{(m)}_{i,j}v'\otimes \bar f_n-(-1)^{\|v'\|}\sum_{h=1}^{n-1}F^{(m)}_{i,j}E_{h,n}v'\otimes \bar f_h\\
+(-1)^{\|v'\|}F^{(m-1)}_{i,j}E_{j,n}v'\otimes \bar f_i,
\end{align*}
which is easily checked to equal $e(F^{(m)}_{i,j}v,F^{(m)}_{i,j}\bullet v')$.

{\em Case 6:} $x=\bar F_{i,j}$ with $1\leq i<j<n$. Multiplying~(\ref{equation:tensor:2}) by $x$, we get
\begin{align*}
\bar F_{i,j}e(v,v')=\bar F_{i,j}v\otimes f_n+\sum_{h=1}^{n-1}\bar F_{i,j}E_{h,n}v\otimes f_h-(-1)^{\|v\|}E_{j,n}v\otimes\bar f_i
\\
-(-1)^{\|v'\|}\bar F_{i,j}v'\otimes \bar f_n-(-1)^{\|v'\|}\sum_{h=1}^{n-1}\bar F_{i,j}E_{h,n}v'\otimes \bar f_h- E_{j,n}v'\otimes f_i\\
=\bar F_{i,j}v\otimes f_n+\sum_{h=1}^{n-1}E_{h,n}\bar F_{i,j}v\otimes f_h+\bar E_{j,n}v\otimes f_i-(-1)^{\|v\|}E_{j,n}v\otimes\bar f_i
\end{align*}
\begin{align*}
-(-1)^{\|v'\|}\bar F_{i,j}v'\otimes \bar f_n-(-1)^{\|v'\|}\sum_{h=1}^{n-1}E_{h,n}\bar F_{i,j}v'\otimes \bar f_h
\\
-(-1)^{\|v'\|}\bar E_{j,n}v'\otimes\bar f_i- E_{j,n}v'\otimes f_i
=e(\bar F_{i,j}v,\bar F_{i,j}\bullet v')+\Bigl[\bar E_{j,n}v\otimes f_i-E_{j,n}v'\otimes f_i\Bigr]
\\
-\Bigl[(-1)^{\|v\|}E_{j,n}v\otimes\bar f_i+(-1)^{\|v'\|}\bar E_{j,n}v'\otimes\bar f_i\Bigr].
\end{align*}
The sums in the square brackets equal zero by~(\ref{equation:tensor:1}) and $\|v\|+\|v'\|=\1$.

Finally, we prove that every element $w\in(M\otimes V^*)^Y$ is in the image of~$e$.
We may assume that $w$ is homogeneous, say, of parity $\epsilon$.
We can write
$$w=\sum_{h=1}^nv_h\otimes f_h+\sum_{h=1}^nv'_h\otimes\bar f_h,
$$
where $v_h\in M_\epsilon$ and $v'_h\in M_{\epsilon+\1}$. For any $i=1,\ldots,n-1$, we have
$$
0=E_{i,n}w=\sum_{h=1}^nE_{i,n}v_h\otimes f_h-v_i\otimes f_n+\sum_{h=1}^nE_{i,n}v'_h\otimes\bar f_h-v'_i\otimes \bar f_n.
$$
Hence $v_i=E_{i,n}v_n$ and $v'_i=E_{i,n}v'_n$. On the other hand,
$$
0=\bar E_{i,n}w=\sum_{h=1}^n\bar E_{i,n}v_h\otimes f_h-(-1)^{\epsilon}v_i\otimes\bar f_n+\sum_{h=1}^n\bar E_{i,n}v'_h\otimes\bar f_h+(-1)^{\epsilon+\1}v'_i\otimes f_n.
$$
Hence $v_i=(-1)^{\epsilon}\bar E_{i,n}v'_n$ and $v'_i=(-1)^\epsilon\bar E_{i,n}v_n$. Combining these formulas with our previous formulas
for $v_i$ and $v'_i$, we get
$$
\left\{
\arraycolsep=2pt
\begin{array}{l}
E_{i,n}v_n=\bar E_{i,n}(-1)^{\epsilon} v'_n;\\[3pt]
\bar E_{i,n}v_n=E_{i,n}(-1)^\epsilon v'_n.
\end{array}
\right.
$$
These formulas show that $(v_n,(-1)^{\epsilon}v'_n)\in\cp(M)_\epsilon$. Finally, we now have
\begin{align*}
e(v_n,(-1)^{\epsilon}v'_n)=v_n\otimes f_n+\sum_{h=1}^{n-1}E_{h,n}v_n\otimes f_h
\\
-(-1)^{\|v'\|+\epsilon}v'_n\otimes \bar f_n-(-1)^{\|v'\|+\epsilon}\sum_{h=1}^{n-1}E_{h,n}v'_n\otimes \bar f_h
\\
=v_n\otimes f_n+\sum_{h=1}^{n-1}E_{h,n}v_n\otimes f_h+v'_n\otimes \bar f_n+\sum_{h=1}^{n-1}E_{h,n}v'_n\otimes \bar f_h=w,
\end{align*}
as desired.
\end{proof}

\begin{lemma} \label{LNew}
Let $\la\in X(n)$.
Every $U(n)$-primitive vector of
$L(\lm)\otimes V^*$ has weight of the form $\lm-\epsilon_j$ for some $1\leq j\leq n$.
\end{lemma}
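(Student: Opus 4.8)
<br>

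The plan is to deduce this directly from the general machinery already developed for complement pairs and the $Y$-invariant functor. First I would observe that a $U(n)$-primitive vector $w \in L(\lm) \otimes V^*$ is in particular a $U(n-1)$-primitive vector that is killed by all $E^{(m)}_{i,n}$ and all $\bar E_{i,n}$ for $m > 0$; that is, $w \in (L(\lm)\otimes V^*)^Y$ and $w$ is $U(n-1)$-primitive. By Lemma~\ref{lemma:tensor:2}, the even injective $U(n-1)$-homomorphism $e: \cp(L(\lm)) \to L(\lm)\otimes V^*$ has image containing $(L(\lm)\otimes V^*)^Y$, so $w = e(v,v')$ for some homogeneous complement pair $(v,v') \in \cp(L(\lm))$. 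Since $e$ is a $U(n-1)$-homomorphism and $w$ is $U(n-1)$-primitive, the pair $(v,v')$ is $U(n-1)$-primitive in the $U(n-1)U^0_{ev}(n)$-module $\cp(L(\lm))$, meaning $v$ and $v'$ are both $U(n-1)$-primitive vectors in $L(\lm)$ (or zero).

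Next I would pin down the weight. By Remark~\ref{remark:tensor:0}, the pair $(v,v')$ has a well-defined weight $\mu \in X(n)$, with $v$ and $v'$ both of weight $\mu$. Now apply Proposition~\ref{proposition:main:2}: any nonzero $U(n-1)$-primitive vector in $L(\lm)$ has weight of the form $\lm - \alpha(i_1,n) - \cdots - \alpha(i_m,n)$ for indices $i_1, \dots, i_m \in \{1,\dots,n-1\}$. At least one of $v, v'$ is nonzero (since $w \neq 0$), so $\mu = \lm - \alpha(i_1,n) - \cdots - \alpha(i_m,n)$ for some such indices. Inspecting formula~(\ref{equation:tensor:2}) for $e(v,v')$, the summands $v \otimes f_n$ and $v' \otimes \bar f_n$ have weight $\mu - \eps_n$, while the summands $E_{h,n}v \otimes f_h$ and $E_{h,n}v' \otimes \bar f_h$ have weight $\mu + \alpha(h,n) - \eps_h = \mu - \eps_n$ as well. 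Hence $w$ has weight $\mu - \eps_n = \lm - \eps_n - \alpha(i_1,n) - \cdots - \alpha(i_m,n)$.

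Finally I would convert this into the claimed form $\lm - \eps_j$. Writing $\alpha(i_k,n) = \eps_{i_k} - \eps_n$, we get
$$
\mu - \eps_n = \lm - \eps_n - \sum_{k=1}^m (\eps_{i_k} - \eps_n) = \lm + (m-1)\eps_n - \sum_{k=1}^m \eps_{i_k}.
$$
Since $w$ lies in $L(\lm) \otimes V^*$ and every weight $\nu$ of $V^*$ satisfies $\sum \nu = -1$ while every weight of $L(\lm)$ has coordinate sum $\sum \lm$, the weight of $w$ has coordinate sum $\sum \lm - 1$; this forces $(m-1) - m = -1$, which is automatic, so instead I would use the structure more carefully: $w$ is a weight vector of $L(\lm)\otimes V^*$, so its weight equals (a weight of $L(\lm)$) plus (a weight of $V^*$), i.e. it is $\nu - \eps_j$ for some weight $\nu$ of $L(\lm)$ and some $1 \le j \le n$. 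The hard part is to show $\nu = \lm$; this follows because the $L(\lm)$-component of $w$ along $f_j$ (resp. $\bar f_j$) must be $U(n)$-primitive-compatible, and tracing back through $e$ shows the ``top'' component is $v$ or $v'$ of weight $\mu$, while the constraint that $w$ is genuinely $U(n)$-primitive (not merely $Y$-invariant and $U(n-1)$-primitive) forces $m \le 1$, i.e. $\mu \in \{\lm, \lm - \alpha(i,n)\}$, giving weight $\lm - \eps_n$ or $\lm - \eps_i$ respectively. I expect verifying that the extra $U(n)$-primitivity kills the cases $m \ge 2$ to be the main obstacle, and I would handle it by applying $E_{i,n}^{\epsilon}$ directly to $e(v,v')$ using the commutation relations~(\ref{equation:intro:1}) together with the complement-pair identity~(\ref{equation:tensor:1}) to show that primitivity of $w$ forces $v$ (and $v'$) to themselves be $U(n)$-primitive in $L(\lm)$, hence of weight $\lm$, so $\mu = \lm$ and $w$ has weight $\lm - \eps_n$, or else $v = v' = 0$ is impossible; more generally a $U(n)$-primitive $w$ of weight $\nu - \eps_j$ forces $\nu = \lm$ directly since $L(\lm)$ is generated by its highest weight space.
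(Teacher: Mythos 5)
Your reduction to complement pairs is fine as far as it goes: $w\in(L(\lm)\otimes V^*)^Y$, so by Lemma~\ref{lemma:tensor:2} you may write $w=e(v,v')$ with $(v,v')$ a $U(n-1)$-primitive weight pair, and Proposition~\ref{proposition:main:2} then gives weight $\mu=\lm-\alpha(i_1,n)-\cdots-\alpha(i_m,n)$, so that $w$ has weight $\mu-\eps_n$, which is of the form $\lm-\eps_j$ exactly when $m\le 1$. But the entire content of the lemma is the exclusion of $m\ge 2$, and that is precisely where your argument breaks down. The mechanism you propose --- applying $E_{i,n}^{\epsilon}$ to $e(v,v')$ and combining with the identity~(\ref{equation:tensor:1}) --- yields no information: the computation in the proof of Lemma~\ref{lemma:tensor:3} shows that $E_{i,n}^{\epsilon}e(v,v')=0$ for \emph{every} homogeneous complement pair, of any level, so $U(n)$-primitivity of $w$ imposes no first-order condition beyond what you already have. (The only constraints not automatically satisfied are $E_{i,n}^{(k)}w=0$ for $k\ge 2$, which you never use.) Moreover, your proposed conclusion that $v$ and $v'$ would be forced to be $U(n)$-primitive in $L(\lm)$ cannot be right: the primitive vectors of $L(\lm)$ are exactly $L(\lm)^{\lm}$, so this would force every primitive vector of $L(\lm)\otimes V^*$ to have weight $\lm-\eps_n$, contradicting Lemma~\ref{lemma:tensor:5}. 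The closing assertion that ``$\nu=\lm$ directly since $L(\lm)$ is generated by its highest weight space'' is not an argument. A further structural problem: you cannot appeal to the full strength of Lemma~\ref{lemma:tensor:3} (whose surjectivity onto the $Y$-invariants would indeed give $m\le1$ at once), because its proof invokes the present lemma.

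The intended proof is much shorter and bypasses all of this: a nonzero primitive vector of weight $\nu$ gives, by universality of Verma modules, a nonzero element of $\Hom_{U(n)}(M(\nu),L(\lm)\otimes V^*)\cong\Hom_{U(n)}(M(\nu),L(\lm)^\tau\otimes V^*)$, and Corollary~\ref{CVermaTensHom} (i.e.\ the Verma filtration of $M(\lm)^\tau\otimes V^*$ from Lemma~\ref{LVermaFilt} together with Lemma~\ref{LHomVermaDualVerma}) forces $\nu=\lm+\mu$ for a weight $\mu$ of $V^*$, i.e.\ $\nu=\lm-\eps_j$. If you want to salvage your route, the missing step must come from the divided powers $E_{i,n}^{(k)}$, $k\ge 2$, or from this Verma filtration argument; as written, the proposal does not close the gap.
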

\begin{proof}
Apply Corollary~\ref{CVermaTensHom} and the universality of Verma modules.
\end{proof}

Define the {\em $j$th level} of $\cp(L(\lm))$ to be
$$
\cp(L(\lm))_j:=\bigoplus_{\mu\in X(n)}\cp(L(\lm))^\mu,
$$
where the sum is over all weights $\mu$ of the form $\mu=\la-j\al_{n-1}-\sum_{i=1}^{n-2}m_i\al_i$
(cf. the definition of the $j$th level of $L(\lm)$ in Section~\ref{The_socle_of_the_first_level}).
The following lemma is an analogue of \cite[Proposition 5.4]{BKtr}.

\begin{lemma}\label{lemma:tensor:3}
For any $\lm\in X(n)$, the restriction of $e$ to $\cp(L(\lm))_0\oplus \cp(L(\lm))_1$ gives an even
isomorphism of $U(n-1)$-supermodules
$$
e':\cp(L(\lm))_0\oplus \cp(L(\lm))_1\to(L(\la)\otimes V^*)^Y,
$$
which takes vectors of weight $\mu\in X(n)$ to vectors of weight $\mu-\epsilon_n$. In particular, $e'$ establishes an isomorphism between the space of $U(n-1)$-primitive vectors of $\cp(L(\lm))_0\oplus \cp(L(\lm))_1$ and the space of $U(n)$-primitive vectors in $L(\la)\otimes V^*$.
\end{lemma}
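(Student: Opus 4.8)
The plan is to deduce everything from Lemma~\ref{lemma:tensor:2}, which already gives that $e\colon\cp(L(\lm))\to L(\lm)\otimes V^*$ is an even injective homomorphism of $U(n-1)$-supermodules whose image contains $(L(\lm)\otimes V^*)^Y$. First I would record two elementary observations. From the explicit formula~(\ref{equation:tensor:2}), $e$ takes a complement pair of weight $\mu$ to a vector of weight $\mu-\epsilon_n$ (each summand $v\otimes f_n$ and $E_{h,n}v\otimes f_h$ has weight $\mu-\epsilon_n$); consequently $e$ sends $\cp(L(\lm))_j$ into the subspace $N_j\subseteq L(\lm)\otimes V^*$ spanned by the weight spaces $(L(\lm)\otimes V^*)^\nu$ with $\nu_n=\lm_n-1+j$. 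Since every generator of $U(n-1)$ has a weight not involving $\epsilon_n$ and kills $f_n,\bar f_n$, each $N_j$ is a $U(n-1)$-submodule and $L(\lm)\otimes V^*=\bigoplus_jN_j$ as $U(n-1)$-modules; moreover $(L(\lm)\otimes V^*)^Y=\bigoplus_j\bigl((L(\lm)\otimes V^*)^Y\cap N_j\bigr)$, because the operators $E^{(m)}_{i,n}$ and $\bar E_{i,n}$ defining $Y$-invariance strictly lower $j$. So it suffices to prove: (i) $e(\cp(L(\lm))_j)\subseteq(L(\lm)\otimes V^*)^Y$ for $j\in\{0,1\}$, and (ii) if $(v,v')\in\cp(L(\lm))_j$ with $j\ge 2$ and $e(v,v')\in(L(\lm)\otimes V^*)^Y$, then $v=v'=0$. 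Combined with the injectivity of $e$ and the inclusion from Lemma~\ref{lemma:tensor:2}, this yields the asserted isomorphism $e'$, and the weight shift $-\epsilon_n$ is the observation above.

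Next I would establish a $Y$-invariance criterion. A straightforward computation in $\mathfrak q(n,\mathbb C)$ shows that the elements $E_{i,n},\bar E_{i,n}$ ($i<n$) pairwise super-commute; using this, the coproduct formulas for $E^{(m)}_{i,n}$ and $\bar E_{i,n}$, the action of these elements on the basis $f_1,\dots,f_n,\bar f_1,\dots,\bar f_n$ of $V^*$, and the formula~(\ref{equation:tensor:2}), one finds, for any complement pair $(v,v')$, that $E_{i,n}\,e(v,v')=\sum_{h<n}(E_{i,n}E_{h,n}v)\otimes f_h-(-1)^{\|v'\|}\sum_{h<n}(E_{i,n}E_{h,n}v')\otimes\bar f_h$, that the $f_n$-coefficient of $E^{(m)}_{i,n}e(v,v')$ is $(1-m)E^{(m)}_{i,n}v$ for $m\ge 2$ (and similarly with $v'$), and that $\bar E_{i,n}e(v,v')$ has $f_n$- and $\bar f_n$-coefficients $\bar E_{i,n}v-E_{i,n}v'$ and $-\bigl((-1)^{\|v\|}E_{i,n}v+(-1)^{\|v'\|}\bar E_{i,n}v'\bigr)$ — both zero by the complement-pair relations $E_{i,n}v=\bar E_{i,n}v'$, $\bar E_{i,n}v=E_{i,n}v'$ — while its remaining coefficients reduce (via super-commutativity and those relations) to $\pm E_{i,n}E_{h,n}v'$ and $\pm E_{i,n}E_{h,n}v$. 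Since $f_1,\dots,f_{n-1},\bar f_1,\dots,\bar f_{n-1}$ are linearly independent, this proves: \emph{for a complement pair $(v,v')$, $e(v,v')$ is $Y$-invariant if and only if $E^{(m)}_{i,n}v=E^{(m)}_{i,n}v'=0$ for all $i<n$, $m\ge 2$, and $E_{i,n}E_{h,n}v=E_{i,n}E_{h,n}v'=0$ for all $i,h<n$.} Claim (i) is then immediate: if $(v,v')$ has level $\le 1$, then $E_{h,n}v$ lies in level $\le 0$, so $E_{i,n}(E_{h,n}v)$ lies in level $\le-1$ and vanishes (no such weights occur in $L(\lm)$), and likewise $E^{(m)}_{i,n}v=0$ for $m\ge 2$, with the same for $v'$.

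The substance of the proof, and the step I expect to be the obstacle, is (ii). Suppose $(v,v')$ is a level-$j$ complement pair with $j\ge 2$ and $e(v,v')$ $Y$-invariant; I show $v=0$ (the argument for $v'$ is identical). If $v\ne 0$, say $v\in L(\lm)^\mu$ with $\lm-\mu=\sum_{i<n}c_i\al_i$, $c_i\ge 0$, then $c_{n-1}=\mu_n-\lm_n=j\ge 2$, and by Proposition~\ref{proposition:intro:5} there is $E\in U^+(n)^{\lm-\mu}$ with $Ev\ne 0$. Using the PBW basis of Proposition~\ref{proposition:intro:1}, ordered so that the generators with second index $<n$ precede those with second index $n$, write $E$ as a sum of terms $E''_k\,E'_k$ with $E''_k\in U^+(n-1)$ and $E'_k$ a monomial in $\{E^{(m)}_{i,n},\bar E_{i,n}:i<n\}$; by the super-commutativity above, each $E'_k$ equals, up to sign, a product $\prod_i E^{(a_i)}_{i,n}\cdot\prod_j\bar E^{\,b_j}_{j,n}$ with $b_j\in\{0,1\}$, and comparing $\al_{n-1}$-coefficients of weights forces $\sum_i a_i+\sum_j b_j=c_{n-1}\ge 2$. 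A short case analysis then shows $E'_kv=0$: if some $a_i\ge 2$, use $E^{(a_i)}_{i,n}v=0$; if two distinct divided-power-degree-one $E$-factors occur, use $E_{i,n}E_{i',n}v=0$; if two $\bar E$-factors occur, then $\bar E_{j,n}\bar E_{j',n}v=E_{j',n}E_{j,n}v=0$ via the complement-pair relations; and if exactly one $E$-factor and one $\bar E$-factor occur, then $E_{i,n}\bar E_{j,n}v=E_{i,n}E_{j,n}v'$, which is $0$ by $E_{i,n}E_{j,n}v'=0$ when $i\ne j$ and by $2E^{(2)}_{i,n}v'=0$ when $i=j$. Hence $Ev=\sum_kE''_k\,E'_kv=0$, a contradiction, so $v=v'=0$.

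Finally, (i), (ii), the level-grading, and Lemma~\ref{lemma:tensor:2} give $e\bigl(\cp(L(\lm))_0\oplus\cp(L(\lm))_1\bigr)=(L(\lm)\otimes V^*)^Y$, whence $e'$ is an even isomorphism of $U(n-1)$-supermodules taking weight $\mu$ to weight $\mu-\epsilon_n$. For the last assertion: a vector of $L(\lm)\otimes V^*$ is $U(n)$-primitive exactly when it is simultaneously $U(n-1)$-primitive and $Y$-invariant (the defining equations partition according to whether the second index is $<n$ or $=n$); since $(L(\lm)\otimes V^*)^Y$ is $U(n-1)$-stable, its $U(n-1)$-primitive vectors coincide with the $U(n)$-primitive vectors of $L(\lm)\otimes V^*$, and $e'$ matches these with the $U(n-1)$-primitive vectors of $\cp(L(\lm))_0\oplus\cp(L(\lm))_1$.
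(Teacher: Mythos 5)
Your argument for the containment $e\bigl(\cp(L(\lm))_0\oplus\cp(L(\lm))_1\bigr)\subseteq(L(\lm)\otimes V^*)^Y$ is essentially the paper's. For surjectivity you take a genuinely different route. The paper takes $w\in(L(\la)\otimes V^*)^Y$, writes $w=e(v,v')$ via Lemma~\ref{lemma:tensor:2}, and inducts on the weight of $w$, reducing to the case where $(v,v')$ is $U(n-1)$-primitive; there $e(v,v')$ is $U(n)$-primitive and Lemma~\ref{LNew} (i.e.\ the Verma filtration of $M(\la)^\tau\otimes V^*$) pins its weight to $\la-\eps_j$, forcing level $\le 1$. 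You instead decompose $(L(\la)\otimes V^*)^Y$ along the level subspaces $N_j$ (legitimate, since the operators defining $Y$-invariance strictly lower $j$) and kill the levels $j\ge2$ directly by a PBW computation combined with Proposition~\ref{proposition:intro:5}. Your route avoids Lemma~\ref{LNew} in the surjectivity step at the price of the supercommutator bookkeeping; both are sound, and the rest (weight shift by $-\eps_n$, the identification of primitive vectors) is the same.

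There is one step you must repair. In the ``only if'' direction of your $Y$-invariance criterion you deduce $E^{(m)}_{i,n}v=0$ for $m\ge2$ from the $f_n$-coefficient $(1-m)E^{(m)}_{i,n}v$ of $E^{(m)}_{i,n}e(v,v')$. In characteristic $p$ this gives nothing when $m\equiv1\pmod p$ (e.g.\ $m=p+1$), and you genuinely need these higher divided powers in step (ii): a monomial $E'_k$ can be a single $E^{(j)}_{i,n}$ with $j$ equal to the level. The fix is one line: the $f_i$-coefficient of $E^{(m-1)}_{i,n}e(v,v')$ equals $E^{(m-1)}_{i,n}E_{i,n}v=m\,E^{(m)}_{i,n}v$, and since $m+(1-m)=1$ the two coefficients together force $E^{(m)}_{i,n}v=0$ for every $m\ge2$ (and likewise for $v'$). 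With that adjustment your criterion, and hence the whole proof, goes through.
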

\begin{proof} We first check that the image of $e'$ lies in $(L(\la)\otimes V^*)^Y$. Take some homogeneous weight pair $(v,v')\in\cp(L(\lm))_0\oplus \cp(L(\lm))_1$.
As usual, $\|v\|+\|v'\|=\1$ if $v\ne0$ and $v'\ne0$.
By the definition of the level of $L(\lm)$ in Section~\ref{The_socle_of_the_first_level} and Remark~\ref{remark:tensor:0}, we have $v,v'\in L(\lm)_0\oplus L(\lm)_1$.

Take some $E_{i,n}^{(m)}$ with $i<n$ and $m>1$.
We have $E_{i,n}^{(k)}f_n=E_{i,n}^{(k)}\bar f_n=0$ and $E_{i,n}^{(k)}E_{h,n}v=E_{i,n}^{(k)}E_{h,n}v'=0$ for any $k>0$.
This formulas and~(\ref{equation:tensor:2}) imply
\begin{multline*}
E_{i,n}^{(m)}e(v,v')=E_{i,n}^{(m)}v\otimes f_n+\sum_{h=1}^{n-1}E_{h,n}v\otimes E_{i,n}^{(m)}f_h\\
-(-1)^{\|v'\|}E_{i,n}^{(m)}v'\otimes \bar f_n-(-1)^{\|v'\|}\sum_{h=1}^{n-1}E_{h,n}v'\otimes E_{i,n}^{(m)}\bar f_h=0
\end{multline*}
in view of $m>1$.

Now consider $E_{i,n}^\epsilon$. Multiplying (\ref{equation:tensor:2}) by $E_{i,n}^\epsilon$, we get
\begin{align*}
E_{i,n}^\epsilon e(v,v')=E_{i,n}^\epsilon v\otimes f_n+(-1)^{\epsilon\|v\|}\sum_{h=1}^{n-1}E_{h,n}v\otimes E_{i,n}^\epsilon f_h
\\
-(-1)^{\|v'\|}E_{i,n}^\epsilon v'\otimes \bar f_n-(-1)^{\|v'\|+\epsilon\|v'\|}\sum_{h=1}^{n-1}E_{h,n}v'\otimes E_{i,n}^\epsilon\bar f_h
\\
=E_{i,n}^\epsilon v\otimes f_n-(-1)^{\epsilon\|v\|}E_{i,n}v\otimes f^\epsilon_n
\\
-(-1)^{\|v'\|}E_{i,n}^\epsilon v'\otimes \bar f_n+(-1)^{\|v'\|+\epsilon\|v'\|+\epsilon}E_{i,n}v'\otimes f^{\epsilon+\1}_n.
\end{align*}
In the case $\epsilon=\0$, the right hand side is trivially zero, so we consider the case $\epsilon=\1$.
We have
\begin{multline*}
E_{i,n}^\epsilon e(v,v')=\Bigl[\bar E_{i,n} v\otimes f_n- E_{i,n}v'\otimes f_n\Bigr]\\
-\Bigl[(-1)^{\|v\|}E_{i,n}v\otimes \bar f_n+(-1)^{\|v'\|}\bar E_{i,n} v'\otimes \bar f_n\Bigr],
\end{multline*}
which equals zero by~(\ref{equation:tensor:1}) and~$\|v\|+\|v'\|=\1$.

It remains now to prove the surjectivity of $e'$. By Lemma~\ref{lemma:tensor:2}, any vector $w\in (M\otimes V^*)^Y$ is of the form $e(v,v')$ for $(v,v')\in\cp(L(\la))$. We may assume that $w$ is a homogeneous weight vector of weight $\nu$.
We shall prove by induction on $\nu$ that $(v,v')$ lies in $\cp(L(\lm))_0\oplus \cp(L(\lm))_1$.

If $(v,v')$ is $U(n-1)$-primitive, then $e(v,v')$ is a $U(n)$-primitive
vector in $L(\la)\otimes V^*$, since we have already shown in Lemma~\ref{lemma:tensor:2} that it is $Y$-invariant. By Lemma~\ref{LNew}, every $U(n)$-primitive vector of
$L(\lm)\otimes V^*$ has weight of the form $\lm-\epsilon_j$ for some $1\leq j\leq n$. So $\nu=\la-\eps_j$. But then the weight of $(v,v')$ is $\la-\eps_j+\eps_n$, so $(v,v')$ lies in $\cp(L(\lm))_0\oplus \cp(L(\lm))_1$, as required.
Otherwise, if $(v,v')\in\cp(L(\la))$ is not $U(n-1)$-primitive, then we can find $1 < i < j < n$ and $k > 0$
such that $E_{i,j}^{(k)}(v,v')\neq 0$.
By Lemma~\ref{lemma:tensor:2}, we have $e(E_{i,j}^{(k)}(v,v'))=E_{i,j}^{(k)}w\in(M\otimes V^*)^Y$.
Hence by induction $E_{i,j}^{(k)}(v,v')$ lies in $\cp(L(\lm))_0\oplus \cp(L(\lm))_1$,
so $(v,v')$ does too by weights.
\end{proof}

\section{Primitive vectors in $L(\lm)\otimes V^*$}\label{Primitive_vectors_in_L(lm)otimesV*}
In view of Lemma~\ref{lemma:tensor:3}, $U(n)$-primitive vectors in $L(\lm)\otimes V^*$
are closely connected with $U(n-1)$-primitive
pairs in $\cp(L(\lm))_1$.
First, we prove the following simple result.

\begin{lemma}\label{lemma:tensor:4}
Let $\mu\in X(n{+}1)$ and $w\in L(\mu)^{\mu}$ be a nonzero homogeneous vector such that
$U^0(n)w$ is irreducible as a $U^0(n)$-supermodule.
Then $U(n)w\cong L(\mu|_{[1..n)})$ as $U(n)$-supermodules.
\end{lemma}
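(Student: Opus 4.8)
The plan is to show that $U(n)w$ is a highest weight $U(n)$-module with simple head $L(\mu|_{[1..n)})$, and then to promote this to genuine irreducibility by exploiting the fact that $w$ sits in the top weight space of the \emph{ambient} simple $U(n+1)$-module $L(\mu)$. Write $\nu:=\mu|_{[1..n)}$ for the $U^0(n)$-weight of $w$ (namely $(\mu_1,\dots,\mu_n)$). Since $w\in L(\mu)^\mu$, it is annihilated by every $E_{i,j}^{(m)}$ ($m>0$) and $\bar E_{i,j}$ with $1\le i<j\le n+1$; in particular $U^+(n)w=\mathbb F w$, so $U(n)w=U^-(n)U^0(n)w$, and $U^0(n)w\subseteq L(\mu)^\mu$ is a primitive $U^0(n)$-subsupermodule which generates $U(n)w$. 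By hypothesis it is $U^0(n)$-irreducible of weight $\nu$, hence $U^0(n)w\cong\mathfrak u(\nu)$ by Proposition~\ref{proposition:intro:4}. Moreover $U(n)w$ lies in $\mathcal O(n)$: its weights lie in $\nu-Q_+(n)=X(\nu)$, and each of its weight spaces sits inside a weight space of $L(\mu)_{U(n)}$, which is finite dimensional because the coordinate sum of any weight of $L(\mu)$ equals $\sum\mu$, so a $U^0(n)$-weight determines the full $U^0(n{+}1)$-weight. By Lemma~\ref{Vermamod} we obtain a surjection $M(\nu)\twoheadrightarrow U(n)w$, so $U(n)w$ has simple head $L(\nu)$.

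The heart of the proof is a ``level'' observation. Set $L(\mu)_0:=\bigoplus_{\xi:\,\xi_{n+1}=\mu_{n+1}}L(\mu)^\xi$. Every weight $\xi$ of $L(\mu)$ satisfies $\mu-\xi\in\sum_{i=1}^n\mathbb Z_{\ge0}\alpha_i$, hence $\xi_{n+1}\ge\mu_{n+1}$; so $L(\mu)_0$ is precisely the part where this bound is attained, and it is a $U(n)$-subsupermodule. Since $U^-(n)$ does not change the $(n{+}1)$st coordinate of a weight and $U^0(n)w\subseteq L(\mu)^\mu\subseteq L(\mu)_0$, we get $U(n)w\subseteq L(\mu)_0$. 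Crucially, every vector $u\in L(\mu)_0$ is killed by $E_{i,n+1}^{(m)}$ ($m>0$) and $\bar E_{i,n+1}$ for all $1\le i\le n$, since these would send $u$ to a weight with $(n{+}1)$st coordinate strictly below $\mu_{n+1}$, which does not occur in $L(\mu)$. Consequently, any vector $v\in U(n)w$ that is $U(n)$-primitive is annihilated by \emph{all} $E_{i,j}^{(m)}$ ($m>0$) and $\bar E_{i,j}$ with $1\le i<j\le n+1$, i.e. $v$ is a $U(n+1)$-primitive vector of $L(\mu)$; but the primitive vectors of $L(\mu)$ form exactly its top weight space $L(\mu)^\mu$ (this follows from Lemma~\ref{Vermamod} together with the irreducibility of $L(\mu)$, as in the remarks after Theorem~\ref{TUnivProperty}). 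Hence such a $v$ has $U^0(n)$-weight $\nu$.

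To conclude, suppose $U(n)w$ were reducible. Having simple head $L(\nu)$, it has a unique maximal submodule $R$, and $R\ne0$. If a submodule $N\subseteq U(n)w$ satisfied $N^\nu\ne0$, then $N\cap U^0(n)w$ would be a nonzero $U^0(n)$-submodule of the irreducible $U^0(n)w=(U(n)w)^\nu$, hence all of $U^0(n)w$, forcing $N\supseteq U(n)U^0(n)w=U(n)w$; therefore every proper submodule, and in particular $R$, has zero $\nu$-weight space. Choosing a nonzero weight vector $v\in R$ of maximal weight, $v$ is $U(n)$-primitive (maximality kills every $E_{i,j}^{(m)}$, $m>0$, $i<j\le n$) and has weight $\ne\nu$, contradicting the previous paragraph. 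Thus $U(n)w$ is irreducible, and being a nonzero quotient of $M(\nu)$ it must be $\cong L(\nu)=L(\mu|_{[1..n)})$.

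I expect the one delicate point to be the level step: identifying $L(\mu)_0$ via the $(n{+}1)$st weight coordinate and observing that membership there forces the ``last column'' of raising operators to act as zero, which is exactly what upgrades a $U(n)$-primitive vector lying in $U(n)w$ to a $U(n{+}1)$-primitive vector of $L(\mu)$. The remaining ingredients — the simple head via $M(\nu)$, and the verification that $U(n)w\in\mathcal O(n)$ with finite-dimensional weight spaces — are routine once one records that the coordinate sum of every weight of $L(\mu)$ is constant.
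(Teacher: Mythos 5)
Your proof is correct and follows essentially the same route as the paper's: both arguments reduce to observing that any $U(n)$-primitive vector in $U(n)w$ has $(n{+}1)$st weight coordinate equal to $\mu_{n+1}$, is therefore automatically $U(n{+}1)$-primitive in $L(\mu)$, and so must lie in $L(\mu)^\mu$ by irreducibility, which rules out a proper nonzero submodule of the highest-weight module $U(n)w\cong$ (quotient of) $M(\mu|_{[1..n)})$. The paper compresses this into a few lines by citing Proposition~\ref{proposition:intro:5}; your write-up merely spells out the same ``level'' observation and the Verma-universality step in full detail.
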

\begin{proof}
We just need to prove that $U(n)w$ is irreducible. Suppose this is not true.
By the universality of Verma modules (Lemma~\ref{Vermamod}),
$U(n)w$ is a quotient of $M(\mu|_{[1..n)})$. Hence $U(n)w$ contains a nonzero $U(n)$-primitive
vector $v$ of $U(n)$-weight $\nu<\mu|_{[1..n)}$.
Clearly, $v$ has $U(n{+}1)$-weight $(\nu,\mu_{n+1})<\mu$ and is $U(n{+}1)$-primitive.
This contradicts the irreducibility of $L(\mu)$ by Proposition~\ref{proposition:intro:5}.
\end{proof}

To deal with $U(n-1)$-primitive pairs in $\cp(L(\lm))_1$, we will consider
a weight $\tilde\lm=(\lm_1,\ldots,\lm_n,d)\in X(n+1)$, where $d\=0\pmod p$.
Note that if $p>0$, then $d$ can be chosen so that $d\le\lm_n$,
in which case $\tilde\lm\in X_p^+(n+1)$ as long as $\la\in X^+_p(n)$.
On the other hand, if $p=0$ then $d=0$, and so it could happen that $\tilde\la\not \in X^+_0(n+1)$ even if
$\la\in X^+_0(n)$.

\begin{lemma}\label{lemma:tensor:5}
Let $\lm\in X(n)$ and $i$ be a tensor $\lm$-normal index.
Then $L(\lm)\otimes V^*$ contains a nonzero primitive homogeneous vector of weight
$\lm-\epsilon_i$.
\end{lemma}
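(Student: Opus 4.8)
The key is to reduce a statement about the tensor product $L(\lm)\otimes V^*$ to a statement about $U(n-1)$-primitive vectors inside an irreducible module, which we already control by Theorem~\ref{theorem:constr:4}. By Lemma~\ref{lemma:tensor:3}, $U(n)$-primitive vectors of $L(\lm)\otimes V^*$ are in bijection (via $e'$) with $U(n-1)$-primitive pairs $(v,v')$ in $\cp(L(\lm))_0\oplus\cp(L(\lm))_1$, and the bijection sends a pair of weight $\mu$ to a vector of weight $\mu-\eps_n$. So to produce a primitive vector of $L(\lm)\otimes V^*$ of weight $\lm-\eps_i$, it suffices to produce a nonzero $U(n-1)$-primitive complement pair of weight $\lm-\eps_i+\eps_n=\lm-\alpha(i,n)+2\eps_n$; equivalently, weight $(\lm_1,\dots,\lm_{i}-1,\dots,\lm_{n-1},\lm_n+1)$ on the first $n$ coordinates. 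The natural way to build such a pair is to embed the problem into $U(n+1)$: set $\tilde\lm:=(\lm_1,\dots,\lm_n,d)\in X(n+1)$ with $d\=0\pmod p$, so that $\tilde\lm_{n+1}\=0$, and work inside $L(\tilde\lm)$, where $U(n)\subset U(n+1)$ acts and $L(\tilde\lm)$ restricted to $U(n)$ contains a copy of $L(\lm)$ on a suitable submodule.

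\textbf{Main steps.} First I would observe that $i$ being tensor $\lm$-normal is, by Corollary~\ref{corollary:main:1}(ii), equivalent to $\bigl[\prod r_\beta(\lm)\bigr]$ containing $-_i$, where $\beta=\res_p\lm_i$. Because $\tilde\lm_{n+1}\=0\pmod p$, the appended coordinate contributes $r_\beta(\tilde\lm)_{n+1}=+_{n+1}-_{n+1}$ if $\beta=\mathbf0$ and $r_\beta(\tilde\lm)_{n+1}=\emptyset$ otherwise; in either case the reduction $\bigl[\prod_{i\le k\le n+1} r_\beta(\tilde\lm)_k\bigr]$ still contains $-_i$ (the appended $+_{n+1}-_{n+1}$ or $\emptyset$ cannot erase $-_i$). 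Hence $i$ is $\tilde\lm$-normal as an index for $U(n+1)$ — here I would invoke Corollary~\ref{corollary:main:1}(i), checking that the exceptional clause ``$\lm_i\=\tilde\lm_{n+1}\=0\pmod p$ and $\bigl[\prod_{i<k\le n}\dots\bigr]=\emptyset$'' does not obstruct, which is where the freedom in choosing $d$ and the hypothesis that $i$ is \emph{tensor} normal (not merely normal) are used. Then Theorem~\ref{theorem:constr:4} applied to $L(\tilde\lm)$ and the index $i$ produces a nonzero homogeneous $U(n)$-primitive vector $\tilde v\in L(\tilde\lm)$ of weight $\tilde\lm-\alpha(i,n+1)=(\lm_1,\dots,\lm_i-1,\dots,\lm_n,d+1)$. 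Next I would pass from $\tilde v$ to the desired complement pair in $\cp(L(\lm))$: apply $\,\bar{\!E}_{n,n+1}$ and suitable divided powers $E_{n,n+1}^{(m)}$ to $\tilde v$ to land in the top $U(n)$-weight space $L(\tilde\lm)^{\tilde\lm}$-region, use Lemma~\ref{lemma:tensor:4} to identify the $U(n)$-submodule generated by an appropriate highest vector with $L(\lm)$, and read off a pair $(v,v')$ with $E_{i,n}^\eps v=E_{i,n}^{\eps+\1}v'$ from the relations built into a $U(n)$-primitive vector sitting over the $(n{+}1)$st strand — this is precisely the content of the ``complement pair'' formalism of Section~\ref{Complement_pairs}, where the two components of a pair correspond to the two parities of the action on the $(n{+}1)$st index.

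\textbf{The expected obstacle.} The main difficulty is the bookkeeping in the last step: translating a single $U(n)$-primitive vector $\tilde v$ of weight $\tilde\lm-\alpha(i,n+1)$ in $L(\tilde\lm)$ into a genuinely \emph{nonzero} complement pair $(v,v')\in\cp(L(\lm))_1$ with the right weight, and verifying that $e'(v,v')$ — which by Lemma~\ref{lemma:tensor:3} is automatically $Y$-invariant — is actually $U(n)$-primitive and nonzero of weight $\lm-\eps_i$. One must track how $\,\bar{\!H}_{n+1}$ and the Clifford-type relation $(\,\bar{\!H}_n\pm\,\bar{\!H}_{n+1})^2=H_n+H_{n+1}$ interact with the two parity components, essentially re-running the kind of parity analysis seen in the proof of Theorem~\ref{theorem:socle:1}, to be sure no degeneracy forces $v=v'=0$. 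I expect this to require a short argument analogous to Lemma~\ref{lemma:socle:1.75}/Lemma~\ref{lemma:socle:5}, showing that applying the raising operators $E_{i,n}^\eps$ to the constructed pair reproduces (a nonzero multiple of) $\tilde v$'s nonvanishing witness in $L(\tilde\lm)^{\tilde\lm}$, hence the pair is nonzero; and an appeal to Corollary~\ref{CVermaTensHom}/Lemma~\ref{LNew} to pin down the weight as exactly $\lm-\eps_i$ rather than a lower weight.
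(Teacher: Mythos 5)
Your overall strategy is the one the paper uses: reduce via Lemma~\ref{lemma:tensor:3} to producing a nonzero $U(n-1)$-primitive complement pair of weight $\lm-\alpha(i,n)$, pass to $\tilde\lm=(\lm_1,\dots,\lm_n,d)\in X(n+1)$ with $d\equiv0\pmod p$, get a $U(n)$-primitive vector $u\in L(\tilde\lm)$ of weight $\tilde\lm-\alpha(i,n+1)$ from Theorem~\ref{theorem:constr:4}, and descend. (The descent is in fact simpler than you anticipate: one sets $v:=\bar E_nu$, $v':=E_nu$; the complement-pair relation is an immediate supercommutator computation, nonvanishing of $(v,v')$ follows because $u$, having weight $<\tilde\lm$, cannot be $U(n+1)$-primitive, and the identification of $U(n)w\cong L(\lm)$ via Lemma~\ref{lemma:tensor:4} uses exactly the vanishing of $\bar H_{n+1}$ on $L(\tilde\lm)^{\tilde\lm}$ that you flag. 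Also note the case $i=n$ must be split off at the start, since then $\lm-\alpha(i,n)$ is meaningless; there $v\otimes f_n$ is already primitive.)

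There is, however, one genuine gap. You assert that the exceptional clause of Definition~\ref{definition:constr:4} ``does not obstruct,'' appealing to ``freedom in choosing $d$.'' There is no such freedom: the later steps force $d\equiv0\pmod p$ (both to make $U^0(n)w\cong\mathfrak u(\tilde\lm)$ irreducible over $U^0(n)$, so that Lemma~\ref{lemma:tensor:4} applies, and to kill $\bar H_{n+1}$ on the highest weight space). With $d\equiv0\pmod p$, consider the case $\bigl[\prod_{i<k\le n}r_\beta(\lm)_k\bigr]=\emptyset$ and $\lm_i\equiv0\pmod p$ (so $\beta=\mathbf0$). Then $\bigl[\prod_{i<k<n+1}r_\beta(\tilde\lm)_k\bigr]=\emptyset$ and $\tilde\lm_i\equiv\tilde\lm_{n+1}\equiv0\pmod p$, so $i$ is \emph{not} $\tilde\lm$-normal and Theorem~\ref{theorem:constr:4} is unavailable; yet $i$ \emph{is} tensor $\lm$-normal, since $\bigl[\prod_{i\le k\le n}r_\beta(\lm)_k\bigr]=[+_i-_i]=+_i-_i$ contains $-_i$. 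So the lemma's hypothesis is satisfied and your construction breaks down precisely there. The paper avoids this by a case split: when $\bigl[\prod_{i<k\le n}r_\beta(\lm)_k\bigr]=\emptyset$ the required complement pair is supplied directly by Theorem~\ref{theorem:constr:1} (whose conclusion $E_{j,n}^{\de}v=E_{j,n}^{\de+\1}v'$ is exactly the complement-pair condition), and the $\tilde\lm$ argument is only run when $\bigl[\prod_{i<k\le n}r_\beta(\lm)_k\bigr]\ne\emptyset$, which guarantees the exceptional clause cannot trigger. You need to add that case.
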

\begin{proof}
If $i=n$ and $v\in L(\la)^\la$, then $v\otimes f_n$ is primitive of weight $\lm-\epsilon_n$.
Now let $i<n$. By Lemma~\ref{lemma:tensor:3}, it suffices to construct a nonzero $U(n-1)$-primitive
homogeneous pair in $\cp(L(\lm))$ of weight $\lm-\alpha(i,n)$. Let $\beta:=\res_p\la_i$.
If $\bigl[\prod_{i<k\le n}r_\beta(\lambda)_k\bigr]=\emptyset$ the required complement
pair comes from Theorem~\ref{theorem:constr:1}. So we may assume that
 $\bigl[\prod_{i<k\le n}r_\beta(\lambda)_k\bigr]\ne\emptyset$.

Note that
$
\textstyle
\bigl[\prod_{i\le k<n+1}r_\beta(\tilde\lambda)_k\bigr]=\bigl[\prod_{i\le k\le n}r_\beta(\lambda)_k\bigr]
$
contains $-_i$, and
$$
\textstyle
\bigl[\prod_{i<k<n+1}r_\beta(\tilde\lambda)_k\bigr]=\bigl[\prod_{i<k\le n}r_\beta(\lambda)_k\bigr]\ne\emptyset.
$$
Hence by Definition~\ref{definition:constr:4}, we have that $i$ is a
$\tilde\lm$-normal index.
By Theorem~\ref{theorem:constr:4}, there is a nonzero $U(n)$-primitive homogeneous vector $u$
of weight~$\tilde\lm-\alpha(i,n+1)$ in $L(\tilde\lm)$. Set $v:=\bar E_nu$ and $v':=E_nu$. Because of its weight,
the vector $u$ is not $U(n+1)$-primitive, whence $v\ne0$ or $v'\ne0$.
If $1\leq i<n$ and $\epsilon\in\{\0,\1\}$ then $E_{i,n}^\epsilon v=[E_{i,n}^\epsilon,\bar E_n]u=E_{i,n+1}^{\epsilon+\1}u$
and $E_{i,n}^{\epsilon+\1} v'=[E_{i,n}^{\epsilon+\1},E_n]u=E_{i,n+1}^{\epsilon+\1}u$. Hence $E_{i,n}^\epsilon v=E_{i,n}^{\epsilon+\1} v'$,
which means that $(v,v')$ is a complement pair for $L(\tilde\lm)$.

We claim that $v,v'\in U(n)w$
for any nonzero homogeneous vector $w\in L(\tilde\lm)^{\tilde\lm}$. Indeed
by the irreducibility of $L(\tilde\lm)$ and the triangular decomposition of $U(n+1)$,
we have $u=Fw$ for some $F\in U^{\leq0}(n+1)^{-\alpha(i,n+1)}$. Hence $E_nF\=K$ and $\bar E_nF\=K'\pmod{I^+_n}$
for some $K,K'\in U^{\leq0}(n+1)^{-\alpha(i,n)}=U^-(n)^{-\alpha(i,n)}U^0(n+1)$. Hence $v=\bar E_nFw=K'w$
and $v'=E_nFw=Kw$.
However $\tbinom{H_{n+1}}m$ and $\bar H_{n+1}$ act as zero on
$w\in L(\tilde\lm)^{\tilde\lm}$ by~Proposition~\ref{proposition:intro:4}(ii).
So we can rewrite $v=L'w$ and $v=Lw$ for some $L,L'\in U^-(n)^{-\alpha(i,n)}U^0(n)=U^{\leq0}(n)^{-\alpha(i,n)}$.

Since $\tilde\lm_{n+1}\=0$, we get that $U^0(n)w=U^0(n{+}1)w\cong\mathfrak u(\tilde\lm)$
is irreducible as a $U^0(n)$-supermodule. Hence by Lemma~\ref{lemma:tensor:4}, we get $U(n)w\cong L(\lm)$.
Thus we have actually proved that $(v,v')\in\cp(L(\lm))_1$. Finally, for all $1\leq j<n-1$ we have
$$
E_j^\delta e(v,v')=e(E^\delta_jv,E^\delta_j\bullet v')=e(E^\delta_jv,(-1)^\delta E^\delta_j v')=e(0,0)=0.
$$
Therefore $(v,v')$ is $U(n-1)$-primitive.
\end{proof}

Now we are going to prove a theorem similar to Theorem~\ref{theorem:main:1}

\begin{theorem}\label{theorem:tensor:1}
Let $\lm\in X(n)$ and $1\le i<n$. If there exists $F\in U^{\leq 0}(n)^{-\alpha(i,n)}$
such that $FL(\lm)^\lm=0$, $\cf_{i,n}(F)$ is odd, $\overline\cf_{i,n}(F)$ is even and
$$
\ev_\lm\Bigl(\cf_{i,n}(F)^2-\overline\cf_{i,n}(F)^2\Bigr)\in\mathbb F^\times,
$$
then there is no nonzero $U(n{-}1)$-primitive pair in $\cp(L(\lm))$ of weight $\lm-\alpha(i,n)$.
\end{theorem}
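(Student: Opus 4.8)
The plan is to follow the strategy of Theorem~\ref{theorem:main:1}, transported to the tensor--product picture. By Lemma~\ref{lemma:tensor:3}, applied to the weight $\lm-\alpha(i,n)=\lm-\eps_i+\eps_n$ (which is of level $1$, so any complement pair of that weight lies in $\cp(L(\lm))_0\oplus\cp(L(\lm))_1$, and $e'$ shifts weights by $-\eps_n$), it suffices to show that $L(\lm)\otimes V^*$ contains no nonzero $U(n)$-primitive vector of weight $\lm-\eps_i$. Suppose for contradiction that it does. Then the space $Z$ of $U(n)$-primitive vectors of weight $\lm-\eps_i$ is a nonzero $U^0(n)$-subsupermodule of $L(\lm)\otimes V^*$, hence contains an irreducible $U^0(n)$-subsupermodule $Z_0\cong\u(\lm-\eps_i)$ by Proposition~\ref{proposition:intro:4}. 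Since $L(\lm)\otimes V^*$ lies in $\O(n)$, the universality of Verma modules (Lemma~\ref{Vermamod}) yields a nonzero even $U(n)$-homomorphism $M(\lm-\eps_i)\to L(\lm)\otimes V^*$; dualizing --- using $L(\lm)^\tau\cong L(\lm)$, the easy fact $(V^*)^\tau\cong V^*$, and the monoidality of contravariant duality (so that $(L(\lm)\otimes V^*)^\tau\cong L(\lm)\otimes V^*$), together with formula~(\ref{EVermaDual}) and the coinduction adjunction --- produces a nonzero even $U^{\le0}(n)$-homomorphism $\psi\colon L(\lm)\otimes V^*\to\u(\lm-\eps_i)$.

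Next I would pin down $\psi$. As it is $U^0(n)$-equivariant it is supported on $(L(\lm)\otimes V^*)^{\lm-\eps_i}=\bigoplus_{j\ge i}L(\lm)^{\lm-\alpha(i,j)}\otimes(V^*)^{-\eps_j}$ (with $\alpha(i,i):=0$), and as it is $U^-(n)$-equivariant it annihilates $F\cdot(L(\lm)\otimes V^*)$ for every $F\in U^-(n)$ of nonzero weight. Using the coproduct of $V^*$ --- where $F^\gamma_{a,b}$ sends $f^\eps_b$ to $\pm f^{\eps+\gamma}_a$ and kills $f^\eps_c$ for $c\neq b$ --- one rewrites, modulo these relations, each $u\otimes f^\eps_j$ with $u\in L(\lm)^{\lm-\alpha(i,j)}$ and $j>i$ as a combination of vectors $u'\otimes f^{\eps'}_a$ with $u'\in L(\lm)^{\lm-\alpha(i,a)}$ and $i\le a<j$; descending induction on $j$ then shows $\psi$ is determined by the two linear maps $\mathsf a:=\psi(-\otimes f_i)$ and $\mathsf b:=\psi(-\otimes\bar f_i)$ on $L(\lm)^\lm$, and $\psi=0$ iff $\mathsf a=\mathsf b=0$.

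The crux is to feed in the hypothesis $FL(\lm)^\lm=0$. For $w\in L(\lm)^\lm$ the vectors $w\otimes f_n$, $w\otimes\bar f_n$ have weight $\lm-\eps_n\neq\lm-\eps_i$ and hence lie in $\ker\psi$, so $\psi(F(w\otimes f^\eps_n))=F\cdot\psi(w\otimes f^\eps_n)=0$ for $\eps\in\{\0,\1\}$. Expanding $F(w\otimes f^\eps_n)$ by the coproduct, the term with all of $F$ acting on $w$ equals $(Fw)\otimes f^\eps_n=0$; in each remaining term part of $F$ carries $f^\eps_n$ upward to some $f^{\eps'}_a$, $i\le a<n$, and after the reduction of the previous paragraph the contributions of the one-factor terms $F^{\eps_0}_{i,n}H$ of $F$ give exactly $\mathsf a(\cf_{i,n}(F)\,w)$ and $\mathsf b(\overline\cf_{i,n}(F)\,w)$, while the many-factor terms contribute subleading corrections which the same reduction absorbs. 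This bookkeeping parallels the coproduct computations in Lemmas~\ref{lemma:tensor:1}--\ref{lemma:tensor:3}, and I expect it to go through by induction on $\height\alpha(i,n)$ (aided by Lemma~\ref{lemma:main:2'}); the outcome should be a system
\begin{align*}
\mathsf a\bigl(\cf_{i,n}(F)\,w\bigr)\pm\mathsf b\bigl(\overline\cf_{i,n}(F)\,w\bigr)&=0,\\
\mathsf b\bigl(\cf_{i,n}(F)\,w\bigr)\pm\mathsf a\bigl(\overline\cf_{i,n}(F)\,w\bigr)&=0
\end{align*}
valid for all $w\in L(\lm)^\lm$. Substituting $w\mapsto\cf_{i,n}(F)\,w$ in the first equation and $w\mapsto\overline\cf_{i,n}(F)\,w$ in the second, subtracting, and using that $\cf_{i,n}(F)$ and $\overline\cf_{i,n}(F)$ commute and that $\cf_{i,n}(F)^2$, $\overline\cf_{i,n}(F)^2$ act on $L(\lm)^\lm$ through the scalars $\ev_\lm(\cf_{i,n}(F)^2)$, $\ev_\lm(\overline\cf_{i,n}(F)^2)$ (by Proposition~\ref{proposition:constr:2} and the relation $\bar H^2_k=H_k$ of~(\ref{equation:intro:1})), we obtain $\ev_\lm(\cf_{i,n}(F)^2-\overline\cf_{i,n}(F)^2)\cdot\mathsf a=0$; since this scalar is a unit, $\mathsf a=0$, and symmetrically $\mathsf b=0$. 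Hence $\psi=0$, a contradiction.

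\emph{Main obstacle.} All of the above except the displayed system is routine category-$\O$ formalism; the real work is the reduction of $F(w\otimes f^\eps_n)$ modulo $\psi$-negligible vectors, i.e.\ controlling the many-factor terms of $F$ and their iterated reductions so that only the $\cf^\delta_{i,n}(F)$ survive in the leading part. I would organize this either by first proving, by induction on $\height\alpha(i,n)$, a closed expression for $F(w\otimes f^\eps_n)$ modulo the relations above purely in terms of the $\cf^\delta_{i,n}(F)$, or by staying in the complement-pair picture, pushing $\psi$ through the map $e$ of~(\ref{equation:tensor:2}) and reading the two relations directly off $E^\eps_{i,n}v=E^{\eps+\1}_{i,n}v'$. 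A minor secondary point is the identification $(V^*)^\tau\cong V^*$ needed to set up the duality.
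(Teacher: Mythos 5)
Your endgame (the two-by-two system in $\mathsf a,\mathsf b$, the substitutions $w\mapsto\cf_{i,n}(F)w$ and $w\mapsto\overline\cf_{i,n}(F)w$, and the use of $\ev_\lm(\cf_{i,n}(F)^2-\overline\cf_{i,n}(F)^2)\in\mathbb F^\times$) is exactly the right linear algebra, and the transfer to $L(\lm)\otimes V^*$ via Lemma~\ref{lemma:tensor:3} is legitimate and non-circular. But the step you flag as the ``main obstacle'' is a genuine gap, not a routine verification, and it is harder in your setting than you suggest. In $L(\lm)\otimes V^*$ the multi-factor terms of $F$ do \emph{not} die for weight reasons: when a chain $F^{\gamma_j}_{a_j,a_{j+1}}\cdots F^{\gamma_m}_{a_m,n}$ from the coproduct lands on the $V^*$ slot, the surviving vector $u\otimes f^{\eps'}_{a_j}$ with $u\in L(\lm)^{\lm-\alpha(i,a_j)}$ still has total weight $\lm-\eps_i$, so $\psi$ does not kill it and you must run your descending-induction reduction on every such term, keeping track of signs and of the odd parts of the $U^0$-coefficients (which flip the parity of $f_{a_j}$). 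You never derive the displayed system --- the signs are left as $\pm$ --- and those signs are precisely what the parity hypotheses on $\cf_{i,n}(F)$ and $\overline\cf_{i,n}(F)$ are there to control: with the wrong signs the unit hypothesis would have to read $a^2+b^2$ rather than $a^2-b^2$ and the proof would collapse.

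The paper's organization makes this difficulty evaporate, and it is essentially your own fallback suggestion. One does \emph{not} pass to $L(\lm)\otimes V^*$: from a primitive pair one gets $\theta:M(\mu)\to\cp(L(\lm))$, projects to the two components $\psi=\pr\circ\theta$, $\psi'=\pr'\circ\theta$, and contravariantly dualizes to $U^{\leq0}(n-1)$-maps $\phi,\phi'$ out of $L(\lm)^{\tau_n}$. There is no $V^*$ factor to absorb weight, so the spanning-set argument of Theorem~\ref{theorem:main:1} kills all multi-factor terms immediately (the inner vector has weight $\lm-\alpha(i_1,n)\neq\lm-\alpha(i,n)$, where $\phi$ vanishes by Proposition~\ref{proposition:main:1}), and $0=\phi(Ff)$, $0=\phi'(Ff)$ come out cleanly as $\phi(F_{i,n}af+\bar F_{i,n}bf)=0$ and its primed analogue. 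The crucial second pair of equations, with $a$ and $b$ swapped, is then read off directly from the complement-pair identity $E^\eps_{i,n}\psi(v)=E^{\eps+\1}_{i,n}\psi'(v)$ of~(\ref{equation:tensor:2.5}) pushed through the duality, with the signs controlled by $\|a\|=\1$, $\|b\|=\0$. So the ``swap'' comes from the pair $(\phi,\phi')$ and the defining relation of complement pairs, not from the $f_n$ versus $\bar f_n$ distinction; if you want a complete proof you should take that route rather than the tensor-product one.
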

\begin{proof} Suppose on the contrary that such a pair exists.
Then the vector space $W$ of all $U(n{-}1)$-primitive pairs in $\cp(L(\lm))^{\lm-\alpha(i,n)}$ is nonzero.
In fact, $W$ is a $U^0(n-1)$-subsupermodule of $\cp(L(\lm))$.
Let $W_0$ be an irreducible
$U^0(n{-}1)$-subsupermodule of $W$. By Proposition~\ref{proposition:intro:4},
we have $W_0\cong{\mathfrak u}(\mu)$,
where $\mu:=(\lm_1,\ldots,\lm_{i-1},$ $\lm_i{-}1,\lm_{i+1},\ldots,\lm_{n-1})$.
By 
the universality of Verma modules, there is a non-zero homogeneous $U(n{-}1)$-homomorphism $\theta:M(\mu)\to\cp(L(\la))$.

Consider the projections $\pr:\cp(L(\lm))\to L(\lm)$ and $\pr':\cp(L(\lm))\to\Pi L(\lm)$ to the first and
the second components, respectively. These maps are degree $\0$ homomorphisms of  $U(n-1)$-supermodules. We set $\psi:=\pr\circ\,\theta$
and $\psi':=\pr'\circ\,\theta$. Thus $\psi:M(\mu)\to L(\lm)$ and $\psi':M(\mu)\to \Pi L(\lm)$ are homogeneous
$U(n-1)$-homomorphisms of the same parity such that
\begin{equation}\label{equation:tensor:2.5}
E^\epsilon_{i,n}\psi(v)=E^{\epsilon+\1}_{i,n}\psi'(v)\qquad\qquad(\epsilon\in\{\0,\1\},\ v\in M(\mu)).
\end{equation}

Let $T(\mu)=(M(\mu)^\mu)^*$. So $T(\mu)$ is the superspace of all linear functions $f:M(\mu)^\mu\to\mathbb F$ with natural grading
$T(\mu)_\0=\{f\in T(\mu)\suchthat f(M(\mu)^\mu_\1)=0\}$, $T(\mu)_\1=\{f\in T(\mu)\suchthat f(M(\mu)^\mu_\0)=0\}$.
We make $T(\mu)$ into a $U^{\leq 0}(n-1)$-supermodule by the following rules:
any element $x\in U^{\leq 0}(n-1)$ of strictly negative weight acts as zero on $T(\mu)$; $(xf)(v)=f(\tau_{n-1}(x)v)$ for any $x\in U^0(n-1)$
and $v\in M(\mu)^\mu$.

Consider the map $\rho:M(\mu)^{\tau_{n-1}}\to T(\mu)$ which sends
a linear map $f:M(\mu)\to\mathbb F$ to its restriction $f|_{M(\mu)^\mu}$. We claim that $\rho$ is an even homomorphism
of $U^{\leq0}(n-1)$-supermodules. Indeed, take an arbitrary $v\in M(\mu)^\mu$.
If $x\in U^{\leq 0}(n-1)$ has weight $<0$, then $\tau_{n-1}(x)$ has weight $>0$,
so
$
\rho(xf)(v)=(xf)(v)=f(\tau_{n-1}(x)v)=f(0)=0.
$
On the other hand, $x\rho(f)=0$ by definition. Now let $x\in U^0(n-1)$. Then we have
$
\rho(xf)(v)=(xf)(v)=f(\tau_{n-1}(x)v).
$
On the other hand, by the definition of the action of $U^{\leq0}(n-1)$ on $T(\mu)$,
we have $(x\rho(f))(v)=\rho(f)(\tau_{n-1}(x)v)=f(\tau_{n-1}(x)v)$, since $\tau_{n-1}(x)v\in M(\mu)^\mu$.

Now, we set $\phi:=\rho\circ\psi^{\tau_{n-1}}$ and $\phi':=\rho\circ(\psi')^{\tau_{n-1}}$.
Then $\phi$ is a $U^{\leq0}(n-1)$-homomorphism from $L(\lm)^{\tau_n}$ to $T(\mu)$
and $\phi'$ is a $U^{\leq0}(n-1)$-homomorphism from $\bigl(\Pi L(\lm)\bigr)^{\tau_n}$ to $T(\mu)$.
Note that $\phi$ and $\phi'$ are homogeneous of the same parity equal to that of $\theta$.

We claim that $\phi\ne0$ or $\phi'\ne0$. Indeed, we have $\psi\ne0$ or $\psi'\ne0$ since $\theta\ne0$. Suppose for definiteness that $\psi\ne0$, the argument for $\psi'$ being similar.
Then $\psi^{\tau_{n-1}}\ne0$.
The image of this homogeneous homomorphism of $U(n-1)$-supermodules is a $U(n-1)$-subsupermodule of $M(\mu)^{\tau_{n-1}}$.
But $M(\mu)^{\tau_{n-1}}$ has a unique minimal subsupermodule,
which is generated by $(M(\mu)^{\tau_{n-1}})^\mu$. Hence $(M(\mu)^{\tau_{n-1}})^\mu\subseteq \operatorname{im}\psi^{\tau_{n-1}}$. Take a nonzero $f\in (M(\mu)^{\tau_{n-1}})^\mu$.
We can write $f=\psi^{\tau_{n-1}}(g)$ for some $g\in L(\lm)^{\tau_n}$.
Since $f(M(\mu)^\mu)\ne0$, we have $\phi(g)=\rho(f)\ne0$.

Since $L(\lm)^{\tau_n}\cong L(\lm)$ and $(\Pi L(\lm))^{\tau_n}\cong\Pi L(\lm)$,
we get
$$
F(L(\lm)^{\tau_n})^\lm=F((\Pi L(\lm))^{\tau_n})^\lm=0.
$$
For the last equality, note that $FL(\lm)^\lm=0$ implies $F\bullet L(\lm)^\lm=0$ by considering homogeneous components.
Take an arbitrary linear function $f:L(\lm)\to\mathbb F$
such that $f(L(\lm)^\gamma)=0$ for any $\gamma<\lm$.
We can consider $f$ as an element of both $L(\lm)^{\tau_n}$ and $(\Pi L(\lm))^{\tau_n}$.
Note that $f$ has weight $\lm$ in either of these supermodules.
Arguing as in Theorem~\ref{theorem:main:1}, we get
\begin{equation}\label{equation:tensor:3}
0=\phi(Ff)=\phi(F_{i,n}af+\bar F_{i,n}bf),
\end{equation}
\begin{equation}\label{equation:tensor:4}
0=\phi'(Ff)=\phi'(F_{i,n}af+\bar F_{i,n}bf),
\end{equation}
where $a:=\cf_{i,n}(F)$ and $b:=\overline\cf_{i,n}(F)$.
Recall that $a$ is odd and $b$ is even. This and $a,b\in U^0(n)$ imply that $a$ and $b$ commute.

Now let $v\in M(\mu)^\mu$. By~(\ref{equation:tensor:3}) and~(\ref{equation:tensor:2.5}), we get
\begin{align*}
0=\phi (F_{i,n}af+\bar F_{i,n}bf)(v)=\psi^{\tau_{n-1}}(F_{i,n}af+\bar F_{i,n}bf)(v)
\\
=(F_{i,n}af+\bar F_{i,n}bf)(\psi(v))=f\bigl(\tau_n(F_{i,n}a)\psi(v)+\tau_n(\bar F_{i,n}b)\psi(v)\bigr)
\\
=f\bigl(\tau_n(a)E_{i,n}\psi(v)+\tau_n(b)\bar E_{i,n}\psi(v)\bigl)
=f\bigl(\tau_n(a)\bar E_{i,n}\psi'(v)+\tau_n(b)E_{i,n}\psi'(v)\bigl)
\\
=f\bigl((\tau_n(a)\bar E_{i,n})\bullet\psi'(v)+(\tau_n(b)E_{i,n})\bullet\psi'(v)\bigl)
\\
=f\bigl(\tau_n(\bar F_{i,n}a)\bullet\psi'(v)+\tau_n(F_{i,n}b)\bullet\psi'(v)\bigl)
=(\bar F_{i,n}af+ F_{i,n}bf)(\psi'(v))
\\
=(\psi')^{\tau_{n-1}}(\bar F_{i,n}af+ F_{i,n}bf)(v)=\phi'(\bar F_{i,n}af+ F_{i,n}bf)(v),
\end{align*}
where we have used the assumptions $\|a\|=\1$ and $\|b\|=\0$. Therefore we have
\begin{equation}\label{equation:tensor:5}
\phi'\bigl( F_{i,n}bf+\bar F_{i,n}af\bigr)=0.
\end{equation}
Similarly, by~(\ref{equation:tensor:4}) and~(\ref{equation:tensor:2.5}), we get
\begin{align*}
0=\phi'(F_{i,n}af+\bar F_{i,n}bf)(v)=(\psi')^{\tau_{n-1}}(F_{i,n}af+\bar F_{i,n}bf)(v)
\\
=(F_{i,n}af+\bar F_{i,n}bf)(\psi'(v))=f\bigl(\tau_n(F_{i,n}a)\bullet\psi'(v)+\tau_n(\bar F_{i,n}b)\bullet\psi'(v)\bigr)
\\
=\!f\bigl((\tau_n(a)E_{i,n})\bullet\psi'(v)+(\tau_n(b)\bar E_{i,n})\bullet\psi'(v)\bigr)
\\
=f\bigl(-\tau_n(a)E_{i,n}\psi'(v)-\tau_n(b)\bar E_{i,n}\psi'(v)\bigr)
\\
=f\bigl(-\tau_n(a)\bar E_{i,n}\psi(v)-\tau_n(b)E_{i,n}\psi(v)\bigr)
\\
=f\bigl(-\tau_n(\bar F_{i,n}a)\psi(v)-\tau_n(F_{i,n}b)\psi(v)\bigr)=(-\bar F_{i,n}af-F_{i,n}bf)(\psi(v))
\\
=\psi^{\tau_{n-1}}(-\bar F_{i,n}af-F_{i,n}bf)(v)=\phi(-\bar F_{i,n}af-F_{i,n}bf)(v).
\end{align*}
Hence, we have
\begin{equation}\label{equation:tensor:6}
\phi(F_{i,n}bf+ \bar F_{i,n}af)=0.
\end{equation}
We apply the substitution $f\mapsto af$ in~(\ref{equation:tensor:3}) and
the substitution $f\mapsto bf$ in~(\ref{equation:tensor:6}), subtract the latter
from the former, note that $a$ and $b$ commute, and use the assumption $\ev_\lm(a^2- b^2)\in\mathbb F^\times$ to get
$$
0=
\phi(F_{i,n}(a^2- b^2)f)=\ev_\lm(a^2- b^2)\phi(F_{i,n}f).
$$
Hence $\phi(F_{i,n}f)=0$. Now apply the substitution $f\mapsto bf$ in~(\ref{equation:tensor:3}) and
the substitution $f\mapsto af$ in~(\ref{equation:tensor:6}), subtract the latter from the former to get
$$
0=
\phi(\bar F_{i,n}b^2f-\bar F_{i,n}a^2f)
=-\ev_\lm(a^2- b^2)\phi(\bar F_{i,n}f).
$$
Hence $\phi(\bar F_{i,n}f)=0$ and, arguing as in Theorem~\ref{theorem:main:1}, we prove that $\phi=0$.
The equality $\phi'=0$ is proved similarly, using~(\ref{equation:tensor:4}) and~(\ref{equation:tensor:5}).
We get a contradiction.
\end{proof}

\begin{theorem}\label{theorem:tensor:2} Let $\lm,\mu\in X(n)$.
Then $L(\lm)\otimes V^*$ contains a nonzero primitive vector of weight $\mu$ if and only if $\mu =\lm-\epsilon_i$
for some tensor $\lm$-normal index $i$.
\end{theorem}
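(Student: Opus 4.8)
The plan is to combine the reduction to complement pairs carried out in Lemmas~\ref{lemma:tensor:2}--\ref{lemma:tensor:3} with the non-existence criterion Theorem~\ref{theorem:tensor:1}. Any $U(n)$-primitive vector of $L(\lm)\otimes V^*$ generates a homomorphic image of a Verma supermodule, so Corollary~\ref{CVermaTensHom} (applied with $W=V^*$, whose weights are $-\epsilon_1,\dots,-\epsilon_n$) forces its weight to be $\lm-\epsilon_i$ for some $1\le i\le n$. For $i=n$ the vector $v^+_\lm\otimes f_n$ is primitive and $n$ is always tensor $\lm$-normal, so assume $i<n$. By Lemma~\ref{lemma:tensor:3}, $e'$ is an even isomorphism of $U(n-1)$-supermodules from $\cp(L(\lm))_0\oplus\cp(L(\lm))_1$ onto $(L(\lm)\otimes V^*)^Y$, sending weight $\lm-\alpha(i,n)$ (which lies in the first level) to weight $\lm-\epsilon_i$ and identifying $U(n-1)$-primitive pairs with $U(n)$-primitive vectors. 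So for $i<n$ the theorem reduces to the claim that $\cp(L(\lm))$ has a nonzero $U(n-1)$-primitive pair of weight $\lm-\alpha(i,n)$ if and only if $i$ is tensor $\lm$-normal. The ``if'' direction of this is exactly Lemma~\ref{lemma:tensor:5} together with Lemma~\ref{lemma:tensor:3}, so only the ``only if'' remains.

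Assume $i<n$ is not tensor $\lm$-normal and set $\beta:=\res_p\lm_i$. By Corollary~\ref{corollary:main:1}(ii), $\bigl[\prod r_\beta(\lm)\bigr]$ does not contain $-_i$; reducing $r_\beta(\lm)_i\bigl[\prod_{i<k\le n}r_\beta(\lm)_k\bigr]$ via Corollary~\ref{corollary:pm:1} yields the same three possibilities as in the proof of Theorem~\ref{theorem:main:2} --- (a) $\beta\ne\mathbf0$ with a surviving $+$ in $(i..n]$; (b) $\beta=\mathbf0$, $\lm_i\equiv1\pmod p$, with at least two surviving $+$'s; (c) $\beta=\mathbf0$, $\lm_i\equiv0\pmod p$, with a surviving $+$ --- except that the relevant interval is now $(i..n]$ instead of $(i..n)$. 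As there, applying Lemma~\ref{lemma:pm:2}, Lemma~\ref{lemma:pm:6} or Lemma~\ref{lemma:pm:5} to $r_\beta(\lm)|_{(i..n]}$ produces $j$ with $i<j\le n$ and a flow $\Gamma$ on $(i..j]$ which is coherent but not fully coherent with $r_\beta(\lm)|_{(i..j]}$, has no buds, and has the surviving $+$ at the terminal index $j$; arranging in addition that $r_\beta(\lm)_j$ carries no minus gives $j\notin S$, where $S=\{t\in(i..j]\mid\res_p\lm_t=\beta\}$ is the set of sources, hence $j\in M:=(i..j]\setminus S$, and $\res_p(\lm_j+1)=\beta$ by Proposition~\ref{proposition:rbeta}. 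Extending the source-to-target map of $\Gamma$ by $i\mapsto j$ gives an injection $\psi\colon\{i\}\cup S\to[i..j]$ with $\psi(t)>t$ and $\res_p\lm_t=\res_p(\lm_{\psi(t)}+1)$. The one essential difference from the proof of Theorem~\ref{theorem:main:2} is that Theorem~\ref{theorem:tensor:1} demands an $F$ with $\cf_{i,n}(F)$ \emph{odd}, so the operator to use is the odd one $S^\1_{i,j}(M)$ with $j\in M$ (instead of $S^\epsilon_{i,j}(M)$ with $\bar\jmath\in M$), and one sets $F:=\ev_\lm\bigl(F_{j,n}S^\1_{i,j}(M)\bigr)$ if $j<n$, $F:=\ev_\lm\bigl(S^\1_{i,j}(M)\bigr)$ if $j=n$.

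That this $F$ meets the hypotheses of Theorem~\ref{theorem:tensor:1} is a routine check: $FL(\lm)^\lm=0$ by Lemma~\ref{lemma:constr:2} and Proposition~\ref{proposition:constr:2}, while Lemma~\ref{lemma:rev:1.5}, Lemma~\ref{lemma:main:2'} and Proposition~\ref{proposition:constr:2} give
\[
\cf_{i,n}(F)=(\bar H_j-\bar H_i)\prod_{t\in M\setminus\{j\}}(\beta-\res_p\lm_t),\qquad
\overline\cf_{i,n}(F)=(\lm_i-\lm_j)\prod_{t\in M\setminus\{j\}}(\beta-\res_p\lm_t),
\]
so $\cf_{i,n}(F)$ is odd and $\overline\cf_{i,n}(F)$ is even; and using $(\bar H_j-\bar H_i)^2=H_i+H_j$ and then $\res_p\lm_i=\beta=\res_p(\lm_j+1)$,
\[
\ev_\lm\bigl(\cf_{i,n}(F)^2-\overline\cf_{i,n}(F)^2\bigr)=2\,\lm_j(\lm_i-\lm_j)\prod_{t\in M\setminus\{j\}}(\beta-\res_p\lm_t)^2 .
\]
The product is a nonzero scalar since no $t\in M\setminus\{j\}$ has residue content $\beta$, and $\lm_i\not\equiv\lm_j\pmod p$ is forced by $\res_p\lm_i=\beta=\res_p(\lm_j+1)$ as soon as $\lm_j\not\equiv0\pmod p$; so everything hinges on arranging $\lm_j\not\equiv0\pmod p$. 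When $\beta\ne\mathbf0$ this is automatic and cases (a), (b) go through; the hard part --- which I expect to be the main obstacle --- is case (c), where $\beta=\mathbf0$ and the section construction of Lemma~\ref{lemma:pm:5} naturally throws up terminal indices $j$ with $\lm_j\equiv0\pmod p$, for which the last display vanishes. The plan is to take, whenever possible, a terminal index $j$ with $\lm_j\equiv-1\pmod p$ (then $r_\mathbf0(\lm)_j=++$ contributes a genuine surplus $+$ and $\res_p\lm_j=2\ne\mathbf0$), and in the remaining degenerate sub-case, where every surviving $+$ sits at an index of type $+-$, to pass to the auxiliary weight $\tilde\lm=(\lm_1,\dots,\lm_n,d)\in X(n+1)$ with $d\equiv0\pmod p$ exactly as in Lemma~\ref{lemma:tensor:5}: $i$ is still not $\tilde\lm$-normal, so the argument is rerun one rank higher with the surviving $+$ at position $n+1$, which then carries the odd operator. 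Once $F$ is constructed, Theorem~\ref{theorem:tensor:1} shows that $\cp(L(\lm))$ has no nonzero $U(n-1)$-primitive pair of weight $\lm-\alpha(i,n)$, and the reduction of the first paragraph completes the proof.
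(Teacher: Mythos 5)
Your overall architecture matches the paper's: the reduction to complement pairs via Lemma~\ref{lemma:tensor:3}, the use of Theorem~\ref{theorem:tensor:1} with the odd operator $S^{\1}_{i,j}(M)$ for $j\in M$, and the computation $\ev_\lm\bigl(\cf_{i,n}(F)^2-\overline\cf_{i,n}(F)^2\bigr)=2\lm_j(\lm_i-\lm_j)\cdot(\text{nonzero square})$ are all exactly what the paper does. The gap is precisely where you flag it: the case $\beta=\mathbf 0$ with the surviving $+$ at an index $j$ of type $+-$ (so $\lm_j\equiv0\pmod p$), and your proposed escape does not close it. Passing to $\tilde\lm=(\lm_1,\dots,\lm_n,d)\in X(n+1)$ only yields, via Theorem~\ref{theorem:main:2}, that $L(\tilde\lm)$ has no $U(n)$-primitive vector of weight $\tilde\lm-\alpha(i,n+1)$; but the passage from such vectors to $U(n-1)$-primitive pairs in $\cp(L(\lm))$ constructed in Lemma~\ref{lemma:tensor:5} is not shown (and not claimed) to be surjective, so non-existence one rank higher gives no information about non-existence of primitive pairs at rank $n$. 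Nor can you literally run the odd operator at $j=n+1$: the element $S^{\1}_{i,n+1}(M)$ lies in $U(n+1)$ and has weight $-\alpha(i,n+1)$, which is not admissible input for Theorem~\ref{theorem:tensor:1}.

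The missing idea is to exploit the hypothetical primitive pair \emph{before} building $F$. A nonzero $U(n-1)$-primitive pair of weight $\lm-\alpha(i,n)$ in particular yields a nonzero $U(n-1)$-primitive vector of that weight in $L(\lm)$, so Theorem~\ref{theorem:main:2} forces $i$ to be $\lm$-normal. Combining $\lm$-normality with the failure of tensor $\lm$-normality pins the combinatorics down completely: a short case analysis (the ``Claim'' in the paper's proof) shows that $\bigl[\prod_{i\le k\le n}r_\beta(\lm)_k\bigr]=\emptyset$, that the offending surviving $+$ sits at position $n$ itself, and that $\lm_n\not\equiv0\pmod p$ (either $\res_p(\lm_n+1)=\beta\ne\mathbf0$, or $\lm_n\equiv-1\pmod p$ when $\beta=\mathbf0$). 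One then always takes $j=n$, the flow lives on all of $[i..n]$ with $i$ itself a source and no buds, $\bar S_{i,n}(M)$ annihilates $L(\lm)^\lm$ by Lemma~\ref{lemma:constr:2}, and your nonvanishing criterion goes through because $\lm_n\not\equiv0$. In short: the degenerate sub-case you were trying to repair never occurs once Theorem~\ref{theorem:main:2} is fed into the argument first.
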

\begin{proof}
Suppose that $i\in\{1,\dots,n\}$ is not tensor $\lm$-normal. By Lemmas~\ref{LNew} and~\ref{lemma:tensor:5}, it suffices prove that there is no primitive vector of weight $\lm-\epsilon_i$ in $L(\lm)\otimes V^*$. Assume for a contradiction that such vector exists.
We have $i<n$, as $n$ is always tensor $\lm$-normal.

By Lemma~\ref{lemma:tensor:3}, there exists a nonzero $U(n-1)$-primitive
pair of weight $\lm-\alpha(i,n)$ in $\cp(L(\lm))_0\oplus\cp(L(\lm))_1$.
In particular, there exists a nonzero $U(n-1)$-primitive vector of weight $\lm-\alpha(i,n)$ in $L(\lm)$.
So by Theorem~\ref{theorem:main:2}, the index $i$ is
$\lm$-normal. Set $\beta:=\res_p\lm_i$. We need to establish the following

\vspace{1 mm}
{\em Claim.} There exists
a flow $\Gamma$ on $[i..n]$ coherent with $r_\beta(\lm)|_{[i..n]}$ having no buds on $[i..n]$
and one of the following happens:
{
\renewcommand{\labelenumi}{{\rm \theenumi}}
\renewcommand{\theenumi}{{\rm(\alph{enumi})}}
\begin{enumerate}
\item\label{theorem:tensor:1:case:a} $\beta\ne\mathbf 0$, $\res_p(\lm_n+1)=\beta$;
\item\label{theorem:tensor:1:case:b} $\beta=\mathbf 0$,   $\lm_n\=-1\pmod p$.
\end{enumerate}}

\vspace{1mm}
To prove the claim, we first suppose that $\beta\ne\mathbf 0$. Then $\bigl[\prod_{i<k<n}r_\beta(\lm)_k\bigl]=-^s$,
as $i$ is
$\lm$-normal. If $s>0$ then $\bigl[\prod_{i<k\le n}r_\beta(\lm)_k\bigl]$
does not contain $+$ regardless of the value of $\lm_n$.
In this case, $i$ is tensor $\lm$-normal contrary to the assumption.
Hence $s=0$. If $r_\beta(\lm)_n$ is distinct from $+$, then
$\bigl[\prod_{i<k\le n}r_\beta(\lm)_k\bigl]$ does not contain $+$ and we have a contradiction again.
Hence $r_\beta(\lm)_n=+$, whence $\res_p(\lm_n+1)=\beta$
by Proposition~\ref{proposition:rbeta}\ref{proposition:rbeta:part:2}.
Since
$$
\textstyle
\bigl[\prod_{i\le k\le n}r_\beta(\lm)_k\bigl]=\bigl[-\bigl[\prod_{i<k<n}r_\beta(\lm)_k\bigl]+\bigl]=[-+]=\emptyset,
$$
Lemma~\ref{lemma:pm:1} implies that there exists
a flow $\Gamma$ on $[i..n]$ (fully) coherent with $r_\beta(\lm)|_{[i..n]}$ and having no buds on $[i..n]$.

Now suppose that $\beta=\mathbf 0$. There are two cases: $r_\mathbf0(\lm)_i=--$ and $r_\mathbf0(\lm)_i=+-$. Suppose first that $r_\mathbf0(\lm)_i=--$. Then we have
$\bigl[\prod_{i<k<n}r_\mathbf 0(\lm)_k\bigl]=+^r-^s$ with $r\le1$, as $i$ is
$\lm$-normal.
If $s\ge2$ then $\bigl[\prod_{i<k\le n}r_\mathbf0(\lm)_k\bigl]$ contains at most one $+$
regardless of $\lm_n$. In this case, $i$ is tensor $\lm$-normal contrary to the assumption.
Hence $s\le1$. Corollary~\ref{corollary:pm:2} shows that only the following two cases are possible: $r=s=0$ and $r=s=1$.
In the former case, the reduction $\bigl[\prod_{i<k\le n}r_\mathbf0(\lm)_k\bigl]=r_\mathbf 0(\lm)_n$ must be $++$,
since $i$ is not tensor $\lm$-normal. Hence $\lm_n\=-1\pmod p$.
In the former case, the reduction
$$\textstyle\bigl[\prod_{i<k\le n}r_\mathbf0(\lm)_k\bigl]=[+-r_\mathbf 0(\lm)_n]$$
must contain at least two signs $+$, which is possible only if $r_\mathbf 0(\lm)_n=++$. Hence again $\lm_n\=-1\pmod p$.
In both cases, we have $\bigl[\prod_{i\le k\le n}r_\mathbf0(\lm)_k\bigl]=\emptyset$.
Now, by Lemma~\ref{lemma:pm:3}, there exists a flow $\Gamma$ on $[i..n]$ (fully) coherent with $r_\mathbf0(\lm)|_{[i..n]}$ and
having no buds on $[i..n]$.

If $r_\mathbf 0(\lm)_i=+-$ then $\bigl[\prod_{i<k<n}r_\mathbf0(\lm)_k\bigl]=-^s$,
as $i$ is
$\lm$-normal. If $s>0$ then also $s\ge2$ by Corollary~\ref{corollary:pm:2}. In that case,
$\bigl[\prod_{i<k\le n}r_\mathbf0(\lm)_k\bigl]$ does not contain $+$ regardless of $\lm_n$.
Hence $i$ is tensor $\lm$-normal contrary to the assumption. So
$\bigl[\prod_{i<k<n}r_\mathbf0(\lm)_k\bigl]=\emptyset$. Definition~\ref{definition:constr:4} implies that
$r_\mathbf 0(\lm)_n\ne+-$.
The cases $r_\mathbf 0(\lm)_n=\emptyset$ and $r_\mathbf 0(\lm)_n=--$ are also impossible,
since then $\bigl[\prod_{i<k\le n}r_\mathbf0(\lm)_k\bigl]$ does not contain the sign $+$,
which makes $i$ a tensor $\lm$-normal index. Thus we conclude that $r_\mathbf 0(\lm)_n=++$, i.e. $\lm_n\=-1\pmod p$.
By Lemma~\ref{lemma:pm:3}, there exists a flow $\Gamma'$ on $(i..n)$ (fully) coherent with $r_\mathbf0(\lm)|_{(i..n)}$
having no buds on $(i..n)$. It suffices to set $\Gamma:=\Gamma'\cup\{(i,n)\}$, to complete the proof of the claim.

\vspace{1mm}
Let $S$ be the set of all sources of edges of $\Gamma$ and set $M:=[i..n]\setminus S$. Note that $i\in S$,
as $r_\beta(\lm)_i$ contains $-$ and $\Gamma$ has no buds. On the other hand, $n\notin S$ as $\Gamma$ is a flow.
Thus we see that $M\subset(i..n]$ and $n\in M$. 
We define the injection $\psi:S\to[i..n]$ as follows.
If $s\in S$ then we set $\psi(s)$ equal to the unique index such that $(s,\psi(s))\in\Gamma$.
Now we apply Lemma~\ref{lemma:constr:2} to conclude that for any $v^+_\lm\in L(\lm)^\lm$,
we have $\bar S_{i,n}(M)\,v^+_\lm=0$.

By Lemma~\ref{lemma:rev:1.5}, we have
$$
\begin{array}{l}
\cf_{i,n}(\bar S_{i,n}(M))=-\(\bar H_i-\bar H_n\)\prod\nolimits_{t\in M\setminus\{n\}}C(i,t),
\\
\overline\cf_{i,n}(\bar S_{i,n}(M))=\(H_i-H_n\)\prod\nolimits_{t\in M\setminus\{n\}}C(i,t).
\end{array}
$$
We have
$$
\ev_\lm\left(\prod\nolimits_{t\in M\setminus\{n\}}C(i,t)\right)=\prod\nolimits_{t\in [i..n)\setminus S}(\beta-\res_p\lm_t).
$$
The right-hand side is a nonzero element of $\mathbb F$, since $\Gamma$ has no buds on $[i..n)$, see Proposition~\ref{proposition:rbeta}\ref{proposition:rbeta:part:1}. We denote this non-zero element by $c$.
Then we have
\begin{align*}
\ev_\lm\left(\cf_{i,n}(\bar S_{i,n}(M))^2-\overline\cf_{i,n}(\bar S_{i,n}(M))^2\right)
=c^2\ev_\lm\Bigl(\(\bar H_i-\bar H_n\)^2-\(H_i-H_n\)^2\Bigr)\\
= c^2(\lm_i+\lm_n-\lm_i^2+2\lm_i\lm_n-\lm_n^2)\cdot1_\mathbb F
\\
= c^2(-\res_p\lm_i-\res_p(\lm_n+1)+2\lm_n+2\lm_i\lm_n)\cdot1_\mathbb F
\\
= c^2(-2\beta+2\lm_n+2\lm_i\lm_n)\cdot1_\mathbb F
=-2 c^2(\beta-\lm_n(\lm_i+1))\cdot1_\mathbb F.
\end{align*}
Suppose temporarily that this coefficient equals zero. Then $\beta=\lm_n(\lm_i+1)\cdot1_\mathbb F$.
Since we also have $\beta=\lm_n(\lm_n+1)\cdot1_\mathbb F$, we obtain $\lm_n\lm_i=\lm_n^2\pmod p$.
Recall that in both cases~\ref{theorem:tensor:1:case:a} and~\ref{theorem:tensor:1:case:b},
we have $\lm_n\not\=0\pmod p$. Thus we have proved $\lm_i\=\lm_n\pmod p$. Now we have simultaneously
$\res_p\lm_n=\beta$ and $\res_p(\lm_n+1)=\beta$. As $p\ne2$, we get $\lm_n\=0\pmod p$, a contradiction.

Now we can apply Theorem~\ref{theorem:tensor:1} to conclude that there is no nonzero $U(n-1)$-primitive
pair in $\cp(L(\lm))$ of weight $\lm-\alpha(i,n)$, which is a contradiction.
\end{proof}

\section{Primitive vectors in $L(\la)\otimes V$}
In order to translate from $L(\la)\otimes V^*$ to $L(\la)\otimes V$, we recall the automorphism $\si$ of $U(n)$ from (\ref{ESigma}).

\begin{corollary} \label{CConormPrim}
Let $\la,\mu\in X(n)$. Then the $U(n)$-supermodule $L(\lm)\otimes V$ contains a nonzero primitive vector of weight $\mu$ if and only if $\mu =\lm+\epsilon_j$
for some tensor $\lm$-conormal index $j$.
\end{corollary}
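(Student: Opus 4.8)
The plan is to deduce Corollary~\ref{CConormPrim} from Theorem~\ref{theorem:tensor:2} by twisting with the automorphism $\si$ of (\ref{ESigma}). Recall from Lemma~\ref{LSigmaAppl} that $L(\la)^\si\cong L(-w_0\la)$ and $(V^*)^\si\cong V$. First I would observe that twisting by $\si$ is an autoequivalence of the category of $U(n)$-supermodules, and that it is compatible with tensor products up to the comultiplication: since $\si$ is a Hopf superalgebra automorphism (being induced from an automorphism of the Lie superalgebra $\mathfrak q(n,\mathbb C)$, hence of $U_\mathbb C(n)$, which automatically respects $\Delta$), we have $(M\otimes N)^\si\cong M^\si\otimes N^\si$ naturally. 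Applying this with $M=L(\la)$ and $N=V$ gives
$$
(L(\la)\otimes V)^\si\cong L(\la)^\si\otimes V^\si\cong L(-w_0\la)\otimes (V^*)^\si{}^{\si}.
$$
Here I must be slightly careful: $\si$ need not be an involution, so I would instead apply the twist $\si^{-1}$, or equivalently use that $V\cong (V^*)^{\si^{-1}}$ follows from $(V^*)^\si\cong V$ together with the fact that $\si$ permutes the finitely many irreducible $U(n)$-modules. The cleanest route is: $\si$ gives a bijection on isomorphism classes of objects, so from $L(\la)\otimes V$ I pass to its $\si^{-1}$-twist $L(-w_0\la)\otimes V^*$, using $V^{\si^{-1}}\cong V^*$ and $L(\la)^{\si^{-1}}\cong L(-w_0\la)$ (both obtained from Lemma~\ref{LSigmaAppl} by applying $\si^{-1}$).

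Next I would track what the twist does to primitive vectors and to weights. A vector $v$ in a $U(n)$-supermodule $M$ is primitive of weight $\mu$ exactly when $E_{i,j}^{(m)}v=\bar E_{i,j}v=0$ for all $i<j,m>0$ and $v\in M^\mu$; under the twist by $\si^{-1}$ this becomes the condition that $v$ is annihilated by $\si^{-1}$ applied to the raising operators, which by (\ref{ESigmaForm}) are (up to scalars) the lowering operators $F_{w_0j,w_0i}^{\eps}$, i.e.\ $v$ becomes a \emph{lowest} weight vector; and the weight $\mu$ goes to $-w_0\mu$. So primitive vectors of weight $\mu$ in $L(\la)\otimes V$ correspond bijectively to primitive vectors of weight $-w_0\mu$ in $L(-w_0\la)\otimes V^*$. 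By Theorem~\ref{theorem:tensor:2}, the latter exist if and only if $-w_0\mu = (-w_0\la)-\eps_i$ for some tensor $(-w_0\la)$-normal index $i$. Applying $-w_0$ to this equation gives $\mu = \la + \eps_{w_0 i}$, so setting $j:=w_0i$ we need: $j$ is such that $w_0 j$ is tensor $(-w_0\la)$-normal. By Corollary~\ref{corollary:conormal}, this is precisely the condition that $j$ is tensor $\la$-conormal, and $\res_p(\la_j+1)$ matches the residue $\be=\res_p((-w_0\la)_{w_0j})$ used there, which is consistent. Thus $L(\la)\otimes V$ has a nonzero primitive vector of weight $\mu$ iff $\mu=\la+\eps_j$ for some tensor $\la$-conormal index $j$, as claimed.

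The main obstacle I anticipate is the bookkeeping around $\si$ versus $\si^{-1}$ and the precise compatibility of $\si$ with the comultiplication used to form tensor products. The formula (\ref{ESigmaForm}) is written for the Lie superalgebra, and one needs that its extension to $U(n)$ (through $U_{\Z[\bi]}(n)$) is genuinely a Hopf superalgebra automorphism; since the paper states $U(n)$ inherits its Hopf structure from $U_\mathbb C(n)$ and $\si$ is induced from a Lie superalgebra automorphism, $\Delta\circ\si=(\si\otimes\si)\circ\Delta$ holds on the generators and hence everywhere, but I would spell this out. A secondary subtlety is the sign/scalar $(\bi)^{\cond_{\eps=\1}}$ in (\ref{ESigmaForm}): it rescales odd generators but does not affect whether a vector is killed by them, so it is harmless for the primitivity argument, and it does not affect weights at all. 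Once these compatibilities are pinned down, the corollary is a formal consequence of Theorem~\ref{theorem:tensor:2} and Corollary~\ref{corollary:conormal}.
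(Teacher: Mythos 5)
Your proposal is correct and follows essentially the same route as the paper: twist by $\si$, use Lemma~\ref{LSigmaAppl} to pass to $L(-w_0\la)\otimes V^*$, apply Theorem~\ref{theorem:tensor:2}, and translate back via Corollary~\ref{corollary:conormal}; the paper merely packages the reduction through $\Hom_{U(n)}(M(\mu),-)$ and the universality of Verma modules instead of tracking primitive vectors directly. One local slip in your write-up: by (\ref{ESigmaForm}), $X^\eps_{i,j}$ with $i<j$ is sent to a scalar multiple of $X^\eps_{w_0j,w_0i}$, and since $w_0j<w_0i$ this is again a \emph{raising} operator $E^\eps_{w_0j,w_0i}$, not a lowering operator, so primitive vectors correspond to primitive vectors (not lowest weight vectors) and weights go to their images under $-w_0$; the premise you state would actually contradict the bijection you then assert, but that bijection is the correct fact and is all the argument needs.
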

\begin{proof}
By the universality of Verma modules, $L(\lm)\otimes V$ contains a nonzero primitive vector of weight $\mu$ if and only if
$$\Hom_{U(n)}(M(\mu),L(\la)\otimes V)\neq 0.$$
Twisting with the automorphism $\si$ and applying Lemma~\ref{LSigmaAppl}, this is equivalent~to
$$\Hom_{U(n)}(M(-w_0\mu),L(-w_0\la)\otimes V^*)\neq 0.$$
Again by the universality of Verma modules, the last statement is in turn equivalent to the existence of a non-zero $U(n)$-primitive vector of weight $-w_0\mu$ in $L(-w_0\la)\otimes V^*$. By Theorem~\ref{theorem:tensor:2}, this is equivalent to $-w_0\mu=-w_0\la-\eps_i$ for some tensor $-w_0\la$-normal index $i$. The last equality is equivalent to $\mu=\la+\eps_{w_0i}$. Now  apply Corollary~\ref{corollary:conormal}
\end{proof}

\begin{remark} 
{\rm
We point out that if we stay in the category of integrable finite dimensional $U(n)$-modules then it is possible to use usual dualities and avoid twisting with $\si$. Indeed, let $\la,\mu\in X^+_p(n)$. It suffices to note that
\begin{align*}
\Hom_{U(n)}(V(\mu),L(\la)\otimes V)&\cong \Hom_{U(n)}((L(\la)\otimes V))^\tau,V(\mu)^\tau)
\\
&\cong \Hom_{U(n)}(L(\la)\otimes V,H^0(\mu))
\\
&\cong \Hom_{U(n)}(L(\la),H^0(\mu)\otimes V^*)
\\
&\cong \Hom_{U(n)}((H^0(\mu)\otimes V^*)^*,L(\la)^*)
\\
&\cong \Hom_{U(n)}(H^0(\mu)^*\otimes V,L(-w_0\la))
\\
&\cong \Hom_{U(n)}(V(-w_0\mu)\otimes V,L(-w_0\la))
\\
&\cong \Hom_{U(n)}(V(-w_0\mu),L(-w_0\la)\otimes V^*).
\end{align*}
Similar arguments do not work for infinite dimensional modules, as for example $L(\la)^*$ is not even a highest weight module in general.
}
\end{remark}

\begin{theorem} 
Let $\la,\mu\in X(n)$. Then:
\begin{enumerate}
\item[{\rm (i)}] $\Hom_{U(n)}(L(\mu),L(\la)\otimes V^*)\neq 0$ if and only if $\mu=\la-\eps_i$ for some $\la$-good index $i$.
\item[{\rm (ii)}] $\Hom_{U(n)}(L(\mu),L(\la)\otimes V)\neq 0$ if and only if $\mu=\la+\eps_i$ for some $\la$-cogood index $i$.
\end{enumerate}
\end{theorem}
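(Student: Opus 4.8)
The plan is to deduce this theorem from the results already established for primitive vectors in the tensor products $L(\la)\otimes V^*$ and $L(\la)\otimes V$, namely Theorem~\ref{theorem:tensor:2} and Corollary~\ref{CConormPrim}, by passing from \emph{primitive vectors} (equivalently, homomorphisms from Verma modules) to \emph{irreducible submodules} (equivalently, homomorphisms from $L(\mu)$), exactly in the spirit of how Theorem~\ref{TSocleQ} was deduced from Theorems~\ref{theorem:main:2} and~\ref{theorem:constr:5} for the first level. Since the two parts are interchanged by twisting with the automorphism $\si$ of~(\ref{ESigma}) together with Lemma~\ref{LSigmaAppl} and Corollary~\ref{CGoodCogood} (which says $i$ is tensor $\la$-good iff $i$ is tensor $(\la-\eps_i)$-cogood, and dually via $w_0$), it suffices to prove~(i) carefully and then obtain~(ii) by the same $-w_0$ argument used in the proof of Corollary~\ref{CConormPrim}.

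\textbf{Key steps for part (i).}
First I would note that $\Hom_{U(n)}(L(\mu),L(\la)\otimes V^*)\neq 0$ forces $\mu=\la-\eps_i$ for some $1\le i\le n$: any nonzero homomorphic image of $L(\mu)$ is irreducible, hence generated by a primitive vector, so there is a primitive vector of weight $\mu$ in $L(\la)\otimes V^*$, and Lemma~\ref{LNew} pins down $\mu$ to this form. Next, for the ``if'' direction, suppose $i$ is tensor $\la$-good. By Theorem~\ref{theorem:tensor:2} there is a nonzero homogeneous primitive vector $w$ of weight $\la-\eps_i$ in $L(\la)\otimes V^*$; let $W=U^0(n)w$, which by Proposition~\ref{PUMu} is isomorphic to $\u(\mu)$, and let $M=U(n)W$ be the submodule it generates. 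I claim $M$ is irreducible, whence $M\cong L(\mu)$. If not, then as in the proof of Theorem~\ref{TSocleQ}, $M$ is a quotient of $M(\mu)$ and contains a nonzero primitive vector of weight $\mu-\beta$ with $\beta$ a nonzero sum of positive roots; by Lemma~\ref{LNew} applied again inside $L(\la)\otimes V^*$ this weight must itself be of the form $\la-\eps_h$, forcing $\beta=\al(h,i)=\eps_h-\eps_i$ for some $h<i$ and giving a second tensor $\la$-normal index $h$ with $\res_p\la_h=\res_p\la_i$ (the residue equality follows from the Linkage Principle \cite[Theorem 8.10]{Kleshchev_Brundan_Modular_Representations_of_the_supergroup_Q(n)_I} applied to $M(\mu)$, exactly as in Theorem~\ref{TSocleQ}); this contradicts tensor $\la$-goodness of $i$ by Definition~\ref{definition:main:5tens}. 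Conversely, for the ``only if'' direction, assume $\mu=\la-\eps_i$ with $i$ tensor $\la$-normal but not tensor $\la$-good, so there is a tensor $\la$-normal $h<i$ with $\res_p\la_h=\res_p\la_i$; given a copy of $L(\mu)$ inside $L(\la)\otimes V^*$, a nonzero vector of $L(\mu)^\mu$ is a primitive vector of weight $\la-\eps_i$ in $L(\la)\otimes V^*$, and I would ``extend'' it — using the machinery behind Theorem~\ref{theorem:tensor:2}, i.e. passing through complement pairs via Lemma~\ref{lemma:tensor:3} and then invoking Theorem~\ref{theorem:constr:5} at the level of $U(n-1)$-primitive vectors inside $\cp(L(\la))$ — to produce a nonzero primitive vector of weight $\la-\eps_h$ lying in $U(\text{something})\cdot(\text{the original vector})\subset L(\mu)$, contradicting irreducibility of $L(\mu)$. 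Finally, if $i$ is not tensor $\la$-normal at all, Theorem~\ref{theorem:tensor:2} already rules out even a primitive vector of weight $\mu$, hence no copy of $L(\mu)$.

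\textbf{Part (ii) and the main obstacle.}
Part~(ii) follows from~(i) by twisting: $\Hom_{U(n)}(L(\mu),L(\la)\otimes V)\neq 0$ is equivalent, after applying $\si$ and Lemma~\ref{LSigmaAppl} (note $(V^*)^\si\cong V$), to $\Hom_{U(n)}(L(-w_0\mu),L(-w_0\la)\otimes V^*)\neq 0$, hence by~(i) to $-w_0\mu=-w_0\la-\eps_j$ with $j$ tensor $(-w_0\la)$-good, i.e. $\mu=\la+\eps_{w_0 j}$; by Corollary~\ref{CCogood} the index $w_0 j$ is tensor $\la$-cogood, and this is exactly the claim. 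The step I expect to be the main obstacle is the ``extension'' part of the ``only if'' direction of~(i): one needs the analogue, for the tensor product $L(\la)\otimes V^*$ (equivalently for complement pairs $\cp(L(\la))$), of the statement that a $\la$-normal index $h<i$ with matching residue forces the primitive vector of weight $\la-\eps_i$ to generate a larger primitive vector — i.e. a ``tensor'' version of Theorem~\ref{theorem:constr:5}. I would handle this by translating the hypothesis into the existence of a nonzero $U(n-1)$-primitive pair of weight $\la-\al(i,n)$ in $\cp(L(\la))_0\oplus\cp(L(\la))_1$ via Lemma~\ref{lemma:tensor:3}, applying the componentwise action of $U(n-1)U^0_{ev}(n)$ from Lemma~\ref{lemma:tensor:1} together with Theorem~\ref{theorem:constr:5} (whose constructions are phrased for arbitrary $U(n-1)$-primitive vectors in $L(\la)$ and respect the complement-pair relation, as the proof of Lemma~\ref{lemma:tensor:5} already shows for the normal case), to get a nonzero $U(n-1)$-primitive pair of weight $\la-\al(h,n)$, and then running $e'$ of Lemma~\ref{lemma:tensor:3} back to obtain the desired primitive vector of weight $\la-\eps_h$ in $L(\la)\otimes V^*$ lying inside the chosen copy of $L(\mu)$. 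Checking that this new vector indeed lies in the submodule generated by the original one — rather than merely existing in $L(\la)\otimes V^*$ — is the delicate point, and mirrors the corresponding verification in Theorem~\ref{TSocleQ}.
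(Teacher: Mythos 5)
Your reduction of (ii) to (i) via the $\si$-twist, Lemma~\ref{LSigmaAppl} and Corollary~\ref{CCogood} is valid (the paper instead uses contravariant duals together with Corollary~\ref{CGoodCogood}, but both routes work), and your ``if'' direction of (i) is essentially the paper's: the submodule generated by the primitive vector supplied by Theorem~\ref{theorem:tensor:2} is irreducible, because a lower primitive vector would, by Lemma~\ref{LNew} and the Linkage Principle, yield a second tensor $\la$-normal index $h<i$ of the same residue, contradicting tensor goodness.

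The genuine gap is in the ``only if'' direction of (i). You reduce it to a ``tensor version of Theorem~\ref{theorem:constr:5}'' --- an extension theorem for $U(n-1)$-primitive complement pairs in $\cp(L(\la))$ --- and you do not prove it; you only outline a plan and yourself flag the key verification as the delicate point. This transfer is not routine: the extension Theorems~\ref{theorem:socle:0.5}, \ref{theorem:socle:1} and \ref{theorem:socle:2} produce $w$ by applying explicit operators $S^{\,\epsilon}_{h,i}(M)$, sometimes after first replacing $v$ by $\bar H_h^a\bar H_i^b v$ with $(a,b)$ chosen maximally \emph{for that particular vector}, sometimes choosing between $w^{\0}$ and $w^{\1}$, and sometimes composing two extensions through an intermediate index $a$; for a complement pair one must check in each case that a single choice works simultaneously for both components, that the output is again a complement pair (Lemma~\ref{lemma:tensor:1} helps here, but only for operators applied uniformly), and that it is nonzero. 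None of this is carried out. The paper avoids the entire issue with a short duality argument that your proposal misses: since $L(\la)$ embeds into $M(\la)^\tau$, a nonzero $\Hom_{U(n)}(L(\mu),L(\la)\otimes V^*)$ gives a nonzero
$$\Hom_{U(n)}(L(\mu),M(\la)^\tau\otimes V^*)\cong\Hom_{U(n)}(M(\la)\otimes V^*,L(\mu))\cong\Hom_{U(n)}(M(\la),L(\mu)\otimes V),$$
so Corollary~\ref{CConormPrim} shows $i$ is tensor $(\la-\eps_i)$-conormal; combined with tensor $\la$-normality of $i$ (from Theorem~\ref{theorem:tensor:2}) and the characterization in Lemma~\ref{LGoodTopNormal}(i), this is precisely the statement that $i$ is tensor $\la$-good. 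To complete your proof you should either carry out the complement-pair extension theorems in full or, more economically, replace that step by this duality argument.
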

\begin{proof}
Note using contravariant duals that
\begin{align*}
\Hom_{U(n)}(L(\mu),L(\la)\otimes V)&\cong \Hom_{U(n)}(L(\mu)\otimes V^*,L(\la))
\\
&\cong \Hom_{U(n)}(L(\la),L(\mu)\otimes V^*).
\end{align*}
So, taking into account Corollary~\ref{CGoodCogood}, it suffices to prove (i).

Assume that $\Hom_{U(n)}(L(\mu),L(\la)\otimes V^*)\neq 0$. Since $L(\mu)$ is a quotient of $M(\mu)$, we also have
$$\Hom_{U(n)}(M(\mu),L(\la)\otimes V^*)\neq 0.$$
Hence $\mu=\la-\eps_i$ for some index $i$ which is tensor $\la$-normal by Theorem~\ref{theorem:tensor:2}. Moreover, since $L(\la)$ is a submodule of $M(\la)^\tau$, we also have
$$\Hom_{U(n)}(L(\mu),M(\la)^\tau\otimes V^*)\neq 0.$$
But
\begin{align*}
\Hom_{U(n)}(L(\mu),M(\la)^\tau\otimes V^*)&\cong \Hom_{U(n)}((M(\la)^\tau\otimes V^*)^\tau, L(\mu)^\tau)
\\
&\cong \Hom_{U(n)}(M(\la)\otimes V^*, L(\mu))
\\
&\cong \Hom_{U(n)}(M(\la), L(\mu)\otimes V).
\end{align*}
Hence by Corollary~\ref{CConormPrim}, the index $i$ is conormal for $\mu=\la-\eps_i$. We have proved that $\mu=\la-\eps_i$ for a tensor $\la$-good index $i$.

Conversely, suppose that $i$ is tensor normal for $\la$ and tensor conormal for $\la-\eps_i$. Since $i$ is tensor normal, Theorem~\ref{theorem:tensor:2} yields a non-zero homomorphism
$$
\theta\in\Hom(M(\la-\eps_i),L(\la)\otimes V^*).
$$
We claim that $\theta$ factors through the quotient $L(\la-\eps_i)$ of $M(\la-\eps_i)$. Indeed, otherwise, there is a composition factor $L(\nu)$ of $M(\la-\eps_i)$ with $\nu\neq \la-\eps_i$ such that
$$\Hom(L(\nu),L(\la)\otimes V^*)\neq 0.$$
By the forward direction in (i) which we have already proved, it follows that $\nu=\la-\eps_k$ for a tensor $\la$-good index $k$. Since $\nu\leq \la-\eps_i$ in the dominance order, it follows that $k<i$. Moreover, $L(\nu)=L(\la-\eps_i-(\eps_k-\eps_i))$ can only be a composition factor of $M(\la-\eps_i)$ if $\res_p\la_i=\res_p\la_k$, see the Linkage Principle of \cite[Section~8]{Kleshchev_Brundan_Modular_Representations_of_the_supergroup_Q(n)_I}. We have now got a contradiction with Lemma~\ref{LGoodTopNormal}.
\end{proof}

\chapter[Projective representations of symmetric groups]{Main results on projective representations of symmetric groups}\label{main_resultsSn}

\section{Representations of Sergeev supealgebras}\label{SMainYTN}
Here we freely use notions defined in the Introduction and Chapter~\ref{ChPrel} above. In particular, $\Se_n$ is the Sergeev superalgebra and $G=Q(n)$ is the algebraic supergroup of type $Q(n)$. We now review in more detail the theory developed in \cite{Brundan_Kleshchev_Projective_representations,Kleshchev_Brundan_Modular_Representations_of_the_supergroup_Q(n)_I}, which will allow us to apply the results on $Q(n)$ obtained above to get new results about $\Se_n$ and eventually projective representations of symmetric and alternating groups.

The category ${\mathcal Pol}(n)$ of finite dimensional polynomial supermodules over the supergroup $G$ is defined in \cite[Section~10]{Brundan_Kleshchev_Projective_representations}. This category splits as
$${\mathcal Pol}(n)=\bigoplus_{d\geq 0}{\mathcal Pol}(n,d),$$
where ${\mathcal Pol}(n,d)\subset {\mathcal Pol}(n)$ is the full subcategory of polynomial supermodules of degree $d$. The category ${\mathcal Pol}(n,d)$ is equivalent to the category $\smod{S(n,d)}$ of finite dimensional supermodules over the Schur superalgebra $S(n,d)$  introduced in \cite{Brundan_Kleshchev_Projective_representations} (denoted $Q(n,d)$ there).

Let
$$\Lambda^+_p(n)=\{\la=(\la_1,\dots,\la_n)\in X^+_p(n)\mid \la_1,\dots,\la_n\geq 0\},$$ and
$$
\Lambda^+_p(n,d)=\{\la=(\la_1,\dots,\la_n)\in \Lambda^+_p(n)\mid \la_1+\dots+\la_n=d\}.
$$
If $\la\in\Lambda^+_p(n)$, then $L(\la),V(\la)\in {\mathcal Pol}(n,d)$, and
$$
\{L(\la)\mid \la\in \Lambda^+_p(n,d)\}
$$
is a complete and irredundant set of irreducible supermodules in ${\mathcal Pol}(n,d)$ (equivalently, irreducible $S(n,d)$-supermodules) up to isomorphism.

Recall the $j$th level $L(\la)_j$ introduced in the previous section.

\begin{lemma} \label{LLevPolDeg}
Let $n\geq 2$, $\la\in \Lambda_p^+(n,d)$ for $d\leq n$, and $j\in\Z_{\geq 0}$. Then, as a $Q(n-1)$-module, $L(\la)_j\in {\mathcal Pol}(n-1,d-j)$.
\end{lemma}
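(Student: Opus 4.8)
The statement is purely about the polynomial degree of the $Q(n-1)$-module $L(\la)_j$, so the plan is to first recall precisely what ``polynomial of degree $d$'' means for a $Q(n-1)$-supermodule in terms of the weight-space decomposition, and then check that every weight occurring in $L(\la)_j$, restricted to $U(n-1)$, has the right coordinate sum. Concretely, a finite-dimensional $Q(m)$-supermodule $M$ lies in ${\mathcal Pol}(m,e)$ if and only if all of its weights $\nu\in X(m)$ have non-negative entries and satisfy $\nu_1+\dots+\nu_m=e$; this characterization is what \cite{Brundan_Kleshchev_Projective_representations} and \cite[Section~10]{Kleshchev_Brundan_Modular_Representations_of_the_supergroup_Q(n)_I} provide, and I would cite it directly. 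So the whole proof reduces to: (1) $L(\la)\in {\mathcal Pol}(n,d)$ implies every weight $\mu$ of $L(\la)$ has $\mu_1,\dots,\mu_n\geq 0$ and $\mu_1+\dots+\mu_n=d$; (2) if in addition $\mu$ contributes to $L(\la)_j$, i.e. $\mu=\la-j\al_{n-1}-\sum_{i=1}^{n-2}m_i\al_i$ with $m_i\in\Z_{\geq 0}$, then $\bar\mu:=(\mu_1,\dots,\mu_{n-1})\in X(n-1)$ has non-negative entries and $\mu_1+\dots+\mu_{n-1}=d-j$; (3) conclude via the characterization above applied to $m=n-1$.

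For step (2), the key computation is to read off $\mu_n$. Writing $\mu=\la-j\al_{n-1}-\sum_{i=1}^{n-2}m_i\al_i$ and recalling $\al_i=\eps_i-\eps_{i+1}$, the only term affecting the $n$th coordinate is $-j\al_{n-1}=-j(\eps_{n-1}-\eps_n)$, which contributes $+j$ to $\mu_n$; hence $\mu_n=\la_n+j$. Therefore
$$
\mu_1+\dots+\mu_{n-1}=(\mu_1+\dots+\mu_n)-\mu_n=d-(\la_n+j).
$$
Now here is where the hypothesis $d\leq n$ and $n\geq 2$ enter: since $\la\in\Lambda^+_p(n,d)$ with $d\leq n$ and $\la_1\geq\dots\geq\la_n\geq 0$, and $\la_1+\dots+\la_n=d\leq n$, the bottom part $\la_n$ must be $0$ unless all $\la_i\geq 1$, which forces $d\geq n$, hence $d=n$ and $\la=(1^n)$; but $(1^n)$ is not $p$-strict for $n\geq 2$ (we would need $p\mid 1$), contradicting $\la\in\Lambda^+_p(n)$. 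So $\la_n=0$, giving $\mu_1+\dots+\mu_{n-1}=d-j$, exactly as required. (If $j>d$ then $L(\la)_j=0$ and there is nothing to prove.)

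For the non-negativity of $\bar\mu$, I would argue that $\bar\mu$ is a weight of $L(\la)_{U(n-1)}$ by Proposition~\ref{proposition:main:1}, hence a weight of the $Q(n-1)$-module $L(\la)_{U(n-1)}$; then since $L(\la)_{U(n-1)}$ is a polynomial $Q(n-1)$-module (the restriction of a polynomial $Q(n)$-module along $Q(n-1)\hookrightarrow Q(n)$ is polynomial — this is immediate from the definition of polynomial representation via the coordinate ring, or can be seen directly from the weight description since all weights of $L(\la)$ already have non-negative entries and truncating keeps them non-negative), every weight of it has non-negative entries. Thus $\bar\mu\in X(n-1)$ has $\bar\mu_1,\dots,\bar\mu_{n-1}\geq 0$ and $\bar\mu_1+\dots+\bar\mu_{n-1}=d-j$, so by the degree characterization $L(\la)_j\in {\mathcal Pol}(n-1,d-j)$. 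The main obstacle, such as it is, is making sure the degree characterization of polynomial representations I invoke is exactly the one established in \cite{Brundan_Kleshchev_Projective_representations}/\cite{Kleshchev_Brundan_Modular_Representations_of_the_supergroup_Q(n)_I} and that restriction along the standard embedding preserves polynomiality; both are essentially bookkeeping with the coordinate ring $\mathbb F[Q(n)]$, and I would handle them by an explicit citation rather than re-deriving them.
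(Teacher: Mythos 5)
Your proof is correct and follows essentially the same route as the paper's: restriction of a polynomial module is polynomial, and the hypothesis $d\leq n$ forces $\la_n=0$ so that any weight $\mu$ contributing to $L(\la)_j$ has $\mu_n=j$ and hence $\mu_1+\dots+\mu_{n-1}=d-j$. You are in fact slightly more careful than the paper, which asserts $\mu_n=j$ directly, whereas you spell out why $\la_n=0$ (ruling out $\la=(1^n)$ via $p$-strictness).
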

\begin{proof}
Since $L(\la)$ is a polynomial $Q(n)$-module, we have that each $L(\la)_j$ is a polynomial $Q(n-1)$-module. Moreover, it follows from the assumption $d\leq n$ that if
$$\mu=(\mu_1,\dots,\mu_n)=\la-j\al_{n-1}-\sum_{i=1}^{n-2}m_i\al_i$$
for some $m_i$'s, then $\mu_n=j$, and hence $\mu_1+\dots+\mu_{n-1}=d-j$. So $L(\la)_j$ must be polynomial of degree $d-j$.
\end{proof}

Let $d\leq n$. Then the Schur superalgebra $S(n,d)$ has an idempotent $e_{n,d}$ (denoted $\xi_\omega$ in \cite{Brundan_Kleshchev_Projective_representations}) with the property $e_{n,d}S(n,d)e_{n,d}\cong \Se_d$. In fact, the isomorphism can be made explicit, see \cite[Theorem 6.2(ii)]{Brundan_Kleshchev_Projective_representations}. This allows us to define the ``Schur functor''
$$
{\mathcal F}_{n,d}: \smod{S(n,d)}\to \smod{\Se_d},\ V\mapsto e_{n,d}V.
$$
Moreover, consider the special weight
$$\omega_{n,d}:=\eps_1+\dots+\eps_d\in X(n).$$
Then, in fact, $e_{n,d}V$ is the $\omega_{n,d}$-weight space $V_{\omega_{n,d}}$ on which $\Se_d$ acts via the isomorphism of \cite[Theorem 6.2(ii)]{Brundan_Kleshchev_Projective_representations}.
The special case $d=n$
will be especially important for us. We use the notation $e_n:=e_{n,n}, {\mathcal F}_n:={\mathcal F}_{n,n}, \omega_n:=\omega_{n,n}$.

Let $\la\in\Lambda^+_p(n,n)$. By Lemma~\ref{LLevPolDeg}, the first level $L(\la)_1$ is in the category $\smod{S(n-1,n-1)}$, so we can apply the Schur functor ${\mathcal F}_{n-1}$ to it. The following proposition shows that the resulting $\Se_{n-1}$-supermodule is simply the restriction of ${\mathcal F}_n(L(\la))$ from $\Se_n$ to $\Se_{n-1}$. This is the main reason why the structure of the first level $L(\la)_1$ as a $U(n-1)$-supermodule is so important for us.

\begin{proposition} \label{PSchFunFirstLevel}
Let $n\geq 2$, $\la\in\Lambda^+_p(n,n)$. Then
$${\mathcal F}_{n-1}(L(\la)_1)={\mathcal F}_n(L(\la))_{\Se_{n-1}}.$$
\end{proposition}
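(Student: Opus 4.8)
The statement is really a statement about weight spaces and idempotents, so the plan is to unwind both sides using the descriptions of the Schur functors via special weights. Recall that ${\mathcal F}_n(M) = e_n M = M^{\omega_n}$ as a vector superspace, where $\omega_n = \eps_1+\dots+\eps_n$, with the $\Se_n$-action transported through the isomorphism $e_nS(n,n)e_n\cong \Se_n$ of \cite[Theorem 6.2(ii)]{Brundan_Kleshchev_Projective_representations}; and likewise ${\mathcal F}_{n-1}(N) = N^{\omega_{n-1}}$ for $N \in \smod{S(n-1,n-1)}$ with $\omega_{n-1} = \eps_1+\dots+\eps_{n-1} \in X(n-1)$. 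So the first step is to identify, as a subspace of $L(\la)$, the space ${\mathcal F}_{n-1}(L(\la)_1)$. By Lemma~\ref{LLevPolDeg} (with $d = n$, $j = 1$) the first level $L(\la)_1$ lives in $\smod{S(n-1,n-1)}$, and its $\omega_{n-1}$-weight space (as a $U(n-1)$-module) consists of those vectors in $L(\la)_1$ whose $U^0(n-1)$-weight is $\eps_1+\dots+\eps_{n-1}$.

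**The key identification.** The heart of the argument is the observation that a vector $v \in L(\la)$ has $U^0(n)$-weight $\omega_n = \eps_1+\dots+\eps_n$ \emph{if and only if} it lies in the first level $L(\la)_1$ and has $U^0(n-1)$-weight $\omega_{n-1}$. One direction is immediate: if $v$ has weight $\omega_n$ then $\omega_n = \la - j\al_{n-1} - \sum_{i=1}^{n-2}m_i\al_i$ forces (looking at the last coordinate, as in the proof of Lemma~\ref{LLevPolDeg}, using $\sum\la = n$) that $j = 1$, so $v \in L(\la)_1$; and restricting the weight to the first $n-1$ coordinates gives $\omega_{n-1}$. Conversely, if $v \in L(\la)_1$ has $U^0(n-1)$-weight $\omega_{n-1}$, then its $U^0(n)$-weight is a weight $\mu$ of $L(\la)$ with $\mu|_{[1..n)} = \omega_{n-1}$; since $\sum\mu = \sum\la = n$ (all weights of $L(\la)$ have the same coordinate sum), the last coordinate of $\mu$ is forced to be $1$, so $\mu = \omega_n$. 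This is exactly the kind of "recover the weight from its first $n-1$ components" argument used in Proposition~\ref{proposition:main:1}; indeed one can cite Proposition~\ref{proposition:main:1} with $\mu = \omega_n$ directly. Hence, \emph{as superspaces},
$$
{\mathcal F}_{n-1}(L(\la)_1) = \bigl(L(\la)_1\bigr)^{\omega_{n-1}} = L(\la)^{\omega_n} = {\mathcal F}_n(L(\la)).
$$

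**Matching the $\Se_{n-1}$-actions.** It remains to check that the $\Se_{n-1}$-module structures agree: the structure on the left comes from applying ${\mathcal F}_{n-1}$ to $L(\la)_1$, i.e.\ from the isomorphism $e_{n-1}S(n-1,n-1)e_{n-1}\cong\Se_{n-1}$ acting on $(L(\la)_1)^{\omega_{n-1}}$; the structure on the right comes from restricting along $\Se_{n-1}\hookrightarrow\Se_n$ the action of $e_nS(n,n)e_n\cong\Se_n$ on $L(\la)^{\omega_n}$. Here I would use the explicit form of the isomorphisms in \cite[Theorem 6.2(ii)]{Brundan_Kleshchev_Projective_representations}: the generators $t_1,\dots,t_{n-2}$ and $c_1,\dots,c_{n-1}$ of $\Se_{n-1}$ are sent to the same elements of the hyperalgebra/Schur algebra regardless of whether one works inside $S(n-1,n-1)$ or inside $S(n,n)$, because the inclusion $U(n-1)\subset U(n)$ sends generators to generators of the same name (as noted in Section~\ref{notation}), and the idempotents $e_{n-1}$ and $e_n$ are the weight idempotents for $\omega_{n-1}$ and $\omega_n$ respectively, which are compatible under this inclusion on the relevant weight spaces. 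So the two $\Se_{n-1}$-actions on the common underlying superspace are induced by the same operators, hence coincide.

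**Main obstacle.** The first identification (of underlying superspaces) is essentially formal once one invokes Proposition~\ref{proposition:main:1}. The genuinely fiddly point — and the step I expect to require the most care — is the bookkeeping that identifies the two $\Se_{n-1}$-\emph{actions}, i.e.\ verifying that the isomorphism $e_{n-1}S(n-1,n-1)e_{n-1}\cong\Se_{n-1}$ of \cite{Brundan_Kleshchev_Projective_representations}, when its image acts on the weight space $L(\la)^{\omega_n}\subset L(\la)$, is precisely the restriction to $\Se_{n-1}$ of the isomorphism $e_nS(n,n)e_n\cong\Se_n$. This comes down to tracking through the (somewhat intricate) definitions of those isomorphisms in \cite[Section~6]{Brundan_Kleshchev_Projective_representations} and checking naturality of the Schur functor with respect to the embedding $Q(n-1)\subset Q(n)$; no new idea is needed, but it is the part where one must be honest about conventions.
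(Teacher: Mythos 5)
Your proposal is correct and follows essentially the same route as the paper: identify both sides with the weight space $L(\la)^{\omega_n}=(L(\la)_1)^{\omega_{n-1}}$ and then match the two $\Se_{n-1}$-actions via the explicit formulas of \cite[Theorem 6.2(ii)]{Brundan_Kleshchev_Projective_representations}. You simply spell out the weight-space identification (via the argument of Proposition~\ref{proposition:main:1}) that the paper leaves implicit.
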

\begin{proof}
Note that, as vector superspaces,
$${\mathcal F}_{n-1}(L(\la)_1)=(L(\la)_1)_{\omega_{n-1}}=L(\la)_{\omega_n}={\mathcal F}_n(L(\la))$$
and use the explicit formulas for the actions of $\Se_{n-1}$ and $\Se_n$ coming from the isomorphism of  \cite[Theorem 6.2(ii)]{Brundan_Kleshchev_Projective_representations}.
\end{proof}

While Proposition~\ref{PSchFunFirstLevel} connects the restriction $\operatorname{res}^{\Se_n}_{\Se_{n-1}}$ to the first level via Schur functors, the following result establishes a similar connection between the induction $\operatorname{ind}^{\Se_n}_{\Se_{n-1}}$ to tensoring with the natural module $V$, cf. \cite[Theorem 4.13]{BKLR}.

\begin{proposition} \label{PIndTens} 
Let $M\in {\mathcal Pol}(n,n-1)$. Then
$$
{\mathcal F}_n(M\otimes V)\cong \operatorname{ind}^{\Se_n}_{\Se_{n-1}}\big({\mathcal F}_{n,n-1}(M)\big).
$$
\end{proposition}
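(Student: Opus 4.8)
The plan is to write down an explicit $\Se_n$-homomorphism from $\operatorname{ind}^{\Se_n}_{\Se_{n-1}}\big({\mathcal F}_{n,n-1}(M)\big)$ to ${\mathcal F}_n(M\otimes V)$ and to prove it is an isomorphism by combining a dimension count with a surjectivity argument.

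First I would unwind the Schur functors as weight spaces. By definition ${\mathcal F}_n(M\otimes V)=(M\otimes V)^{\omega_n}$, where $\omega_n=\eps_1+\dots+\eps_n$, carrying the $\Se_n$-action transported through the isomorphism $e_nS(n,n)e_n\cong\Se_n$ of \cite[Theorem 6.2]{Brundan_Kleshchev_Projective_representations}; likewise ${\mathcal F}_{n,n-1}(M)=M^{\omega_{n,n-1}}$ with $\omega_{n,n-1}=\eps_1+\dots+\eps_{n-1}$. Since the natural module $V$ has weights $\eps_1,\dots,\eps_n$, each space $V^{\eps_i}=\langle v_i,\bar v_i\rangle$ being two-dimensional, we get a decomposition of vector superspaces
\[
{\mathcal F}_n(M\otimes V)=\bigoplus_{i=1}^n M^{\omega_n-\eps_i}\otimes V^{\eps_i}.
\]
All the weights $\omega_n-\eps_i$ lie in the $S_n$-orbit of $\omega_{n,n-1}=(1^{n-1},0)$, and because the even part of $Q(n)$ contains $GL(n)$ we have $\dim M^{\omega_n-\eps_i}=\dim M^{\omega_{n,n-1}}=\dim{\mathcal F}_{n,n-1}(M)$ for every $i$. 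Hence $\dim{\mathcal F}_n(M\otimes V)=2n\dim{\mathcal F}_{n,n-1}(M)$. On the other hand $\Se_n=\T_n\otimes\Cl_n$ is free of rank $2n$ as a right $\Se_{n-1}=\T_{n-1}\otimes\Cl_{n-1}$-module (take a transversal of $S_{n-1}$ in $S_n$ tensored with $\{1,c_n\}$), so $\dim\operatorname{ind}^{\Se_n}_{\Se_{n-1}}\big({\mathcal F}_{n,n-1}(M)\big)=2n\dim{\mathcal F}_{n,n-1}(M)$ as well. Thus the two sides have equal dimension and it suffices to produce a surjective $\Se_n$-homomorphism between them.

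Next I would define $\iota_0\colon{\mathcal F}_{n,n-1}(M)\to{\mathcal F}_n(M\otimes V)$ by $m\mapsto m\otimes v_n$; this indeed lands in $(M\otimes V)^{\omega_n}$ since $\omega_{n,n-1}+\eps_n=\omega_n$. The essential point is that $\iota_0$ is $\Se_{n-1}$-equivariant for the inclusion $\Se_{n-1}\subset\Se_n$. Using the explicit realization inside $e_nS(n,n)e_n$ of the generators $t_1,\dots,t_{n-2},c_1,\dots,c_{n-1}$ of $\Se_{n-1}$ from \cite[Section 6]{Brundan_Kleshchev_Projective_representations} — each built only from the hyperalgebra elements attached to the coordinates $1,\dots,n-1$ — together with the comultiplication formulas, one checks that every such generator $x$ satisfies $x\cdot(m\otimes v_n)=(xm)\otimes v_n$: indeed $E_k^{(t)}v_n=F_k^{(t)}v_n=0$ and $\tbinom{H_k}{t}v_n=0$ for $t>0$ and $k\le n-2$, while $\bar H_jv_n=0$ for $j\le n-1$, so the ``$V$-component'' of $\Delta(x)$ always acts trivially on $v_n$; moreover this matches the action of $\Se_{n-1}$ on $M^{\omega_{n,n-1}}={\mathcal F}_{n,n-1}(M)$ (this compatibility is the degree $n-1$ analogue of Proposition~\ref{PSchFunFirstLevel}, established by the same inspection of the explicit actions). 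By the tensor–hom adjunction, $\iota_0$ extends uniquely to an $\Se_n$-homomorphism
\[
\hat\iota\colon\operatorname{ind}^{\Se_n}_{\Se_{n-1}}\big({\mathcal F}_{n,n-1}(M)\big)=\Se_n\otimes_{\Se_{n-1}}{\mathcal F}_{n,n-1}(M)\longrightarrow{\mathcal F}_n(M\otimes V),\qquad a\otimes m\mapsto a\cdot(m\otimes v_n).
\]

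Finally I would show $\hat\iota$ is surjective, i.e. that the $\Se_n$-submodule generated by $M^{\omega_{n,n-1}}\otimes v_n$ exhausts ${\mathcal F}_n(M\otimes V)$, working through the decomposition above. The element $c_n$ (realized through $\bar H_n$ up to a unit) sends $m\otimes v_n$ to a unit multiple of $m\otimes\bar v_n$ modulo $M^{\omega_n-\eps_n}\otimes v_n$, which brings in the full summand $M^{\omega_n-\eps_n}\otimes V^{\eps_n}$; and applying the elements $t_{n-1},\ t_{n-1}t_{n-2},\ \dots,\ t_{n-1}\cdots t_j$, which connect coordinate $n$ to coordinate $j$, one reaches — modulo summands already obtained — all of $M^{\omega_n-\eps_j}\otimes V^{\eps_j}$ for $j<n$ as well, using the explicit form of these elements in $S(n,n)$ and the comultiplication $S(n,n)\to S(n,n-1)\otimes S(n,1)$. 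Combined with the dimension equality of the first step, this forces $\hat\iota$ to be an isomorphism. I expect the main obstacle to be precisely this surjectivity step: it requires keeping the three identifications — $\Se_n\cong e_nS(n,n)e_n$, the embedding $\Se_{n-1}\subset\Se_n$, and the comultiplication — simultaneously consistent and carrying out the explicit computation of the action of $t_{n-1},\dots,t_1$ and $c_n$ on the vectors $m\otimes v_i^{\epsilon}$. This is the $Q(n)$-analogue of \cite[Theorem 4.13]{BKLR} and proceeds along the same lines.
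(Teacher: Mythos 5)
Your proposal is correct and follows essentially the same route as the paper: the same weight-space decomposition of $(M\otimes V)_{\omega_n}$, the same dimension count via $S_n$-conjugacy of the weights $\omega_n-\eps_i$, the same embedding $m\mapsto m\otimes v_n$ extended by Frobenius reciprocity, and the same surjectivity argument (the paper writes it compactly as $(i,n)c_n^\eps\big(M_{\omega_n-\eps_n}\otimes v_n\big)=M_{\omega_n-\eps_i}\otimes v_i^\eps$ using the explicit identification of \cite[Theorem 6.2(ii)]{Brundan_Kleshchev_Projective_representations}).
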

\begin{proof}
We have
\begin{equation}\label{EDecompTensOmega}
{\mathcal F}_n(M\otimes V)=(M\otimes V)_{\omega_n}=\bigoplus_{i=1}^n(M_{\omega_n-\eps_i}\otimes v_i)\,\oplus\, \bigoplus_{i=1}^n(M_{\omega_n-\eps_i}\otimes \bar v_i),
\end{equation}
where $\{v_1,\dots,v_n,\bar v_1,\dots,\bar v_n\}$ is the natural basis of $V$. So

Note that $\omega_n-\eps_n=\omega_{n,n-1}$ and so $M_{\omega_n-\eps_n}\otimes v_n$ is the submodule of the restriction $\operatorname{res}^{\Se_n}_{\Se_{n-1}}{\mathcal F}_n(M\otimes V)$ isomorphic to ${\mathcal F}_{n,n-1}(M)$. Now the Frobenius reciprocity allows us to extend this embedding of ${\mathcal F}_{n,n-1}(M)$ into ${\mathcal F}_n(M\otimes V)$  to a homomorphism $\phi:\operatorname{ind}^{\Se_n}_{\Se_{n-1}}\big({\mathcal F}_{n,n-1}(M)\big)\to {\mathcal F}_n(M\otimes V)$.

Moreover, the weights $\omega_n-\eps_i$ and $\omega_n-\eps_n$ are conjugate under the action of the symmetric group (the Weyl group of the even part $Q(n)_{ev}$), and so $\dim M_{\omega_n-\eps_i}=\dim M_{\omega_n-\eps_n}$. Hence, by (\ref{EDecompTensOmega}), we have
$$
\dim {\mathcal F}_n(M\otimes V)\!=2n\dim M_{\omega_n-\eps_n}\!=2n\dim {\mathcal F}_{n,n-1}(M)\!=\dim \operatorname{ind}^{\Se_n}_{\Se_{n-1}}\!\big({\mathcal F}_{n,n-1}(M)\big).
$$
So it suffices to prove that $\phi$ is surjective. Let $\eps\in\{\0,\1\}$, $(i,n)\in S_n$ be the transposition of $i$ and $n$, and $c_n$ be the $n$th generator of the Clifford algebra $\mathcal C_n$ as in the introduction. Then the explicit identification of $\Se_n$ with $e_nS(n,n)e_n$ obtained in \cite[Theorem 6.2(ii)]{Brundan_Kleshchev_Projective_representations} implies that
$$(i,n)c_n^\eps\big(M_{\omega_n-\eps_n}\otimes v_n\big)=M_{\omega_n-\eps_i}\otimes v_i^\eps.$$
This yields the surjectivity of $\phi$.
\end{proof}

Now, denote
$$
D(\la):={\mathcal F}_n(L(\la)),\quad S(\la):={\mathcal F}_n(V(\la))\qquad\qquad(\la\in\Lambda^+_p(n,n)).
$$
The set of weights $\Lambda^+_p(n,n)$ can and {\em will} be identified with the set ${\mathcal P}_p(n)$ of $p$-strict partitions of $d$. A $p$-strict weight $\la\in \Lambda^+_p(n,n)$ is {\em restricted} if
 $\la$ is a restricted $p$-strict partition, i.e. $\la\in  \mathcal{RP}_p(n)$. An irreducible $S(n,n)$-supermodule $L(\la)$ is {\em restricted} if $\la$ is restricted. Finally, an $S(n,n)$-supermodule is {\em restricted} if all its composition factors are restricted.

\begin{theorem} 
\label{TSchFApplication}
We have:
\begin{enumerate}
\item[{\rm (i)}] $\{\la\in {\mathcal P}_p(n)\mid D(\la)\neq 0\}={\mathcal{RP}}_p(n)$;
\item[{\rm (ii)}] $\{D(\la)\mid \la\in {\mathcal{RP}}_p(n)\}$
is a complete and irredundant set of irreducible $\Se_n$-supermodules up to isomorphism;
\item[{\rm (iii)}] if $\la\in {\mathcal{RP}}_p(n)$ then $D(\la)$ is the simple head of the Specht module $S(\la)$;
\item[{\rm (iv)}] if 
$V,W\in \smod{S(n,n)}$ are supermodules such that $V$ has restricted head and $W$ has restricted socle, then $$\Hom_{S(n,n)}(V,W)\cong\Hom_{\Se_n}(\mathcal{F}_n(V),\mathcal{F}_n(W)).$$
\end{enumerate}
\end{theorem}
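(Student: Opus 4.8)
The plan is the following. Parts (i), (ii) and (iii) are exactly the main results of \cite{Brundan_Kleshchev_Projective_representations}, recalled in the introduction, so there is nothing to prove beyond citing them; I list them here because they are the inputs to (iv) (and to the results that follow it). The real content is part (iv), which I would prove by the standard Schur functor formalism, with part (i) as the one nontrivial module-theoretic input.

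Recall that $S(n,n)$ contains the idempotent $e_n=e_{n,n}$ with $e_nS(n,n)e_n\cong\Se_n$, and that $\mathcal{F}_n=\mathcal{F}_{n,n}=e_n\cdot{-}\cong\Hom_{S(n,n)}(S(n,n)e_n,{-})$ is an exact functor from $\smod{S(n,n)}$ to $\smod{\Se_n}$. Since $\{L(\la)\mid\la\in\mathcal{P}_p(n)\}$ is a complete irredundant list of irreducible $S(n,n)$-supermodules and $\mathcal{F}_n(L(\la))=D(\la)$, part (i) says precisely that the irreducible $S(n,n)$-supermodules killed by $e_n$ are the $L(\la)$ with $\la\in\mathcal{P}_p(n)\setminus\mathcal{RP}_p(n)$; equivalently, a supermodule with restricted socle (resp.\ restricted head) has no nonzero submodule (resp.\ quotient) killed by $e_n$. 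Now for $V,W\in\smod{S(n,n)}$ there is a natural even map $\theta\colon\Hom_{S(n,n)}(V,W)\to\Hom_{\Se_n}(\mathcal{F}_nV,\mathcal{F}_nW)$, $f\mapsto\mathcal{F}_n(f)$. For injectivity: if $\mathcal{F}_n(f)=0$ then the submodule $f(V)\subseteq W$ is killed by $e_n$, hence $f(V)=0$ since $\operatorname{soc}W$ is restricted. For surjectivity I would use the left adjoint $\mathcal{G}_n=S(n,n)e_n\otimes_{\Se_n}{-}$ of $\mathcal{F}_n$, with counit $\kappa_V\colon\mathcal{G}_n\mathcal{F}_nV\to V$, $ae_n\otimes e_nv\mapsto ae_nv$. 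Its image is the submodule $S(n,n)e_nV$ generated by $e_nV$, and $V/S(n,n)e_nV$ is killed by $e_n$; since $\operatorname{head}V$ is restricted this quotient vanishes, so $\kappa_V$ is surjective. Applying the exact functor $\mathcal{F}_n$ to $0\to\ker\kappa_V\to\mathcal{G}_n\mathcal{F}_nV\to V\to0$ and using the canonical isomorphism $\mathcal{F}_n\mathcal{G}_n\cong\id$ on $\smod{\Se_n}$ (coming from $e_nS(n,n)e_n\cong\Se_n$ as $(\Se_n,\Se_n)$-bimodules), under which $\mathcal{F}_n(\kappa_V)$ becomes the identity of $\mathcal{F}_nV$, one gets $\mathcal{F}_n(\ker\kappa_V)=0$, i.e.\ $e_n\ker\kappa_V=0$. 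Given $g\in\Hom_{\Se_n}(\mathcal{F}_nV,\mathcal{F}_nW)$, the adjunction produces $\tilde g\colon\mathcal{G}_n\mathcal{F}_nV\to W$; then $e_n\tilde g(\ker\kappa_V)=\tilde g(e_n\ker\kappa_V)=0$, so $\tilde g(\ker\kappa_V)$ is a submodule of $W$ killed by $e_n$, hence zero because $\operatorname{soc}W$ is restricted. Thus $\tilde g=f\circ\kappa_V$ for some $f\colon V\to W$, and unwinding the adjunction gives $\mathcal{F}_n(f)=g$.

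I do not expect a serious conceptual obstacle: this is the familiar fact that a Schur-type functor $e\cdot{-}$ is fully faithful on the full subcategory of modules whose head and socle avoid the simples annihilated by $e$, and part (i) identifies that class of simples as the non-restricted $L(\la)$. The points needing care are the super-theoretic bookkeeping — the $\Hom$-spaces, the tensor product defining $\mathcal{G}_n$, and the counit all live in the category of supermodules, but since $e_n$ is even this causes no change — and the explicit check that the isomorphism $\mathcal{F}_n\mathcal{G}_n\cong\id$ intertwines $\mathcal{F}_n(\kappa_V)$ with the identity of $\mathcal{F}_nV$, which is a short computation using $e_n^2=e_n$.
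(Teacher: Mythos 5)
Your proposal is correct and matches the paper's approach: the paper likewise disposes of (i) and (ii) by citation to \cite{Brundan_Kleshchev_Projective_representations}, and for (iv) simply cites the standard Schur-functor fact (\cite[Lemma 2.17(ii)]{BKLR}) whose proof is exactly the injectivity-from-restricted-socle / surjectivity-via-the-counit-of-the-left-adjoint argument you write out. The only cosmetic difference is that the paper deduces (iii) from (iv) together with the fact that $L(\la)$ is the simple head of $V(\la)$, whereas you cite (iii) directly from the reference; both are legitimate since (iii) is indeed one of the stated main results there.
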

\begin{proof}
(i) is \cite[Theorem 9.5]{Brundan_Kleshchev_Projective_representations}, and  (ii) is \cite[Theorem 10.2]{Brundan_Kleshchev_Projective_representations}. On the other hand,  (iv) is a standard property of Schur functors, see for example \cite[Lemma 2.17(ii)]{BKLR}. Finally, (iii) follows from (iv) and the fact that $L(\la)$ is the simple head of $V(\la)$.
\end{proof}

\begin{proposition}\label{PRestrSocle}
If $\la\in {\mathcal{RP}}_p(n),\mu\in {\mathcal{RP}}_p(n-1)$, then the socles of the $U(n-1)$-module $L(\la)_1$ and the $U(n)$-module $L(\mu)\otimes V$ are restricted.
\end{proposition}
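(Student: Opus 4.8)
The plan is to reduce each of the two assertions to a statement about $p$-restricted partitions and then settle that statement with the combinatorics of Chapters~\ref{ChComb}--\ref{main_results} and Kang's description of $B(\Lambda_0)$. First I would observe that neither socle can contain a non-$p$-strict constituent: by Lemma~\ref{LLevPolDeg} the supermodule $L(\la)_1$ lies in ${\mathcal Pol}(n-1,n-1)$, while $L(\mu)\otimes V$ (with $\mu$ viewed in $X(n)$ by appending a trailing $0$) is a tensor product of polynomial supermodules of degrees $n-1$ and $1$, hence lies in ${\mathcal Pol}(n,n)$. Therefore every composition factor of either module is of the form $L(\nu)$ with $\nu$ a $p$-strict partition, of $n-1$ resp.\ $n$, and only the $p$-restrictedness of the labels of the socle constituents remains to be proved.

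Next I would pin down those labels. By Theorem~\ref{TFirstLevelMain}(ii) the socle of $L(\la)_1$ is $\bigoplus_i L(\mu^{(i)})$, the sum over the $\la$-good indices $i$, where $\mu^{(i)}=(\la_1,\dots,\la_{n-1})-\eps_i$; since $\la\in\mathcal{RP}_p(n)$ with $n\ge 2$ forces $\la_n=0$, the partition $\mu^{(i)}$ is simply $\la$ with its $i$-th part lowered by $1$. Dually, by the final theorem of Chapter~\ref{main_results} the socle of $L(\mu)\otimes V$ is $\bigoplus_j L(\nu^{(j)})$, the sum over the tensor $\mu$-cogood indices $j$, where $\nu^{(j)}=\mu+\eps_j$; note that $\mu_n=0$ and that $\nu^{(j)}$ must be a $p$-strict partition of $n$, which forces $j<n$.

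The heart of the matter is then to show $\mu^{(i)}\in\mathcal{RP}_p(n-1)$ and $\nu^{(j)}\in\mathcal{RP}_p(n)$. For the first, I would identify $\mu^{(i)}$ with $\tilde e_\beta\la$ for $\beta=\res_p\la_i$: via Lemma~\ref{LNormRemovAdd} together with Corollaries~\ref{corollary:main:1} and~\ref{corollary:main:2} (applicable because $\la_n\equiv 0\pmod p$) the $\la$-good index $i$ is exactly the row of the $\beta$-good node of $\la$ in the sense of the Introduction, so removing that node gives $\mu^{(i)}$; as $\la$ is $p$-restricted, the observation recorded in the Introduction that $\tilde e_\beta$ carries $\mathcal{RP}_p$ into $\mathcal{RP}_p\cup\{0\}$ yields $\mu^{(i)}\in\mathcal{RP}_p(n-1)$. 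Symmetrically I would identify $\nu^{(j)}$ with $\tilde f_\beta\mu$ for the relevant $\beta=\res_p(\mu_j+1)$, using Lemma~\ref{LGoodTopNormal} together with Corollaries~\ref{corollary:conormal} and~\ref{CCogood} to translate tensor $\mu$-cogood into the $\beta$-cogood node of $\mu$, and then use that $\tilde f_\beta$, being an operator of the crystal $\mathcal{RP}_p\cong B(\Lambda_0)$, likewise preserves $p$-restrictedness. As an alternative to the crystal identification, one can argue purely in the signature calculus: a $p$-restrictedness inequality for $\mu^{(i)}$ can fail, compared with the ones valid for $\la$, only at row $i-1$; a short computation valid because $p\ne 2$ shows such a failure forces $\res_p\la_{i-1}=\res_p\la_i$; and then the fact that a symbol $-_{i-1}$ sitting immediately to the left of a surviving $-_i$ in $[\prod r_{\res_p\la_i}(\la)]$ must itself survive (using Corollary~\ref{corollary:pm:1} and Lemmas~\ref{lemma:main:2}, \ref{lemma:main:3}) shows $i-1$ to be a $\la$-normal index of the same residue with $i-1<i$, contradicting the goodness of $i$; the cogood case is parallel, and can alternatively be reduced to the previous case by the $\si$-twist of Lemma~\ref{LSigmaAppl} together with Corollary~\ref{corollary:conormal}.

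I expect the main obstacle to be precisely this combinatorial matching: making watertight the identification of the Lie-theoretic good/cogood indices with the Young-diagram good/cogood nodes across the two different reduction conventions used in the Introduction and the main body, and checking that the socle labels agree with $\tilde e_\beta\la$ resp.\ $\tilde f_\beta\mu$ on the nose, including the boundary effects caused by the trailing zero part. If instead one runs the direct signature argument, the delicate points are the residue arithmetic that singles out the $\mathbf 0$-node and the careful application of the signature-surgery lemmas of Chapter~\ref{ChComb}, especially in the cases where $r_\beta(\la)$ takes a two-letter value.
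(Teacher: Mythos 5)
Your proof is correct and follows essentially the same route as the paper's: identify the socle constituents via the socle branching theorems (Theorem~\ref{TFirstLevelMain} and its tensor analogue) and then verify combinatorially that removing a good node, respectively adding a cogood node, preserves $p$-restrictedness — a step the paper dismisses as "an easy combinatorial check" (or a citation to the crystal structure on $\mathcal{RP}_p$) and which you spell out in more detail.
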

\begin{proof}
Let $\nu\in{\mathcal{P}}_p(n-1)$ and $\kappa\in {\mathcal{P}}_p(n)$ be such that
$$\Hom_{U(n-1)}(L(\nu),L(\la)_1)\neq0\quad \text{and}\quad
\Hom_{U(n)}(L(\kappa),L(\mu)\otimes V)\neq0.$$
We need to prove that $\nu\in{\mathcal{RP}}_p(n-1)$ and $\kappa\in {\mathcal{RP}}_p(n)$. By Theorem~\ref{TFirstLevelMain}, $\nu$ is obtained from $\la$ by removing a good node, as defined in the Introduction. Now it is an easy combinatorial check to see that $\nu\in{\mathcal{RP}}_p(n-1)$. This also follows from \cite[Theorem 22.1.2]{Kbook}. Similarly we check that $\kappa$, which is obtained from $\mu$ by adding a tensor good node is restricted.
\end{proof}

\begin{remark}
{\rm
There is a general conceptual argument which shows that if $\la$ is restricted, then the socle of {\em any} level $L(\la)_j$ is restricted. For $GL(n-1)\subset GL(n)$ and some other natural embeddings this was first proved in \cite[Theorem B]{KlResI}. The proof given in \cite{KlResI} goes through for $Q(n-1)\subset Q(n)$ using theory developed in \cite[Section 9]{Kleshchev_Brundan_Modular_Representations_of_the_supergroup_Q(n)_I}. We will not need this here. Similarly, one can see that the socle of $L(\mu)\otimes V$ is restricted because it is a submodule of $V^{\otimes n}$, which has a restricted socle in view of  \cite[Theorem 6.2(i)]{Brundan_Kleshchev_Projective_representations}.
}
\end{remark}

\begin{corollary} \label{CTransl}
Let $\la\in {\mathcal{RP}}_p(n)$ and $\mu\in  {\mathcal{RP}}_p(n-1)$. Then
\begin{enumerate}
\item[{\rm (i)}] $\Hom_{\Se_{n-1}}(D(\mu),D(\la)_{\Se_{n-1}})\cong \Hom_{U(n-1)}(L(\mu),L(\la)_1)$.
\item[{\rm (ii)}] $\Hom_{\Se_{n-1}}(S(\mu),D(\la)_{\Se_{n-1}})\cong \Hom_{U(n-1)}(V(\mu),L(\la)_1)$.
\item[{\rm (iii)}] $\Hom_{\Se_{n}}(D(\la),\operatorname{ind}_{\Se_{n-1}}^{\Se_n}D(\mu))\cong \Hom_{U(n)}(L(\la),L(\mu)\otimes V)$.
\item[{\rm (iv)}] $\Hom_{\Se_{n}}(S(\la),\operatorname{ind}_{\Se_{n-1}}^{\Se_n}D(\mu))\cong \Hom_{U(n)}(V(\la),L(\mu)\otimes V)$.
\end{enumerate}
\end{corollary}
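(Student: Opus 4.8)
<br>

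The plan is to deduce Corollary~\ref{CTransl} directly from the results of Section~\ref{SMainYTN} combined with the Schur functor yoga of Theorem~\ref{TSchFApplication}(iv). The key structural inputs are Proposition~\ref{PSchFunFirstLevel}, which identifies $\mathcal{F}_{n-1}(L(\la)_1)$ with the restriction ${\mathcal F}_n(L(\la))_{\Se_{n-1}}=D(\la)_{\Se_{n-1}}$, and Proposition~\ref{PIndTens}, which identifies $\mathcal{F}_n(M\otimes V)$ with $\operatorname{ind}^{\Se_n}_{\Se_{n-1}}\mathcal{F}_{n,n-1}(M)$ for $M\in{\mathcal Pol}(n,n-1)$. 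Everything then reduces to checking that the relevant $S(n-1,n-1)$- or $S(n,n)$-supermodules on both sides of Theorem~\ref{TSchFApplication}(iv) have the right restricted head/socle conditions.

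For part~(i): first I would note that $L(\la)_1$ lies in ${\mathcal Pol}(n-1,n-1)\simeq\smod{S(n-1,n-1)}$ by Lemma~\ref{LLevPolDeg}, and that $L(\mu)$, being irreducible with $\mu\in\mathcal{RP}_p(n-1)$, has restricted head (it equals its own head). By Proposition~\ref{PRestrSocle}, the socle of $L(\la)_1$ is restricted. Therefore Theorem~\ref{TSchFApplication}(iv) with $V=L(\mu)$, $W=L(\la)_1$ gives
$$\Hom_{S(n-1,n-1)}(L(\mu),L(\la)_1)\cong\Hom_{\Se_{n-1}}(\mathcal{F}_{n-1}(L(\mu)),\mathcal{F}_{n-1}(L(\la)_1))=\Hom_{\Se_{n-1}}(D(\mu),D(\la)_{\Se_{n-1}}),$$
using $\mathcal{F}_{n-1}(L(\mu))=D(\mu)$ and Proposition~\ref{PSchFunFirstLevel}. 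Part~(ii) is identical except that $V(\mu)$ replaces $L(\mu)$; here $V(\mu)$ has restricted head (its head is $L(\mu)$, and $\mu$ is restricted by hypothesis), and $\mathcal{F}_{n-1}(V(\mu))=S(\mu)$ by definition, so the same argument applies.

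For parts~(iii) and~(iv): I would take $M=L(\mu)$, respectively $M=V(\mu)$, as an object of ${\mathcal Pol}(n-1,n-1)$ inflated (via the obvious polynomial functor) to ${\mathcal Pol}(n,n-1)$, so that $\mathcal{F}_{n,n-1}(M)=D(\mu)$, respectively $S(\mu)$. Then $M\otimes V\in{\mathcal Pol}(n,n)\simeq\smod{S(n,n)}$, and by Proposition~\ref{PIndTens} we have $\mathcal{F}_n(M\otimes V)\cong\operatorname{ind}^{\Se_n}_{\Se_{n-1}}\mathcal{F}_{n,n-1}(M)$. Now apply Theorem~\ref{TSchFApplication}(iv) with the pair $(D(\la)$ or $S(\la)$ side, $M\otimes V$ side$)$: the module $L(\la)$ has restricted head, $V(\la)$ has restricted head since $\la\in\mathcal{RP}_p(n)$, and by Proposition~\ref{PRestrSocle} the socle of $L(\mu)\otimes V$ is restricted; for $V(\mu)\otimes V$ one needs that its socle is restricted, which follows because $V(\mu)\otimes V$ embeds contravariant-dually into $H^0(\mu)\otimes V$ and hence its socle is a submodule of a module whose socle one controls — alternatively, one reduces to $L(\cdot)\otimes V$ by a composition series argument, since any irreducible submodule of $V(\mu)\otimes V$ is an irreducible submodule of $L(\nu)\otimes V$ for some composition factor $L(\nu)$ of $V(\mu)$, and restrictedness of such $\nu$ follows from the Linkage Principle as in Proposition~\ref{PRestrSocle}. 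With these conditions in hand, Theorem~\ref{TSchFApplication}(iv) yields
$$\Hom_{S(n,n)}(L(\la),M\otimes V)\cong\Hom_{\Se_n}(D(\la),\operatorname{ind}^{\Se_n}_{\Se_{n-1}}\mathcal{F}_{n,n-1}(M)),$$
and similarly with $V(\la)$, $S(\la)$, giving (iii) and (iv).

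The main obstacle I anticipate is the bookkeeping around the restricted head/socle hypotheses needed to invoke Theorem~\ref{TSchFApplication}(iv): in parts (iii)--(iv) one must be careful that $L(\mu)\otimes V$ (and $V(\mu)\otimes V$) genuinely has restricted socle as an $S(n,n)$-supermodule, not merely that its $U(n)$-socle is restricted — but these agree since a polynomial $Q(n)$-supermodule and the corresponding $S(n,n)$-supermodule have the same submodule lattice. The cleanest route for the Weyl-module cases is probably the embedding $V(\mu)\hookrightarrow M(\mu)^\tau$ combined with exactness of $-\otimes V$ and the Linkage Principle \cite[Section~8]{Kleshchev_Brundan_Modular_Representations_of_the_supergroup_Q(n)_I}, exactly as in the proof of Theorem~\ref{TSocleQ}. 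A secondary routine point is to confirm that the inflation ${\mathcal Pol}(n-1,n-1)\to{\mathcal Pol}(n,n-1)$ is compatible with the Schur functors $\mathcal{F}_{n-1}$ and $\mathcal{F}_{n,n-1}$, which is immediate from the definitions of the idempotents $e_{n-1}$ and $e_{n,n-1}$ in \cite{Brundan_Kleshchev_Projective_representations}.
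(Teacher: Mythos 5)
Your route is exactly the paper's: the published proof is a one-line citation of Theorem~\ref{TSchFApplication}, Proposition~\ref{PRestrSocle}, Proposition~\ref{PSchFunFirstLevel} and Proposition~\ref{PIndTens}, and your parts (i)--(iii) supply precisely the bookkeeping those citations suppress. The one thing to fix is part (iv), where you have misread the statement: in (iv) the second argument is still $\operatorname{ind}_{\Se_{n-1}}^{\Se_n}D(\mu)$ on the Sergeev side and $L(\mu)\otimes V$ on the $U(n)$ side --- only the \emph{first} argument changes, from $D(\la)$ resp.\ $L(\la)$ to $S(\la)$ resp.\ $V(\la)$. So you should keep $M=L(\mu)$ throughout and vary the module required to have restricted head, rather than switching to $M=V(\mu)$. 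Your digression about the socle of $V(\mu)\otimes V$ (via $M(\mu)^\tau$ and the Linkage Principle) is therefore not needed for the stated corollary; as set up, it would prove a different isomorphism, namely $\Hom_{\Se_n}(-,\operatorname{ind}_{\Se_{n-1}}^{\Se_n}S(\mu))\cong \Hom_{U(n)}(-,V(\mu)\otimes V)$, which is not what is claimed. The correct proof of (iv) is already implicit in your closing sentence (``similarly with $V(\la)$, $S(\la)$''): apply Theorem~\ref{TSchFApplication}(iv) with first argument $V(\la)$, which has restricted head because its head is $L(\la)$ with $\la\in\mathcal{RP}_p(n)$, and second argument $L(\mu)\otimes V$, whose socle is restricted by Proposition~\ref{PRestrSocle}, exactly as in your part (iii).
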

\begin{proof}
Apply Theorem~\ref{TSchFApplication} and Propositions~\ref{PRestrSocle},~\ref{PSchFunFirstLevel},~\ref{PIndTens}.
\end{proof}

We now claim that the definitions of good, cogood, normal, and conormal nodes of  $p$-strict partitions given in the Introduction match those of good, cogood, normal, and conormal indices used in the main body of this paper.

To be more precise, recall first that the set of all possible contents is $I:=\{0,1,\dots,\ell\}\subset \Z$, where $\ell=(p-1)/2$. As the content of the node $A=(r,s)$ depends only on the column $s$, we can speak of the content of $s\in\Z_{>0}$, denoted $\Res_p s$, so that $\Res_p(r,s)=\Res_p s$. Recall also that for any $s\in\Z$ we have by definition that  $\res_p s=s(s-1)\pmod{p}$.
It is pointed out in \cite[(8.8)]{Kleshchev_Brundan_Modular_Representations_of_the_supergroup_Q(n)_I} that for any $s\in \Z$ there exists a unique $i\in I$ such that
$$s(s-1)\equiv i^2+i\pmod{p},$$
i.e. for any $r,s\in\Z$ we have that $\res_pr=\res_ps$ if and only if $\Res_pr=\Res_ps$. For a residue $\beta$ of the form $\beta=s(s-1)\pmod{p}$ for $s\in \Z$, we denote by $i(\beta)$ the unique $i\in I$ with $\beta\equiv i^2+i\pmod{p}$. Thus
$$\beta\equiv i(\beta)^2+i(\beta)\pmod{p}.$$ Conversely, for $i\in I$ we denote by $\be(i)$ the residue
$$\be(i):=i^2+i\pmod{p}.$$

Now, let $\la\in {\mathcal{RP}}_p(n)$ be a $p$-strict partition of $n$ considered also as a weight in $X^+_p(n)$. The case $n=1$ is boring, so let us assume that $n>1$. Then we always have $\la_n=0$. Moreover, comparing the definitions, we see that $A$ is a $\beta$-removable node for $\la$ in the sense of Section~\ref{SNormGood} if and only if either $A$ is an $i(\beta)$-removable node for $\la$ in the sense of the Introduction or $A=(n,0)$. Similarly, $A$ is a $\beta$-addable node for $\la$ in the sense of Section~\ref{SNormGood} if and only if $A$ is an $i(\beta)$-addable node for $\la$ in the sense of the Introduction.
So, the reduced $i$-signature of $\la$ in the sense of the Introduction is obtained from the the reduced $\beta(i)$-signature $\si$ of $\la$ in the sense of Section~\ref{SNormGood} by removing one $-_n$ from the end of $\si$.
It follows from Lemma~\ref{LNormRemovAdd} and Corollary~\ref{corollary:main:2} that an $i$-removable node $A=(r,s)$ is $i$-normal for $\la$ in the sense of the Introduction if and only if $\res_ps=\beta(i)$ and $r$ is a $\la$-normal index in the sense of Section~\ref{SNormGood}. The similar statement holds for good nodes. Also, an $i$-addable node $A=(r,s)$ is $i$-normal (resp. $i$-cogood) for $\la$ in the sense of the Introduction if and only if $\res_ps=\beta(i)$ and $r$ is a tensor $\la$-conormal (resp. $\la$-cogood) index in the sense of Section~\ref{SSTensConCog}.

Now, Corollary~\ref{CTransl} and Theorem~\ref{TFirstLevelMain} imply Theorem A from the Introduction.

To deduce Theorem $B$, recall the Jucys-Murphy elements
$$
L_1,\dots,L_n\in \Se_n
$$
from \cite[(13.22)]{Kbook}. The eigenvalues of the elements $L_k^2$ on finite dimensional $\Se_n$-supermodules are all of the form $\beta(i)=i^2+i$ (considered as an element of $\mathbb{F}$) for $i\in I$. It is known that the Jucys-Murphy elements commute and so we can decompose an arbitrary finite dimensional $\Se_n$-supermodule $V$ into the corresponding simultaneous generalized eigenspaces:
$$
V=\bigoplus_{\bi\in I^n}V_\bi,
$$
where for $\bi=(i_1,\dots,i_n)\in I^n$, we define:
$$
V_{\bi}=\{v\in V\mid (L_k^2-\beta(i_k))^N=0\ \text{for $N\gg0$},\ k=1,\dots,n\}.
$$

Denote by $\Gamma_n$ the set of all tuples $\ga=(\ga_i)_{i\in I}$ with $\ga_i\in\Z_{\geq 0}$ and $\sum_{i\in I}\ga_i=n$.
For any $\ga=(\ga_i)_{i\in I}\in\Gamma_n$, denote by $I^\ga$ the subset of $I^n$, which consists of all $\bi=(i_1,\dots,i_n)\in I^n$ such that for each $i\in I$ there are exactly $\ga_i$ entries among $i_1,\dots,i_n$ which are equal to $i$. Note that
$$
I^n=\bigsqcup_{\ga\in \Gamma_n}I^\ga
$$
is just the decomposition of $I^n$ into the orbits of the natural action of $S_n$ on $I^n$ by place permutations.

Now, given a finite dimensional $\Se_n$-supermodule $V$ and $\ga\in\Gamma_n$, let
$$
V[\ga]:=\bigoplus_{\bi\in I^\ga}V_\bi.
$$
 It is known that the symmetric polynomials in $L_1^2,\dots,L_n^2$ are central in $\Se_n$, see \cite[Remark 15.4.7, Theorem 14.3.1]{Kbook}. It follows that $V[\ga]$ is a subsupermodule of $V$, and we have the decomposition of $\Se_n$-supermodules
 $$
 V=\bigoplus_{\ga\in\Gamma_n}V[\ga].
 $$
It is known that the (non-zero) $V[\ga]$ are precisely the superblock components of $V$, but we will not need this fact.

Given a partition $\la\in {\mathcal{P}}_p(n)$, define $\ga(\la)$ to be the tuple $(\ga_i)_{i\in I}\in\Gamma_n$, where $\ga_i$ is the number of $i$-nodes of $\la$, cf. \cite[(22.1)]{Kbook}.
The following result follows immediately from the definition of the supermodules $G(\la)$:

\begin{lemma} \label{LGLaBlock}
Let $\la\in {\mathcal{RP}}_p(n)$ and $\ga=\ga(\la)$. Then $G(\la)[\gamma]=G(\la)$.
\end{lemma}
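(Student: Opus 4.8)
The plan is to argue by induction on $n$, exploiting the inductive definition of $G(\la)$ via the crystal operators $\tilde e_i$ together with the compatibility of the Jucys--Murphy eigenspace decomposition with restriction from $\Se_n$ to $\Se_{n-1}$. For the base case $n=0$ we have $G(\emptyset)=\mathbb F$ and $\ga(\emptyset)=(0,\dots,0)$, so the claim is trivial. Now let $n>0$. Since $G(\la)$ is irreducible and $G(\la)=\bigoplus_{\ga'\in\Gamma_n}G(\la)[\ga']$ with each $G(\la)[\ga']$ a subsupermodule, there is a unique $\ga'\in\Gamma_n$ with $G(\la)[\ga']=G(\la)$, and it remains to show $\ga'=\ga(\la)$.

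By the very definition of $G(\la)$ there is an $i\in I$ such that $\la$ has a good $i$-node $A$, and, setting $\mu:=\tilde e_i\la=\la_A\in\mathcal{RP}_p(n-1)$, we have $\tilde e_iG(\la)=G(\mu)$, i.e. $\Hom_{\Se_{n-1}}(G(\mu),\operatorname{res}_iG(\la))\neq0$; in particular $G(\mu)$ is a composition factor of the $\Se_{n-1}$-supermodule $\operatorname{res}_iG(\la)$. Since $\mu$ is obtained from $\la$ by deleting the single node $A$, which has content $i$, the tuple $\ga(\la)$ is obtained from $\ga(\mu)$ by increasing the $i$-th entry by $1$. By the inductive hypothesis, $G(\mu)=G(\mu)[\ga(\mu)]$.

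To identify $\ga'$ I would use that the $I$-labelling of the block components of $\operatorname{res}^{\Se_n}_{\Se_{n-1}}$ from \cite[Section~19.1]{Kbook} is precisely the one singling out the generalized eigenvalue $\be(i)=i^2+i$ of the Jucys--Murphy element $L_n^2$ of \cite[(13.22)]{Kbook}, while the $\Gamma_m$-block decomposition of an $\Se_m$-supermodule records the multiset of generalized eigenvalues of $L_1^2,\dots,L_m^2$. Consequently, if an $\Se_n$-supermodule $V$ satisfies $V=V[\ga']$, then $\operatorname{res}_iV$, viewed as an $\Se_{n-1}$-supermodule, lies in the block component labelled by the tuple obtained from $\ga'$ by decreasing the $i$-th entry by $1$ (so $\ga'_i>0$ once $\operatorname{res}_iV\neq0$). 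Applying this to $V=G(\la)$ and using that the composition factor $G(\mu)$ of $\operatorname{res}_iG(\la)$ satisfies $G(\mu)=G(\mu)[\ga(\mu)]$, I conclude that $\ga(\mu)$ equals $\ga'$ with its $i$-th entry decreased by $1$, and hence $\ga'=\ga(\la)$, completing the induction.

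The only nontrivial point is the bookkeeping in the last paragraph: making precise the interaction between the $I$-labelling of $\operatorname{res}^{\Se_n}_{\Se_{n-1}}$ and the $\Gamma_n$-block decomposition through the Jucys--Murphy elements. Once the relevant definitions of \cite{Kbook} are unwound, this is routine and no further ideas are required; indeed this is why the excerpt can describe the lemma as following immediately from the construction of the $G(\la)$.
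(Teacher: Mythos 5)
Your proof is correct and is essentially the paper's argument: the paper simply asserts that the lemma ``follows immediately from the definition of the supermodules $G(\la)$,'' and your induction on $n$ --- unwinding the inductive definition of $G(\la)$ via $\tilde e_i$ and matching the $I$-labelling of $\operatorname{res}^{\Se_n}_{\Se_{n-1}}$ with the generalized eigenvalues of $L_n^2$ --- is precisely the routine bookkeeping that claim leaves implicit.
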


With the goal of identifying $D(\la)$ and $G(\la)$ in mind, we want to prove a similar result for $D(\la)$. For this we need the following:

\begin{lemma} \label{LCharZero}
Theorem $B$ holds in charecteristic zero.
\end{lemma}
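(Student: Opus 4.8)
The plan is to show that $D(\la)$ and $G(\la)$ lie in the same ``residue block'' $\Se_n[\ga(\la)]$ of $\Se_n$, and then to invoke semisimplicity of $\Se_n$ over a field of characteristic zero to conclude that such a block contains exactly one irreducible supermodule. Throughout $p=0$, so $\mathcal{RP}_0(n)=\mathcal{P}_0(n)$ is the set of strict partitions of $n$; by Theorem~\ref{TSchFApplication} the set $\{D(\la)\mid\la\in\mathcal{RP}_0(n)\}$, and by the crystal graph classification recalled in the introduction the set $\{G(\la)\mid\la\in\mathcal{RP}_0(n)\}$, are each complete and irredundant sets of irreducible $\Se_n$-supermodules. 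Note also that for $p=0$ the content of column $s$ equals $s-1$, so $\ga_i(\la)$ is simply the number of boxes of $\la$ in column $i+1$; hence $\la\mapsto\ga(\la)$ is essentially transposition of Young diagrams and in particular is injective on strict partitions.

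The core step will be to establish that $D(\la)[\ga(\la)]=D(\la)$, i.e. that $L_1^2,\dots,L_n^2$ act on $D(\la)$ with generalized eigenvalue multiset recording exactly the residue multiset $\ga(\la)$. Since $\Se_n$ is semisimple in characteristic zero, $D(\la)=S(\la)=\mathcal F_n(V(\la))$ is the irreducible Specht supermodule, and on it the $L_k^2$ act diagonalizably with the classical content eigenvalues of the seminormal form (see \cite{Kbook}); this gives $D(\la)\in\Se_n[\ga(\la)]$ at once. Alternatively one can argue by induction on $n$: restricting $D(\la)$ to $\Se_{n-1}$ and using semisimplicity together with Theorem~A(ii) (so that $S(\mu)=D(\mu)$), every constituent of $\operatorname{res}^{\Se_n}_{\Se_{n-1}}D(\la)$ is some $D(\mu)$ with $\mu$ obtained from $\la$ by removing a normal node, and those lying in $\operatorname{res}_iD(\la)$ — the generalized $\beta(i)$-eigenspace of $L_n^2$ — are precisely those for which the removed node has content-residue $\beta(i)$, by the crystal-graph branching of \cite{Brundan_Kleshchev_Hecke-Clifford_superalgebras}; since by induction $D(\mu)\in\Se_{n-1}[\ga(\mu)]$ with $\ga(\mu)=\ga(\la)-\eps_i$, the multiset of $(L_1^2,\dots,L_n^2)$-eigenvalues on $\operatorname{res}_iD(\la)$, hence on all of $D(\la)$, is $\ga(\mu)\cup\{i\}=\ga(\la)$.

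With this in hand the conclusion is immediate. By Lemma~\ref{LGLaBlock} we have $G(\la)\in\Se_n[\ga(\la)]$ as well. Since $\la\mapsto\ga(\la)$ is injective on strict partitions, the blocks $\Se_n[\ga(\la)]$ are pairwise distinct; because $\{G(\la)\}$ is complete and irredundant, these are exactly the non-zero summands in $\Se_n=\bigoplus_\ga\Se_n[\ga]$, and each of them, $\Se_n$ being semisimple, contains a unique irreducible supermodule. Therefore $D(\la)$ and $G(\la)$, both irreducible and both in $\Se_n[\ga(\la)]$, are isomorphic.

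The main obstacle is the core step, i.e. placing $D(\la)$ in the block $\Se_n[\ga(\la)]$: the cleanest route rests on the seminormal action of the Jucys--Murphy elements on the Specht supermodule in the semisimple case (equivalently, on $D(\la)$ having the expected central character), while the inductive route needs the precise colour-matching in $i$-restriction — that a constituent $D(\mu)$ of $\operatorname{res}_iD(\la)$ comes from removing a node of content-residue $\beta(i)$ — which, although built into the very construction of the functors $\operatorname{res}_i$ and the isomorphism $B\cong B(\Lambda_0)$, must be cited so as not to presuppose the identification $D(\la)\cong G(\la)$ being proved.
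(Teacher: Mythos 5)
Your proposal is correct and takes essentially the same route as the paper: the key input is the known spectrum of the squared Jucys--Murphy elements on the characteristic-zero irreducible (the paper cites Nazarov's Theorem 7.2 where you cite the seminormal form), which yields $D(\la)[\ga(\la)]=D(\la)$, and the conclusion then follows from Lemma~\ref{LGLaBlock} together with the injectivity of $\la\mapsto\ga(\la)$ on strict partitions when $p=0$. Your alternative inductive route is not needed and, as you yourself note, would require extra care to avoid presupposing $D(\la)\cong G(\la)$.
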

\begin{proof}
We note that both approaches to representation theory of Sergeev superalgebras $\Se_n$, described in the Introduction, work the case where $p=\operatorname{char}\mathbb{F}=0$. In this case we interpret $I$ as $\{0,1,2,\dots\}$, and $ {\mathcal{RP}}_0(n)= {\mathcal{P}}_0(n)$ as strict partitions of $n$, i.e. partitions with distinct parts. The Schur functor approach in this case has been developed originally by Sergeev \cite{Sergeev}. It leads to the $\Se_n$-supermodules $D(\la)$ parametrized by $\la\in  {\mathcal{P}}_0(n)$. On the other hand, \cite[Theorem 7.2]{Nazarov} (see also \cite[Section 2.6]{SergeevSpecht}) describes the spectrum of the squares of Jucys-Murphy elements on $D(\la)$. This description implies that $D(\la)[\gamma(\la)]=D(\la)$, and so $D(\la)=G(\la)$, since in the case $p=0$ all columns have distinct contents, and therefore $\ga(\la)$ determines $\la$ uniquely.
\end{proof}

\begin{lemma} \label{LDBlock}
If $\la\in {\mathcal{RP}}_p(n)$ and $\ga=\ga(\la)$, then $D(\la)[\ga]=D(\la)$.
\end{lemma}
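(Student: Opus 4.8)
The plan is to show that the central subalgebra of $\Se_n$ generated by the symmetric functions in $L_1^2,\dots,L_n^2$ acts on $D(\la)=\mathcal{F}_n(L(\la))=L(\la)_{\omega_n}$ through the character attached to the tuple $\ga=\ga(\la)$, which is precisely the content of $D(\la)[\ga]=D(\la)$. Since $D(\la)=e_nL(\la)$, the first step is to recall from \cite{Kbook,Brundan_Kleshchev_Projective_representations} that, under the identification $\Se_n\cong e_nS(n,n)e_n$, every symmetric polynomial in $L_1^2,\dots,L_n^2$ is the image $e_nz$ of a ``content type'' central element $z\in S(n,n)$ — the $Q(n)$-analogue of Jucys' description of the Jucys--Murphy elements of $\mathbb F S_n$ as restrictions of content central elements. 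It follows that such an element acts on $\mathcal{F}_n(M)=e_nM$, for any finite dimensional $S(n,n)$-supermodule $M$, exactly as $z$ acts on $M$.

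Next I would invoke the Linkage Principle of \cite[Section~8]{Kleshchev_Brundan_Modular_Representations_of_the_supergroup_Q(n)_I}: the irreducible $L(\la)$ lies in a single block of $S(n,n)$, and the content central elements act on that block through the character determined by the residue content of $\la$, i.e.\ by the multiset of residues $\res_p s$ of the columns $s$ occupied by nodes of $\la$. Under the dictionary recalled in Section~\ref{SMainYTN} — $\res_p r=\res_p s\iff\Res_p r=\Res_p s$, and $\be(i)\equiv i^2+i$ for a column of content $i$ — this is exactly the multiset in which $\be(i)$ occurs with multiplicity $\ga(\la)_i$. Combining the two steps, the symmetric functions in the $L_k^2$ act on $D(\la)$ by the same character by which they act on $V[\ga(\la)]$ for an arbitrary $\Se_n$-supermodule $V$; since these characters are pairwise distinct over $\ga\in\Gamma_n$ and $D(\la)=\bigoplus_{\ga\in\Gamma_n}D(\la)[\ga]$, we conclude $D(\la)[\ga(\la)]=D(\la)$.

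The main obstacle is the first step — identifying the symmetric functions of $L_1^2,\dots,L_n^2$ with restrictions of central elements of $S(n,n)$ carrying the content character. If a clean citation is not available, I would instead use the Sergeev duality action of $\Se_n$ on tensor space $V^{\otimes n}$ over $Q(n)$ (as in \cite{Kbook}): there each $L_k$ acts by an explicit sum of ``transposition'' operators twisted by Clifford generators, each $L_k^2$ acts through a partial Casimir-type operator, and the symmetric functions of the $L_k^2$ act through operators commuting with $Q(n)$, hence by scalars on $L(\la)$ given by the same quadratic content formula underlying the Linkage Principle. A more economical alternative, avoiding tensor-space computations, is induction on $n$ via Proposition~\ref{PSchFunFirstLevel}: granting $D(\mu)[\ga(\mu)]=D(\mu)$ for all $\mu\in\mathcal{RP}_p(n-1)$, together with the fact that every composition factor $D(\mu)$ of $\operatorname{res}^{\Se_n}_{\Se_{n-1}}D(\la)$ has $\bar\mu=(\la_1,\dots,\la_{n-1})-\eps_i$ for some $i$ (so $\ga(\mu)$ differs from $\ga(\la)$ by one unit), one gets that $\operatorname{res}^{\Se_n}_{\Se_{n-1}}D(\la)$ is concentrated in the sum of the $\Se_{n-1}$-blocks obtained from $[\ga(\la)]$ by removing one unit; as $D(\la)$ is irreducible it lies in a single $\Se_n$-block, and block branching then forces that block to be $[\ga(\la)]$. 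The base case $n=1$ is immediate, and Lemma~\ref{LCharZero} can serve as a consistency check or to normalise the content function correctly.
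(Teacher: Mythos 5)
Your overall target is right -- one must show that the symmetric functions in $L_1^2,\dots,L_n^2$ act on $D(\la)$ through the content character attached to $\ga(\la)$ -- but each of the routes you propose has a hole, and the paper closes the lemma by a different mechanism entirely. The paper's proof is a reduction-mod-$p$ argument: by Lemma~\ref{LCharZero} (which rests on Nazarov's explicit computation of the Jucys--Murphy spectrum in characteristic zero) one has $D_0(\mu)[\ga_0(\mu)]=D_0(\mu)$; reducing mod $p$ reduces the eigenvalues of the $L_k^2$ mod $p$, so every composition factor of $\bar D_0(\mu)$ lies in the block $[\bar\ga_0(\mu)]$; and finally $D(\la)$ occurs as a composition factor of some $\bar D_0(\mu)$ with $\ga_p(\la)=\bar\ga_0(\mu)$ by \cite[Theorem 10.8]{Brundan_Kleshchev_Projective_representations} and \cite{BrundanQII}. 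This sidesteps exactly the fact you identify as your ``main obstacle'' -- that symmetric functions in the $L_k^2$ are restrictions under $e_n(\cdot)e_n$ of central ``content'' elements of $S(n,n)$ acting on the block of $L(\la)$ by the $\ga(\la)$-character. That statement is true, but it is not established in the paper or in the results you can cite from it, and your two fallbacks do not supply it.

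Concretely: fallback (a) does not work as stated because $V^{\otimes n}$ is not semisimple in characteristic $p$, so an operator commuting with $Q(n)$ does not act by a scalar on the $L(\la)$-isotypic part; to extract the generalized eigenvalue you would again need to identify the operator with a central element of $S(n,n)$ and compute its central character, which is the original obstacle. Fallback (b) contains a false step: it is \emph{not} known (and is the sort of thing modular branching rules conspicuously fail to give) that every composition factor $D(\mu)$ of $\operatorname{res}^{\Se_n}_{\Se_{n-1}}D(\la)$ has $\bar\mu=(\la_1,\dots,\la_{n-1})-\eps_i$; Theorem~A and Corollary~\ref{CTransl} control only the socle and the $\Hom$'s from Specht modules. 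Even if you replace that claim by the general block-theoretic fact that restriction moves $[\ga]$ to a sum of blocks $[\ga-\de_j]$, the inductive step applied to a single good node of content $i$ only yields $\ga=\ga(\la)-\de_i+\de_j$ for \emph{some} $j$, which does not pin down $\ga=\ga(\la)$ when $\la$ has good nodes of only one content (e.g.\ $\la=(n)$). So as written the proposal has a genuine gap; the reduction-mod-$p$ route of the paper, or a properly referenced Sergeev-duality analogue of Jucys' theorem, is needed to close it.
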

\begin{proof}
In this proof it will be important to distinguish between characteristic $0$ and characteristic $p$, so we will use the corresponding indices in our notation, for example $I_0$ vs. $I_p$, $\Gamma^p_n$, vs. $\Gamma^0_n$, $\gamma_0(\la)$ vs. $\ga_p(\la)$, etc.

For $\mu\in {\mathcal{P}}_0(n)$, denote by $D_0(\mu)$ the irreducible module corresponding to $\mu$ over a field of characteristic zero, and denote by $\bar D_0(\mu)$ its reduction modulo $p$, see \cite[p.65]{Brundan_Kleshchev_Projective_representations}. As $D_0(\mu)=G_0(\mu)$ by Lemma~\ref{LCharZero}, it follows that $D_0(\mu)[\ga_0(\mu)]=D_0(\mu)$, where $\ga_0(\mu)\in\Gamma_n^0$.
Let $\bar\ga_0(\mu)$ denote the element of $\Gamma^p_n$ obtained from $\gamma_0(\mu)$ as follows. If $\ga_0(\mu)=(\ga_i')_{i\in I_0}$, then $\bar\ga_0(\mu)=(\ga_i)_{i\in I_p}$, where
$$\ga_i:=\sum_{\{j\in I_0\mid i^2+i\equiv j^2+j\pmod p\}}\ga_j' \qquad(i\in I_p).$$

When we reduce a representation modulo $p$, the eigenvalues of the squares of Jucys-Murphy elements will get reduced modulo $p$, and so $\bar D_0(\mu)=\bar D_0(\mu)[\bar\ga_0(\mu)]$. Hence $D[\bar\ga_0(\mu)]=D$ for any composition factor $D$ of $\bar D_0(\mu)$.
Now, $D(\la)$ must appear as a composition factor of some reduction $\bar D(\mu)$, in which case, by \cite[Thorem 10.8]{Brundan_Kleshchev_Projective_representations} and \cite[Theorems  4.3, 6.3]{BrundanQII}, we have $\ga_p(\la)=\bar\ga_0(\mu)$. Therefore $D(\la)[\ga_p(\la)]=D(\la)$, as required.
\end{proof}

\begin{corollary} \label{CContD}
Let $\la,\mu\in {\mathcal{RP}}_p(n)$. Then $D(\la)\cong G(\mu)$ implies $\ga(\la)=\ga(\mu)$.
\end{corollary}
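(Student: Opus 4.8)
The plan is to exploit the two block-support statements already established, namely Lemma~\ref{LGLaBlock} ($G(\mu)[\gamma(\mu)]=G(\mu)$) and Lemma~\ref{LDBlock} ($D(\la)[\gamma(\la)]=D(\la)$), together with the fact that the decomposition $V=\bigoplus_{\gamma\in\Gamma_n}V[\gamma]$ is functorial in $V$. First I would recall that each $V[\gamma]$ is by construction the sum of the simultaneous generalized eigenspaces $V_\bi$ for the commuting operators $L_1^2,\dots,L_n^2$ with $\bi\in I^\gamma$, and that these are subsupermodules of $V$. Since the Jucys--Murphy elements $L_k$ lie in $\Se_n$, any isomorphism of $\Se_n$-supermodules (even or odd) commutes with the action of each $L_k^2$, hence maps generalized eigenspaces to generalized eigenspaces, and therefore sends $V[\gamma]$ isomorphically onto $W[\gamma]$ for every $\gamma$.

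Now suppose $D(\la)\cong G(\mu)$. Applying the previous paragraph with this isomorphism, we get $D(\la)[\gamma]\cong G(\mu)[\gamma]$ for every $\gamma\in\Gamma_n$. Take $\gamma=\gamma(\la)$: by Lemma~\ref{LDBlock} the left-hand side is all of $D(\la)$, which is nonzero, so $G(\mu)[\gamma(\la)]\neq 0$. On the other hand, Lemma~\ref{LGLaBlock} gives $G(\mu)=G(\mu)[\gamma(\mu)]$, and since the $[\gamma]$-components of a fixed module are a direct sum decomposition, $G(\mu)[\gamma(\la)]=0$ whenever $\gamma(\la)\neq\gamma(\mu)$. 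Combining these two observations forces $\gamma(\la)=\gamma(\mu)$, which is exactly the claim.

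This argument is essentially bookkeeping, so there is no serious obstacle; the only point requiring a line of care is the functoriality of the $[\gamma]$-decomposition under a possibly odd isomorphism, which I would handle by the generalized-eigenspace remark above (the $L_k^2$ are even elements of $\Se_n$, so their action is compatible with both the $\Se_n$-module structure and the parity grading, and the decomposition into $V[\gamma]$ is intrinsic to the $\Se_n$-action and hence transported by any $\Se_n$-isomorphism). I would place this corollary immediately after Lemma~\ref{LDBlock}, since it is the natural consequence used to begin the identification of $D(\la)$ with $G(\la)$ (Theorem~B).
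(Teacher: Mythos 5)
Your proof is correct and is essentially the paper's own argument: the paper's proof of this corollary is the one-line instruction ``Combine Lemmas~\ref{LGLaBlock} and \ref{LDBlock},'' and your write-up is exactly that combination spelled out, including the (correct) observation that the $[\gamma]$-decomposition is determined by the action of the even elements $L_k^2$ and is therefore preserved by any, possibly odd, isomorphism of $\Se_n$-supermodules. No gaps.
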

\begin{proof}
Combine Lemmas ~\ref{LGLaBlock} and \ref{LDBlock}.
\end{proof}

Now, we complete the proof of Theorem B. We apply induction on $n$. The base of induction is clear, as $\Se_1$ has only one irreducible supermodule up to isomorphism. For the inductive step, assume that $n>1$ and Theorem B is true for $n-1$. Let $\la\in {\mathcal{RP}}_p(n)$, and $\mu$ be obtained from $\la$ be removing an $i$-good node of $\la$. By the inductive assumption we have $D(\mu)=G(\mu)$. We need to prove that $G(\la)=D(\la)$. Let $G(\la)=D(\nu)$ for some  $\nu\in {\mathcal{RP}}_p(n)$. Using the notation of \cite[(17.6) and Theorem 22.2.2]{Kbook}, we see that $G(\mu)$ is in the socle of $\operatorname{res}_iG(\la)$.
So $D(\mu)$ is in the socle of $\res_iD(\nu)$, which by Theorem A implies that $\mu$ is obtained from $\nu$ by removing a good node $A$. By Corolary~\ref{CContD}, $\ga(\nu)=\ga(\la)$, and so $A$ must have content $i$. Thus $\mu$ is obtained from $\la$ by removing an $i$-good node, and $\mu$ is also obtained from $\nu$ by removing an $i$-good node. This implies that $\la=\nu$, either by an easy combinatorial exercise or by \cite[Corollary 17.2.3]{Kbook}. The proof of Theorem B is complete.

\section{Projective representations of symmetric groups}\label{SFinal}

Finally, we use the categorical superequivalence from \cite[Section 13.2]{Kbook} to translate from $\Se_n$ to $\T_n$. Recall from the introduction that $\Se_n\cong\T_n\otimes \mathcal{C}_n$. The Clifford superalgebra $\mathcal{C}_n$ is simple as a superalgebra, so it has only one irreducible supermodule denoted by $U_n$, see \cite[Example 12.2.14]{Kbook}. The supermodule $U_n$ is of type $\Mtype$ if $n$ is even and of type $\Qtype$ if $n$ is odd.

We have functors
$$
?\boxtimes U_n=\mathfrak{F}_n:\smod{\T_n}\to\smod{\Se_n}$$
and  
$$
\Hom_{\mathcal{C}_n}(U_n,?)=\mathfrak{G}_n:\smod{\Se_n}\to\smod{\T_n}.
$$
Main properties of these functors are described in \cite[Proposition 13.2.2]{Kbook}. In particular, $\mathfrak{F}_n$ and $\mathfrak{G}_n$ are exact, left and right adjoint to each other, and behave nicely with respect to restriction and induction.

If $n$ is even, then $\mathfrak{F}_n$ and $\mathfrak{G_n}$ are quasi-inverse equivalences of categories which induce a type-preserving bijection between the set of isomorphism classes of $\T_n$-supermodules and the set of isomorphism classes of $\Se_n$-supermodules. In this case we define
$$
S^\la:=\mathfrak{G}_n(S(\la)),\quad D^\mu:=\mathfrak{G}_n(D(\mu))\qquad(\la\in\mathcal{P}_p(n),\ \mu\in\mathcal{RP}_p(n)).
$$

If $n$ is odd, we have in the notation of \cite{Kbook} that $\mathfrak{G}_n\circ\mathfrak{F}_n\simeq \operatorname{Id}\oplus\operatorname{\Pi}$ and $\mathfrak{F}_n\circ\mathfrak{G}_n\simeq \operatorname{Id}\oplus\operatorname{\Pi}$, where $\Pi$ is the parity change functor. In this case, if $D(\la)$ is of type $\Mtype$, i.e. $h_{p'}(\la)$ is even, then $D^\la:=\mathfrak{G}_n(D(\la))$ is an irreducible $\T_n$-supermodule of type $\Qtype$. If $D(\la)$ is of type $\Qtype$, i.e. $h_{p'}(\la)$ is odd, then $\mathfrak{G}_n(D(\la))\simeq D^\la\oplus \Pi D^\la$ for a unique irreducible $\T_n$-supermodule $D^\la$ of type~$\Mtype$.


\begin{theorem} 
{\rm \cite[Theorem 22.3.1]{Kbook}} 
For any $n$ we have that
$$\{D^\la\mid \la\in \mathcal{RP}_p(n)\}$$ is a complete and irredundant set of irreducible $\T_n$-supermodules up to isomorphism. Moreover, $D^\la$ is of type $\Mtype$ if and only if $n-h_{p'}(\la)$ is even.
\end{theorem}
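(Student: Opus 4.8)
The plan is to deduce this statement by transporting the classification of irreducible $\Se_n$-supermodules along the functors $\mathfrak{F}_n = {?}\boxtimes U_n$ and $\mathfrak{G}_n = \Hom_{\mathcal{C}_n}(U_n,{?})$ that implement the ``Morita superequivalence'' $\Se_n\cong\T_n\otimes\mathcal{C}_n$. The only inputs needed are Theorem~\ref{TSchFApplication}(ii) together with the type statement for the $D(\la)$ recorded in the Introduction, the properties of $\mathfrak{F}_n$ and $\mathfrak{G}_n$ collected in \cite[Proposition 13.2.2]{Kbook}, and the fact that the simple Clifford superalgebra $\mathcal{C}_n$, equivalently its unique irreducible supermodule $U_n$, is of type $\Mtype$ when $n$ is even and of type $\Qtype$ when $n$ is odd.

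First I would treat the case $n$ even. Here $\mathcal{C}_n$ is of type $\Mtype$, so $\mathfrak{F}_n$ and $\mathfrak{G}_n$ are mutually quasi-inverse superequivalences, and in particular induce a type-preserving bijection on isomorphism classes of irreducible supermodules. Applying $\mathfrak{G}_n$ to the complete irredundant list $\{D(\la)\mid\la\in\mathcal{RP}_p(n)\}$ of Theorem~\ref{TSchFApplication}(ii) therefore produces a complete irredundant list $\{D^\la\mid\la\in\mathcal{RP}_p(n)\}$ of irreducible $\T_n$-supermodules, with $D^\la$ of type $\Mtype$ precisely when $D(\la)$ is, i.e. precisely when $h_{p'}(\la)$ is even; since $n$ is even this is the same as $n-h_{p'}(\la)$ being even.

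Then I would treat the case $n$ odd, which is where the real work lies. Now $\mathcal{C}_n$ is of type $\Qtype$, so $\mathfrak{F}_n$ and $\mathfrak{G}_n$ no longer form an equivalence; instead, by the general formalism of \cite[Section 12.2]{Kbook} applied to $\Se_n\cong\T_n\otimes\mathcal{C}_n$, they still set up a bijection between isomorphism classes of irreducible supermodules which interchanges the types $\Mtype$ and $\Qtype$. Concretely, when $D(\la)$ has type $\Mtype$ the supermodule $D^\la := \mathfrak{G}_n(D(\la))$ is irreducible of type $\Qtype$, and when $D(\la)$ has type $\Qtype$ one has $\mathfrak{G}_n(D(\la))\cong D^\la\oplus\Pi D^\la$ with $D^\la$ irreducible of type $\Mtype$; in either case $\mathfrak{F}_n(D^\la)$ recovers $D(\la)$ up to a parity shift, using $\mathfrak{F}_n\circ\mathfrak{G}_n\simeq \operatorname{Id}\oplus\Pi$. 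Exactness and faithfulness of $\mathfrak{F}_n$ then give both completeness and irredundancy: every irreducible $\T_n$-supermodule $D$ has $\mathfrak{F}_n(D)=D\boxtimes U_n\ne 0$ with all composition factors among the $D(\la)$, so $D$ arises as some $D^\la$; and if $D^\la\cong D^\mu$ then $\mathfrak{F}_n(D^\la)$ and $\mathfrak{F}_n(D^\mu)$ share a composition factor, forcing $D(\la)\cong D(\mu)$ and hence $\la=\mu$. The type statement follows because the types are now swapped: $D^\la$ has type $\Mtype$ iff $D(\la)$ has type $\Qtype$ iff $h_{p'}(\la)$ is odd, which for odd $n$ is the same as $n-h_{p'}(\la)$ being even.

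The step I expect to be the main obstacle is precisely this odd-$n$ analysis: one has to track the parity-shift functor $\Pi$ carefully and make sure that retaining only $D^\la$ (and not also $\Pi D^\la$) in the type-$\Qtype$ case still yields a list with no repetitions and no omissions. However, all of the required functorial statements about irreducible supermodules over $A$ versus $A\otimes\mathcal{C}$ for a simple Clifford superalgebra $\mathcal{C}$ are already available in \cite[Sections 12.2 and 13.2]{Kbook}; indeed the assertion is exactly \cite[Theorem 22.3.1]{Kbook}, so the role of this final step in the paper is only to record how that classification meshes with Theorems A and B to complete the translation to $\T_n$, and it is a matter of assembling known pieces rather than proving anything new.
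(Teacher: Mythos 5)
Your argument is correct and is essentially the intended one: the paper does not prove this statement but simply cites \cite[Theorem 22.3.1]{Kbook}, and the functorial transport you describe (even $n$: type-preserving superequivalence; odd $n$: the $\operatorname{Id}\oplus\Pi$ relations for $\mathfrak{F}_n\circ\mathfrak{G}_n$ and $\mathfrak{G}_n\circ\mathfrak{F}_n$ with types swapped) is exactly the setup the paper records in Section~\ref{SFinal} around the theorem. The only tiny imprecision is that in the type-$\Mtype$ case $\mathfrak{F}_n(D^\la)\cong D(\la)\oplus\Pi D(\la)$ rather than ``$D(\la)$ up to parity shift,'' but this does not affect your completeness or irredundancy arguments.
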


Let again $n$ be odd. If $\la\in\mathcal{P}_p(n)$ and $h_{p'}(\la)$ is even, we define $S^\la:=\mathfrak{G}_n(S(\la))$. However, if $h_{p'}(\la)$ is odd, we have to be more careful.

Let $V$ be a finite dimensional supermodule over a superalgebra $A$. A map $J:V\to V$  is called a {\em $\Qtype$-map} if $J$ is an odd $A$-endomorphism of $V$ such that $J^2=\id_V$. The Schur's Lemma for superalgebras (see e.g. \cite[Lemma 12.2.3]{Kbook}) states that for a finite dimensional irreducible $A$-supermodule $V$, its endomorphism algebra $\operatorname{End}(V)$ is spanned by $\id_V$ if $V$ is of type $\Mtype$, and it is spanned by $\id_V$ and a $\Qtype$-map $J$ on $V$ if $V$ is of type $\Qtype$ (to get to this statement of Schur's lemma, one needs to replace $J$ from \cite[Lemma 12.2.3]{Kbook} by $\sqrt{-1}J$). We are interested in $\Qtype$-maps on some supermodules which are not necessarily irreducible.

\begin{lemma} \label{LSJ}
Let $n\in\Z_{>0}$.
\begin{enumerate}
\item[{\rm (i)}] If $\la\in X_p^+(n)$ and $h_{p'}(\la)$ is odd then the $U(n)$-supermodule $V(\la)$ has a $\Qtype$-map.
\item[{\rm (ii)}] If $\la\in \mathcal{P}_p(n)$ and $h_{p'}(\la)$ is odd then the $\Se_n$-supermodule $S(\la)$ has a $\Qtype$-map.
\end{enumerate}
\end{lemma}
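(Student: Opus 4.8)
The plan is to prove part (i) first and then deduce part (ii) by applying the Schur functor $\mathcal{F}_n$, using the fact that it is an exact functor which preserves endomorphisms of the kind we need, namely that it carries an odd $U(n)$-endomorphism of $V(\la)$ to an odd $\Se_n$-endomorphism of $S(\la)=\mathcal{F}_n(V(\la))$. First I would recall from Proposition~\ref{PUMu} that, since $h_{p'}(\la)$ is odd, the irreducible $U^0(n)$-supermodule $\u(\la)$ is of type $\Qtype$; hence by Schur's lemma for superalgebras there is a $\Qtype$-map $J_0:\u(\la)\to\u(\la)$, i.e. an odd $U^0(n)$-endomorphism with $J_0^2=\id$. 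The key step is to extend $J_0$ to an odd endomorphism $J$ of $V(\la)$. For this I would use the universal property recorded in Theorem~\ref{TUnivProperty}: $V(\la)$ is universal among finite dimensional $U(n)$-supermodules generated by a primitive $U^0(n)$-subsupermodule isomorphic to $\u(\la)$. Concretely, inflate $\u(\la)$ to $U^{\geq 0}(n)$, form the Verma module $M(\la)=U(n)\otimes_{U^{\geq 0}(n)}\u(\la)$, note that $\id\otimes J_0$ is an odd $U(n)$-endomorphism of $M(\la)$ squaring to $\id$, and then check that it descends to the finite-dimensional quotient $V(\la)$ (equivalently $H^0(\la)^\tau$), since $\id\otimes J_0$ preserves the relevant maximal submodule/Weyl kernel because that submodule is defined intrinsically.

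In more detail, the cleanest route is probably through $M(\la)$ and $V(\la)$ simultaneously. The map $\bar J:=\id_{U(n)}\otimes J_0$ on $M(\la)=U(n)\otimes_{U^{\geq 0}(n)}\u(\la)$ is well defined because $J_0$ is $U^0(n)$-linear and we inflate along $U^{\geq 0}(n)\to U^0(n)$; it is odd and $\bar J^2=\id$ since $J_0^2=\id$. Now $V(\la)$ is a quotient of $M(\la)$ by the unique maximal submodule $N$ not meeting $M(\la)^\la=\u(\la)$. Since $\bar J$ is a $U(n)$-automorphism, $\bar J(N)$ is again a submodule; and $\bar J(N)\cap \u(\la)=\bar J(N\cap \u(\la))=0$ because $\bar J$ restricts to $J_0$ on the weight space $\u(\la)$. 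By maximality $\bar J(N)\subseteq N$, hence $\bar J(N)=N$, so $\bar J$ descends to an odd automorphism $J$ of $V(\la)$ with $J^2=\id$, i.e. a $\Qtype$-map. This proves (i). (One could alternatively argue via the contravariant dual realization $V(\la)=H^0(\la)^\tau$ and a $\Qtype$-map on $H^0(\la)$, but the Verma-module argument avoids having to analyze $\ind_B^G$.)

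For (ii), apply $\mathcal{F}_n$ to $J:V(\la)\to V(\la)$. Since $\mathcal{F}_n(M)=e_nM$ for the idempotent $e_n\in S(n,n)$ with $e_nS(n,n)e_n\cong\Se_n$, and since $J$ is a homogeneous (odd) $S(n,n)$-endomorphism of $V(\la)$ when $V(\la)$ is regarded as an $S(n,n)$-supermodule via $\la\in\Lambda^+_p(n,n)$, the restriction $\mathcal{F}_n(J)=J|_{e_nV(\la)}$ is an odd $\Se_n$-endomorphism of $S(\la)=e_nV(\la)$, and $\mathcal{F}_n(J)^2=\mathcal{F}_n(J^2)=\mathcal{F}_n(\id)=\id$. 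Hence $\mathcal{F}_n(J)$ is a $\Qtype$-map on $S(\la)$, as required. The main obstacle I anticipate is the descent step: one must make sure that the odd endomorphism of the Verma module genuinely passes to the finite-dimensional quotient defining $V(\la)$, i.e. that the submodule being quotiented out is preserved; this is handled by the uniqueness/maximality characterization of that submodule together with the fact that $\bar J$ is an automorphism acting as $J_0$ on the $\la$-weight space, but it is the point that requires care rather than routine computation.
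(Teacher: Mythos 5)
Your part (ii) and the overall strategy for (i) (start from the $\Qtype$-map on $\mathfrak u(\la)$ supplied by Proposition~\ref{PUMu} and Schur's lemma, then transport it to $V(\la)$) match the paper; but for (i) the paper takes a shorter route than yours, inducing the $\Qtype$-map from $\mathfrak u(\la)$ to $H^0(\la)=\ind_B^G\mathfrak u(\la)$ by functoriality and then passing to the contravariant dual $V(\la)=H^0(\la)^\tau$. Your Verma-module descent is a legitimate alternative, but as written it contains a misidentification that matters: the kernel of $M(\la)\twoheadrightarrow V(\la)$ is \emph{not} ``the unique maximal submodule $N$ not meeting $M(\la)^\la$.'' The largest submodule meeting $M(\la)^\la$ trivially is the radical of $M(\la)$, and quotienting by it yields the irreducible module $L(\la)$, not the Weyl module $V(\la)$ (which in general has several composition factors, cf.\ Theorem~\ref{TUnivProperty}). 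So your maximality argument, taken literally, produces a $\Qtype$-map on $L(\la)$ rather than on $V(\la)$.

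The gap is fixable within your framework: the correct kernel is the intersection of all submodules $K\subseteq M(\la)$ such that $M(\la)/K$ is finite dimensional and integrable (this is what the universal property of Lemma~\ref{Vermamod}/Theorem~\ref{TUnivProperty} encodes), and since $\bar J=\id\otimes J_0$ is an automorphism of $M(\la)$ it permutes the set of such $K$ and therefore fixes their intersection; descent then goes through. But you should either make this intrinsic characterization explicit or, more economically, argue as the paper does via $\ind_B^G$ and $\tau$-duality, which avoids the issue entirely. Part (ii) is correct and is exactly the paper's argument: $\mathcal F_n(J)$ is the restriction of $J$ to the weight space $V(\la)_{\omega_n}=S(\la)$, which is still odd and squares to the identity.
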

\begin{proof}
(i) Under our assumptions, $\mathfrak{u}(\la)$ is an irreducible $H$-supermodule of type $\Qtype$, see Proposition~\ref{PUMu}(v), so it has a $\Qtype$-map by Schur's Lemma. This map induces a $\Qtype$-map on the induced module $H^0(\la)$, and now (i) follows by passing to duals.

(ii) Let $J$ be a $\Qtype$-map on $V(\la)$, which exists by (i). By definition $S(\la)=\mathcal F(V(\la))=V(\la)_{\omega_n}$, and $\mathcal{F}(J)$ is just the restriction of $J$ to the weight space $V(\la)_{\omega_n}$, so it is also a $\Qtype$-map.
\end{proof}

\begin{lemma} \label{LNasty}
Let $n$ be odd, and $M\in\smod{\Se_n}$ possess a $\Qtype$-map  $J_M$. Then there exists a unique up to isomorphism $N\in\smod{\T_n}$ such that $\mathfrak{G}_n(M)\simeq N\oplus \Pi N$.
\end{lemma}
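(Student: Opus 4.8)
\textbf{Proof plan for Lemma~\ref{LNasty}.}
The statement to prove is the standard fact from the theory of superalgebra modules that, for an odd $n$, an $\Se_n$-supermodule $M$ carrying a $\Qtype$-map descends under $\mathfrak{G}_n$ to a supermodule of the form $N\oplus\Pi N$. The plan is to reduce to the irreducible case and then invoke the explicit description of $\mathfrak{G}_n$ on irreducibles recorded just before the lemma. First I would recall from \cite[Proposition 13.2.2]{Kbook} that $\mathfrak{G}_n=\Hom_{\mathcal{C}_n}(U_n,?)$ is exact, and that for odd $n$ one has $\mathfrak{F}_n\circ\mathfrak{G}_n\simeq \operatorname{Id}\oplus\Pi$, together with the behaviour on irreducibles: if $D(\la)$ is of type $\Mtype$ then $\mathfrak{G}_n(D(\la))=D^\la$ is of type $\Qtype$, and if $D(\la)$ is of type $\Qtype$ then $\mathfrak{G}_n(D(\la))\simeq D^\la\oplus\Pi D^\la$ with $D^\la$ of type $\Mtype$.

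The key step is to show that the hypothesis ``$M$ has a $\Qtype$-map'' forces every composition factor of $M$, counted with multiplicity in the appropriate sense, to pair up so that $\mathfrak{G}_n(M)$ becomes $\Pi$-symmetric, i.e. $\mathfrak{G}_n(M)\simeq \Pi\,\mathfrak{G}_n(M)$ canonically. Concretely, a $\Qtype$-map $J_M$ on $M$ is precisely an isomorphism $M\xrightarrow{\sim}\Pi M$ of $\Se_n$-supermodules whose square is the identity; applying the functor $\mathfrak{G}_n$ and using that $\mathfrak{G}_n$ commutes with $\Pi$ (which follows from $\mathfrak{G}_n=\Hom_{\mathcal{C}_n}(U_n,?)$ and the fact that $\Pi$ on the source is implemented by the odd endomorphism of $U_n$ coming from a generator $c_i$ of $\mathcal{C}_n$), we get an isomorphism $\mathfrak{G}_n(M)\xrightarrow{\sim}\Pi\,\mathfrak{G}_n(M)$, again squaring to the identity. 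Thus $W:=\mathfrak{G}_n(M)$ is a $\T_n$-supermodule equipped with a $\Qtype$-map. Then I would appeal to the general structural fact that a finite dimensional supermodule $W$ over any superalgebra admitting a $\Qtype$-map is isomorphic to $N\oplus\Pi N$ for some $N$: one decomposes $W$ into indecomposables, and the $\Qtype$-map permutes the indecomposable summands, pairing each type-$\Mtype$ indecomposable $W_k$ with $\Pi W_k$ and forcing type-$\Qtype$ indecomposables to occur; collecting one representative from each $\{W_k,\Pi W_k\}$ pair and half of each $\Qtype$-block produces $N$ with $W\simeq N\oplus\Pi N$. (This is \cite[Section 12.1]{Kbook}-style bookkeeping; I would cite the relevant statement there rather than redo it.)

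For uniqueness of $N$ up to isomorphism: if $N\oplus\Pi N\simeq N'\oplus\Pi N'$, then comparing indecomposable summands via Krull--Schmidt and using that $\Pi$ is an involution on the set of isomorphism classes of indecomposables, one gets $N\simeq N'$ after possibly replacing $N'$ by $\Pi N'$; but $\Pi N'\simeq N'$ would only happen when $N'$ itself is $\Pi$-symmetric, in which case the two choices literally coincide. I would state this carefully, noting that the ambiguity $N\leftrightarrow \Pi N$ is exactly the ``up to isomorphism'' latitude allowed in the statement, and that in all the applications in this paper $N$ is further pinned down (e.g.\ as $S^\la$ or $D^\la$) by an additional normalization already fixed in the surrounding text.

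The main obstacle I anticipate is not any deep argument but rather getting the compatibility of $\mathfrak{G}_n$ with $\Pi$ stated cleanly: one must check that the parity-change functor on $\smod{\Se_n}$ is intertwined by $\mathfrak{G}_n$ with the parity-change functor on $\smod{\T_n}$, which amounts to unwinding the identification $\Se_n\cong\T_n\otimes\mathcal{C}_n$ and the description of $U_n$ as the unique irreducible $\mathcal{C}_n$-supermodule, and observing that multiplication by a Clifford generator $c_i\in\mathcal{C}_n\subset\Se_n$ realizes $\Pi$ on any $\Se_n$-module. Once that naturality square is in place, the rest is formal: transport the $\Qtype$-map through $\mathfrak{G}_n$ and apply the Krull--Schmidt pairing argument.
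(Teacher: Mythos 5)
There is a genuine gap in the existence part of your argument. The ``general structural fact'' you invoke --- that any finite dimensional supermodule $W$ over a superalgebra admitting a $\Qtype$-map is isomorphic to $N\oplus\Pi N$ --- is false. An irreducible supermodule of type $\Qtype$ carries a $\Qtype$-map by the super Schur's Lemma (this is exactly how type $\Qtype$ is detected), yet it is indecomposable and so certainly not of the form $N\oplus\Pi N$. More generally, an odd automorphism $J$ of $W$ with $J^2=\id$ is precisely an isomorphism $W\simeq\Pi W$; it carries no splitting data, because an odd operator has no eigenspace decomposition compatible with the grading. So after you transport $J_M$ through $\mathfrak{G}_n$ to an odd automorphism of $W=\mathfrak{G}_n(M)$, you are stuck: ``collecting half of each $\Qtype$-block'' is not an operation that exists, and the Krull--Schmidt bookkeeping you describe cannot manufacture a direct sum decomposition out of an odd endomorphism alone.

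What actually makes the lemma work is that for odd $n$ you have a \emph{second} $\Qtype$-map at your disposal, namely $J_U$ on the irreducible $\mathcal{C}_n$-supermodule $U_n$ (which is of type $\Qtype$ precisely because $n$ is odd). The paper composes the two: writing $\delta_M$ for the sign operator $m\mapsto(-1)^{\|m\|}m$, the assignment $f\mapsto\delta_M J_M f J_U$ is an \emph{even} $\T_n$-linear operator on $\Hom_{\mathcal{C}_n}(U_n,M)$ squaring to $-\id$, so $\mathfrak{G}_n(M)$ splits into its $\pm\sqrt{-1}$-eigenspaces $N_+\oplus N_-$, and $f\mapsto J_M f$ gives an isomorphism $N_-\simeq\Pi N_+$. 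Your proposal never uses $J_U$ in the splitting step (you only mention odd endomorphisms of $U_n$ when discussing compatibility of $\mathfrak{G}_n$ with $\Pi$), and this is exactly the missing ingredient: without pairing $J_M$ against $J_U$ there is no even operator to diagonalize, hence no decomposition. As a minor additional point, left multiplication by a single generator $c_i$ is not a $\mathcal{C}_n$-endomorphism of $U_n$ (it fails the sign rule against $c_i$ itself), so even your naturality square needs the genuine $J_U$ rather than a Clifford generator. The uniqueness discussion is acceptable once one notes, as the paper does, that it is uniqueness up to a not necessarily even isomorphism, so that $N$ and $\Pi N$ are identified.
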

\begin{proof}
Note that $U_n$ is an irreducible type $\Qtype$ module over $\mathcal{C}_n$, so it possesses a $\Qtype$-map  $J_U$ by Schur's Lemma. Let $\delta_M:M\to M$ be a linear map which maps an arbitrary homogeneous $m\in M$ to $(-1)^{\| m\|}m$.

Observe that the linear operator
$$\Hom_{\mathcal{C}_n}(U_n,M)\to \Hom_{\mathcal{C}_n}(U_n,M),\ f\mapsto \de_MJ_MfJ_U$$
squares to $-\id$. So we can decompose $\mathfrak{G}_n(M)=\Hom_{\mathcal{C}_n}(U_n,M)$ as $N_+\oplus N_-$, where
$$
N_\pm:=\{f\in \Hom_{\mathcal{C}_n}(U_n,M)\mid f=\pm\sqrt{-1} \delta_M J _MfJ_U\}.
$$
One easily checks that this is decomposition respects the structure of $\mathfrak{G}_n(M)$ as a $\T_n$-supermodule. It remains to observe that $N_-\simeq\Pi N_+$, the isomorphism given by $f\mapsto J_M f$. The uniqueness of $N$ (up to a not necessarily even isomorphism) follows from the Krull-Schmidt Theorem.
\end{proof}

It now follows from Lemmas~\ref{LSJ} and~\ref{LNasty}, that in the case where $n$ is odd and $\la\in\mathcal{P}_p(n)$ has odd $p'$-height $h_{p'}(\la)$, we have $\mathfrak{G}_n(S(\la))\cong S^\la\oplus\Pi S^\la$ for some $\T_n$-module $S^\la$ defined uniquely up to isomorphism. We have now defined the modules $S^\la$ for $\la\in\mathcal{P}_p(n)$ in all cases. We refer to these modules as {\em Specht modules} for $\T_n$.

\begin{proposition}
Let $\la\in\mathcal{RP}_p(n)$. Then the Specht module $S^\la$ has simple head $D^\la$.
\end{proposition}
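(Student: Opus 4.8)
The plan is to reduce the statement to its $\Se_n$-analogue, Theorem~\ref{TSchFApplication}(iii) (which says $D(\la)$ is the simple head of $S(\la)$), by transporting along the functors $\mathfrak F_n$ and $\mathfrak G_n$ between $\smod{\T_n}$ and $\smod{\Se_n}$, whose relevant properties are recalled in the text from \cite[Proposition 13.2.2]{Kbook}: both are exact and faithful, they are biadjoint, and $\mathfrak F_n\mathfrak G_n\simeq\operatorname{Id}\oplus\Pi\simeq\mathfrak G_n\mathfrak F_n$ when $n$ is odd, while they are mutually quasi-inverse equivalences when $n$ is even. When $n$ is even, $\mathfrak G_n$ is an equivalence of categories, hence carries radicals to radicals and heads to heads; since $S^\la=\mathfrak G_n(S(\la))$ and $D^\la=\mathfrak G_n(D(\la))$, the statement is immediate.

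Assume now $n$ is odd. First I would identify $\mathfrak F_n(S^\la)$. If $h_{p'}(\la)$ is even then $S^\la=\mathfrak G_n(S(\la))$, so $\mathfrak F_n(S^\la)\cong S(\la)\oplus\Pi S(\la)$. If $h_{p'}(\la)$ is odd then by construction (Lemmas~\ref{LSJ},~\ref{LNasty}) $\mathfrak G_n(S(\la))\cong S^\la\oplus\Pi S^\la$, so $\mathfrak F_n(S^\la)\oplus\Pi\mathfrak F_n(S^\la)\cong S(\la)\oplus\Pi S(\la)$; since $S(\la)$ is indecomposable (it has simple head $D(\la)$) and admits a $\Qtype$-map, so $S(\la)\cong\Pi S(\la)$, the Krull--Schmidt theorem forces $\mathfrak F_n(S^\la)\cong S(\la)$. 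In either case $\operatorname{head}\bigl(\mathfrak F_n(S^\la)\bigr)$ is $D(\la)$ or $D(\la)\oplus\Pi D(\la)$. Because $\mathfrak F_n$ is exact and carries semisimple supermodules to semisimple supermodules (from the description of $\mathfrak F_n$ on irreducibles), $\mathfrak F_n(\operatorname{head}S^\la)$ is a semisimple quotient of $\mathfrak F_n(S^\la)$, hence a quotient of $\operatorname{head}\bigl(\mathfrak F_n(S^\la)\bigr)$, i.e. a sub-sum of $\{D(\la),\Pi D(\la)\}$. Combining this with the fact that $\mathfrak F_n$ sends a type-$\Mtype$ irreducible to an irreducible of type $\Qtype$ and a type-$\Qtype$ irreducible to a module of the form $D'\oplus\Pi D'$ with $D'$ of type $\Mtype$, together with the $\Mtype/\Qtype$ dictionary implicit in the definitions of $D^\la$ and $D(\la)$, one concludes that $\operatorname{head}S^\la$ is $D^\la$-isotypic.

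It remains to see that $D^\la$ occurs in $\operatorname{head}S^\la$ with multiplicity exactly one and that the head is $D^\la$ rather than $\Pi D^\la$. For the multiplicity I would compute $\dim\Hom_{\T_n}(S^\la,D^\la)$: by biadjunction and the relation $\mathfrak G_n(S(\la))\cong S^\la\oplus\Pi S^\la$ (resp. $S^\la=\mathfrak G_n(S(\la))$ in the $h_{p'}$-even case) it is $\frac12\dim\Hom_{\Se_n}\bigl(S(\la),\mathfrak F_n(D^\la)\bigr)$ (resp. $\dim\Hom_{\Se_n}\bigl(S(\la),\mathfrak F_n(D^\la)\bigr)$), and since $S(\la)$ has simple head $D(\la)$ every such homomorphism factors through $D(\la)$, so this space is a copy of $\operatorname{End}_{\Se_n}(D(\la))$; matching its dimension against $\dim\operatorname{End}_{\T_n}(D^\la)$ via the type dictionary yields multiplicity one, and hence $\operatorname{head}S^\la$ is a single irreducible, $\cong D^\la$ or $\cong\Pi D^\la$. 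That it is $D^\la$ follows because applying $\mathfrak G_n$ to a surjection $S(\la)\twoheadrightarrow D(\la)$ chosen to intertwine the $\Qtype$-maps (possible since $\Hom_{\Se_n}(S(\la),D(\la))\cong\operatorname{End}_{\Se_n}(D(\la))$) respects the eigenspace decompositions defining $S^\la$ and $D^\la$, giving a surjection $S^\la\twoheadrightarrow D^\la$.

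The main obstacle is the bookkeeping with the parity-change functor $\Pi$ and the Schur indicator types $\Mtype,\Qtype$ in the $n$ odd case: one must track which of $S(\la),D(\la),S^\la,D^\la$ are $\Pi$-selfdual, verify $\mathfrak F_n(S^\la)\cong S(\la)$, and rule out the a priori possibility $\operatorname{head}S^\la\cong(D^\la)^{\oplus2}$ — the last point being exactly where the computation of $\dim\Hom_{\T_n}(S^\la,D^\la)$ is required. Everything else is formal nonsense about exact, faithful, biadjoint functors.
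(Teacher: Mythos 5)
Your argument is correct and is essentially the paper's: the paper also reduces to the $\Se_n$-statement via the exactness and biadjointness of $\mathfrak F_n$ and $\mathfrak G_n$, simply by noting that these properties give $\dim\Hom_{\T_n}(S^\la,D^\mu)=\dim\Hom_{\T_n}(D^\la,D^\mu)$ for \emph{every} $\mu\in\mathcal{RP}_p(n)$, which in one stroke handles both the isotypicity and the multiplicity-one points you treat separately. Your extra analysis of $\mathfrak F_n(S^\la)$ and the $\Mtype/\Qtype$ bookkeeping is sound but not needed once all those Hom dimensions are matched.
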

\begin{proof}
An easy computation involving the properties of the functors $\mathfrak{G}_n$ and $\mathfrak{F}_n$ established in \cite[Proposition 13.2.2]{Kbook} shows that $\dim\Hom_{\T_n}(S^\la,D^\mu)=\dim\Hom_{\T_n}(D^\la,D^\mu)$ for any $\mu$, using the similar fact for the supermodules $S(\la)$ and $D(\mu)$ over $\Se_n$.
\end{proof}

Another application of \cite[Proposition 13.2.2]{Kbook} now yields our main result for $\T_n$-supermodules:

\begin{theorem} 
Let $\la\in\mathcal{RP}_p(n)$ and $\mu\in \mathcal{RP}_p(n-1)$. Then
\begin{enumerate}
\item[{\rm (i)}] $\mu$ is obtained from $\la$ by removing a good node if and only if
$$
\Hom_{\T_{n-1}}(D^\mu,\operatorname{res}^{\T_n}_{\T_{n-1}}D^\la)\neq 0.
$$
\item[{\rm (ii)}] $\mu$ is obtained from $\la$ by removing a normal node if and only if
$$
\Hom_{\T_{n-1}}(S^\mu,\operatorname{res}^{\T_n}_{\T_{n-1}}D^\la)\neq 0.
$$
In particular, if $\mu$ is obtained from $\la$ by removing a normal node then $D^\mu$ is a composition factor of the restriction $\operatorname{res}^{\T_n}_{\T_{n-1}}D^\la$.
\item[{\rm (iii)}] $\la$ is obtained from $\mu$ by adding a cogood node if and only if
$$
\Hom_{\T_{n}}(D^\la,\operatorname{ind}^{\T_n}_{\T_{n-1}}D^\mu)\neq 0.
$$
\item[{\rm (iv)}] $\la$ is obtained from $\mu$ by adding a conormal node if and only if
$$
\Hom_{\T_{n}}(S^\la,\operatorname{ind}^{\T_n}_{\T_{n-1}}D^\mu)\neq 0.
$$
In particular, if $\la$ is obtained from $\mu$ by adding a conormal node then $D^\la$ is a composition factor of the rinduction $\operatorname{ind}^{\T_n}_{\T_{n-1}}D^\mu$.
\end{enumerate}
\end{theorem}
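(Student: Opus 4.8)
The plan is to transfer Theorem~A, the analogue of this statement for the Sergeev superalgebras $\Se_n$, across the categorical superequivalence of \cite[Section~13.2]{Kbook} between $\smod{\T_n}$ and $\smod{\Se_n}$ realised by the functors $\mathfrak{F}_n=?\boxtimes U_n$ and $\mathfrak{G}_n=\Hom_{\Cl_n}(U_n,?)$. The only external input is \cite[Proposition~13.2.2]{Kbook}: the functors $\mathfrak{F}_n,\mathfrak{G}_n$ are exact, annihilate no nonzero module, are biadjoint to each other, carry irreducibles to irreducibles up to direct sums and the parity shift $\Pi$, and are compatible with restriction and induction, in the sense that $\mathfrak{F}_{n-1}\circ\operatorname{res}^{\T_n}_{\T_{n-1}}\cong\operatorname{res}^{\Se_n}_{\Se_{n-1}}\circ\,\mathfrak{F}_n$ and $\mathfrak{F}_n\circ\operatorname{ind}^{\T_n}_{\T_{n-1}}\cong\operatorname{ind}^{\Se_n}_{\Se_{n-1}}\circ\,\mathfrak{F}_{n-1}$ (and similarly for $\mathfrak{G}$), all up to $\Pi$. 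Since $\Pi$ is an autoequivalence with $\Hom(\Pi X,Y)\cong\Pi\Hom(X,Y)$, a $\Hom$-space is nonzero if and only if any $\Pi$-twist of it or any finite direct sum of copies of it is nonzero; hence it suffices to track the nonvanishing of $\Hom$-spaces, and all the parity ambiguities above become harmless.

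For statement~(i) I would apply $\mathfrak{F}_{n-1}$ and the restriction-compatibility isomorphism to rewrite $\mathfrak{F}_{n-1}\bigl(\operatorname{res}^{\T_n}_{\T_{n-1}}D^\la\bigr)$ as $\operatorname{res}^{\Se_n}_{\Se_{n-1}}\mathfrak{F}_n(D^\la)$, use the $(\mathfrak{F}_{n-1},\mathfrak{G}_{n-1})$-adjunction, and note that $\mathfrak{F}_{n-1}(D^\mu)$ is a nonzero sum of copies of $D(\mu)$ and $\Pi D(\mu)$ while $\mathfrak{F}_n(D^\la)$ is a nonzero sum of copies of $D(\la)$ and $\Pi D(\la)$, the exact multiplicities depending on the parities of $n$ and $n-1$ and on the $\Mtype$/$\Qtype$ types but being irrelevant for nonvanishing. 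This gives that $\Hom_{\T_{n-1}}(D^\mu,\operatorname{res}^{\T_n}_{\T_{n-1}}D^\la)\neq0$ if and only if $\Hom_{\Se_{n-1}}(D(\mu),\operatorname{res}^{\Se_n}_{\Se_{n-1}}D(\la))\neq0$, which by Theorem~A(i) is equivalent to $\mu$ being obtained from $\la$ by removing a good node. Statement~(ii) is handled identically, with $D^\mu,D(\mu)$ replaced by the Specht modules $S^\mu,S(\mu)$; by the definition of $S^\mu$ in the present section, $\mathfrak{F}_{n-1}(S^\mu)$ is again a nonzero sum of copies of $S(\mu)$ and $\Pi S(\mu)$, so one reduces to Theorem~A(ii). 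The ``in particular'' clause of~(ii) then follows because $D^\mu$ is the head of $S^\mu$, so a nonzero map $S^\mu\to\operatorname{res}^{\T_n}_{\T_{n-1}}D^\la$ exhibits $D^\mu$ as a composition factor of the restriction. Statements~(iii) and~(iv) are the mirror images: one pushes $\operatorname{ind}^{\T_n}_{\T_{n-1}}D^\mu$ through $\mathfrak{F}_n$ to $\operatorname{ind}^{\Se_n}_{\Se_{n-1}}D(\mu)$ via the induction-compatibility isomorphism, transports $D^\la$ (respectively $S^\la$) to the $\Se$-side by biadjunction, and quotes Theorem~A(iii) and~(iv); the final composition-factor assertion in~(iv) again uses that $D^\la$ is the head of $S^\la$.

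The step that will require the most care --- and the only genuine subtlety --- is the parity and type bookkeeping in the odd case: when $n$ or $n-1$ is odd the functors $\mathfrak{F},\mathfrak{G}$ are not quasi-inverse equivalences but satisfy only $\mathfrak{G}\mathfrak{F}\simeq\operatorname{Id}\oplus\Pi$ and $\mathfrak{F}\mathfrak{G}\simeq\operatorname{Id}\oplus\Pi$, and the $\Mtype$/$\Qtype$ type of $D^\la$ may differ from that of $D(\la)$, so that $\mathfrak{F}_n(D^\la)$ can be $D(\la)$, $\Pi D(\la)$, $D(\la)\oplus\Pi D(\la)$ or $D(\la)^{\oplus2}$ depending on the case. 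One must check in each case that nonvanishing of $\Hom$ is preserved; this is exactly where one uses that $\Pi$ is an equivalence, that $D^\la\cong\Pi D^\la$ precisely when $D^\la$ has type~$\Qtype$, and that $\mathfrak{F}$ and $\mathfrak{G}$ never annihilate a nonzero module. Since every such modification only multiplies a $\Hom$-space by a $\Pi$-twist or replaces it by a nonzero number of copies of itself, nonvanishing is unaffected and the argument goes through uniformly; this is the routine computation referred to once \cite[Proposition~13.2.2]{Kbook} is in hand.
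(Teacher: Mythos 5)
Your proposal is correct and is exactly the route the paper takes: the paper's entire proof consists of the remark that the theorem follows from Theorem~A by ``another application of \cite[Proposition 13.2.2]{Kbook}'', and your write-up simply fills in the adjunction, restriction/induction compatibility, and parity bookkeeping that this citation is meant to cover.
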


\appendix

\backmatter

\bibliographystyle{amsalpha}

\printindex

\end{document}
